\newtheorem{thm}{Theorem}[section]
\newtheorem{cor}[thm]{Corollary}
\newtheorem{lem}[thm]{Lemma}
\newtheorem{prop}[thm]{Proposition}
\theoremstyle{definition}\newtheorem{ex}{Example}
\newtheorem{Def}[thm]{Definition}
\newtheorem{fact}[thm]{Fact}
\newtheorem{conj}[thm]{Conjecture}
\newtheorem{const}[thm]{Construction}
 \newtheorem{remark}[thm]{Remark}
 \newtheorem{conv}[thm]{Convention}
\newcommand{\shA}{\textbf{A}}
\newcommand{\gerbeE}{\mathcal{E}}
\newcommand{\Sn}{S^{\bigotimes_{R}n}}
\newcommand{\Snp}{S^{\bigotimes_{R}(n+1)}}
\newcommand{\GE}{G_{\mathcal{E}}}
\newcommand\restr[2]{{
  \left.\kern-\nulldelimiterspace 
  #1 
    \vphantom{\big|} 
  \right|_{#2} 
  }}
\newcommand{\Hom}{\text{Hom}}
\newcommand*{\sheafhom}{\mathrm{H}\kern -.5pt om}
\newcommand{\Spec}{\text{Spec}}
\DeclareMathOperator{\sHom}{\mathscr{H}\text{\kern -3pt {\calligra\large om}}\,}
\newcommand{\Z}{\mathbb{Z}}
\begin{document}

\title{Rigid inner forms over local function fields}

\author{Peter Dillery}
\thanks{This research was conducted while author was affiliated with: University of Michigan, Department of Mathematics, 530 Church St, Ann Arbor, MI 48109, USA}
\thanks{This research was partially supported by NSF grant DMS-1840234}
\address{Department of Mathematics,
  University of Maryland, 4176 Campus Drive,
  College Park, MD 20742-4015, USA}
\email{dillery@umd.edu}
\maketitle

\begin{abstract}
We generalize the concept of rigid inner forms, defined by Kaletha in \cite{Tasho}, to the setting of a local function field $F$ in order state the local Langlands conjectures for arbitrary connected reductive groups over $F$. To do this, we define for a connected reductive group $G$ over $F$ a new cohomology set $H^{1}(\gerbeE, Z \to G) \subset H_{\text{fpqc}}^{1}(\gerbeE, G)$ for a gerbe $\gerbeE$ attached to a class in $H_{\text{fppf}}^{2}(F, u)$ for a certain canonically-defined profinite commutative group scheme $u$, building up to an analogue of the classical Tate-Nakayama duality theorem. We define a relative transfer factor for an endoscopic datum serving a connected reductive group $G$ over $F$, and use rigid inner forms to extend this to an absolute transfer factor, enabling the statement of endoscopic conjectures relating stable virtual characters and $\dot{s}$-stable virtual characters for a semisimple $\dot{s}$ associated to a tempered Langlands parameter. 
\end{abstract}

\section{Introduction}

\subsection{Motivation}
The purpose of this paper is to generalize the theory of \textit{rigid inner forms}, introduced in \cite{Tasho} for local fields of characteristic zero, to local function fields. Rigid inner forms allow one to study the representation theory of a connected reductive group $G$ over a local field $F$ by working simultaneously with all inner forms of $G$---in particular, they allow for an unambiguous statement of the endoscopic local Langlands conjectures for arbitrary connected reductive groups over $F$. 

The idea of studying all the inner forms of $G$ simultaneously for endoscopy was first suggested by Adams-Barbasch-Vogan in \cite{ABV}; generally speaking, given a tempered Langlands parameter $\varphi \colon W_{F}' \to \prescript{L}{}G$, we should have a subset of representations of inner forms of $G$, denoted by $\Pi_{\varphi}$, and a bijective map to some set of representations related to $S_{\varphi}$, the centralizer of $\varphi$ in $\widehat{G}$. A fundamental question encountered when treating all inner forms at the same time is when two inner forms should be declared ``the same". Since we are concerned with representation theory, a natural requirement of isomorphisms of inner forms is that an automorphism of an inner form $G'$ of $G$ should preserve the conjugacy classes of $G'(F)$ as well as the representations of $G'(F)$.

In order to ensure that automorphisms of inner twists satisfy the above requirements, Vogan in \cite{Vogan} expanded the data of an inner twist to that of a \textit{pure inner twist}, which gives the desired rigidity. A pure inner twist is a triple $(G', \psi, x)$, where $\psi \colon G \to G'$ is an inner form of $G$, and $x \in Z^{1}(F, G)$ is a 1-cocycle such that $\text{Ad}(x(\sigma)) = \psi^{-1} \circ \prescript{\sigma}{}\psi$ for all $\sigma$ in $\Gamma$. However, not every inner twist can be enriched to a pure inner twist, since in general $H^{1}(F, G) \to H^{1}(F, G_{\text{ad}})$ need not be surjective. The question then becomes: How does one rigidify the notion of inner twists in a way that includes all of them?

The concept of rigid inner forms introduced by Kaletha in \cite{Tasho} answers this question when $F$ is of characteristic zero. Again we take tuples $(G', \psi, z)$, where now $z$ is a 1-cocycle in a new cohomology set, denoted by $H^{1}(u \to W, Z \to G)$, where $Z$ is some finite central $F$-subgroup of $G$. The cohomology set $H^{1}(u \to W, Z \to G)$ carries a canonical surjective map to $H^{1}(F, G/Z)$, which means that such tuples encompass all inner forms of $G$. Moreover, rigid inner forms are rigid enough so that their automorphisms preserve both desired representation-theoretic properties discussed above. We also have an embedding $H^{1}(F, G) \hookrightarrow H^{1}(u \to W, Z \to G)$, connecting rigid inner twists to Vogan's pure inner twists.

Assume that $F$ is a finite extension of $\mathbb{Q}_{p}$ for some $p$, so that  the theory of \cite{Tasho} applies. The following is a short account of the conjectures enabled by rigid inner forms:

We first record the conjectures coming from Vogan's pure inner twists. Fix $\varphi \colon W_{F}' \to \prescript{L}{}G$ a tempered Langlands parameter with centralizer $S_{\varphi} \subset \widehat{G}$, as well as $G^{*}$, a quasi-split pure inner form of $G$. After fixing a Whittaker datum $\mathfrak{w}$ for $G^{*}$, we have a conjectural map $\iota_{\mathfrak{w}}$ and subset $\Pi_{\varphi}^{\text{pure}}$ of the irreducible tempered representations of the pure inner forms of $G^{*}$ making the following diagram commute:
\[
\begin{tikzcd}
\Pi_{\varphi}^{\text{pure}} \arrow["\iota_{\mathfrak{w}}"]{r} \arrow{d} & \text{Irr}(\pi_{0}(S_{\varphi})) \arrow{d} \\
H^{1}(F,G^{*}) \arrow{r} & \pi_{0}(Z(\widehat{G})^{\Gamma})^{*},
\end{tikzcd}
\]
where the left arrow sends a pure inner form representation $(G', \psi, x, \pi)$ to the class $[x]$, the lower arrow is the Kottwitz pairing (see \cite{Kott86}), and the right-hand arrow sends an irreducible representation to its central character. Moreover, the map $\iota_{\mathfrak{w}}$ provides the correct virtual characters which are needed for the endoscopic character identities for a choice of semisimple element $s \in S_{\varphi}(\mathbb{C})$. However, there need not be a quasi-split pure inner form of our general connected reductive $G$.

Now we will see the conjectures allowed by replacing the notion of pure inner forms with rigid inner forms. In addition to the Langlands parameter $\varphi$ with centralizer $S_{\varphi}$, let $Z$ be a fixed finite central $F$-subgroup of $G$. The isogeny $G \to G/Z := \overline{G}$ dualizes to an isogeny $\widehat{\overline{G}} \to \widehat{G}$; let $S_{\varphi}^{+}$ denote the preimage of $S_{\varphi}$ under this isogeny. Then, after fixing a Whittaker datum $\mathfrak{w}$ for $G^{*}$, a quasi-split rigid inner form of $G$ (which always exists), we conjecture the existence of a subset $\Pi_{\varphi}$ of $\Pi_{\varphi}^{\text{temp}}$, the tempered representations of the rigid inner forms of $G^{*}$, and a bijective map $\iota_{\mathfrak{w}}$ making the following diagram commute
\[
\begin{tikzcd}
\Pi_{\varphi} \arrow["\iota_{\mathfrak{w}}"]{r} \arrow{d} & \text{Irr}(\pi_{0}(S_{\varphi}^{+})) \arrow{d} \\
H^{1}(u \to W, Z \to G^{*}) \arrow{r} & \pi_{0}(Z(\widehat{\overline{G}})^{+})^{*}
\end{tikzcd}
\]
where the left map sends a representation of a rigid inner twist to the corresponding class in $H^{1}(u \to W, Z \to G^{*})$, the right map sends a representation to its central character, and the bottom map is an extension of the duality isomorphism $H^{1}(F, G) \xrightarrow{\sim}  \pi_{0}(Z(\widehat{G})^{\Gamma})^{*}$ defined by Kottwitz in \cite{Kott86}; here $Z(\widehat{\overline{G}})^{+}$ denotes the preimage of $Z(\widehat{G})^{\Gamma}$ in $Z(\widehat{\overline{G}})$.

We now turn to endoscopy. Choosing a semisimple $s \in S_{\varphi}(\mathbb{C})$, along with the data of $\varphi$, gives rise to an endoscopic datum $\mathfrak{e} = (H, \mathcal{H}, \eta, s)$ for $G$; for simplicity we will assume that $\mathcal{H} = \prescript{L}{}H$.  Rigid inner forms allow us to define, given a fixed quasi-split rigid inner twist $(G^{*}, \psi, z)$ of $G$, a ($\mathfrak{w}$-normalized) absolute transfer factor $\Delta'[\dot{\mathfrak{e}}, \psi, z, \mathfrak{w}]$ for pairs of related strongly regular semisimple elements of $H(F)$ and $G(F)$---this was only previously possible for quasi-split $G$. The fact that we have replaced $\mathfrak{e}$ by $\dot{\mathfrak{e}}$ corresponds to the necessity of replacing $s$ by a preimage $\dot{s}$ in $S_{\varphi}^{+}(\mathbb{C})$, on which this factor depends. This absolute transfer factor allows for the formulation of endoscopic virtual character identities for the images $\iota_{\mathfrak{w}}(\dot{\pi})$ of representations $\dot{\pi} \in \Pi_{\varphi}$ of rigid inner twists of $G$ in the set $\text{Irr}(\pi_{0}(S_{\varphi}^{+}))$. 

If we want to generalize these conjectures to connected reductive groups over a local function field $F$, a natural question that arises is whether or not an analogue of the theory of rigid inner forms can be developed in this new situation. There are nontrivial obstacles to a direct translation of the theory established in \cite{Tasho}. Notably, the cohomology set $H^{1}(u \to W, Z \to G)$ is defined using the cohomology of a group extension $$0 \to u \to W \to \Gamma \to 0$$ corresponding to a canonical class in $H^{2}(F,u)$ for a special profinite commutative affine group $u$ (where $\Gamma$ denotes the absolute Galois group of $F$). The group $u$ will not be smooth in positive characteristic, and so it is no longer true that $H^{2}(F, u) = H^{2}(\Gamma, u(F^{s}))$ (where $F^{s}$ is a separable closure of $F$), and therefore there is no way of choosing a corresponding group extension in this situation.

We remedy this deficiency by working instead with the fppf cohomology group $H^{2}_{\text{fppf}}(F, u)$, which may be computed using the \v{C}ech cohomology related to the fpqc cover $\Spec(\bar{F}) \to \Spec(F)$. Classes in the group $\check{H}^{2}_{\text{fppf}}(F, u)$ correspond to isomorphism classes of $u$-gerbes over $\Spec(F)$, which means that for a canonical class in $H^{2}_{\text{fppf}}(F, u)$ we get a corresponding $u$-gerbe $\gerbeE$, whose role will replace that of $W$ in \cite{Tasho}. With the gerbe $\gerbeE$ in hand, we investigate its cohomology in a way that parallels the cohomology of the group $W$ in \cite{Tasho}, culminating in the construction of a cohomology set $H^{1}(\gerbeE, Z \to G)$ that is the analogue of $H^{1}(u \to W, Z \to G)$ discussed above. In particular, we will have a Tate-Nakayama type isomorphism for $H^{1}(\gerbeE, Z \to G)$ that will be used to construct a canonical pairing $$H^{1}(\gerbeE, Z \to G) \times \pi_{0}(Z(\widehat{\overline{G}})^{+}) \to \mathbb{C}^{*}$$ extending the positive-characteristic analogue (see \cite{Thang2}) of the Kottwitz pairing in characteristic zero alluded to above. 

Note that if $F$ is a finite extension of $\mathbb{Q}_{p}$, then $u$ is smooth, and in this case our gerbe $\gerbeE$ may be replaced by a group extension of $\Gamma$ by $u(\bar{F})$ using the comparison isomorphism $H^{2}_{\text{fppf}}(F, u) \xrightarrow{\sim} H^{2}_{\text{\'{e}tale}}(F, u) = H^{2}(\Gamma, u(\bar{F}))$. This then recovers the group $W$ used in \cite{Tasho}, cf. the discussion of \textit{Galois gerbes} in \cite{LR}.

The definition of the cohomology set $H^{1}(\gerbeE, Z \to G)$ allows for a completely analogous definition of rigid inner forms, which, when combined with a construction of the relative local transfer factor for local functions fields, allows for the definition of an absolute transfer factor for an endoscopic datum $\mathfrak{e}$ associated to an arbitrary connected reductive group over $F$. The development of this theory culminates in a statement of the above conjectures in the setting of local function fields. This work has been extended to global function fields in  \cite{Dillery} in order to study the multiplicity of discrete automorphic representations, as well as the relationship between the adelic transfer factor and the aforementioned local transfer factors. Moreover, the transfer factors constructed in this paper (and studied further in \cite{Dillery}) will be useful in the study/stabilization of the trace formula, which was recently established for global function fields in \cite{LL}.

\subsection{Overview} We now summarize the structure of this paper. The goal of \S 2 is to obtain a concrete interpretation of torsors on gerbes. It begins by recalling the basic theory of fibered categories, stacks, and gerbes,  progresses to a characterization of torsors on gerbes, and concludes by investigating the analogue of the inflation-restriction sequence in group cohomology in the setting of gerbes. Following this, the next two sections focus solely on tori: in \S 3, we construct the pro-algebraic group $u$, investigate its cohomology, and then define the cohomology set $H^{1}(\gerbeE, Z \to S)$ for an $F$-torus $S$, where $\gerbeE$ is a $u$-gerbe associated to a canonical cohomology class in $H^{2}(F, u)$. We also discuss basic functoriality properties of the cohomology group $H^{1}(\gerbeE, Z \to S)$ using our insight from \S 2. An analogue of the classical Tate-Nakayama isomorphism is constructed for $H^{1}(\gerbeE, Z \to S)$ for $S$ an $F$-torus in \S 4, using an fppf-analogue of the ``unbalanced cup product" (see \cite[\S 4.3]{Tasho}); this is the technical heart of the paper. 

Once the situation for tori is established, the purpose of \S 5 is to define $H^{1}(\gerbeE, Z \to G)$ for a general connected reductive group $G$, and extend all of the previous results to this new situation. There is not much to do here: the bulk of the work is just direct translation of the results in \cite[\S 3, \S 4]{Tasho} to fppf cohomology, using basic theorems about the structure theory of connected reductive groups over local function fields (see \cite{Debacker}, \cite{Thang1}, \cite{Thang2}). In order to apply the first five sections to the local Langlands conjectures, it is necessary to recall and define the (relative) local transfer factor corresponding to an endoscopic datum for a reductive group over a local function field---we do this in \S6. This section is entirely self-contained for expository purposes, and in many cases is just a direct exposition of the constructions stated in \cite{LS}; the only aspects of the arguments loc. cit. that require minor adjustment are those concerning the $\Delta_{I}$ and $\Delta_{III_{1}}$ factors, but we include a discussion of all of the factors for completeness.  

Finally, in \S7 we define rigid inner forms for local function fields. Then we use them to define an absolute local transfer factor for an endoscopic datum associated to an arbitrary connected reductive group over $F$. Once this is done, we give a brief summary of the conjectures stemming from our constructions. This section closely parallels \cite[\S 5]{Tasho}; in many cases, we follow the arguments verbatim, substituting Galois-cohomological calculations with analogous computations in \v{C}ech cohomology.
\subsection{Notation and terminology}
We will always assume that $F$ is a local field of characteristic $p > 0$, although all of the arguments work for $p$-adic local fields, forgetting about the prime-to-$p$ sequence $\{n_{k}'\}$ in \S 4.3. For an arbitrary algebraic group $G$ over $F$, $G^{\circ}$ denotes the identity component. For a connected reductive group $G$ over $F$, $Z(G)$ denotes the center of $G$, and for $H$ a subgroup of $G$, $N_{G}(H), Z_{G}(H)$ denote the normalizer and centralizer group schemes of $H$ in $G$, respectively. We will denote by $\mathscr{D}(G)$ the derived subgroup of $G$, by $G_{\text{ad}}$ the quotient $G/Z(G)$, and if $G$ is semisimple, we denote by $G_{\text{sc}}$ the simply-connected cover of $G$; if $G$ is not semisimple, $G_{\text{sc}}$ denotes $\mathscr{D}(G)_{\text{sc}}$. If $T$ is a maximal torus of $G$, denote by $T_{\text{sc}}$ its preimage in $G_{\text{sc}}$. We fix an algebraic closure $\bar{F}$ of $F$, which contains a separable closure of $F$, denoted by $F^{s}$. For $E/F$ a Galois extension, we denote the Galois group of $E$ over $F$ by $\Gamma_{E/F}$, and we set $\Gamma_{F^{s}/F} =: \Gamma$ (although occasionally we will also use $\Gamma$ to denote a finite Galois group---this will be made clear when relevant). 

We call an affine, commutative algebraic group \textit{multiplicative} if its characters span its coordinate ring over $F^{s}$. For $Z$ a multiplicative group over $F$, we denote by $X^{*}(Z), X_{*}(Z) (=X_{*}(Z^{\circ}))$ the character and co-character modules of $Z$, respectively, viewed as $\Gamma$-modules. Given $T$ a split maximal torus in $G$, we denote by $W(G,T)$ the quotient group $N_{G}(T)/Z_{G}(T)$, and frequently identify it as a subset of $\text{Aut}_{\Z}(X^{*}(T))$.  For a morphism $f \colon A \to B$ of multiplicative groups over $F$, we use $f^{\sharp}$ to denote both induced morphisms $X_{*}(A) \to X_{*}(B)$ and $X^{*}(B) \to X^{*}(A)$. Also, given a morphism $f \colon U \to V$ of two objects in a stack $\mathcal{C}$ and sheaf $\mathcal{F}$ on $\mathcal{C}$, we also use the symbol $f^{\sharp}$ to denote the induced morphism $\mathcal{F}(V) \to \mathcal{F}(U)$; there will be no danger of confusing these two notations. For an $F$-torus $T$, we define the \textit{dual torus} $\widehat{T}$ to be the $\mathbb{C}$-torus with character group $X_{*}(T)$; we equip $\widehat{T}$ with a $\Gamma$-action via the natural $\Gamma$-action on $X_{*}(T)$. We will frequently denote $\widehat{T}(\mathbb{C})^{\Gamma}$ by $\widehat{T}^{\Gamma}$. For two $F$-schemes $X, Y$ and $F$-algebra $R$, we set $X \times_{\Spec(F)} Y =: X \times_{F} Y$, or by $X \times Y$ if $F$ is understood, and set $X \times_{F} \text{Spec}(R) =: X_{R}$. We also set $X(\text{Spec}(R)) =: X(R)$, the set of $F$-morphisms $\{\text{Spec}(R) \to X\}$.

\subsection{Acknowledgements} The author thanks Tasho Kaletha for introducing to him the motivating question of this paper, as well as for his feedback and proof-reading. The author also thanks the anonymous reviewer for their useful comments and corrections. The author gratefully acknowledges the support of NSF grant DMS-1840234.

\tableofcontents

\section{Preliminaries on gerbes}

\subsection{Basics of fibered categories and stacks}

The purpose of this subsection is to briefly review the theory of fibered categories and stacks that will be used later in the paper. For a comprehensive treatment, see for example \cite[Chapter 3]{Olsson}. Let $\mathcal{C}$ denote a category which has finite fibered products. In the later sections, this will be the category $\text{Sch}/S$ of schemes over a fixed scheme $S$, but for now we will allow it to be arbitrary. Let $\mathscr{X} \xrightarrow{\pi} \mathcal{C}$ be a morphism of categories (i.e., a functor). 

\begin{Def} For $X, Y \in \text{Ob}(\mathscr{X})$ denote by $U, V$ (respectively) the objects $\pi(X), \pi(Y)$ in $\mathcal{C}$ (i.e., $X$ and $Y$ \textit{lie above} or \textit{lift} $U$ and $V$); we say that a morphism $f \colon Y \to X$ in $\mathscr{X}$ is \textit{strongly cartesian} if for every pair of a morphism $g \colon Z \to X$ in $\mathscr{X}$ and morphism $h \colon \pi(Z) \to V$ in $\mathcal{C}$ such that $\pi(g) = \pi(f) \circ h$, there is a unique $\tilde{h} \colon Z \to Y$ such that $f \circ \tilde{h} = g$ and $\pi(\tilde{h}) = h$. In this case, we say that $\tilde{h}$ \textit{lifts} $h$. 
\end{Def}

We continue working with a fixed $\mathscr{X} \xrightarrow{\pi} \mathcal{C}$. 

\begin{Def} For a fixed $U \in \text{Ob}(\mathcal{C})$, we define a category $\mathscr{X}(U)$ as follows; its objects will be given by the set $\{X \in \text{Ob}(\mathscr{X}) \colon \pi(X) = U\}$ and its morphisms will be those morphisms $X \xrightarrow{f} X'$ such that $\pi(f) = \text{id}_{U}$. We call this the \textit{fiber category over $U$}, or just the \textit{fiber over $U$}. 
\end{Def}

\begin{Def} We say that $\mathscr{X} \xrightarrow{\pi} \mathcal{C}$ is a \textit{fibered category} over $\mathcal{C}$ if for every $U \in \text{Ob}(\mathcal{C})$, morphism $V \xrightarrow{f} U$ in $\mathcal{C}$, and $X \in \mathscr{X}(U)$, there is an object $Y \in \mathscr{X}(V)$ and strongly cartesian morphism $\tilde{f} \colon Y \to X$ such that $\pi(\tilde{f}) = f$. One checks that if we have another strongly cartesian $Y' \xrightarrow{\tilde{f}'} X$ satisfying the above property, then there is a unique isomorphism $Y' \to Y$ making all the obvious diagrams commute. We define a \textit{morphism of fibered categories } from $\mathscr{X} \xrightarrow{\pi} \mathcal{C}$ to $\mathscr{X}' \xrightarrow{\pi'} \mathcal{C}$ to be a functor $f \colon \mathscr{X} \to \mathscr{X}'$ such that $\pi = \pi' \circ f$. We say that a fibered category $\mathscr{X} \to \mathcal{C}$ is \textit{fibered in groupoids} if for all $U \in \text{Ob}(\mathcal{C})$, $\mathscr{X}(U)$ is a groupoid (recall that a category is a \textit{groupoid} if all morphisms are isomorphisms). We will denote the group $\text{Aut}_{\mathscr{X}(U)}(X)$ simply by $\text{Aut}_{U}(X)$ for ease of notation. 
\end{Def}

\begin{lem}\label{fiberedprod} If $\mathscr{X} \to \mathcal{C}$ is fibered in groupoids, then $\mathscr{X}$ also has finite fibered products.
\end{lem}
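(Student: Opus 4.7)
The plan is to construct $X \times_Z Y$ by pushing the data down to $\mathcal{C}$, forming the fibered product there, pulling back via a strongly cartesian lift to a candidate $W$, and then verifying the universal property using the definitions. Given $f \colon X \to Z$ and $g \colon Y \to Z$ in $\mathscr{X}$, set $U = \pi(X)$, $V = \pi(Y)$, $T = \pi(Z)$, $u = \pi(f)$, $v = \pi(g)$, and form $U \times_T V$ in $\mathcal{C}$ with projections $p_U, p_V$ using the hypothesis on $\mathcal{C}$. Using the fibered-category property, choose a strongly cartesian morphism $\tilde{p} \colon W \to X$ lifting $p_U$; then $\pi(W) = U \times_T V$, and $\tilde{p}$ serves as the candidate first projection.

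Next I would build the second projection $q \colon W \to Y$ over $p_V$ satisfying $g \circ q = f \circ \tilde{p}$. The composition $f \circ \tilde{p} \colon W \to Z$ lies over $u \circ p_U = v \circ p_V$. Pick a strongly cartesian lift $\tilde{v} \colon Y^{\sharp} \to Z$ of $v$; the strongly cartesian property of $\tilde{v}$ applied to $g$ produces a unique morphism $Y \to Y^{\sharp}$ over $\mathrm{id}_V$, and applied to $f \circ \tilde{p}$ produces a unique morphism $W \to Y^{\sharp}$ over $p_V$. The desired $q$ is obtained by composing the second morphism with the inverse of the first, using that in the setting relevant to the paper (gerbes, i.e.\ fibered categories fibered in groupoids) morphisms over $\mathrm{id}_V$ are invertible.

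For the universal property, suppose $E$ is equipped with $\eta \colon E \to X$ and $\zeta \colon E \to Y$ such that $f \circ \eta = g \circ \zeta$. Projecting yields $\pi(\eta), \pi(\zeta)$ with $u \circ \pi(\eta) = v \circ \pi(\zeta)$, so the universal property downstairs provides a unique $\pi(E) \to U \times_T V$. The strongly cartesian property of $\tilde{p}$ then furnishes a unique lift $h \colon E \to W$ with $\tilde{p} \circ h = \eta$; a short diagram chase using the uniqueness of factorizations through $\tilde{v}$ shows $q \circ h = \zeta$, establishing existence and uniqueness.

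The main obstacle is the construction of $q$ in the second step: since $g$ is not assumed strongly cartesian, one has to factor $g$ through an auxiliary strongly cartesian morphism and then invert the resulting morphism over the identity. This inversion is what ultimately forces use of the groupoid structure on fibers that the paper's gerbes will supply, and all remaining verifications reduce to mechanical applications of the universal properties of $U \times_T V$, $\tilde{p}$, and $\tilde{v}$.
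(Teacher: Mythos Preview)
The paper's ``proof'' is a one-line citation to the Stacks Project; your proposal instead carries out the construction directly, and your argument is correct. Your construction of $W$, $\tilde{p}$, and $q$ is exactly the standard one, and your verification of the universal property via the strongly cartesian properties of $\tilde{p}$ and $\tilde{v}$ goes through cleanly.

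More importantly, you correctly flag that the step inverting $\iota \colon Y \to Y^{\sharp}$ requires the fibers to be groupoids. This is not a defect of your argument but of the lemma's statement: as written (for an arbitrary fibered category, not one fibered in groupoids) the lemma is actually false. A counterexample: take $\mathcal{C}$ to be the terminal category, so that any category $\mathscr{X}$ is trivially fibered over $\mathcal{C}$ (identity morphisms are strongly cartesian lifts of the unique morphism below), yet $\mathscr{X}$ certainly need not have fibered products. The Stacks Project lemma being cited is in fact stated for categories fibered in groupoids, which is the only case the paper ever uses (gerbes). So your caveat is exactly right, and your proof establishes precisely the version of the lemma that is both true and needed.
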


\begin{proof} Since we assume that $\mathcal{C}$ has finite fibered products, this follows from \cite[Lemma 06N4]{Stacksproj}.
\end{proof}

In all that follows, given a fibered category $\mathscr{X} \to \mathcal{C}$, for every $U \in \text{Ob}(\mathcal{C})$, $X \in \mathscr{X}(U)$, and morphism $V \xrightarrow{f} U$ in $\mathcal{C}$, we choose some $Y \to X$ satisfying the conditions in the above definition, and will denote this by $f^{*}X \to X$. One checks that for any morphism $X \xrightarrow{\varphi} Y$ in $\mathscr{X}(U)$, a morphism $f \colon V \to U$ induces a canonical morphism $f^{*}X \to f^{*}Y$ in $\mathscr{X}(V)$, which we will denote by $f^{*}\varphi$. 

The choices of pullbacks also define, for any $X \in \mathscr{X}(U)$, a functor $\mathcal{C}/U \to \mathscr{X}/X$ (where $\mathcal{C}/U$ denotes the category of pairs $(V,g)$ where $V \in \text{Ob}(\mathcal{C})$ and $g \colon V \to U$, morphisms given in the obvious way) given by sending the morphism $V \xrightarrow{f} U$ to $f^{*}X \to X$ and the $U$-morphism $h \colon (W \xrightarrow{g} U) \to (V \xrightarrow{f} U)$ to the unique morphism $g^{*}X \to f^{*}X$ lifting $h$ which is compatible with the pullback morphisms to $X$. Such a functor is injective on objects and fully faithful.

\begin{Def} Given a fibered category $\mathscr{X} \xrightarrow{\pi} \mathcal{C}$ and $X,Y \in \mathscr{X}(U)$, we may define a presheaf (of sets), denoted by $\underline{\Hom}(X,Y)$, on the category $\mathcal{C}/U$ by setting
$$\underline{\Hom}(X,Y)(V \xrightarrow{f} U) := \Hom_{\mathscr{X}(V)}(f^{*}X, f^{*}Y),$$
and for a morphism $(W \xrightarrow{g} U) \xrightarrow{h} (V \xrightarrow{f} U)$, we define the restriction map to be 
$$\Hom_{\mathscr{X}(V)}(f^{*}X, f^{*}Y) \xrightarrow{h^{*}} \Hom_{\mathscr{X}(W)}(h^{*}(f^{*}X), h^{*}(f^{*}Y)) \cong \Hom_{\mathscr{X}(W)}(g^{*}X, g^{*}Y),$$ where the first map above sends $\varphi$ to $h^{*}\varphi$, and the second map is the canonical isomorphism induced by the canonical identifications $h^{*}(f^{*}X) \cong g^{*}X$, $h^{*}(f^{*}Y) \cong g^{*}Y$. For the remainder of this paper, it will be harmless to make such identifications, and we do so without comment. If $\mathscr{X} \to \mathcal{C}$ is fibered in groupoids and $Y=X$, we denote the above presheaf by $\underline{\text{Aut}}_{U}(X)$---this is a presheaf of groups. It will play an important role in what follows.
\end{Def}
\noindent We will now assume that we may endow $\mathcal{C}$ with the structure of a site, denoted by $\mathcal{C}_{\tau}$, so that it makes sense to talk about sheaves on $\mathcal{C}_{\tau}$.

\begin{Def} We say that a fibered category is a \textit{prestack} (over $\mathcal{C}_{\tau}$) if for all $U \in \text{Ob}(\mathcal{C})$ and $X, Y \in \mathscr{X}(U)$, the presheaf $\underline{\Hom}(X,Y)$ is a sheaf on $(\mathcal{C}/U)_{\tau}$. 
\end{Def}

\begin{Def} Fix $U \in \text{Ob}(\mathcal{C})$, a covering $\{V_{i} \xrightarrow{h_{i}} U\}_{i \in I}$ of $U$ (here $I$ denotes the indexing set), and a subset $\{X_{i} \in \mathscr{X}(V_{i})\}_{i \in I}$ of $\text{Ob}(\mathscr{X})$. The fibered product $V_{ij} := V_{i} \times_{U} V_{j}$ has two projections; we will denote the one to $V_{i}$ by $p_{1}$ and the one to $V_{j}$ by $p_{2}$. We say that this subset, together with a collection of isomorphisms $\{f_{ij} \colon p_{1}^{*}X_{i} \xrightarrow{\sim} p_{2}^{*}X_{j} \colon f_{ij} \in \Hom(\mathscr{X}(V_{ij}))\}_{i,j \in I}$ is a \textit{descent datum} (for this fixed covering of $U$) if the following diagram commutes for all $i,j,k \in I$:
\[
\begin{tikzcd} 
p_{12}^{*}p_{1}^{*}X_{i} \arrow["p_{12}^{*}f_{ij}"]{r} \arrow[equals]{d} & p_{12}^{*}p_{2}^{*}X_{j} \arrow[equals]{r} & p_{23}^{*}p_{1}^{*}X_{j} \arrow["p_{23}^{*}f_{jk}"]{d} \\
p_{13}^{*}p_{1}^{*}X_{i} \arrow["p_{13}^{*}f_{ik}"]{r} & p_{13}^{*}p_{2}^{*}X_{k} \arrow[equals]{r} & p_{23}^{*}p_{2}^{*}X_{k}, 
\end{tikzcd}
\]
where the equalities denote the canonical isomorphisms discussed above, $p_{ij}$ denotes the projection $V_{ijk}:= V_{i} \times_{U} V_{j} \times_{U} V_{k} \to V_{ij}$, and analogously for the other projections. Given another descent datum $\{Y_{i} \in \mathscr{X}(V_{i})\}_{i \in I}$, $\{g_{ij}\}_{i,j \in I}$, we say that it is \textit{isomorphic} to our above datum if there are isomorphisms $\phi_{i} \colon X_{i} \to Y_{i}$ in $\mathscr{X}(V_{i})$ which for all $i,j$ satisfy $p_{2}^{*}\phi_{j}^{-1} \circ g_{ij} \circ p_{1}^{*}\phi_{i} = f_{ij}$.

\end{Def}

Continuing the notation of the above definition, note that if $X \in \mathscr{X}(U)$, then we get a descent datum for free via setting $X_{i} := h_{i}^{*}X$ and $f_{ij} \colon p_{1}^{*}h_{i}^{*}X \to p_{2}^{*}h_{j}^{*}X$ the canonical isomorphism between these two pullbacks to $V_{ij}$ of $X$. We denote this descent datum by $X_{\text{canon}}$. 

\begin{Def} We say that a descent datum $\{X_{i}\}_{i \in I}$, $\{f_{ij}\}_{i,j \in I}$ for $U$ with respect to the cover $\{V_{i} \to U\}$ is \textit{effective} if there is an object $X \in \mathscr{X}(U)$ such that $\{X_{i}\}_{i \in I}$, $\{f_{ij}\}_{i,j \in I}$ is isomorphic to $X_{\text{canon}}$ (in this case, the pair of $X$ and a choice of isomorphism between $X_{\text{canon}}$ and $\{X_{i}\}$ is unique up to canonical isomorphism). We say that a prestack $\mathscr{X} \to \mathcal{C}_{\tau}$ is a \textit{stack} if all descent data (for all objects of $\mathcal{C}$ and their covers) are effective. We define a \textit{morphisms of stacks over $\mathcal{C}_{\tau}$} to be a morphism between their underlying fibered categories.
\end{Def}

The following proposition shows that whether or not a morphism between two stacks over $\mathcal{C}_{\tau}$ is an equivalence can be checked over a cover of $\mathcal{C}_{\tau}$. We will assume that $\mathcal{C}$ has a final object $U$ and that our cover consists of one element $U_{0} \to U$ (this will be our general situation for the rest of the paper). It is easy to check that if $\mathscr{X} \to \mathcal{C}_{\tau}$ is a stack, then restricting $\mathscr{X}$ to the category whose objects are pairs $(X, V \xrightarrow{f} U_{0})$, where $X \in \mathscr{X}(V)$, and whose morphisms are the morphisms in $\mathscr{X}$ that lift $U_{0}$-morphisms, defines a stack over $(\mathcal{C}/U_{0})_{\tau}$,  denoted by $\mathscr{X}_{U_{0}}$, with structure map given in the canonical manner. This may also be viewed as the fibered product of categories $\mathscr{X} \times_{\mathcal{C}} (\mathcal{C}/U_{0})$, for the definition of this, see e.g. \cite[\S 003O]{Stacksproj}. We set $U_{1}:= U_{0} \times_{U} U_{0}$.

\begin{prop}\label{localequivalence} Let $U_{0} \to U$ be a cover of $\mathcal{C}_{\tau} = (\mathcal{C}/U)_{\tau}$, and $\phi \colon \mathscr{X} \to \mathscr{X}'$ be a morphism of stacks over $\mathcal{C}_{\tau}$; we have an induced morphism of stacks over $(\mathcal{C}/U_{0})_{\tau}$, denoted by $\phi_{U_{0}} \colon \mathscr{X}_{U_{0}} \to \mathscr{X}'_{U_{0}}$. Then $\phi$ is an equivalence of categories if and only if $\phi_{U_{0}}$ is.
\end{prop}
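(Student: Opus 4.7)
The forward direction is immediate: an equivalence of categories restricts to an equivalence on any fibered subcategory defined by pullback, so if $\phi$ is an equivalence then $\phi_{U_0}$ is as well. I focus on the converse, assuming $\phi_{U_0}$ is an equivalence, and verify that $\phi$ is both fully faithful and essentially surjective.

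For fully faithfulness, fix $V \in \text{Ob}(\mathcal{C})$ and $X, Y \in \mathscr{X}(V)$. Since $\mathscr{X}$ and $\mathscr{X}'$ are stacks, $\underline{\Hom}(X, Y)$ and $\underline{\Hom}(\phi(X), \phi(Y))$ are sheaves on $(\mathcal{C}/V)_{\tau}$, and $\phi$ induces a morphism between them. To show that it is an isomorphism of sheaves it suffices to check this on the cover $h \colon V_0 := V \times_U U_0 \to V$ obtained by pullback from $U_0 \to U$. Over $V_0$ the two sections are $\Hom_{\mathscr{X}(V_0)}(h^*X, h^*Y)$ and $\Hom_{\mathscr{X}'(V_0)}(\phi(h^*X), \phi(h^*Y))$, and the induced map between them is precisely the Hom-map coming from $\phi_{U_0}$ applied to the objects of $\mathscr{X}_{U_0}$ over $V_0 \to U_0$; this is a bijection by hypothesis, and the sheaf property then transfers bijectivity down to $V$.

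For essential surjectivity, fix $X' \in \mathscr{X}'(V)$ and set $X'_0 := h^*X' \in \mathscr{X}'(V_0)$. Since $\phi_{U_0}$ is essentially surjective, there exist $X_0 \in \mathscr{X}(V_0)$ and an isomorphism $\alpha \colon \phi(X_0) \xrightarrow{\sim} X'_0$ in $\mathscr{X}'(V_0)$. Let $p_1, p_2 \colon V_1 := V_0 \times_V V_0 \to V_0$ be the two projections. The canonical descent datum on $X'$ provides a gluing isomorphism $\beta \colon p_1^*X'_0 \xrightarrow{\sim} p_2^*X'_0$ over $V_1$ satisfying the cocycle condition on $V_0 \times_V V_0 \times_V V_0$. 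Conjugating $\beta$ by the pullbacks of $\alpha$ along $p_1, p_2$ produces an isomorphism $\phi(p_1^*X_0) \xrightarrow{\sim} \phi(p_2^*X_0)$ over $V_1$. Since $V_1$ maps canonically to $U_0$ (via either projection composed with $V_0 \to U_0$), the assumed fully faithfulness of $\phi_{U_0}$ lifts this uniquely to an isomorphism $\gamma \colon p_1^*X_0 \xrightarrow{\sim} p_2^*X_0$. The cocycle identity for $\gamma$ on the triple fiber product (which likewise lies over $U_0$) follows by applying $\phi$, using the cocycle identity for $\beta$, and invoking faithfulness of $\phi_{U_0}$. The stack condition on $\mathscr{X}$ then yields an effective descent $X \in \mathscr{X}(V)$ with $h^*X \cong X_0$ compatibly with $\gamma$. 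Finally, $\alpha$ gives an isomorphism between the pullbacks of $\phi(X)$ and $X'$ to $V_0$ that is compatible with the respective gluing data (verified once more by faithfulness of $\phi_{U_0}$ on $V_1$), so the stack property of $\mathscr{X}'$ descends $\alpha$ to the desired isomorphism $\phi(X) \xrightarrow{\sim} X'$ in $\mathscr{X}'(V)$.

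The main subtlety is the essential surjectivity step, particularly the construction of $\gamma$ and the verification of its cocycle identity: both rely on invoking fully faithfulness of $\phi$ not only on $V_0$ but on the higher fiber products $V_0 \times_V V_0$ and $V_0 \times_V V_0 \times_V V_0$. Fortunately each of these schemes lives naturally over $U_0$, placing it in the fibered subcategory on which the hypothesis on $\phi_{U_0}$ supplies exactly the fully faithfulness we need.
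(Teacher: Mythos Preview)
Your proof is correct and follows essentially the same strategy as the paper: descend both objects and morphisms through the cover using the stack axioms, with the local equivalence $\phi_{U_0}$ supplying the needed bijections on Hom sets over the fiber products. The paper's version is slightly less streamlined in the essential surjectivity step (it introduces auxiliary objects $\tilde{X}_1,\tilde{X}_2$ over $U_1$ rather than working directly with $p_1^*X_0, p_2^*X_0$ as you do) and leaves the fully faithfulness argument as an exercise, so your write-up is in fact more explicit.
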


\begin{proof} One direction is trivial. For the other, if $X'$ is an object of $\mathscr{X}'$, then we may find an object $\tilde{X}$ of $\mathscr{X}$ and $f$ a morphism in $\mathscr{X}'(U_{0})$ such that $\phi(\tilde{X}) \xrightarrow{f,\sim} X'_{U_{0}}$ (where we are denoting the pullback of $X'$ to $U_{0}$ by $X'_{U_{0}}$). We may also find objects $\tilde{X}_{1}$, $\tilde{X}_{2}$ in $\mathscr{X}(U_{1})$ and morphisms $f_{i}$ in $\mathscr{X}'(U_{1})$ with $\phi(\tilde{X}_{i}) \xrightarrow{f_{i}, \sim} p_{i}^{*}(X'_{U_{0}})$ for $i=1,2$, which, since $\phi_{U_{0}}$ is an equivalence, are such that we have isomorphisms $\tilde{X}_{i} \xrightarrow{\tilde{f}_{i}, \sim} p_{i}^{*}\tilde{X}$ with $p_{i}^{*}f \circ \phi_{U_{0}}(\tilde{f}_{i}) =f_{i}$ as well as an isomorphism $h \colon \tilde{X}_{1} \to \tilde{X}_{2}$ such that $f_{2} \circ \phi(h)\circ f_{1}^{-1}$ is the canonical identification $p_{1}^{*}X'_{U_{0}} \cong p_{2}^{*}X'_{U_{0}}$. It is straightforward to check that $\mathscr{D} := \{\tilde{X}\}$, $\{\tilde{f}_{2} \circ h \circ \tilde{f}_{1}^{-1}, \tilde{f}_{1} \circ h^{-1} \circ \tilde{f}_{2}^{-1}\}$ is a descent datum on $\mathscr{X}$, and hence (since $\mathscr{X}$ is a stack)  there is some $X \in \mathscr{X}(U)$ with  $X_{\text{canon}}$ isomorphic to $\mathscr{D}$ as descent data. Then since $\mathscr{X}'$ is a prestack, the local isomorphism $\phi(X)_{U_{0}} \xrightarrow{\sim} X'_{U_{0}}$ induced by $f$ and the isomorphism of descent data glues to an isomorphism $\phi(X) \xrightarrow{\sim} X'$, as desired. The analogous argument for morphisms is similar, and left as an exercise.
\end{proof}

\subsection{Cohomological basics} We briefly recall some results on the \v{C}ech cohomology of sites. We now work in a less general setting, assuming that $\mathcal{C} = \text{Sch}/\Spec(F)$ (for brevity of notation, we will denote this category by $\text{Sch}/F$), and $\tau \in \{\text{fppf},\text{fpqc} \}$ for $F$ a field. As a matter of convention, all group schemes in this paper will be affine. Fix a cover $U_{0} \to \Spec(F)$ in $(\text{Sch}/F)_{\tau}$, which for our purposes will frequently be $\Spec(\bar{F})$ (when dealing with the fpqc site). Following the notation in \cite{Lieblich}, we denote $U_{0} \times_{F} U_{0}$ by $U_{1}$, with the two projection maps $p_{1}, p_{2}: U_{1} \to U_{0}$, and $U_{0} \times_{F} U_{0} \times_{F} U_{0}$ by $U_{2}$, with the three projection maps $p_{12}, p_{13}, p_{23}: U_{2} \to U_{1}$ and three projection maps $q_{1}, q_{2}, q_{3}: U_{2} \to U_{0}$. Define $U_{n}$ to be the $(n+1)$-fold fibered product of $U_{0}$ over $F$. We continue with the notation from \S 2.1. 

The category of abelian sheaves on $(\text{Sch}/F)_{\text{fppf}}$ is an abelian category with enough injectives, and for an abelian $F$-group scheme $\shA$ we may thus define the cohomology groups $H^{i}((\text{Sch}/F)_{\text{fppf}}, \shA)$ for $i \geq 0$ by taking the derived functors of the global section functor on this abelian category, viewing $\shA$ as a sheaf on $(\text{Sch}/F)_{\text{fppf}}$. We will denote $H^{i}((\text{Sch}/F)_{\text{fppf}}, \shA)$ by $H^{i}(F,\shA)$, or sometimes by $H^{i}_{\text{fppf}}(F, \shA)$ when we want to emphasize our use of the fppf topology. We could also define these cohomology groups for the fpqc topology, but there are set-theoretic issues that make this complicated, so we avoid doing so. Here is a useful vanishing result:

\begin{prop}\label{vanishingcohomology} For $G$ a finite type commutative $F$ group scheme and $U_{0} = \Spec(\bar{F})$, we have $H^{i}(U_{j}, G_{U_{j}}) = 0$ for all $i > 0$ and all $j \geq 0$. 
\end{prop}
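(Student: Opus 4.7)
The plan is to proceed by d\'evissage on $G$, reducing to the standard building blocks of commutative finite type group schemes: $\mathbb{G}_a$, $\mathbb{G}_m$, $\mu_n$, $\alpha_p$, and finite \'etale constant groups. Every such $G$ admits a composition series whose successive quotients lie among these (via the identity component/component group decomposition, Chevalley's theorem for connected affine groups, and the classification of finite commutative group schemes over a field), so the long exact cohomology sequences transmit the vanishing through each step. Throughout I write $A_j := \bar{F}^{\otimes_F(j+1)}$, so that $U_j = \Spec(A_j)$.

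The case $G = \mathbb{G}_a$ is immediate: $U_j$ is affine and $\mathbb{G}_a$ represents the quasi-coherent sheaf $\mathcal{O}_{U_j}$, whence $H^i(U_j, \mathbb{G}_a) = 0$ for $i > 0$. For $G = \alpha_p$, I would invoke the fppf-exact Artin--Schreier sequence $0 \to \alpha_p \to \mathbb{G}_a \xrightarrow{\mathrm{Frob}} \mathbb{G}_a \to 0$ after checking that Frobenius is surjective on $A_j$: since $\bar{F}^{p} = \bar{F}$, every simple tensor is a $p$-th power, and in characteristic $p$ Frobenius is additive, so $A_j^{p} = A_j$. Combined with the $\mathbb{G}_a$ vanishing this yields the claim for $\alpha_p$.

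For the multiplicative and \'etale cases, I would express $A_j$ as the filtered colimit $\mathrm{colim}_E\, E^{\otimes_F(j+1)}$ over finite subextensions $F \subset E \subset \bar{F}$, use continuity of fppf cohomology for finite type coefficients to reduce to each such algebra, and observe that each $E^{\otimes_F(j+1)}$ is a finite product of Henselian Artinian local rings whose residue fields sit inside $\bar{F}$. For smooth $G$, topological invariance of the \'etale site combined with Grothendieck's theorem $H^{i}_{\text{fppf}} = H^{i}_{\text{\'{e}t}}$ reduces the cohomology of each such summand to Galois cohomology of its residue field; these residue fields cover $\bar{F}$ in the colimit, which is algebraically closed, so the cohomology vanishes. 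The Kummer sequence then handles $\mu_n$, and finite \'etale constant groups are handled identically using $H^{i}_{\text{\'{e}t}}(\bar{F}, -) = 0$.

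The main obstacle is the non-Noetherian and non-reduced nature of $A_j$ for $j \geq 1$, since $F$ is imperfect and hence $\bar{F} \otimes_F \bar{F}$ carries nontrivial nilpotents. This forces repeated invocation of topological invariance of the \'etale site and careful tracking of how the finite-level residue fields assemble to cover $\bar{F}$ in the colimit. A secondary subtle point is the vanishing of $H^{i}(U_j, \mathbb{G}_m)$ for $i \geq 2$, which reduces to a Brauer-type vanishing on products of strictly Henselian local rings with algebraically closed residue field; this is standard but deserves care in the non-Noetherian limit.
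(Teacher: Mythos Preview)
The paper's proof is simply a citation to Rosengarten, Lemma 2.9.4, with no argument given, so your direct d\'evissage is a genuinely different and self-contained route.

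Your argument is sound, but one point deserves to be made explicit: the composition series with the building blocks you list exists over an algebraically closed base, not over $F$ itself. Over the imperfect field $F$, non-split tori do not filter by copies of $\mathbb{G}_m$, and wound unipotent groups need not filter by copies of $\mathbb{G}_a$. The fix is immediate: each $U_j$ is already a $\bar{F}$-scheme (via any of the $j+1$ factor inclusions), so $G_{U_j} = (G_{\bar{F}})_{U_j}$, and it suffices to d\'evisser $G_{\bar{F}}$, over which your list of building blocks is complete. State this reduction at the outset. A minor terminological point: the sequence $0 \to \alpha_p \to \mathbb{G}_a \xrightarrow{\mathrm{Frob}} \mathbb{G}_a \to 0$ is the Frobenius-kernel sequence rather than Artin--Schreier (the latter has kernel $\mathbb{Z}/p\mathbb{Z}$ and map $\mathrm{Frob}-1$); your argument for $\alpha_p$ is correct regardless, and $\mathbb{Z}/p\mathbb{Z}$ is already covered by your \'etale-constant case. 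The obstacles you flag in your final paragraph (non-Noetherian and non-reduced $A_j$, Brauer-type vanishing in the colimit) are exactly the right ones, and the continuity-plus-Henselian reduction you outline handles them.
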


\begin{proof} This is \cite[Lemma 2.9.4]{Rosengarten}.
\end{proof}

We also have the following alternative notion of cohomology:

\begin{Def} For $\shA$ an abelian $F$-group scheme, the \textit{\v{C}ech cohomology of $\shA$ with respect to the (fpqc or fppf) cover $U_{0} \to \Spec(F)$} is the cohomology of the following complex:
\begin{center}
$\shA(U_{0}) \xrightarrow{d} \shA(U_{1}) \xrightarrow{d} \shA(U_{2}) \xrightarrow{d} \dots$
\end{center}
where the map $d \colon \shA(U_{i-1}) \to \shA(U_{i})$ for $i \geq 1$ sends $a$ to $$\prod_{1 \leq j \leq i+1} (-1)^{j+1}p_{1, \dots, \widehat{j}, \dots, i+1}^{\sharp}(a),$$
where $p_{1, \dots, \widehat{j}, \dots, i+1} \colon U_{i} \to U_{i-1}$ is the projection map given by forgetting the $j$th factor. These groups will be denoted by $\check{H}^{i}(U_{0} \to \Spec(F), \shA)$ for $i \geq 0$, or simply by $\check{H}^{i}(R/F, \shA)$ if $U_{0} = \Spec(R)$ for an $F$-algebra $R$. For another cover $V_{0} \to U_{0} \to \Spec(F)$, we have a morphism of complexes which induces homomorphisms $\check{H}^{i}(U_{0} \to \Spec(F), \shA) \to \check{H}^{i}(V_{0} \to \Spec(F), \shA)$ for all $i$. We then define the \textit{\v{C}ech cohomology of $\shA$ on $(\text{Sch}/F)_{\tau}$} to be the colimit $$\varinjlim_{V_{0} \to \Spec(F)} \check{H}^{i}(V_{0} \to \Spec(F), \shA)$$ over all covers $V_{0} \to \Spec(F)$ in $(\text{Sch}/F)_{\tau}$, and denote this group by $\check{H}_{\tau}^{i}(F, \shA)$. 
\end{Def}

The following result gives the relationship between \v{C}ech and sheaf cohomology for the site $(\text{Sch}/F)_{\text{fppf}}$ and $G$ a commutative finite type $F$-group scheme: 

\begin{prop}\label{newcomparison3} For $G$ a commutative group scheme of finite type over $F$, we have a canonical isomorphism $\check{H}^{i}_{\text{fppf}}(F,G) \xrightarrow{\sim} H_{\text{fppf}}^{i}(F,G)$ for all $i \geq 0$.
\end{prop}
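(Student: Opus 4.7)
The plan is to deduce the result from the standard Čech-to-derived-functor spectral sequence together with Proposition \ref{vanishingcohomology}. For any fppf cover $V_0 \to \Spec(F)$ there is a spectral sequence
$$E_2^{p,q} = \check{H}^p(V_0/F,\underline{H}^q(G)) \Rightarrow H^{p+q}_{\text{fppf}}(F,G),$$
where $\underline{H}^q(G)$ denotes the presheaf $U \mapsto H^q_{\text{fppf}}(U,G)$ on $(\text{Sch}/F)_{\text{fppf}}$. Since filtered colimits are exact and the abutment is independent of the cover, passing to the colimit over all fppf covers produces a spectral sequence
$$\check{H}^p_{\text{fppf}}(F,\underline{H}^q(G)) \Rightarrow H^{p+q}_{\text{fppf}}(F,G).$$
Because $\underline{H}^0(G) = G$ as a presheaf, the $q=0$ row recovers $\check{H}^p_{\text{fppf}}(F,G)$, so it will suffice to show that the $E_2$ page vanishes for $q>0$.

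A key preliminary observation is that finite field extensions $\Spec(F') \to \Spec(F)$ are cofinal among fppf covers of $\Spec(F)$. Indeed, any fppf cover $\Spec(R) \to \Spec(F)$ has $R$ a nonzero, finitely presented $F$-algebra, and by the Nullstellensatz the residue field at a closed point of $\Spec(R)$ is a finite extension $F'/F$, yielding an fppf refinement $\Spec(F') \to \Spec(R) \to \Spec(F)$. Hence the colimit defining $\check{H}^p_{\text{fppf}}$ can be computed along the directed system of finite subextensions of $\bar{F}/F$, whose limit is $\Spec(\bar{F}) = \varprojlim \Spec(F')$.

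To obtain the desired vanishing, observe that the $j$th term of the Čech complex for the cover $\Spec(F') \to \Spec(F)$ with coefficients in $\underline{H}^q(G)$ is $H^q_{\text{fppf}}((F')^{\otimes_F (j+1)},G)$. Since $G$ is of finite type (hence of finite presentation, as $F$ is Noetherian), fppf cohomology of a finite type group scheme commutes with filtered colimits of rings with affine transition maps, so
$$\varinjlim_{F'/F \text{ finite}} H^q_{\text{fppf}}\bigl((F')^{\otimes_F (j+1)},G\bigr) = H^q_{\text{fppf}}(\bar{F}^{\otimes_F (j+1)},G) = 0$$
for $q>0$ and $j \geq 0$, where the final equality is Proposition \ref{vanishingcohomology}. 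Since filtered colimits are exact, the colimit complex vanishes identically, whence $\check{H}^p_{\text{fppf}}(F,\underline{H}^q(G)) = 0$ for all $p$ and $q>0$.

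The spectral sequence therefore collapses on the $q=0$ row, yielding a canonical isomorphism $\check{H}^p_{\text{fppf}}(F,G) \xrightarrow{\sim} H^p_{\text{fppf}}(F,G)$ as edge maps. The main technical point requiring care is the commutativity of fppf cohomology with the filtered colimit $\bar{F} = \varinjlim F'$ on each tensor power---this is a standard limit result (e.g. Stacks Project, \emph{Limits of cohomology}) whose hypotheses are satisfied precisely because $G$ is of finite presentation. Everything else reduces to bookkeeping with cofinal subsystems and the standard Čech-to-derived-functor spectral sequence.
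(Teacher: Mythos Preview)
Your argument is correct. The paper itself does not prove this proposition at all---it simply cites \cite{Rosengarten}, Proposition 2.9.5---so your proof is necessarily ``different'' in that you actually supply one. That said, your argument is essentially the same machinery the paper sets up immediately \emph{after} this proposition: the \v{C}ech-to-derived spectral sequence for a system of fppf covers (Corollary~\ref{spectralsequence1}), its degeneration under the vanishing hypothesis (Corollary~\ref{cechtoderived}), and the application via Proposition~\ref{vanishingcohomology} (Proposition~\ref{fpqccover}). Your additional ingredient is the cofinality of finite subextensions of $\bar{F}/F$ among all fppf covers of $\Spec(F)$, which lets you identify the full fppf \v{C}ech cohomology with the colimit over finite field extensions; this is exactly right and is the point that makes the argument about $\check{H}^{i}_{\text{fppf}}$ rather than about $\check{H}^{i}(\bar{F}/F,-)$. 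In effect you have folded Propositions~\ref{newcomparison3} and~\ref{fpqccover} into a single argument, which is arguably cleaner than treating them separately. The one step worth flagging (and you do flag it) is the identification $\varinjlim_{F'} H^{q}\bigl((F')^{\otimes_F(j+1)},G\bigr) = H^{q}\bigl(\bar{F}^{\otimes_F(j+1)},G\bigr)$; this is indeed standard for $G$ of finite presentation and affine transition maps, so there is no gap.
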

\begin{proof} This is \cite[Proposition 2.9.5]{Rosengarten}.
\end{proof}

The following result allows us to compare \v{C}ech cohomology of fppf covers and derived fppf cohomology:

\begin{lem} For an fppf cover $\text{Spec}(R) \to \text{Spec}(F)$ and abelian group sheaf $\textbf{A}$ on $(\text{Sch}/F)_{\text{fppf}}$, there is a spectral sequence $\{E_{r}, d_{r}\}_{r \geq 0}$ with $E_{2}^{p,q} = \check{H}^{p}(R/F, \underline{H}^{q}(\textbf{A}))$ which converges to $H^{p+q}(F, \textbf{A})$, where $\underline{H}^{q}(\textbf{A})$ is the presheaf given by $U \mapsto H^{q}(U, \restr{\textbf{A}}{U})$, 
\end{lem}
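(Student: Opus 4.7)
The plan is to realize this as one of the two spectral sequences associated with a bicomplex built from an injective resolution of $\mathbf{A}$. Specifically, choose an injective resolution $\mathbf{A} \to I^\bullet$ in the abelian category of fppf sheaves on $\text{Sch}/F$ and form the double complex
$$K^{p,q} := I^q(U_p) = \check{C}^p(R/F, I^q),$$
whose horizontal differentials are the alternating-sum \v{C}ech maps appearing in the definition of $\check{H}^\bullet(R/F, -)$ and whose vertical differentials are induced from the resolution. Both spectral sequences of this bicomplex abut to the cohomology of the total complex.

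Filtering first by rows, the $q$th row is the \v{C}ech complex of the injective fppf sheaf $I^q$ on the cover $\Spec(R) \to \Spec(F)$. A standard dimension-shifting argument on Grothendieck sites, using that the \v{C}ech complex is functorial and sends short exact sequences of sheaves to long exact sequences on fppf covers, shows that injective abelian sheaves have vanishing \v{C}ech cohomology in all positive degrees with respect to any cover. Hence each row is quasi-isomorphic to $\Gamma(F, I^q)$ placed in degree zero, the corresponding spectral sequence collapses at $E_2$, and the abutment is identified with $H^{p+q}(\Gamma(F, I^\bullet)) = H^{p+q}(F, \mathbf{A})$.

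Filtering in the opposite direction (columns first), each column is $I^\bullet(U_p)$. Because the restriction of an injective fppf sheaf on $\text{Sch}/F$ to the slice site over $U_p$ is again injective---hence acyclic for the $U_p$-global-sections functor---this column computes $H^q(U_p, \mathbf{A}|_{U_p})$, which is the value at $U_p$ of the presheaf $P^q \colon U \mapsto H^q(U, \mathbf{A}|_U)$. Taking the remaining \v{C}ech differential on the $E_1$ page then yields
$${}'E_2^{p,q} = \check{H}^p(R/F, P^q),$$
and it remains to rewrite this as $\check{H}^p(R/F, \underline{H}^q(\mathbf{A}))$ through the canonical sheafification morphism $P^q \to \underline{H}^q(\mathbf{A})$.

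The main obstacle is this last sheafification step, since $\check{H}^\bullet(R/F, -)$ is not a priori invariant under sheafification of the coefficient presheaf. The standard resolution uses that the kernel and cokernel of $P^q \to \underline{H}^q(\mathbf{A})$ are presheaves whose sheafification vanishes, and that such presheaves have vanishing \v{C}ech cohomology on any fppf cover after passing to a suitable refinement---this can be reduced, via the flexibility of fppf covers, to the statement for the cover $\Spec(R) \to \Spec(F)$ at hand. For a full treatment one may cite \cite{Stacksproj} (Tags 03OW and 03F9), where this \v{C}ech-to-derived-functor spectral sequence is developed in precisely the required generality; alternatively, the argument is sketched in Milne, \emph{\'Etale Cohomology}, III.2.
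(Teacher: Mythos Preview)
Your bicomplex argument is exactly the standard proof and is essentially what the paper invokes: the paper's own proof is a one-line citation to the Stacks Project (Lemma I.21.10.6), whose proof proceeds via the double complex $I^q(U_p)$ just as you do. Up through the identification
\[
E_2^{p,q} \;=\; \check{H}^p\bigl(R/F,\, P^q\bigr), \qquad P^q(U)=H^q(U,\mathbf{A}|_U),
\]
your argument is correct and complete.

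The problem is your final paragraph. You should \emph{not} pass from the presheaf $P^q$ to its sheafification. For any abelian sheaf $\mathbf{A}$ on a site and any $q>0$, the sheafification of $P^q$ is the zero sheaf: every class in $H^q(U,\mathbf{A})$ dies on a cover of $U$. If the $E_2$ term really involved the sheafified $\underline{H}^q(\mathbf{A})$, the spectral sequence would degenerate to $\check{H}^p(R/F,\mathbf{A})\cong H^p(F,\mathbf{A})$ for every cover and every sheaf, which is false. The correct statement (and the one proved in the Stacks Project reference) has the \emph{presheaf} $P^q$ in the $E_2$ term; the paper's phrase ``the fppf sheaf associated to the presheaf'' is a slip of wording, and you should not try to force your argument to match it. Drop the sheafification step entirely and your proof is complete and coincides with the cited one.
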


\begin{proof} This is \cite[Lemma 03AZ]{Stacksproj}.
\end{proof}

The following generalization of this result will be of crucial importance to us: Since the direct limit functor is exact on the category of direct systems of abelian groups, if $\{\text{Spec}(R_{i}) \to \text{Spec}(F)\}_{i \geq 0}$ is an inverse system of fppf covers of $\text{Spec}(F)$, then we have a spectral sequence $\{E_{r}, d_{r}\}_{r \geq 0}$ with $E_{2}^{p,q} = \varinjlim_{i} \check{H}^{p}(R_{i}/F, \underline{H}^{q}(\textbf{A}))$ converging to $\varinjlim_{i} H^{p+q}(F, \textbf{A}) = H^{p+q}(F, \textbf{A})$. Note that, by definition, $\varinjlim_{i} \check{H}^{p}(R_{i}/F, \underline{H}^{q}(\textbf{A})) =  \check{H}^{p}(R/F, \underline{H}^{q}(\textbf{A}))$, where $R:= \varinjlim_{i} R_{i}$. We thus obtain:

\begin{cor}\label{spectralsequence1} For an inverse system of fppf covers $\{\text{Spec}(R_{i}) \to \text{Spec}(F)\}_{i \geq 0}$ and abelian group sheaf $\textbf{A}$ on $(\text{Sch}/F)_{\text{fppf}}$, there is a spectral sequence $\{E_{r}, d_{r}\}_{r \geq 0}$ with $E_{2}^{p,q} = \check{H}^{p}(R/F, \underline{H}^{q}(\textbf{A}))$ which converges to $H^{p+q}(F, \textbf{A})$, where $R := \varinjlim_{i} R_{i}$. 
\end{cor}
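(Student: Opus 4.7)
The plan is to obtain the desired spectral sequence as the termwise direct limit of the spectral sequences produced by the preceding lemma applied to each cover $\Spec(R_i) \to \Spec(F)$, leveraging exactness of filtered colimits of abelian groups.

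First, for each index $i$, the preceding lemma provides a convergent spectral sequence $\{E_r^{(i)}, d_r^{(i)}\}_{r \geq 0}$ with
$$E_2^{p,q,(i)} = \check{H}^p(R_i/F, \underline{H}^q(\textbf{A})) \Longrightarrow H^{p+q}(F, \textbf{A}).$$
The transition maps $\Spec(R_j) \to \Spec(R_i)$ in the inverse system of covers induce, by functoriality of the Čech complex and of the derived functors of $\Gamma$, morphisms of spectral sequences $\{E_r^{(i)}\} \to \{E_r^{(j)}\}$ whose induced map on the abutment is the identity on $H^{p+q}(F, \textbf{A})$.

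Next I would pass to the filtered colimit over $i$. Since direct limits are exact on directed systems of abelian groups, colimits commute with kernels, images, and hence with taking cohomology of complexes; this allows one to form the termwise colimit $E_r^{p,q} := \varinjlim_i E_r^{p,q,(i)}$ together with the induced differentials $d_r := \varinjlim_i d_r^{(i)}$, and to verify successively that $E_{r+1} = H^*(E_r, d_r)$ and that the whole gadget converges to $\varinjlim_i H^{p+q}(F,\textbf{A}) = H^{p+q}(F,\textbf{A})$ (the abutment is constant in $i$). This produces a well-defined spectral sequence with the correct abutment.

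Finally I would identify the $E_2$-page. By the definition of Čech cohomology as a colimit over refining covers, together with the fact that finite fiber products commute with filtered colimits of rings (so that the $(n+1)$-fold fiber products of $\Spec(R)$ are the inverse limits of those of $\Spec(R_i)$ with $R := \varinjlim_i R_i$), one has
$$\varinjlim_i \check{H}^p(R_i/F, \underline{H}^q(\textbf{A})) = \check{H}^p(R/F, \underline{H}^q(\textbf{A})),$$
as noted in the text. The main point requiring care is this last identification: one must know that the presheaves $\underline{H}^q(\textbf{A})$ behave well with respect to the colimit $R = \varinjlim R_i$ when evaluated on the associated Čech nerves, which is exactly the content of the definition of Čech cohomology as a colimit over the directed system of covers. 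Once that is granted, combining the previous three steps yields the stated spectral sequence, completing the proof.
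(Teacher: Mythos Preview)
Your proposal is correct and follows essentially the same approach as the paper: take the termwise direct limit of the spectral sequences from the preceding lemma, invoke exactness of filtered colimits of abelian groups, and identify the $E_2$-page via the definition of \v{C}ech cohomology for $R = \varinjlim_i R_i$. The paper's argument is the brief paragraph immediately preceding the corollary, and your version simply spells out the same steps in more detail.
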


We may now compare the \v{C}ech cohomology of systems of fppf covers and derived fppf cohomology (cf. \cite[Lemma 03F7]{Stacksproj}):

\begin{cor}\label{cechtoderived} For an inverse system of fppf covers $\{\text{Spec}(R_{i}) \to \text{Spec}(F)\}_{i \geq 0}$ and fppf abelian group sheaf $\textbf{A}$, if $H^{i}(\tilde{R}_{j}, \textbf{A}_{\tilde{R}_{j}}) = 0$ for all $i \geq 1$, $j \geq 0$, where $\tilde{R}_{j} := R^{\bigotimes_{F} (j+1)}$, then the above spectral sequence gives a canonical isomorphism $\check{H}^{p}(R/F, \textbf{A}) \to H^{p}(F, \textbf{A})$ for all $p \in \mathbb{N}$.
\end{cor}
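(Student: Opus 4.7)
The plan is to apply the spectral sequence of Corollary \ref{spectralsequence1},
$$E_2^{p,q} = \check H^p(R/F, \underline H^q(\textbf A)) \Longrightarrow H^{p+q}(F, \textbf A),$$
and to show it degenerates onto the bottom row, so that the induced edge homomorphism furnishes the desired canonical isomorphism $\check H^p(R/F, \textbf A) \xrightarrow{\sim} H^p(F, \textbf A)$. Since $\textbf A$ is already an fppf sheaf we have $\underline H^0(\textbf A) = \textbf A$, and so the bottom row is $E_2^{p,0} = \check H^p(R/F, \textbf A)$. It will therefore suffice to establish that $E_2^{p,q} = 0$ for every $q \geq 1$ and $p \geq 0$.

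To carry out the vanishing, first I would unravel $E_2^{p,q}$ as the $p$-th cohomology of the Čech complex whose $j$-th term is $\underline H^q(\textbf A)(\tilde R_j)$. Since $\underline H^q(\textbf A)$ is the fppf sheaf associated to the presheaf $V \mapsto H^q(V, \textbf A|_V)$, the hypothesis $H^q(\tilde R_j, \textbf A_{\tilde R_j}) = 0$ says precisely that this presheaf takes the value $0$ on each $\tilde R_j$. The step I would then perform is to promote this vanishing of the presheaf to vanishing of the sheafification on the very same objects. Concretely, I would exploit the fact that any class in $H^q(V, \textbf A|_V)$ is locally trivializable (there is an fppf cover of $V$ on which it restricts to $0$), so that any matching family representing a section of $\underline H^q(\textbf A)$ over $\tilde R_j$ can be refined componentwise to one whose entries are all zero; combined with the observation that $\tilde R_j = \varinjlim_i R_i^{\otimes_F (j+1)}$ is a cofiltered limit of affine schemes, so sections of any fppf sheaf over $\tilde R_j$ are the filtered colimit of sections over the $R_i^{\otimes_F (j+1)}$, this forces $\underline H^q(\textbf A)(\tilde R_j) = 0$ for all $q \geq 1$ and $j \geq 0$, making the relevant Čech complex identically zero.

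Once this vanishing is in hand, the spectral sequence degenerates at $E_2$ onto the $q=0$ row: the differentials entering and leaving $E_r^{p,0}$ land in positions with $q \geq 1$ or come from such positions, so $E_\infty^{p,0} = E_2^{p,0}$, and the whole filtration on $H^p(F, \textbf A)$ collapses to this single graded piece. The resulting edge map is the natural one, so we obtain the canonical isomorphism $\check H^p(R/F, \textbf A) \xrightarrow{\sim} H^p(F, \textbf A)$ claimed. The main obstacle I expect is the careful matching-family/refinement bookkeeping in the middle step, promoting the presheaf vanishing on $\tilde R_j$ to sheafified vanishing; this is technical but routine, and should follow by adapting the argument used in the Stacks Project (Lemma I.2.10.7 cited in the statement) from the case of a single fppf cover to the cofiltered limit $R = \varinjlim_i R_i$.
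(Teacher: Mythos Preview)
Your overall plan—show $E_2^{p,q}=0$ for $q\geq 1$ so the spectral sequence of Corollary \ref{spectralsequence1} degenerates onto the bottom row, then read off the edge isomorphism—is exactly the paper's argument, which it compresses to a single sentence.

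The confusion is in your middle step. In the \v{C}ech-to-derived spectral sequence (see the cited Stacks Project lemma), $\underline{H}^{q}(\textbf{A})$ is the \emph{presheaf} $V\mapsto H^{q}(V,\textbf{A}|_V)$, not its sheafification; the paper's phrase ``fppf sheaf associated to the presheaf'' is an imprecision. The \v{C}ech complex computing $E_2^{p,q}=\check H^{p}(R/F,\underline H^{q}(\textbf{A}))$ therefore has $j$-th term literally equal to $\underline H^{q}(\textbf{A})(\tilde R_j)=H^{q}(\tilde R_j,\textbf{A}_{\tilde R_j})$, which is zero by hypothesis. No ``promotion'' from presheaf to sheaf is needed.

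In fact your promotion argument should have set off an alarm: it nowhere uses the hypothesis $H^{q}(\tilde R_j,\textbf{A})=0$. What it actually proves is the (true, but general) fact that the sheafification of $V\mapsto H^{q}(V,\textbf{A})$ vanishes for every $q\geq 1$ and every $\textbf{A}$, since higher cohomology classes die on a cover. If the $E_2$ terms really involved the sheafification, the spectral sequence would \emph{always} degenerate and \v{C}ech cohomology would agree with derived cohomology for every cover—plainly false. So the appearance of the hypothesis in the statement is telling you that the presheaf reading is the correct one. With that fixed, your proof collapses to: the \v{C}ech complex for $\underline H^{q}(\textbf{A})$ is identically zero for $q\geq 1$, hence $E_2^{p,q}=0$ there, hence degeneration at $E_2$.
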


\begin{proof} The above hypotheses imply that the spectral sequence of Corollary \ref{spectralsequence1} degenerates at $E_{2}$, giving the result.
\end{proof}

We immediately obtain:

\begin{prop}\label{fpqccover} For $G$ a commutative group scheme of finite type over $F$, we have a natural isomorphism $\check{H}^{i}(\bar{F}/F, G) \xrightarrow{\sim} H^{i}(F, G)$ for all $i \geq 0$. 
\end{prop}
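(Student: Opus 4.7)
The plan is to realize the proposition as a direct application of Corollary \ref{cechtoderived}. First I would write $\bar{F} = \varinjlim_{E} E$, where $E$ ranges over the directed system of finite subextensions of $\bar{F}/F$. Each such $E$ is finite (in particular finitely presented) and faithfully flat over $F$, so $\Spec(E) \to \Spec(F)$ is an fppf cover, and the collection $\{\Spec(E) \to \Spec(F)\}_{E}$ forms the required inverse system of fppf covers with colimit ring $R = \bar{F}$.

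Next I would verify the hypotheses of Corollary \ref{cechtoderived} for the sheaf $\textbf{A} = G$. In the notation of that corollary, $\tilde{R}_{j} = \bar{F}^{\bigotimes_{F}(j+1)}$ is precisely the coordinate ring of the scheme $U_{j}$ defined in \S 2.2 with $U_{0} = \Spec(\bar{F})$. The required vanishing $H^{i}(\tilde{R}_{j}, G_{\tilde{R}_{j}}) = 0$ for $i \geq 1$ and $j \geq 0$ is therefore exactly the content of Proposition \ref{vanishingcohomology}, since $G$ is a commutative $F$-group scheme of finite type.

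Applying Corollary \ref{cechtoderived} then yields a canonical isomorphism $\check{H}^{p}(\bar{F}/F, G) \xrightarrow{\sim} H^{p}(F, G)$ for every $p \geq 0$, where the left-hand side denotes the \v{C}ech cohomology of the Čech complex for the cover $\Spec(\bar{F}) \to \Spec(F)$ (equivalently, by the remark preceding Corollary \ref{spectralsequence1} together with the fact that $G$ is of finite presentation, the colimit $\varinjlim_{E} \check{H}^{p}(E/F, G)$). Naturality in $G$ is immediate from the construction.

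I do not foresee any genuine obstacle: the only point requiring any care is recognizing $\bar{F}$ as a filtered colimit of finite (hence fppf) extensions and invoking finite presentation of $G$ to identify the Čech complex of the limit cover with the colimit of Čech complexes, so that the colimit notation in Corollary \ref{spectralsequence1} applies. Once this identification is in place, the proposition is a direct consequence of the vanishing result of Proposition \ref{vanishingcohomology}.
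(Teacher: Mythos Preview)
Your proposal is correct and matches the paper's proof essentially verbatim: the paper also chooses a tower $\{R_i = E_i\}$ of finite extensions of $F$ with union $\bar{F}$ and combines Proposition \ref{vanishingcohomology} with Corollary \ref{cechtoderived}. Your extra remark about finite presentation of $G$ (to identify the \v{C}ech complex over $\bar{F}$ with the colimit of the \v{C}ech complexes over the $E_i$) is a point the paper glosses over as ``by definition'' in the discussion before Corollary \ref{spectralsequence1}, so you are simply being slightly more explicit.
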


\begin{proof} Combine Proposition \ref{vanishingcohomology} with Corollary \ref{cechtoderived} for $\{R_{i} = E_{i}\}$ a tower of finite extensions of $F$ whose union is $\bar{F}$.
\end{proof}

In order to effectively use the above result to compute derived functor cohomology, we need a \v{C}ech interpretation of the connecting homomorphism $\delta$ for a short exact sequence of commutative finite-type $F$-group schemes $0 \to K \to G \to Q \to 0$. We do this for degree 1-2 and abelian $G$. Setting $U_{0} = \Spec(\bar{F})$, let $x \in Q(U_{1})$ be a \v{C}ech 1-cocycle. By Proposition \ref{vanishingcohomology}, we may find a lift $y \in G(U_{1})$ of $x$, and then we have $dy \in K(U_{2})$, since $Q(U_{2}) = G(U_{2})/K(U_{2})$, also using \ref{vanishingcohomology}. Moreover, $dy \in K(U_{2})$ is a \v{C}ech 2-cocycle. One checks that this defines a map $\check{\delta} \colon \check{H}^{1}(\bar{F}/F, Q) \to \check{H}^{2}(\bar{F}/F, K)$, and we have the following result:

\begin{prop}\label{connecting} Under the comparison isomorphisms of Proposition \ref{fpqccover}, we have $\check{\delta} = \delta$, where $\delta$ is the usual connecting homomorphism arising from the derived functor formalism.
\end{prop}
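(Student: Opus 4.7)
The plan is to leverage the naturality of the comparison isomorphism of Proposition \ref{fpqccover} in the coefficient sheaf, together with the vanishing statement of Proposition \ref{vanishingcohomology}. First, I would observe that since $H^{i}(U_{j}, K) = 0$ for all $i \geq 1$ and $j \geq 0$, the sequence of \v{C}ech complexes
\[
0 \to K(U_{\bullet}) \to G(U_{\bullet}) \to Q(U_{\bullet}) \to 0
\]
is exact in each degree: the surjectivity of $G(U_{j}) \to Q(U_{j})$ is immediate from $H^{1}(U_{j}, K) = 0$ applied to the fppf short exact sequence, while injectivity and middle exactness are automatic. The snake-lemma construction applied to this short exact sequence of complexes yields a long exact sequence of \v{C}ech cohomology groups whose degree $1 \to 2$ connecting map is, by unraveling the snake lemma, precisely the map $\check{\delta}$ described in the statement.

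To compare $\check{\delta}$ with $\delta$, I would pass through the \v{C}ech-to-derived spectral sequence of Corollary \ref{spectralsequence1}. Using the horseshoe lemma, one may choose compatible injective resolutions $0 \to K \to \mathcal{I}_{K}^{\bullet}$, $0 \to G \to \mathcal{I}_{G}^{\bullet}$, $0 \to Q \to \mathcal{I}_{Q}^{\bullet}$ in the category of fppf abelian sheaves fitting into a short exact sequence of resolutions. Applying the \v{C}ech section functor $\Gamma(U_{\bullet}, -)$ produces a short exact sequence of double complexes, hence a morphism of the three associated spectral sequences, whose abutment fits into the derived long exact sequence for $H^{\bullet}_{\text{fppf}}(F, -)$ with connecting map exactly $\delta$. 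The vanishing of Proposition \ref{vanishingcohomology} forces each of these spectral sequences to degenerate at $E_{2}$, and the resulting edge isomorphisms on the bottom row are precisely those identified in Proposition \ref{fpqccover}. Comparing the long exact sequence on the bottom $E_{2}$ terms (which is the \v{C}ech long exact sequence built above) with the long exact sequence of abutments then gives $\check{\delta} = \delta$.

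The main obstacle will be making the last comparison fully precise, that is, verifying that the morphism of spectral sequences induced by the horseshoe resolution is compatible with the abutment connecting map in the sense that, after the edge-map identification of \v{C}ech and derived cohomology, the two boundary operators coincide. I would handle this by a direct cochain-level chase in low degree: given a \v{C}ech $1$-cocycle $x \in Q(U_{1})$, lift it to $\tilde{x} \in \mathcal{I}_{Q}^{0}(U_{1})$ and then to $y \in \mathcal{I}_{G}^{0}(U_{1})$ using injectivity, and compute in the total complex to show that $dy \bmod $ coboundaries represents both $\check{\delta}[x]$ in \v{C}ech and $\delta[x]$ in derived cohomology. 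Alternatively, a cleaner abstract route is to verify that in degrees $0, 1, 2$ both $\check{H}^{\bullet}(\bar{F}/F, -)$ and $H^{\bullet}_{\text{fppf}}(F, -)$ are $\delta$-functors on the category of commutative finite-type $F$-group schemes --- the first using the effectivity of descent and the vanishing of Proposition \ref{vanishingcohomology} --- and that the comparison isomorphism of Proposition \ref{fpqccover} is a morphism of $\delta$-functors, which forces $\check{\delta} = \delta$ automatically.
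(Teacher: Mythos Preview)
Your argument is correct and is one of the standard ways to establish this kind of compatibility. However, the paper does not actually prove the proposition: it simply cites \cite{Rosengarten}, Proposition E.2.1. So there is nothing to compare on the level of method; your spectral-sequence/horseshoe argument (or the cleaner $\delta$-functor variant you sketch at the end) is exactly the sort of proof one would expect to find behind that citation, and either version would serve as a self-contained replacement for the reference.
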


\begin{proof} This is \cite[Proposition E.2.1]{Rosengarten}.
\end{proof}

Recall that for a site $\mathcal{C}_{\tau}$ and $\mathscr{G}$ a group sheaf on $\mathcal{C}_{\tau}$, \textit{a $\mathscr{G}$-torsor $\mathscr{T}$} is a sheaf on $\mathcal{C}_{\tau}$ equipped with a right group (sheaf) action $\mathscr{T} \times \mathscr{G} \to \mathscr{T}$ (satisfying the usual group action axioms) such that for every object $X$ of $\mathcal{C}_{\tau}$, there is some cover $\{Y_{i} \to X\}$ such that $\mathscr{T}_{Y_{i}} := \mathscr{T} \times_{\mathcal{C}} (\mathcal{C}/Y_{i})$ is ($\mathscr{G}$-equivariantly) isomorphic to the trivial $\mathscr{G}_{Y_{i}}$-torsor $\mathscr{G}_{Y_{i}}$, that is, the group sheaf $\mathscr{G}_{Y_{i}}$ equipped with the right translation action.

\begin{Def}\label{torsors} For $G$ a finite-type $F$-group scheme (not necessarily abelian), we call an $F$-scheme $S$ equipped with a right $G$-action a \textit{$G$-torsor over $F$} if there is an fppf cover $V \to \Spec(F)$ such that $S_{V}$ is $G$-equivariantly isomorphic to $G_{V}$ equipped with the trivial $G_{V}$-action. We will see shortly that this is equivalent to only requiring $V \to \Spec(F)$ be fpqc. We define $G_{U}$-torsors over any scheme $U \to \Spec(F)$ completely analogously. 
\end{Def}

\begin{prop}\label{representability} Let $G$ be a finite-type group scheme over $F$ (recall that we assume it is affine), with $\underline{G}$ the associated sheaf on $(\text{Sch}/F)_{\text{fpqc}}$. For every $\underline{G}$-torsor $P$ on $(\text{Sch}/F)_{\text{fpqc}}$, $P$ is representable (as a torsor) by a $G$-torsor $S \to \Spec(F)$.
\end{prop}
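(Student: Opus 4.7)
The plan is to reduce the statement to Grothendieck's fpqc descent theorem for affine schemes. Since $P$ is a $\underline{G}$-torsor on $(\text{Sch}/F)_{\text{fpqc}}$, by definition there exists an fpqc cover $V \to \Spec(F)$ and a $\underline{G}_V$-equivariant isomorphism $\alpha \colon P|_V \xrightarrow{\sim} \underline{G}|_V$. Since $G$ is assumed affine, $G_V$ is an affine $V$-scheme representing $P|_V$, so the first step is complete: $P$ is locally representable by an affine scheme over an fpqc cover.

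Next I would produce a descent datum for $G_V$ along $V \to \Spec(F)$. Setting $V_1 = V \times_F V$ with projections $p_1, p_2 \colon V_1 \to V$, the two pullbacks $p_1^* P|_V$ and $p_2^* P|_V$ are both canonically identified with $P|_{V_1}$. Composing these canonical identifications with $p_1^*\alpha$ and $p_2^*\alpha$ yields a $\underline{G}_{V_1}$-equivariant isomorphism $\varphi \colon p_1^* G_V \xrightarrow{\sim} p_2^* G_V$ of $V_1$-schemes. Because the canonical identifications on $V_2 = V \times_F V \times_F V$ satisfy the obvious compatibility, $\varphi$ satisfies the cocycle condition $p_{23}^*\varphi \circ p_{12}^*\varphi = p_{13}^*\varphi$ on $V_2$. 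Thus $(G_V, \varphi)$ is an fpqc descent datum on an affine $V$-scheme.

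By Grothendieck's theorem on fpqc descent for affine morphisms (SGA 1 / \cite{Stacksproj}, Tag 0244), this descent datum is effective: there is an affine $F$-scheme $S$ together with an isomorphism $\beta \colon S_V \xrightarrow{\sim} G_V$ identifying the canonical descent datum on $S_V$ with $\varphi$. The right $G$-action on $G_V$ together with its descent datum is $\varphi$-equivariant (since $\alpha$ was $\underline{G}_V$-equivariant), so by the same descent theorem the action descends uniquely to a right $G$-action on $S$. By construction, $S_V \cong G_V$ as $G_V$-schemes, so $S$ is locally trivial, hence a $G$-torsor in the sense of Definition \ref{torsors}.

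It remains to check that $S$ represents $P$. The isomorphism $\alpha$ composed with $\beta^{-1}$ yields a $\underline{G}_V$-equivariant isomorphism $P|_V \xrightarrow{\sim} \underline{S}|_V$, and the construction guarantees that this local isomorphism is compatible with the two pullbacks to $V_1$ (both match the canonical descent data). Since $\underline{\Hom}(P, \underline{S})$ is an fpqc sheaf on $\Spec(F)$ (it is a subsheaf of the sheaf $\underline{\Hom}$ of sheaves), the local isomorphism glues to a global $\underline{G}$-equivariant isomorphism $P \xrightarrow{\sim} \underline{S}$. The main obstacle here is really just citing the right descent theorem for the affine case and verifying that equivariance propagates through descent; the bookkeeping with projections is routine once one has fixed the trivialization $\alpha$. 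Note that this also justifies the comment in Definition \ref{torsors} that the condition ``fppf local'' in the definition of a torsor may be weakened to ``fpqc local,'' since the resulting representing $G$-torsor $S$ is in particular trivialized by an fppf cover (namely, by $S \to \Spec(F)$ itself, after replacing $V$ by a suitable refinement if necessary, or directly since $S$ is representable and one may apply the standard argument).
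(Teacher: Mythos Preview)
Your argument is correct and follows essentially the same route as the paper: trivialize $P$ over an fpqc cover, package the transition data as a descent datum on the affine scheme $G_V$, invoke effective fpqc descent for (quasi-)affine morphisms, and descend the $G$-action along with it. The only place where the paper is sharper is the last step: Definition~\ref{torsors} requires triviality over an \emph{fppf} cover, and your sentence ``$S_V \cong G_V$ \dots\ hence a $G$-torsor in the sense of Definition~\ref{torsors}'' jumps the gun since $V$ is only fpqc; the paper closes this by observing that $S$ inherits being fppf over $F$ from $G$ by descent, and then the diagonal $S \xrightarrow{\Delta} S \times_F S$ exhibits a section trivializing $S$ over the fppf cover $S \to \Spec(F)$---which is the ``standard argument'' you allude to but do not spell out.
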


\begin{proof} To begin with, let $\mathcal{V} = \{V_{i} \to \Spec(F)\}$ be an fpqc cover of $\Spec(F)$ trivializing $P$. Choosing trivializations $h_{i} \colon P_{V_{i}} \xrightarrow{\sim} \underline{G}_{V_{i}}$ (as $\underline{G}_{V_{i}}$-torsors) gives an element $x = (x_{ij}) \in \prod_{i,j} G(V_{i} \times_{F} V_{j})$ satisfying the 1-cocycle condition. This furnishes us with an fpqc descent datum of torsors on the site $(\text{Sch}/F)_{\text{fpqc}}$ via the cover $\{V_{i} \to \Spec(F)\}$, objects $\{G_{V_{i}}\}$ (with trivial right $G_{V_{i}}$-action), and isomorphisms $m_{x_{ij}} \colon p_{1}^{*}(G_{V_{j}}) \xrightarrow{\sim} p_{2}^{*}(G_{V_{i}})$ of $G_{V_{ij}}$-torsors given by left-translation by $x_{ij}$. Now, since the morphisms $G_{V_{i}} \to V_{i}$ are quasi-affine (indeed, they are the base change of the affine morphism $G \to \Spec(F)$), by \cite[Lemma 0247]{Stacksproj}, this descent datum is effective, and hence we get an $F$-scheme $S$ with a $G$-action such that $\underline{S} \xrightarrow{\sim} P$ as fpqc $G$-sheaves. Now, since $G \to \Spec(F)$ is fppf and $S$ is isomorphic to $G$ after an fpqc base-change, it is also fppf over $F$, and we have a section $S \xrightarrow{\Delta} S \times_{F} S$ given by the diagonal, showing that $S$ is trivialized over an fppf cover of $\Spec(F)$.
\end{proof}

\begin{remark}\label{repremark}
We will frequently use this proposition without comment in order to identify $G$-torsors over $F$ and $\underline{G}$-torsors on $(\text{Sch}/F)_{\text{fpqc}}$. Because of this, it is harmless to abuse notation and denote the sheaf $\underline{G}$ by $G$. When $G$ is not of finite type over $F$, we will make sure to specify the topology on $\text{Sch}/F$. However, even if $G$ is not of finite type, the first part of the above proof (which only uses affineness) shows that an fpqc $\underline{G}$-torsor is represented by an $F$-scheme $S$ equipped with a $G$-action which is trivialized over an fpqc (but not necessarily fppf) cover of $\Spec(F)$---we call such an $S$ a \textit{$G$-torsor over $F$ with respect to the \text{fpqc} topology}, and freely identify $\underline{G}$-torsors on $(\text{Sch}/F)_{\text{fpqc}}$ and $G$-torsors over $F$ with respect to the fpqc topology. Also, if we replace $F$ by $R$ an arbitrary commutative $F$-algebra, the proof carries over verbatim to show that every fpqc $\underline{G}$-torsor is representable by an $R$-scheme $S$.
\end{remark}

We may also define the \v{C}ech cohomology groups $\check{H}^{0}(U_{0} \to \Spec(F), G)$ and $\check{H}^{1}(U_{0} \to \Spec(F), G)$ for an arbitrary (possibly non-abelian) group scheme $G$, using the conventions of \cite[III.3.6]{Giraud}, which agree with our previous \v{C}ech cohomology conventions if $G$ is abelian. Namely, we define differentials from $G(U_{0})$ to $G(U_{1})$ and from $G(U_{1})$ to $G(U_{2})$, given (respectively) by 
\begin{equation}\label{cechboundaries} g \mapsto p_{1}^{\sharp}(g)^{-1}p_{2}^{\sharp}(g), \hspace{2mm} g \mapsto p_{12}^{\sharp}(g)p_{23}^{\sharp}(g)p_{13}^{\sharp}(g)^{-1}.
\end{equation}
We may then take $\check{H}^{0}(U_{0} \to \Spec(F), G)$ to be the fiber over the identity of the degree-zero differential, and $\check{H}^{1}(U_{0} \to \Spec(F), G)$ to be the pointed set consisting of the fiber over the identity of the degree-one differential modulo the equivalence relation given by declaring $a$ and $b$ equivalent if there exists $g \in G(U_{0})$ with $a = p_{1}^{\sharp}(g)^{-1}bp_{2}^{\sharp}(g)$. Denote the inverse limit of these sets over all $\tau$-covers by $\check{H}^{1}_{\tau}(F, G)$. The set $\check{H}^{1}_{\tau}(F, G)$ is evidently in bijection with isomorphism classes of $\underline{G}$-torsors on $(\text{Sch}/F)_{\tau}$, by gluing of sheaves on sites (see \cite[\S 04TP]{Stacksproj}).

\begin{Def}\label{cohomologous} When we say that two \v{C}ech $n$-cocycles $a, b \in \shA(U_{n})$ are \textit{cohomologous}, we always mean that $[a] = [b]$ in the set $\check{H}^{n}(U_{0} \to \Spec(F), \shA)$.
\end{Def}

For $G$ an arbitrary $F$-group scheme of finite type, we (temporarily) denote by $\tilde{H}^{1}(F,G)$ the pointed set of isomorphism classes of $G$-torsors over $F$ (see Definition \ref{torsors}---we don't specify the topology here in light of Proposition \ref{representability}). For $G$ abelian, $\tilde{H}^{1}(F,G)$ equals the group $H^{1}_{\text{fppf}}(F,G) = H^{1}(F,G)$, since we have a canonical isomorphism $\tilde{H}^{1}(F,G) = \check{H}^{1}_{\text{fppf}}(F, G)$, using that every fppf $\underline{G}$-torsor is representable by a scheme (by Proposition \ref{representability}), and the canonical isomorphism $\check{H}^{1}_{\text{fppf}}(F, G) = H^{1}_{\text{fppf}}(F, G)$, from Proposition \ref{newcomparison3}. Because of this, we may replace the notation $\tilde{H}^{1}(F,G)$ by $H^{1}(F,G)$ without causing any ambiguity. It is also clear that for general finite type $G$, many of our cohomology sets are redundant, since we have identifications $H^{1}(F,G) = \check{H}^{1}_{\text{fppf}}(F, G) = \check{H}^{1}_{\text{fpqc}}(F, G)$, by Proposition \ref{representability}.

In light of the above discussion, we will frequently identify $H^{1}(F,G)$ with isomorphism classes of fpqc $\underline{G}$-torsors on $(\text{Sch}/F)_{\text{fpqc}}$. We conclude this subsection with a short discussion of sheaf cohomology on a stack $\mathscr{X} \to (\text{Sch}/F)_{\tau}$ fibered in groupoids; first, we discuss how to give $\mathscr{X}$ the structure of a site.   

\begin{Def} For a stack $\mathscr{X} \xrightarrow{\pi} (\text{Sch}/F)_{\tau} $ fibered in groupoids, we give $\mathscr{X}$ the structure of a site via the \textit{$\tau$ topology on $\mathscr{X}$} (where $\tau \in \{\text{fppf},\text{fpqc} \}$). First, recall that $\mathscr{X}$ has finite fibered products, by Lemma \ref{fiberedprod}; to define this $\tau$ topology, for $X \in \text{Ob}(\mathscr{X})$ say that a collection of morphisms $\{X_{i} \xrightarrow{f_{i}} X \}$ in $\mathscr{X}$ is a cover if and only if $\{\pi(X_{i}) \xrightarrow{\pi(f_{i})} \pi(X) \}$ is a cover in $(\text{Sch}/F)_{\tau}$. This endows $\mathscr{X}$ with the structure of a site such that $\mathscr{X} \xrightarrow{\pi} (\text{Sch}/F)_{\tau}$ is a continuous functor. Whenever we talk about a Grothendieck topology on a stack over $F$, we will always be assuming that it is fibered in groupoids.
\end{Def}

\begin{conv}\label{settheoreticconv} Assume that for a group sheaf $\mathscr{G}$ on our site $\mathscr{X} \to (\text{Sch}/F)_{\tau}$ we have that the collection of isomorphism classes of $\mathscr{G}$-torsors on $\mathscr{X}$ is a set---of course this is not a problem unless $\tau = \text{fpqc}$, which is where the set-theoretic difficulties emerge, see for example \cite[\S 022A]{Stacksproj}. This will always be the case for our applications in this paper, and will be explained when relevant.
\end{conv}

\begin{Def} For a commutative group sheaf $\mathscr{G}$ on our site $\mathscr{X} \to (\text{Sch}/F)_{\tau}$ (which for us will always be the pullback to $\mathscr{X}$ of the fpqc sheaf associated to a finite type $F$-group scheme $G$; this sheaf on $\mathscr{X}$ will be denoted by $G_{\mathscr{X}}$) we define $H_{\tau}^{1}(\mathscr{X}, \mathscr{G})$ to be the group of isomorphism classes of $\mathscr{G}$-torsors on $\mathscr{X}$ equipped with the $\tau$ topology, with group operation induced by sending the torsors $\mathscr{T}, \mathscr{S}$ to the contracted product $\mathscr{T} \times^{\mathscr{G}} \mathscr{S}$; this contracted product always exists, see \cite[III.1.3]{Giraud}. If $\mathscr{G}$ is non-abelian, we define $H_{\tau}^{1}(\mathscr{X}, \mathscr{G})$ to be the pointed set of isomorphism classes of $\mathscr{G}$-torsors on $\mathscr{X}$ (with the induced $\tau$ topology). 
\end{Def}


\subsection{Gerbes}

We continue with the notation of \S 2.1 and \S 2.2, and fix $\shA$ a commutative $F$-group scheme, not necessarily of finite type.

\begin{Def} A stack $\gerbeE \xrightarrow{\pi} (\text{Sch}/F)_{\tau}$ fibered in groupoids is called a \textit{gerbe} if every object $U$ of $(\text{Sch}/F)_{\tau}$ has a cover $\{V_{i} \to U\}$ such that every $V_{i}$ has a lift in $\gerbeE$, and for any two objects $X,Y \in \text{Ob}(\gerbeE(U))$, there is a cover $\{V_{i} \xrightarrow{f_{i}} U\}$ such that $f_{i}^{*}X$ and $f_{i}^{*}Y$ are isomorphic in $\gerbeE(V_{i})$ for all $i$. When we want to emphasize the topology of $(\text{Sch}/F)_{\tau}$, we say that $\gerbeE$ is a $\tau$-gerbe ($\tau \in \{\text{fppf}, \text{fpqc} \}$).
\end{Def}

\begin{ex} The \textit{classifying stack of $\shA$ over $F$}, denoted by $B_{F}\shA \to (\text{Sch}/F)_{\tau}$, has fiber category $B_{F}\shA(U)$, for $U \in \text{Ob}(\text{Sch}/F)$ an $F$-scheme, the category of all $\shA_{U}$ torsors $T$ (with respect to the $\tau$ topology), with morphisms being isomorphisms of $\shA_{U}$ torsors. For $V \xrightarrow{f} U$ in $\text{Sch}/F$ and $T, S$ fixed $\shA_{U}, \shA_{V}$-torsors (respectively), a morphism $(V, S) \to (U,T)$ lifting $f$ is an isomorphism of $\shA_{V}$-torsors $S \to f^{*}T$. One verifies easily that this is a gerbe over $(\text{Sch}/F)_{\tau}$.
\end{ex}

By abuse of notation, for a stack $\mathscr{X} \to (\text{Sch}/F)_{\tau}$, we frequently write $\mathscr{X}$ to denote the site of $\mathscr{X}$ with the Grothendieck topology induced by $(\text{Sch}/F)_{\tau}$; that is to say, we write $\mathscr{X}$ to denote $\mathscr{X}_{\tau}$, $\tau \in \{\text{fppf}, \text{fpqc} \}$.

\begin{Def} As we discussed in \S 2.1, for any $X \in \gerbeE(U)$, the functor on $\text{Sch}/U$ given by sending $V \xrightarrow{f} U$ to $\text{Aut}_{U}(f^{*}X)$ defines a sheaf of groups on $(\text{Sch}/U)_{\tau}$, denoted by $\underline{\text{Aut}}_{U}(X)$. We call our gerbe $\gerbeE$ \textit{abelian} if this group sheaf is abelian for all $X$.  
\end{Def}

\begin{lem}\label{bandlemma} If $\gerbeE$ is an abelian gerbe, then the sheaves $\underline{\text{Aut}}_{U}(X)$, as $X$ varies through all objects of $\gerbeE$, glue to define an abelian group sheaf on $(\text{Sch}/F)_{\tau}$, called the \textbf{band} of $\gerbeE$ and denoted by $\text{Band}(\gerbeE)$. Moreover, we have for any $X \in \gerbeE(U)$ an isomorphism $\restr{\text{Band}(\gerbeE)}{U} \xrightarrow{h_{X}} \underline{\text{Aut}}_{U}(X)$ of sheaves on $(\text{Sch}/U)_{\tau}$ such that for any $X,Y \in \gerbeE(U)$ and isomorphism $\varphi \colon X \to Y$ in $\gerbeE(U)$, the following diagram commutes
\[
\begin{tikzcd}
\restr{\text{Band}(\gerbeE)}{U} \arrow[equals]{r} \arrow[d, "h_{X}"] & \restr{\text{Band}(\gerbeE)}{U} \arrow[d, "h_{Y}"] \\
\underline{\text{Aut}}_{U}(X) \arrow["f \mapsto \varphi \circ f \circ \varphi^{-1}"]{r} & \underline{\text{Aut}}_{U}(Y)
\end{tikzcd}
\]
\end{lem}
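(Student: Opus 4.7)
The strategy is to build a canonical identification between $\underline{\text{Aut}}_U(X)$ and $\underline{\text{Aut}}_U(Y)$ for any two lifts $X,Y \in \gerbeE(U)$, which then lets one glue the local automorphism sheaves into a single global band. First, if $\varphi \colon X \to Y$ is an isomorphism in $\gerbeE(U)$, conjugation $f \mapsto \varphi \circ f \circ \varphi^{-1}$ defines an isomorphism $c_\varphi \colon \underline{\text{Aut}}_U(X) \to \underline{\text{Aut}}_U(Y)$ of sheaves on $(\text{Sch}/U)_\tau$. A different choice $\varphi'$ differs from $\varphi$ by some $\alpha = \varphi^{-1} \circ \varphi' \in \text{Aut}_U(X)$; the abelian hypothesis forces $\alpha$ to be central, so $c_{\varphi'} = c_\varphi$. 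Hence the isomorphism is canonical, and denoting it $c_{X,Y}$, a brief check yields $c_{Y,Z} \circ c_{X,Y} = c_{X,Z}$ whenever these isomorphisms all exist.

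Second, for general $X, Y \in \gerbeE(U)$ the gerbe axiom supplies a cover $\{V_i \xrightarrow{f_i} U\}$ on which $f_i^* X \cong f_i^* Y$. On each overlap $V_{ij} := V_i \times_U V_j$ the pullbacks of $c_{f_i^*X, f_i^*Y}$ and $c_{f_j^*X, f_j^*Y}$ (possibly after further refinement to produce explicit isomorphisms) coincide, by the canonicity established in the previous step. Since $\underline{\Hom}(\underline{\text{Aut}}_U(X), \underline{\text{Aut}}_U(Y))$ is a sheaf on $(\text{Sch}/U)_\tau$ (we are working in a prestack), these locally defined maps descend uniquely to a canonical isomorphism $c_{X,Y} \colon \underline{\text{Aut}}_U(X) \to \underline{\text{Aut}}_U(Y)$, and the composition relation $c_{Y,Z} \circ c_{X,Y} = c_{X,Z}$ persists after descent by the same sheaf property.

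Finally, I would invoke the gerbe axiom once more to cover the terminal object $\Spec(F)$ by schemes $V$ admitting lifts $X_V \in \gerbeE(V)$. Taking the collection $\{\underline{\text{Aut}}_V(X_V)\}_V$ together with the canonical gluing isomorphisms $c_{p_1^* X_V, p_2^* X_W}$ on the double overlaps $V \times_F W$, the cocycle condition on triple overlaps is immediate from $c_{Y,Z} \circ c_{X,Y} = c_{X,Z}$ and the canonicity of the $c_{-,-}$. Since sheaves of abelian groups on $(\text{Sch}/F)_\tau$ satisfy $\tau$-descent, this datum glues to an abelian sheaf $\text{Band}(\gerbeE)$ equipped, for every lift $X \in \gerbeE(U)$, with a canonical isomorphism $h_X \colon \restr{\text{Band}(\gerbeE)}{U} \xrightarrow{\sim} \underline{\text{Aut}}_U(X)$. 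The displayed diagram then commutes because by construction $h_Y = c_{X,Y} \circ h_X$, and $c_{X,Y} = c_\varphi$ by the independence of choice in step one.

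The main obstacle is organizing the descent carefully: one must check both that the local conjugation isomorphisms agree on overlaps and that the resulting canonical identifications $c_{X,Y}$ satisfy the triple-overlap cocycle used in the final gluing. Both points hinge on canonicity of $c_\varphi$ in $\varphi$, which is precisely the content of the abelian hypothesis; without it, conjugation would only be well-defined up to an inner automorphism and neither step would go through.
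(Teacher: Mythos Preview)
Your argument is correct and is essentially the standard construction. The paper itself does not give a proof of this lemma; it simply cites the Stacks Project (Lemma I.8.11.8), so you have in fact supplied more detail than the paper does, and your approach matches what one finds in that reference.
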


\begin{proof} This is \cite[Lemma 0CJY]{Stacksproj}.
\end{proof}
In fact, following the setup of the above lemma, even if $X$ and $Y$ are not isomorphic in $\gerbeE(U)$, since they are locally isomorphic (by the definition of a gerbe), we may find a cover $\{V_{i} \to U\}$ such that the pullbacks of $X$ and $Y$ to each $V_{i}$ are isomorphic via some $\phi_{i}$, so that we get an isomorphism $\restr{\underline{\text{Aut}}_{U}(X)}{V_{i}} \xrightarrow{\sim} \restr{\underline{\text{Aut}}_{U}(Y)}{V_{i}}$ for all $i$ of sheaves on $(\text{Sch}/V_{i})_{\tau}$ which is independent of the choice of $\phi_{i}$ in view of the above lemma, and hence glues to a \textit{canonical} isomorphism $\underline{\text{Aut}}_{U}(X) \xrightarrow{\sim} \underline{\text{Aut}}_{U}(Y)$ of sheaves on $(\text{Sch}/U)_{\tau}$ (which is the same as $h_{Y} \circ h_{X}^{-1}$). 

For the rest of this paper, all gerbes will be assumed to be abelian, and when we refer to a ``gerbe," we always mean an abelian gerbe.

\begin{conv}\label{conventions} A simplifying convention we will use in this paper is that, when discussing an abelian $F$-group scheme $\shA$ and a $\tau$-cover $U_{0} \to \Spec(F)$, we will always assume that $\check{H}^{1}_{\tau}(U_{n}, \shA_{U_{n}}) = 0$ for all $n \geq 0$.  Equivalently, every $\shA_{W}$-torsor over $W$ has a $W$-trivialization (see Remark \ref{repremark}) for $W = U_{n}$. If $\shA$ is of finite-type, this condition holds for $U_{0} = \Spec(\bar{F})$ and $\tau = \text{fpqc}$, see \cite[\S 2.9]{Rosengarten}.
\end{conv}

\begin{Def}\label{bandedgerbe} We call a pair $(\gerbeE, \theta)$ of a gerbe $\gerbeE$ and an isomorphism $\theta \colon \shA \xrightarrow{\sim} \text{Band}(\gerbeE)$ an \textit{$\shA$-gerbe}. In practice, $\theta$ will be a way for us to identify automorphisms of objects in $\gerbeE$ with elements of $\shA$ in a manner that does not depend on isomorphism classes in the fibers; we will frequently omit explicit mention of the map $\theta$. For $X \in \gerbeE(V)$, we denote the isomorphism $h_{X} \circ \theta_{U}$ from Lemma \ref{bandlemma} by $\theta_{X}$. Any morphism of stacks over $(\text{Sch}/F)_{\tau}$ between two gerbes $\gerbeE$ and $\gerbeE'$ induces a morphism of group sheaves over $F$ between the corresponding bands. If both can be given the structure of $\shA$-gerbes, then we say that such a morphism of $\text{Sch}/F$ categories between two $\shA$-gerbes is a \textit{morphism of $\shA$-gerbes} if it is the identity on bands (via the identifications of both bands with $\shA$). By \cite[Lemma 12.2.4]{Olsson}, any morphism of $\shA$-gerbes is an equivalence, and so we will also call such a functor an \textit{equivalence of $\shA$-gerbes}. 
\end{Def}

\begin{ex} The gerbe $B_{F}\shA \to (\text{Sch}/F)_{\tau}$ may be canonically given the structure of an $\shA$-gerbe, since for an abelian group sheaf $\shA$ and $\shA$-torsor $T$, the automorphism sheaf defined by $T$ is canonically isomorphic to $\shA$. 
\end{ex}
We say that an $\shA$-gerbe $\gerbeE$ is \textit{split} over the $\tau$-cover $V \to \Spec(F)$ if $\gerbeE_{V} := \gerbeE \times_{\text{Sch}/F} \text{Sch}/V$ is equivalent as an $\shA_{V}$-gerbe to $B_{V}(\shA_{V})$. The following result is a useful alternative characterization of an $\shA$-gerbe $\gerbeE$ splitting over a cover $V \to \Spec(F)$:
\begin{prop}\label{splittingchar} The gerbe $\gerbeE \to (\text{Sch}/F)_{\tau}$ is split over $V \to \Spec(F)$ if and only if there is an object $X \in \gerbeE(V)$.
\end{prop}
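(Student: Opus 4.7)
The forward direction is essentially immediate: if $\gerbeE_V := \gerbeE \times_{\text{Sch}/F} \text{Sch}/V$ is equivalent as an $\shA_V$-gerbe to $B_V(\shA_V)$, then the trivial $\shA_V$-torsor, viewed as an object of $B_V(\shA_V)(V)$, corresponds under the equivalence to an object of $\gerbeE_V(V)$, which by the definition of the fibered product is exactly an object $X \in \gerbeE(V)$.

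For the converse, fix $X \in \gerbeE(V)$. The plan is to construct a morphism of $\shA_V$-gerbes $F \colon \gerbeE_V \to B_V(\shA_V)$; any such morphism is automatically an equivalence by the remark following Definition \ref{bandedgerbe}, so this will suffice. On an object $Y \in \gerbeE(W)$ lying over $\alpha \colon W \to V$, set
$$F(Y) := \underline{\Hom}(\alpha^*X, Y),$$
which is a sheaf on $(\text{Sch}/W)_\tau$ because $\gerbeE$ is a stack. Since $\gerbeE$ is a gerbe, $\alpha^*X$ and $Y$ are locally isomorphic over $W$, so $F(Y)$ is locally trivial as a right $\underline{\text{Aut}}_W(\alpha^*X)$-torsor under pre-composition. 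Transporting this action through $\theta_{\alpha^*X} \colon \shA_W \xrightarrow{\sim} \underline{\text{Aut}}_W(\alpha^*X)$ (Lemma \ref{bandlemma}) makes $F(Y)$ an $\shA_W$-torsor, i.e., an object of $B_V(\shA_V)(W)$. On morphisms, $\varphi \colon Y \to Y'$ lifting $f \colon W' \to W$ is sent, via post-composition with $\varphi$ together with the canonical identification $f^*(\alpha^*X) \cong (\alpha \circ f)^*X$, to a morphism $F(Y) \to F(Y')$ of the appropriate type.

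The crux is checking that $F$ is the identity on bands. The band of $B_V(\shA_V)$ is canonically $\shA_V$ (automorphisms of a torsor act by translation), and the band of $\gerbeE_V$ is $\shA_V$ via the pullback of $\theta$. Unwinding the definitions, the map $\underline{\text{Aut}}_W(Y) \to \underline{\text{Aut}}_W(F(Y))$ induced by $F$ is post-composition on $\underline{\Hom}(\alpha^*X, Y)$, and the compatibility diagram in Lemma \ref{bandlemma} applied to local isomorphisms $\alpha^*X \xrightarrow{\sim} Y$ shows that this corresponds to the identity on $\shA_W$ under both identifications.

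The main obstacle I anticipate is organizing the canonical identifications: checking functoriality of $F$, verifying that the induced map on bands is independent of the local trivializations used, and ensuring the torsor-structure transport through $\theta_{\alpha^*X}$ is compatible with pullback along $f$. All of this is handled by the naturality already built into Lemma \ref{bandlemma}; once these compatibilities are cleanly set up, the equivalence comes for free from the automatic-equivalence property of morphisms of $\shA$-gerbes, bypassing any direct verification of essential surjectivity or full faithfulness.
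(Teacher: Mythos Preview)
Your proof is correct. The paper's own proof simply cites an external reference (\cite{Vistoli}, Remark 2.4) for the nontrivial direction, so you have in fact supplied the details that the paper outsources; the $\underline{\Hom}(\alpha^{*}X,-)$ construction you use is the standard one, and indeed the paper later employs the same idea (with $\underline{\text{Isom}}$) in the proof of Proposition~\ref{gerbebijection}.
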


\begin{proof} It is clear that if an $\shA$-gerbe $\gerbeE$ is split over $V$, we have such an object. For the other direction, see \cite[Remark 2.4]{Vistoli}.
\end{proof}

\begin{fact}\label{gerbecocycle} Gerbes are closely related to \v{C}ech 2-cocycles of $\shA$ with respect to covers of $(\text{Sch}/F)_{\tau}$, and in this sense are natural analogues of the group extensions that arise in the study of 2-cocycles from Galois cohomology. Indeed, let $(\gerbeE, \theta)$ be an $\shA$-gerbe over $(\text{Sch}/F)_{\tau}$, and take some $U_{0} \to \Spec(F)$ a cover in $\tau$ such that we have some $X \in \gerbeE(U_{0})$ with $p_{1}^{*}X \xrightarrow{\varphi, \sim} p_{2}^{*}X$ for some $\varphi$ an isomorphism in $\gerbeE(U_{1})$ (because of Convention \ref{conventions}, given any $X \in \gerbeE(U_{0})$, we can always find such a $\varphi$). We extract a \v{C}ech 2-cocycle $a \in \shA(U_{2})$ in the following manner: $\varphi$ defines an automorphism of $q_{1}^{*}X$ over $U_{2}$ via the composition $$d\varphi:= (p_{13}^{*}\varphi)^{-1} \circ (p_{23}^{*}\varphi) \circ (p_{12}^{*} \varphi) \in \text{Aut}_{U_{2}}(q_{1}^{*}X),$$ and we set $a = \theta_{q_{1}^{*}X}^{-1}(d \varphi) \in \shA(U_{2})$. Then $a$ is a \v{C}ech 2-cocycle, whose class in $\check{H}^{2}(U_{0} \to \Spec(F), \shA)$ is independent of the choice of $\varphi$ and $X$ (see \cite[\S3]{Moerdijk}). We denote by $[\gerbeE] \in \check{H}^{2}(U_{0} \to \Spec(F), \shA)$ the \v{C}ech cohomology class obtained from $\gerbeE$ as above, and call $[\gerbeE]$ the \textit{\v{C}ech class corresponding to $\gerbeE$}.
\end{fact}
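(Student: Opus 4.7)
The plan is to verify three things in sequence: (a) $c$ is a 2-cocycle in $\shA(U_2)$; (b) the class $[c] \in \check{H}^{2}(U_{0} \to \Spec(F), \shA)$ is independent of the auxiliary choice $\varphi$; (c) it is independent of $X$ as well. Throughout I will use that $\theta$ is compatible with pullbacks (so that $\theta_{g^{*}Y} = g^{\sharp}\theta_{Y}$ for any morphism $g$ and any object $Y$ of $\gerbeE$) and that $\text{Band}(\gerbeE)$ is abelian.

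For (a), I would compute $dc \in \shA(U_{3})$ by translating the four-term alternating sum $p_{234}^{\sharp}c - p_{134}^{\sharp}c + p_{124}^{\sharp}c - p_{123}^{\sharp}c$ through the band isomorphism into a composition of automorphisms of $r_{1}^{*}X$, where $r_{1} \colon U_{3} \to U_{0}$ denotes the first projection. Using Lemma \ref{bandlemma} and conjugation by the appropriate pullbacks of $\varphi$, each of the four terms $p_{ijk}^{*}(d\varphi)$ (which a priori are automorphisms of possibly different objects $r_{\ell}^{*}X$) can be transported into $\underline{\text{Aut}}_{U_{3}}(r_{1}^{*}X)$. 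Expanding each such term into its three constituent pullbacks of $\varphi$ along the various maps $U_{3} \to U_{1}$ yields twelve factors; each of the six maps $p_{ij} \colon U_{3} \to U_{1}$ (for $1 \leq i < j \leq 4$) appears exactly twice among these factors with opposite signs, and abelianness of the band lets one rearrange the product freely so that every factor cancels with its inverse. This is the fppf analogue of the classical verification that a \v{C}ech 2-cochain built from pairwise trivializations is automatically a cocycle.

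For (b), another isomorphism $\varphi' \colon p_{1}^{*}X \to p_{2}^{*}X$ differs from $\varphi$ by a unique $\alpha \in \underline{\text{Aut}}_{U_{1}}(p_{1}^{*}X)$, so $\varphi' = \varphi \circ \alpha$; set $a = \theta_{p_{1}^{*}X}^{-1}(\alpha) \in \shA(U_{1})$. Expanding $d\varphi'$ and using abelianness of the band to commute the $\alpha$-factors past the $\varphi$-factors, one obtains $d\varphi' = d\varphi \cdot (p_{13}^{*}\alpha)^{-1}(p_{23}^{*}\alpha)(p_{12}^{*}\alpha)$; applying $\theta_{q_{1}^{*}X}^{-1}$ yields $c' = c + da$, so $[c'] = [c]$. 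For (c), given another $X' \in \gerbeE(U_{0})$ the gerbe axiom supplies a cover $\{V_{i} \to U_{0}\}_{i}$ over which $X$ and $X'$ become isomorphic; setting $V_{0} = \coprod_{i} V_{i}$ we obtain an isomorphism $\beta \colon X_{V_{0}} \xrightarrow{\sim} X'_{V_{0}}$. Lemma \ref{bandlemma} ensures conjugation by $\beta$ identifies $\underline{\text{Aut}}_{V_{0}}(X_{V_{0}})$ with $\underline{\text{Aut}}_{V_{0}}(X'_{V_{0}})$ compatibly with $\theta$, which together with (b) applied over the refined cover shows that the restrictions of $c$ and $c'$ to $\shA(V_{2})$ are cohomologous. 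Under Convention \ref{conventions} applied to both $U_{0}$ and $V_{0}$, Corollary \ref{cechtoderived} identifies $\check{H}^{2}(U_{0}/F, \shA)$ and $\check{H}^{2}(V_{0}/F, \shA)$ both with $H^{2}(F, \shA)$, so the restriction between them is an isomorphism and $[c] = [c']$ holds already in $\check{H}^{2}(U_{0}/F, \shA)$.

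The main obstacle I expect is the bookkeeping in (a): correctly matching the twelve pullbacks of $\varphi$ to the six maps $U_{3} \to U_{1}$, transporting everything to automorphisms of the single object $r_{1}^{*}X$ via Lemma \ref{bandlemma}, and verifying that abelianness of the band is used in exactly the right place (without it the required commutations would introduce commutator terms and the cancellation would fail). The remaining steps are comparatively routine translations of classical \v{C}ech-cohomological computations into the gerbe setting via repeated application of Lemma \ref{bandlemma} and the pullback-compatibility of $\theta$.
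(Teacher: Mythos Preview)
The paper does not supply its own argument for this fact; it simply cites \cite{Moerdijk}, \S 3. So there is no ``paper's proof'' to compare against, and your proposal should be judged on its own merits.

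Parts (a) and (b) are correct in outline. The twelve-factor cancellation in (a) is the standard verification, and your description of how Lemma \ref{bandlemma} is used to transport all automorphisms to $\underline{\text{Aut}}_{U_3}(r_1^*X)$ is accurate. In (b) your computation $c' = c + da$ is right once one tracks which object each $p_{ij}^*\alpha$ acts on and conjugates through the pullbacks of $\varphi$ via Lemma \ref{bandlemma}.

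Part (c), however, has a genuine gap. Your appeal to Corollary \ref{cechtoderived} to identify $\check{H}^2(U_0/F,\shA)$ and $\check{H}^2(V_0/F,\shA)$ with $H^2(F,\shA)$ is not justified: that corollary requires the vanishing of \emph{all} higher cohomology $H^i(\tilde{R}_j,\shA)$ for $i \geq 1$, whereas Convention \ref{conventions} only grants $\check{H}^1 = 0$. Moreover, Convention \ref{conventions} is a hypothesis on the specific cover $U_0$; there is no reason it should automatically hold for an arbitrary refinement $V_0$. So the passage back from $V_0$ to $U_0$ is not established.

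The fix is much simpler than refining the cover. Convention \ref{conventions} with $n=0$ says $\check{H}^1_\tau(U_0,\shA_{U_0}) = 0$, which means every $\shA_{U_0}$-torsor over $U_0$ is trivial. Since $\underline{\text{Isom}}(X,X')$ is such a torsor (the gerbe axioms make it one), it has a section, so $X$ and $X'$ are already isomorphic in $\gerbeE(U_0)$ via some $\beta$. Then $(p_2^*\beta)^{-1} \circ \varphi' \circ (p_1^*\beta)$ is an isomorphism $p_1^*X \to p_2^*X$; Lemma \ref{bandlemma} shows the cocycle built from $(X',\varphi')$ equals on the nose the cocycle built from $(X,(p_2^*\beta)^{-1}\varphi'(p_1^*\beta))$, and part (b) then says the latter is cohomologous to $c$. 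No refinement or comparison with derived cohomology is needed.
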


\begin{cor}\label{equivalence} We have a well-defined map from the set of $\shA$-gerbes split over $V$ to the group $\check{H}^{2}(V \to \Spec(F), \shA)$ defined by $\gerbeE \mapsto [\gerbeE]$. Moreover, if $(\gerbeE, \theta)$ is equivalent to $(\gerbeE', \theta')$, then $[\gerbeE] = [\gerbeE']$.
\end{cor}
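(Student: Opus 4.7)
The plan is to treat both claims as direct consequences of Fact \ref{gerbecocycle}, with the only real content being that an equivalence of $\shA$-gerbes transports the data used to build the cocycle in a way that preserves it on the nose.

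For the first claim, I would first invoke Proposition \ref{splittingchar} to extract an object $X \in \gerbeE(V)$ from the splitting hypothesis. Because $\gerbeE$ is a gerbe, the objects $p_{1}^{*}X$ and $p_{2}^{*}X$ of $\gerbeE(V_{1})$ are locally isomorphic, and the obstruction to gluing local isomorphisms into a global one $\varphi \colon p_{1}^{*}X \xrightarrow{\sim} p_{2}^{*}X$ is a class in $\check{H}^{1}_{\tau}(V_{1}, \underline{\text{Aut}}_{V_{1}}(p_{1}^{*}X)) \cong \check{H}^{1}_{\tau}(V_{1}, \shA_{V_{1}})$, which vanishes by Convention \ref{conventions}. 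With the pair $(X, \varphi)$ in hand, Fact \ref{gerbecocycle} supplies the cocycle $c = \theta_{q_{1}^{*}X}^{-1}(d\varphi) \in \shA(V_{2})$ and asserts that its class in $\check{H}^{2}(V \to \Spec(F), \shA)$ is independent of the choice of both $X$ and $\varphi$. Defining $[\gerbeE]$ to be this class then produces the desired well-defined map.

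For the invariance assertion, let $F \colon (\gerbeE, \theta) \to (\gerbeE', \theta')$ be an equivalence of $\shA$-gerbes and fix a pair $(X, \varphi)$ computing $[\gerbeE]$ as above. Since $F$ is a morphism of fibered categories it canonically commutes with pullback, so $F(X) \in \gerbeE'(V)$ and $F(\varphi)$ may be regarded as an isomorphism $p_{1}^{*}F(X) \xrightarrow{\sim} p_{2}^{*}F(X)$ in $\gerbeE'(V_{1})$; this pair computes $[\gerbeE']$ through the cocycle $c' = (\theta')_{q_{1}^{*}F(X)}^{-1}(dF(\varphi))$. Functoriality of $F$ yields $dF(\varphi) = F(d\varphi)$, and the hypothesis that $F$ is the identity on bands translates, via Lemma \ref{bandlemma} and the definition $\theta_{Y} = h_{Y} \circ \theta_{V}$, into the relation $(\theta')_{F(Y)} = F \circ \theta_{Y}$ for every $Y$. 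Applied with $Y = q_{1}^{*}X$, this forces $c = c'$ as elements of $\shA(V_{2})$, hence $[\gerbeE] = [\gerbeE']$.

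The only real obstacle is unwinding carefully what ``identity on bands'' means at the level of the object-dependent trivializations $\theta_{Y}$ produced by Lemma \ref{bandlemma}; once the compatibility $(\theta')_{F(Y)} = F \circ \theta_{Y}$ is isolated, equality of cocycles is immediate from functoriality and the well-definedness established in the first part.
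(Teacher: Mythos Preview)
Your proposal is correct and follows the same approach the paper sketches: the first claim is immediate from Fact \ref{gerbecocycle} (with Convention \ref{conventions} supplying the isomorphism $\varphi$), and the second is exactly the ``straightforward exercise using Lemma \ref{bandlemma} and pullbacks in fibered categories'' that the paper invokes. You have simply filled in the details the paper omits, in particular isolating the compatibility $(\theta')_{F(Y)} = F \circ \theta_{Y}$ as the content of ``identity on bands.''
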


\begin{proof} The first statement is immediate. The second statement is a straightforward exercise using Lemma \ref{bandlemma} and pullbacks in fibered categories.
\end{proof}

\begin{remark} One could just as well replace the isomorphism $\varphi$ with an isomorphism from $p_{2}^{*}X$ to $p_{1}^{*}X$, but this creates sign issues that for clarity we will choose to avoid. This is the reason that the isomorphism $\varphi$ goes in the opposite direction from our twisted gluing isomorphisms defined below (for example, in Definition \ref{explicitgerbe}).
\end{remark}

\begin{ex} For $\gerbeE = B_{F}\shA$, one can take $U_{0} = \Spec(F)$, $X = \shA$, and $\varphi = \text{id}$, yielding the trivial class in $\check{H}^{2}(U_{0} \to \Spec(F), A)$ via the above correspondence.
\end{ex}

\begin{Def} Let $\mathscr{F}$ be a sheaf (of sets) on an $\shA$-gerbe $(\gerbeE, \theta)$ with the induced $\tau$ topology. Recall that for $G$ a finite type $F$-group scheme (thought of as an fpqc sheaf over $F$), we denote by $\GE$ the pullback of $G$ to a sheaf on $\gerbeE$ (with the induced $\tau$ topology). We have a morphism of sheaves on $\gerbeE$ denoted by $$\iota \colon \shA_{\gerbeE} \times_{\gerbeE} \mathscr{F} \to \mathscr{F},$$ called the \textit{inertial action}, which for an object $X$ of $\gerbeE(U)$ and $a \in \shA_{\gerbeE}(X) = \shA(U)$ is defined by the automorphism $\mathscr{F}(X) \xrightarrow{\theta_{X}(a)^{\sharp}} \mathscr{F}(X)$. This gives an action of the group sheaf $\shA_{\gerbeE}$ on the sheaf $\mathscr{F}$, see \cite[2.3]{Shin}.
\end{Def}

\begin{Def}\label{explicitgerbe} Fix a \v{C}ech 2-cocycle $a$ of $\shA$ taking values in the cover $U_{0} \to \Spec(F)$, that is to say, $a \in \shA(U_{2})$. Then we may define an $\shA$-gerbe as follows: take the fibered category $\gerbeE_{a} \to \text{Sch}/F$ whose fiber over $V$ is defined to be the category of pairs $(T,\psi)$, where $T$ is a (right) $\shA_{V \times_{F} U_{0}}$-torsor on $V \times_{F} U_{0}$ with $\shA$-action $m$ (in the $\tau$ topology), along with an isomorphism of $\shA_{V \times_{F} U_{1}}$-torsors $\psi \colon p_{2}^{*}T \xrightarrow{\sim} p_{1}^{*}T$, called a \textit{twisted gluing map}, satisfying the following ``twisted gluing condition" on the $\shA_{V \times_{F} U_{2}}$-torsor $q_{1}^{*}T$: $$(p_{12}^{*}\psi) \circ (p_{23}^{*} \psi ) \circ (p_{13}^{*}\psi)^{-1} = m_{a},$$ where $m_{a}$ denotes the automorphism of the torsor $q_{1}^{*}T$ given by right-translation by $a$. A morphism $(T,\psi_{T}) \to (S, \psi_{S})$ in $\gerbeE_{a}$ lifting the morphism of $F$-schemes $V \xrightarrow{f} V'$ is a morphism of $\shA_{V \times U_{0}}$-torsors $T \xrightarrow{h} f^{*}S$ satisfying, on $V \times_{F} U_{1}$, the relation $f^{*}\psi_{S} \circ p_{2}^{*}h = p_{1}^{*}h \circ \psi_{T}$. We will call such a pair $(T, \psi)$ in $\gerbeE_{a}(V)$ a \textit{$a$-twisted torsor over $V$} when $\shA$ is understood. We call $\gerbeE_{a}$ the \textit{gerbe corresponding to $a$}.
\end{Def}
When working with the fibered category $\gerbeE_{a} \to \text{Sch}/F$ there is an obvious canonical choice of pullbacks. Indeed, for $(T, \psi) \in \gerbeE_{a}(U)$ and $f \colon V \to U$, we set $f^{*}(T, \psi) := (f^{*}T, f^{*}\psi)$, and the strongly cartesian morphism $f^{*}(T, \psi) \to (T, \psi)$ to be the one induced by the identity. We always work with this choice of pullbacks. 

\begin{prop}\label{expectedclass}The category $\gerbeE_{a} \to (\text{Sch}/F)_{\tau}$ may be canonically given the structure of an $\shA$-gerbe $(\gerbeE_{a}, \theta)$, with an object $X \in \gerbeE_{a}(U_{0})$ and an isomorphism $\varphi \colon p_{1}^{*}X \to p_{2}^{*}X$ satisfying $\theta_{q_{1}^{*}X}^{-1}(d \varphi) = a \in \shA(U_{2})$. In particular, $\gerbeE_{a}$ is split over $U_{0}$ and $[\gerbeE_{a}] = [a]$. 
\end{prop}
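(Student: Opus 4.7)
The plan is to verify the stack and gerbe axioms for $\gerbeE_a$, identify its band with $\shA$, and then exhibit the specific object $X$ and isomorphism $\varphi$ needed to read off the \v{C}ech class.

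First I would verify that $\gerbeE_a$ is a stack: a descent datum of $c$-twisted torsors on a cover $\{V_i \to V\}$ glues by first applying fpqc descent to the underlying $\shA$-torsors on $V_i \times_F U_0$ (valid since such torsors are quasi-affine, cf.~the proof of Proposition \ref{representability}), then gluing the twisted gluing maps $\psi_i$ to an isomorphism on $V \times_F U_1$; the twisted gluing condition is local on $V \times_F U_2$ and hence preserved. Morphisms glue analogously. Next I would identify the band: working locally on a cover where $T$ trivializes, an automorphism of $(T, \psi) \in \gerbeE_a(V)$ is left translation by some $e \in \shA(V \times_F U_0)$, and compatibility with $\psi$ forces $p_1^* e = p_2^* e$ on $V \times_F U_1$ (since $\shA$ is abelian), which by descent means $e \in \shA(V)$. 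This canonical identification $\underline{\text{Aut}}_V(T, \psi) \cong \shA|_V$ is functorial in $(T, \psi)$ and hence yields the desired isomorphism $\theta \colon \shA \xrightarrow{\sim} \text{Band}(\gerbeE_a)$.

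For the remaining gerbe axioms and the computation of the class, I would take $T$ to be the trivial $\shA$-torsor on $U_0 \times_F U_0 = U_1$ and let $\psi_X$ be left translation by $a$, regarded as an element of $\shA(U_0 \times_F U_1) = \shA(U_2)$. A direct coordinate computation on $U_0 \times_F U_2 = U_3$ shows that the twisted gluing condition for $(T, \psi_X)$ is precisely the \v{C}ech cocycle identity $da = 0$, so $X := (T, \psi_X) \in \gerbeE_a(U_0)$ is well-defined. Local existence of objects in $\gerbeE_a(V)$ for arbitrary $V$ then follows by pulling $X$ back along any morphism $V \times_F U_0 \to U_0$, and local isomorphy of two objects of $\gerbeE_a(V)$ follows from Convention \ref{conventions}: after a suitable refinement both underlying torsors trivialize, and the two twisted gluing maps then differ by an $\shA$-valued 1-cochain, which is precisely the datum of a local morphism between them.

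Finally I would take $\varphi \colon p_1^* X \to p_2^* X$ to be left translation by $a$, now regarded as an element of $\shA(U_1 \times_F U_0) = \shA(U_2)$; a second application of $da = 0$ verifies the required morphism-compatibility condition on $U_1 \times_F U_1 = U_3$. Then $d\varphi = (p_{13}^* \varphi)^{-1} \circ p_{23}^* \varphi \circ p_{12}^* \varphi \in \text{Aut}_{U_2}(q_1^* X)$ acts on the underlying trivial torsor over $U_3$ as left translation by $(p_{12}^* - p_{13}^* + p_{23}^*)(a)$, and a third invocation of $da = 0$ identifies this with the pullback of $a$ under the projection $U_3 = U_2 \times_F U_0 \to U_2$, which under the band identification is $\theta_{q_1^* X}(a)$. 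Hence $\theta_{q_1^* X}^{-1}(d\varphi) = a$, and Fact \ref{gerbecocycle} gives $[\gerbeE_a] = [a]$; splitting of $\gerbeE_a$ over $U_0$ follows from Proposition \ref{splittingchar} applied to $X$. The main technical obstacle is careful bookkeeping in these coordinate computations: all three invocations of $da = 0$ use the same \v{C}ech cocycle relation on $U_3$ but in three different specializations of its four coordinates.
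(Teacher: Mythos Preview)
Your proposal is correct and follows essentially the same approach as the paper, which delegates the construction of $X$ and $\varphi$ to Lemma~\ref{section}: the trivial torsor on $U_1$ with gluing map and $\varphi$ both given by translation by $a$, every verification reducing to the cocycle identity $da=0$. You supply more explicit detail on the stack and gerbe axioms (which the paper asserts follow ``immediately'' once $X$ exists), but the core content is identical.
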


\begin{proof} If we prove that there is such an object $X \in \gerbeE_{a}(U_{0})$, it will follow immediately that $\gerbeE_{a}$ defines a gerbe, provided that any two objects are locally isomorphic (the proof of this last fact will be postponed). Moreover, we have that $\text{Band}(\gerbeE_{a})$ is canonically isomorphic to $\shA$, since (for $V = \Spec(F)$, the general case is identical) any automorphism of an $a$-twisted torsor $(T,\psi)$ is given by a unique element $x \in \shA(\bar{F})$, and since $\psi$ is a morphism of $\shA_{U_{1}}$-torsors commuting with this chosen automorphism, we in fact have that $x \in \shA(F)$ (using fpqc descent, cf. the proof of Lemma \ref{twisthom} below). All that's left to show is the existence of $X$ and $\varphi$. This follows from the following lemma (which is important in its own right).
\end{proof}

\begin{lem}\label{section} We have a canonical section $x: \text{Sch}/U_{0} \to \gerbeE_{a}$ such that the two pullbacks $x_{1}$ and $x_{2}$ to $\text{Sch}/U_{1}$ are isomorphic via $\varphi \colon x_{1} \xrightarrow{\sim} x_{2}$ satisfying $d \varphi := (p_{13}^{*}\varphi)^{-1} \circ (p_{23}^{*}\varphi) \circ (p_{12}^{*} \varphi)  = \iota_{a}$ as a natural transformation from $q_{1}^{*}x: \text{Sch}/U_{2} \to \gerbeE_{a}$ to itself, where we are using $\iota_{a}$ to denote the natural transformation from the identity functor $(\gerbeE_{a})_{U_{2}} \to (\gerbeE_{a})_{U_{2}}$ to itself given by the automorphism $\theta_{Z}(a_{V}) \colon Z \xrightarrow{\sim} Z$ for all $Z \in \gerbeE_{U_{2}}(V)$.
\end{lem}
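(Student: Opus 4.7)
The plan is to construct $x$ and $\varphi$ explicitly by equipping trivial torsors with twisted gluing maps coming from pullbacks of $a$, and then verify every claim by direct computation from the 2-cocycle identity $da = 0$ in $\shA(U_3)$. For $(V, f) \in \text{Sch}/U_0$, let $\tilde{a}_f \in \shA(V \times_F U_1)$ denote the pullback of $a \in \shA(U_2)$ along the map $V \times_F U_1 \to U_0 \times_F U_1 = U_2$ induced by $f$ in the first coordinate and the identity on $U_1$, and set $x(V, f) := (\shA_{V \times_F U_0}, m_{\tilde{a}_f})$, where $m_{\tilde{a}_f}$ is right-translation on the trivial torsor. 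The twisted gluing condition on $V \times_F U_2$ is an identity in $\shA(V \times_F U_2)$ which is precisely the pullback of $da = 0$ along the natural map $V \times_F U_2 \to U_3$, and functoriality of $x$ in $V$ is automatic since trivial torsors and the elements $\tilde{a}_f$ pull back compatibly.

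Next, for $W \to U_1$ with induced maps $f_1, f_2 \colon W \to U_0$, the objects $x_1(W)$ and $x_2(W)$ share the same underlying trivial torsor $\shA_{W \times_F U_0}$ but carry distinct gluing maps $m_{\tilde{a}_{f_i}}$. Define the isomorphism $\varphi_W := m_g$, where $g \in \shA(W \times_F U_0)$ is the pullback of $a$ along the map $W \times_F U_0 \to U_2$ given by $(f_1, f_2)$ on the $W$-factor and the identity on $U_0$. That $\varphi_W$ defines a morphism in $\gerbeE_a(W)$ amounts, using that $\shA$ is abelian, to the identity $p_1^* g - p_2^* g = \tilde{a}_{f_2} - \tilde{a}_{f_1}$ in $\shA(W \times_F U_1)$, which is $da = 0$ specialized, in informal coordinates, to the four-tuple $(f_1(w), f_2(w), u, u')$. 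Naturality of $\varphi$ in $W$ is immediate from the construction.

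Finally, for $\mu \colon W \to U_2$ with component maps $w_1, w_2, w_3 \colon W \to U_0$, tracking the definitions shows that $p_{12}^* \varphi$, $p_{23}^* \varphi$, and $p_{13}^* \varphi$ at $W$ are right-translation by the pullbacks of $a$ along $(w_1, w_2, \mathrm{id}_{U_0})$, $(w_2, w_3, \mathrm{id}_{U_0})$, and $(w_1, w_3, \mathrm{id}_{U_0})$, respectively, so the alternating composition $(p_{13}^*\varphi)^{-1} \circ (p_{23}^*\varphi) \circ (p_{12}^*\varphi)$ is right-translation by
\[
a(w_1, w_2, u) + a(w_2, w_3, u) - a(w_1, w_3, u) \in \shA(W \times_F U_0),
\]
which by one last application of $da = 0$ at $(w_1, w_2, w_3, u)$ equals $a(w_1, w_2, w_3) \in \shA(W)$, pulled back trivially along the $U_0$-direction; under the identification $\theta$, this is exactly $\iota_a$ at $q_1^* x(W)$. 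The main obstacle is careful bookkeeping of the face and projection maps so that each of the three required identities reduces cleanly to the relation $da = 0$; no other ingredient is needed.
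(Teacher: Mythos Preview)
Your proposal is correct and follows essentially the same approach as the paper: both construct the section by equipping the trivial $\shA$-torsor with a twisted gluing map given by (a pullback of) $a$, take $\varphi$ to be translation by another pullback of $a$, and verify every required compatibility by reducing it to an instance of the $2$-cocycle identity $da=0$. The only difference is presentational---you work in ``coordinates'' at a general $(V,f)\in\text{Sch}/U_0$ and evaluate $a$ at tuples, whereas the paper first builds the object over $U_0$ itself and then pulls back, tracking the various face maps $p_{ij}$, $q_k$ explicitly; these are the same computation.
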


\begin{proof}

Define the $a$-twisted torsor on $(\text{Sch}/U_{0})_{\text{fpqc}}$ to be (as an $\shA_{U_{0}}$-torsor) $\shA_{U_{0}}$; we will define the twisted gluing map after a short discussion. The gluing map should be an isomorphism of $\shA_{U_{1}}$-torsors: $\psi:  \tilde{p_{2}}^{*}(\shA_{U_{0}}) \to \tilde{p_{1}}^{*}(\shA_{U_{0}})$, where $\tilde{p_{2}}: U_{0} \times U_{1} \to U_{0} \times U_{0}$ is $\text{id}_{U_{0}} \times p_{2}$ and $\tilde{p_{1}}: U_{0} \times U_{1} \to U_{0} \times U_{0}$ is $\text{id}_{U_{0}} \times p_{1}$. We have that $U_{0} \times U_{1} = U_{2}$, and $U_{0} \times U_{0} = U_{1}$, and then $\tilde{p_{1}}$ equals $p_{12}$, $\tilde{p_{2}}$ equals $p_{13}$. So, giving $\psi$ reduces to giving a morphism of $\shA_{U_{0} \times U_{1}} = \shA_{U_{2}}$-torsors $p_{13}^{*}(\shA_{U_{1}}) \to p_{12}^{*}(\shA_{U_{1}})$. Both sides are canonically equal to $\shA_{U_{2}}$, because $\shA$ is a sheaf on $(\text{Sch}/F)_{\text{fpqc}}$ so its value on a $U_{1}$-object only depends on the map to $\Spec(F)$, which is the same regardless of the map from $U_{2}$ to $U_{1}$. So we may take $\psi$ to be $m_{a}$, which makes sense since $a \in \shA(U_{2})$; this is $\shA$-equivariant since $\shA$ is commutative. We need to check that $\psi$ satisfies the twisted cocycle condition. 

The above paragraph relied on the equalities $U_{0} \times U_{1} = U_{2}$ and $U_{0} \times U_{0} = U_{1}$. Continuing these identifications, $\tilde{p_{12}}: U_{0} \times U_{2} \to U_{0} \times U_{1}$ is the map $U_{3} \to U_{2}$ given by $q_{123}$, and similarly $\tilde{p_{13}} = q_{124}, \tilde{p_{23}} = q_{134}$. Whence, $\tilde{p_{13}}^{*}(\psi^{-1}) \circ \tilde{p_{12}}^{*}(\psi) \circ \tilde{p_{23}}^{*}(\psi) = (q_{124}^{*}m_{a}^{-1}) \circ (q_{123}^{*}m_{a}) \circ (q_{134}^{*}m_{a})  = q_{234}^{*}m_{a}$, since $a$ is a \v{C}ech 2-cocycle. Take $\tilde{q_{1}}^{*}(\shA_{U_{0}}), \tilde{q_{1}} = \text{id}_{U_{0}} \times q_{1}$. By construction, after identifying $\tilde{q_{1}}\shA$ with $\shA_{U_{3}}$, we see that the left multiplication map $m_{a_{U_{2},r_{2}}}$, where $a_{U_{2}, r_{2}}$ denotes the image of $a$ in $\shA(U_{0} \times U_{2}) = \shA(U_{3})$ via the map $r_{2} \colon U_{0} \times U_{2} \to U_{2}$ which projects onto the second factor, equals $q_{234}^{*}m_{a}$, as desired. This $a$-twisted $\shA$-torsor on $(\text{Sch}/U_{0})_{\text{fpqc}}$ induces an $a_{V}$-twisted $\shA$-torsor on each $(\text{Sch}/V)_{\text{fpqc}}$, $V \to U$, via pullback, giving our map $x$, which one easily checks is a functor.

We now need to define a natural transformation $\varphi: x_{1} \xrightarrow{\sim} x_{2}$ between the two pullbacks of $x$ to $U_{1}$. It's enough (by taking pullbacks) to define a morphism of $a$-twisted torsors $$\varphi \colon \shA_{(U_{1} \xrightarrow{p_{1}} U_{0}) \times U_{0}} \to \shA_{(U_{1} \xrightarrow{p_{2}} U_{0}) \times U_{0}},$$ which we can take to be translation by $a$, via the same identifications as above.  We will verify shortly that $p_{1}^{*}\varphi \circ \psi_{U_{1} \xrightarrow{p_{1}} U_{0}} =  \psi_{U_{1} \xrightarrow{p_{2}} U_{0}} \circ p_{2}^{*}\varphi$. The same argument showing that $d \psi = m_{a}$ gives that $d\varphi = m_{a}$, which is $\iota_{a}$, by the definition of the inertial action on $\gerbeE_{a}$.

We now justify our above claim that $\varphi$ is a morphism of $a$-twisted torsors. For $V \xrightarrow{f} U_{0}$, the gluing map $\psi_{V}$ is $$(\shA_{V \times U_{0}}) \times_{\text{id} \times p_{2}} (V \times U_{1}) \to (\shA_{V \times U_{0}}) \times_{\text{id} \times p_{1}} (V \times U_{1}),$$ given by left translation by $a \in \shA(U_{2}) \xrightarrow{(f \times \text{id})^{\sharp}} \shA(V \times U_{1})$. As such, we first look at $\psi_{U_{1} \xrightarrow{p_{1}} U_{0}}$. This is the map on $\shA_{U_{3}}$ given by left translation by the image of $a$ in $\shA(U_{1} \times U_{1})$ via $\shA(U_{2}) \xrightarrow{(p_{1} \times \text{id})^{\sharp}} \shA(U_{3})$, which is evidently $p_{134}^{\sharp}(a)$. 

We also have the map $$\varphi: \shA_{(U_{1} \xrightarrow{p_{1}} U_{0}) \times U_{0}} \to \shA_{(U_{1} \xrightarrow{p_{2}} U_{0}) \times U_{0}} $$ which is also left translation by $a \in \shA(U_{2})$. Thus, $p_{1}^{*}\varphi$ is the map  $$\varphi: \shA_{(U_{1} \xrightarrow{p_{1}} U_{0}) \times U_{0}} \times_{\text{id} \times p_{1}} (U_{1} \times U_{1}) \to \shA_{(U_{1} \xrightarrow{p_{2}} U_{0}) \times U_{0}} \times_{\text{id} \times p_{1}} (U_{1} \times U_{1}),$$ which is left translation by the image of $a$ in $\shA(U_{3})$ via $U_{3} \xrightarrow{\text{id} \times p_{1}} U_{2}$, which is $p_{123}^{\sharp}(a)$. 

On the other hand, the map $$p_{2}^{*}\varphi = \varphi \colon \shA_{(U_{1} \xrightarrow{p_{1}} U_{0}) \times U_{0}} \times_{\text{id} \times p_{2}} (U_{1} \times U_{1}) \to \shA_{(U_{1} \xrightarrow{p_{2}} U_{0}) \times U_{0}} \times_{\text{id} \times p_{2}} (U_{1} \times U_{1}),$$

\noindent corresponds on $\shA_{U_{3}}$ to translation by $(\text{id} \times p_{2})^{\sharp}(a) = p_{124}^{\sharp}(a)$, and, finally, we have $$\psi_{U_{1} \xrightarrow{p_{2}} U_{0}} \colon \shA_{(U_{1} \xrightarrow{p_{2}} U_{0}) \times U_{0}} \times_{\text{id} \times p_{2}} (U_{1} \times U_{1}) \to \shA_{(U_{1} \xrightarrow{p_{2}} U_{0}) \times U_{0}} \times_{\text{id} \times p_{1}} (U_{1} \times U_{1})$$ given by $(p_{2} \times \text{id})^{\sharp}(a) = p_{234}^{\sharp}(a)$. The desired equality holds since $p_{234}^{\sharp}(a) \cdot p_{124}^{\sharp}(a) = p_{134}^{\sharp}(a) \cdot p_{123}^{\sharp}(a)$, since $a$ is a 2-cocycle.
\end{proof}

We still need to prove that any two objects in $\gerbeE_{a}$ are locally isomorphic before we know that it is a gerbe (right now all we have shown is that it's a fibered category with an object in the fiber over $U_{0}$). One can see this using a similar calculation as in the proof of Lemma \ref{section}, but there is a more conceptual argument coming from the following discussion of functoriality:



\begin{const}\label{changeofgerbe} Let $\shA \xrightarrow{f} \textbf{B}$ be an $F$-morphism of commutative group schemes, $U_{0} \to \Spec(F)$ a cover in $\tau$, and $a, b \in \shA(U_{2}), \textbf{B}(U_{2})$ two \v{C}ech 2-cocycles such that $[f(a)]= [b]$ in $\check{H}^{2}(U_{0} \to \Spec(F), \textbf{B})$. Then for any $x \in \textbf{B}(U_{1})$ satisfying $d(x)\cdot b = f(a)$, we may define a morphism of $(\text{Sch}/F)_{\tau}$-stacks $\gerbeE_{a} \xrightarrow{\phi_{a,b,x}} \gerbeE_{b}$.

For any $V \in \text{Ob}(\text{Sch}/F)$, given a $a$-twisted torsor $(T,\psi)$ over $V$, we define a $b$-twisted torsor $(T',\psi')$ over $V$ as follows. Define the $\textbf{B}_{V \times_{F} U_{0}}$ torsor $T'$ to be $T \times^{\textbf{A}_{V \times U_{0}},f} \textbf{B}_{V \times U_{0}}$, and take the gluing map to be $\psi' := \overline{m_{x^{-1}} \circ \psi}$, where $\overline{m_{x^{-1}} \circ \psi}$ denotes the isomorphism of contracted products $$p_{2}^{*}(T \times^{\textbf{A}_{V \times U_{0}},f} \textbf{B}_{V \times U_{0}}) = (p_{2}^{*}T) \times^{\textbf{A}_{V \times U_{1}},f} \textbf{B}_{V \times U_{1}} \to (p_{1}^{*}T) \times^{\textbf{A}_{V \times U_{1}},f} \textbf{B}_{V \times U_{1}} = p_{1}^{*}(T \times^{\textbf{A}_{V \times U_{0}},f} \textbf{B}_{V \times U_{0}})$$ induced by $(m_{x^{-1}} \circ \psi) \times \text{id}_{\textbf{B}}$ (and we are implicitly identifying $x$ with its image in $\textbf{B}(V \times_{F} U_{1})$). 

We compute that 
$$(p_{12}^{*}\psi') \circ (p_{23}^{*} \psi' ) \circ (p_{13}^{*}\psi')^{-1} = m_{p_{12}^{\sharp}(x)^{-1}p_{23}^{\sharp}(x)^{-1}p_{13}^{\sharp}(x) \cdot f(a)} = m_{b},$$ so that $\phi_{a,b,x}((T,\psi)):= (T', \psi')$ indeed defines an element of $\gerbeE_{b}(V)$. From here, one checks that any morphism $\varphi \colon (S,\psi_{S}) \to (T,\psi_{T})$ of $a$-twisted torsors induces a morphism of the corresponding $b$-twisted torsors by means of the map on contracted products induced by $\varphi \times \text{id}$, giving the desired morphism of $(\text{Sch}/F)_{\tau}$-stacks.

Note that the above morphism does in general depend on the choice of $x$; indeed, any two such morphisms differ by post-composing by an automorphism of $\gerbeE_{b}$ determined by a \v{C}ech 1-cocycle $z$ with respect to the cover $U_{0} \to \Spec(F)$. It is clear from the above discussion that if $\shA = \textbf{B}$ and $f = \text{id}_{\shA}$ and $a$ and $b$ are cohomologous, then this construction gives an isomorphism of categories between $\gerbeE_{a}$ and $\gerbeE_{b}$.
\end{const}

We can finally prove that $\gerbeE_{a}$ is a gerbe:

\begin{cor} Any two objects in $\gerbeE_{a}$ are locally isomorphic.
\end{cor}

\begin{proof}
It is enough to prove the result after replacing $\gerbeE_{a}$ with $(\gerbeE_{a})_{U_{0}}$. This replaces the cover $U_{0} \to \Spec(F)$ with $U_{0}' := U_{1} \xrightarrow{p_{1}} U_{0}$, and each $n$-fold product $U_{n}$ with $U_{n}' := U_{n+1}$. Using the definition of the gerbes $\mathcal{E}_{c}$, the base-change $(\gerbeE_{a})_{U_{0}}$ becomes canonically identified with $\gerbeE_{a'}$, where $a' := p_{234}^{\sharp}(a) \in \shA_{U_{0}}(U_{2}') = \shA(U_{3})$. But now $a'$ is a coboundary---we can take $x = a \in \shA_{U_{0}}(U_{1}') = \shA(U_{2})$ and, since $a$ is a \v{C}ech $2$-cocycle, we compute that
$$dx = p_{123}^{\sharp}(a) \cdot p_{134}^{\sharp}(a)^{-1} \cdot p_{124}^{\sharp}(a) = p_{234}^{\sharp}(a).$$
From here we can apply Construction \ref{changeofgerbe} with $f = \text{id}$ and $x$ as above to identify $(\gerbeE_{a})_{U_{0}} = \mathcal{E}_{a'}$ with $B_{F}\shA_{U_{0}}$, where the desired result is clear.
\end{proof}

\begin{prop}\label{gerbebijection} Suppose that $\gerbeE \to (\text{Sch}/F)_{\tau}$ and $\gerbeE' \to (\text{Sch}/F)_{\tau}$ are two $\shA$-gerbes split over $U_{0} \to \Spec(F)$ a cover in $\tau$. Then $[\gerbeE] = [\gerbeE']$ in $\check{H}^{2}(U_{0} \to \Spec(F), \shA)$ if and only if $\gerbeE$ is $\shA$-equivalent to $\gerbeE'$.
\end{prop}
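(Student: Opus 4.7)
One direction is Corollary \ref{equivalence}. For the converse, my plan is to reduce to the explicit gerbes of Definition \ref{explicitgerbe}: I will show that any $\shA$-gerbe $\gerbeE$ split over $U_{0}$ is $\shA$-equivalent to $\gerbeE_{a}$ for \emph{any} \v{C}ech 2-cocycle $a$ representing $[\gerbeE]$, so that if $[\gerbeE] = [\gerbeE']$ and $a$ is a common representative of the two classes, then $\gerbeE \simeq \gerbeE_{a} \simeq \gerbeE'$ as $\shA$-gerbes.

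To pass between different choices of representative of a single class, I note that if $a, a' \in \shA(U_{2})$ satisfy $a = d(x) \cdot a'$ for some $x \in \shA(U_{1})$, then the functor $\phi_{a,a',x}$ of Construction \ref{changeofgerbe} (applied with $f = \text{id}_{\shA}$) is already built as a morphism of $\shA$-gerbes $\gerbeE_{a} \to \gerbeE_{a'}$, hence an equivalence by the remark following Definition \ref{bandedgerbe}. Therefore it suffices to exhibit, for the specific $a$ extracted from $\gerbeE$ as in Fact \ref{gerbecocycle}, an $\shA$-equivalence $\gerbeE \to \gerbeE_{a}$.

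The natural candidate is as follows. Using Proposition \ref{splittingchar} and Convention \ref{conventions}, first fix $X \in \gerbeE(U_{0})$ together with an isomorphism $\varphi \colon p_{1}^{*}X \to p_{2}^{*}X$ in $\gerbeE(U_{1})$, and set $a := \theta^{-1}_{q_{1}^{*}X}(d\varphi)$. Define a functor $F \colon \gerbeE \to \gerbeE_{a}$ sending $Y \in \gerbeE(V)$ to the pair $(T_{Y}, \psi_{Y})$, where $T_{Y}$ is the $\tau$-sheaf on $V \times_{F} U_{0}$ of isomorphisms $X_{V \times U_{0}} \to Y_{V \times U_{0}}$ in $\gerbeE(V \times_{F} U_{0})$, regarded as a right $\shA_{V \times U_{0}}$-torsor via the band identification $\theta$, and $\psi_{Y} \colon p_{2}^{*}T_{Y} \to p_{1}^{*}T_{Y}$ is the isomorphism induced by pre-composition with $\varphi_{V \times U_{1}}$ (using that the two pullbacks of $Y$ from $V$ to $V \times_{F} U_{1}$ through the two projections onto $V \times_{F} U_{0}$ are canonically equal). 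Morphisms are sent to the induced morphisms of isomorphism sheaves.

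The remaining verifications are: (i) the twisted cocycle identity for $\psi_{Y}$ on $q_{1}^{*}T_{Y}$, which by tracing pullbacks reduces to the relation $d\varphi = \theta_{q_{1}^{*}X}(a)$ and so holds by construction; (ii) functoriality of $F$ in $V$ and in $Y$, which is routine; and (iii) that $F$ is the identity on bands, which amounts to observing that on any $V$ the $\shA(V)$-action on $T_{Y}$ induced via the band of $\gerbeE$ (pre-composition with $\theta_{X_{V \times U_{0}}}(b)$) agrees with the $\shA$-torsor structure of $(T_{Y}, \psi_{Y})$ seen from the band of $\gerbeE_{a}$ as described in the proof of Proposition \ref{expectedclass}. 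Once (i)--(iii) are in hand, $F$ is a morphism of $\shA$-gerbes and hence an equivalence. The main obstacle I anticipate is (i): it requires careful bookkeeping of the projections $p_{ij}$ and $q_{i}$ and verifying that the threefold composition $(p_{12}^{*}\psi_{Y}) \circ (p_{23}^{*}\psi_{Y}) \circ (p_{13}^{*}\psi_{Y})^{-1}$ corresponds, under pre-composition, precisely to the automorphism $d\varphi$ of $q_{1}^{*}X$ that defines $a$.
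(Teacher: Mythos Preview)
Your proposal is correct and follows essentially the same approach as the paper: reduce to showing $\gerbeE \simeq \gerbeE_{a}$ for the specific $a$ extracted from a choice of $X$ and $\varphi$, then invoke Construction \ref{changeofgerbe} to handle cohomologous representatives. Your functor $F(Y) = (T_{Y}, \psi_{Y})$ built from the isomorphism sheaf $\underline{\text{Isom}}(X_{V \times U_{0}}, Y_{V \times U_{0}})$ with gluing by pre-composition with $\varphi$ is exactly the paper's construction (written there as $\underline{\text{Isom}}_{\gerbeE(V \times U_{0})}(\tilde{p}_{2}^{*}X, \tilde{p}_{1}^{*}Y)$), and your outlined verifications (i)--(iii) match the paper's remaining checks.
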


\begin{proof} We already know the ``if" direction from Corollary \ref{equivalence}. Let $\gerbeE$ be a $\shA$-gerbe with $X \in \gerbeE(U_{0})$ and $\varphi \colon p_{1}^{*}X \xrightarrow{\sim} p_{2}^{*}X$ in $\gerbeE(U_{1})$. By Definition \ref{bandedgerbe}, it's enough to construct a $(\text{Sch}/F)$-morphism $\gerbeE \to \gerbeE'$ which is the identity on bands. If we show that $\gerbeE$ is $\shA$-equivalent to $\gerbeE_{a}$ for $a \in \shA(U_{2})$ giving $d \varphi$, then the result will follow from applying Construction \ref{changeofgerbe} to cohomologous cocycles (with $f = \text{id}_{\shA}$, in the notation of the construction).

At the level of objects, send $Y \in \gerbeE(V \xrightarrow{f} \Spec(F))$ to the sheaf $\underline{\text{Isom}}_{\gerbeE(V \times U_{0})}(\tilde{p}_{2}^{*}X, \tilde{p}_{1}^{*}Y)$, on $\text{Sch}/(V \times_{F} U_{0})$, where $\tilde{p}_{i}$ is the $i$th projection of $V \times U_{0}$. We claim that this sheaf is an $a$-twisted torsor over $V$. First, it is easy to see that the above sheaf is an $\shA_{V \times U_{0}}$-torsor, by means of the action of the band of $\gerbeE$ on either side of the isomorphism (it doesn't matter which by Lemma \ref{bandlemma}). We need to define an isomorphism of $\shA_{V \times U_{1}}$-torsors $$\psi \colon p_{2}^{*}[\underline{\text{Isom}}_{\gerbeE(V \times U_{0})}(\tilde{p}_{2}^{*}X, \tilde{p}_{1}^{*}Y)] \xrightarrow{\sim} p_{1}^{*}[\underline{\text{Isom}}_{\gerbeE(V \times U_{0})}(\tilde{p}_{2}^{*}X, \tilde{p}_{1}^{*}Y)]$$ satisfying the twisted gluing condition with respect to $a$. We may take this to be the isomorphism obtained by pre-composing by $\tilde{p}_{2, U_{1}}^{*}\varphi$ (after making appropriate canonical identifications which we leave to the reader, where $\tilde{p}_{i, U_{1}}$ is the $i$th projection for $V \times U_{1}$). This defines our equivalence on the level of objects. 

At the level of morphisms, for $Y \xrightarrow{\tilde{f}} Z$ lifting $V \xrightarrow{f} W$, we have an induced morphism $\tilde{p}_{1}^{*}Y \to \tilde{p}_{1}^{*}Z$, and post-composing by this map gives a morphism $$(\underline{\text{Isom}}_{\gerbeE(V \times U_{0})}(\tilde{p}_{2}^{*}X, \tilde{p}_{1}^{*}Y), \psi_{Y}) \to (\underline{\text{Isom}}_{\gerbeE(V \times U_{0})}(\tilde{p}_{2}^{*}X, \tilde{p}_{1}^{*}Z), \psi_{Z}),$$ which is a morphism of $a$-twisted torsors. The induced morphism of bands is the identity by definition of the $\shA$-action on each $\underline{\text{Isom}}_{\gerbeE(V \times U_{0})}(\tilde{p}_{2}^{*}X, \tilde{p}_{1}^{*}Y)$.
\end{proof}

\begin{remark} The above equivalence $\gerbeE \to \gerbeE_{a}$ sends $X$ to the isomorphism class of the \textit{trivial $a$-twisted torsor}, which we define to be the $a$-twisted torsor over $U_{0}$ given by $x(U_{0})$, where $x$ is the section constructed in Lemma \ref{section}.
\end{remark}

We conclude this subsection with an important result concerning torsors on gerbes which will be crucial for our later cohomological constructions. In what follows, we fix a finite type $F$-group scheme $G$. Recall that if $\gerbeE \xrightarrow{\pi} (\text{Sch}/F)_{\tau}$ is a gerbe, then $\GE$ denotes the corresponding group sheaf on $\gerbeE$ with the induced $\tau$ topology. 

\begin{lem}\label{altres} Assume that $G$ is abelian. If $\mathscr{T}$ is a $G_{\gerbeE}$-torsor on the $\shA$-gerbe $\gerbeE \xrightarrow{\pi} (\text{Sch}/F)_{\tau}$ split over $U_{0}$, then there is a unique map $\phi \in \Hom_{F}(\shA, G)$ such that the inertial action $\iota \colon \shA_{\gerbeE} \times_{\gerbeE} \mathscr{T} \to \mathscr{T}$ is induced by $\phi_{\gerbeE} := \pi^{*}\phi \colon \shA_{\gerbeE} \to G_{\gerbeE}$. We denote this homomorphism by $\text{Res}(\mathscr{T})$. 
\end{lem}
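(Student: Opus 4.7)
The strategy is to reinterpret the inertial action as a morphism of group sheaves on $\gerbeE$ and then descend it. Since $G$ is abelian, for any $G_{\gerbeE}$-torsor $\mathscr{T}$ on $\gerbeE$ the sheaf $\underline{\text{Aut}}^{G_{\gerbeE}}(\mathscr{T})$ of $G_{\gerbeE}$-equivariant automorphisms is canonically isomorphic to $G_{\gerbeE}$ via left translation on any local trivialization (this identification being independent of the trivialization precisely because $G$ is abelian). The inertial action $\iota$ is $G_{\gerbeE}$-equivariant---each $\theta_{X}(a)^{\sharp}$ is the pullback along an automorphism of $X$ in $\gerbeE$, and is therefore an automorphism of $\mathscr{T}|_X$ as a $G_{\gerbeE}$-torsor---so $\iota$ is classified by a homomorphism of group sheaves $\Phi\colon \shA_{\gerbeE} \to G_{\gerbeE}$ on $\gerbeE$.

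The main step is to exhibit $\Phi$ as the pullback $\pi^{*}\phi$ of a unique homomorphism of sheaves of groups $\phi\colon \shA \to G$ on $(\text{Sch}/F)_{\tau}$; equivalently (by Yoneda for representable sheaves) a unique $F$-homomorphism of group schemes. For each $U \in \text{Ob}(\text{Sch}/F)$, I consider the $\tau$-cover $V := U \times_{F} U_{0} \to U$. By Proposition \ref{splittingchar}, splitness of $\gerbeE$ over $U_{0}$ furnishes an object $X_{0} \in \gerbeE(U_{0})$, whose pullback $X_{V} \in \gerbeE(V)$ gives a homomorphism $\Phi_{X_{V}} \colon \shA(V) \to G(V)$. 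To define $\phi_{U}\colon \shA(U) \to G(U)$ I would verify that the two pullbacks of $\Phi_{X_{V}}$ under $p_{1}, p_{2}\colon V \times_{U} V \to V$ coincide; granting this, the sheaf properties of $\shA$ and $G$ produce the required $\phi_{U}$, and functoriality in $U$ is immediate from the construction. Group-homomorphicity follows because $\iota$ is an action, and uniqueness is forced, since any $\phi, \phi'$ with $\pi^{*}\phi = \pi^{*}\phi' = \Phi$ must agree on $V$ for every $U$, hence on $U$ by the sheaf condition.

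The crux is the descent verification. The pullbacks $p_{1}^{*}X_{V}$ and $p_{2}^{*}X_{V}$ in $\gerbeE(V\times_U V)$ need not be isomorphic, but by the gerbe axiom they become isomorphic after passage to some further $\tau$-cover $W \to V \times_{U} V$. The canonical identification of $\underline{\text{Aut}}(p_{1}^{*}X_{V})$ with $\underline{\text{Aut}}(p_{2}^{*}X_{V})$ discussed immediately after Lemma \ref{bandlemma} is compatible with the band structure, so it sends $\theta_{p_{1}^{*}X_{V}}(a)$ to $\theta_{p_{2}^{*}X_{V}}(a)$ for all $a \in \shA(V\times_U V)$; because $G$ is abelian, the associated left-multiplication elements of $G(W)$ attached to these two automorphisms of $\mathscr{T}$ coincide, so the sheaf property of $G$ yields equality already in $G(V \times_{U} V)$. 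The main conceptual point is that abelianness of both the band and $G$ makes all the relevant identifications canonical; once descent is established the remaining assertions are formal.
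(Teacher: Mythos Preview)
Your proof is correct and follows essentially the same approach as the paper: both arguments observe that the inertial action is $G_{\gerbeE}$-equivariant and hence given by translation by a unique element of $G$, then use abelianness of $G$ together with the band compatibility of Lemma~\ref{bandlemma} to show this element is independent of the choice of object and so descends to $F$. The only organizational difference is that the paper builds $\Phi_{\gerbeE}$ globally on $\gerbeE$ and descends once via the section over $U_{0}$ (invoking Convention~\ref{conventions} to obtain $\varphi\colon p_{1}^{*}X \xrightarrow{\sim} p_{2}^{*}X$ directly over $U_{1}$), whereas you descend scheme-by-scheme and appeal to the gerbe axiom plus a further cover $W$; the latter step is unnecessary under Convention~\ref{conventions} but not incorrect.
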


\begin{proof} If such a map exists, uniqueness is clear. For $V \to \Spec(F)$, $X \in \gerbeE(V)$, and $x \in \shA_{\gerbeE}(X) = \shA(V)$, the induced automorphism of sheaves $\iota_{x} \colon \restr{\mathscr{T}}{\gerbeE/X} \to \restr{\mathscr{T}}{\gerbeE/X}$ is $\restr{\GE}{\gerbeE/X}$-equivariant, since the $\GE$-action $\mathscr{T} \times_{\gerbeE} \GE \to \mathscr{T}$ is a morphism of sheaves on $\gerbeE$ and $\iota_{x}$ is induced by an automorphism of $X$ (by definition). It follows that $\iota_{x}$ must be given by right-translation by a unique element $g_{x} \in \GE(X) = G(V)$, defining a map $\shA_{\gerbeE}(X) = \shA(V) \xrightarrow{\Phi_{X}} G(V) = \GE(X)$. Moreover, if $X \xrightarrow{\nu} Y$ is a morphism in $\gerbeE$, lifting $V \xrightarrow{f} U$, then the square 
\[
\begin{tikzcd}
\shA(U) \arrow["\Phi_{X}"]{r} & G(U) \\
\shA(V) \arrow["f^{\sharp}"]{u} \arrow["\Phi_{Y}"]{r} & G(V) \arrow["f^{\sharp}"]{u}
\end{tikzcd}
\]
commutes because of the commutativity of the squares 
\[
\begin{tikzcd}
\restr{\shA_{\gerbeE}}{\gerbeE/X} \times \restr{\mathscr{T}}{\gerbeE/X} \arrow["\iota_{X}"]{r} &  \restr{\mathscr{T}}{\gerbeE/X} & &  \restr{\mathscr{T}}{\gerbeE/X}  \times \restr{\GE}{\gerbeE/X} \arrow{r} &  \restr{\mathscr{T}}{\gerbeE/X} \\
\restr{\shA_{\gerbeE}}{\gerbeE/Y} \times \restr{\mathscr{T}}{\gerbeE/Y} \arrow["\nu^{\sharp}"]{u} \arrow["\iota_{Y}"]{r} & \restr{\mathscr{T}}{\gerbeE/Y} \arrow["\nu^{\sharp}"]{u} & &  \restr{\mathscr{T}}{\gerbeE/Y}  \times \restr{\GE}{\gerbeE/Y} \arrow{r}  \arrow["\nu^{\sharp}"]{u} & \restr{\mathscr{T}}{\gerbeE/Y}  \arrow["\nu^{\sharp}"]{u}.
\end{tikzcd}
\]
It follows that the above maps glue across all objects in $\gerbeE$ to define a homomorphism of group sheaves $\Phi_{\gerbeE} \colon \shA_{\gerbeE} \to G_{\gerbeE}$. This defines a homomorphism $\bar{\Phi} \colon \shA_{U_{0}} \to G_{U_{0}}$ via pulling back by a section $s \colon \text{Sch}/U_{0}\to \gerbeE$, and the above argument shows that the two pullbacks to $\shA_{U_{1}}$ coincide by setting $\nu = \varphi \colon p_{1}^{*}X \xrightarrow{\sim} p_{2}^{*}X$ any isomorphism in $\gerbeE(U_{1})$ for $X := s(U_{0})$, showing that $\bar{\Phi}$ descends to an $F$-homomorphism $\Phi$, whose pullback by $\pi$ is $\Phi_{\gerbeE}$.
\end{proof}

Let $\mathcal{A}$ be the category of monomorphisms $Z \to G$ defined over $F$, where $G$ is either a commutative algebraic group of finite type over $F$ or a connected reductive group defined over $F$, and $Z$ is a finite multiplicative group defined over $F$ (usually thought of as a subgroup of $G$) whose image in $G$ is central. We define the set of morphisms $\mathcal{A}(Z_{1} \to G_{1}, Z_{2} \to G_{2})$ to be the set of commutative diagrams
\[
\begin{tikzcd}
Z_{1}  \arrow{r} \arrow[swap]{d} & Z_{2}  \arrow{d}\\
G_{1}  \arrow{r} & G_{2},
\end{tikzcd}
\]
where the horizontal maps are morphisms of algebraic groups defined over $F$. Set $\mathcal{T} \subset \mathcal{A}$ (resp. $\mathcal{R} \subset \mathcal{A}$) to be the subcategory where $[Z \to G]$ belongs to $\mathcal{T}$ (resp. $\mathcal{R}$) if $G$ is a torus (resp. a connected reductive group).

For $G$ abelian with finite $F$-subgroup $Z$ we define the set $H^{1}(\gerbeE, Z \to G)$ to be the group of isomorphism classes of (fpqc) $\GE$-torsors on $\gerbeE$ such that $\text{Res}(\mathscr{T})$ factors through $Z$. For arbitrary $[Z \to G] \in \mathcal{A}$, we define $H^{1}(\gerbeE, Z \to G)$ to be the pointed set of all isomorphism classes of $\GE$-torsors on $\gerbeE$ such that the inertial action is induced by an $F$-homomorphism $\phi \colon \shA \to Z$; note that this agrees with our previous definition if $G$ is abelian, and define the set $H^{1}_{\text{bas}}(\gerbeE, G_{\gerbeE})$ to be $\varinjlim_{Z} H^{1}(\gerbeE, Z \to G)$, where the direct limit is over all finite central subgroups of $G$ (for arbitrary $G$). If $\mathscr{T}$ is a $\GE$-torsor whose isomorphism class lies in $H^{1}(\gerbeE, Z \to G)$, we say that $\mathscr{T}$ is \textit{$Z$-twisted}. The map $[Z \to G] \mapsto H^{1}(\gerbeE, Z \to G)$ defines a functor from $\mathcal{A}$ to the category of pointed sets (abelian groups if $G$ is abelian).

\subsection{Torsors on $\gerbeE_{a}$}
We use the same notation as in the previous subsections. Fix the fpqc cover $\Spec(\bar{F}):= U_{0} \to \Spec(F)$, and fix a group scheme $G$ of finite type over $F$, viewed as a sheaf on $(\text{Sch}/F)_{\tau} =(\text{Sch}/F)_{\text{fpqc}}$. We will not assume that $G$ is abelian. For $U \in \text{Ob}(\text{Sch}/F)$, recall that $\gerbeE \times_{\text{Sch}/F} (\text{Sch}/U)$ is denoted by $\gerbeE_{U}$. 

We say again for emphasis that in this subsection, our site will be the big fpqc site $(\text{Sch}/F)_{\tau} = (\text{Sch}/F)_{\text{fpqc}}$, not $(\text{Sch}/F)_{\text{fppf}}$.

We first record a characterization of sheaves on the site $B_{F}\shA \to (\text{Sch}/F)_{\tau}$ (with the induced $\tau$ topology). Consider the category of sheaves on $B_{F}\shA$, as well as the category of sheaves on $(\text{Sch}/F)_{\tau}$ equipped with an $\shA$-action, where we require morphisms in this latter category to be $\shA$-equivariant. There is a canonical section $s \colon \text{Sch}/F \to B_{F}\shA$ sending $U \to \Spec(F)$ to the trivial $\shA_{U}$-torsor $\shA_{U}$. Define the map between the above two categories to be the one which sends a sheaf $\mathscr{F}$ on $B_{F}\shA$ to the sheaf $s^{*}\mathscr{F}$ on $(\text{Sch}/F)_{\tau}$ with $\shA$-action given by $$\shA \times_{F} s^{*}\mathscr{F} \xrightarrow{s^{*}\iota} s^{*}\mathscr{F},$$ and sends the morphism of sheaves $\mathscr{F} \xrightarrow{f} \mathscr{F}'$ to $s^{*}f$, where in the definition of the action we are making the identification $s^{*}(\shA_{\gerbeE}) = \shA$.

\begin{prop}\label{classifyingsheaf} The above map defines an equivalence of categories. 
\end{prop}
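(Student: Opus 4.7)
The plan is to construct a quasi-inverse $\Phi$ to the pullback functor $s^{*}$: given an $\shA$-equivariant sheaf $\mathscr{G}$ on $(\text{Sch}/F)_{\tau}$, define $\Phi(\mathscr{G})$ on $B_{F}\shA$ by setting, for each $T \in B_{F}\shA(U)$,
$$\Phi(\mathscr{G})(T) := \Hom_{\shA_{U}}\bigl(T, \restr{\mathscr{G}}{U}\bigr),$$
the set of $\shA_{U}$-equivariant sheaf morphisms on $(\text{Sch}/U)_{\tau}$ from the torsor $T$ to the restriction of $\mathscr{G}$ (equipped with its induced $\shA_{U}$-action). A morphism $(U', T') \to (U, T)$ in $B_{F}\shA$ lifting $f \colon U' \to U$ is an isomorphism $T' \xrightarrow{\sim} f^{*}T$ of $\shA_{U'}$-torsors; pulling back along $f$ and pre-composing with this isomorphism yields the required restriction map, and the sheaf axiom for $\Phi(\mathscr{G})$ on $B_{F}\shA$ with its induced $\tau$-topology follows from that of $\mathscr{G}$.

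For the natural isomorphism $s^{*} \circ \Phi \simeq \text{id}$, evaluation at the trivial torsor gives
$$s^{*}\Phi(\mathscr{G})(U) = \Hom_{\shA_{U}}(\shA_{U}, \restr{\mathscr{G}}{U}) \xrightarrow{\sim} \mathscr{G}(U)$$
by sending an equivariant morphism to its value on the identity section. Since any such $\phi$ satisfies $\phi(a) = a \cdot \phi(1)$ by equivariance, the inertial $\shA$-action induced by translation automorphisms of $\shA_{U}$ corresponds under this identification to the original $\shA$-action on $\mathscr{G}$.

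For the natural isomorphism $\Phi \circ s^{*} \simeq \text{id}$, construct for a sheaf $\mathscr{F}$ on $B_{F}\shA$ and $T \in B_{F}\shA(U)$ a map $\mathscr{F}(T) \to \Phi(s^{*}\mathscr{F})(T)$ as follows: a section $\xi \in \mathscr{F}(T)$ determines, for each local trivialization $\alpha \colon \shA_{V} \xrightarrow{\sim} \restr{T}{V}$ (viewed as a morphism in $B_{F}\shA(V)$), the pullback $\alpha^{*}\xi \in \mathscr{F}(\shA_{V}) = s^{*}\mathscr{F}(V)$. The assignment $\alpha \mapsto \alpha^{*}\xi$ packages into an $\shA_{U}$-equivariant morphism of sheaves $T \to \restr{s^{*}\mathscr{F}}{U}$: equivariance holds because translating $\alpha$ by $a \in \shA(V)$ corresponds to a translation automorphism of $\shA_{V}$, and $\mathscr{F}$'s functoriality on this automorphism is exactly the inertial action of $a$ on $s^{*}\mathscr{F}(V)$. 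The inverse is constructed by reversing the process: given an equivariant $\phi \colon T \to \restr{s^{*}\mathscr{F}}{U}$, trivializations of $T$ produce a compatible family of local sections of $\mathscr{F}$ that glues, by the sheaf property of $\mathscr{F}$ on $B_{F}\shA$ with its $\tau$-topology, to a unique section of $\mathscr{F}(T)$.

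The main obstacle is verifying that the $\Phi \circ s^{*} \simeq \text{id}$ isomorphism is canonical, functorial, and independent of choices. The key point is that any two local trivializations of $T$ over a common refinement differ by the action of an element of $\shA$, and the inertial action records precisely this indeterminacy on $s^{*}\mathscr{F}$; thus the equivariance condition on $\phi$ encodes exactly the descent datum needed to glue local sections of $\mathscr{F}$ on trivial torsors into a section on $T$. Once this compatibility is established, the two constructions are mutually inverse, completing the equivalence.
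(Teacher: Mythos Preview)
Your argument is correct and follows the standard route to this equivalence: build the quasi-inverse $\Phi(\mathscr{G})(T) = \Hom_{\shA_{U}}(T, \restr{\mathscr{G}}{U})$ and check both composites are naturally isomorphic to the identity. The paper does not prove this proposition at all; it simply cites \cite{Shin}, Remark 2.6. So there is nothing to compare against in the paper itself, and your explicit construction is exactly the kind of argument one finds in the cited reference (and more generally in the literature on equivariant sheaves versus sheaves on classifying stacks). The key point you correctly identify---that equivariance of $\phi$ under $\shA$ is precisely the descent datum needed to glue local sections of $\mathscr{F}$ over trivializations of $T$---is the heart of the matter, and your treatment of it is sound.
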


\begin{proof} See \cite[Remark 2.6]{Shin}.
\end{proof}

\begin{Def} For our fixed $G$ and $\gerbeE \to (\text{Sch}/F)_{\text{fpqc}}$ a gerbe, define the fibered category $\textbf{Tors}(G,\gerbeE)$ over $(\text{Sch}/F)_{\text{fpqc}}$, where the fiber over $U \in \text{Ob}(\text{Sch}/F)$ is the category of (fpqc) $G_{\gerbeE_{U}}$-torsors on $\gerbeE_{U}$, with a morphism from $\mathscr{T}$ to $\mathscr{S}$ lying above $f: V \to U$ given by a morphism of $G_{\gerbeE_{V}}$-torsors $\mathscr{T} \to f^{*}\mathscr{S}$. Here $f^{*}\mathscr{S}$ denotes the pullback of the $G_{\gerbeE_{U}}$-torsor $\mathscr{S}$ to $\gerbeE_{V}$ via the morphism $\gerbeE_{V} := \gerbeE \times_{\text{Sch}/F} (\text{Sch}/V) \to \gerbeE \times_{\text{Sch}/F} (\text{Sch}/U) =: \gerbeE_{U}$ induced by the functor $\text{Sch}/V \to \text{Sch}/U$ sending $W \to V$ to $W \to V \xrightarrow{f} U$.

\end{Def}

\begin{prop} The fibered category $\textbf{Tors}(G,\gerbeE) \to (\text{Sch}/F)_{\text{fpqc}}$ is a stack.
\end{prop}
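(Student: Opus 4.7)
To prove that $\textbf{Tors}(G,\gerbeE) \to (\text{Sch}/F)_{\text{fpqc}}$ is a stack, we must verify two things on top of the fibered-category structure already built into the definition: (i) for any two objects $\mathscr{T}, \mathscr{S} \in \textbf{Tors}(G,\gerbeE)(U)$, the presheaf $\underline{\Hom}(\mathscr{T},\mathscr{S})$ on $(\text{Sch}/U)_{\text{fpqc}}$ is a sheaf, and (ii) every descent datum is effective. The plan is to reduce both statements to the fact that sheaves on any site satisfy descent, applied here to the site of $\gerbeE_U$ with its induced fpqc topology.

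For (i), observe that a morphism of $G_{\gerbeE_V}$-torsors $\mathscr{T}_V \to \mathscr{S}_V$ is the same as a $G_{\gerbeE_V}$-equivariant map between sheaves on $\gerbeE_V$. Given an fpqc cover $\{W_i \to V\}$ in $\text{Sch}/U$, the pullback cover $\{\gerbeE_{W_i} \to \gerbeE_V\}$ is a cover in the induced fpqc topology on $\gerbeE_V$, and a compatible family of local morphisms of torsors glues to a unique global morphism of sheaves by the sheaf property of $\mathscr{S}_V$. The glued map is automatically $G_{\gerbeE_V}$-equivariant because equivariance is a condition on local sections. This verifies the prestack condition.

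For (ii), fix a cover $\{V_i \to U\}$ and a descent datum consisting of torsors $\mathscr{T}_i$ on $\gerbeE_{V_i}$ together with isomorphisms $f_{ij} \colon p_1^*\mathscr{T}_i \xrightarrow{\sim} p_2^*\mathscr{T}_j$ on $\gerbeE_{V_{ij}}$ satisfying the cocycle condition on $\gerbeE_{V_{ijk}}$. Since the $\gerbeE_{V_i} \to \gerbeE_U$ again form an fpqc cover on $\gerbeE_U$ with its induced topology, the underlying sheaves of sets glue via the $f_{ij}$ to a sheaf $\mathscr{T}$ on $\gerbeE_U$ by descent for sheaves on a site (cf.\ \cite{Stacksproj}, \S I.7.26). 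The $G_{\gerbeE_{V_i}}$-actions on $\mathscr{T}_i$ glue to a $G_{\gerbeE_U}$-action on $\mathscr{T}$ because the $f_{ij}$ are $G$-equivariant by the definition of a morphism in $\textbf{Tors}(G,\gerbeE)$, so the local action maps $\mathscr{T}_i \times G_{\gerbeE_{V_i}} \to \mathscr{T}_i$ themselves form a descent datum and glue uniquely. Local triviality is preserved: each $\mathscr{T}_i$ is locally trivial, so refining $\{V_i\}$ if necessary and pulling the trivializing covers back produces a trivializing cover of $\gerbeE_U$ for $\mathscr{T}$. Hence $\mathscr{T}$ is a $G_{\gerbeE_U}$-torsor, and its restrictions to $\gerbeE_{V_i}$ are canonically isomorphic to $\mathscr{T}_i$ compatibly with the $f_{ij}$, so the descent datum is effective.

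The only mildly technical point is the interplay between the two layers of topology, namely the fpqc topology on $\text{Sch}/F$ used to form covers in the base and the induced fpqc topology on $\gerbeE$ used to define torsors; once one unwinds that a cover $\{V_i \to U\}$ in $(\text{Sch}/F)_{\text{fpqc}}$ induces a cover $\{\gerbeE_{V_i} \to \gerbeE_U\}$ in the induced topology on $\gerbeE_U$, both (i) and (ii) are formal consequences of descent for sheaves with group-object structure on a site.
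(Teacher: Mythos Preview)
Your proof is correct and follows essentially the same approach as the paper: both verify the prestack and effectivity conditions by reducing to gluing of morphisms of sheaves and gluing of sheaves (with compatible $G$-action) on the site $\gerbeE_U$ with its induced fpqc topology, invoking \cite{Stacksproj}, \S I.7.26. Your version is somewhat more detailed in spelling out why the glued sheaf inherits a $G$-action and remains locally trivial, but the underlying argument is the same.
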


\begin{proof}
Our above construction is clearly a fibered category, and the remaining conditions, namely that the isomorphism functor associated to the fiber over $U \in \text{Ob}(\text{Sch}/F)$ is a sheaf and that all descent data from $(\text{Sch}/F)_{\text{fpqc}}$ are effective, follow from (respectively) gluing of morphisms of torsors and gluing of torsors on stacks over $(\text{Sch}/F)_{\text{fpqc}}$ with the induced fpqc topology, which follow easily from the discussion in \cite[\S 04TP]{Stacksproj} (with our stack being $\gerbeE \to (\text{Sch}/F)_{\text{fpqc}}$).
\end{proof}

We now introduce the category of $a$-twisted $G$-torsors on the site $(\text{Sch}/F)_{\text{fpqc}}$, corresponding to a \v{C}ech 2-cocycle $a \in \shA(U_{2})$, whose purpose is to give a concrete interpretation of the above stack in the case where $\gerbeE = \gerbeE_{a}$. This definition is a generalization of \cite[Definition 1.2.1]{Caldararu}.

\begin{Def}\label{twistedtorsordefinition} An \textit{$a$-twisted $G$-torsor over $F$} is a quadruple $(T, \psi, m, n)$ consisting of a $G_{U_{0}}$-torsor $m: T \times G_{U_{0}} \to T$ over $U_{0}$, an $\shA_{U_{0}}$-action $n: \shA_{U_{0}} \times_{U_{0}} T \to T$ which commutes with $m$, and an $\shA$-equivariant isomorphism of $G_{U_{1}}$-torsors $\psi: p_{2}^{*}T \to p_{1}^{*}T$ satisfying the \textit{twisted cocycle condition} 
$$(p_{12}^{*} \psi )\circ (p_{23}^{*}\psi) \circ (p_{13}^{*}\psi)^{-1}= n_{a}$$ 
on $q_{1}^{*}T$. We occasionally abbreviate the quadruple $(T,\psi,m,n)$ by $(T,\psi)$ (in such cases there will be no ambiguity regarding the associated actions). A morphism of $a$-twisted $G$-torsors on $(\text{Sch}/F)_{\text{fpqc}}$, $h: (T, \psi_{T}, m_{T}, n_{T}) \to (S,\psi_{S}, m_{S}, n_{S})$ is an $\shA$-equivariant morphism of $G_{U_{0}}$-torsors over $U_{0}$, $h: T \to S$, satisfying $\psi_{S} \circ p_{2}^{*}h = p_{1}^{*}h \circ \psi_{T}$. We get an associated fibered category over $\text{Sch}/F$, denoted by $\textbf{Tors}_{a}(G, \shA, F)$, by letting the fiber over $V$ be all \textit{$a_{V}$-twisted-torsors over $V$}, where $a_{V}$ is the image of $a$ in $\shA(V \times U_{2})$, defined identically as above after replacing $U_{0}, U_{1}$ by $V \times U_{0}$ and $V \times U_{1} = (V \times U_{0}) \times_{V} (V \times U_{0})$.
\end{Def}

The following lemma provides a different way to interpret some aspects of the above definition.

\begin{lem}\label{twisthom} Assume that $G$ is abelian. For a $G_{U_{0}}$-torsor $T$, having a $G_{U_{0}}$-equivariant $\shA_{U_{0}}$-action on $T$ is equivalent to requiring that the $\shA_{U_{0}}$-action be induced by a group homomorphism $\shA_{U_{0}} \to G_{U_{0}}$, and insisting further that there is a twisted gluing map giving $T$ (along with the two given actions) the structure of an $a$-twisted $G$-torsor implies that this homomorphism is defined over $F$. 
\end{lem}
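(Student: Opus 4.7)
The plan is to establish the two assertions of the lemma in turn.

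For the first assertion, I exploit the fact that, because $G$ is abelian and $T$ is a (right) $G_{U_{0}}$-torsor, the sheaf of $G_{U_{0}}$-equivariant endomorphisms of $T$ is canonically identified with $G_{U_{0}}$ itself: locally on $U_{0}$ one has $T \cong G_{U_{0}}$ as right torsors, and the $G$-equivariant endomorphisms of the trivial torsor are exactly left translations, which coincide with right translations since $G$ is abelian; by Lemma \ref{bandlemma}-style gluing this identification is canonical and independent of the local trivialization. Under this identification, any $G_{U_{0}}$-equivariant action $n \colon \shA_{U_{0}} \times_{U_{0}} T \to T$ associates to a section $a$ of $\shA_{U_{0}}$ over some $V \to U_{0}$ a section $\phi(a) \in G(V)$ characterized by $n(a,t) = t \cdot \phi(a)$. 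Functoriality of the identification shows $\phi$ is a morphism of sheaves $\shA_{U_{0}} \to G_{U_{0}}$, and the action axiom $n(ab, t) = n(a, n(b, t))$ becomes $\phi(ab) = \phi(a)\phi(b)$, so $\phi$ is a group homomorphism. Conversely, if $n$ is defined by such a $\phi$, the abelianness of $G$ immediately gives that $n$ commutes with the right $G$-action.

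For the second assertion, I invoke fpqc descent of morphisms along $U_{0} \to \Spec(F)$: $\phi \colon \shA_{U_{0}} \to G_{U_{0}}$ descends to an $F$-morphism if and only if $p_{1}^{*}\phi = p_{2}^{*}\phi$ as morphisms $\shA_{U_{1}} \to G_{U_{1}}$. The $\shA$-equivariance of the twisted gluing map $\psi \colon p_{2}^{*}T \xrightarrow{\sim} p_{1}^{*}T$ says that for any local section $a$ of $\shA_{U_{1}}$ and $t$ of $p_{2}^{*}T$ one has
\[
\psi\bigl(n(a,t)\bigr) = n\bigl(a, \psi(t)\bigr).
\]
Since the $\shA$-actions on $p_{i}^{*}T$ are induced by $p_{i}^{*}\phi$ and $\psi$ is $G_{U_{1}}$-equivariant, the left-hand side equals $\psi(t) \cdot p_{2}^{*}\phi(a)$, while the right-hand side equals $\psi(t) \cdot p_{1}^{*}\phi(a)$. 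Since $p_{1}^{*}T$ is a $G_{U_{1}}$-torsor, the actions of $p_{1}^{*}\phi(a)$ and $p_{2}^{*}\phi(a)$ on it coincide if and only if $p_{1}^{*}\phi(a) = p_{2}^{*}\phi(a)$, and this forces $p_{1}^{*}\phi = p_{2}^{*}\phi$, as required. Note that the twisted cocycle identity on $\psi$ plays no role in this argument; only the $\shA$-equivariance of $\psi$ is used.

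The main (relatively mild) subtlety is in the first step: one must verify that the identification of $G_{U_{0}}$-equivariant endomorphisms of $T$ with sections of $G_{U_{0}}$ is functorial in the base, so that the assignment $a \mapsto \phi(a)$ genuinely defines a morphism of sheaves (and hence of group schemes) rather than merely a map on points. Once this functorial identification is in hand, the rest is routine bookkeeping with torsor actions, together with an application of fpqc descent for morphisms along $\Spec(\bar{F}) \to \Spec(F)$.
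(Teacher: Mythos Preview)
Your proposal is correct and follows essentially the same approach as the paper: identify each $G$-equivariant automorphism $n_{a}$ of the torsor with translation by a unique element of $G$ (using abelianness), observe functoriality to get a sheaf homomorphism $\phi\colon \shA_{U_{0}}\to G_{U_{0}}$, and then use the $\shA$- and $G$-equivariance of $\psi$ to show $p_{1}^{*}\phi=p_{2}^{*}\phi$, concluding by fpqc descent of morphisms. Your observation that the twisted cocycle identity itself is not used---only the equivariance of $\psi$---is also consistent with the paper's argument.
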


\begin{proof}

For $V \to U_{0}$, if we fix $x \in \shA(V)$, then $n_{x} \colon T_{V} \xrightarrow{\sim} T_{V}$ is an automorphism of $G_{V}$-torsors, and is thus right-translation $m_{g_{x}}$  by some unique $g_{x} =: f(x) \in G(V)$, and the assignment $a \mapsto g_{x}$ is functorial in $V$ by uniqueness of $g_{x}$, and hence we get a group homomorphism $\shA_{U_{0}} \xrightarrow{f} G_{U_{0}}$ giving the $\shA$-action. 

To show that the homomorphism $f$ descends to a morphism $\shA \to G$, it is enough by fpqc descent of morphisms to show that $p_{1}^{*}f(x) = p_{2}^{*}f(x)$ for fixed $x \in \shA(V)$ with $V \to U_{1}$. Since $p_{1}^{*}f$ is induced by the $\shA_{U_{1}}$-action on $p_{1}^{*}T$, $p_{2}^{*}f$ by the $\shA_{U_{1}}$-action on $p_{2}^{*}T$, and we have an $\shA_{U_{1}}$-equivariant morphism of $G_{U_{1}}$-torsors $\psi \colon p_{2}^{*}T \xrightarrow{\sim} p_{1}^{*}T$, this means that since the diagram
\[
\begin{tikzcd}
 (p_{2}^{*}T)_{V} \arrow[r, "\psi"] \arrow[d, "m_{p_{2}^{*}f(x)}"] &  (p_{1}^{*}T)_{V} \arrow[d, "m_{p_{1}^{*}f(x)}"] \\
( p_{2}^{*}T)_{V} \arrow[r, "\psi"] &  (p_{1}^{*}T)_{V}  
\end{tikzcd}
\]
commutes and $\psi$ is $G_{U_{1}}$-equivariant, the right-hand translation $m_{p_{1}^{*}f(x)}$ equals $\psi \circ m_{p_{2}^{*}f(x)} \circ \psi^{-1} = m_{p_{2}^{*}f(x)}$, giving the result.
\end{proof}

\begin{prop} The fibered category $\textbf{Tors}_{a}(G, \shA, F) \to (\text{Sch}/F)_{\text{fpqc}}$ is a stack.
\end{prop}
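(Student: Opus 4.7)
The plan is to verify directly the three requirements: that $\textbf{Tors}_{a}(G, \shA, F)$ is a fibered category, that isomorphism presheaves between pairs of objects are sheaves (the prestack condition), and that descent data are effective. The first point is essentially built into Definition \ref{twistedtorsordefinition}: given $f \colon V \to V'$ and $(T, \psi, m, n) \in \textbf{Tors}_{a}(G, \shA, F)(V')$, the pullback is the quadruple $((f \times \text{id}_{U_0})^{*}T, (f \times \text{id}_{U_1})^{*}\psi, (f \times \text{id}_{U_0})^{*}m, (f \times \text{id}_{U_0})^{*}n)$, and the twisted cocycle condition on the new gluing map is the pullback of the corresponding identity for $(T, \psi)$, which holds because $a_{V}$ is by construction the image of $a_{V'}$. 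The associated strongly cartesian morphism is the identity on the pullback torsor, and with this choice the standard verifications that it is fibered are mechanical.

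For the prestack condition, fix $V \in \text{Ob}(\text{Sch}/F)$ and two objects $(T,\psi_{T}), (S,\psi_{S}) \in \textbf{Tors}_{a}(G,\shA,F)(V)$. Given an fpqc cover $\{W_{i} \to V\}$ and morphisms $h_{i} \colon (T_{W_{i}}, \psi_{T,W_{i}}) \to (S_{W_{i}}, \psi_{S,W_{i}})$ which agree on overlaps, I would first use descent for morphisms of $G$-torsors on the site $(\text{Sch}/(V \times_{F} U_{0}))_{\text{fpqc}}$ to get a unique morphism $h \colon T \to S$ of $G_{V \times U_{0}}$-torsors over $V \times_{F} U_{0}$ (here invoking the discussion in \cite{Stacksproj}, \S I.7.26, via the sheaf $\underline{\text{Isom}}(T,S)$). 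Since the conditions that $h$ be $\shA$-equivariant and that $p_{1}^{*}h \circ \psi_{T} = \psi_{S} \circ p_{2}^{*}h$ are each equalities of maps of sheaves which hold locally on the cover of $V \times_{F} U_{0}$ obtained by base-change, they hold globally, so $h$ is a morphism of $a$-twisted $G$-torsors.

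The main work is the effectivity of descent. Fix $V$ with fpqc cover $\{V_{i} \to V\}_{i \in I}$ and a descent datum of $a$-twisted $G$-torsors $\{(T_{i}, \psi_{i}, m_{i}, n_{i})\}_{i \in I}$ together with compatible isomorphisms $\{\phi_{ij}\}$ satisfying the cocycle condition (where all the structure sits over the induced fpqc cover $\{V_{i} \times_{F} U_{0} \to V \times_{F} U_{0}\}$). The plan is: first descend the underlying $G_{V_{i} \times U_{0}}$-torsors $T_{i}$ with their $G$-actions via descent for $G$-torsors on $(\text{Sch}/(V \times_{F} U_{0}))_{\text{fpqc}}$ (applying Proposition \ref{representability} and the sheaf-theoretic gluing of \cite{Stacksproj}, \S I.7.26) to obtain a $G_{V \times U_{0}}$-torsor $T$ over $V \times_{F} U_{0}$. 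Next, the local $\shA$-actions $n_{i}$ are compatible with the $\phi_{ij}$ by definition of the morphisms in $\textbf{Tors}_{a}(G,\shA,F)$, so they glue to an $\shA$-action $n$ on $T$ that commutes with the $G$-action. For the twisted gluing map, observe that the isomorphisms $\psi_{i} \colon p_{2}^{*}T_{i} \to p_{1}^{*}T_{i}$ live in the sheaf $\underline{\text{Isom}}(p_{2}^{*}T, p_{1}^{*}T)$ over $V \times_{F} U_{1}$, and compatibility with the $\phi_{ij}$ (which is the morphism condition in $\textbf{Tors}_{a}$) means they agree on the cover $\{V_{i} \times_{F} U_{1} \to V \times_{F} U_{1}\}$, so they glue to a single $\shA$-equivariant morphism of $G_{V \times U_{1}}$-torsors $\psi \colon p_{2}^{*}T \to p_{1}^{*}T$. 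Finally, the twisted cocycle identity $(p_{12}^{*}\psi) \circ (p_{23}^{*}\psi) \circ (p_{13}^{*}\psi)^{-1} = n_{a}$ is an equation of morphisms of sheaves over $V \times_{F} U_{2}$ which pulls back on each $V_{i} \times_{F} U_{2}$ to the corresponding identity for $(T_{i}, \psi_{i})$, hence holds after descent by the sheaf property of $\underline{\text{Isom}}$ again.

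The step I expect to require the most care is the descent of the gluing map, because it lives over $V \times_{F} U_{1}$ rather than $V \times_{F} U_{0}$, and one has to be careful that the cover $\{V_{i} \times_{F} U_{1} \to V \times_{F} U_{1}\}$ really does arise as a base-change along which the $\psi_{i}$ are compatible in the correct equivariant sense; the rest is bookkeeping of pullbacks and invocation of standard fpqc descent.
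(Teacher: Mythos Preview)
Your proof is correct and follows essentially the same approach as the paper: both verify the prestack condition via gluing of morphisms of sheaves (\cite{Stacksproj}, \S I.7.26) and effectivity of descent by first descending the underlying $G$-torsor, then the $\shA$-action, and checking the twisted gluing map and cocycle identity locally. Your version is more explicit about descending $\psi$ over $V \times_{F} U_{1}$, but the strategy is the same.
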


\begin{proof}
The isomorphism functor on $V \in \text{Ob}(\text{Sch}/F)$ associated to the fiber category over $V$ is evidently a sheaf, by gluing of morphism of sheaves (again, see \cite[\S 04TP]{Stacksproj}), and if the equivariance conditions hold on an fpqc cover, they hold on $V$. Thus, all that remains to check is effectivity of descent data. This follows because of gluing of $G$-torsors on $(\text{Sch}/U_{0})_{\text{fpqc}}$ with the fpqc topology, and the $\shA$-action on compatible torsors defined on any cover $\{V_{i} \to V \}$ extends to a $\shA$-action of the glued torsor on $V$ by gluing of morphisms (using $\shA$-equivariance of morphisms in $\textbf{Tors}_{a}(G, \shA, F)$). Again, the commutation relations can be checked locally.
\end{proof}

The next fundamental result shows that the above two notions of torsors actually coincide. We begin with a lemma that addresses the case when $\gerbeE = B_{F}\shA$. 

\begin{lem}\label{classtors} There is an equivalence of categories over $\text{Sch}/F$ $$\eta: \textbf{Tors}(G,B_{F}\shA) \to \textbf{Tors}_{e_{\shA}}(G, \shA, F).$$
\end{lem}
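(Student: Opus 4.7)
The plan is to combine the equivalence of Proposition \ref{classifyingsheaf} between sheaves on $B_F\shA$ and $\shA$-equivariant sheaves on $(\text{Sch}/F)_\tau$ with standard fpqc descent from $U_0$ to $F$. The forward functor $\eta$ sends a $G_{B_F\shA}$-torsor $\mathscr{T}$ to the quadruple $(s^*\mathscr{T}|_{U_0}, \psi_{\text{can}}, m, n)$, where $s \colon \text{Sch}/F \to B_F\shA$ is the canonical section sending $U$ to the trivial $\shA_U$-torsor, $m$ is the $G$-action inherited from the $G_{B_F\shA}$-action (using $s^*G_{B_F\shA}=G$), $n$ is the pullback of the inertial $\shA$-action on $\mathscr{T}$, and $\psi_{\text{can}}$ is the canonical identification $p_2^*(s^*\mathscr{T}|_{U_0}) \cong p_1^*(s^*\mathscr{T}|_{U_0})$ coming from the base change. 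The actions $m$ and $n$ commute because the inertial action consists of $G_{B_F\shA}$-torsor automorphisms (it arises from automorphisms of objects of the gerbe, while the torsor action is a sheaf morphism on $B_F\shA$), and $\psi_{\text{can}}$ satisfies the $e_\shA$-twisted cocycle condition trivially, since $e_\shA$ is the identity so $n_{e_\shA} = \text{id}$.

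The nontrivial step in defining $\eta$ is verifying that $s^*\mathscr{T}$ is a $G$-torsor over $\text{Sch}/F$: I would argue that any object $X$ of $B_F\shA(V)$, being an $\shA_V$-torsor, admits local trivializations $s(V') \xrightarrow{\sim} X_{V'}$ over some cover $V' \to V$, so any cover of $X$ in $B_F\shA$ on which $\mathscr{T}$ trivializes can be refined to one consisting of objects in the essential image of $s$, from which $\mathcal{S} := s^*\mathscr{T}$ inherits local trivializations over $V'$. Conversely, the inverse functor takes $(T, \psi, m, n)$ and applies fpqc descent to obtain a $G$-torsor $\mathcal{S}$ over $F$ (using Proposition \ref{representability}, since the cocycle condition is ordinary as $e_\shA$ is trivial); the $\shA$-equivariance of $\psi$ combined with the commutation of $m$ and $n$ ensures that the $\shA$-action on $T$ descends to a commuting $\shA$-action on $\mathcal{S}$. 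Applying Proposition \ref{classifyingsheaf} in reverse to $\mathcal{S}$ (with its commuting $G$- and $\shA$-actions) produces a sheaf $\mathscr{T}$ on $B_F\shA$ carrying a $G_{B_F\shA}$-action and satisfying $s^*\mathscr{T} = \mathcal{S}$; the torsor property for $\mathscr{T}$ on $B_F\shA$ follows by running the cover argument above in reverse.

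Functoriality on morphisms in both directions is immediate from Proposition \ref{classifyingsheaf}, since morphisms of torsors in each category are exactly equivariant morphisms of the underlying sheaves, and the two composites are canonically naturally isomorphic to the identity functors by the same proposition. The main obstacle, and only real subtlety, is the compatibility of the two torsor conditions, which reduces to the observation that every object of $B_F\shA$ is locally isomorphic to one in the essential image of $s$; everything else is bookkeeping on top of Proposition \ref{classifyingsheaf} and standard fpqc descent.
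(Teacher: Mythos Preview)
Your proposal is correct and follows essentially the same route as the paper: in both, the forward functor is pullback along the canonical section $s$, and the inverse is built by applying fpqc descent to $(T,\psi)$ to get a $G$-torsor $\mathcal{S}$ over $F$ with commuting $\shA$-action, and then invoking Proposition \ref{classifyingsheaf}. The one point of divergence is how the $G_{B_F\shA}$-action on the resulting sheaf $\mathscr{T}$ is obtained. You transfer the morphism $\mathcal{S}\times G\to\mathcal{S}$ through the equivalence of Proposition \ref{classifyingsheaf} directly, using that $G$ with trivial $\shA$-action corresponds to $G_{B_F\shA}$ and that the action map is $\shA$-equivariant precisely because $m$ and $n$ commute; this is clean and entirely valid. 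The paper instead unpacks this step by hand: it defines the action on the image $\mathcal{C}=s(\text{Sch}/F)$ via $\pi^*$ of the $G$-action on $\mathcal{S}$, then extends to an arbitrary object $X$ by conjugating through local trivializations $h_{X_i}\colon X_i\xrightarrow{\sim}\shA_{V_i}$, and checks that the result is independent of the choice of $h_{X_i}$ (which again reduces to the commutation of $m$ and $n$). The paper's explicit construction buys a reusable template --- the same ``define on $\mathcal{C}$, extend by local trivializations, check independence'' pattern reappears in the proof of Lemma \ref{uniqueness} and in the discussion following it --- but for the purposes of this lemma your categorical shortcut is equivalent and arguably tidier.
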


\begin{proof} Set $\gerbeE := B_{F}\shA \xrightarrow{\pi} \text{Sch}/F$. If we start with the data of an object $(T,\psi)$ in $\textbf{Tors}_{e_{\shA}}(G, \shA, F)$, the map $\psi$ furnishes $T$ with a descent datum (of torsors, not just sheaves) with respect to the fpqc cover $U_{0} \to \Spec(F)$. By gluing of fpqc sheaves (see \cite[\S 04TP]{Stacksproj}) such an object then gives a $G$-torsor on $(\text{Sch}/F)_{\text{fpqc}}$ with $G$-equivariant $\shA$-action. By Proposition \ref{classifyingsheaf}, this defines a sheaf $\mathscr{T}$ on $B_{F}\shA$, so all we need to do is define the $\GE$-action, $\tilde{m}: \mathscr{T} \times \GE \to \mathscr{T}$. 

Denote by $s \colon \text{Sch}/F \to B_{F}\shA$ the canonical section. Denote by $\mathcal{C}$ the (categorical) image of this embedding of categories. We may define a morphism of sheaves 
\begin{equation}\label{gaction}
\restr{\mathscr{T}}{\mathcal{C}} \times \restr{\GE}{\mathcal{C}} \to \restr{\mathscr{T}}{\mathcal{C}}
\end{equation}
by applying $\pi^{p}$ (notation as in Stacksproject, 00WU) to the action $T \times G \to T$. 

For an arbitrary $\shA$-torsor over $V$, say $X$, we may find an fpqc cover $\{V_{i} \xrightarrow{f_{i}} V\}$ such that we have isomorphisms of $\shA_{V_{i}}$-torsors $h_{X_{i}} \colon X_{i} := f_{i}^{*}X \xrightarrow{\sim} \shA_{V_{i}}$, and, if $s_{X_{i}} \colon \text{Sch}/V_{i} \to \gerbeE$ denotes the embedding of categories induced by $X_{i}$, we can define an action
\begin{equation}\label{gaction2}
\restr{\mathscr{T}}{s_{X_{i}}(\text{Sch}/V_{i})} \times \restr{\GE}{s_{X_{i}}(\text{Sch}/V_{i})} \to \restr{\mathscr{T}}{s_{X_{i}}(\text{Sch}/V_{i})}
\end{equation}
by conjugating our above action \eqref{gaction} by $h_{X_{i}} \colon s_{X_{i}}(\text{Sch}/V_{i}) \xrightarrow{\sim} \mathcal{C}/\shA_{V_{i}}$ (where we identify $h_{X_{i}}$ with the induced equivalence between the embedded categories). To check that this glues to give a morphism of sheaves $\mathscr{T} \times \GE \to \mathscr{T}$, it's enough to check that the action defined in \eqref{gaction2} is independent of the choice of $h_{X_{i}}$, which is equivalent to showing that the action in \eqref{gaction} is equivariant under the inertial action. This follows because the $G$-action on $T$ is $\shA$-equivariant. 

We have thus constructed a map  $\textbf{Tors}_{e_{\shA}}(G, \shA, F) \to \textbf{Tors}(G,B_{F}\shA)$ which is the inverse of the map $\textbf{Tors}(G,B_{F}\shA) \to \textbf{Tors}_{e_{\shA}}(G, \shA, F)$ obtained by pulling back by the section $s$.
\end{proof}

\begin{prop}\label{gerbesheaf}
For $\gerbeE = \gerbeE_{a}$, there is a canonical equivalence of categories $\eta: \textbf{Tors}(G,\gerbeE) \to \textbf{Tors}_{a}(G, \shA, F)$.
\end{prop}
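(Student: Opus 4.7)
The plan is to construct $\eta$ directly using the canonical section $x \colon \text{Sch}/U_{0} \to \gerbeE_{a}$ and the isomorphism $\varphi \colon p_{1}^{*}x \xrightarrow{\sim} p_{2}^{*}x$ in $\gerbeE_{a}(U_{1})$ furnished by Lemma \ref{section}, exploiting the identity $d\varphi = \iota_{a}$ to match the twisted cocycle conditions on both sides.

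On objects, for a $G_{\gerbeE_{a}}$-torsor $\mathscr{T}$, I would set $T := x^{*}\mathscr{T}$, a $G_{U_{0}}$-torsor on $U_{0}$, with right $G_{U_{0}}$-action $m$ given by pullback of the $\GE$-action and $\shA_{U_{0}}$-action $n$ given by pullback of the inertial action $\iota \colon \shA_{\gerbeE_{a}} \times_{\gerbeE_{a}} \mathscr{T} \to \mathscr{T}$. These two actions commute because $\iota_{x}$ is always $G_{\gerbeE_{a}}$-equivariant (as in the proof of Lemma \ref{altres}). The twisted gluing map is defined by $\psi := \varphi^{\sharp} \colon p_{2}^{*}T \xrightarrow{\sim} p_{1}^{*}T$ (the morphism of sheaves on $(\text{Sch}/U_{1})_{\text{fpqc}}$ induced by $\varphi$), and its $\shA$-equivariance is automatic from the naturality of $\iota$. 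The crucial verification is that applying the sheaf $\mathscr{T}$ to the identity $d\varphi = (p_{13}^{*}\varphi)^{-1} \circ (p_{23}^{*}\varphi) \circ (p_{12}^{*}\varphi) = \iota_{a}$ of automorphisms of $q_{1}^{*}x$ in $\gerbeE_{a}(U_{2})$ produces exactly the relation $(p_{12}^{*}\psi) \circ (p_{23}^{*}\psi) \circ (p_{13}^{*}\psi)^{-1} = n_{a}$ on $q_{1}^{*}T$. On morphisms, $\eta$ is just pullback along $x$, and checking compatibility with $\psi$ amounts to the functoriality of $\mathscr{T}$ applied to $\varphi$.

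For the inverse, I would mimic the proof of Lemma \ref{classtors}. Given $(T, \psi, m, n)$ in $\textbf{Tors}_{a}(G,\shA,F)$, the pair $\psi$ provides descent data that lets me define the underlying sheaf $\mathscr{T}$ on $\gerbeE_{a}$ as follows: on the essential image of $x$ in $\gerbeE_{a}$, declare $\mathscr{T}$ to be (the image of) $T$, together with its $\shA$-action encoding the inertial action. For an arbitrary object $(Q, \psi_{Q}) \in \gerbeE_{a}(V)$, which becomes isomorphic to $x(V_{i})$ after an fpqc cover $\{V_{i} \to V\}$, transport the action by conjugation along these local isomorphisms; the resulting morphisms glue because the $\shA$-action on $T$ is $G$-equivariant, so the outcome is independent of the chosen trivialization (as in the argument surrounding \eqref{gaction2} in Lemma \ref{classtors}). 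The $G_{\gerbeE_{a}}$-torsor structure then descends from the $G_{U_{0}}$-torsor structure on $T$ via $\psi$, using fpqc gluing of torsors (\cite{Stacksproj}, \S I.7.26).

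The main obstacle will be tracking the bookkeeping in the verification that $d\varphi = \iota_{a}$ transports to the exact twisted cocycle condition for $\psi$, taking care with the direction of $\psi$ (from $p_{2}^{*}T$ to $p_{1}^{*}T$, opposite to $\varphi$) and with the identifications $p_{ij}^{*}q_{k}^{*} = q_{l}^{*}$ between various pullbacks to $U_{2}$; the sign convention foreshadowed in the remark after Corollary \ref{equivalence} is designed precisely for this step to come out clean. Once that identity is pinned down, checking that $\eta \circ \eta^{-1}$ and $\eta^{-1} \circ \eta$ are canonically isomorphic to the identity reduces to the fact that both constructions are inverse equivalences after pullback along the cover $x$, which is handled by Proposition \ref{localequivalence} together with Lemma \ref{classtors}.
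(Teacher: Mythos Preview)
Your forward construction of $\eta$ matches the paper's exactly: pull back $\mathscr{T}$ along the section $x$, take $\psi$ to be induced by $\varphi$, and read off the twisted cocycle condition from $d\varphi = \iota_{a}$.

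Where you diverge is in establishing that $\eta$ is an equivalence. The paper does \emph{not} construct an explicit inverse. Instead, having already shown that both $\textbf{Tors}(G,\gerbeE_{a})$ and $\textbf{Tors}_{a}(G,\shA,F)$ are stacks over $(\text{Sch}/F)_{\text{fpqc}}$, it applies Proposition~\ref{localequivalence} directly to $\eta$: it suffices to check that $\eta_{U_{0}}$ is an equivalence. Over $U_{0}$ the class $[a]$ becomes trivial, so after noting that $\textbf{Tors}_{a}$ and $\textbf{Tors}_{b}$ are equivalent for cohomologous $a,b$ (and likewise $\gerbeE_{a} \cong \gerbeE_{b}$ via Construction~\ref{changeofgerbe}), one reduces to $a = e_{\shA}$ and $\gerbeE = B_{F}\shA$, where Lemma~\ref{classtors} finishes the job. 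This is the argument you gesture at in your final sentence, and it makes your middle paragraph unnecessary.

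Your explicit inverse construction is plausible but the sketch is imprecise at the key point. You write that the locally defined values of $\mathscr{T}$ glue ``because the $\shA$-action on $T$ is $G$-equivariant''; this is the reason gluing works in Lemma~\ref{classtors}, where $\psi$ is an honest descent datum. Here $\psi$ only satisfies the \emph{twisted} cocycle condition $d\psi = n_{a}$, so the obstruction to gluing on triple overlaps is $n_{a}$, and what absorbs it is precisely the inertial action on $\gerbeE_{a}$---the ambiguity in your local trivializations $h_{i}$ lives in $\shA$, and on triple overlaps their mismatch is $a$. Spelling this out is doable but fiddly, and the paper's localization argument sidesteps it entirely.
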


\begin{proof} The argument largely follows that in \cite[\S 2.1.3]{Lieblich} (where we replace the action via a character $\chi$ by the inertial action). Let $x: \text{Sch}/U_{0} \to \gerbeE$ be the section constructed in Lemma \ref{section}; let $X$ be the corresponding lift of $U_{0}$. This same lemma also tells us that the two pullbacks of $x$ to $U_{1}$, the maps $x_{1}$ and $x_{2}$, are isomorphic via $\varphi$; this means that for every $V \xrightarrow{f} U_{1}$, we have an isomorphism $\varphi_{V}: (p_{1} \circ f)^{*}X \to  (p_{2} \circ f)^{*}X$ in $\gerbeE(V)$.

Let $\mathscr{T} \in \textbf{Tors}(G,\gerbeE)(\Spec(F))$ (the argument is identical for a $G_{\gerbeE_{V}}$-torsor, $V \in \text{Ob}(\text{Sch}/F)$). Then define the $G$-torsor over $(\text{Sch}/U_{0})_{\text{fpqc}}$ to be $T:= x^{*}\mathscr{T}$ (sending $V \xrightarrow{f} U_{0}$ to $\mathscr{T}(f^{*}X)$). We know that $\shA_{\gerbeE}$ acts on $\mathscr{T}$ via the inertial action, denoted by $\iota: \shA_{\gerbeE} \times \mathscr{T} \to \mathscr{T}$. As such, we get an $\shA$-action on $T$ via taking $x^{*}\iota$ (using that $x^{*}\shA_{\gerbeE} = \shA$). Similarly, we can set $\psi$ to be the $U_{1}$-sheaf isomorphism $p_{2}^{*}x^{*}\mathscr{T} \to p_{1}^{*}x^{*}\mathscr{T}$ induced by the natural transformation $\varphi: x \circ p_{1} \xrightarrow{\sim} x \circ p_{2}$. One sees that $\psi$ satisfies the twisted cocycle condition, since the map from $(q_{1}^{*}x)(\text{Sch}/U_{2})$ to itself given by the natural transformation of $q_{1}^{*}x$: $$d\varphi = (p_{13}^{*}\varphi)^{-1} \circ (p_{23}^{*} \varphi) \circ (p_{12}^{*}\varphi)$$ equals $\iota_{a}$, so that the induced map $q_{1}^{*}T \to q_{1}^{*}T$ is exactly translation by $a$.  Note that $\psi$ is $\shA$-equivariant for our $\shA$-action, since for $z \in \shA(U_{0})$, we can identify $z$ with $\theta_{p_{1}^{*}X}(z), \theta_{p_{2}^{*}X}(z) \in \text{Aut}_{U_{1}}(p_{1}^{*}X),$ $\text{Aut}_{U_{1}}(p_{2}^{*}X)$, and then $\varphi _{U_{0}} \circ  \theta_{p_{2}^{*}X}(z) =  \theta_{p_{1}^{*}X}(z) \circ \varphi_{U_{0}}$, as $ \theta_{p_{1}^{*}X}(z)= \varphi_{U_{0}} \circ  \theta_{p_{2}^{*}X}(z) \circ \varphi_{U_{0}}^{-1}$ (by Lemma \ref{bandlemma}).

We take $m: T \times G_{U_{0}} \to T$ to be the pullback of the $\GE$-action $\tilde{m}$ on $\mathscr{T}$ by $x$. Fixing $V \xrightarrow{f} U_{0}$, since $\tilde{m}: \mathscr{T} \times \GE \to \mathscr{T}$ is a morphism of sheaves on $\gerbeE$, it commutes with the restriction maps $\varphi_{V}^{\sharp}$, giving  the $G$-equivariance of $\psi$. One checks via an identical argument that $m$ commutes with the $\shA_{U_{0}}$-action (since it acts via the band of $\gerbeE$), and that if $\mathscr{T} \to \mathscr{S}$ is a morphism in $\textbf{Tors}(G,\gerbeE)(U_{0})$, the induced maps $\mathscr{T}(f^{*}X) \to \mathscr{S}(f^{*}X)$ give a morphism in $\textbf{Tors}_{a}(G, \shA, F)(U_{0})$. We thus obtain our functor $\eta$ (after applying the above construction with $U_{0}$ replaced by an arbitrary $V \to U_{0}$, which proceeds identically as above).

Since both $\textbf{Tors}(G,\gerbeE)$ and $\textbf{Tors}_{a}(G, \shA, F)$ are stacks over $(\text{Sch}/F)_{\text{fpqc}}$, it's enough to check that $\eta$ is locally an equivalence, by Proposition \ref{localequivalence} (where we are using that we are working with the fpqc sites). By base-changing to $U_{0}$, we may assume that $a$ is a $1$-coboundary; one checks easily (using an argument similar to the one used in Construction \ref{changeofgerbe}) that if $a$ is cohomologous to $b$, then $\textbf{Tors}_{a}(G, \shA, F)$ and $\textbf{Tors}_{b}(G, \shA, F)$ are equivalent, and we know from Construction \ref{changeofgerbe} that $\gerbeE_{b}$ and $\gerbeE_{a}$ are isomorphic (not just equivalent). Hence, we may assume that $a = e_{\shA}$, and $\gerbeE = B_{F}\shA$, and now we may apply Lemma \ref{classtors}.
\end{proof}

The following two results follow immediately from the above proof, pulling back functors between the categories $\textbf{Tors}(G,\gerbeE)$ (with varying $G$ and/or $\gerbeE$) by the section $x$:

\begin{cor}\label{functwist} Let $G \xrightarrow{f} H$ be a morphism of finite type $F$-group schemes, giving the usual functor $$\textbf{Tors}(G,\gerbeE_{a}) \to \textbf{Tors}(H,\gerbeE_{a}),$$ which sends $\mathscr{T}$ to $\mathscr{T} \times^{\GE, f_{\gerbeE}} H_{\gerbeE}$. Then this corresponds via the equivalence $\eta$ to the functor $$\textbf{Tors}_{a}(G, \shA, F) \to  \textbf{Tors}_{a}(H, \shA, F)$$ sending $(T, \psi, m, n)$ to the $H_{U_{0}}$-torsor $T \times^{G,f} H$, with $\shA$-action induced by $n \times \text{id}$; when $G$ is abelian, this is the same as replacing  the homomorphism $\shA \to G$ giving $n$ with its post-composition by $f$. The new gluing map $\tilde{\psi}$ is obtained by applying $- \times^{G,f} H$ and taking the morphism induced by $\psi \times \text{id}$.

\end{cor}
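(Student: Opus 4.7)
The plan is to unwind the definition of the equivalence $\eta$ established in the proof of Proposition \ref{gerbesheaf} and observe that pullback of sheaves by the section $x \colon \text{Sch}/U_0 \to \gerbeE_a$ commutes with the contracted product construction. Recall from that proof that $\eta(\mathscr{T}) = (x^*\mathscr{T}, \psi_{\mathscr{T}}, m_{\mathscr{T}}, n_{\mathscr{T}})$, where $m_{\mathscr{T}} = x^*\tilde{m}$ pulls back the $\GE$-action, $n_{\mathscr{T}} = x^*\iota$ pulls back the inertial action, and $\psi_{\mathscr{T}}$ is the isomorphism $p_2^*x^*\mathscr{T} \to p_1^*x^*\mathscr{T}$ induced by the natural transformation $\varphi \colon x \circ p_1 \xrightarrow{\sim} x \circ p_2$ of Lemma \ref{section}.

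First, I would check that for $\mathscr{T}' := \mathscr{T} \times^{\GE, f_{\gerbeE}} H_{\gerbeE}$ one has a canonical identification $x^*\mathscr{T}' = (x^*\mathscr{T}) \times^{G, f} H$ of sheaves on $(\text{Sch}/U_0)_{\text{fpqc}}$. This follows because pullback of sheaves along a morphism of sites preserves arbitrary colimits (and hence the sheafified quotient defining the contracted product), together with the identifications $x^*f_{\gerbeE} = f$ and $x^*H_{\gerbeE} = H$. The right $H$-action on $\mathscr{T}'$, coming from translation on the second factor, pulls back to the analogous right $H$-action on the contracted product over $U_0$.

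Next, I would determine the inertial action and gluing data of $\mathscr{T}'$. The crucial observation is that $H_{\gerbeE}$, being pulled back from a sheaf on $(\text{Sch}/F)_{\tau}$, carries the trivial inertial action (since automorphisms of objects of $\gerbeE$ act trivially on such sections). Therefore the inertial action on $\mathscr{T}'$ is induced by the inertial action on the first factor alone, and applying $x^*$ yields the action $n_{\mathscr{T}} \times \text{id}_H$ on $(x^*\mathscr{T}) \times^{G,f} H$; when $G$ is abelian this is, by Lemma \ref{twisthom}, exactly the action coming from the composition $\shA \to G \xrightarrow{f} H$. Similarly, the natural transformation $\varphi$ induces a gluing map on $\mathscr{T}'$ which under $x^*$ becomes the map on contracted products induced by $\psi_{\mathscr{T}} \times \text{id}_H$; the twisted cocycle condition for this new gluing map is inherited directly from that of $\psi_{\mathscr{T}}$ by functoriality of $-\times^{G,f} H$.

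The only obstacle is bookkeeping: verifying that $G$-equivariance, $\shA$-equivariance, and the twisted gluing condition are all preserved under contracted product with the (inertially trivial) sheaf $H_{\gerbeE}$. Once the key identification $x^*(- \times^{\GE,f_{\gerbeE}} H_{\gerbeE}) = (x^*-) \times^{G,f} H$ is in place, all remaining compatibilities follow from the fact that $x^*$ is a functor and from the triviality of the inertial action on $H_{\gerbeE}$.
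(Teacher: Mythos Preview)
Your proposal is correct and follows essentially the same approach as the paper: the paper states that the corollary follows immediately from the proof of Proposition~\ref{gerbesheaf} by pulling back the contracted-product functor along the section $x$, and you have simply unwound what that pullback does to each component of the $a$-twisted torsor data.
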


\begin{cor}\label{pulltwist} Let $\phi_{a,b,x} \colon \gerbeE_{a} \to \gerbeE_{b}$ be the morphism of stacks over $F$ defined in Construction \ref{changeofgerbe} between the $\shA$-gerbe $\gerbeE_{a}$ corresponding to the \v{C}ech 2-cocycle $a \in \shA(U_{2})$, the $\textbf{B}$-gerbe $\gerbeE_{b}$, corresponding to the \v{C}ech 2-cocycle $b \in \textbf{B}(U_{2})$, induced by a homomorphism $\shA \xrightarrow{h} \textbf{B}$ such that $[h(a)] = [b] \in H^2(F, \textbf{B})$ and $x \in \textbf{B}(U_{1})$ such that $d(x) \cdot b = h(a)$. Then the functor $$\textbf{Tors}(G,\gerbeE_{b}) \to \textbf{Tors}(G, \gerbeE_{a})$$ induced by pullback by $\phi_{a,b,x}$ corresponds via $\eta$ to the functor $$\textbf{Tors}_{b}(G, \textbf{B}, F) \to  \textbf{Tors}_{a}(G, \shA, F)$$ sending the object $(T, \psi, m, n)$ to the $a$-twisted $G$-torsor with underlying $G_{U_{0}}$-torsor $T$, $\shA$-action given by mapping to $\textbf{B}$ by $h$, and gluing map $\tilde{\psi}$ given by translating $\psi$ by $x$.

\end{cor}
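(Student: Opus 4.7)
The plan is to follow the proof of Proposition \ref{gerbesheaf}. Writing $s_c \colon \text{Sch}/U_0 \to \gerbeE_c$ for the canonical section of Lemma \ref{section} and $\varphi_c$ for the corresponding natural transformation ($c \in \{a, b\}$), the equivalence $\eta$ sends a torsor $\mathscr{T} \in \textbf{Tors}(G, \gerbeE_c)$ to the pullback $s_c^*\mathscr{T}$ equipped with the gluing map induced by $\varphi_c$. For $\mathscr{T} \in \textbf{Tors}(G, \gerbeE_b)$ we then have $\eta(\mathscr{T}) = (s_b^*\mathscr{T}, \psi)$ and $\eta(\phi_{a,b,x}^*\mathscr{T}) = (s_a^*\phi_{a,b,x}^*\mathscr{T}, \tilde{\psi})$, and the task is to compare these two pairs.

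The first step I would carry out is to construct a canonical natural isomorphism $\alpha \colon \phi_{a,b,x} \circ s_a \xrightarrow{\sim} s_b$ of functors $\text{Sch}/U_0 \to \gerbeE_b$. For each $V \to U_0$, both $\phi_{a,b,x}(s_a(V))$ and $s_b(V)$ have underlying $\textbf{B}$-torsor canonically identified with $\textbf{B}_{V \times U_0}$ (for the former via the identification $\shA_{V \times U_0} \times^{\shA, h} \textbf{B}_{V \times U_0} \cong \textbf{B}_{V \times U_0}$). Unpacking the gluing map $\overline{m_{x^{-1}} \circ m_a}$ produced by Construction \ref{changeofgerbe} and comparing with the gluing map $m_b$ of $s_b(V)$, the two differ by a factor controlled by $x \in \textbf{B}(U_1)$; the hypothesis $d(x) \cdot b = h(a)$ is precisely what allows an appropriate translation by $x$ to furnish a morphism of $b$-twisted $\textbf{B}$-torsors between them, defining $\alpha_V$. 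Naturality in $V$ is then direct.

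Given $\alpha$, transport of structure canonically identifies $s_a^*\phi_{a,b,x}^*\mathscr{T}$ with $s_b^*\mathscr{T}$ as $G_{U_0}$-torsors, producing the asserted underlying torsor. The statement about the $\shA$-action follows because $\phi_{a,b,x}$ is $h$-equivariant on bands by construction: the inertial $\shA_{\gerbeE_a}$-action on $\phi_{a,b,x}^*\mathscr{T}$ factors as $h_{\gerbeE_a}$ followed by the inertial $\textbf{B}_{\gerbeE_b}$-action on $\mathscr{T}$, and pulling back along $s_a$ yields exactly the $\textbf{B}$-action on $s_b^*\mathscr{T}$ pre-composed with $h$.

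The main technical obstacle is expected to be the gluing map: $\tilde{\psi}$ is induced by $\phi_{a,b,x}(\varphi_a)$, and under the identification $\alpha$ this natural transformation should differ from $\varphi_b$ precisely by the automorphism of $s_b$ given by translation by $x$, so that pulling back by $\mathscr{T}$ produces $\psi$ translated by $x$. Carefully verifying this requires tracking the identifications built into $\alpha$ and the projections between the various fibered products of $U_0$ over $F$, invoking $d(x) \cdot b = h(a)$ once more to ensure the twisted cocycle conditions on both sides match up with consistent conventions.
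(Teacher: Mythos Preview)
Your proposal is correct and follows essentially the same approach as the paper: the paper simply states that both this corollary and Corollary~\ref{functwist} ``follow immediately from the above proof, pulling back functors between the categories $\textbf{Tors}(G,\gerbeE)$ (with varying $G$ and/or $\gerbeE$) by the section $x$,'' and what you have written is precisely the unpacking of that one-line justification via the sections $s_a$, $s_b$ and the natural transformations $\varphi_a$, $\varphi_b$ from Lemma~\ref{section}.
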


\subsection{Twisted cocycles}
In this section, we introduce the notion of twisted cocycles, which facilitate computations involving torsors on gerbes. We continue with the notation and conventions of \S 2.4. Fix a \v{C}ech 2-cocycle $a \in \shA(U_{2})$.

\begin{Def}\label{twistedcocycle} An \textit{$a$-twisted \v{C}ech 1-cocycle valued in $G$} (or just an \textit{$a$-twisted cocycle} if $G$ is understood) is a pair $(x, \phi)$, where $\phi \colon \shA \to Z(G)$ is an $F$-homomorphism and $x \in G(U_{1})$ satisfies $dx = \phi(a)$. We say that $(x, \phi)$ and $(y, \phi')$ are \textit{equivalent} if $\phi = \phi'$ and there exists $z \in G(\bar{F})$ such that $p_{1}^{\sharp}(z)^{-1} y p_{2}^{\sharp}(z) = x$ (in this case, we say that $z$ \textit{realizes the equivalence} of $(x, \phi)$ and $(y, \phi')$). This clearly defines an equivalence relation. We denote the set of all $a$-twisted cocycles by $Z^{1}(\gerbeE_{a}, G_{\gerbeE_{a}})$, and the set of all equivalence classes by $H^{1,*}(\gerbeE_{a}, G_{\gerbeE_{a}})$. For some fixed finite central $Z$ in $G$, we say that an $a$-twisted cocycle $(x,\phi)$ is an \textit{$a$-twisted $Z$-cocycle} if $\phi$ factors through $Z$. We denote the set of all $a$-twisted $Z$-cocycles of $G$ for a fixed $Z$ by $Z^{1}(\gerbeE_{a}, Z \to G)$. If $(x, \phi)$ is in $Z^{1}(\gerbeE_{a}, Z \to G)$, then evidently its whole equivalence class is as well. Denote the set of equivalence classes of $a$-twisted $Z$-cocycles by $H^{1,*}(\gerbeE_{a}, Z \to G)$, and the set $\varinjlim_{Z} H^{1,*}(\gerbeE_{a}, Z \to G)$ by $H^{1,*}_{\text{bas}}(\gerbeE_{a}, G_{\gerbeE_{a}})$, where the direct limit is over all finite central $F$-subgroups.
\end{Def}

We get the following expected result:

\begin{prop}\label{correspondence1} For $G$ a finite-type $F$-group and $Z$ a finite central $F$-subgroup, we have a canonical bijection from $H^{1}(\gerbeE_{a}, Z \to G)$ to $H^{1,*}(\gerbeE_{a}, Z \to G)$ which is functorial in $[Z \to G]$. Taking direct limits, this induces a canonical bijection $H^{1}_{\text{bas}}(\gerbeE_{a}, G_{\gerbeE_{a}}) \to H^{1,*}_{\text{bas}}(\gerbeE_{a}, G_{\gerbeE_{a}})$, functorial in the group $G$. If $G$ is abelian, we also have a canonical isomorphism of abelian groups $H^{1}(\gerbeE_{a}, G_{\gerbeE_{a}}) \to H^{1,*}(\gerbeE_{a}, G_{\gerbeE_{a}})$.
\end{prop}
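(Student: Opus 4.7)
The plan is to produce explicit mutually inverse assignments between $H^{1,*}(\gerbeE_{a}, Z \to G)$ and $H^{1}(\gerbeE_{a}, Z \to G)$ by working on the concrete side of the equivalence $\eta \colon \textbf{Tors}(G, \gerbeE_{a}) \to \textbf{Tors}_{a}(G, \shA, F)$ of Proposition~\ref{gerbesheaf}. Under $\eta$, a class in $H^{1}(\gerbeE_{a}, Z \to G)$ corresponds to an isomorphism class of $a$-twisted $G$-torsors whose $\shA$-action arises from an $F$-homomorphism $\shA \to G$ factoring through $Z$; this is exactly the additional datum recorded by an $a$-twisted $Z$-cocycle, and (in the abelian case) this factorization is supplied automatically by Lemma~\ref{twisthom}.

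The forward map sends $(x,\phi)$ to the $a$-twisted $G$-torsor whose underlying $G_{U_{0}}$-torsor is trivial, whose $\shA_{U_{0}}$-action is left-translation via the composite $\shA \xrightarrow{\phi} Z \hookrightarrow G$, and whose gluing isomorphism $\psi_{x} \colon p_{2}^{*}G_{U_{0}} \to p_{1}^{*}G_{U_{0}}$ is left-translation by $x \in G(U_{1})$. The $\shA$- and $G$-actions commute because $\phi(\shA) \subset Z$ is central, and the twisted cocycle condition of Definition~\ref{twistedtorsordefinition} unfolds to
\[
L_{p_{12}(x)} \circ L_{p_{23}(x)} \circ L_{p_{13}(x)^{-1}} = L_{dx} = L_{\phi(a)},
\]
which holds by the defining equation $dx = \phi(a)$ of a twisted cocycle.

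For the inverse, given a class in $H^{1}(\gerbeE_{a}, Z \to G)$ represented by an $a$-twisted torsor $(T, \psi, m, n)$, use Convention~\ref{conventions} to fix a trivialization $h \colon T \xrightarrow{\sim} G_{U_{0}}$ of $G_{U_{0}}$-torsors. Conjugation by $h$ turns $n$ into left-translation by the given homomorphism $\phi \colon \shA \to Z$, and turns $\psi$ into left-translation by a unique $x \in G(U_{1})$; the twisted cocycle condition becomes precisely $dx = \phi(a)$. A different trivialization $h' = L_{z} \circ h$ with $z \in G(\bar{F}) = G(U_{0})$ replaces $x$ by $p_{1}(z) \cdot x \cdot p_{2}(z)^{-1}$ while fixing $\phi$ (since $\phi(\shA)$ is central), which, rewritten as $p_{1}(z)^{-1} \cdot (p_{1}(z) x p_{2}(z)^{-1}) \cdot p_{2}(z) = x$, is exactly the equivalence relation of Definition~\ref{twistedcocycle}. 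Well-definedness and mutual inverseness of the two assignments (as well as preservation of the distinguished element) follow at once from the constructions.

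Functoriality in $[Z \to G] \in \mathcal{A}$ comes from matching the pushout functor of Corollary~\ref{functwist} --- which sends an $a$-twisted $G_{1}$-torsor to its contracted product with $G_{2}$ along $f \colon G_{1} \to G_{2}$ --- with the visibly equivalence-preserving map $(x,\phi) \mapsto (f(x), f \circ \phi)$ on twisted cocycles; passing to the colimit over central $Z$ then yields the assertion for $H^{1}_{\text{bas}}$, and the abelian statement follows by taking $Z = G$. The one place requiring genuine care is the bookkeeping of pullback identifications in the twisted cocycle condition, to see that the three pulled-back left-translation automorphisms of $G_{U_{1}}$ compose to left-translation by the \v{C}ech coboundary $dx$ as in Proposition~\ref{connecting}; this is where the ordering and sign conventions of Definitions~\ref{twistedtorsordefinition} and~\ref{twistedcocycle} must be matched precisely, but beyond this the proof is essentially mechanical.
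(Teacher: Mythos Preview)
Your proof is correct and follows essentially the same approach as the paper: both arguments hinge on the equivalence of Proposition~\ref{gerbesheaf} and produce the cocycle $(x,\phi)$ by trivializing the underlying $G_{U_{0}}$-torsor and reading off the gluing map as a left-translation. The paper phrases the construction from the gerbe side (pulling back $\mathscr{T}$ along the canonical section $s$ of Lemma~\ref{section} and composing with $\varphi^{\sharp}$), whereas you work directly in $\textbf{Tors}_{a}(G,\shA,F)$ and make both directions explicit; these are the same computation viewed through the two sides of $\eta$, and the paper simply defers bijectivity to Proposition~\ref{gerbesheaf} rather than writing out the inverse as you do.
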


\begin{proof} Let $\mathscr{T}$ be a $Z$-twisted $\GE$-torsor. Set $\phi := \text{Res}(\mathscr{T})$; it remains to construct the appropriate $x \in G(U_{1})$.  Let $X := s(\Spec(\bar{F}))$, where $s$ denotes the canonical section $\text{Sch}/\bar{F} \to \gerbeE_{a}$ constructed in Lemma \ref{section}, and let $\varphi$ the isomorphism $p_{1}^{*}X \to p_{2}^{*}X$ in $\gerbeE_{a}(U_{1})$ from the same Lemma. Setting $T := s^{*}\mathscr{T}$ gives a $G_{\bar{F}}$-torsor---choose a $\bar{F}$-trivialization $h$ of $T$. Taking $p_{1}^{*}h \circ \varphi^{\sharp} \circ p_{2}^{*}h^{-1}$ defines an automorphism $G_{U_{1}} \to G_{U_{1}}$ which is given by left-translation by a unique element $x \in G(U_{1})$, and this $x$ satisfies $dx = \phi(a)$, as desired (we leave the details to the reader, cf. the proof of Proposition \ref{gerbesheaf}). Note that choosing a different $h$ gives an equivalent twisted cocycle. 

Moreover, given any isomorphism $\Psi \colon \mathscr{T}_{1} \to \mathscr{T}_{2}$, fixing trivializations $h_{i} \colon T_{i} \to G_{\bar{F}}$ as above gives the isomorphism $h_{1} \circ s^{*}\Psi \circ h_{2}^{-1} \colon G_{\bar{F}} \xrightarrow{\sim} G_{\bar{F}}$, which is left-translation by a unique $y \in G(\bar{F})$, which realizes the equivalence between the twisted cocycles obtained using $h_{1}$ and $h_{2}$. Thus, we have a canonical well-defined map $H^{1}(\gerbeE_{a}, Z \to G) \to H^{1,*}(\gerbeE_{a}, Z \to G)$. The fact that this is a bijection is immediate from Proposition \ref{gerbesheaf}. Functoriality in $[Z \to G] \in \mathcal{A}$ is trivial. The proof of the last statement follows by replacing $Z$ by $G$ in the above argument for abelian $G$. 
\end{proof}

We thus get a concrete interpretation of $H^{1}(\gerbeE_{a}, Z \to G)$ and $H^{1}_{\text{bas}}(\gerbeE_{a}, G_{\gerbeE_{a}})$; in light of the above results, we denote $H^{1,*}(\gerbeE_{a}, Z \to G)$ simply by $H^{1}(\gerbeE_{a}, Z \to G)$ and $H^{1,*}(\gerbeE_{a}, G_{\gerbeE_{a}})$ by $H^{1}(\gerbeE_{a}, G_{\gerbeE_{a}})$ (we make this latter identification only for abelian $G$)---the above identifications are implicit in this notation. 

To extend this to an arbitrary $\shA$-gerbe $\gerbeE$ split over $\bar{F}$, we need the following result:

\begin{prop}\label{trivialize}
Let $(\gerbeE, \theta)$ be an \textit{arbitrary} $\shA$-gerbe and $a \in \shA(U_{2})$ such that $[a] = [\gerbeE]$ in $\check{H}^{2}(\bar{F}/F, \shA)$. If $\check{H}^{1}(\bar{F}/F, \shA) = 0$, we have a canonical functorial bijection between $H^{1}(\gerbeE, \GE)$ and $H^{1}(\gerbeE_{a}, G_{\gerbeE_{a}})$. 
\end{prop}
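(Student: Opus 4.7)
The plan is to apply Proposition \ref{gerbebijection} to obtain an $\shA$-equivalence $F \colon \gerbeE \xrightarrow{\sim} \gerbeE_{a}$ (which exists since $[\gerbeE] = [a] = [\gerbeE_{a}]$ by Proposition \ref{expectedclass}) and define the bijection as pullback $F^{*}$ at the level of torsor isomorphism classes. Because $F$ is an equivalence of sites, pullback along $F$ is an equivalence of stacks $\textbf{Tors}(G, \gerbeE_{a}) \xrightarrow{\sim} \textbf{Tors}(G, \gerbeE)$ that intertwines the inertial actions, and therefore induces a bijection $F^{*} \colon H^{1}(\gerbeE_{a}, G_{\gerbeE_{a}}) \xrightarrow{\sim} H^{1}(\gerbeE, \GE)$. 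Functoriality in $[Z \to G] \in \mathcal{A}$ is then immediate, since pullback by $F$ commutes with the contracted product $- \times^{G_{\gerbeE_{a}}} H_{\gerbeE_{a}}$ that defines the functoriality maps (cf.\ Corollary \ref{functwist}), and the condition that $F$ is the identity on bands ensures that residual homomorphisms $\text{Res}(\mathscr{T})$ are preserved.

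The substantive step is canonicity: I must show that $F^{*}$ does not depend on the choice of $F$. Given another $\shA$-equivalence $F' \colon \gerbeE \to \gerbeE_{a}$, the composite $\Phi := F' \circ F^{-1}$ is an auto-equivalence of $\gerbeE_{a}$ as an $\shA$-gerbe. Applying Construction \ref{changeofgerbe} with $\textbf{B} = \shA$, $b = a$, and $h = \text{id}_{\shA}$, together with the remark immediately following that construction, every such $\Phi$ is of the form $\phi_{a,a,x}$ for some $x \in \shA(U_{1})$; the condition $d(x) \cdot a = a$ forces $x$ to be a \v{C}ech 1-cocycle. A direct unwinding of Definition \ref{explicitgerbe} shows that $\phi_{a,a,x}$ and $\phi_{a,a,y}$ are related by a natural isomorphism of $\shA$-gerbe morphisms exactly when $yx^{-1}$ is a 1-coboundary: such a natural transformation is given by right-multiplication by some $\alpha \in \shA(\bar{F})$, and compatibility with the modified twisted gluing maps $m_{x^{-1}} \circ \psi$ and $m_{y^{-1}} \circ \psi$ translates, via the abelianness of $\shA$, into the relation $d\alpha = y x^{-1}$. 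Since naturally isomorphic functors induce the same map on isomorphism classes of torsors, the hypothesis $\check{H}^{1}(\bar{F}/F, \shA) = 0$ forces $\Phi$ to be $2$-isomorphic to $\text{id}_{\gerbeE_{a}}$, whence $F^{*} = (F')^{*}$.

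The main obstacle will be the explicit verification just sketched, i.e.\ carrying out the unwinding that identifies 2-isomorphism classes of auto-equivalences of $\gerbeE_{a}$ with $\check{H}^{1}(\bar{F}/F, \shA)$. The existence half of this classification is already essentially contained in Construction \ref{changeofgerbe}, but the uniqueness half (that $\phi_{a,a,x}$ is naturally isomorphic to the identity when $x$ is a coboundary, and only then) requires a careful direct computation with twisted gluing maps; this is the one place where the vanishing of $\check{H}^{1}(\bar{F}/F, \shA)$ is actually consumed.
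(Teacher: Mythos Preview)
Your overall strategy matches the paper's: choose an $\shA$-equivalence to $\gerbeE_{a}$, define the bijection by pullback, and reduce canonicity to the statement that any auto-equivalence of $\shA$-gerbes acts trivially on $H^{1}$. The paper packages this last step as Lemma \ref{uniqueness}, proved for an \emph{arbitrary} $\shA$-gerbe (not just $\gerbeE_{a}$) by directly extracting a \v{C}ech 1-cocycle from the auto-equivalence and then, using $\check{H}^{1}=0$, building a 2-isomorphism $\mu\colon \text{id}_{\gerbeE}\xrightarrow{\sim}\eta$ by hand and checking it descends.

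Where your version has a gap is the assertion that ``every such $\Phi$ is of the form $\phi_{a,a,x}$ for some $x\in\shA(U_{1})$.'' Neither Construction \ref{changeofgerbe} nor the remark following it says this. The construction \emph{produces} specific morphisms $\phi_{a,b,x}$ from the data $(f,x)$, and the remark only says that two such $\phi$'s (for the same $f$) differ by an automorphism determined by a 1-cocycle; it does not classify all auto-equivalences of $\gerbeE_{a}$. A general $\shA$-auto-equivalence $\Phi$ of $\gerbeE_{a}$ need not literally send $(T,\psi)$ to $(T,m_{x^{-1}}\psi)$; at best it is 2-isomorphic to some $\phi_{a,a,x}$, but establishing that surjectivity is itself the substance of the argument you are trying to avoid. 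Your ``uniqueness half'' (that $\phi_{a,a,x}\cong\text{id}$ iff $x$ is a coboundary) is fine, but it is orthogonal to the missing surjectivity.

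The paper sidesteps this by never attempting to match $\Phi$ to a $\phi_{a,a,x}$: instead it picks an object $X$ and an isomorphism $\varphi\colon p_{1}^{*}X\to p_{2}^{*}X$, compares $\varphi$ with $\eta(\varphi)$ via a choice of $h\colon X\to\eta(X)$, reads off the 1-cocycle directly, and then uses $x=dy$ to write down the 2-isomorphism explicitly on a section and glue. If you want to keep your $\gerbeE_{a}$-specific route, you must actually prove that any $\shA$-auto-equivalence of $\gerbeE_{a}$ is 2-isomorphic to some $\phi_{a,a,x}$; this amounts to redoing the content of Lemma \ref{uniqueness} in the explicit model, so there is no real saving.
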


\begin{proof} We have an equivalence of $\shA$-gerbes $\eta_{a} \colon \gerbeE \to \gerbeE_{a}$ for $a \in \shA(U_{2})$ representing $[\gerbeE] \in \check{H}^{2}(\bar{F}/F, \shA)$. This means we have a quasi-inverse $\nu_{a} \colon \gerbeE_{a} \to \gerbeE$ of $\shA$-gerbes, so that pullback by $\nu_{a}$ and $\eta_{a}$ induce the claimed bijection; if $\Psi$ is the natural isomorphism $\nu_{a} \circ \eta_{a} \xrightarrow{\sim} \text{id}_{\gerbeE}$, then $\Psi^{\sharp}$ gives an isomorphism from $\eta_{a}^{*}(\nu_{a}^{*}\mathscr{T})$ to $\mathscr{T}$.  To check that the above map is independent of the choice of $\nu_{a}$, it's enough to show that if $\eta \colon \gerbeE \to \gerbeE$ is an auto-equivalence of $\shA$-gerbes, then the induced map $H^{1}(\gerbeE, \GE) \to H^{1}(\gerbeE, \GE)$ is the identity.  This is the content of the following lemma, which will be useful later. 
\end{proof}

\begin{lem}\label{uniqueness} If $\shA$ is such that $\check{H}^{1}(\bar{F}/F, \shA) = 0$, then for any $\shA$-gerbe $\gerbeE$ split over $\bar{F}$ and $\shA$-equivalence $\eta \colon \gerbeE \to \gerbeE$, the induced map $\eta^{*} \colon H^{1}(\gerbeE, \GE) \to H^{1}(\gerbeE, \GE)$ is the identity for any $F$-group scheme $G$.
\end{lem}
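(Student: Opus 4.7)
The strategy is to construct a natural isomorphism $\tilde{\nu} \colon \mathrm{id}_{\gerbeE} \Rightarrow \eta$ of functors, which immediately induces an isomorphism $\mathscr{T} \xrightarrow{\sim} \eta^{*}\mathscr{T}$ for every $\GE$-torsor and thus proves the claim. First, by Proposition \ref{gerbebijection}, after fixing a \v Cech 2-cocycle $a \in \shA(U_{2})$ representing $[\gerbeE]$, we may replace $\gerbeE$ by $\gerbeE_{a}$ (conjugating $\eta$ by a chosen $\shA$-equivalence $\gerbeE \to \gerbeE_{a}$). Let $X := x(U_{0}) \in \gerbeE_{a}(U_{0})$ and $\varphi \colon p_{1}^{*}X \to p_{2}^{*}X$ be the canonical data from Lemma \ref{section}, so that $d\varphi = \iota_{a}$. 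Applying $\eta$ produces $\eta(X) \in \gerbeE_{a}(U_{0})$ together with $\eta(\varphi) \colon p_{1}^{*}\eta(X) \to p_{2}^{*}\eta(X)$ (using that $\eta$ commutes with pullbacks up to the canonical identifications).

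Since $\gerbeE_{a}$ is a gerbe with band $\shA$, the isomorphism sheaf $\underline{\Hom}(X, \eta(X))$ on $(\mathrm{Sch}/U_{0})_{\tau}$ is an $\shA_{U_{0}}$-torsor, which is trivial by Convention \ref{conventions}; hence we may pick some isomorphism $\nu \colon X \xrightarrow{\sim} \eta(X)$. The main obstacle is that this $\nu$ need not be compatible with the gluing data $\varphi$ and $\eta(\varphi)$, and the vanishing hypothesis is precisely what allows us to correct this. Compare the two isomorphisms $\eta(\varphi)$ and $\alpha := p_{2}^{*}\nu \circ \varphi \circ (p_{1}^{*}\nu)^{-1}$ in $\gerbeE_{a}(U_{1})$: their ratio is an automorphism of $\eta(p_{1}^{*}X)$ corresponding under $\theta_{\eta(p_{1}^{*}X)}^{-1}$ to a unique element $z \in \shA(U_{1})$. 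A direct computation of $d$ applied to both expressions---using that $\eta$ is the identity on bands (so $d\eta(\varphi) = \iota_{a}$) and that conjugation by $q_{1}^{*}\nu$ sends $\theta_{X}(a)$ to $\theta_{\eta(X)}(a)$---yields $d\alpha = d\eta(\varphi) = \iota_{a}$, whence $dz = 0$ as a \v Cech 1-cocycle. By the hypothesis $\check H^{1}(\bar F/F, \shA) = 0$, we have $z = dc$ for some $c \in \shA(U_{0})$, and replacing $\nu$ by $\tilde\nu := \nu \circ \theta_{X}(c)^{-1}$ arranges the strict compatibility $p_{2}^{*}\tilde\nu \circ \varphi = \eta(\varphi) \circ p_{1}^{*}\tilde\nu$.

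Finally, we translate to the equivalent category $\textbf{Tors}_{a}(G, \shA, F)$ via Proposition \ref{gerbesheaf}. Given a $\GE$-torsor $\mathscr{T}$, the $a$-twisted torsors corresponding to $\mathscr{T}$ and to $\eta^{*}\mathscr{T}$ are $x^{*}\mathscr{T}$ and $(\eta \circ x)^{*}\mathscr{T}$ respectively, with gluing maps induced by $\varphi$ and $\eta(\varphi)$. The isomorphism $\tilde\nu$ induces a map $(\eta \circ x)^{*}\mathscr{T} \to x^{*}\mathscr{T}$ which is $G_{U_{0}}$-equivariant (since the $\GE$-action is a morphism of sheaves on $\gerbeE_{a}$ and hence commutes with $\tilde\nu^{\sharp}$), $\shA$-equivariant (since $\tilde\nu$ is a morphism in $\gerbeE_{a}$ and the inertial action is given by the band, cf.\ Lemma \ref{bandlemma}), and intertwines the gluing maps by the compatibility established in the previous step. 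Hence $[\eta^{*}\mathscr{T}] = [\mathscr{T}]$ in $H^{1}(\gerbeE_{a}, G_{\gerbeE_{a}})$ for every $\mathscr{T}$, as required.
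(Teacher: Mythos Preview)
Your proof is correct and follows essentially the same core idea as the paper's: extract from $\eta$ a \v{C}ech 1-cochain measuring the failure of a chosen isomorphism $X \to \eta(X)$ to intertwine $\varphi$ with $\eta(\varphi)$, show it is a 1-cocycle (using that $\eta$ is the identity on bands, so $d\eta(\varphi)$ and $d\varphi$ give the same element of $\shA(U_2)$), kill it using $\check{H}^{1}(\bar F/F,\shA)=0$, and correct the isomorphism accordingly.

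The packaging of the final step differs slightly. The paper works with an arbitrary $\shA$-gerbe $\gerbeE$ throughout and, after obtaining the corrected isomorphism on $\Spec(\bar F)$, explicitly promotes it to a global 2-isomorphism $\mu\colon \mathrm{id}_{\gerbeE}\Rightarrow\eta$: first extending from the image of the section to all of $\gerbeE_{\bar F}$ by conjugating with local isomorphisms (checking independence of choices), and then verifying $p_1^*\bar\mu = p_2^*\bar\mu$ so that $\bar\mu$ descends to $\gerbeE$. You instead reduce to $\gerbeE_a$ at the outset and then, rather than building the full 2-isomorphism, pass through the equivalence $\textbf{Tors}(G,\gerbeE_a)\simeq \textbf{Tors}_a(G,\shA,F)$ of Proposition~\ref{gerbesheaf}: since the $a$-twisted torsor associated to $\eta^*\mathscr{T}$ is $(\eta\circ x)^*\mathscr{T}$ with gluing induced by $\eta(\varphi)$, your corrected $\tilde\nu$ furnishes an isomorphism of $a$-twisted torsors directly. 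This is a clean shortcut, though your opening sentence (``construct a natural isomorphism $\tilde\nu\colon \mathrm{id}_{\gerbeE}\Rightarrow\eta$'') overstates what you actually do; you might rephrase to say you construct the data over $U_0$ with gluing compatibility, which suffices via Proposition~\ref{gerbesheaf}. One small point to note: the paper later uses the \emph{actual} 2-isomorphism $\mu$ as a blueprint for constructing explicit isomorphisms of $G_{\gerbeE}$-torsors (see the paragraph following the proof), so the paper's more laborious route has some downstream payoff.
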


\begin{proof}  The first step is to extract a \v{C}ech 1-cocycle from $\eta$. Let $Y \in \gerbeE(U)$; note that for any morphism $f \colon V \to U$, we have a unique isomorphism $\phi_{f}$ making the diagram 
\[
\begin{tikzcd}
\eta(f^{*}Y) \arrow{dr} \arrow["\phi_{f}"]{r} & f^{*}\eta(Y) \arrow{d} \\
& \eta(Y)
\end{tikzcd}
\]
commute. This means, for an object $X \in \gerbeE(\Spec(\bar{F}))$, we have canonical identifications $p_{k}^{*}\eta(X) \xrightarrow{\sim} \eta(p_{k}^{*}X)$, $p_{ij}^{*}\eta(p_{k}^{*}X) \xrightarrow{\sim} \eta(p_{ij}^{*}p_{k}^{*}X)$, and (combining the previous two) $p_{ij}^{*}p_{k}^{*}\eta(X) \xrightarrow{\sim} \eta(p_{ij}^{*}p_{k}^{*}X)$ for all $1 \leq i, j \leq 3$, $1 \leq k \leq 2$. We make these identifications without comment in what follows. 

Picking an isomorphism $\varphi \colon p_{1}^{*}X \xrightarrow{\sim} p_{2}^{*}X$ in $\gerbeE(U_{1})$, these identifications allow us to view the isomorphism $\eta(\varphi)$ as an isomorphism from $p_{1}^{*}\eta(X)$ to $p_{2}^{*}\eta(X)$. Choosing an isomorphism $h \colon X \xrightarrow{\sim} \eta(X)$ in $\gerbeE(\Spec(\bar{F}))$ (possible because of Convention \ref{conventions}), the map $[p_{1}^{*}h^{-1} \circ \eta(\varphi)^{-1} \circ p_{2}^{*}h] \circ \varphi$ lies in $\text{Aut}_{U_{1}}(p_{1}^{*}X)$ and thus (via $\theta_{p_{1}^{*}X}^{-1}$) gives an element $x \in \shA(U_{1})$. We claim that $x$ is a \v{C}ech 1-cocycle. This follows from repeated use of Lemma \ref{bandlemma} and the fact that, on $q_{1}^{*}X$, we may use the above identifications and the fact that $\eta$ is the identity on bands to deduce that $ \theta_{q_{1}^{*}\eta(X)}^{-1}(d \eta(\varphi)) =   \theta_{q_{1}^{*}X}^{-1}(d \varphi) \in \shA(U_{2})$. It is important to note that the 1-cocycle $x$ does \textit{not} depend on the choice of $\varphi$, since if $\varphi'$ is obtained by precomposing $\varphi$ by an automorphism $y$ of $p_{1}^{*}X$, then the extra $y$ cancels out, again using Lemma \ref{bandlemma} and the fact that $\eta$ is the identity on bands.

With this in hand, since we assume that $\check{H}^{1}(\bar{F}/F, \shA) = 0$, we get that $x = dy$ for some $y \in \shA(\bar{F})$. We will show that any $G_{\gerbeE}$-torsor $\mathscr{T}$ is isomorphic to $\eta^{*}\mathscr{T}$, which gives the result. It's enough to construct a 2-isomorphism $\mu \colon \text{id}_{\gerbeE} \xrightarrow{\sim} \eta$, since then $\mu^{\sharp}$ will give the desired isomorphism of $\GE$-torsors (for any choice of $G$). This will just consist of a compatible system of isomorphisms $X \xrightarrow{\mu_{X}} \eta(X)$ in $\gerbeE(U)$ for every $X \in \gerbeE(U)$. The argument will be similar to the proof of Lemma \ref{classtors}; we will first construct such a system of isomorphisms on $\gerbeE_{\bar{F}}$, which we will descend to a system of isomorphisms on $\gerbeE$ using the fact that $\gerbeE \to (\text{Sch}/F)_{\text{fpqc}}$ is a stack.

We first define this system of isomorphisms on the embedded subcategory $\mathcal{C} := s(\text{Sch}/\bar{F}) \subset \gerbeE$, where $s$ is the section induced by $X$. For $f^{*}X \in \gerbeE(V)$, $\mu_{f^{*}X}$ is given by $f^{*}h$ post-composed with $\theta_{f^{*}\eta(X)}(y_{V}^{-1}) \in \text{Aut}_{V}(f^{*}\eta(X))$. It is a straightforward exercise to verify that for any object $Z \in \gerbeE(W \xrightarrow{g} \Spec(\bar{F}))$ such that we have a (non-canonical) isomorphism $Z \xrightarrow{\lambda, \sim} g^{*}X$ in $\gerbeE(W)$, the isomorphism $Z \to \eta(Z)$ in $\gerbeE(W)$ given by $\eta(\lambda^{-1}) \circ \mu_{g^{*}X} \circ \lambda$ is independent of the choice of $\lambda$, and so we set $\mu_{Z} := \eta(\lambda^{-1}) \circ \mu_{g^{*}X} \circ \lambda$. By taking common refinements of fpqc covers (since $\gerbeE \to (\text{Sch}/F)_{\text{fpqc}}$ is a gerbe), this implies that $\mu_{X}$ induces a natural isomorphism $\restr{\text{id}}{\gerbeE_{\bar{F}}} \xrightarrow{\bar{\mu}} \restr{\eta}{\gerbeE_{\bar{F}}}$.

To show that $\bar{\mu}$ descends to $\gerbeE$, we need to show (by gluing of morphisms) that $p_{1}^{*}(\bar{\mu}) = p_{2}^{*}(\bar{\mu})$ on $\gerbeE_{U_{1}}$. Let $Y \in \gerbeE(V \xrightarrow{f} U_{1})$; there is an fpqc cover $\{V_{i} \xrightarrow{f_{i}} V \}$ such that we have isomorphisms $f_{i}^{*}Y \xrightarrow{\Psi_{i,1}} f_{i}^{*}f^{*}p_{1}^{*}X$ in $\gerbeE(V_{i})$, as well as isomorphisms $\{\Psi_{i,2}\}$ defined analogously.  For each $i$, we have the following diagram
\[
\begin{tikzcd}
f_{i}^{*}Y \arrow["\Psi_{i,1}"]{r} \arrow["\Psi_{i,2}"]{rd} & f_{i}^{*}f^{*}p_{1}^{*}X \arrow["\Psi_{i,1,2}"]{d} \arrow["f_{i}^{*}f^{*}p_{1}^{*}h"]{r} & f_{i}^{*}f^{*}p_{1}^{*}\eta(X) \arrow["\eta(\Psi_{i,1,2})"]{d} \arrow["\eta(\Psi_{i,1})^{-1}"]{r} & f_{i}^{*}\eta(Y) \\
& f_{i}^{*}f^{*}p_{2}^{*}X \arrow["f_{i}^{*}f^{*}p_{2}^{*}h"]{r}  & f_{i}^{*}f^{*}p_{2}^{*}\eta(X), \arrow[swap, "\eta(\Psi_{i,2})^{-1}"]{ur} &
\end{tikzcd}
\]
where we have made the canonical identifications mentioned at the beginning of the proof in several places and $\Psi_{i,1,2} := \Psi_{i,2} \circ \Psi_{i,1}^{-1}$. The diagram does not commute because of the middle square. Indeed, starting at the top-left corner, going right then down then left yields $ f_{i}^{*}f^{*}p_{2}^{*}h^{-1} \circ \eta(\Psi_{i,1,2}) \circ f_{i}^{*}f^{*}p_{1}^{*}h = \theta_{f_{i}^{*}Y}(x_{V_{i}}^{-1}) \circ \Psi_{i,1,2}$, where $x_{V_{i}}$ denotes the image of $x \in \shA(U_{1}) \to \shA(V_{i} \xrightarrow{f \circ f_{i}} U_{1})$ (using that $x$ does not depend on the choice of $\varphi$, see beginning of the proof). But now replacing $f_{i}^{*}f^{*}p_{k}^{*}h$ with $\bar{\mu}_{f_{i}^{*}f^{*}p_{k}^{*}X}$ for $k=1,2$ serves to replace the above composition with $$\theta_{f_{i}^{*}Y}(p_{1,V_{i}}^{\sharp}(y)^{-1}) \circ \theta_{f_{i}^{*}Y}(p_{2,V_{i}}^{\sharp}(y)) \circ \theta_{f_{i}^{*}Y}(x_{V_{i}}^{-1}) \circ \Psi_{i,1,2} = \theta_{f_{i}^{*}Y}(d(y_{V_{i}}) \cdot x_{V_{i}}^{-1}) \circ \Psi_{i,1,2} = \Psi_{i,1,2},$$ where $p_{k, V_{i}}$ for $k=1,2$ denotes the map $V_{i} \xrightarrow{p_{k} \circ f \circ f_{i}} \Spec(\bar{F})$, since $dy = x$ by construction. This gives the main result, since if $(p_{1}^{*}\bar{\mu})_{Y}$ and $(p_{2}^{*}\bar{\mu})_{Y}$ coincide on an fpqc cover of $Y$, they coincide on $Y$ as well.
\end{proof}

One consequence is:

\begin{cor} For $\mathcal{E}$, $G$ as above, $H^{1}(\mathcal{E}, G_{\mathcal{E}})$ is a set (cf. Convention \ref{settheoreticconv}).
\end{cor}

\begin{proof} By the above result, it's enough to prove the statement for $\mathcal{E} = \mathcal{E}_{a}$, where it follows from Proposition \ref{gerbesheaf} and the fact that isomorphism classes of $a$-twisted $G$-torsors over $F$ evidently form a set. 
\end{proof}

The above proof also gives a useful blueprint for constructing isomorphisms of $\GE$-torsors. We give one application here, using it to explain how to explicitly construct an isomorphism of $G_{\gerbeE_{a}}$-torsors $\mathscr{T}_{1} \to \mathscr{T}_{2}$ given an equivalence between their corresponding (class of) $a$-twisted cocycles $(x_{1}, \phi)$, $(x_{2}, \phi)$ coming from trivializations $h_{1}, h_{2}$, realized by the element $y \in G(\bar{F})$. Namely, we first define the map $\restr{\mathscr{T}_{1}}{\mathcal{C}} \to \restr{\mathscr{T}_{2}}{\mathcal{C}}$ on the category $\mathcal{C} := s(\text{Sch}/\bar{F})$ by taking $h_{2}^{-1} \circ m_{y} \circ h_{1}$, and then extend this to all of $\gerbeE_{\bar{F}}$ by conjugating by fpqc-local isomorphisms to objects in $\mathcal{C}$ (as in the above proof, cf. also the proof of Lemma \ref{classtors}). The fact that $dy \in G(U_{1})$ is 1-cocycle implies that this isomorphism descends to an isomorphism of $G_{\gerbeE_{a}}$-torsors $\mathscr{T}_{1} \to \mathscr{T}_{2}$.

The punchline of this entire subsection is the following result:

\begin{cor}\label{punchline} For any $\shA$-gerbe $\gerbeE$ split over $\bar{F}$ and $a \in \shA(U_{2})$ with $[a] = [\gerbeE] \in \check{H}^{2}(\bar{F}/F, \shA)$, if $\check{H}^{1}(\bar{F}/F, \shA) = 0$, then we have a canonical functorial bijection between $H^{1}(\gerbeE, Z \to G)$ and $H^{1,*}(\gerbeE_{a}, Z \to G)$ for any $[Z \to G]$ in $\mathcal{A}$. 
\end{cor}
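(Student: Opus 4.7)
The plan is to chain the two main tools already established: Proposition \ref{trivialize}, which compares $\GE$-torsors on an arbitrary $\shA$-gerbe $\gerbeE$ with those on the concrete model $\gerbeE_{a}$, and Proposition \ref{correspondence1}, which translates $\GE$-torsors on $\gerbeE_{a}$ into equivalence classes of $a$-twisted cocycles. Composing the bijections from each should give the result, modulo verifying that the first one preserves $Z$-twistedness and that functoriality survives the composition.

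First, I would apply Proposition \ref{trivialize} to obtain a canonical bijection $H^{1}(\gerbeE, \GE) \to H^{1}(\gerbeE_{a}, G_{\gerbeE_{a}})$, realized by pullback along any $\shA$-equivalence $\eta_{a} \colon \gerbeE \to \gerbeE_{a}$ (which exists by Proposition \ref{gerbebijection} since $[\gerbeE] = [a]$). The hypothesis $\check{H}^{1}(\bar{F}/F, \shA) = 0$ guarantees independence of this choice via Lemma \ref{uniqueness}, and since the latter is stated for an arbitrary (not necessarily abelian) $F$-group scheme $G$, the construction goes through uniformly.

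The key step is to verify that this bijection restricts to one between the subsets cut out by the condition that the inertial action is induced by an $F$-homomorphism $\phi \colon \shA \to Z$. Because $\eta_{a}$ is a morphism of $\shA$-gerbes, it is by definition the identity on bands; in view of Lemma \ref{bandlemma}, for any object $X$ of $\gerbeE$ and any section $a$ of $\shA$, the automorphism $\theta_{X}(a)$ of $X$ is transported to the automorphism $\theta_{\eta_{a}(X)}(a)$ of $\eta_{a}(X)$. It follows that the inertial action on the pullback torsor $\eta_{a}^{*}\mathscr{T}$ is exactly the transport of the inertial action on $\mathscr{T}$, so the factoring homomorphism $\shA \to G$ attached by Lemma \ref{altres} (or its manifest non-abelian analogue implicit in the definition of $H^{1}(\gerbeE, Z \to G)$) is preserved. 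Hence $\eta_{a}^{*}$ identifies $Z$-twisted torsors on $\gerbeE$ with $Z$-twisted torsors on $\gerbeE_{a}$, producing a canonical bijection $H^{1}(\gerbeE, Z \to G) \to H^{1}(\gerbeE_{a}, Z \to G)$.

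Composing with the bijection $H^{1}(\gerbeE_{a}, Z \to G) \to H^{1,*}(\gerbeE_{a}, Z \to G)$ of Proposition \ref{correspondence1} then yields the stated identification. Functoriality in $[Z \to G] \in \mathcal{A}$ is inherited from the two constituent maps: pullback along $\eta_{a}$ commutes with contraction of torsors along a morphism $(Z_{1} \to G_{1}) \to (Z_{2} \to G_{2})$, so the first bijection is functorial, and Proposition \ref{correspondence1} is already asserted to be functorial. The mildest obstacle in the argument is the verification that the inertial action is preserved under an $\shA$-equivalence; as explained, this is immediate from the defining property \textbf{identity on bands} combined with the naturality built into Lemma \ref{bandlemma}, so no further work is required.
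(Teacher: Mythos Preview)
Your proposal is correct and matches the paper's intent: the corollary is stated without proof as the ``punchline'' of the subsection, meant to follow immediately by composing Proposition~\ref{trivialize} with Proposition~\ref{correspondence1}. Your explicit verification that pullback along an $\shA$-equivalence preserves the inertial action (hence restricts to $Z$-twisted torsors) is exactly the small detail the paper leaves implicit, and your justification via the identity-on-bands condition is the right one.
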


\subsection{Inflation-restriction}

We continue with the notation of the previous sections; in particular, $\gerbeE \xrightarrow{\pi} (\text{Sch}/F)_{\text{fpqc}}$ is a fixed $\shA$-gerbe split over $\bar{F}$, and we will now assume that $\check{H}^{1}(\bar{F}/F, \shA) = 0$. In this section, we discuss the analogue of the inflation-restriction exact sequence in the setting of gerbes. Again $G$ will be a fixed finite type $F$-group scheme; recall that when $G$ is abelian, this implies that $H^{2}_{\text{fppf}}(F,G) = \check{H}^{2}_{\text{fppf}}(F,G) = \check{H}^{2}(\bar{F}/F, G)$, see \S 2.2. Our goal is to define a functorial ``inflation-restriction" sequence for any $[Z \to G] \in \mathcal{A}$:
\[
\begin{tikzcd}
  0 \arrow[r] & H^{1}(F, G)  \arrow[r, "\text{Inf}"] & H^{1}(\gerbeE, Z \to G)  \arrow[r, "\text{Res}"] & \Hom_{F}(\shA, Z)   \arrow[r, "tg"] & H^{2}(F, G),
\end{tikzcd}
\]
where the $H^{2}$-term is to be ignored if $G$ is non-abelian. In order to define this sequence, we may assume that $\gerbeE = \gerbeE_{a}$ for some $a \in \shA(U_{2})$, due to Corollary \ref{punchline}, and take $H^{1}(\gerbeE_{a}, Z \to G)$ to be equivalence classes of $a$-twisted $Z$-cocycles valued in $G$. This makes computations significantly simpler.

We take the first map, called \textit{inflation}, to be the one induced by sending the 1-cocycle $x \in G(U_{1})$ to $(x, 0) \in Z^{1}(\gerbeE_{a}, G_{\gerbeE_{a}})$, we take the second map, called \textit{restriction}, to be the one that sends the $a$-twisted cocycle $(a, \phi)$ to $\phi$, and we take the third map, called \textit{transgression} to be the one that sends $\phi \in \Hom_{F}(\shA, Z)$ to $[\phi(a)] \in \check{H}^{2}(\bar{F}/F, G)$; we could just as well take the image of tg to be $[\phi(a)] \in \check{H}^{2}(\bar{F}/F, Z)$ as well, but we use $\check{H}^{2}(\bar{F}/F, G)$ to match more closely with \cite{Tasho}. We leave it to the reader to check that these maps are well-defined.  

\begin{prop} The image of the class $[\mathscr{T}] \in H^{1}(\gerbeE, Z \to G)$ under the restriction map defined above equals the unique $F$-homomorphism $\shA \to Z$ inducing the inertial action on $\mathscr{T}$ (see Lemma \ref{altres}).
\end{prop}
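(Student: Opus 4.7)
The plan is to unwind the chain of identifications that produces the twisted cocycle $(x, \phi)$ from $\mathscr{T}$ and compare $\phi$ directly with the homomorphism produced by Lemma \ref{altres}. Both will turn out to describe the same $\shA$-action on the trivializing pullback $s^{*}\mathscr{T}$, so equality will follow from the fpqc descent uniqueness used in both constructions.

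First I would reduce to the case $\gerbeE = \gerbeE_a$ for some $a \in \shA(U_2)$ with $[a] = [\gerbeE]$. By Corollary \ref{punchline}, the bijection $H^{1}(\gerbeE, Z \to G) \xrightarrow{\sim} H^{1,*}(\gerbeE_a, Z \to G)$ is realized by pullback along an $\shA$-equivalence $\nu_a \colon \gerbeE_a \to \gerbeE$. Since this equivalence is the identity on bands, the inertial action on $\nu_a^{*}\mathscr{T}$ is the pullback of the inertial action on $\mathscr{T}$, so $\text{Res}(\nu_a^{*}\mathscr{T}) = \text{Res}(\mathscr{T})$ by the uniqueness in Lemma \ref{altres}. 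Hence it suffices to work on $\gerbeE_a$.

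Now fix a $Z$-twisted $\GE$-torsor $\mathscr{T}$ on $\gerbeE_a$. Let $s \colon \text{Sch}/\bar{F} \to \gerbeE_a$ be the canonical section of Lemma \ref{section}, with $X := s(\Spec(\bar F))$ and $\varphi \colon p_1^{*}X \xrightarrow{\sim} p_2^{*}X$. The equivalence $\eta$ of Proposition \ref{gerbesheaf} sends $\mathscr{T}$ to the $a$-twisted $G$-torsor $(T, \psi, m, n)$ where $T = s^{*}\mathscr{T}$, the gluing map $\psi$ is induced by $\varphi^{\sharp}$, and, crucially, the $\shA_{U_0}$-action $n \colon \shA_{U_0} \times_{U_0} T \to T$ is the pullback $s^{*}\iota$ of the inertial action $\iota \colon \shA_{\gerbeE_a} \times_{\gerbeE_a} \mathscr{T} \to \mathscr{T}$. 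Under the bijection of Proposition \ref{correspondence1}, the homomorphism $\phi$ appearing in the twisted cocycle $(x, \phi)$ associated to $\mathscr{T}$ is precisely the unique $F$-homomorphism $\shA \to Z(G)$ obtained (via Lemma \ref{twisthom}, applied after restricting to the central subgroup $Z$) as the descent of the $G_{U_0}$-equivariant $\shA_{U_0}$-action $n$.

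Finally, by Lemma \ref{altres} the homomorphism $\text{Res}(\mathscr{T})$ is the unique $F$-homomorphism such that $\iota$ equals the action induced by $\pi^{*}\text{Res}(\mathscr{T}) \colon \shA_{\gerbeE_a} \to \GE$. Pulling this identity back along $s$ and noting $s^{*}\pi^{*} = \mathrm{id}$ on sheaves coming from $(\text{Sch}/F)_{\text{fpqc}}$, we conclude that the $\shA_{U_0}$-action $n = s^{*}\iota$ on $T$ is also induced by $\text{Res}(\mathscr{T})_{U_0}$. Thus both $\phi_{U_0}$ and $\text{Res}(\mathscr{T})_{U_0}$ provide the descent of $n$ from an $\shA_{U_0}$-action to a homomorphism $\shA \to Z(G)$, and the uniqueness part of Lemma \ref{twisthom} forces $\phi = \text{Res}(\mathscr{T})$. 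The only real content is keeping the identifications straight; the main point to be careful about is that the bijection of Proposition \ref{correspondence1} genuinely recovers the same homomorphism produced by the intrinsic construction of $\text{Res}$ in Lemma \ref{altres}, which is exactly what the pullback-by-$s$ compatibility above provides.
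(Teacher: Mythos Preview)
Your proof is correct and carries out exactly the exercise the paper intends: reduce to $\gerbeE_a$, then use the proof of Proposition \ref{gerbesheaf} (as mediated by Proposition \ref{correspondence1}) to see that the $\phi$ attached to $\mathscr{T}$ is the descent of the pulled-back inertial action, which is $\text{Res}(\mathscr{T})$ by construction. In fact you could shorten this considerably by observing that the bijection of Proposition \ref{correspondence1} \emph{defines} $\phi := \text{Res}(\mathscr{T})$ outright, so once the reduction to $\gerbeE_a$ is made the statement is tautological; your additional unwinding via Lemma \ref{twisthom} is correct but not strictly necessary.
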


\begin{proof} We leave this as an exercise, using the proof of Proposition \ref{gerbesheaf} for the case $\gerbeE = \gerbeE_{a}$.
\end{proof}

\begin{prop}\label{infres} The above maps define a functorial exact sequence of pointed sets (groups if $G$ is abelian, where the $H^{2}$ term is to be ignored if $G$ is non-abelian).
\end{prop}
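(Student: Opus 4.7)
The plan is to work throughout with the twisted-cocycle interpretation from Proposition \ref{correspondence1}: by Corollary \ref{punchline} I may replace $\gerbeE$ by $\gerbeE_a$ for some $a \in \shA(U_2)$ representing $[\gerbeE]$, and identify a class in $H^{1}(\gerbeE, Z \to G)$ with an equivalence class of pairs $(x, \phi)$, where $\phi\colon \shA \to Z$ is an $F$-homomorphism and $x \in G(U_1)$ satisfies $dx = \phi(a)$. Under this identification the three maps become completely transparent at the cocycle level: $\text{Inf}([x]) = [(x,0)]$, $\text{Res}([(x,\phi)]) = \phi$, and $tg(\phi) = [\phi(a)]$, and every required exactness statement reduces to an elementary cocycle manipulation.

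For injectivity of $\text{Inf}$: if $x, y \in G(U_1)$ are honest \v{C}ech $1$-cocycles whose classes $(x, 0)$ and $(y, 0)$ are equivalent in $H^{1}(\gerbeE_a, Z \to G)$, then by Definition \ref{twistedcocycle} there exists $z \in G(\bar{F})$ with $p_{1}(z)^{-1} y p_{2}(z) = x$, which is exactly the coboundary relation for $\check{H}^1(\bar F/F, G)$; hence $[x] = [y]$ in $H^1(F, G)$ via Proposition \ref{fpqccover}. Exactness at $H^{1}(\gerbeE, Z \to G)$ is then clear: $\text{Res}\circ\text{Inf} = 0$ by construction, and conversely any $(x,\phi)$ with $\phi = 0$ satisfies $dx = \phi(a) = 0$, so $x$ is itself a $1$-cocycle and $[(x,0)] = \text{Inf}([x])$. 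Exactness at $\Hom_F(\shA, G)$ (only required in the abelian case, where $tg$ is defined): $tg(\text{Res}([(x,\phi)])) = [\phi(a)] = [dx] = 0$ in $H^{2}(F, G)$, and conversely $[\phi(a)] = 0$ furnishes some $x \in G(U_1)$ with $dx = \phi(a)$, yielding an $a$-twisted $Z$-cocycle restricting to $\phi$.

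Functoriality in $[Z \to G] \in \mathcal{A}$ is transparent from the same formulas: a morphism $(f_Z, f_G)$ in $\mathcal{A}$ sends a twisted cocycle $(x, \phi)$ to $(f_G(x), f_Z \circ \phi)$ and manifestly commutes with all three arrows, and naturality in the target of $tg$ is just the usual functoriality of $H^2$. Independence from the choice of representative $a \in \shA(U_2)$ of $[\gerbeE]$ is already absorbed into the canonical bijection of Corollary \ref{punchline}, so no separate verification is needed. The arguments above are self-contained once the twisted-cocycle reformulation is in hand; the only step that genuinely invokes outside structural input is the first, which uses the comparison $\check{H}^1(\bar F/F, G) \cong H^1(F, G)$ of Proposition \ref{fpqccover}. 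I do not anticipate a substantial obstacle: the content of the proposition lies entirely in recognizing that the twisted-cocycle language forces the classical inflation-restriction computations to proceed verbatim.
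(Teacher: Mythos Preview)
Your proof is correct and follows essentially the same route as the paper's: both reduce to the twisted-cocycle model $(x,\phi)$ via Corollary~\ref{punchline} and verify exactness at each node by direct inspection of the relations $dx = \phi(a)$ and $p_1(z)^{-1}yp_2(z) = x$. You are somewhat more explicit than the paper in spelling out injectivity of $\mathrm{Inf}$ and in addressing functoriality and independence of the representative $a$, whereas the paper leaves these as immediate or as an exercise; conversely the paper's proof is terser but covers the same ground. One minor citation point: Proposition~\ref{fpqccover} is stated for commutative finite-type $G$, so for non-abelian $G$ the identification $\check{H}^1(\bar F/F,G)\cong H^1(F,G)$ should instead be justified by the discussion surrounding Proposition~\ref{representability} (every $G$-torsor is fppf-locally trivial, hence trivialized over $\bar F$).
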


\begin{proof} Clearly the image of the first set is contained in the fiber over identity of the second map. Conversely, if we have some $(x, \phi)$ with $\phi = 0$, then the twisted cocycle condition on $x \in G(U_{1})$ is just the usual cocycle condition, and hence $[x] \in H^{1}(F, G)$ maps to $(x, 0)$. Taking a twisted cocycle $(x, \phi)$ already gives an element $x \in G(U_{1})$ such that $dx = \phi(a)$, so that evidently $[\phi(a)] = 0$ in $\check{H}^{2}(\bar{F}/F, G)$. Finally, if $\phi \in \Hom_{F}(\shA, Z)$ is such that $\phi(a) = dx$ for $x \in G(U_{1})$, then $(x, \phi)$ defines a twisted cocycle, completing the proof. We leave functoriality in $G$ as an exercise. 
\end{proof}

For $[Z \to G]$ in $\mathcal{A}$, denote by $G \xrightarrow{\pi} \overline{G}$ the quotient of $G$ by $Z$. The following version of the long exact sequence in fpqc cohomology will be useful later:

\begin{lem}\label{LES} For $[Z \to G] \in \mathcal{A}$ we have an exact sequence of pointed sets (abelian groups if $G$ is abelian):
\[
\begin{tikzcd}
\overline{G}(F) \arrow{r} & H^{1}(\gerbeE, Z_{\gerbeE}) \arrow{r} & H^{1}(\gerbeE, \GE) \arrow{r} & H^{1}(\gerbeE, \overline{G}_{\gerbeE})
\end{tikzcd}
\]
\end{lem}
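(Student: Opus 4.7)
The plan is to reduce to the case $\gerbeE = \gerbeE_{a}$ for a \v{C}ech $2$-cocycle $a \in \shA(U_{2})$ representing $[\gerbeE]$, via Corollary \ref{punchline}, and then carry out the argument in terms of $a$-twisted cocycles using Proposition \ref{correspondence1}. Everything then becomes a direct calculation analogous to the standard proof of the degree-one long exact sequence in Galois cohomology, with the only new wrinkle being the homomorphism $\phi \colon \shA \to Z(G)$ attached to each class.

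First I would define the connecting map $\overline{G}(F) \to H^{1}(\gerbeE, Z_{\gerbeE})$ as follows. Given $\bar{g} \in \overline{G}(F)$, use Proposition \ref{vanishingcohomology} (which gives $H^{1}(\bar{F}, Z) = 0$) to lift $\bar{g}$ to an element $g \in G(\bar{F}) = G(U_{0})$. Set $z := p_{1}(g)^{-1}p_{2}(g) \in G(U_{1})$. Since $\pi(z) = 1$ we have $z \in Z(U_{1})$, and a direct expansion of $(p_{12}z)(p_{23}z)(p_{13}z)^{-1}$ telescopes to the identity, so that $(z,0)$ is an (untwisted) $Z$-cocycle defining a class in $H^{1}(\gerbeE, Z_{\gerbeE})$. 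Two choices of lift $g, g'$ differ by an element of $Z(\bar{F})$, which realizes an equivalence of the resulting twisted cocycles, so the assignment is well-defined and clearly functorial.

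For exactness at $H^{1}(\gerbeE, Z_{\gerbeE})$, suppose $(z, \phi)$ lies in the kernel of the map to $H^{1}(\gerbeE, \GE)$. Equivalence with $(1,0)$ in $G$ forces $\phi = 0$ and the existence of $g \in G(\bar{F})$ with $z = p_{1}(g)^{-1}p_{2}(g)$. Then $\bar{g} := \pi(g)$ satisfies $p_{1}(\bar{g}) = p_{2}(\bar{g})$, hence descends to $\overline{G}(F)$ by fpqc descent, and maps to the class of $(z,0)$. The reverse inclusion is immediate from the construction. For exactness at $H^{1}(\gerbeE, \GE)$, classes $(z, \phi)$ with $z \in Z(U_{1})$ and $\phi$ factoring through $Z$ push forward to $(\pi(z), \pi \circ \phi) = (1, 0)$, giving one containment. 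Conversely, if $(x, \phi)$ pushes to the trivial class in $H^{1}(\gerbeE, \overline{G}_{\gerbeE})$, then $\pi \circ \phi = 0$ (so $\phi$ factors through $Z$) and there exists $\bar{g} \in \overline{G}(\bar{F})$ with $\pi(x) = p_{1}(\bar{g})^{-1}p_{2}(\bar{g})$. Lifting $\bar{g}$ to $g \in G(\bar{F})$ via $H^{1}(\bar{F}, Z) = 0$, the element $x' := p_{1}(g) \cdot x \cdot p_{2}(g)^{-1}$ satisfies $\pi(x') = 1$, is equivalent to $(x, \phi)$ through $g$, and the centrality of $\phi(a)$ yields $dx' = q_{1}(g)\phi(a)q_{1}(g)^{-1} = \phi(a)$. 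Thus $(x', \phi)$ is an $a$-twisted $Z$-cocycle valued in $Z$ whose image is the original class.

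The main obstacle to be careful about is the twisted cocycle condition surviving the conjugation $x \mapsto p_{1}(g) \cdot x \cdot p_{2}(g)^{-1}$; this works precisely because $\phi$ takes values in $Z(G)$, so after the cancellations in $dx'$ the residual conjugation by $q_{1}(g)$ acts trivially on $\phi(a)$. Once this centrality is exploited, the rest is bookkeeping with twisted $1$-cocycles, and the proof proceeds without needing to return to the original gerbe-theoretic definition.
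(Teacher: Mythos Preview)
Your argument is correct and essentially the same as the paper's. Both reduce to $\gerbeE_{a}$ via Corollary \ref{punchline} and work with $a$-twisted cocycles; the only cosmetic difference is that the paper packages the connecting map $\overline{G}(F) \to H^{1}(\gerbeE, Z_{\gerbeE})$ as the composite $\overline{G}(F) \xrightarrow{\delta} H^{1}(F,Z) \xrightarrow{\mathrm{Inf}} H^{1}(\gerbeE_{a}, Z_{\gerbeE_{a}})$ and then appeals to factorizations through $H^{1}(F,G)$, whereas you unwind $\delta$ explicitly at the cochain level. Your verification that $dx' = q_{1}(g)\,\phi(a)\,q_{1}(g)^{-1} = \phi(a)$ by centrality is exactly the computation underlying the paper's remark ``(using centrality)\dots this $z$ necessarily satisfies $dz = dx = \phi(a)$.''
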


\begin{proof} Again, we may work with $a$-twisted cocycles. The first map is defined to be the composition $\overline{G}(F) \xrightarrow{\delta} H^{1}(F, Z) \xrightarrow{\text{Inf}} H^{1}(\gerbeE_{a}, Z_{\gerbeE_{a}})$ from the short exact sequence of fppf group schemes associated to $Z \to G$, and the second and third maps come from functoriality. The first map lands in the kernel of the second because the composition of the first two maps may be factored as $\overline{G}(\bar{F}) \xrightarrow{\delta} H^{1}(F, Z) \to  H^{1}(F, G) \xrightarrow{\text{Inf}} H^{1}(\gerbeE_{a}, G_{\gerbeE_{a}})$. Moreover, if $(x, \phi) \in Z^{1}(\gerbeE_{a}, Z_{\gerbeE_{a}})$ has trivial image in $H^{1}(\gerbeE_{a}, G_{\gerbeE_{a}})$, then $\phi=0$ and hence $(x, \phi)$ lies in the image of the inflation map $H^{1}(F, Z) \to H^{1}(\gerbeE_{a}, Z_{\gerbeE_{a}})$, and again the composition $H^{1}(F, Z) \to H^{1}(\gerbeE_{a}, Z_{\gerbeE_{a}}) \to H^{1}(\gerbeE_{a}, G_{\gerbeE_{a}})$ factors as $H^{1}(F, Z) \to H^{1}(F, G) \hookrightarrow H^{1}(\gerbeE_{a}, G_{\gerbeE_{a}})$, giving the other containment.

For exactness at the second spot, if $(x, \phi)$ is such that $[\pi(x, \phi)] = 0$, then $\pi \circ \phi = 0$, and so by basic properties of quotients, this happens if and only if $\phi$ factors through $Z$. Given this, the class maps to the identity if and only if (using centrality) we have that $x \in G(U_{1})$ is such that there is some $z \in Z(U_{1})$ with $x = p_{1}^{\sharp}(g)^{-1} z p_{2}^{\sharp}(g)$, and this $z$ necessarily satisfies $dz = dx = \phi(a)$ (since it's cohomologous to $x$). This holds if and only if $[(x, \phi)] = [(z, \phi)]$ in $H^{1}(\gerbeE_{a}, G_{\gerbeE_{a}})$, and $(z, \phi) \in Z^{1}(\gerbeE_{a}, Z_{\gerbeE_{a}})$, as desired.
\end{proof}

One checks easily (using Construction \ref{changeofgerbe}) that if $\gerbeE$ is an $\shA$-gerbe split over $\bar{F}$ and $\gerbeE'$ is a $\textbf{B}$-gerbe split over $\bar{F}$, and we have a morphism $\nu \colon \gerbeE \to \gerbeE'$ of categories over $\text{Sch}/F$ inducing the map $f \in \Hom_{F}(\shA, \textbf{B})$, then  the following diagram also commutes for any finite type $G$ (ignoring $H^{2}$ if $G$ is non-abelian):
\[
\begin{tikzcd}
0 \arrow{r}  & H^{1}(F, G) \arrow{r} & H^{1}(\gerbeE, \GE) \arrow{r} & \Hom_{F}(\shA, Z(G)) \arrow{r} & H^{2}(F,G) \arrow[equals]{d} \\
0 \arrow{r}  & H^{1}(F, G) \arrow[equals]{u} \arrow{r} & H^{1}(\gerbeE', G_{\gerbeE'}) \arrow["\nu^{*}"]{u} \arrow{r} & \Hom_{F}(\textbf{B}, Z(G)) \arrow{r} \arrow["f^{*}"]{u} & H^{2}(F,G).
\end{tikzcd}
\]

\subsection{Addendum: inverse limits of gerbes}
In this section we present a few elementary results concerning inverse limits of gerbes. We keep all of the previous notation of \S 2. The new assumptions of this subsection are as follows: We have a system $\{u_{n}\}_{n \in \mathbb{N}}$ of finite-type commutative affine groups over $F$ with transition maps $p_{n+1,n} \colon u_{n+1} \to u_{n}$ (defined over $F$) which are epimorphisms. We also assume that we have systems of elements $\{a_{n} \in u_{n}(U_{2})\}$ and $\{x_{n} \in u_{n}(U_{1}) \}$ such that $a_{n}$ are \v{C}ech 2-cocycles and $a_{n} \cdot dx_{n} = p_{n+1,n}(a_{n+1})$. This gives rise to a system of (fpqc) gerbes $\{\gerbeE_{n}:= \gerbeE_{a_{n}} \to (\text{Sch}/F)_{\text{fpqc}}\}_{n \in \mathbb{N}}$ (abbreviated as just $\{\gerbeE_{n}\}$) with morphisms of $\text{Sch}/F$-categories $\pi_{n+1,n} \colon \gerbeE_{n+1} \to \gerbeE_{n}$, where $\pi_{n+1,n} := \phi_{a_{n+1},a_{n},x_{n}}$, see Construction \ref{changeofgerbe}. 

\begin{Def} Define the \textit{inverse limit} of the system $\{\gerbeE_{n}\}$, denoted by $\varprojlim_{n} \gerbeE_{n} \to (\text{Sch}/F)_{\text{fpqc}}$, to be the category with fiber over $U \in \text{Ob}(\text{Sch}/F)$ with objects given by systems of pairs $(X_{n}, i_{n})_{n \in \mathbb{N}}$ of an object $X_{n} \in \gerbeE_{n}(U)$ and an isomorphism $i_{n} \colon \pi_{n+1,n}(X_{n+1}) \xrightarrow{\sim} X_{n}$ in $\gerbeE_{n}(U)$, and morphisms $(X_{n},i_{n}) \to (Y_{n},j_{n})$ given by a system of morphisms $\{f_{n} \colon X_{n} \to Y_{n} \}$ such that $j_{n} \circ \pi_{n+1,n}f_{n+1} = f_{n} \circ i_{n}$ for all $n$ (we extend this definition to morphisms between objects in different fibers in the obvious way). We call such a system of morphisms \textit{coherent}. It is clear that we have a compatible system of canonical morphisms of $\text{Sch}/F$-categories $\pi_{m} \colon \varprojlim_{n} \gerbeE_{n} \to \gerbeE_{m}$ for all $m$.
\end{Def}

It will turn out that the category $\gerbeE := \varprojlim_{n} \gerbeE_{n} \to (\text{Sch}/F)_{\text{fpqc}}$ is canonically an fpqc $u:= \varprojlim_{n} u_{n}$-gerbe, split over $\Spec(\bar{F})$. Denote the projection map $u \to u_{n}$ by $p_{n}$. Note that we have maps $\check{H}^{i}(\bar{F}/F, u_{n+1}) \to \check{H}^{i}(\bar{F}/F, u_{n})$ induced by $p_{n+1,n}$, and thus also a map 
\begin{equation}\label{projection}
\check{H}^{i}(\bar{F}/F,  u) \to \varprojlim_{n} \check{H}^{i}(\bar{F}/F, u_{n})
\end{equation}
for all $i \geq 0$. Recall from Proposition \ref{gerbebijection} that the fpqc $u$-gerbe $\gerbeE$ corresponds to a class in $\check{H}^{2}(\bar{F}/F, u)$. We give one preliminary result to show that our Convention \ref{conventions} applies for the group $u$ (which is not in general of finite type):

\begin{lem} Using the notation as above, $\check{H}_{\text{fpqc}}^{1}(U_{n}, u_{U_{n}}) = 0$ for all $n \geq 0$.
\end{lem}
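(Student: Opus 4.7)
The plan is to show directly that every fpqc $u_{U_{n}}$-torsor $T$ on $U_{n}$ admits a global section, since $\check{H}^{1}_{\text{fpqc}}(U_{n}, u_{U_{n}})$ classifies such torsors. The strategy is to decompose $T$ via the projections $p_{m} \colon u \to u_{m}$: for each $m$, form the pushout
$$T_{m} := T \times^{u_{U_{n}}, p_{m}} (u_{m})_{U_{n}},$$
an fpqc $(u_{m})_{U_{n}}$-torsor, and these assemble into an inverse system with transitions $T_{m+1} \to T_{m}$ induced by $p_{m+1,m}$. First I would identify $T$ with $\varprojlim_{m} T_{m}$ as an fpqc sheaf on $U_{n}$. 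After passing to an fpqc cover $V \to U_{n}$ trivializing $T$, all of the $T_{m}$ become the constant torsors $(u_{m})_{V}$, and the identity $u(V) = \varprojlim_{m} u_{m}(V)$ (valid for affine $V$ since $u = \varprojlim_{m} u_{m}$ as affine $F$-group schemes, i.e., the coordinate ring of $u$ is the colimit of those of the $u_{m}$) lets one check $T = \varprojlim_{m} T_{m}$ locally, and then globally by descent. Since global sections commute with limits, this gives a canonical bijection
$$T(U_{n}) \;=\; \varprojlim_{m} T_{m}(U_{n}).$$

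Next I would establish the Mittag-Leffler properties of the inverse system on the right. Each $u_{m}$ is finite type over $F$, so Convention \ref{conventions} yields $\check{H}^{1}_{\text{fpqc}}(U_{n}, (u_{m})_{U_{n}}) = 0$ and hence $T_{m}(U_{n}) \neq \varnothing$. To lift sections compatibly, observe that $K_{m} := \ker(p_{m+1,m})$ is a closed subgroup of the finite-type affine group $u_{m+1}$, hence itself finite type, and $T_{m+1} \to T_{m}$ is a principal $(K_{m})_{T_{m}}$-torsor. Given $t_{m} \in T_{m}(U_{n})$, the pullback $t_{m}^{*}T_{m+1}$ is a $(K_{m})_{U_{n}}$-torsor on $U_{n}$, which is trivial again by Convention \ref{conventions} applied to $K_{m}$; any trivialization produces the desired lift $t_{m+1}$, so each transition $T_{m+1}(U_{n}) \to T_{m}(U_{n})$ is surjective.

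Once surjectivity of the transitions between non-empty sets is in place, a straightforward inductive (dependent-choice) argument on this countable system produces a compatible sequence $(t_{m}) \in \varprojlim_{m} T_{m}(U_{n})$, and the identification of the first paragraph converts this into a global section of $T$, as desired. I expect the only genuinely delicate point to be the sheaf-theoretic identification $T = \varprojlim_{m} T_{m}$; everything else is a routine Mittag-Leffler argument applied twice to the finite-type case of Convention \ref{conventions}. The identification itself should go through without much fuss because $U_{n} = \Spec(\bar{F}^{\otimes_{F}(n+1)})$ is affine, so after passing to an affine fpqc cover trivializing $T$ the claim reduces to the purely algebraic identity $u(V) = \varprojlim_{m} u_{m}(V)$, and descent then handles the comparison on $U_{n}$ itself.
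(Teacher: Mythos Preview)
Your proposal is correct and follows essentially the same route as the paper. Both arguments form the pushouts $T_{m}=T\times^{u}u_{m}$, invoke the vanishing of $\check{H}^{1}$ for the finite-type groups $u_{m}$ (and their transition kernels) over $U_{n}$ to build a compatible system of sections/trivializations inductively, and then pass to the inverse limit using $u=\varprojlim_{m} u_{m}$; the only difference is packaging—the paper adjusts an arbitrary trivialization $h_{m+1}$ by an element of $u_{m+1}(R)$ lifting the defect $y_{m}\in u_{m}(R)$ and only at the end assembles the limit map $P\to u$, whereas you first isolate the identification $T\cong\varprojlim_{m}T_{m}$ and then run a Mittag-Leffler argument on global sections.
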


\begin{proof} Let $R$ denote $\bar{F}^{\bigotimes_{F} n}$ for $n \geq 1$, and let $P$ be an fpqc $u_{R}$-torsor over $R$ (cf. Remark \ref{repremark}). Then for all $n$ we obtain a $u_{n,R}$-torsor by taking $P_{n}:= P \times^{u_{R}, p_{n}} u_{n,R}$. Moreover, by Proposition \ref{vanishingcohomology}, we have an isomorphism of $u_{1,R}$-torsors $P_{1} \xrightarrow{h_{1}} u_{1,R}$. Similarly, we have a trivialization $h_{2} \colon P_{2} \xrightarrow{\sim} u_{2,R}$, and the induced isomorphism $P_{1} = P_{2} \times^{u_{2,R},p_{2,1}} u_{1,R} \to u_{1,R}$ differs from $h_{1}$ by post-composing by an automorphism of the trivial $u_{1,R}$-torsor $u_{1,R}$ which must be translation by some $y_{1} \in u_{1}(R)$, which we may lift to $\tilde{y}_{1} \in u_{2}(R)$. We may then replace $h_{2}$ by its post-composition with translation by $\tilde{y}_{1}$ to assume that, via $p_{2,1}$, it induces $h_{1}$. Proceeding inductively in this manner, we obtain trivializations $h_{n} \colon P_{n} \xrightarrow{\sim} u_{n,1}$ such that $h_{n-1}$ is induced by $h_{n}$ via $p_{n,n-1}$, as above. This allows us to define a morphism of $u$-torsors $h \colon P \to u$ by applying $\varprojlim_{n}$ to the ($u$-equivariant) composition $P \xrightarrow{\text{id} \times e_{u_{n,R}}} P_{n} \xrightarrow{h_{n}} u_{n,R}$ (where $e_{u_{n,R}} \colon \Spec(R) \to u_{n,R}$ is the identity section), which is automatically an isomorphism.
\end{proof}

 The main result of this subsection is:

\begin{prop}\label{invlim1} With the setup as above, the category $\gerbeE := \varprojlim_{n} \gerbeE_{n} \to (\text{Sch}/F)_{\text{fpqc}}$ can be given the structure of an fpqc $u$-gerbe, split over $\Spec(\bar{F})$. Moreover, the map \eqref{projection} sends the class in $\check{H}^{2}(\bar{F}/F, u)$ corresponding to $\gerbeE$ to the element $([a_{n}])  \in \varprojlim_{n} \check{H}^{2}(\bar{F}/F, u_{n})$.
\end{prop}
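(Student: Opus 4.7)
The plan is to verify the stack axioms, identify the band with $u$, construct a splitting over $\bar{F}$, and then read off the class via Fact \ref{gerbecocycle}. First I would check that $\gerbeE \to (\text{Sch}/F)_{\text{fpqc}}$ is a fibered category: for $f \colon V \to U$, define $f^{*}(X_n) := (f^{*}X_n)$, which is consistent because each $\pi_{n+1,n}$ preserves pullbacks (Construction \ref{changeofgerbe} is given coherently on pullbacks), and strong cartesianness follows level by level. The stack axioms follow because $\underline{\Hom}((X_n),(Y_n)) = \varprojlim_n \underline{\Hom}(X_n, Y_n)$ is an inverse limit of fpqc sheaves, and a descent datum on $\gerbeE$ projects to descent data on each $\gerbeE_n$ which descend to objects $Y_n \in \gerbeE_n(U)$; the essential uniqueness of descent assembles the $Y_n$ into a coherent system.

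Next, for any $(X_n) \in \gerbeE(U)$ an automorphism is by definition a coherent system $\{f_n\}$ of automorphisms of the $X_n$, so
\[
\underline{\text{Aut}}_U((X_n)) \;=\; \varprojlim_n \underline{\text{Aut}}_U(X_n) \;\cong\; \varprojlim_n u_n|_U \;=\; u|_U,
\]
and by Lemma \ref{bandlemma} these canonical isomorphisms glue to an identification $\text{Band}(\gerbeE) \cong u$, so $\gerbeE$ is a $u$-gerbe. For the gerbe property and splitting over $\bar{F}$, by Proposition \ref{splittingchar} it suffices to produce an object of $\gerbeE(\bar{F})$. Using the canonical section $x_n$ of Lemma \ref{section} for each $\gerbeE_n$ together with Corollary \ref{pulltwist} and the relation $a_n \cdot dx_n = p_{n+1,n}(a_{n+1})$, one gets canonical isomorphisms $\alpha_{n+1} \colon \pi_{n+1,n}(x_{n+1}(U_0)) \xrightarrow{\sim} x_n(U_0)$ induced by translation by $x_n$. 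Starting from $X_1 := x_1(U_0)$ and proceeding inductively, I adjust each successive $x_{n+1}(U_0)$ (using that $\check{H}^{1}(\bar{F}/F, u_{n+1}) = 0$, established just before the proposition, so that any two objects in $\gerbeE_{n+1}(\bar{F})$ are isomorphic) to an isomorphic object $X_{n+1}$ satisfying $\pi_{n+1,n}(X_{n+1}) = X_n$ on the nose; the resulting coherent system is the sought-after object. The second gerbe axiom (local isomorphy of any two lifts) follows level by level from the gerbe property of each $\gerbeE_n$.

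Finally, to compute $[\gerbeE]$, I apply Fact \ref{gerbecocycle} to $X := (X_n)$ and pick a system $\varphi = (\varphi_n) \colon p_1^{*}X \xrightarrow{\sim} p_2^{*}X$ in $\gerbeE(U_1)$ whose components are the canonical isomorphisms from Lemma \ref{section} (transported by the $\alpha_n$ of the previous paragraph). Under the identification $\text{Band}(\gerbeE) = u = \varprojlim_n u_n$, the automorphism $d\varphi \in \text{Aut}_{U_2}(q_1^{*}X)$ projects componentwise to $d\varphi_n \in \text{Aut}_{U_2}(q_1^{*}X_n)$, so $\theta^{-1}(d\varphi) \in u(U_2)$ projects to $\theta_n^{-1}(d\varphi_n) \in u_n(U_2)$. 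By Proposition \ref{expectedclass}, the latter represents $[\gerbeE_n] = [a_n]$, so the image of $[\gerbeE]$ under \eqref{projection} is $([a_n]) \in \varprojlim_n \check{H}^{2}(\bar{F}/F, u_n)$, as claimed.

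The main obstacle I expect is the strict equality (as opposed to canonical isomorphism) built into the definition of the inverse limit: the canonical sections $x_n(U_0)$ are not literally compatible with the $\pi_{n+1,n}$, so the inductive adjustment using the vanishing of $\check{H}^{1}(\bar{F}/F, u_n)$ to produce an on-the-nose coherent system is the essential technical point. Once that is done, everything else is bookkeeping: the stack axioms pass through inverse limits, the band is computed componentwise, and the \v{C}ech-class calculation reduces to applying Proposition \ref{expectedclass} in each factor.
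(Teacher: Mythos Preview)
Your overall strategy matches the paper's: verify the stack and gerbe axioms, identify the band componentwise as $\varprojlim_n u_n = u$, construct an explicit object over $\bar F$, and compute the class via Fact~\ref{gerbecocycle}. The paper does not dwell on the stack axioms and proceeds directly to constructing $X \in \gerbeE(\bar F)$ together with a coherent isomorphism $\tilde\varphi \colon p_1^*X \to p_2^*X$.

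There is a genuine gap in your inductive step. The statement ``any two objects of $\gerbeE_{n+1}(\bar F)$ are isomorphic'' (which, incidentally, follows from Proposition~\ref{vanishingcohomology} for the finite-type $u_{n+1}$, not from the lemma on $u$ you cite) does \emph{not} imply that the functor $\pi_{n+1,n}$ is surjective on objects: replacing $x_{n+1}(U_0)$ by an isomorphic object only replaces its image by an isomorphic object in $\gerbeE_n(\bar F)$, not by $X_n$ on the nose. What is actually needed is surjectivity of $u_{n+1}(U_j) \to u_n(U_j)$ for $j=1,2$, which holds because the transition maps $p_{n+1,n}$ are epimorphisms and $H^1(U_j,\ker p_{n+1,n})=0$ by Proposition~\ref{vanishingcohomology}. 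The paper uses exactly this: it keeps the underlying torsor trivial at every stage, lifts each $x_k \in u_k(U_1)$ to $\tilde x_k \in u_{k+1}(U_1)$ (and iteratively to all higher levels), and explicitly translates the gluing map $\psi_{n+1}$ and the isomorphism $\varphi_{n+1}$ by the appropriate products of these lifts so that $\pi_{n+1,n}(T_{n+1},\tilde\psi_{n+1}) = (T_n,\tilde\psi_n)$ and $\pi_{n+1,n}(\tilde\varphi_{n+1}) = \tilde\varphi_n$ hold strictly. The same issue recurs for your coherent system $(\varphi_n)$: you must verify $\pi_{n+1,n}(\varphi_{n+1}) = \varphi_n$ exactly, which again requires the explicit lifting rather than an isomorphism-class argument. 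Once this is done, your computation of the class is correct and matches the paper's.
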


\begin{proof} Existence of an object in $X \in \gerbeE(\Spec(\bar{F}))$  is clear, since by Lemma \ref{section} we have the ``trivial" $a_{n}$-twisted torsors $X_{n}$ in each $\gerbeE_{n}(\Spec(\bar{F}))$, and any two objects in $\gerbeE_{n}(\Spec(\bar{F}))$ are isomorphic (since $H^{1}(\bar{F}, u_{n}) = 0$ for all $n$). It is straightforward to check that, after choosing $i_{n}$ appropriately, we can take $\varphi_{n} \colon p_{1}^{*}X_{n} \to p_{2}^{*}X_{n}$ to be translation by a $2$-cocycle cohomologous to $a_{n}$ (cf. the proof of Lemma \ref{section}) to obtain an isomorphism $\varphi \colon p_{1}^{*}X \to p_{2}^{*}X$.

We now check that any two objects of $\gerbeE$ are locally isomorphic, following the proof suggested by the anonymous reviewer. Let $(X_{n}, i_{n})$ and $(Y_{n}, j_{n})$ be two objects in $\gerbeE(V)$; each $\Hom_{\gerbeE_{n}(V)}(X_{n}, Y_{n}) =: S_{n}$ is a $u_{n,V}$-torsor for any $n$. Pre-composing by $i_{n}$ and post-composing by $j_{n}^{-1}$ gives an isomorphism of $u_{n,V}$ torsors from $S_{n+1}\times^{\pi_{n+1,n}} u_{n,V}$ to $S_{n}$, and so we may define the inverse limit $S := \varprojlim_{n}  S_{n} \to V$, which, since each $S_{n} \to V$ is affine, is representable by an affine scheme $S$ over $V$, by \cite[\S 01YV]{Stacksproj}, which is also an fpqc cover of $V$, because each $S_{n} \to V$ is faithfully flat. It follows immediately that the pullbacks of $X$ and $Y$ to $S$ are isomorphic, as desired (all isomorphism sheaves $S_{n}$ are compatibly trivialized).

The band of $\gerbeE$ is canonically isomorphic to $u$, since for $V \to \Spec(F)$, any automorphism of the object $(X_{n}, i_{n})_{n}$ is given by a compatible system of automorphisms $X_{n} \xrightarrow{\sim} X_{n}$; for each $n$ we have a canonical identification of the band of $\gerbeE_{n}$ with $u_{n}$, and the compatibility hypothesis exactly says that we have a coherent system of elements with respect to the projective system $\{u_{n}(U)\}$ for any such system of automorphisms. This finishes the proof of the first claim.

For the second claim, we may use the lift $X$ and isomorphism $\varphi \colon p_{1}^{*}X \to p_{2}^{*}X$ constructed above to compute the class $[\gerbeE] \in \check{H}^{2}(\bar{F}/F, u)$ (see Fact \ref{gerbecocycle}). It is clear from our above construction that, via the natural projection map $u = \text{Band}(\gerbeE) \to \text{Band}(\gerbeE_{n}) = u_{n}$, the differential of $\varphi$ maps to the differential of $\varphi_{n}$, which one checks gives translation by an element that is cohomologous to $a_{n}$, as desired.
\end{proof}

We immediately get the following result:
\begin{cor}\label{invlimcor} The map $\check{H}^{2}(\bar{F}/F, u) \to \varprojlim_{n} \check{H}^{2}(\bar{F}/F, u_{n})$ defined above is surjective.
\end{cor}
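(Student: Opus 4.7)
My plan is to reduce the statement directly to Proposition \ref{invlim1} by unravelling the definitions. Given an arbitrary element $(\alpha_n) \in \varprojlim_n \check{H}^{2}(\bar{F}/F, u_n)$, I will first choose for each $n$ a \v{C}ech 2-cocycle representative $a_n \in u_n(U_2)$ of the class $\alpha_n$. The compatibility condition built into the inverse limit is precisely that $[p_{n+1,n}(a_{n+1})] = [a_n]$ in $\check{H}^{2}(\bar{F}/F, u_n)$ for every $n$. By the definition of \v{C}ech cohomology, this means I can select elements $x_n \in u_n(U_1)$ such that $a_n \cdot dx_n = p_{n+1,n}(a_{n+1})$ for all $n$.

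With these choices made, the pair of systems $\{a_n\}$ and $\{x_n\}$ is exactly the data required to run the setup immediately preceding Proposition \ref{invlim1}. That proposition then produces a $u$-gerbe $\gerbeE := \varprojlim_n \gerbeE_{a_n}$, split over $\Spec(\bar{F})$, whose associated class $[\gerbeE] \in \check{H}^{2}(\bar{F}/F, u)$ is shown there to map to $([a_n]) = (\alpha_n)$ under the projection map \eqref{projection}. This exhibits the desired preimage and establishes surjectivity.

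There is no substantive obstacle here: the gerbe-theoretic construction in Proposition \ref{invlim1}, together with the explicit inductive modification of the twisted gluing maps by the cochains $\tilde{x}_n$ lifting the $x_n$, has already carried out all of the work. The only point worth emphasising is that the freedom to choose representatives $a_n$ and realisations $x_n$ of the compatibility relations is exactly what licenses the application of that proposition; once that observation is made, the corollary follows immediately.
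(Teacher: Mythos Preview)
Your proposal is correct and is precisely the argument the paper has in mind: the corollary is stated as an immediate consequence of Proposition \ref{invlim1}, and you have simply spelled out the one-line deduction of choosing representatives $a_n$ and coboundary witnesses $x_n$ to place an arbitrary $(\alpha_n)$ into the setup of \S 2.7, whereupon the proposition hands you the preimage $[\gerbeE]$.
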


\section{The cohomology set $H^{1}(\gerbeE, Z \to G)$}

This section largely follows \cite[\S 3]{Tasho}. We fix a local field $F$ of characteristic $p > 0$, and continue with the notation of \S 2. We make extensive use of the equivalence of categories between multiplicative $F$-groups of finite type and discrete, finitely-generated $\Gamma$-modules, see for example \cite{Borel}, Chapter 8. For the rest of this paper, we will always have (in the notation of \S 2) $U_{0} = \Spec(\bar{F})$; we continue using the notation of $U_{i}$ for $i > 0$ to represent the $(i+1)$st fibered products of $\Spec(\bar{F})$ over $F$.

\subsection{The multiplicative pro-algebraic group $u$}

For a finite Galois extension $E/F$, we consider the algebraic group $R_{E/F}[n] := \text{Res}_{E/F}\mu_{n}$, which is a multiplicative $F$-group with character group $X^{*}(R_{E/F}[n]) = \Z/n\Z[\Gamma_{E/F}]$ with $\Gamma_{E/F}$ acting by left-translation. We have the diagonal embedding $\mu_{n} \to R_{E/F}[n]$ induced by the $\Gamma$-homomorphism $\Z/n\Z[\Gamma_{E/F}] \to \Z/n\Z$ defined by $[\gamma] \mapsto 1$. The kernel of this homomorphism will be denoted by $\Z/n\Z[\Gamma_{E/F}]_{0}$, and is the character group of the multiplicative $F$-group $R_{E/F}[n]/\mu_{n}$, which will denote by $u_{E/F,n}$. Note that $u_{E/F,n}$ is smooth if and only if $n$ is coprime to the characteristic of $F$.

If $K/F$ is a finite Galois extension containing $E$ and $m$ is a multiple of $n$, then the injective morphism of $\Gamma$-modules $\Z/n\Z[\Gamma_{E/F}] \to \Z/m\Z[\Gamma_{K/F}]$ induced by the inclusion $\Z/n\Z \hookrightarrow \Z/m\Z$ and the map 
\begin{equation*}
[\gamma] \mapsto  \sum_{\substack{\sigma \in \Gamma_{K/F} \\ \sigma \mapsto \gamma}} [\sigma] 
\end{equation*}
induces an epimorphism $R_{K/F}[m] \to R_{E/F}[n]$. This maps $R_{K/F}[m]_{0}$ to $R_{E/F}[n]_{0}$ and thus induces an epimorphism $u_{K/F,m} \to u_{E/F,n}$. We define the pro-algebraic multiplicative group $u$ to be the limit $$u:= \varprojlim u_{E/F,n}$$ taken over the index category $\mathcal{I}$ whose objects are tuples $(E/F,n)$ as $n$ ranges through $\mathbb{N}$ and $E/F$ ranges over all finite Galois extensions of $F$ contained in our fixed separable closure $F^{s}$, and where there is at most one morphism $(K/F,m) \to (E/F,n)$ in $\mathcal{I}$ and it exists if and only if $E \subset K$ and $n \mid m$. For every $(E/F, n)$, the canonical map $u \to u_{E/F,n}$ is an epimorphism. Note that $u$ is a commutative affine group scheme over $F$; when taking the cohomology of $u$, we view it as a commutative fpqc group sheaf on $(\text{Sch}/F)_{\text{fpqc}}$ (and thus also a sheaf on $(\text{Sch}/F)_{\text{fppf}}$).

For a finite multiplicative algebraic group $Z$ over $F$, any $F$-homomorphism $u \to Z$ factors through an $F$-homomorphism $u_{E/F,n} \to Z$ for some $(E/F,n) \in \mathcal{I}$. We also have the ``evaluation at $e$" map $\delta_{e}: \mu_{n,F^{s}} \to u_{E/F,n,F^{s}}$, which is induced by the corresponding morphism of character groups from $\Z/n\Z[\gamma_{E/F}]_{0}$ to $\Z/n\Z$ sending $\sum_{\gamma \in \Gamma_{E/F}} c_{\gamma} [\gamma]$ to $c_{e}.$ It's easy to check that, for $E$ splitting $Z$, we have an isomorphism
\begin{equation}\label{normisomorphism}
\Hom_{F}(u_{E/F,n}, Z) \to \Hom(\mu_{n}, Z)^{N_{E/F}},\hspace{1cm}  f \mapsto f \circ \delta_{e},
\end{equation}
where the superscript $N_{E/F}$ denotes the kernel of the norm map and for two algebraic $F$-groups $A, B$, $\Hom(A,B)$ denotes the abelian group $\Hom_{F^{s}}(A_{F^{s}}, B_{F^{s}})$, which carries a natural $\Gamma$-action.

When taking inverse limits of the groups $u_{E/F, n}$ (and computing any cohomology groups) we may replace the category $\mathcal{I}$ with any co-final subcategory $\{E_{k}/F, n_{k}\}$ in $\mathcal{I}$, which we do in what follows by taking a tower $F = E_{0} \subset E_{1} \subset E_{2} \subset \dots$ of finite Galois extensions of $F$ with the property that $\cup E_{k} = F^{s}$ and a co-final sequence $\{n_{k}\} \subset \mathbb{N}^{\times}$. We set $R_{k}:= R_{E_{k}/F}[n_{k}]$ and $u_{k}:= u_{E_{k}/F, n_{k}}.$


\begin{prop}\label{profinite} The canonical map $H^{i}(F, u) \to \varprojlim H^{i}(F, u_{k})$ is an isomorphism for $i=1,2$.
\end{prop}

\begin{proof} First we fix $(E/F, n) \in \mathcal{I}$. We know from Hilbert's Theorem 90 that $H^{1}(F, \mu_{n_{k}}) = F^{*}/F^{*,n_{k}}$, from Shapiro's lemma that $H^{1}(F, R_{k}) = E_{k}^{*}/E^{*,n_{k}}$, and from local class field theory that $H^{2}(F, \mu_{n_{k}}) = \Z/n_{k}\Z$. Under these correspondences, the map $H^{1}(F, \mu_{n_{k}}) \to H^{1}(F, R_{k})$ corresponds to the obvious map $F^{*}/F^{*,n_{k}} \to E_{k}^{*}/E_{k}^{*,n_{k}}$, and so we have a short exact sequence of groups

\begin{equation}\label{LimitSES}
\begin{tikzcd}
  0 \arrow[r] & E^{*}/(F^{*} \cdot E^{*,n_{k}}) \arrow[r] & H^{1}(F, u_{k})  \arrow[r] & C_{k} \arrow[r] & 0  ,
\end{tikzcd}
\end{equation}

\noindent where $C_{k}$ is the image of $H^{1}(F, u_{k}) \to H^{2}(F, \mu_{n_{k}})$. 

By \cite[Lemma 0D6K]{Stacksproj}, we have the short exact sequences
\begin{center}
\begin{tikzcd}
  0 \arrow[r] & \varprojlim^{(1)}H^{0}(F, u_{k}) \arrow[r] & H^{1}(F, u)  \arrow[r] & \varprojlim H^{1}(F, u_{k}) \arrow[r] & 0  ; \\
  0 \arrow[r] & \varprojlim^{(1)}H^{1}(F, u_{k}) \arrow[r] & H^{2}(F, u)  \arrow[r] & \varprojlim H^{2}(F, u_{k}) \arrow[r] & 0  ,
\end{tikzcd}
\end{center}
and so it suffices to show that both left-hand terms vanish: the first vanishes because it's an inverse system of finite groups, and we will show that the second vanishes as well. Because of \eqref{LimitSES}, it is enough to show that $\varprojlim^{(1)} E_{k}^{*}/(F^{*} \cdot E_{k}^{*,n_{k}})$ vanishes, which follows if we can show that $\varprojlim^{(1)} E_{k}^{*}/E_{k}^{*,n_{k}}$ vanishes. To this end, consider the exact sequence induced by the valuation map $v$:
\begin{center}
\begin{tikzcd}
  0 \arrow[r] & \mathcal{O}_{k}^{\times}/( \mathcal{O}_{k}^{\times})^{n_{k}} \arrow[r] & E_{k}^{*}/E_{k}^{*,n_{k}}  \arrow[r, "v"] & \Z/ n_{k} \Z \arrow[r] & 0.

\end{tikzcd}
\end{center}
Note that, under the above transition maps, $\varprojlim \Z/ n_{k}\Z = \varprojlim^{(1)} \Z/n_{k}\Z = 0$; to see the first equality, fix $l \in \mathbb{N}$: we know from basic number theory that if $\varpi_{k}$ is a uniformizer of $E_{k}$, then $v_{l}(N_{E_{k}/E_{l}}(\pi_{k})) = f_{E_{k}/E_{l}}$, where $f_{E_{k}/E_{l}}$ denotes the degree of the associated extension of residue fields. Whence, we may choose $k > > l$ so that $n_{l} \mid f_{E_{k}/E_{l}}$, and so the transition map $\Z / n_{k} \Z \to \Z / n_{l} \Z$ is zero, as desired. 

It thus suffices to show that $\varprojlim^{(1)} \mathcal{O}_{k}^{\times}/( \mathcal{O}_{k}^{\times})^{n_{k}}$ vanishes, and, the final reduction, that $\varprojlim^{(1)} \mathcal{O}_{k}^{\times}$ vanishes. From here, it is enough by the explicit resolution computing the derived inverse limit (cf. \cite[\S 3.5]{Weibel}) to show that the map $$\Pi_{k \geq 1} \mathcal{O}_{k}^{\times} \xrightarrow{\text{id}-\alpha} \Pi_{k \geq 1} \mathcal{O}_{k}^{\times} $$ is surjective, where $\alpha$ on $\mathcal{O}_{l+1}^{\times}$ is the map $$\mathcal{O}_{l+1}^{\times} \xrightarrow{N_{E_{l+1}/E_{l}}} \mathcal{O}_{l}^{\times}  \hookrightarrow \Pi_{k \geq 1} \mathcal{O}_{k}^{\times}.$$ To this end, let $(x_{k}) \in \Pi_{k \geq 1} \mathcal{O}_{k}^{\times}$, and fix $l \in \mathbb{N}$. Consider the sequence inside $\mathcal{O}_{l}^{\times}$ given by $$x_{l}, x_{l}N_{E_{l+1}/E_{l}}(x_{l+1}), x_{l}N_{E_{l+1}/E_{l}}(x_{l+1})N_{E_{l+2}/E_{l}}(x_{l+2}), \dots ;$$ since the norm groups are open subgroups shrinking to the identity inside $\mathcal{O}_{l}^{\times}$, it follows that this sequence converges to some $y_{l} \in \mathcal{O}_{l}^{\times}$, and it is clear that $(y_{k})_{k \geq 1}$ gives a preimage of $(x_{k})$ under $\text{id}-\alpha$, as desired.
\end{proof}
The following result will be important when discussing the uniqueness of our constructions. 

\begin{lem}\label{spectralcollapses} We have $H^{i}(U_{n}, u) = 0$ for all $i >0$, $n \geq 0$.
\end{lem}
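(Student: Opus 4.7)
The plan is to reduce the vanishing for $u$ to the vanishing already established in Proposition \ref{vanishingcohomology} for each finite-type $u_k$. Since $u$ is affine with coordinate ring $\mathcal{O}(u) = \varinjlim_k \mathcal{O}(u_k)$, we have $u(R) = \varprojlim_k u_k(R)$ for every $F$-algebra $R$, so $u = \varprojlim_k u_k$ as fppf sheaves on $\text{Sch}/F$. The same Milnor-type exact sequence invoked in the proof of Proposition \ref{profinite} (from \cite{Stacksproj}, I.21.22.2) then yields, for every $i \geq 1$,
$$0 \longrightarrow {\varprojlim_k}^{(1)} H^{i-1}(U_n, u_k) \longrightarrow H^i(U_n, u) \longrightarrow \varprojlim_k H^i(U_n, u_k) \longrightarrow 0.$$

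The rightmost term vanishes for all $i \geq 1$ by Proposition \ref{vanishingcohomology} applied to each $u_k$. For $i \geq 2$, the leftmost term likewise vanishes, because each constituent $H^{i-1}(U_n, u_k)$ is already zero. The only remaining case is $i = 1$, where one must show that ${\varprojlim_k}^{(1)} u_k(U_n)$ vanishes.

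For this, it suffices to verify the Mittag-Leffler condition, and I plan to establish the much stronger statement that every transition map $u_{k+1}(U_n) \to u_k(U_n)$ is already surjective. The kernel $K_k := \ker(u_{k+1} \twoheadrightarrow u_k)$ is a closed subgroup scheme of the finite-type commutative affine group $u_{k+1}$, hence is itself a finite-type commutative affine $F$-group. Applying Proposition \ref{vanishingcohomology} to $K_k$ gives $H^1(U_n, K_k) = 0$, and the long exact sequence in fppf cohomology associated to $0 \to K_k \to u_{k+1} \to u_k \to 0$ then produces the desired surjectivity. No step presents a serious obstacle; the main task is the bookkeeping of identifying $u$ with $\varprojlim u_k$ as fppf sheaves and verifying that the cited Milnor sequence applies in our setting, both of which are essentially formal.
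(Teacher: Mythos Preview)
Your proof is correct and follows the same overall strategy as the paper: invoke the Milnor exact sequence from \cite{Stacksproj}, I.21.22.2, reduce to the vanishing of $\varprojlim^{(1)}_k u_k(U_n)$, and verify Mittag--Leffler by showing the transition maps are surjective. The only difference is in that last verification: the paper routes through the auxiliary groups $R_k$, showing $R_k(U_n) \to R_l(U_n)$ is surjective by an explicit coordinate computation and then using that $H^0(U_n,R_k)$ surjects onto $H^0(U_n,u_k)$ (via $H^1(U_n,\mu_{n_k})=0$), whereas you apply Proposition~\ref{vanishingcohomology} directly to the kernel $K_k = \ker(u_{k+1} \to u_k)$. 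Your argument is slightly cleaner and avoids the detour through $R_k$.
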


\begin{proof} First note that $H^{i}(U_{n}, u_{k}) = 0$ for any $i,k>0$, $n \geq 0$, by Proposition \ref{vanishingcohomology}. Thus, the result is clear if we can show that $H^{i}(U_{n}, u) = \varprojlim H^{i}(U_{n}, u_{k})$ for all $i > 0$, $n \geq 0$. Using the same short exact sequence for inverse limits and cohomology used in the proof of Proposition \ref{profinite}, it's enough to show that $\varprojlim^{(1)} H^{j}(U_{n}, u_{k}) = 0$ for all $j \geq 0$. 

For $j \geq 1$ this is immediate, since all the groups in the system are zero, by above. Thus, all that's left is showing $\varprojlim^{(1)} H^{0}(U_{n}, u_{k}) = 0$ for all $n$, which follows from the Mittag-Leffler condition, since $H^{1}(U_{n}, \text{ker}(u_{l} \to u_{k}))$ vanishes for any $k,l$.
\end{proof}

\begin{cor}\label{cechidentification} We have canonical isomorphisms $\check{H}^{p}(\bar{F}/F, u) \to H^{p}(F, u)$ for all $p \in \mathbb{N}$.
\end{cor}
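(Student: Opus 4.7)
The plan is to deduce this corollary as a direct consequence of Corollary \ref{cechtoderived} combined with the preceding Lemma \ref{spectralcollapses}. Specifically, I would apply Corollary \ref{cechtoderived} with the abelian fppf sheaf $\textbf{A} = u$ and with the inverse system of fppf covers $\{\Spec(R_i) \to \Spec(F)\}$ taken to be the tower of finite Galois extensions $F \subset E_1 \subset E_2 \subset \cdots$ introduced just before Lemma \ref{spectralcollapses}, so that $\varinjlim R_i = \bar{F}$ (up to the standard identification with $F^s$ for fppf-cohomological purposes). With this choice, the iterated tensor products $\tilde{R}_j := R^{\bigotimes_F (j+1)}$ coincide with $U_j$ in the notation fixed at the start of this section.

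The hypothesis of Corollary \ref{cechtoderived} requires precisely that $H^i(U_j, u_{U_j}) = 0$ for every $i \geq 1$ and $j \geq 0$. This vanishing is exactly the content of Lemma \ref{spectralcollapses} established just above. Under this vanishing, the $E_2$-page of the spectral sequence of Corollary \ref{spectralsequence1} reduces to its bottom row, the sequence degenerates at $E_2$, and we obtain the canonical isomorphisms
\[
\check{H}^p(\bar{F}/F, u) \xrightarrow{\sim} H^p(F, u) \quad \text{for all } p \in \mathbb{N}.
\]

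There is no real obstacle remaining, since all the genuine work has already been done in Lemma \ref{spectralcollapses}; the nontriviality there was that $u$ is only pro-algebraic rather than of finite type, so one could not simply quote Proposition \ref{vanishingcohomology} directly and instead had to commute cohomology with the inverse limit by verifying the vanishing of $\varprojlim^{(1)} H^0(U_n, u_k)$ via Mittag--Leffler (using surjectivity of the transition maps $H^0(U_n, R_k) \twoheadrightarrow H^0(U_n, u_k)$). Once Lemma \ref{spectralcollapses} is in hand, Corollary \ref{cechidentification} is essentially a one-line invocation of the spectral-sequence machinery already assembled in \S 2.2, and plays the role for $u$ of what Proposition \ref{fpqccover} plays for finite-type commutative $F$-group schemes.
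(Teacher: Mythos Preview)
Your proposal is correct and is exactly the paper's own proof: the paper writes simply ``This is an immediate consequence of combining Lemma \ref{spectralcollapses} with Corollary \ref{cechtoderived}.'' One small quibble: the tower $\{E_k\}$ introduced before Lemma \ref{spectralcollapses} has union $F^{s}$, not $\bar{F}$, so to land on $\check{H}^{p}(\bar{F}/F,u)$ you should instead take a tower of finite (not necessarily separable) extensions with union $\bar{F}$, as the paper does in the proof of Proposition \ref{fpqccover}; your parenthetical ``standard identification with $F^{s}$'' is not quite right in positive characteristic, but this does not affect the argument since Lemma \ref{spectralcollapses} is stated for $U_n = \Spec(\bar{F}^{\otimes_F (n+1)})$ directly.
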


\begin{proof} This is an immediate consequence of combining Lemma \ref{spectralcollapses} with Corollary \ref{cechtoderived}.
\end{proof}

Next, we prove the basic result about the cohomology of $u$. 

\begin{thm}\label{maincohom} We have $H^{1}(F, u) = 0$ and a canonical isomorphism $H^{2}(F, u) = \widehat{\mathbb{Z}}$.
\end{thm}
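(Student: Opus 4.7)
The plan is to use Proposition \ref{profinite} to reduce to computing $\varprojlim_k H^i(F, u_k)$ for $i = 1, 2$ along a cofinal tower $\{(E_k/F, n_k)\}_{k \geq 0}$ in the index category. I would fix this tower with two arithmetic properties: $n_k \mid [E_k:F]$ and $n_k \mid [E_{k+1}:E_k]$. At each level $k$, the long exact sequence in fppf cohomology attached to $0 \to \mu_{n_k} \to R_k \to u_k \to 0$, combined with the standard identifications $H^1(F, \mu_n) = F^*/F^{*n}$ (Kummer), $H^i(F, R_k) = H^i(E_k, \mu_{n_k})$ (Shapiro), $H^2(F, \mu_n) \cong \mathbb{Z}/n\mathbb{Z}$ (local invariant map), and the vanishing $H^i(F, \mu_n) = 0$ for $i \geq 3$ over a local function field, gives a computable exact sequence. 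Since the restriction $H^2(F, \mu_{n_k}) \to H^2(E_k, \mu_{n_k})$ is multiplication by $[E_k:F]$ on invariants, our hypothesis makes it zero, so the sequence splits into
\[
0 \to E_k^*/(F^* \cdot E_k^{*n_k}) \to H^1(F, u_k) \to \mathbb{Z}/n_k\mathbb{Z} \to 0, \qquad H^2(F, u_k) \cong \mathbb{Z}/n_k\mathbb{Z}.
\]

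For $H^2(F, u) = \widehat{\mathbb{Z}}$, I would verify that the transition $H^2(F, u_{k+1}) \to H^2(F, u_k)$ is the natural reduction. Factoring $R_{k+1} \to R_k$ through $R_{E_{k+1}/F}\mu_{n_k}$ and applying Shapiro, this decomposes as (a) the $(n_{k+1}/n_k)$-th power map $\mu_{n_{k+1}} \to \mu_{n_k}$, acting on invariants as $\tfrac{a}{n_{k+1}} \mapsto \tfrac{a}{n_k}$, and (b) the corestriction $H^2(E_{k+1}, \mu_{n_k}) \to H^2(E_k, \mu_{n_k})$, which preserves Brauer invariants of local fields (using $\mathrm{cor} \circ \mathrm{res} = [E_{k+1}:E_k]$ together with $\mathrm{res}$ being multiplication by $[E_{k+1}:E_k]$ on invariants). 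The composition is $a \bmod n_{k+1} \mapsto a \bmod n_k$, so $\varprojlim \mathbb{Z}/n_k\mathbb{Z} = \widehat{\mathbb{Z}}$ canonically.

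For $H^1(F, u) = 0$: a character-group computation (imposing commutativity of the diagonals $\mu_n \hookrightarrow R_{E/F}[n]$ with the tower transitions) shows that the map $\mu_{n_{k+1}} \to \mu_{n_k}$ is raising to the $(n_{k+1}/n_k)[E_{k+1}:E_k]$-th power, so by naturality of the connecting map the transition on the $\mathbb{Z}/n_k\mathbb{Z}$-quotient of $H^1(F, u_k)$ is multiplication by $[E_{k+1}:E_k]$, which the assumption $n_k \mid [E_{k+1}:E_k]$ kills. For the subgroup $E_k^*/(F^* \cdot E_k^{*n_k})$ the transitions are induced by $N_{E_{k+1}/E_k}$ composed with a power map, and vanishing of universal norms for a local field (injectivity of the reciprocity map $E_k^* \hookrightarrow \mathrm{Gal}(E_k^{\mathrm{ab}}/E_k)$ in both the $p$-adic and positive-characteristic settings) forces the inverse limit to be trivial. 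Left-exactness of $\varprojlim$ applied to the short exact sequence above then yields $\varprojlim H^1(F, u_k) = 0$.

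The main obstacle will be correctly identifying the transition maps, in particular the asymmetry between $H^1$ and $H^2$ on the $\mathbb{Z}/n_k\mathbb{Z}$-piece: the factor of $[E_{k+1}:E_k]$ that kills the quotient in $H^1$ is precisely the factor absorbed by corestriction on $H^2$. This asymmetry, traceable to how $\mu_n$ sits diagonally in $R_{E/F}[n]$ versus how $R_{E/F}[n]$ surjects onto $u_{E/F,n}$, is the heart of the theorem.
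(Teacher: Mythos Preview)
Your proposal is correct and follows essentially the same architecture as the paper: reduce via Proposition~\ref{profinite} to $\varprojlim_k H^i(F,u_k)$ along a cofinal tower, then analyze the long exact sequence for $0\to\mu_{n_k}\to R_k\to u_k\to 0$ using Kummer, Shapiro, and the local invariant map. The treatment of $H^1$ is nearly identical to the paper's---both show the $\mathbb{Z}/n_k\mathbb{Z}$-quotient dies because the transition maps eventually vanish, and both kill the $E_k^*/(F^*\cdot E_k^{*n_k})$-piece by the vanishing of universal norms (injectivity of the local reciprocity map); your invocation of left-exactness of $\varprojlim$ to conclude is fine since $\varprojlim A_k=0$ makes $\varprojlim B_k\hookrightarrow\varprojlim C_k=0$.

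The one genuine difference is in the computation of $H^2(F,u_k)$. The paper invokes fppf Poitou--Tate duality for finite group schemes (\cite{Milne}, III.6.10) to get $H^2(F,u_k)\cong H^0(\Gamma,X^*(u_k))^*\cong\mathbb{Z}/(n_k,[E_k{:}F])\mathbb{Z}$, then sets $n_k=[E_k{:}F]$. You instead read $H^2(F,u_k)$ directly off the long exact sequence using the vanishing $H^3_{\mathrm{fppf}}(F,\mu_n)=0$ (which follows from Kummer and surjectivity of $\times n$ on $\mathrm{Br}(F)=\mathbb{Q}/\mathbb{Z}$) together with the fact that $\mathrm{res}\colon H^2(F,\mu_{n_k})\to H^2(E_k,\mu_{n_k})$ is zero under your hypothesis $n_k\mid[E_k{:}F]$. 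Your route is more elementary---it avoids the duality theorem---at the cost of having to track the transition maps on $H^2$ by hand through corestriction on Brauer groups, whereas the paper's duality argument makes the projection $\mathbb{Z}/n_k\mathbb{Z}\to\mathbb{Z}/n_l\mathbb{Z}$ transparent. Both approaches buy the same canonical identification with $\widehat{\mathbb{Z}}$.
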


\begin{proof} As above, we fix a co-final subcategory $\{(E_{k}, n_{k})\}$ of $\mathcal{I}$. By Proposition \ref{profinite}, $H^{i}(F, u) = \varprojlim H^{i}(F, u_{E_{k}/F, n_{k}})$ for $i=1,2$. 

The argument for $i=2$ is identical to that in \cite{Tasho}, with a few minor adjustments---we have the functorial isomorphism 
\[
H^{2}(F, u_{k}) \cong H^{0}(F, \underline{X}^{*}(u_{k}))^{*} = H^{0}(\Gamma, X^{*}(u_{k}))^{*} \cong  \left[ \frac{n_{k}}{(n_{k}, [E_{k}: F])} \Z / n_{k} \Z \right]^{*} \cong \Z / (n_{k}, [E_{k}: F]) \Z ,
\]

\noindent where for an abelian group $M$, $M^{*}$ denotes the group $\Hom_{\Z}(M, \mathbb{Q}/\Z)$, $\underline{X}^{*}(u_{k})$ denotes the \'{e}tale group scheme associated to the $\Gamma$-module $X^{*}(u_{k})$, and the first isomorphism is given by the analogue of Poitou-Tate duality for fppf cohomology of finite group schemes over a local field of positive characteristic, see for example \cite[III.6.10]{Milne}. For $k > l$, the transition map $H^{2}(p): H^{2}(F, u_{k}) \to H^{2}(F, u_{l})$ is translated by this isomorphism to the natural projection map $$\Z / (n_{k}, [E_{k}: F]) \Z \to \Z / (n_{l}, [E_{l}: F]) \Z.$$ We may then set $n_{k} = [E_{k}: F]$ for all $k$, giving $(n_{k}, [E_{k}: F]) = n_{k}$, settling the case $i=2$.

For $i=1$, we consider the short exact sequence \eqref{LimitSES} from the proof of Proposition \ref{profinite}. Note that since the composition $H^{2}(F, \mu_{n_{k}}) \to H^{2}(F, R_{k}) \xrightarrow{\sim} H^{2}(E, \mu_{n_{k}})$ is the restriction map, after identifying both sides with $\frac{1}{n_{k}}\Z/\Z$ via the invariant map, it is multiplication by $[E_{k} \colon F]$ and so, by choosing $E_{k}$ such that $n_{k}$ divides $[E_{k} \colon F]$ for all $k$, $C_{k}$ is just $H^{2}(F, \mu_{n_{k}}) = \frac{1}{n_{k}}\Z/\Z$. Our desired vanishing will then follow from the vanishing of $\varprojlim H^{2}(F, \mu_{n_{k}})$ and $\varprojlim E_{k}^{*}/(F^{*} \cdot E_{k}^{*,n_{k}})$.

To show the former, it's enough to find, for every $l$ fixed, some $k > l$ such that the transition map $H^{2}(F, \mu_{n_{k}}) \to H^{2}(F, \mu_{n_{l}})$ is zero. For this, note that, at the level of character modules, the map $p_{k,l}^{\sharp}: X^{*}(R_{l}) \to X^{*}(R_{k})$ induces a map on quotients by the subgroups $X^{*}(R_{l})_{0}$, $X^{*}(R_{k})_{0}$ (respectively) that's identified with the map $\Z / n_{l} \Z \to \Z / n_{k} \Z$ sending $[1]$ to $[(\frac{n_{k}}{n_{l}})^{2}]$, and we may choose $k$ so that $n_{k}/n_{l}$ is a  multiple of $n_{l}$.

It remains to show that $\varprojlim E_{k}^{*}/(F^{*} \cdot E_{k}^{*,n_{k}}) = 0$---after noting that $\varprojlim^{(1)} F^{*}/(F^{*} \cap E_{k}^{*,n_{k}}) = 0$ (the transition maps are induced by multiplication by $[E_{k} \colon F]$, so for any fixed $k$ their image is eventually trivial), this reduces to showing $\varprojlim E_{k}^{*}/E_{k}^{*,n_{k}} = 0$. As in the proof of Proposition \ref{profinite}, the valuation short exact sequence lets us reduce further to showing $\varprojlim \mathcal{O}_{k}^{\times}/(\mathcal{O}_{k}^{\times})^{n_{k}} = 0$, and the identical argument loc. cit. for the vanishing of $\varprojlim^{(1)} \mathcal{O}_{k}^{\times}$ shows that $\varprojlim^{(1)} (\mathcal{O}_{k}^{\times})^{n_{k}} = 0$ as well, so we may make the final reduction to showing the triviality of $\varprojlim \mathcal{O}_{k}^{\times}$, an immediate consequence of the fact that the norm groups shrink to the identity.
\end{proof}

Combining the above result with Corollary \ref{cechidentification} immediately gives:

\begin{cor} We have  $\check{H}^{1}(\bar{F}/F, u) = 0$ and a canonical identification $\check{H}^{2}(\bar{F}/F, u) \xrightarrow{\sim} \widehat{\Z}$. In particular, the natural map $\check{H}^{p}(\bar{F}/F, u) \to \varprojlim_{k} \check{H}^{p}(\bar{F}/F, u_{k})$ is an isomorphism for $p=1,2$. 
\end{cor}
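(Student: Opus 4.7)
The plan is to reduce everything to results already established in the excerpt and chase identifications. The first two assertions follow by directly combining Theorem~\ref{maincohom} with Corollary~\ref{cechidentification}: the latter gives a canonical isomorphism $\check{H}^{p}(\bar{F}/F, u) \xrightarrow{\sim} H^{p}(F, u)$ for all $p$, and the former identifies $H^{1}(F, u) = 0$ and $H^{2}(F, u) \cong \widehat{\Z}$ canonically. So we immediately get $\check{H}^{1}(\bar{F}/F, u) = 0$ and $\check{H}^{2}(\bar{F}/F, u) \cong \widehat{\Z}$.

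For the ``in particular'' part, I would assemble a commutative square
\[
\begin{tikzcd}
\check{H}^{p}(\bar{F}/F, u) \arrow{r} \arrow{d} & \varprojlim_{k} \check{H}^{p}(\bar{F}/F, u_{k}) \arrow{d} \\
H^{p}(F, u) \arrow{r} & \varprojlim_{k} H^{p}(F, u_{k}),
\end{tikzcd}
\]
where the vertical arrows are the comparison maps between \v{C}ech and derived cohomology, and the horizontal arrows are the natural projection maps. The left vertical arrow is an isomorphism by Corollary~\ref{cechidentification}. Since each $u_{k}$ is a finite-type commutative $F$-group scheme, Proposition~\ref{fpqccover} identifies $\check{H}^{p}(\bar{F}/F, u_{k}) \xrightarrow{\sim} H^{p}(F, u_{k})$ for all $p$; taking inverse limits shows that the right vertical arrow is also an isomorphism. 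Finally, the bottom arrow is an isomorphism for $p = 1, 2$ by Proposition~\ref{profinite}(2)--(3) (after passing to the cofinal subsystem $\{(E_{k}, n_{k})\}$, which does not affect the inverse limit). By a diagram chase, the top arrow is then an isomorphism for $p = 1, 2$.

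There is no real obstacle here; everything is an application of results already in hand. The only minor subtlety worth recording is that passing from the full index category $\mathcal{I}$ to the cofinal subsystem $\{(E_{k}, n_{k})\}$ is harmless because inverse limits are insensitive to restriction along cofinal subcategories, which is implicit in the statement since $u$ itself is defined as this cofinal inverse limit.
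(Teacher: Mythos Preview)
Your proposal is correct and follows essentially the same approach as the paper, which simply states that the corollary follows immediately from combining Theorem~\ref{maincohom} with Corollary~\ref{cechidentification}. Your treatment of the ``in particular'' clause via the commutative square is a reasonable unpacking of what the paper leaves implicit.
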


We denote by $\xi \in H^{2}(F, u)$ the element corresponding to $-1 \in \widehat{\Z}$. Later in the paper we will also refer to this class simply by $-1$, when there is no danger of confusion. For any multiplicative algebraic group $Z$ defined over $F$, we obtain a map 
\begin{equation}\label{alphamap} \xi^{*} \colon \Hom_{F}(u,Z) \to H^{2}(F, Z)
\end{equation}
via taking the image of $\xi$ under the map $H^{2}(F, u) \to H^{2}(F, Z)$ induced by $\phi \in \Hom_{F}(u,Z)$.

\begin{prop} If $Z$ is any finite multiplicative algebraic group defined over $F$, then $\xi^{*}$ is surjective. If $Z$ is also split, then $\xi^{*}$ is also injective.
\end{prop}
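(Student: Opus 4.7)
The strategy is to concretely identify $\alpha^{*}$, via the isomorphism \eqref{normisomorphism} and local Poitou-Tate duality for finite commutative fppf group schemes, with (up to sign) the restriction map of Pontryagin duals $\Hom_{\Z}(X^{*}(Z), \mathbb{Q}/\Z) \to \Hom_{\Z}(X^{*}(Z)^{\Gamma}, \mathbb{Q}/\Z)$. Both claims will then follow from standard properties of the Pontryagin dual, given that $\mathbb{Q}/\Z$ is an injective $\Z$-module.

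First I identify the domain. Any $\phi \in \Hom_{F}(u, Z)$ factors through $u_{k} := u_{E_{k}/F, n_{k}}$ for some $k$ in the cofinal system from the proof of Theorem \ref{maincohom} with $n_{k} = [E_{k}:F]$. Once $k$ is large enough that $E_{k} \supset F(Z)$ and $[E_{k}:F(Z)]$ is divisible by the exponent of $Z$, the identity $N_{E_{k}/F} = [E_{k}:F(Z)] \cdot N_{F(Z)/F}$ annihilates $X^{*}(Z)$ (since $N_{F(Z)/F}(\chi) \in X^{*}(Z)^{\Gamma}$ is killed by the exponent), so the $N_{E_{k}/F}$-kernel condition in \eqref{normisomorphism} becomes vacuous and $\Hom_{F}(u_{k}, Z) \cong \Hom_{\Z}(X^{*}(Z), \Z/n_{k})$ via $\phi \mapsto \delta_{e}^{\sharp} \circ \phi^{\sharp}$. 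Passing to the direct limit along the natural inclusions $\Z/n_{k} \hookrightarrow \mathbb{Q}/\Z$ yields a canonical identification $\Hom_{F}(u, Z) \cong \Hom_{\Z}(X^{*}(Z), \mathbb{Q}/\Z)$.

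Next I compute $\alpha^{*}$. Local Poitou-Tate duality (as used in the proof of Theorem \ref{maincohom}) provides $H^{2}(F, G) \cong \Hom_{\Z}(X^{*}(G)^{\Gamma}, \mathbb{Q}/\Z)$ for $G \in \{Z, u_{k}\}$, and under these identifications $\phi_{*} \colon H^{2}(F, u_{k}) \to H^{2}(F, Z)$ is the $\Z$-linear dual of $\phi^{\sharp}|_{X^{*}(Z)^{\Gamma}} \colon X^{*}(Z)^{\Gamma} \to X^{*}(u_{k})^{\Gamma}$. The group $X^{*}(u_{k})^{\Gamma}$ is cyclic of order $n_{k}$ generated by $\sum_{\gamma \in \Gamma_{E_{k}/F}}[\gamma]$, and by construction $\alpha_{k}$ corresponds to $-1 \in \Z/n_{k}$. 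Since $\delta_{e}^{\sharp}(\sum_{\gamma}[\gamma]) = 1$, every $\chi \in X^{*}(Z)^{\Gamma}$ satisfies $\phi^{\sharp}(\chi) = \delta_{e}^{\sharp}(\phi^{\sharp}(\chi)) \cdot \sum_{\gamma}[\gamma]$, so
\[
\alpha^{*}(\phi)(\chi) = -\delta_{e}^{\sharp}(\phi^{\sharp}(\chi))/n_{k} \in \mathbb{Q}/\Z.
\]
Comparing with the identification of the previous paragraph, this is $-1$ times the restriction to $X^{*}(Z)^{\Gamma}$ of the homomorphism in $\Hom_{\Z}(X^{*}(Z), \mathbb{Q}/\Z)$ attached to $\phi$, confirming the claim.

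Since $\mathbb{Q}/\Z$ is divisible and thus injective as a $\Z$-module, the restriction map $\Hom_{\Z}(X^{*}(Z), \mathbb{Q}/\Z) \to \Hom_{\Z}(X^{*}(Z)^{\Gamma}, \mathbb{Q}/\Z)$ is surjective, whence $\alpha^{*}$ is surjective. Its kernel equals $\Hom_{\Z}(X^{*}(Z)/X^{*}(Z)^{\Gamma}, \mathbb{Q}/\Z)$, which vanishes iff the finite abelian group $X^{*}(Z)/X^{*}(Z)^{\Gamma}$ is trivial (as $\mathbb{Q}/\Z$ is a cogenerator for finite abelian groups), iff $\Gamma$ acts trivially on $X^{*}(Z)$, iff $Z$ is split. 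The main obstacle is the bookkeeping in the $\alpha^{*}$ computation --- carefully tracking the compatibility of $\delta_{e}^{\sharp}$, the Poitou-Tate pairing, functoriality of $\phi_{*}$, and the sign normalization of $\alpha$ to obtain precisely the restriction map on the nose.
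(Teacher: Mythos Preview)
Your argument is correct and is essentially the same approach as the one the paper invokes: the paper simply cites \cite{Tasho}, Proposition 3.1, noting that one must replace classical Poitou--Tate by the fppf version for finite commutative group schemes over local function fields, and Kaletha's proof there proceeds exactly by identifying $\Hom_{F}(u,Z)\cong \Hom_{\Z}(X^{*}(Z),\mathbb{Q}/\Z)$ via \eqref{normisomorphism}, identifying $H^{2}(F,Z)\cong \Hom_{\Z}(X^{*}(Z)^{\Gamma},\mathbb{Q}/\Z)$ via duality, and recognizing $\alpha^{*}$ as (a sign times) the restriction map. Your writeup makes this explicit and checks the bookkeeping carefully; in particular your observation that the $N_{E_{k}/F}$-kernel condition becomes vacuous once $[E_{k}:F(Z)]$ is divisible by the exponent of $Z$, and your verification that $\phi^{\sharp}(\chi)=\delta_{e}^{\sharp}(\phi^{\sharp}(\chi))\cdot\sum_{\gamma}[\gamma]$ for $\chi\in X^{*}(Z)^{\Gamma}$, are exactly the two small computations underlying the referenced proof.
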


\noindent The identical proof as in \cite[Proposition 3.1]{Tasho} works here, with the only difference being the replacement of the classical local Poitou-Tate with the version for finite groups schemes over local fields of positive characteristic, which does not affect the rest of the argument. 

\subsection{Basic properties of $H^{1}(\gerbeE, Z \to G)$} 
Fix a $u$-gerbe $(\gerbeE, \theta)$ split over $\bar{F}$ corresponding to the class $\xi \in H^{2}(F, u)$, where by ``corresponding" we mean $[\gerbeE] \in \check{H}^{2}(\bar{F}/F, u) \xrightarrow{\sim} \varprojlim_{n} \check{H}^{2}(\bar{F}/F, u_{n}) =  \widehat{\Z}$ maps to $\xi$ (see Proposition \ref{fpqccover}, Corollary \ref{cechidentification}, and Proposition \ref{profinite}). 

Given $[Z \to G]$ in $\mathcal{A}$, recall the definition of $H^{1}(\gerbeE, Z \to G)$ from the end of \S 2.3, which is functorial $[Z \to G]$. For any other choice of $\shA$-gerbe $\gerbeE'$ with $[\gerbeE'] = -1$, we know from Proposition \ref{cechidentification} that $[\gerbeE] = [\gerbeE'] \in \check{H}^{2}(\bar{F}/F, u)$, and hence by Proposition \ref{gerbebijection} we have a $u$-equivalence $\eta \colon \gerbeE \to \gerbeE'$, which (via pullback) induces a map $H^{1}(\gerbeE', G_{\gerbeE'}) \to H^{1}(\gerbeE, \GE)$, and it is straightforward to verify that this map further gives rise to a map $H^{1}(\gerbeE', Z \to G) \to H^{1}(\gerbeE, Z \to G)$ for any $[Z \to G] \in \mathcal{A}$, which by Lemma \ref{uniqueness}, is independent of the choice of $\eta$. 

\begin{lem}\label{translem} For abelian $G$, the transgression map $\Hom_{F}(u,Z) \to H^{2}(F, G)$ is equal to the composition of the map $\xi^{*}$ defined in \eqref{alphamap} and the natural map $H^{2}(F, Z) \to H^{2}(F, G)$. 
\end{lem}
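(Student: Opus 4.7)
The plan is to unwind both sides of the claimed equality via explicit \v{C}ech cocycle representatives and then verify their agreement by direct inspection. First I would use Proposition \ref{gerbebijection} together with Corollary \ref{cechidentification} to replace $\gerbeE$ by $\gerbeE_{a}$, where $a \in u(U_{2})$ is any \v{C}ech $2$-cocycle whose class in $\check{H}^{2}(\bar{F}/F, u) \cong H^{2}(F, u)$ equals $\alpha$. By Corollary \ref{punchline} and Lemma \ref{uniqueness} this replacement does not affect the resulting map on $H^{1}(\gerbeE, Z \to G)$, so the transgression map defined in \S 2.6 can be computed through the identification $H^{1}(\gerbeE, Z \to G) = H^{1,*}(\gerbeE_{a}, Z \to G)$. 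Under this identification, and viewing $\phi \in \Hom_{F}(u, Z)$ as a homomorphism into $Z(G)$ via the central embedding $Z \hookrightarrow G$, transgression sends $\phi$ to the class $[\phi(a)] \in \check{H}^{2}(\bar{F}/F, G) = H^{2}(F, G)$.

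Next I would compute $\alpha^{*}(\phi) \in H^{2}(F, Z)$ using the same \v{C}ech description. By the definition in \eqref{alphamap}, $\alpha^{*}(\phi) = \phi_{*}(\alpha)$, where $\phi_{*} \colon H^{2}(F, u) \to H^{2}(F, Z)$ is the map functorially induced by $\phi$. The comparison isomorphisms $\check{H}^{2}(\bar{F}/F, -) \xrightarrow{\sim} H^{2}(F, -)$, provided by Corollary \ref{cechidentification} for $u$ (via Corollary \ref{cechtoderived} applied to the vanishing in Lemma \ref{spectralcollapses}) and by Proposition \ref{fpqccover} for the finite-type group $Z$, are functorial in the coefficient sheaf. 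Consequently $\phi_{*}(\alpha)$ is represented by the \v{C}ech $2$-cocycle obtained by applying $\phi(U_{2}) \colon u(U_{2}) \to Z(U_{2})$ to $a$, namely $\phi(a) \in Z(U_{2})$. The natural map $H^{2}(F, Z) \to H^{2}(F, G)$ induced by $Z \hookrightarrow G$ is again functorial under the identification with \v{C}ech cohomology, so the image of $\alpha^{*}(\phi)$ in $H^{2}(F, G)$ is represented by the same cocycle $\phi(a)$, now viewed in $G(U_{2})$. This matches the transgression, giving the lemma.

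The only point that requires care is the naturality, in the coefficient group, of the various comparison isomorphisms between \v{C}ech and derived fppf cohomology used above. This is automatic from the construction of the spectral sequences in Corollary \ref{spectralsequence1} and Corollary \ref{cechtoderived}, both of which are functorial in the sheaf argument; so I expect no serious obstacle beyond cleanly bookkeeping these identifications.
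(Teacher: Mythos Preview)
Your proposal is correct and follows essentially the same approach as the paper: reduce to the explicit gerbe $\gerbeE_{a}$ for a representative $a$ of $\alpha$, observe that the transgression map by definition sends $\phi$ to $[\phi(a)]$, and recognize this as $\alpha^{*}(\phi)$ pushed into $H^{2}(F,G)$. The paper's proof is slightly more terse, using the functoriality of the inflation--restriction sequence in $[Z \to G]$ to immediately replace $G$ by $Z$ and thereby bypass your final step of pushing forward along $Z \hookrightarrow G$; your extra care about naturality of the \v{C}ech-to-derived comparison isomorphisms is not wrong but is more than the paper spells out.
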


\begin{proof} We may work with $a$-twisted cocycles valued in $G$ for a choice of $a \in \shA(U_{2})$ with $[a] = \xi$. By the functoriality of our inflation-restriction sequence, we may replace $G$ by $Z$, and we are reduced to showing that the transgression map $\Hom_{F}(u,Z) \to H^{2}(F, Z)$ equals the map $\xi^{*}$. Recall that $\xi^{*}$ is defined by mapping a homomorphism to the image of $\xi$ under the induced map $H^{2}(F, u) \to H^{2}(F, Z)$. By construction, the image of $f \in \Hom_{F}(u, Z)$ under the transgression map is the class $[f(a)] \in H^{2}(F, Z)$, which is exactly the statement of the lemma, since $[a] = \xi$. 
\end{proof}

\begin{remark} The above proof does not use anything specific about the group $u$; the result holds when $u$ is replaced by any commutative $F$-group scheme $\shA$, $a$ by a \v{C}ech 2-cocycle $c$, and $\gerbeE$ by $\gerbeE_{c}$. We will use this in \S 4 without comment.
\end{remark}

For $[Z \to G]$ in $\mathcal{A}$, recall that $G \xrightarrow{\pi} \overline{G} := G/Z$.

\begin{lem}\label{mapb} There is a morphism of pointed sets $b \colon H^{1}(\gerbeE, Z \to G) \to H^1(F, \overline{G})$, and when $G$ is abelian it is a group homomorphism.
\end{lem}

\begin{proof}
If the inertial action on a $G_{\gerbeE}$-torsor $\mathscr{T}$ is induced by a homomorphism $u \to Z$ then evidently the inertial action is trivial on the $\overline{G}_{\gerbeE}$-torsor $\overline{\mathscr{T}} := \mathscr{T} \times^{G_{\gerbeE}} \overline{G}_{\gerbeE}$, and so by inflation-restriction $\overline{\mathscr{T}}$ descends to a $\overline{G}$-torsor $\overline{T}$ over $F$ whose isomorphism class is uniquely determined by $\overline{\mathscr{T}}$. We may thus define the map as $[\mathscr{T}] \mapsto [\overline{T}]$.
\end{proof}

The following is the most important proposition of the section, and will be used extensively in the next section. 

\begin{prop}\label{bigdiag} Let $[Z \to G] \in \mathcal{A}$ be abelian. Put $\overline{G} = G/Z$. Then (discarding equalities) we have the commutative diagram with exact rows and columns:
\[
\begin{tikzcd}
  & \overline{G}(F)  \arrow[d] \arrow[equals]{r} & \overline{G}(F) \arrow[d] & & \\  
  0 \arrow[r] & H^{1}(F, Z) \arrow[d]  \arrow[r, "\text{Inf}"] & H^{1}(\gerbeE_{a}, Z \to Z) \arrow[d]  \arrow[r, "\text{Res}"] & \Hom_{F}(u, Z) \arrow[equals]{d}  & \\
   0 \arrow[r] & H^{1}(F, G)  \arrow[r, "\text{Inf}"] \arrow[equals]{d}& H^{1}(\gerbeE_{a}, Z \to G) \arrow[d, "b"]  \arrow[r, "\text{Res}"] & \Hom_{F}(u, Z) \arrow[d, "\xi^{*}"] \arrow[r, "tg"] & H^{2}(F, G) \arrow[equals]{d} \\  
  & H^{1}(F, G) \arrow[r] & H^{1}(F, \overline{G}) \arrow[d]  \arrow[r] & H^{2}(F, Z) \arrow[d]  \arrow[r] & H^{2}(F, G) \\  
  & & 0 & 0. &
\end{tikzcd}
\]
\end{prop}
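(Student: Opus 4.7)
My strategy would be to translate everything to explicit $a$-twisted cocycles via Proposition \ref{correspondence1}, after which the maps $b$, $\text{Inf}$, $\text{Res}$, and $tg$ become concretely computable, and the problem reduces to a combination of (i) inflation-restriction, (ii) the long exact sequence of the central extension $1 \to Z \to G \to \overline{G} \to 1$, and (iii) the surjectivity of $\alpha^{*}$. I would then verify in turn the exactness of the rows, the exactness of the columns, and the commutativity of each square.

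The rows should come essentially for free. The second and third rows are the inflation-restriction sequences of Proposition \ref{infres}, applied respectively to $[Z \to Z]$ (where $Z$ is its own center) and to $[Z \to G]$. The fourth row is the standard long exact sequence in (non-abelian) cohomology associated to $1 \to Z \to G \to \overline{G} \to 1$, truncated at $H^{2}(F, Z)$ when $G$ is non-abelian.

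For the columns, the second is part of the long exact sequence for $1 \to Z \to G \to \overline{G} \to 1$, and in the third column exactness at both $H^{1}(\gerbeE_{a}, Z \to Z)$ and $H^{1}(\gerbeE_{a}, Z \to G)$ should follow from Lemma \ref{LES} (using that the kernel of $b$ coincides with the kernel of the forgetful map $H^{1}(\gerbeE_{a}, Z \to G) \to H^{1}(\gerbeE_{a}, \overline{G}_{\gerbeE_{a}})$, since $H^{1}(F, \overline{G}) \hookrightarrow H^{1}(\gerbeE_{a}, \overline{G}_{\gerbeE_{a}})$ by inflation-restriction); the fourth column is exact at $H^{2}(F, Z)$ by the surjectivity of $\alpha^{*}$ for finite multiplicative $Z$, the proposition immediately preceding this one. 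The hard part will be the surjectivity of $b$: given $[\xi] \in H^{1}(F, \overline{G})$ represented by $\xi \in \overline{G}(U_{1})$, I would use $G(U_{1}) \twoheadrightarrow \overline{G}(U_{1})$ (which holds since $H^{1}(U_{1}, Z) = 0$ by Proposition \ref{vanishingcohomology}) to pick a lift $x \in G(U_{1})$; then $dx \in Z(U_{2})$ is a 2-cocycle representing $\delta([\xi])$, and using the surjectivity of $\alpha^{*}$ I would choose $\phi \in \Hom_{F}(u, Z)$ with $[\phi(a)] = [dx]$, so $\phi(a) = dx \cdot dz$ for some $z \in Z(U_{1})$; replacing $x$ by $xz$ (which has the same image in $\overline{G}(U_{1})$ since $Z \subseteq \ker\pi$) yields $d(xz) = \phi(a)$, so $(xz, \phi)$ is an $a$-twisted $Z$-cocycle with $b([(xz, \phi)]) = [\xi]$.

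Commutativity of each square should be essentially tautological from the definitions on the level of $a$-twisted cocycles. The only square needing a short verification is the bottom-right one, where I must show $\delta \circ b = \alpha^{*} \circ \text{Res}$: for $(x, \phi) \in Z^{1}(\gerbeE_{a}, Z \to G)$ one has $b([(x, \phi)]) = [\pi(x)]$, and the \v{C}ech formula for the connecting map (Proposition \ref{connecting}) gives $\delta([\pi(x)]) = [dx] = [\phi(a)]$, which equals $\alpha^{*}(\phi)$ by Lemma \ref{translem}.
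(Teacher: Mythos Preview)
Your proof is correct and follows essentially the same architecture as the paper's: rows from inflation-restriction and the long exact sequence for $1 \to Z \to G \to \overline{G} \to 1$, columns from Lemma \ref{LES} and the long exact sequence, commutativity of the $\delta \circ b = \alpha^{*} \circ \text{Res}$ square checked by hand via Proposition \ref{connecting} and Lemma \ref{translem}. (What you call the ``bottom-right'' square is what the paper calls the ``bottom middle''; the paper also singles out the genuine bottom-right square involving $tg$, but that one is exactly Lemma \ref{translem}, which you invoke anyway.)

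The one substantive difference is your treatment of the surjectivity of $b$. The paper, in the abelian case, appeals to the four-lemma applied to the bottom two rows (using that $\alpha^{*}$ is surjective and the outer vertical maps are identities), and explicitly \emph{defers} the non-abelian case to Proposition \ref{bigdiag2}, where it is handled via elliptic maximal tori and the analogue of Kottwitz's Lemma 10.2. Your argument instead constructs a preimage directly at the cocycle level: lift $\xi$ to $x \in G(U_{1})$, use surjectivity of $\alpha^{*}$ on $H^{2}(F,Z)$ to find $\phi$ with $[\phi(a)] = [dx]$, and correct $x$ by a central $z$. Because $Z$ is central, $d(xz) = dx \cdot dz$ even for non-abelian $G$, so your construction works uniformly across all of $\mathcal{A}$ without any appeal to the four-lemma or to the structure theory of reductive groups. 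This is more elementary and self-contained than the paper's route; the paper's deferral to elliptic tori is not actually needed for this particular surjectivity statement.
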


\begin{proof} We may work with $a$-twisted cocycles for an appropriate choice of $a$. The second and third rows come from the already-established inflation-restriction result, the fourth row and left column come from the long exact sequence for fppf cohomology associated to the short exact sequence $0 \longrightarrow Z \longrightarrow G \longrightarrow \overline{G} \longrightarrow 0$, and the middle column is from Lemma \ref{LES} and Lemma \ref{mapb}. It follows from the same lemmas that the middle column is exact, except for possibly the surjectivity of $b$, which we will show later in the proof. The commutativity of all squares is obvious, except for the bottom right one, which is exactly Lemma \ref{translem}, and the bottom middle one, which we will show now. 

The map $H^{1}(\gerbeE_{a}, G_{\gerbeE_{a}}) \to  H^1(\gerbeE_{a}, \overline{G}_{\gerbeE_{a}})$ sends the class of the $a$-twisted cocycle $(x, \phi)$ to the class of $(\pi(x), \pi \circ \phi)$. Since we assume that $\phi$ factors through $Z$, the class $[(\pi(x), \pi \circ \phi)]$ is the class of $[\pi(x), 0]$, where $\pi(x) \in \overline{G}(U_{1})$ is an actual 1-cocycle (because $\pi(\phi(a)) = e_{\overline{G}}$). We want to look at the image of the class $[\pi(x)]$ under the connecting homomorphism $\delta \colon H^{1}(F, \overline{G}) \to H^{2}(F, Z)$ (computed as in Proposition \ref{connecting}).  

To compute $\delta([\pi(x)])$, we first lift $\pi(x)$ to $G(U_{1})$; the natural element to pick here is $x \in G(U_{1})$. Then $\delta([\pi(x)])$ is exactly $[dx] \in H^{1}(\bar{F}/F, Z)$, which, by assumption, equals $[\phi(a)]$, which gives the desired commutativity, since the class $[\text{Res}[(x, \phi)](a)] = [\phi(a)]$ is exactly the element of $H^{2}(F, Z)$ obtained by following the square in the other direction, see the proof of Lemma \ref{translem}.

The last thing to show is the surjectivity of $b$, which follows immediately from the surjectivity of $\xi^{*}$, using the commutativity of the bottom right and middle squares and the four-lemma. We will address the non-abelian case analogue of this result in Proposition \ref{bigdiag2}.
\end{proof}

\section{Extending Tate-Nakayama}
Let $S$ be an $F$-torus and $E/F$ a finite Galois extension. As in \cite[\S 4]{Tasho}, the goal of this section is to extend the notion of the classical Tate-Nakayama isomorphism $$X_{*}(S)_{\Gamma, \text{tor}} = H^{-1}_{\text{Tate}}(\Gamma_{E/F}, X_{*}(S)) \xrightarrow{\sim} H^1(\Gamma, S)$$ to the setting of our cohomology group $H^{1}(\gerbeE, Z \to S)$. Some new notation: for an affine $F$-group scheme $G$, we will denote by $F[G]$ the coordinate ring of $G$. Let $H^{1}(\gerbeE)$ denote the functor from $\mathcal{T}$ to $\text{AbGrp}$ which sends $[Z \to S]$ to the group $H^{1}(\gerbeE, Z \to S)$.

Following \cite{Tasho}, we will first construct a functor $\overline{Y}_{+, \text{tor}} \colon \mathcal{T} \to \text{AbGrp}$ which extends the functor $S \mapsto X_{*}(S)_{\Gamma, \text{tor}}$, as well as a morphism of functors from $\overline{Y}_{+, \text{tor}}$ to the functor $[Z \to S] \mapsto \Hom_{F}(u, Z)$.  Then we will construct a unique isomorphism of functors $$\overline{Y}_{+, \text{tor}} \to H^{1}(\gerbeE)$$ on $\mathcal{T}$ which for objects $[1 \to S] \in \mathcal{T}$ coincides with the Tate-Nakayama isomorphism, and such that the composition $\overline{Y}_{+, \text{tor}}(Z \to S) \to H^{1}(\gerbeE, Z \to S) \to \Hom_{F}(u, Z)$ equals the morphism alluded to above. 

The first subsection is just a summary of \cite[\S 4.1]{Tasho}. 

\subsection{The functor $\overline{Y}_{+, \text{tor}}$}

For $[Z \to S] \in \mathcal{T}$, we set $\overline{S} := S/Z$. Then if $Y := X_{*}(S)$ and $\overline{Y} := X_{*}(\overline{S})$, we have an injective morphism of $\Gamma$-modules $Y \to \overline{Y}$. 

We then have an isomorphism of $\Gamma$-modules $$\overline{Y}/Y \to \Hom(\mu_{n}, Z) \hspace{1cm} \bar{\lambda} \mapsto [x \mapsto (n\lambda)(x)],$$ for any $n \in \mathbb{N}$ such that $[\overline{Y} \colon Y] \mid n$, where for $\lambda \in \overline{Y}$, we identify $n\lambda$ with an element of $Y$. Take any finite Galois extension $E/F$ which splits $S$, and take $I \subset \Z[\Gamma_{E/F}]$ to be the augmentation ideal. Set $\overline{Y}_{+} := \overline{Y}/IY$, and $\overline{Y}_{+}^{N}$ the quotient of $\overline{Y}^{N}$ by $IY$, where the superscript $N$ denotes the kernel of the norm map $N_{E/F}$. 

Then we have $\overline{Y}_{+}^{N} = \overline{Y}_{+, \text{tor}}$ (see \cite[Fact 4.1]{Tasho}), and the natural map $\overline{Y}_{+}^{N} \to [\overline{Y}/Y]^{N}$ post-composed with the isomorphisms $[\overline{Y}/Y]^{N} \xrightarrow{\sim} \Hom(\mu_{n}, Z)^{N}  \xrightarrow{\sim} \Hom_{F}(u_{E/F,n}, Z)$ (this second isomorphism comes from \eqref{normisomorphism} in \S 3) gives a homomorphism $\overline{Y}_{+}^{N} \to \Hom_{F}(u, Z)$. For varying $E/F$ and $n$, these homomorphisms are compatible and splice to a homomorphism $\overline{Y}_{+, \text{tor}} \to \Hom_{F}(u, Z)$. 

Given a morphism $[Z_{1} \to S_{1}] \to [Z_{2} \to S_{2}]$ in $\mathcal{T}$, the induced morphism $\overline{S_{1}} \to \overline{S_{2}}$ induces a $\Gamma$-morphism $X_{*}(\overline{S_{1}})_{+, \text{tor}} \to X_{*}(\overline{S_{2}})_{+, \text{tor}}$, showing that the assignment $[Z \to S] \mapsto \overline{Y}_{+, \text{tor}}$ is functorial in $[Z \to S]$.

\subsection{The unbalanced cup product on fppf cohomology}
Let $K/F$ be a finite field extension, and $E/F$ a Galois extension contained in $K$. In order to construct the isomorphism of functors from $\overline{Y}_{+, \text{tor}}$ to $H^{1}(\gerbeE)$, it is necessary to extend the construction of the unbalanced cup product from Galois cohomology to the setting of fppf cohomology.

Recall that, as in \cite{Tasho}, for a group $G$ with surjective homomorphism $G \xrightarrow{\Delta} H$ and $G$-module $M$, the subgroup $C^{n,1}(G, H, M)$ is the subgroup of all (inhomogeneous) $n$-cochains for $G$ with respect to $M$ such that the last coordinate only depends on the residue modulo the kernel of $\Delta$; i.e., the last argument of any cochain $G^{n} \to M$ is a function on $H$ (keeping other coordinates fixed). One may then define a bilinear $\Z$-pairing $$C^{n,1}(G,H,M) \times C^{-1}_{\text{Tate}}(H, M) \to C^{n-1}(G, M),$$ called the \textit{unbalanced cup product}, where here $C^{-1}_{\text{Tate}}(H, M)$ denotes $M$ (inhomogenous $-1$-cochains). For global applications (which will not be addressed in this paper, but do not change the arguments), we will work with an arbitrary commutative ring $R$ and finite flat extension $R \to S$ such that we have a finite Galois subextension $R \to S' \to S$. For our applications in this paper, $R$ will be $F$, $S$ will be $K$, and $S'$ will be $E$. We remind the reader that all $R$-group schemes are assumed to be affine over $R$.

For $S/R$ a finite flat extension (not necessarily \'{e}tale), $S'/R$ a Galois extension contained in $S$, and $G$ a commutative $R$-group scheme, define the group $C^{n}(S/R, G)$ to be $G(\Snp)$, and  $C^{n,1}(S/R, S', G)$ to be the subgroup $G(\Sn \otimes_{R} S')$. Our goal is to define an unbalanced cup product $$C^{n,1}(S/R, S', G) \times C_{\text{Tate}}^{-1}(\Gamma', H(S')) \xrightarrow{\underset{S'/R}{\sqcup}} C^{n-1}(S/R, J)$$ for two commutative $R$-group schemes $G, H$ and $R$-pairing $P \colon G \times H \to J$ for $J$ another commutative $R$-group scheme, where as above $C_{\text{Tate}}^{-1}(\Gamma', H(S')) = H(S')$ and $\Gamma' := \text{Aut}_{R\text{-alg}}(S')$. We start by defining the standard cup product in \v{C}ech cohomoloogy:

\begin{Def} We recall the general fppf cup product on \v{C}ech cochains as defined in \cite[\S 3]{Shatz}, continuing with the notation from above. For $n,m \geq 1$, we have the morphism of $R$-algebras $S^{\otimes_{R}n} \otimes_{R} S^{\otimes_{R} m}  \xrightarrow{\theta} S^{\otimes_{R} (n+m-1)}$ defined on simple tensors by $$(a_{1} \otimes \dots \otimes a_{n}) \otimes (b_{1} \otimes \dots \otimes b_{m}) \mapsto a_{1} \otimes \dots \otimes a_{n-1} \otimes a_{n}b_{1} \otimes b_{2} \otimes \dots \otimes b_{m}.$$
The cup product is defined to be the composition of $$G(S^{\otimes_{R} n}) \times H(S^{\otimes_{R} m}) \xrightarrow{(\text{id} \otimes 1)^{\sharp} \times (1 \otimes \text{id})^{\sharp}} G(S^{\otimes_{R}n} \otimes_{R} S^{\otimes_{R} m}) \times H(S^{\otimes_{R}n} \otimes_{R} S^{\otimes_{R} m}) = (G \times H)(S^{\otimes_{R}n} \otimes_{R} S^{\otimes_{R} m})$$ with the map $$(G \times H)(S^{\otimes_{R}n} \otimes_{R} S^{\otimes_{R} m}) \xrightarrow{\theta^{\sharp}} (G \times H)(S^{\otimes_{R} (n+m-1)}) \xrightarrow{P} J(S^{\otimes_{R} (n+m-1)}).$$
\end{Def}

With the above definition in hand, we resume our construction of the unbalanced cup product. We have a homomorphism of $R$-algebras $\lambda \colon S^{\bigotimes_{R} n} \otimes_{R} S' \to \prod_{\Gamma'} S^{\bigotimes_{R} n}$ defined on simple tensors by $$a_{i,0} \otimes \dots \otimes a_{i,n} \mapsto (a_{i,0} \otimes \dots \otimes a_{i,n-1} \prescript{\sigma}{}a_{i,n} )_{\sigma \in \Gamma'}.$$ Moreover, for any $R$-group scheme $J$, we have a canonical identification $J(\prod_{\Gamma'} S^{\bigotimes_{R} n}) \to \prod_{\Gamma'} J(S^{\bigotimes_{R} n})$; we then define, for $a \in G(S^{\bigotimes_{R} n} \otimes_{R} S')$ and $b \in H(S')$, $$a \tilde{\underset{S'/R}{\sqcup}} b = \lambda(a \cup b^{(0)}) \in \prod_{\Gamma'} J(S^{\bigotimes_{R} n}).$$ In the above formula we are using the fppf cup product as defined in \cite[\S 3]{Shatz}, (which in this case is the pairing applied to $(a, p_{n+1}^{\sharp}(b^{(0)})) \in G(S^{\bigotimes_{R} (n+1)}) \times H(S^{\bigotimes_{R} (n+1)})$) and $b^{(0)}$ denotes the element $b \in H(S')$ viewed as a $0$-cochain.

We now apply the group homomorphism $N \colon \prod_{\Gamma'} J(S^{\bigotimes_{R} n}) \to J(S^{\bigotimes_{R} n})$ obtained by taking the sum of all elements on the left-hand side, and the resulting pairing 
$$C^{n,1}(S/R, S', G) \times C_{\text{Tate}}^{-1}(\Gamma', H(S')) \to J(S^{\bigotimes_{R} n})$$ is $\Z$-bilinear.  Indeed, $$(a +a') \tilde{\underset{S'/R}{\sqcup}} b = \lambda[(a+a') \cup b^{(0)}] = \lambda(a\cup b^{(0)} + a' \cup b^{(0)}) = \lambda(a \cup b^{(0)}) + \lambda(a' \cup b^{(0)}).$$ This will be our desired pairing, denoted by $a \underset{S'/R}{\sqcup} b$. 

We will now prove some basic properties of this pairing. The first order of business is to show that this agrees with the analogous pairing from \cite{Tasho} in the case that $S/R$ is also finite \'{e}tale, with Galois group $\Gamma$. There is a simple way to compare \v{C}ech cohomology and Galois cohomology in such a case: for any commutative $R$-group $G$, there is a comparison homomorphism $$C^{n}(S/R, G) \to C^{n}(\Gamma, G(S))$$ given as follows: We have a homomorphism of $R$-algebras 
\begin{equation}\label{translate}
t \colon S^{\bigotimes_{F} (n+1)} \to \prod_{\underline{\sigma} \in \Gamma^{n}} S_{\underline{\sigma}},
\end{equation}
induced by the map on simple tensors $$a_{0} \otimes \dots \otimes a_{n} \mapsto (a_{0}\prescript{\sigma_{1}}{}a_{1}\prescript{(\sigma_{1} \sigma_{2})}{}a_{2} \dots \prescript{(\sigma_{1} \dots \sigma_{n})}{}a_{n})_{(\sigma_{1}, \dots ,\sigma_{n})}.$$ 

We immediately get a homomorphism $c: G(S^{\bigotimes_{R} (n+1)}) \to G(\prod_{\underline{\sigma} \in \Gamma^{n}} S_{\underline{\sigma}}) = C^{n}(\Gamma, G(S))$, where the last equality is the obvious identification. Passing to cohomology, this induces a homomorphism $\check{H}^{n}(S/R, G) \to H^{n}(\Gamma, G(S))$. Note that all of these maps are isomorphisms if $S/R$ is a finite Galois extension of fields. This comparison map also respects our special subgroups; that is to say, the homomorphism $G(\Snp) \to \prod_{\Gamma^{n}} G(S)$ maps  $C^{n,1}(S/R, S', G)$ into $C^{n,1}(\Gamma, \Gamma', G(S))$, which again is an isomorphism when $R$ and $S$ are fields.

\begin{prop}\label{comparecup} When $S/R$ is finite Galois, the unbalanced cup product $a \underset{S'/R}{\sqcup} b$ agrees with the unbalanced cup-product from \cite{Tasho} after applying the comparison homomorphism \eqref{translate}.
\end{prop}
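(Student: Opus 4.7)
The plan is to unwind both sides of the claimed equality at the level of simple tensors, using the fact that when $S/R$ is finite étale Galois the map $t$ from \eqref{translate} is a ring isomorphism, and then analyzing what $\lambda$ and the norm $N$ become under this identification.

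First, I would record the standard compatibility of cup products: for $S/R$ finite étale Galois with group $\Gamma$, the comparison homomorphism $c$ induced by $t$ intertwines the fppf Čech cup product (as in \cite{Shatz}) with the ordinary Galois cup product. This follows because $t$ is a ring homomorphism that transforms the face maps $p_{1,\ldots,p+1}$ and $p_{p+1,\ldots,p+q+1}$ used to define the Čech cup product into the projections on $\Gamma$-tuples that define the Galois cup product. Applied to our situation, $c(a \cup b^{(0)}) \in C^n(\Gamma, J(S))$ is the Galois cup product of $c(a)$ with the $0$-cochain $b$ (viewed through $H(S') \hookrightarrow H(S)$), which by standard formulas evaluates at $(\sigma_1, \ldots, \sigma_n)$ to $P\bigl(c(a)(\sigma_1, \ldots, \sigma_n), \prescript{\sigma_1 \cdots \sigma_n}{}b\bigr)$.

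Next, I would decode the map $\lambda$ on simple tensors. Since $S'$ is the subring of $S$ fixed by $N := \ker(\Gamma \twoheadrightarrow \Gamma')$, the restriction of $t$ to $\Sn \otimes_R S'$ identifies this ring with the subring of $\prod_{\Gamma^n} S$ consisting of tuples whose last coordinate depends on $\sigma_n$ only through its image in $\Gamma'$; equivalently, $c$ restricts to an identification $C^{n,1}(S/R, S', G) \xrightarrow{\sim} C^{n,1}(\Gamma, \Gamma', G(S))$. Under the canonical identifications $J(\prod_{\Gamma'} \Sn) = \prod_{\Gamma'} J(\Sn) \xrightarrow{\prod c} \prod_{\Gamma'} C^{n-1}(\Gamma, J(S))$, a direct computation on $(a_0 \otimes \cdots \otimes a_{n-1}) \otimes s'$ shows that $\lambda$ becomes the map sending $c(a \cup b^{(0)})$ to the family indexed by $\tau \in \Gamma'$ of $(n-1)$-cochains $(\sigma_1, \ldots, \sigma_{n-1}) \mapsto P\bigl(c(a)(\sigma_1, \ldots, \sigma_{n-1}, \tilde{\tau}), \prescript{\sigma_1 \cdots \sigma_{n-1} \tilde{\tau}}{}b\bigr)$ for any lift $\tilde{\tau} \in \Gamma$ of $\tau$ (the choice being irrelevant by the $C^{n,1}$ condition on $a$ together with the $N$-invariance of $b \in H(S')$).

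Finally, the norm $N \colon \prod_{\Gamma'} J(\Sn) \to J(\Sn)$ is summation over $\Gamma'$, which reproduces exactly the summation $\sum_{\tau \in \Gamma'}$ appearing in the Galois-theoretic unbalanced cup product of \cite{Tasho}, \S 4.3. Chaining the identifications gives $c(a \underset{S'/R}{\sqcup} b) = c(a) \underset{\Gamma'}{\sqcup} b$, as desired. The main obstacle is the calculation in the second paragraph tracking how the Galois twist on $b$ emerges from $\lambda$: the content is that multiplying the last tensor factor by $\prescript{\tau}{}s'$ (rather than leaving it alone) encodes precisely the twist $\prescript{\sigma_1 \cdots \sigma_{n-1} \tau}{}b$ on the Galois side, and verifying this requires a careful bookkeeping on simple tensors together with the explicit formula for $t$.
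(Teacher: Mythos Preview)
Your proposal is correct and follows essentially the same approach as the paper: both arguments track $a \cup b^{(0)}$ through $\lambda$ and the norm $N$, then apply the comparison homomorphism $t$ and verify on simple tensors that the resulting $(n-1)$-cochain is $(\sigma_1,\ldots,\sigma_{n-1}) \mapsto \sum_{\sigma \in \Gamma'} P\bigl(a(\sigma_1,\ldots,\sigma_{n-1},\sigma),\prescript{\sigma_1\cdots\sigma_{n-1}\sigma}{}b\bigr)$. The only difference is organizational---you first record the general Čech/Galois cup-product compatibility before analyzing $\lambda$, whereas the paper computes $\lambda(a \cup b^{(0)})$ directly as a ring homomorphism and then compares.
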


\begin{proof} Recall that the pairing from \cite{Tasho} sends $a \in C^{n,1}(\Gamma, \Gamma', G(S))$ and $b \in H(S')$ to the $(n-1)$-cochain $$(\sigma_{1}, \dots, \sigma_{n-1}) \mapsto P[\Sigma_{\sigma \in \Gamma'} a(\sigma_{1}, \dots, \sigma_{n-1}, \sigma) \otimes \prescript{\sigma_{1} \dots \sigma_{n-1} \sigma}{} b] \in J(S),$$ where we are abusing notation and using $P$ to denote the map $G(S) \otimes_{\Z} H(S) \to J(S)$ induced by the pairing $P$.

In the \v{C}ech setting, the point $a \cup b^{(0)}$ corresponds to the $R$-algebra homomorphism $R[J] \to R[G] \otimes_{F} R[H] \to S^{\bigotimes_{R} n+1}$ given by post-composing $P^{\sharp}$ by the map determined by $a$ and $1 \otimes b$ (identifying the points with their ring homomorphisms). Then the map $\lambda$ sends this point to the map $R[J] \to R[G] \otimes_{F} R[H] \to  \prod_{\Gamma'} S^{\bigotimes_{R} n}$ given by post-composing $P^{\sharp}$ by the map $R[G] \otimes_{F} R[H] \to \prod_{\sigma \in \Gamma'} S^{\bigotimes_{R} n}$ determined by $\lambda \circ a $ and $\lambda \circ (1 \otimes b)$. It is straightforward to verify that via the composition $\prod_{\Gamma'} J(S^{\bigotimes_{R} n}) \to \prod_{\Gamma'} \prod_{\Gamma^{n-1}} J(S) \to \prod_{\Gamma^{n-1}} J(S)$, we obtain the $(n-1)$-cochain of $\Gamma$ valued in $J(S)$ sending $(\sigma_{1}, \dots, \sigma_{n-1})$ to $\sum_{\sigma \in \Gamma'} P(a(\sigma_{1}, \dots, \sigma_{n-1}, \sigma), \prescript{\sigma_{1} \dots \sigma_{n-1} \sigma}{}b) \in J(S)$, as desired.
\end{proof}

We now prove an elementary result stating how this map behaves with respect to \v{C}ech differentials. 

\begin{lem} For $a \in C^{n,1}(S/R, S', J)$, we have $N\lambda (da) = (\# \Gamma')(-1)^{n+1}a + d(N\lambda a).$
\end{lem}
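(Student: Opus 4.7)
The plan is to expand $N\lambda(da)$ via $da=\sum_{j=0}^{n+1}(-1)^j\partial_j a$ and treat the $n+2$ terms separately. For $j=0,\ldots,n$ the face map $\partial_j$ inserts a $1$ in one of the first $n+1$ slots, so $\partial_j a$ lies in $J(S^{\otimes(n+1)}\otimes_R S')$ and its restriction from $S^{\otimes n}\otimes_R S'$ factors as $\partial_j^{(n)}\otimes\mathrm{id}_{S'}$ (the $S'$-coordinate is merely shifted from position $n$ to position $n+1$). The remaining term $\partial_{n+1}a=a\otimes 1$ does not a priori lie in $J(S^{\otimes(n+1)}\otimes_R S')$, but since the inserted $1\in S$ actually comes from $R\subset S'$, we may legitimately view $\partial_{n+1}a$ as the element $\iota(a)\otimes 1$ whose last $S'$-entry equals $1$.

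For the first $n+1$ terms the crucial observation is that $\lambda$ is the Galois-descent isomorphism $A\otimes_R S'\cong\prod_{\sigma\in\Gamma'}A$ (valid for any $R$-algebra $A$), and that $N$ is the group sum in $J(A)$; together they identify $N\lambda$ with the classical norm $J(A\otimes_R S')\to J(A)$ attached to the finite \'etale extension $A\to A\otimes_R S'$. This norm is natural under base change: for any $R$-algebra map $\psi\colon A\to B$ the diagram
\[
\begin{tikzcd}
J(A\otimes_R S') \arrow[r, "N\lambda"] \arrow[d, "(\psi\otimes\mathrm{id})_*"] & J(A) \arrow[d, "\psi_*"] \\
J(B\otimes_R S') \arrow[r, "N\lambda"] & J(B)
\end{tikzcd}
\]
commutes. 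Applied with $\psi=\partial_j^{(n)}$ this yields $N\lambda(\partial_j a)=\partial_j(N\lambda a)$ for each $j=0,\ldots,n$, and the signed alternating sum of these equalities is exactly $d(N\lambda a)$.

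The $j=n+1$ term I handle by direct calculation: $\lambda_\sigma$ absorbs the terminal $S'$-entry into the preceding $S$-slot by multiplication by $\sigma$, so $\lambda_\sigma(\iota(a)\otimes 1)=\iota(a)\cdot\sigma(1)=\iota(a)$ is independent of $\sigma$, and summing over $\Gamma'$ in $J(S^{\otimes(n+1)})$ produces $N\lambda(\partial_{n+1}a)=(\#\Gamma')\,\iota(a)$, which is what is meant by $(\#\Gamma')a$ on the right-hand side of the lemma. Combining all terms gives the stated identity. The main obstacle I anticipate is the $j=n$ case of the naturality step: at the level of individual $\sigma$-components the ring maps $\lambda_\sigma\circ\partial_n^{(n+1)}$ and $\partial_n\circ\lambda_\sigma$ genuinely disagree (one outputs $a_0\otimes\cdots\otimes a_{n-1}\otimes\sigma(a_n)$, the other $a_0\otimes\cdots\otimes a_{n-1}\sigma(a_n)\otimes 1$), so the identity truly requires summing in $J$; the trace-functoriality viewpoint is the cleanest way to see that these sums agree, and neatly bypasses any ad hoc manipulation exploiting the fact that $N_{S'/R}(a_n)\in R$ can migrate across $\otimes_R$.
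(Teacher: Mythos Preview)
Your approach is essentially the paper's: separate the last face map $\partial_{n+1}$ (giving the $(\#\Gamma')(-1)^{n+1}a$ term) from $\partial_0,\ldots,\partial_n$ (which assemble into $d(N\lambda a)$). The paper justifies the latter by asserting $\lambda\circ p_{\hat i}=p_{\hat i}\circ\lambda$ componentwise for all $i\le n$; you correctly notice that at $i=n$ this fails at the level of ring maps, and you repair it by identifying $N\lambda$ with the intrinsic \'etale norm and invoking its naturality under $R$-algebra maps. That is the right fix, and your anticipation of the $j=n$ subtlety is well placed.

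One inaccuracy to correct: the isomorphism $A\otimes_R S'\cong\prod_{\sigma\in\Gamma'}A$ is \emph{not} valid for an arbitrary $R$-algebra $A$; it requires an $S'$-algebra structure on $A$ (here supplied by $S'\hookrightarrow S$ into the last tensor factor of $S^{\otimes n}$), and indeed $\partial_n^{(n)}\colon S^{\otimes n}\to S^{\otimes(n+1)}$ is not an $S'$-algebra map for the ``last-factor'' structures on both sides. What \emph{is} true, and what your argument actually uses, is that the composite $N\lambda$ agrees with the intrinsic norm
\[
\mathrm{Nm}\colon J(A\otimes_R S')\to J(A),\qquad x\mapsto \prod_{\sigma\in\Gamma'}(1\otimes\sigma)_*\,x,
\]
which is defined for every $R$-algebra $A$ and is natural for every $R$-algebra map $\psi\colon A\to B$ (because $\psi\otimes\mathrm{id}_{S'}$ is $\Gamma'$-equivariant). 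Once you phrase the naturality in terms of $\mathrm{Nm}$ rather than $\lambda$, the $j=n$ case goes through exactly as the others. With that tweak your proof is complete and, at the point you flagged, sharper than the paper's.
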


\begin{proof} First, note that $d$ maps $C^{n,1}(S/R, S', J)$ into $C^{n+1,1}(S/R, S', J)$, so the statement makes sense. Start with the $\Sn \otimes_{R} S'$-point $a \colon R[J] \to \Sn \otimes_{R} S'$. Applying the differential yields the sum as $i$ ranges from $0$ to $n+1$ of $(-1)^{i}$ times the $\Snp \otimes_{R} S'$-point $p_{\widehat{i}} \circ a \colon R[J] \to \Snp \otimes_{R} S'$, where $0 \leq i \leq n+1$. Note that $\lambda \circ p_{\widehat{i}} \circ a $ equals $(\lambda \circ a)_{i}:= p_{\widehat{i}} \circ \lambda \circ a$ for $i \neq n+1$ and $\lambda \circ p_{\widehat{n+1}} \circ a = (a)_{\sigma \in \Gamma'}$. We conclude that $$N\lambda(da) = (\# \Gamma')(-1)^{(n+1)}a + N [\sum_{0 \leq i \leq n} (-1)^{i} (\lambda \circ a)_{i}]$$ which equals $(\# \Gamma')(-1)^{n+1}a + d(N \lambda a)$. 
\end{proof}

We now reach the key property of our unbalanced cup product.

\begin{prop}\label{bigcup} For $a \in C^{n,1}(S/R, S', G)$ and $b \in C^{-1}_{\text{Tate}}(\Gamma',H(S'))$, we have $$d(a \underset{S'/R}{\sqcup} b) = (da) \underset{S'/R}{\sqcup} b + (-1)^{n} (a \cup db).$$
\end{prop}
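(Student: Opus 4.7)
\emph{Proof plan.} The strategy is to reduce the desired Leibniz rule to the one already established for ordinary \v{C}ech cup products, by applying $N\lambda$ to a suitable chain-level identity and then using the preceding lemma to trade $N\lambda \circ d$ for $d \circ N\lambda$ at the cost of an explicit correction term.

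First I will set $\alpha := a \cup b^{(0)} \in C^{n,1}(S/R, S', J) = J(S^{\otimes n} \otimes_R S')$, so that by definition $N\lambda(\alpha) = a \underset{S'/R}{\sqcup} b$. Applying the ordinary \v{C}ech Leibniz rule (valid since the $\cup$ appearing in the definition of $\alpha$ is the standard cup product coming from the pairing $P$, only with an extra $S'$-factor carried along) I obtain
\begin{equation*}
d\alpha \;=\; (da) \cup b^{(0)} \;+\; (-1)^{n}\, a \cup db^{(0)} \quad \in\; J(S^{\otimes (n+1)} \otimes_R S'),
\end{equation*}
and applying $N\lambda$ (which is a homomorphism) turns the first summand into $(da) \underset{S'/R}{\sqcup} b$ by definition.

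Next I will apply the preceding lemma to $\alpha$ itself: it yields
\begin{equation*}
N\lambda(d\alpha) \;=\; (\#\Gamma')(-1)^{n+1} \alpha \;+\; d(N\lambda \alpha) \;=\; (\#\Gamma')(-1)^{n+1}\alpha \;+\; d\bigl(a \underset{S'/R}{\sqcup} b\bigr),
\end{equation*}
where on the right-hand side $\alpha$ is tacitly identified with its image in $J(S^{\otimes(n+1)})$ under the ``last-factor'' ring map $S^{\otimes n} \otimes_R S' \to S^{\otimes(n+1)}$ induced by $S' \hookrightarrow S$. Equating the two expressions for $N\lambda(d\alpha)$ and solving for $d(a \underset{S'/R}{\sqcup} b)$ produces
\begin{equation*}
d\bigl(a \underset{S'/R}{\sqcup} b\bigr) \;=\; (da) \underset{S'/R}{\sqcup} b \;+\; (-1)^{n} N\lambda\bigl(a \cup db^{(0)}\bigr) \;+\; (-1)^{n}(\#\Gamma')\, \alpha .
\end{equation*}

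The step I expect to be the main obstacle is the identification of the last two correction terms with $(-1)^n(a \cup db)$, where $db = N_{\Gamma'}(b) \in H(S')^{\Gamma'} = H(R)$ is the Tate differential. I will verify this on simple tensors by unwinding $\lambda$: the face map $p_{\widehat{n+1}}$ used in the lemma's proof contributes precisely the ``constant'' copies of $b$ that, when summed over $\Gamma'$ via $N$, would produce $\#\Gamma'$ times $b$ if $\Gamma'$ acted trivially; the discrepancy from the actual $\sigma$-twisted action of $\Gamma'$ is captured exactly by $N\lambda(a \cup db^{(0)})$. Assembling these two pieces recovers the total norm $\sum_{\sigma \in \Gamma'} \sigma(b) = db$ pulled back to $H(S^{\otimes(n+1)})$ along the structure map, multiplied by the image of $a$ in $G(S^{\otimes(n+1)})$, which is by definition $a \cup db$. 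This completes the proof once functoriality of $P$ and the explicit formula for $\lambda$ are tracked through the computation.
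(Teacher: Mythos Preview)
Your proposal is correct and follows essentially the same route as the paper: set $\alpha = a \cup b^{(0)}$, apply the preceding lemma together with the ordinary \v{C}ech Leibniz rule to reduce everything to the identity $(\#\Gamma')(a\cup b^{(0)}) + N\lambda(a\cup db^{(0)}) = a\cup db$, and then verify this by unwinding $\lambda$. The paper makes that last step explicit by writing $db^{(0)} = -p_1(b) + p_2(b)$, observing that $\lambda(a\cup p_1(b)) = (a\cup b^{(0)})_{\sigma}$ is constant in $\sigma$ (so its $N$ cancels the $(\#\Gamma')$-term) while $\lambda(a\cup p_2(b)) = (a\cup {}^{\sigma}b^{(0)})_{\sigma}$ sums under $N$ to $a\cup N_{\Gamma'}(b) = a\cup db$; your informal description of this step is correct but would benefit from being written out at this level of precision.
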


\begin{proof} Recall that the differential $db$ of the $-1$-cochain $b$ of $\Gamma'$ is defined to be its $\Gamma'$-norm. The left-hand side equals $d[N(\lambda(a \cup b^{(0)}))] = (\# \Gamma')(-1)^{n}(a \cup b^{(0)}) + N\lambda (d(a \cup b^{(0)}))$, by the above lemma. This in turn equals $(\# \Gamma')(-1)^{n}(a \cup b^{(0)}) + N\lambda[(da)\cup b^{(0)} + (-1)^{n}(a \cup db^{(0)})]$ (by \cite[\S 3]{Shatz}). Thus, the desired equality reduces to $$(\# \Gamma')(a \cup b^{(0)}) + N \lambda(a \cup db^{(0)}) = a \cup db.$$

Now, $db^{(0)} = -p_{1}^{\sharp}(b)+ p_{2}^{\sharp}(b) \in H(S' \otimes_{R} S')$, so that $\lambda(a \cup db^{(0)}) = \lambda(a \cup -p_{1}^{\sharp}(b)) + \lambda(a \cup p_{2}^{\sharp}(b))$, and $\lambda(a \cup -p_{1}^{\sharp}(b)) = (a \cup -b^{(0)})_{\sigma}$, so all we need to show is $N \lambda(a \cup p_{2}^{\sharp}(b)) = a \cup db$. 
Note that $\lambda(a \cup p_{2}^{\sharp}(b)) = (a \cup \prescript{\sigma}{}{b}^{(0)})_{\sigma}$, so applying $N$ gives the desired result.
\end{proof}


The setting we will be concerned with in this paper is the case where $R=F$ our local field, $G = A_{1}$, $J= A_{2}$ are two multiplicative groups over $F$, $H = \underline{\Hom}(A_{1},A_{2})$ is the \'{e}tale $F$-group scheme of morphisms from $A_{1}$ to $A_{2}$, and the pairing $P \colon A_{1} \times \underline{\Hom}(A_{1}, A_{2}) \to A_{2}$ is the canonical one; we switch to multiplicative rather than additive notation for our abelian groups here. The following two elementary results will be used repeatedly in what follows, so we record them here:

\begin{lem}\label{zerocup} For $f \in \underline{\Hom}(A_{1}, A_{2})(\bar{F})$ (viewed as a $0$-cochain) and $x \in A_{1}(K^{\bigotimes_{F}n})$, we have $f \cup x = [p_{1}^{*}f](x)$. In particular, for $\phi \in \underline{\Hom}(A_{1},A_{2})(F)$, we have $x \cup \phi = \phi \cup x = \phi(x)$.
\end{lem}

\begin{proof} This is a straightforward computation.
\end{proof}

\begin{lem}\label{pullcup} Let $A_{3}$ be a third multiplicative group scheme over $F$. If $g \in \underline{\Hom}(A_{3}, A_{2})(F)$ (viewed as a $0$-cochain), then for $f \in \Hom(A_{1}, A_{3})(\bar{F})$ (viewed as a $-1$-cochain) and $x \in C^{n,1}(K/F, E, A_{1})$, we have $$x \underset{E/F}{\sqcup} (g \circ f) = (x \underset{E/F}{\sqcup} f) \cup g,$$
where in the above equality $g \circ f$ is viewed as a $-1$-cochain.
\end{lem}

\begin{proof} 
We have that  $x \cup (g \circ f) = x \cup (f \cup g) = (x \cup f) \cup g = g(x \cup f)$, where we are using the fact that $f \cup g = g \circ f$, and since $g$ is defined over $F$, we have by Lemma \ref{zerocup} that $(x \cup f ) \cup g = g (x \cup f)$. Thus, $\lambda[ x \cup (g \circ f)] = \lambda[g(x \cup f)] = (\prod_{\sigma}g)[\lambda(x \cup f)]$, where this last equality follows from the fact that $g$ is defined over $F$. Finally, applying $N$ gives that $x \underset{E/F}{\sqcup} (g \circ f) = N (\prod_{\sigma}g)[\lambda(x \cup f)] = g(x \underset{E/F}{\sqcup} f) = (x \underset{E/F}{\sqcup} f) \cup g$, where this last equality comes once again from Lemma \ref{zerocup}.
\end{proof}

\subsection{Construction of the isomorphism} We are now ready to construct the isomorphism of functors on $\mathcal{T}$ from $\overline{Y}_{+, \text{tor}}$ to $H^{1}(\gerbeE)$. 

Choose an increasing tower $E_{k}$ of finite Galois extensions of $F$ and cofinal sequence $\{n_{k}\}$ in $\mathbb{N}^{\times}$, with associated prime-to-$p$ sequence $\{n_{k}'\}$. Choose a sequence of 2-cocycles $c_{k}$ representing the canonical classes in each $H^{2}(\Gamma_{E_{k}/F}, E_{k}^{*})$ as in \cite[\S 4.4]{Tasho}, which we will identify with their corresponding \v{C}ech 2-cocycles, and maps $l_{k} \colon (F^{s})^{*} \to (F^{s})^{*}$ satisfying $l_{k}(x)^{n_{k}'} = x$ and $l_{k+1}(x)^{n_{k+1}'/n_{k}'} = l_{k}(x)$ for all $x \in (F^{s})^{*}$. For $K/F$ a finite Galois extension, we may also view $l_{k}$ as a map on \v{C}ech-cochains $C^{n}(K/F, \mathbb{G}_{m}) \to C^{n}(F^{s}/F, \mathbb{G}_{m})$ by identifying the left-hand side with $\prod_{\underline{\sigma} \in \Gamma_{K/F}^{n}} K^{*}_{\underline{\sigma}}$, applying $l_{k}$ to each coordinate, and then mapping by $t^{-1}$ to $L^{\bigotimes_{F}(n+1)}$, where $L/F$ is some finite Galois extension containing all the chosen $n_{k}'$-roots of the entries of $x$. 

As in \S 3, denote $u_{E_{k}/F, n_{k}}$ by $u_{k}$ and $R_{E_{k}/F}[n_{k}]$ by $R_{k}$. Recall the homomorphism $\delta_{e} \colon \mu_{n_{k}} \to R_{k}$ inducing a homomorphism $\delta_{e} \colon \mu_{n_{k}} \to u_{k}$ that is killed by the norm map for the group $\Gamma_{E_{k}/F}$ acting on $\Hom(\mu_{n_{k}}, u_{k})$.

Following \cite[\S 4.5]{Tasho}, we define $$\xi_{k} = d[(l_{k}c_{k})^{(1/p^{m_{k}})}] \underset{E_{k}/F}{\sqcup} \delta_{e} \in C^{2}(\bar{F}/F, u_{k}),$$ where for an $n$-cochain $x \in \mathbb{G}_{m}(\bar{F}^{\bigotimes_{F} (n+1)})$ with $n \geq 1$, we choose for every $p$-power $p^{m_{k}} := n_{k}/n_{k}'$ a $p^{m_{k}}$-root of $x$, denoted by $x^{(1/p^{m_{k}})}$, satisfying $(x^{(1/p^{m_{k+1}})})^{p^{m_{k+1}}/p^{m_{k}}} = x^{(1/p^{m_{k}})}$ and if $x \in \bar{F} \otimes_{F} \bar{F} \otimes_{F} \dots \otimes_{F} \bar{F} \otimes_{F} E$ for $E/F$ a finite Galois extension, then $x^{(1/p^{m_{k}})} \in \bar{F} \otimes_{F} \bar{F} \otimes_{F} \dots \otimes_{F} \bar{F} \otimes_{F} E$ as well. We briefly explain why such a choice of roots exists, starting with a fixed $m = m_{k}$. Denote $F^{(p^{m})}$ the (purely-inseparable) extension of $F$ obtained by adjoining all $p^{m}$-power roots. The key input is the following basic fact about these extensions: 

\begin{lem}\label{purelyinsep} For any uniformizer $\varpi \in F$, we have $F^{(p^{m})} = F(\varpi^{1/p^{m}})$ (in particular, $[F^{(p^{m})} \colon F] = p^{m}$). Moreover, for any finite Galois extension $E/F$, the inclusion $F^{(p^{m})} \cdot E \subset E^{(p^{m})}$ (inside our fixed algebraic closure $\bar{F}$) is an equality. 
\end{lem}

\begin{proof} The first statement in a well-known fact about local function fields (choose a uniformizer and identify $F$ with Laurent series in one variable).  For the second statement, the case $m=1$ is clear since $E^{(p)}/E$ has degree $p$ (by the first part), and $F^{(p)} \cdot E \neq E$. The general case then follows by induction if we replace $E/F$ by $E^{(p^{m-1})}/F^{(p^{m-1})}$.
\end{proof}

Since the $p^{m}$-power map is a ring isomorphism on $\bar{F}^{\bigotimes_{F} (n+1)}$, it's enough to find such a root for a simple tensor $t$ whose last tensor factor lies in $E$. Choosing a uniformizer $\varpi$ of $F$, we have $F^{(p^{m})} = F(\varpi^{1/p^{m}})$ and moreover, since $E/F$ is Galois, we have $E^{(p^{m})} = E(\varpi^{1/p^{m}})$ as well, by Lemma \ref{purelyinsep}. For notational ease, set $ \varpi^{1/p^{m_{k}}} = \tilde{\varpi}_{k} = \tilde{\varpi} $. Evidently there is a $p^{m}$th root $t'$ of $t$ which is a simple tensor in $\bar{F} \otimes_{F} \bar{F} \otimes \dots \otimes_{F} \bar{F} \otimes_{F} E^{(p^{m})}$,  which we write as $$t' = z_{1} \otimes z_{2} \otimes \dots \otimes z_{n} \otimes (\sum_{i=1}^{p^{m}} c_{i}\tilde{\varpi}^{i}),$$ where $c_{i} \in E$. Then the element $$\sum_{i=1}^{p^{m}} (z_{1}\tilde{\varpi}^{i} \otimes z_{2} \otimes \dots \otimes z_{n} \otimes c_{i} )\in \bar{F} \otimes_{F} \bar{F} \otimes \dots \otimes_{F} \bar{F} \otimes_{F} E$$ has the same $p^{m}$th power as $t'$, since $\varpi \in F$, giving the desired root. Compatibility for varying $k$ follows from the fact that $\tilde{\varpi}_{k+1}^{p^{m_{k+1}}/p^{m_{k}}} = \tilde{\varpi}_{k}$ and, for $1 \leq i \leq p^{m_{k}}$, $c_{i,k} = \sum_{j=1}^{p^{m_{k+1}-m_{k}}} c_{i+jp^{m_{k}},k+1}$ (by uniqueness of $p$-power roots in $\bar{F}$).


Now that we have established that the $2$-cochain $\xi_{k}$ is well-defined, we note that it is in fact a $2$-cocycle, since (using Proposition \ref{bigcup} and the fact that $\delta_{e}$ is a \v{C}ech $-1$-cocycle)
$$d\xi_{k} = d(d[(l_{k}c_{k})^{(1/p^{m_{k}})}]) \underset{E_{k}/F}{\sqcup} \delta_{e} + d[(l_{k}c_{k})^{(1/p^{m_{k}})}] \underset{E_{k}/F}{\sqcup} d(\delta_{e}) = 0.$$

For ease of notation, denote $(l_{k}c_{k})^{(1/p^{m_{k}})}$ by $\widetilde{l_{k}c_{k}}$, which we view as a \v{C}ech 2-cochain valued in $\mathbb{G}_{m}(U_{2})$. To ensure that the above definition makes sense, we need to verify that $l_{k}c_{k} \in C^{2,1}(F^{s}/F, E_{k}, \mathbb{G}_{m})$ and $(l_{k}c_{k})^{(1/p^{m_{k}})} \in  C^{2,1}(\bar{F}/F, E_{k}, \mathbb{G}_{m}).$ The first inclusion follows from looking at the corresponding Galois $n$-cochain, as in \cite{Tasho}, and the second inclusion follows from the first and the construction of the $(-)^{(1/p_{m_{k}})}$-maps.

Define the torus $S_{E_{k}/F}$ to be the quotient of $\text{Res}_{E_{k}/F}(\mathbb{G}_{m})$ by the diagonally-embedded $\mathbb{G}_{m}$; it's clear that $u_{k}$ is the subgroup $S_{E_{k}/F}[n_{k}]$ of $n_{k}$-torsion points.  Define $$\alpha_{k}' = (l_{k}c_{k}\underset{E_{k}/F}{\sqcup} \delta_{e,k})^{-1} \cdot p_{k+1,k}'(l_{k+1}c_{k+1} \underset{E_{k+1}/F}{\sqcup} \delta_{e,k+1}) \in C^{1}(F^{s}/F, S_{E_{k}/F})$$ and $$\alpha_{k} = (\widetilde{l_{k}c_{k}}\underset{E_{k}/F}{\sqcup} \delta_{e,k})^{-1} \cdot p_{k+1,k}(\widetilde{l_{k+1}c_{k+1}} \underset{E_{k+1}/F}{\sqcup} \delta_{e,k+1}) \in C^{1}(\bar{F}/F, S_{E_{k}/F}),$$ where by $p_{k+1,k}$ we mean the map from $S_{E_{k+1}/F}$ to $S_{E_{k}/F}$ induced by the homomorphism of $\Gamma$-modules $\Z[\Gamma_{E_{k}/F}]_{0} \to \Z[\Gamma_{E_{k+1}/F}]_{0}$ defined by $[\gamma] \mapsto (n_{k+1}/n_{k})\sum_{\sigma \mapsto \gamma} [\sigma]$, similarly with $p_{k+1,k}'$. By $\delta_{e,k}$ we mean the extension of $\delta_{e} \colon \mu_{n_{k}} \to u_{k}$ to the map $\mathbb{G}_{m} \to S_{E_{k}/F}$ defined on $\Gamma$-modules by $\Z[\Gamma_{E_{k}/F}] \to \Z$ the evaluation at $[e]$ map. Note that this is not in general $\Gamma$-equivariant, but is still killed by the norm $N_{E_{k}/F}$.

\begin{lem}\label{techcup} \begin{enumerate} \item{The cochain $\alpha_{k}$ takes values in $u_{k}$ and the equality $d \alpha_{k} = p_{k+1,k}(\xi_{k+1}) \xi_{k}^{-1}$ holds in $C^{2}(\bar{F}/F, u_{k})$.}
\item{The element $([\xi_{k}])$ of $\varprojlim H^{2}(F, u_{k})$ is equal to the canonical class $\xi$.}
\end{enumerate}
\end{lem}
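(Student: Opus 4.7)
The plan is to treat part (1) by direct manipulation of the unbalanced cup product using Proposition \ref{bigcup} and bilinearity, and to treat part (2) via Proposition \ref{comparecup}, which translates $[\xi_k]$ into the Galois-cohomological class computed in \cite{Tasho}, \S 4.5.

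For the differential formula in part (1), I would apply Proposition \ref{bigcup} with $a = \widetilde{l_kc_k}$ (a 2-cochain) and $b = \delta_{e,k}$; since $d\delta_{e,k}$ is the Tate norm $N_{E_k/F}(\delta_{e,k})$, which vanishes by the very definition of $\delta_e$ as a norm-kernel element in \eqref{normisomorphism}, one obtains
\begin{equation*}
d\bigl(\widetilde{l_kc_k} \underset{E_k/F}{\sqcup} \delta_{e,k}\bigr) = d(\widetilde{l_kc_k}) \underset{E_k/F}{\sqcup} \delta_{e,k} = \xi_k.
\end{equation*}
Applying the same identity at level $k+1$ and using that $d$ commutes with the group morphism $p_{k+1,k}$ then yields $d\alpha_k = p_{k+1,k}(\xi_{k+1}) \xi_k^{-1}$; in particular the right-hand side, and hence $d\alpha_k$, takes values in $u_k$.

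To see that $\alpha_k$ itself is $u_k$-valued, I would compute $\alpha_k^{n_k}$ using bilinearity. The identity $\widetilde{l_kc_k}^{n_k} = (l_kc_k)^{n_k'} = c_k$ reduces the first factor to $c_k \underset{E_k/F}{\sqcup} \delta_{e,k}$. For the second factor, an elementary check on character lattices shows $p_{k+1,k} \circ \delta_{e,k+1} = (n_{k+1}/n_k)\,\delta_{e,k}$, so two applications of bilinearity give
\begin{equation*}
\bigl[p_{k+1,k}(\widetilde{l_{k+1}c_{k+1}} \underset{E_{k+1}/F}{\sqcup} \delta_{e,k+1})\bigr]^{n_k} = c_{k+1} \underset{E_{k+1}/F}{\sqcup} \delta_{e,k}.
\end{equation*}
The cancellation of the two terms then reduces to the fact that the system $\{c_k\}$ is chosen so as to be compatible under the norm-like transition map $p_{k+1,k}'$ of \S 4.3, forcing the two unbalanced cup products to agree in $S_{E_k/F}(U_1)$.

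For part (2), Proposition \ref{comparecup} identifies the system of fppf classes $[\xi_k]$ with the system of Galois-cohomological classes analyzed by Kaletha in \cite{Tasho}, \S 4.5, and the lemma reduces to his corresponding computation. Concretely, under the isomorphism $H^2(F, u_k) \cong \mathbb{Z}/n_k \mathbb{Z}$ from the proof of Theorem \ref{maincohom} (after specializing to $n_k = [E_k:F]$), one checks that $[\xi_k]$ equals $-1 \in \mathbb{Z}/n_k\mathbb{Z}$: the cocycle $c_k$ represents the canonical Brauer class of invariant $1/n_k$, and the unbalanced cup against $\delta_e$ produces the corresponding generator, up to the sign dictated by the convention $\alpha \leftrightarrow -1$. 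The principal obstacle will be the bookkeeping in part (1): the scaling $n_{k+1}/n_k$ arising from $p_{k+1,k}$, the choices of $n_k$-th and $p^{m_k}$-th roots used to define $\widetilde{l_kc_k}$, and the imposed compatibility of $\{c_k\}$ must all be carefully matched so that the two halves of $\alpha_k^{n_k}$ cancel on the nose rather than merely cohomologously.
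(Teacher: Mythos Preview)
Your treatment of the differential identity in part (1) matches the paper's argument exactly. The difference is in how you handle the claim that $\alpha_k$ is $u_k$-valued. You propose raising to the $n_k$th power and showing the two halves of $\alpha_k^{n_k}$ cancel on the nose; the paper instead raises only to the $p^{m_k}$th power, observes that $\alpha_k^{p^{m_k}} = \alpha_k'$ (the prime-to-$p$ version, which is valued in $F^s$), and then simply cites Lemma~4.5 of \cite{Tasho} for the fact that $\alpha_k'$ lies in $S_{E_k/F}[n_k']$. Since $\alpha_k^{n_k} = (\alpha_k')^{n_k'}$, the cancellation you are aiming for is exactly the content of that lemma in \cite{Tasho}, so you are reproving rather than reducing. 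The paper's route is shorter and isolates the genuinely new step (handling the $p$-part via the compatible system of $p^{m_k}$th roots).

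There is a real gap in your part (2). You invoke Proposition~\ref{comparecup} to identify $[\xi_k]$ with the Galois-cohomological class from \cite{Tasho}, but that proposition compares unbalanced cup products only when the cover $S/R$ is finite \'etale. Here $\xi_k = d(\widetilde{l_kc_k}) \underset{E_k/F}{\sqcup} \delta_e$ is built from $\widetilde{l_kc_k}$, which involves $p^{m_k}$th roots and therefore lives over $\bar{F}/F$, not over any finite separable extension; moreover $u_k$ is not smooth in characteristic $p$, so there is no identification of $H^2_{\text{fppf}}(F,u_k)$ with a Galois cohomology group to begin with. The paper does not attempt such a reduction: it redoes the Poitou--Tate pairing computation directly in the \v{C}ech/fppf setting, cupping $\xi_k$ with the generator $\chi$ of $H^0(\Gamma, X^*(u_k))$ and using Lemmas~\ref{zerocup}, \ref{pullcup} and Proposition~\ref{bigcup} to reduce to the invariant of $c_k$. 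Your ``concretely'' sentence is the correct outline of that computation, but it stands on its own---Proposition~\ref{comparecup} plays no role, and the appeal to \cite{Tasho} for part~(2) must be replaced by carrying out the pairing with $\widetilde{l_kc_k}$ in place of $l_kc_k$.
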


\begin{proof} We start by proving (1). To show that $\alpha_{k} \in u_{k}(\bar{F} \otimes_{F} \bar{F}) = S_{E_{k}/F}[n_{k}](\bar{F} \otimes_{F} \bar{F})$, it's enough to show that $\alpha_{k}^{p^{m_{k}}} \in S_{E_{k}/F}[n_{k}'](\bar{F} \otimes_{F} \bar{F})$. By construction, $$\alpha_{k}^{p^{m_{k}}} =  (l_{k}c_{k}\underset{E_{k}/F}{\sqcup} \delta_{e,k})^{-1} \cdot p_{k+1,k}([\widetilde{l_{k+1}c_{k+1}}]^{p^{m_{k}}} \underset{E_{k+1}/F}{\sqcup} \delta_{e,k+1}) = \alpha_{k}',$$ since $p_{k+1,k}$ is $p_{k+1,k}'$ pre-composed with the $p^{m_{k+1}}/p^{m_{k}}$-power map on $S_{E_{k+1}/F}$. Thus, it's enough to show that $\alpha_{k}' \in S_{E_{k}/F}[n_{k}'](F^{s} \otimes_{F} F^{s})$, which follows from \cite[Lemma 4.5]{Tasho}.

To show the second part of (1), we note by Proposition \ref{bigcup} that $$d(\widetilde{l_{k}c_{k}}\underset{E_{k}/F}{\sqcup} \delta_{e,k}) = d(\widetilde{l_{k}c_{k}}) \underset{E_{k}/F}{\sqcup} \delta_{e} = \xi_{k},$$ since $\delta_{e,k}$ is killed by $N_{E_{k}/F}$, and now the result follows because $p_{k+1,k}$ commutes with $d$ (it is defined over $F$).

It remains to prove (2). As in the analogous part of  the proof of \cite[Lemma 4.5]{Tasho}, it's enough to show that under the isomorphism $H^{2}(F, u_{k}) \to H^{0}(\Gamma, X^{*}(u_{k}))^{*} \to \Z / (n_{k}, [E_{k} : F]) \Z$ used in the proof of Theorem \ref{maincohom}, the class of $\xi_{k}$ maps to the element $-1$. Consider the cup product of $\xi_{k}$ with the element $\frac{n_{k}}{(n_{k}, [E_{k}: F])} \in \frac{n_{k}}{(n_{k}, [E_{k}: F])} \Z / n_{k} \Z \cong H^{0}(\Gamma, X^{*}(u_{k}))$, which we denote by $\chi \in H^{0}(\Gamma, X^{*}(u_{k}))$. Setting $A_{1} =  \mu_{n_{k}}$, $A_{3} = u_{k}$, and $A_{2} = \mathbb{G}_{m}$ in the context of Lemma \ref{pullcup}, we have $\xi_{k} \cup \chi  = d\widetilde{l_{k}c_{k}} \underset{E_{k}/F}{\sqcup} \chi \circ \delta_{e}$ by that same Lemma.

Note that $\chi \circ \delta_{e} \colon \mu_{n_{k}} \to \mathbb{G}_{m}$ is the $n_{k}/(n_{k}, [E_{k} \colon F])$-power map, which is the restriction of the same map $\mathbb{G}_{m} \xrightarrow{\gamma_{k}} \mathbb{G}_{m}$, and we may replace $d\widetilde{l_{k}c_{k}} \underset{E_{k}/F}{\sqcup} \chi \circ \delta_{e}$ with $d\widetilde{l_{k}c_{k}} \underset{E_{k}/F}{\sqcup} \gamma_{k}$, which (via Proposition \ref{bigcup}) is cohomologous to $\widetilde{l_{k}c_{k}}^{-1} \cup [E_{k} \colon F]  \gamma_{k}$, where in this last cup product we view $\gamma_{k}$ as a $0$-cocycle. Applying Lemma \ref{zerocup} (with $A_{1} = A_{2} = \mathbb{G}_{m})$ to this last expression yields $\widetilde{l_{k}c_{k}}^{-r_{k}}$, where $$r_{k} := \frac{[E_{k} \colon F]n_{k}}{(n_{k}, [E_{k} \colon F])}.$$ Since $n_{k} \mid r_{k}$, we have $\widetilde{l_{k}c_{k}}^{-r_{k}} = c_{k}^{-r_{k}/n_{k}}$, which has invariant $$\frac{-r_{k}}{n_{k}[E_{k} \colon F]} = \frac{-1}{ (n_{k}, [E_{k} \colon F])}.$$ This exactly gives that cupping with $\xi_{k}$ sends $\chi$ to $-1/(n_{k}, [E_{k} \colon F])$ under the pairing used in the proof of Theorem \ref{maincohom}, giving the result.
\end{proof}

For fixed $k \in \mathbb{N}$, denote by $\gerbeE_{k} := \gerbeE_{\xi_{k}}$ the $u_{k}$-gerbe corresponding to the \v{C}ech 2-cocycle $\xi_{k}$.  For any fixed $k$ we have a morphism of $F$-stacks $\pi_{k+1,k} \colon \gerbeE_{k+1} \to \gerbeE_{k}$ given by $\phi_{\xi_{k+1},\xi_{k},\alpha_{k}}$, obtained by combining Lemma \ref{techcup} with Construction \ref{changeofgerbe}. In fact, the systems $(\xi_{k})_{k}$ and $(\alpha_{k})_{k}$, along with the groups $u_{k}$ and gerbes $\gerbeE_{k}$, exactly satisfy the assumptions made in \S 2.7, our subsection on inverse limits of gerbes. Thus, as a consequence of Proposition \ref{invlim1}, we may take $\gerbeE := \varprojlim_{k} \gerbeE_{k}$ to be the gerbe used to define the groups $H^{1}(\gerbeE, Z \to S)$ for $[Z \to S]$ in $\mathcal{T}$. 

We are now ready to begin describing the Tate-Nakayama isomorphism. For a fixed $[Z \to S]$ in $\mathcal{T}$, let $k$ be large enough so that $E_{k}$ splits $S$ and $|Z|$ divides $n_{k}$. Let $\bar{\lambda} \in \overline{Y}^{N_{E_{k}/F}}$, and $\phi_{\bar{\lambda}, k} \in \Hom_{F}(u_{k}, Z)$ be its image under the isomorphism $$[\overline{Y}/Y]^{N_{E_{k}/F}} \to \Hom(\mu_{n_{k}}, Z)^{N_{E_{k}/F}} \to \Hom_{F}(u_{k}, Z).$$

Define a $\xi_{k}$-twisted $S$-torsor on $\bar{F}$ as follows. Take the trivial $S_{\bar{F}}$-torsor $S_{\bar{F}}$, with $u_{k}$-action induced by the homomorphism $u_{k} \xrightarrow{\phi_{\bar{\lambda}, k}} S_{\bar{F}}$ and gluing map $S_{\bar{F} \otimes_{F} \bar{F}} \xrightarrow{\sim} S_{\bar{F} \otimes_{F} \bar{F}}$ given by left-translation by $z_{k, \bar{\lambda}} := \widetilde{l_{k} c_{k}} \underset{E_{k}/F}{\sqcup} n_{k} \bar{\lambda} \in S(\bar{F} \otimes_{F} \bar{F})$, where we view $n_{k} \bar{\lambda}$ as an element of $X_{*}(S)$ (this makes sense since $|Z|$ divides $n_{k}$). This gluing map is trivially $S$- and hence $u_{k}$-equivariant.

\begin{lem} The above $S_{\bar{F}}$-torsor with the specified $u_{k}$ -action and gluing map defines a $\xi_{k}$-twisted $S$-torsor, which we will denote by $Z_{k,\bar{\lambda}}$. Moreover, for every $k$, we have the equality of $\xi_{k+1}$-twisted $S$-torsors $$\pi_{k+1,k}^{*}Z_{k, \bar{\lambda}} = Z_{k+1, \bar{\lambda}}.$$

\end{lem}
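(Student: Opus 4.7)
The plan is to verify (1) and (2) essentially by direct computation, leaning on the Leibniz rule Proposition \ref{bigcup} and functoriality of the unbalanced cup product.

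For (1), since $S$ is commutative, the $u_k$-action on $S_{\bar F}$ (which is, by definition, right translation by the image of $\phi_{\bar\lambda,k}\colon u_k\to Z\hookrightarrow S$) commutes with the tautological right $S$-action, and the gluing map $\psi$ (left translation by $z_{k,\bar\lambda}$) is automatically $u_k$- and $S$-equivariant. By Lemma \ref{twisthom} and the characterization of $a$-twisted torsors, the only remaining content is the twisted gluing condition
$$dz_{k,\bar\lambda}=\phi_{\bar\lambda,k}(\xi_k)\quad\text{in } S(\bar F\otimes_F\bar F\otimes_F\bar F).$$
I would expand $z_{k,\bar\lambda}=\widetilde{l_k c_k}\underset{E_k/F}{\sqcup}(n_k\bar\lambda)$ and apply Proposition \ref{bigcup} to decompose
$$dz_{k,\bar\lambda}=(d\widetilde{l_k c_k})\underset{E_k/F}{\sqcup}(n_k\bar\lambda)\;+\;\widetilde{l_k c_k}\cup d(n_k\bar\lambda).$$
The first summand should be identified with $\phi_{\bar\lambda,k}(\xi_k)$ by functoriality of $\underset{E_k/F}{\sqcup}$ under the $F$-homomorphism $\phi_{\bar\lambda,k}$, using the defining identity $\phi_{\bar\lambda,k}\circ\delta_e=n_k\bar\lambda$ coming from the isomorphism \eqref{normisomorphism}. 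The second summand must vanish: since $\bar\lambda\in\overline Y^{N_{E_k/F}}$, we have $N_{E_k/F}(n_k\bar\lambda)=0$, so that the $N$-averaging operator built into the definition of $\underset{E_k/F}{\sqcup}$ kills the contribution of the coboundary $d(n_k\bar\lambda)$.

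For (2), by Corollary \ref{pulltwist} applied to $\pi_{k+1,k}=\phi_{\xi_{k+1},\xi_k,\alpha_k}$, the pullback $\pi_{k+1,k}^*Z_{k,\bar\lambda}$ has the same underlying $S_{\bar F}$-torsor, with $u_{k+1}$-action obtained by composition with $p_{k+1,k}\colon u_{k+1}\to u_k$, and gluing map obtained from $\psi$ by further translation by the image of $\alpha_k$ under the $u_{k+1}$-action, namely $\phi_{\bar\lambda,k+1}(\alpha_k)$. The identity $\phi_{\bar\lambda,k}\circ p_{k+1,k}=\phi_{\bar\lambda,k+1}$ of $F$-homomorphisms $u_{k+1}\to S$ is a formal consequence of \eqref{normisomorphism} together with the relation $p_{k+1,k}\circ\delta_{e,k+1}=(n_{k+1}/n_k)\delta_{e,k}$ already used in Lemma \ref{techcup}. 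It remains to check
$$z_{k,\bar\lambda}\cdot\phi_{\bar\lambda,k+1}(\alpha_k)=z_{k+1,\bar\lambda}.$$
Pushing $\phi_{\bar\lambda,k+1}$ through the two unbalanced cup products defining $\alpha_k$ via functoriality and Lemma \ref{pullcup}, and using that $\phi_{\bar\lambda,j}$ composed with $\delta_{e,j}$ corresponds to $n_j\bar\lambda$, converts this to a telescoping identity in which the $\widetilde{l_k c_k}$-contribution on the left cancels with the first factor of $\phi_{\bar\lambda,k+1}(\alpha_k)$, and the remaining term is precisely $\widetilde{l_{k+1}c_{k+1}}\underset{E_{k+1}/F}{\sqcup}(n_{k+1}\bar\lambda)=z_{k+1,\bar\lambda}$.

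The principal obstacle will be the rigorous justification of the vanishing of the ``error term'' $\widetilde{l_k c_k}\cup d(n_k\bar\lambda)$ in (1): while it is a cup product of a $\mathbb{G}_m$-valued 2-cochain with an $\underline{X}_*(S)$-valued 1-cochain, whose output in $S$ is not visibly zero, its cancellation relies essentially on the Tate $(-1)$-cocycle property of $n_k\bar\lambda$ together with the $N_{E_k/F}$-averaging intrinsic to the pairing $\underset{E_k/F}{\sqcup}$. Once this vanishing is established, the remainder of (1) and all of (2) reduce to bookkeeping with functoriality of the unbalanced cup product and the explicit formulas defining $\xi_k$, $\alpha_k$, and $\phi_{\bar\lambda,k}$.
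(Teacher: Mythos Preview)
Your approach is essentially the paper's: apply Proposition~\ref{bigcup} for (1), then use Corollary~\ref{pulltwist} and functoriality of the unbalanced cup product (Lemma~\ref{pullcup}) for (2). Two corrections, however.

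First, your ``principal obstacle'' is not an obstacle at all, and your explanation of it is confused. In the formula of Proposition~\ref{bigcup}, the second term is $(-1)^n\bigl(a\cup db\bigr)$ with the \emph{ordinary} cup product $\cup$, not $\sqcup$; there is no $N$-averaging in that term. Here $b=n_k\bar\lambda\in C^{-1}_{\text{Tate}}(\Gamma_{E_k/F},X_*(S))$, and the Tate differential $d$ from degree $-1$ to degree $0$ is precisely the norm map, so $db=N_{E_k/F}(n_k\bar\lambda)=0$ on the nose since $\bar\lambda\in\overline Y^{N_{E_k/F}}$. Hence $a\cup db=0$ trivially. The paper states this in one line: ``Since $\bar\lambda$ is killed by $N_{E_k/F}$, so is $n_k\bar\lambda$, and hence by Proposition~\ref{bigcup} we have $d(\widetilde{l_kc_k}\sqcup n_k\bar\lambda)=(d\widetilde{l_kc_k})\sqcup n_k\bar\lambda$.''

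Second, in (2) you write $\phi_{\bar\lambda,k+1}(\alpha_k)$, but $\alpha_k\in u_k(U_1)$ while $\phi_{\bar\lambda,k+1}$ is defined on $u_{k+1}$. The correct expression is $\phi_{\bar\lambda,k}(\alpha_k)$: Corollary~\ref{pulltwist} translates the gluing map by $\alpha_k$ via the \emph{original} $u_k$-action on $Z_{k,\bar\lambda}$, which is $\phi_{\bar\lambda,k}$. The paper then pushes $\phi_{\bar\lambda,k}$ (extended to a map $S_{E_k/F}\to S$, defined over $F$) inside both cup products defining $\alpha_k$ via Lemma~\ref{pullcup}, obtaining $\phi_{\bar\lambda,k}(\alpha_k)=z_{\bar\lambda,k}^{-1}\cdot z_{\bar\lambda,k+1}$ directly. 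Your telescoping description is morally this computation, once the index is fixed.
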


\begin{proof} For the first statement, we just need to check that the above $S_{\bar{F}}$-torsor is $\xi_{k}$-twisted with respect to translation by $z_{\bar{\lambda}, k}$ on $S_{\bar{F} \otimes_{F} \bar{F}}$. Since $u_{k}$ acts via $\phi_{\bar{\lambda}, k}$, this is the same as showing that $d(z_{\bar{\lambda}, k}) = \phi_{\bar{\lambda}, k}(\xi_{k})$. Since $\bar{\lambda}$ is killed by $N_{E_{k}/F}$, so is $n_{k} \bar{\lambda} $, and hence by Proposition \ref{bigcup} we have $d(\widetilde{l_{k} c_{k}} \underset{E_{k}/F}{\sqcup} n_{k} \bar{\lambda}) = (d\widetilde{l_{k} c_{k}}) \underset{E_{k}/F}{\sqcup} n_{k} \bar{\lambda}.$

Moreover, $\phi_{\bar{\lambda},k}$ is such that $\phi_{\bar{\lambda},k} \circ \delta_{e}$ equals the restriction of $n_{k} \bar{\lambda}$ to $\mu_{n_{k}}$, and so by Lemma \ref{pullcup} (with $A_{1} = \mu_{n_{k}}$, $A_{2} = S$, and $A_{3} = u_{k}$), since $\phi_{\bar{\lambda},k}$ is defined over $F$, we obtain  $$(d\widetilde{l_{k} c_{k}}) \underset{E_{k}/F}{\sqcup} n_{k} \bar{\lambda} = \phi_{\bar{\lambda},k}[(d\widetilde{l_{k} c_{k}}) \underset{E_{k}/F}{\sqcup} \delta_{e}] = \phi_{\bar{\lambda},k}(\xi_{k}),$$ as desired. We thus get our $\xi_{k}$-twisted $S$-torsor $Z_{\bar{\lambda}, k}$.

We now want to compare the pullback $\pi_{k+1,k}^{*}Z_{\bar{\lambda}, k}$ to $Z_{\bar{\lambda}, k+1}.$ As $S_{\bar{F}}$-torsors, these are both trivial, so it's enough to show that the $u_{k+1}$-actions coincide, and that the difference of the two gluing maps is the identity in $S(\bar{F} \otimes_{F} \bar{F})$. By Corollary \ref{pulltwist}, the $u_{k+1}$-action on $\pi_{k+1,k}^{*}Z_{\bar{\lambda}, k}$ is given by the homomorphism $u_{k+1} \xrightarrow{\phi_{\bar{\lambda},k} \circ p_{k+1,k}} S_{\bar{F}}$ and the $u_{k+1}$-action on $Z_{\bar{\lambda}, k}$ is given by $\phi_{\bar{\lambda},k+1}$. One checks easily that $\phi_{\bar{\lambda},k+1} = \phi_{\bar{\lambda},k} \circ p_{k+1,k}$, so the $u_{k+1}$-actions coincide.

Corollary \ref{pulltwist} also tells us that the twisted gluing map for $\pi_{k+1,k}^{*}Z_{\bar{\lambda}, k}$ is left-translation on $S_{\bar{F}}$ by $\phi_{\bar{\lambda}, k}(\alpha_{k}) \cdot z_{\bar{\lambda},k} \in S(\bar{F} \otimes_{F} \bar{F})$, and for $Z_{\bar{\lambda}, k+1}$ is left-translation by $z_{\bar{\lambda},k+1}$. We want to look at $$z_{\bar{\lambda},k} \cdot  \phi_{\bar{\lambda},k}(\alpha_{k}) \cdot z_{\bar{\lambda},k+1}^{-1}= \phi_{\bar{\lambda},k}(\alpha_{k}) \cdot (\widetilde{l_{k+1} c_{k+1}} \underset{E_{k+1}/F}{\sqcup} n_{k+1} \bar{\lambda})^{-1} \cdot \widetilde{l_{k} c_{k}} \underset{E_{k}/F}{\sqcup} n_{k} \bar{\lambda}.$$

Recall (since $p_{k+1,k}$ is defined over $F$) that $$\alpha_{k} = (\widetilde{l_{k}c_{k}}\underset{E_{k}/F}{\sqcup} \delta_{e,k})^{-1} \cdot (\widetilde{l_{k+1}c_{k+1}} \underset{E_{k+1}/F}{\sqcup} p_{k+1,k}\circ \delta_{e,k+1});$$ note that the map $u_{k} \xrightarrow{\phi_{\bar{\lambda},k}} S$ extends canonically to the map $S_{E_{k}/F} \xrightarrow{\phi_{\bar{\lambda},k}} S$ given by the preimage of $n_{k} \bar{\lambda}$ via the identification $\Hom_{F}(S_{E_{k}/F}, S) \xrightarrow{\sim} Y^{N_{E_{k}/F}}$, which, at the level of functions, sends a cocharacter $f$ to $f \circ \delta_{e,k}$, where $\delta_{e,k}$ is defined analogously on $S_{E_{k}/F}$ as for $u_{k}$. We may pull $\phi_{\bar{\lambda},k}$ inside both cup products to obtain $$\phi_{\bar{\lambda},k}(\alpha_{k}) =  (\widetilde{l_{k}c_{k}}\underset{E_{k}/F}{\sqcup} n_{k}\bar{\lambda})^{-1} \cdot (\widetilde{l_{k+1}c_{k+1}} \underset{E_{k+1}/F}{\sqcup}  \phi_{\bar{\lambda},k} \circ p_{k+1,k} \circ \delta_{e,k+1}).$$ Since $\phi_{\bar{\lambda},k} \circ p_{k+1,k} = \phi_{\bar{\lambda},k+1}$, the above is exactly $z_{\bar{\lambda},k}^{-1} \cdot z_{\bar{\lambda},k+1}$, so we are done.
\end{proof}

Again choosing $k \in \mathbb{N}$ such that $E_{k}$ splits $S$ and $|Z|$ divides $n_{k}$, we may define an $S_{\gerbeE}$-torsor on $\gerbeE$ by pulling back $Z_{k, \bar{\lambda}}$ (identifying this $\xi_{k}$-twisted $S$-torsor with an $S_{\gerbeE_{k}}$-torsor on $\gerbeE_{k}$ as in Proposition \ref{gerbesheaf}) to $\gerbeE$ via the projection map $\pi_{k} \colon \gerbeE \to \gerbeE_{k}$. By the above lemma, this does not depend on the choice of $k$, and so we denote this torsor simply by $Z_{\bar{\lambda}}$. We are now in a position to prove the main result. The statement and proof largely follow the analogous result in \cite{Tasho}, which is that paper's Theorem 4.8. 

\begin{thm}\label{tatenak1} The assignment $\bar{\lambda} \mapsto Z_{\bar{\lambda}}$ induces an isomorphism
$$\iota \colon \overline{Y}_{+,\text{tor}} \to H^{1}(\gerbeE)$$ of functors $\mathcal{T} \to \text{AbGrp}$. This isomorphism coincides with the Tate-Nakayama isomorphism for objects $[1 \to S]$ in $\mathcal{T}$ and lifts the morphism from $\overline{Y}_{+,\text{tor}}$ to $\Hom_{F}(u,-)$ described earlier in the subsection. Moreover, $\iota$ is the unique isomorphism between these two functors satisfying the above two properties.
\end{thm}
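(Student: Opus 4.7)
The plan is to verify, in order, that $\iota$ is a well-defined group homomorphism, functorial in $[Z \to S]$, lifts the prescribed map to $\Hom_{F}(u,-)$, agrees with classical Tate-Nakayama on objects $[1 \to S]$, is bijective via a five-lemma argument, and is uniquely determined. The structure of the proof closely parallels that of Theorem 4.8 in \cite{Tasho}, with Galois cup products replaced by the fppf unbalanced cup product developed in \S 4.2.

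First I would check that $\bar{\lambda} \mapsto Z_{\bar{\lambda}}$ descends from $\overline{Y}^{N_{E_{k}/F}}$ to $\overline{Y}_{+,\text{tor}}$. For $y \in Y$ and $\sigma \in \Gamma_{E_{k}/F}$, the element $(\sigma - 1)y \in IY$ yields a trivial $u$-action (since it maps to zero in $\Hom(\mu_{n_{k}}, Z)^{N_{E_{k}/F}}$), and the untwisted cocycle $\widetilde{l_{k} c_{k}} \underset{E_{k}/F}{\sqcup} n_{k}(\sigma-1)y$ is a \v{C}ech coboundary (by Proposition \ref{bigcup} combined with $\Z$-bilinearity of the unbalanced cup product, using that $(\sigma - 1)y$ is a Tate coboundary). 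Bilinearity of the cup product makes $\iota$ a homomorphism, and functoriality in $[Z \to S]$ follows from Corollary \ref{functwist} and the naturality of the maps $\phi_{\bar{\lambda}, k}$. The lifting property is immediate from Lemma \ref{altres} and the construction of $Z_{\bar{\lambda}}$. For comparison with Tate-Nakayama on $[1 \to S]$: when $Z = 1$ the $u$-action is trivial, so $Z_{\bar{\lambda}}$ is inflated from an ordinary $S$-torsor represented by the honest \v{C}ech cocycle $z_{k,\bar{\lambda}}$, which via Proposition \ref{comparecup} and the identity $\widetilde{l_{k} c_{k}}^{p^{m_{k}}} = l_{k}c_{k}$ corresponds to the classical Tate-Nakayama representative $c_{k}\underset{E_{k}/F}{\cup}\bar{\lambda}$.

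The heart of the proof is bijectivity, which I would establish via the commutative diagram
\begin{equation*}
\begin{tikzcd}[column sep=small]
0 \arrow[r] & (Y/IY)_{\text{tor}} \arrow[d, "\text{TN}"] \arrow[r] & \overline{Y}_{+,\text{tor}} \arrow[d, "\iota"] \arrow[r] & \Hom_{F}(u, Z) \arrow[d, equals] \arrow[r] & H^{2}(F, S) \arrow[d, equals] \\
0 \arrow[r] & H^{1}(F, S) \arrow[r, "\text{Inf}"] & H^{1}(\gerbeE, Z \to S) \arrow[r, "\text{Res}"] & \Hom_{F}(u, Z) \arrow[r, "tg"] & H^{2}(F, S),
\end{tikzcd}
\end{equation*}
where the bottom row is from Proposition \ref{bigdiag} and the top row is the Tate cohomology sequence attached to $0 \to Y \to \overline{Y} \to \overline{Y}/Y \to 0$, identified via classical Tate-Nakayama (in the limit over the tower $E_{k}$) together with the canonical isomorphism $(\overline{Y}/Y)^{N_{E_{k}/F}} \cong \Hom_{F}(u_{E_{k}/F, n_{k}}, Z)$ and the inclusion into $\Hom_{F}(u, Z)$. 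Commutativity of the left and middle squares is verified above; commutativity of the right square says that the Tate connecting homomorphism corresponds under Tate-Nakayama duality to the transgression $\alpha^{*}$ composed with $H^{2}(F, Z) \to H^{2}(F, S)$, and this is precisely what is encoded by Lemma \ref{techcup}(2) (which identifies $[\xi_{k}]$ with the canonical class) combined with Lemma \ref{translem}. The five lemma then yields that $\iota$ is an isomorphism. Finally, for uniqueness: if $\iota'$ is another such isomorphism, then $\iota - \iota'$ has trivial restriction to $\Hom_{F}(u, Z)$, so factors through the inflation of $H^{1}(F, S)$. The $[1 \to S]$-agreement combined with functoriality applied to the morphism $[1 \to S] \to [Z \to S]$ in $\mathcal{T}$ forces the difference to vanish on the image of $(Y/IY)_{\text{tor}}$, and the exactness of the top row then forces $\iota = \iota'$.

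The main obstacle I anticipate is the commutativity of the rightmost square, which is where the specific construction of $\xi_{k}$ as $d\widetilde{l_{k}c_{k}} \underset{E_{k}/F}{\sqcup} \delta_{e}$ becomes essential: once Lemma \ref{techcup}(2) is in hand, the chase through the Tate-Nakayama identification reduces the square's commutativity to the familiar statement that cupping with the fundamental class implements Tate-Nakayama duality. All other parts of the argument are direct translations of well-established techniques into the fppf setting, and the first several steps reduce—via Proposition \ref{correspondence1}—to straightforward manipulations of $a$-twisted cocycles.
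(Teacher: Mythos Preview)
Your approach is essentially the same as the paper's: the same five-lemma diagram (Proposition \ref{bigdiag} against the Tate-type sequence for $Y \to \overline{Y} \to \overline{Y}/Y$), the same key inputs (Lemma \ref{techcup}(2), Lemma \ref{translem}, Proposition \ref{comparecup}), and the same deferral of uniqueness to the argument in \cite{Tasho}, \S4.2.

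Two places where your justifications diverge from the paper and are not quite right as stated. First, for descent to $\overline{Y}_{+,\text{tor}}$: invoking Proposition \ref{bigcup} does not by itself show that $\widetilde{l_{k}c_{k}} \underset{E_{k}/F}{\sqcup} n_{k}(\sigma-1)y$ is a \v{C}ech coboundary (that proposition computes a differential, not an antiderivative). The paper instead observes that for $\bar{\lambda} \in Y$ one has, by $\Z$-bilinearity and $(\widetilde{l_{k}c_{k}})^{n_{k}} = c_{k}$, the equality $\widetilde{l_{k}c_{k}} \underset{E_{k}/F}{\sqcup} n_{k}\bar{\lambda} = c_{k} \underset{E_{k}/F}{\sqcup} \bar{\lambda}$; since $c_{k}$ lives in $\mathbb{G}_{m}(E_{k} \otimes_{F} E_{k})$, Proposition \ref{comparecup} identifies this with the Galois-cohomological unbalanced cup product, which by \cite{Tasho}, \S4.3 is the classical Tate cup product $c_{k} \cup \bar{\lambda}$---trivial on $IY$. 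Second, for the right-hand square: Lemma \ref{techcup}(2) and Lemma \ref{translem} give that the transgression of $\phi_{\bar{\lambda},k}$ is $[\phi_{\bar{\lambda},k}(\xi_{k})] = [(d\widetilde{l_{k}c_{k}}) \underset{E_{k}/F}{\sqcup} n_{k}\bar{\lambda}]$, but matching this with the Tate-Nakayama image of $N_{k}(\bar{\lambda})$ still requires the explicit computation (via Proposition \ref{bigcup}) that this class equals $[(c_{k} \cup N_{k}(\bar{\lambda}))^{-1}]$, which the paper carries out and you should not omit.
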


\begin{proof} This assignment is clearly additive in $\bar{\lambda}$, and so it defines a group homomorphism from $\overline{Y}^{N}$ to $H^{1}(\gerbeE, Z \to S)$ for any object $[Z \to S]$ of $\mathcal{T}$. Moreover, any morphism $[Z \to S] \xrightarrow{h} [Z' \to S']$ in $\mathcal{T}$ induces the morphism $H^{1}(\gerbeE, Z \to S) \to H^{1}(\gerbeE, Z' \to S')$ sending the class of $\pi_{k}^{*}Z_{\bar{\lambda},k}$ (for suitable $k$, as discussed above) to that of $\pi_{k}^{*}( Z_{\bar{\lambda},k} \times^{h, S} S')$, and so it is enough to show that $Z_{\bar{\lambda},k} \times^{h, S} S$ is isomorphic  to $Z_{h^{\sharp}(\bar{\lambda}),k}$ as $\xi_{k}$-twisted $S'$-torsors. Note that $Z_{\bar{\lambda},k} \times^{h, S} S' $ is evidently trivial as an $S'_{\bar{F}}$-torsor, and has $u_{k}$-action given by $h \circ \phi_{\bar{\lambda},k}$, whereas $Z_{h^{\sharp}(\bar{\lambda}),k}$ has $u_{k}$-action given by $  \phi_{h^{\sharp}\bar{\lambda},k} =  h \circ \phi_{\bar{\lambda},k} $, since if $\phi_{\bar{\lambda},k} \circ \delta_{e} = n_{k} \bar{\lambda}$, then $h \circ (\phi_{\bar{\lambda},k} \circ \delta_{e})= h \circ n_{k} \bar{\lambda} = h^{\sharp}\bar{\lambda}$. Finally, one checks by a similar argument that $h(z_{\bar{\lambda}, k}) = z_{h^{\sharp}\bar{\lambda}, k}$, giving the desired equality of torsors, and hence that the assignment of the theorem gives a morphism of functors from $\overline{Y}^{N}$ to $H^{1}(\gerbeE)$.

We need to check that for $[Z \to S]$ in $\mathcal{T}$ fixed, the homomorphism $\overline{Y}^{N} \to H^{1}(\gerbeE, Z \to S)$ descends to the quotient $\overline{Y}_{+, \text{tor}} = \overline{Y}^{N}/IY$. To this end, suppose that $\bar{\lambda} \in \overline{Y}^{N}$ lies in $Y$. Then (choosing $k$ large enough) by \S 4.1, $\phi_{\bar{\lambda}, k}$ is trivial, and moreover $$z_{\bar{\lambda},k} = \widetilde{l_{k} c_{k}} \underset{E_{k}/F}{\sqcup} n_{k} \bar{\lambda} = c_{k}  \underset{E_{k}/F}{\sqcup} \bar{\lambda}.$$ Note that $c_{k} \in \mathbb{G}_{m}(E_{k} \otimes_{F} E_{k})$, and hence by Proposition \ref{comparecup}, this unbalanced cup product may be computed using the definition given in \cite{Tasho}, working with Galois cohomology. By \cite[\S 4.3]{Tasho}, this coincides with the usual cup product in finite Tate cohomology with respect to the group $\Gamma_{E_{k}/F}$, and thus yields the image of $\bar{\lambda}$ induced by the Tate-Nakayama isomorphism $X_{*}(S)^{N_{k}} \to [X_{*}(S)/IX_{*}(S)]^{N_{k}} \xrightarrow{\sim} H^{1}(\Gamma_{E_{k}/F}, S(E_{k})) = H^{1}(F, S)$. As a consequence, if $\bar{\lambda} \in IY$, then $z_{\bar{\lambda},k} = 1$, and so $Z_{\bar{\lambda},k}$ is given by the trivial $S_{\bar{F}}$-torsor with trivial $u_{k}$-action and gluing map equal to the identity, thus yielding the trivial $\xi_{k}$-twisted $S$-torsor on $\gerbeE_{k}$, as desired.

The argument of the above paragraph also shows that if we take $[1 \to S] \in \mathcal{T}$, then $\overline{Y}_{+, \text{tor}}[1 \to S] = Y/IY$ and the homomorphism $Y/IY \to H^{1}(\gerbeE, 1 \to S) = H^1(F,S)$ is exactly the Tate-Nakayama isomorphism. For the morphism of functors on $\mathcal{T}$ from $\overline{Y}_{+, \text{tor}}$ to $\Hom_{F}(u,-)$ sending $\bar{\lambda}$ to $\phi_{\bar{\lambda},k} \circ p_{k}$, we have already discussed that the image of $\pi_{k}^{*}Z_{\bar{\lambda},k}$ under the restriction morphism $H^1(\gerbeE, Z \to S) \to \Hom_{F}(u, Z)$ equals $\phi_{\bar{\lambda},k} \circ p_{k}$, giving the desired compatibility of morphisms of functors to $\Hom_{F}(u,-)$.

The final thing to show is that for $[Z \to S]$ fixed, the assignment of the theorem yields an isomorphism from $\overline{Y}_{+,\text{tor}}$ to $H^{1}(\gerbeE, Z \to S)$. As in \cite{Tasho}, consider the diagram 
\begin{center}
\begin{tikzcd}
  0 \arrow[r] & H^{1}(F,S) \arrow[r] & H^{1}(\gerbeE, Z \to S)  \arrow[r] &\Hom_{F}(u,Z) \arrow[r] & H^{2}(F,S)   \\

  0 \arrow[r] &Y_{\Gamma,\text{tor}} \arrow[r] \arrow[u] & \overline{Y}_{+,\text{tor}} \arrow[u]  \arrow[r] & \varinjlim[\overline{Y}/Y]^{N_{k}} \arrow[r] \arrow[u] & \varinjlim Y^{\Gamma}/N_{k}(Y) \arrow[u],
\end{tikzcd}
\end{center}
where the top horizontal sequence is just inflation-restriction, the first lower-horizontal map is induced by the inclusion $X_{*}(S) \to X_{*}(\overline{S})$, the second is induced by the maps $\overline{Y}_{+,\text{tor}} = \overline{Y}^{N_{k}}/I_{k}Y \to [\overline{Y}/Y]^{N_{k}}$, and the third is induced by the maps $[\overline{Y}/Y]^{N_{k}} \to Y^{\Gamma}/N_{k}(Y)$ given by $[\bar{\lambda}] \mapsto [N_{k}(\bar{\lambda})]$. It's a straightforward exercise in group cohomology to check that the bottom horizontal sequence is exact. The first vertical map is the Tate-Nakayama isomorphism, the second vertical map is the assignment $\bar{\lambda} \mapsto Z_{\bar{\lambda}}$, the third vertical map is induced by the system of maps $[\overline{Y}/Y]^{N_{k}} \to \Hom_{F}(u_{k},Z) \to \Hom_{F}(u,Z)$ discussed in \S 4.1, and the final vertical map is induced by the system of negative Tate-Nakayama isomorphisms $H^{0}_{\text{Tate}}(\Gamma_{E_{k}/F}, Y) \xrightarrow{\sim} H^2(\Gamma_{E_{k}/F}, S(E_{k})) \xrightarrow{\text{Inf}} H^{2}(F, S)$. 

We claim that this diagram commutes; the first square commutes by our above discussion of the compatibility with the Tate-Nakayama isomorphism, and the middle square commutes by compatibility between the two morphisms of functors to $\Hom_{F}(u, -)$. Thus, we only need to show that the right-hand square commutes. It's enough to do this for a sufficiently large fixed $k$ and $u$ replaced by $u_{k}$, because any $\phi \colon u \to Z$ factors through some $\phi_{k} \colon u_{k} \to Z$, and then $\phi(\xi) = \phi_{k}(p_{k}(\xi))= [\phi_{k}(\xi_{k})]$ in $H^{2}(F,Z)$, since $[p_{k}(\xi)] = [\xi_{k}]$ in $\check{H}^{2}(\bar{F}/F, u_{k})$ (by construction). Fix $\bar{\lambda} \in \overline{Y}$ whose norm lies in $Y$. Then its image in $\Hom_{F}(u_{k}, Z)$ is $\phi_{\bar{\lambda},k}$, which, by Lemma \ref{translem}, maps under the transgression map to the image of the class $[\phi_{\bar{\lambda},k}(\xi_{k})] \in H^2(F, Z)$ in $H^2(F,S)$, which equals the class of $(d\widetilde{l_{k} c_{k}}) \underset{E_{k}/F}{\sqcup} n_{k} \bar{\lambda}$, since we may pull $\phi_{\bar{\lambda},k}$ inside the cup product defining $\xi_{k}$ by Lemma \ref{pullcup}.

On the other hand, if we take $N_{k}(\bar{\lambda}) \in Y^{\Gamma} = Y^{\Gamma_{E_{k}/F}}$, then its image under the Tate-Nakayama map $ Y^{\Gamma_{E_{k}/F}} \to H^2(\Gamma_{E_{k}/F}, S(E_{k}))$ is obtained by taking the cup product with the class $[c_{k}] \in H^{2}(\Gamma_{E_{k}/F}, E_{k}^{*}) \xrightarrow{\text{Inf}} H^{2}(F, \mathbb{G}_{m})$. I.e., we obtain the class of the cocycle $(c_{k} \cup N_{k}(\bar{\lambda}))^{-1}$ in $H^{2}(F, S)$. By Proposition \ref{bigcup}, $(d\widetilde{l_{k} c_{k}}) \underset{E_{k}/F}{\sqcup} n_{k} \bar{\lambda}$ is cohomologous in $H^{2}(F,S)$ to $[\widetilde{l_{k}c_{k}} \cup d(n_{k} \bar{\lambda})]^{-1}$, which, since $N_{k}(\bar{\lambda}) \in Y$, equals $(c_{k} \cup N_{k}(\bar{\lambda}))^{-1}$, giving the claim.

The first and third vertical maps are isomorphisms, and the last vertical map is injective, and so by the five-lemma we get that the second vertical map is an isomorphism. The uniqueness of $\iota$ satisfying the two properties of the theorem follows from the argument for the analogous result in the characteristic zero case in \cite[\S 4.2]{Tasho}.
\end{proof}

\section{Extending to reductive groups}

In order to apply the above cohomological results to the local Langlands correspondence, it is necessary to extend the above constructions to connected reductive groups over a local function field $F$. We use the same notation as above; $\gerbeE$ will always be a $u$-gerbe split over $\bar{F}$ with $[\gerbeE] = \xi$. We start by briefly recalling some fundamental cohomological results on reductive algebraic groups over $F$ a local function field. 

\begin{thm}\label{kneser} For any simply-connected reductive group $G$ over a local field $F$, $H^{1}(F,G) = 0$. 
\end{thm}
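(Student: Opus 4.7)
The plan is to appeal to the classical vanishing theorem for simply-connected groups over local fields rather than to reprove it from scratch; the statement of the theorem is essentially a citation of known results, and the paper's job is really just to record it for later use. The main structural step, which applies in any characteristic, is to write $G = \prod_{i} R_{K_{i}/F}(G_{i}')$, where $K_{i}/F$ is a finite separable extension and $G_{i}'$ is an absolutely almost simple simply connected group over $K_{i}$. This decomposition exists because $G$, being semisimple simply connected, is a direct product of its almost simple factors, and each almost simple simply connected factor over $F$ is of the form $R_{K/F}(G')$ for $G'$ absolutely almost simple simply connected over a finite separable extension $K$ of $F$; note that $K/F$ being separable is automatic since $G$ is smooth.

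Having done that, the next step is to apply Shapiro's lemma in flat cohomology to reduce to
\[
H^{1}(F, R_{K_{i}/F}G_{i}') \;=\; H^{1}(K_{i}, G_{i}'),
\]
so that the problem is transported to the absolutely almost simple case over each $K_{i}$, which is again a local field. At this point I would invoke the classical theorem: in the $p$-adic case this is Kneser's theorem, and in the positive characteristic case this was proven by Bruhat--Tits (see the references in \cite{Thang1}, \cite{Thang2}) via Bruhat--Tits theory of buildings and a type-by-type analysis. Since the setup of the paper is local function fields, I would simply cite the positive-characteristic version.

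The main obstacle, were one to attempt a proof rather than a citation, would be the positive-characteristic almost simple case, where one cannot directly transplant Kneser's inductive arguments because they rely on the existence of unramified tori with favorable properties and on finiteness statements for Galois cohomology that are more delicate in positive characteristic (for instance, one must deal with wildly ramified extensions and with the fact that $G$ need not arise by base change from a group over $\mathbb{Z}$). The Bruhat--Tits approach bypasses this by analyzing maximal parahoric subgroups and showing that every class in $H^{1}$ can be represented by a cocycle valued in such a subgroup, which one then trivializes using the vanishing of $H^{1}$ of the reductive quotient of the special fiber (a connected reductive group over a finite field, where Lang's theorem applies). Given that the excerpt explicitly cites \cite{Thang1} and \cite{Thang2} in the context of structure theory of reductive groups over local function fields, the intended proof is simply a one-line reference to those sources.
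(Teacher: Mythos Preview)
Your proposal is correct in spirit: this theorem is indeed a known result to be cited rather than reproved, and your reduction via Weil restriction to the absolutely almost simple case followed by an appeal to Kneser/Bruhat--Tits is the standard route. The paper's own proof is even more terse than you anticipate: it is a one-line citation, but to \cite{Serre1}, Theorem 5 (Serre's 1995 Bourbaki survey), rather than to \cite{Thang1} or \cite{Thang2}. Your sketch of the underlying argument is accurate and more informative than what the paper provides, but the intended ``proof'' here is purely a pointer to the literature.
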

This is \cite[Theorem 5]{Serre1}.

\begin{thm}\label{connect} Let $G$ be a semisimple group over $F$ a local field, and let $C$ denote the kernel of the central isogeny $G_{\text{sc}} \to G$. Then the natural map $H^{1}(F, G) \to H^{2}(F, C)$ is a bijection, thus endowing $H^{1}(F,G)$ with the canonical structure of an abelian group.
\end{thm}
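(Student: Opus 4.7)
The central isogeny $G_{\text{sc}} \to G$ fits into a short exact sequence of fppf $F$-group schemes $1 \to C \to G_{\text{sc}} \to G \to 1$, and the natural map in question is the connecting map $\delta \colon H^{1}(F, G) \to H^{2}(F, C)$ in the associated long exact sequence of pointed sets
\[
H^{1}(F, G_{\text{sc}}) \longrightarrow H^{1}(F, G) \xrightarrow{\delta} H^{2}(F, C) \longrightarrow H^{2}(F, G_{\text{sc}}).
\]
By Theorem \ref{kneser}, the first term vanishes. The plan is to show that $\delta$ is a bijection of pointed sets, from which the abelian group structure on $H^{2}(F, C)$ transfers canonically to $H^{1}(F, G)$.

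For injectivity I would apply the standard twisting argument. Given $c_{1}, c_{2} \in H^{1}(F, G)$ with $\delta(c_{1}) = \delta(c_{2})$, I would twist the short exact sequence by a cocycle representing $c_{1}$. Since $C$ is central in $G_{\text{sc}}$ it is fixed under inner automorphisms, and since inner twists of simply connected groups are again simply connected, the twisted sequence has the form $1 \to C \to G_{\text{sc}}^{c_{1}} \to G^{c_{1}} \to 1$ with $G_{\text{sc}}^{c_{1}}$ simply connected. Applying Theorem \ref{kneser} to $G_{\text{sc}}^{c_{1}}$ shows that the twisted connecting map $\delta^{c_{1}}$ has trivial fiber over the basepoint. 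Under the canonical translation bijection $H^{1}(F, G) \xrightarrow{\sim} H^{1}(F, G^{c_{1}})$ sending $c_{1}$ to the basepoint, the class $c_{2}$ lands in this fiber and is therefore trivial; translating back gives $c_{1} = c_{2}$.

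For surjectivity I would choose a maximal $F$-torus $T$ of $G_{\text{sc}}$ containing $C$, and set $T' := T/C$, a maximal torus of $G$. The resulting commutative diagram of central extensions
\[
\begin{tikzcd}
1 \arrow{r} & C \arrow{r} \arrow[equals]{d} & T \arrow{r} \arrow{d} & T' \arrow{r} \arrow{d} & 1 \\
1 \arrow{r} & C \arrow{r} & G_{\text{sc}} \arrow{r} & G \arrow{r} & 1
\end{tikzcd}
\]
induces a morphism of long exact sequences in fppf cohomology in which the two $H^{2}(F, C)$ terms are canonically identified. Hence it suffices to show that the upper connecting map $\delta_{T} \colon H^{1}(F, T') \to H^{2}(F, C)$ is surjective, since any class in $H^{2}(F, C)$ then lifts to $H^{1}(F, T')$, whose image in $H^{1}(F, G)$ is returned to the original class by $\delta$. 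By exactness of the upper row, surjectivity of $\delta_{T}$ is equivalent to vanishing of the map $H^{2}(F, C) \to H^{2}(F, T)$.

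The main obstacle is this last vanishing step. In characteristic zero it follows from Kneser's arguments together with local Tate--Nakayama duality for tori; in positive characteristic $C$ may fail to be smooth (when $p \mid |C|$), so classical Galois-cohomological techniques must be replaced by fppf-cohomological ones throughout. This is precisely the content of \cite{Thang1} and \cite{Thang2}, where the required vanishing is established via the structure decomposition $G_{\text{sc}} = \prod_{i} \text{Res}_{K_{i}/F} H_{i}$ with each $H_{i}$ absolutely almost simple and simply connected over a finite separable extension $K_{i}/F$, reducing to the case of the centers $C_{i}$ of the $H_{i}$ where the analog of Kneser's theorem is available. Once these two steps are in hand, the abelian group structure transported to $H^{1}(F, G)$ is visibly canonical, depending only on the intrinsic central isogeny $G_{\text{sc}} \to G$ and not on the auxiliary choice of $T$.
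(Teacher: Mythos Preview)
The paper does not actually prove this result: its entire proof is the one-line citation ``This is Theorem 2.4 in \cite{Thang1}.'' Your sketch therefore goes further than the paper does, and the injectivity step via twisting is correct and standard. The issue is in your surjectivity argument.

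You write ``I would choose a maximal $F$-torus $T$ of $G_{\text{sc}}$ containing $C$'' and then reduce to the vanishing of $H^{2}(F,C) \to H^{2}(F,T)$. This vanishing is false for an arbitrary choice of $T$. Take $G_{\text{sc}} = SL_{2}$, $C = \mu_{2}$, and $T$ the diagonal (split) torus, so $T \cong \mathbb{G}_{m}$. The fppf Kummer sequence identifies $H^{2}(F,\mu_{2})$ with $\mathrm{Br}(F)[2] \cong \mathbb{Z}/2\mathbb{Z}$, and the map $H^{2}(F,\mu_{2}) \to H^{2}(F,\mathbb{G}_{m}) = \mathrm{Br}(F)$ is the inclusion of $2$-torsion, which is nonzero. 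So $\delta_{T}$ is not surjective for this $T$, and your reduction collapses. The fix is to choose $T$ to be an \emph{anisotropic} maximal torus of $G_{\text{sc}}$ (which exists over a non-archimedean local field; the paper records this a few lines later, citing \cite{Debacker}). For anisotropic $T$ one has $X_{*}(T)^{\Gamma} = 0$, hence $H^{2}(F,T) \cong \hat{H}^{0}(\Gamma, X_{*}(T)) = 0$ by Tate--Nakayama, and the vanishing is automatic. In the general (non-absolutely-simple) case one combines this with the Weil-restriction decomposition you mention.

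Since you ultimately defer the ``main obstacle'' to \cite{Thang1} anyway, your proposal ends up resting on the same citation the paper invokes; the surrounding scaffolding is helpful context but, as written, contains the gap above.
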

This is \cite[Theorem 2.4]{Thang1}. 

\noindent The arguments in \cite{Tasho} which extend the Tate-Nakayama isomorphism of \S 4 to reductive groups rely heavily on the existence of elliptic/fundamental maximal tori (see \cite[\S 10]{Kott86}), and their corresponding cohomological properties. 

\begin{thm} Every semisimple algebraic group over a local function field $F$ contains a maximal $F$-torus $T$ which is anisotropic over $F$.
\end{thm}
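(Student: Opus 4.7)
The plan is to reduce the problem to the case of an absolutely almost simple, simply-connected group and then construct an anisotropic maximal $F$-torus by twisting a standard torus in the quasi-split inner form by an elliptic Weyl element.

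I would first reduce to the simply-connected case: if $\pi\colon G_{\mathrm{sc}} \to G$ is the simply-connected cover and $T_{\mathrm{sc}} \subset G_{\mathrm{sc}}$ is a maximal $F$-torus, then $T := \pi(T_{\mathrm{sc}})$ is a maximal $F$-torus of $G$, and any nontrivial $F$-rational character of $T$ pulls back to a nontrivial $F$-rational character of $T_{\mathrm{sc}}$; hence anisotropy of $T_{\mathrm{sc}}$ implies that of $T$. Writing $G_{\mathrm{sc}} = \prod_i \mathrm{Res}_{K_i/F}(G_i)$ with each $G_i$ absolutely almost simple and simply-connected over a finite separable extension $K_i/F$, and using that Weil restriction preserves anisotropy (by Shapiro's lemma applied to the character lattices), the problem further reduces to producing an anisotropic maximal $K_i$-torus in each $G_i$.

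For the remaining case of $G$ absolutely almost simple simply-connected over a local function field, let $G^{*}$ denote the quasi-split inner form of $G$, and fix a Borel pair $(B, T_0)$ in $G^{*}$ defined over $F$. The Weyl group $W = N_{G^{*}}(T_0)/T_0$ inherits a $\Gamma$-action from $T_0$. I would construct a $1$-cocycle $z \in Z^{1}(\Gamma, W)$ whose class modifies the given $\Gamma$-action on $X^{*}(T_0)$ to one having no nonzero rational fixed vectors --- concretely, arranging for Frobenius to project to a Coxeter element of the absolute Weyl group suffices, since Coxeter elements act without fixed vectors on $X^{*}(T_0)_{\mathbb{Q}}$. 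Using Tits' canonical lifts $n_w \in N_{G^{*}}(T_0)(F^{s})$ of Weyl elements, one then promotes $z$ to a cocycle $\tilde z \in Z^{1}(\Gamma, N_{G^{*}}(T_0))$, and the resulting twisted maximal $F$-torus $T_0^{\tilde z}$ of $G^{*}$ has $\Gamma$-fixed character lattice equal to zero, hence is $F$-anisotropic.

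The main obstacle will be transferring this anisotropic torus from the quasi-split form $G^{*}$ back to the inner form $G$: one must exhibit a maximal $F$-torus of $G$ stably conjugate (over $\bar F$) to $T_0^{\tilde z}$, since any such torus carries the same $\Gamma$-action on its character lattice and is therefore also $F$-anisotropic. The obstruction to this transfer lies in $H^{2}(F, T_0^{\tilde z})$, which --- by the positive-characteristic local Tate-Nakayama duality established by Thang in \cite{Thang1} and \cite{Thang2}, the same framework that underlies Theorems \ref{kneser} and \ref{connect} above --- is dual to the $\Gamma$-fixed part of the character lattice of $T_0^{\tilde z}$. By the anisotropy just arranged, this obstruction vanishes, and the transfer produces the desired anisotropic maximal $F$-torus of $G$.
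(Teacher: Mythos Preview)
The paper's own proof is a one-line citation to \cite{Debacker}, \S 2.4, which extracts anisotropic maximal tori from Bruhat--Tits theory. Your approach is genuinely different and more classical: reduce to the absolutely almost simple simply-connected case, Coxeter-twist the standard torus in the quasi-split inner form to make it anisotropic, then transfer to the given inner form using $H^{2}(F,T)=0$. The reductions and the transfer step are correct, and the overall strategy works; what you gain is a self-contained argument that avoids the building, at the cost of a longer proof.

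Two points in your step 3 need tightening. First, the Tits section is not multiplicative, so $\sigma \mapsto n_{z(\sigma)}$ is only a cochain, not a cocycle in $N_{G^{*}}(T_{0})$. The obstruction to correcting it lies in $H^{2}(F,{}^{z}T_{0})$ for the $z$-twisted action, and this vanishes by precisely the Tate--Nakayama argument you invoke in step 4, since ${}^{z}T_{0}$ is anisotropic by design; so the lift $\tilde z$ exists, but not for the reason you state. Second, having $\tilde z \in Z^{1}(\Gamma, N_{G^{*}}(T_{0})) \subset Z^{1}(\Gamma, G^{*})$ produces an anisotropic maximal $F$-torus inside the inner twist ${}^{\tilde z}G^{*}$, not a priori inside $G^{*}$ itself; you need $[\tilde z]$ trivial in $H^{1}(F, G^{*})$, which holds by Theorem \ref{kneser} since you have reduced to the simply-connected case. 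Finally, when $G^{*}$ is quasi-split but non-split, an ordinary Coxeter element need not kill the $\Gamma$-invariants once combined with the $*$-action; you should use a twisted Coxeter element in the sense of Springer, which is the standard fix.
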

This follows from \cite[\S 2.4]{Debacker}. It follows immediately that every reductive group $G$ contains a maximal $F$-torus which is $F$-anisotropic modulo $Z(G)^{\circ}$; this will be an elliptic maximal torus. 

Moreover, we have the following result for $G$ a connected reductive group over $F$, implied by the proof of \cite[Lemma 10.2]{Kott86} and Theorem \ref{connect}: 

\begin{thm}\label{Kott10.2} If $T$ is an elliptic maximal torus of $G$, then $H^{1}(F, T) \to H^{1}(F, G)$ is surjective. 
\end{thm}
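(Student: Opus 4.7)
The plan is to follow the two-step structure of the proof of Lemma 10.2 in \cite{Kott86}: first settle the semisimple case using local duality together with Theorem \ref{connect}, then bootstrap to the general reductive case by a nonabelian twisting argument applied to the cocenter sequence. In our positive characteristic setting, the only substantive change is the systematic use of fppf cohomology, made necessary by the possible non-smoothness of $C = \ker(G_{\text{sc}} \to G)$.

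First I would dispose of the semisimple case. Let $T_{\text{sc}} \subset G_{\text{sc}}$ be the preimage of $T$ under the simply connected cover, which is a maximal $F$-torus of $G_{\text{sc}}$ and anisotropic, because $T$ itself is anisotropic in the semisimple setting (ellipticity together with $Z(G)^{\circ} = 1$) and anisotropy is an isogeny invariant. The morphism of fppf short exact sequences
\begin{equation*}
\begin{tikzcd}
1 \arrow[r] & C \arrow[r] \arrow[d, equal] & T_{\text{sc}} \arrow[r] \arrow[d] & T \arrow[r] \arrow[d] & 1 \\
1 \arrow[r] & C \arrow[r] & G_{\text{sc}} \arrow[r] & G \arrow[r] & 1
\end{tikzcd}
\end{equation*}
produces a commutative square whose right vertical is the isomorphism of Theorem \ref{connect}, so it suffices to show that the connecting homomorphism $H^{1}(F, T) \to H^{2}(F, C)$ of the top row is surjective. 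By the associated long exact sequence this reduces to $H^{2}(F, T_{\text{sc}}) = 0$, which follows from Tate local duality for tori: since $T_{\text{sc}}$ is smooth, fppf and Galois cohomology coincide, and the duality identifies $H^{2}(F, T_{\text{sc}})$ with the Pontryagin dual of $X^{*}(T_{\text{sc}})^{\Gamma}$, which vanishes by anisotropy.

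Next I would bootstrap to general reductive $G$. Consider the cocenter sequence $1 \to G_{\text{der}} \to G \to D \to 1$ and its torus-level counterpart $1 \to T \cap G_{\text{der}} \to T \to D \to 1$ (using that $Z(G)^{\circ} \subseteq T$ surjects onto $D$, so $T$ surjects onto $D$). Ellipticity of $T$ in $G$ translates into anisotropy of the maximal torus $T \cap G_{\text{der}}$ of $G_{\text{der}}$, so the same duality argument yields $H^{2}(F, T \cap G_{\text{der}}) = 0$, hence surjectivity of $H^{1}(F, T) \twoheadrightarrow H^{1}(F, D)$. Given $\xi \in H^{1}(F, G)$ with image $\bar{\xi}$ in $H^{1}(F, D)$, lift $\bar{\xi}$ to some $\tau \in H^{1}(F, T)$ and let $\xi_{\tau}$ denote its image in $H^{1}(F, G)$. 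The standard twisting formalism for nonabelian fppf cohomology identifies the fiber of $H^{1}(F, G) \to H^{1}(F, D)$ through $\xi_{\tau}$ with the image of $H^{1}(F, G_{\text{der}}^{\xi_{\tau}}) \to H^{1}(F, G^{\xi_{\tau}})$, composed with the canonical bijection $H^{1}(F, G^{\xi_{\tau}}) \cong H^{1}(F, G)$ sending the basepoint to $\xi_{\tau}$. Since $T$ is abelian, the inner twist by a cocycle representing $\xi_{\tau}$ fixes $T$ and preserves $T \cap G_{\text{der}}$ as an elliptic maximal torus of $G_{\text{der}}^{\xi_{\tau}}$; applying the semisimple case there and untwisting exhibits $\xi$ as the image of some class in $H^{1}(F, T)$.

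The main obstacle will be the careful bookkeeping in the twisting step---verifying that inner twisting by $\xi_{\tau}$ preserves the inclusion of $T$ as an elliptic maximal torus whose intersection with the derived subgroup remains elliptic, and that the composition of bijections correctly recovers $\xi$ in the image of the untwisted map from $H^{1}(F, T)$. The cohomological ingredients, namely Tate local duality for smooth tori and Theorem \ref{connect}, are readily available in positive characteristic and pose no serious difficulty.
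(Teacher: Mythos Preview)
Your proposal is correct and is precisely the argument the paper invokes: the paper does not write out a proof but simply says the result is implied by the proof of Lemma 10.2 in \cite{Kott86} together with Theorem \ref{connect}, and what you have written is exactly that proof, with the Galois-cohomological boundary maps replaced by their fppf analogues to accommodate the possibly non-smooth kernel $C$ in positive characteristic. One small point of precision: your justification for $H^{2}(F,T_{\text{sc}})=0$ via ``the Pontryagin dual of $X^{*}(T_{\text{sc}})^{\Gamma}$'' is slightly off---the Tate--Nakayama isomorphism gives $H^{2}(\Gamma_{E/F},T_{\text{sc}}(E))\cong \hat H^{0}(\Gamma_{E/F},X_{*}(T_{\text{sc}}))$, and it is the vanishing of $X_{*}(T_{\text{sc}})^{\Gamma}$ (equivalently $X^{*}(T_{\text{sc}})^{\Gamma}$) from anisotropy that kills this---but the conclusion is unaffected.
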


We also have the following, which is a generalization of \cite[Theorem 1.2]{Kott86}; it concerns the functor $\mathcal{A}$ from the category of connected reductive $F$-groups to abelian groups, defined by $\mathcal{A}(G) = \pi_{0}(Z(\widehat{G})^{\Gamma})^{*}$, where $\widehat{G}$ denotes a Langlands dual group of $G$. Recall that Tate-Nakayama duality gives us an isomorphism $H^{1}(F, T) \xrightarrow{\sim} \pi_{0}(\widehat{T}^{\Gamma})^{*}$ for any $F$-torus $T$ (this will be reviewed in more detail in \S 6.1).

\begin{thm}\label{kottwitzpairing} There is a unique extension of the above isomorphism of functors to an isomorphism of functors on the category of reductive $F$-groups, given by a natural transformation $$\alpha_{G} \colon H^{1}(F, G) \to \mathcal{A}(G).$$ 
\end{thm}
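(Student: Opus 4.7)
The plan is to follow Kottwitz's original construction in \cite{Kott86}, \S 1, replacing Galois cohomology with its fppf counterpart. The three ingredients already available to us in positive characteristic are: Theorem \ref{kneser} (vanishing of $H^1(F,G_{\text{sc}})$), Theorem \ref{connect} (the identification $H^1(F,G) \cong H^2(F,C)$ for semisimple $G$ with fundamental group $C$), and Theorem \ref{Kott10.2} (surjectivity of $H^1(F,T) \twoheadrightarrow H^1(F,G)$ for an elliptic maximal torus $T$).

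I would first fix an elliptic maximal torus $T \subset G$, which exists by the discussion preceding Theorem \ref{Kott10.2}. The inclusion $T \hookrightarrow G$ gives a canonical embedding $Z(\widehat{G}) \hookrightarrow \widehat{T}$ of $\Gamma$-modules; passing to $\Gamma$-invariants, $\pi_0$, and then to Pontryagin duals yields a morphism $\pi_0(\widehat{T}^\Gamma)^* \to \pi_0(Z(\widehat{G})^\Gamma)^* = \mathcal{A}(G)$. Composing with the Tate-Nakayama isomorphism $H^1(F,T) \xrightarrow{\sim} \pi_0(\widehat{T}^\Gamma)^*$ (the functor $\alpha$ for $[1 \to T] \in \mathcal{T}$ restricted to the torus case) produces a homomorphism $\widetilde{\alpha}_T \colon H^1(F,T) \to \mathcal{A}(G)$. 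This candidate is forced upon us if we want compatibility with the torus case and functoriality along $T \hookrightarrow G$.

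The key step is then to show that $\widetilde{\alpha}_T$ descends along the surjection $H^1(F,T) \twoheadrightarrow H^1(F,G)$. To this end I would identify $\ker\bigl(H^1(F,T) \to H^1(F,G)\bigr)$ with the image of $H^1(F,T_{\text{sc}}) \to H^1(F,T)$ by running the long exact sequence for $1 \to T_{\text{sc}} \to G_{\text{sc}} \to G/T \to 1$ together with Theorem \ref{kneser}. On the dual side, the inclusion $T_{\text{sc}} \hookrightarrow T$ dualizes to $\widehat{T} \twoheadrightarrow \widehat{T_{\text{sc}}} = \widehat{T}/Z(\widehat{G})$, so under Tate-Nakayama the image of $H^1(F,T_{\text{sc}})$ in $H^1(F,T)$ consists precisely of those classes which pair trivially with $Z(\widehat{G})^\Gamma$; these are therefore killed by $\widetilde{\alpha}_T$, producing the desired factorization $\alpha_G \colon H^1(F,G) \to \mathcal{A}(G)$.

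With $\alpha_G$ in hand, the remaining tasks are routine diagram chases: independence of the choice of elliptic $T$ follows from naturality of $\widetilde{\alpha}_T$ under an isomorphism between two elliptic maximal tori (they are stably conjugate, and the dual inclusions $Z(\widehat{G}) \hookrightarrow \widehat{T}_i$ are intertwined by the stable conjugacy), and functoriality in $G$ follows by choosing compatible elliptic tori under any morphism $G \to G'$. To deduce that $\alpha_G$ is an isomorphism, I would bootstrap from the torus case via the exact sequence $1 \to G_{\text{der}} \to G \to D \to 1$ (with $D$ a torus) and from the semisimple case via Theorem \ref{connect}, using fppf Poitou-Tate duality $H^2(F,C) \cong \pi_0(\widehat{C}^\Gamma)^*$ for finite commutative $F$-group schemes (invoked already in the proof of Theorem \ref{maincohom} via \cite{Milne}, III.6.10). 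Uniqueness is then immediate from surjectivity of $H^1(F,T) \to H^1(F,G)$, since any extension of $\alpha_T$ to $H^1(F,G)$ is determined on the image of $H^1(F,T)$. I expect the main obstacle to lie in the duality step for the semisimple case when $C$ is not smooth (i.e., when $p \mid |C|$): one must verify that the composition defining $\alpha_G$ matches the duality pairing on the nose, which is a careful cocycle-level computation in fppf cohomology rather than a formal consequence of the characteristic-zero argument.
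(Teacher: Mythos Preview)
The paper does not prove this theorem; it simply cites \cite{Thang2}, Theorem 2.1. Your sketch is therefore not a comparison candidate but an attempt to reproduce Th\v{a}ng's (and ultimately Kottwitz's) argument. The broad strategy---define $\alpha_G$ via an elliptic maximal torus, descend along the surjection of Theorem \ref{Kott10.2}, and verify bijectivity by reducing to tori and to the semisimple case through Theorem \ref{connect} and fppf Poitou--Tate---is correct and is essentially what Th\v{a}ng does.

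Two steps in your outline would not go through as written. First, your independence-of-$T$ argument asserts that any two elliptic maximal tori are stably conjugate; this is false already for $SL_2$ over a $p$-adic field, where distinct quadratic extensions give non-stably-conjugate anisotropic tori. Second, ``choosing compatible elliptic tori under any morphism $G \to G'$'' is not generally possible: a homomorphism of reductive groups need not carry an elliptic maximal torus to an elliptic maximal torus, nor even to a maximal torus. Kottwitz (and Th\v{a}ng in positive characteristic) circumvent both issues by proving independence and functoriality through the structural decomposition $1 \to G_{\text{der}} \to G \to D \to 1$ and the isogeny $G_{\text{sc}} \to G_{\text{der}}$, so that $\alpha_G$ is pinned down by its values on tori and on $H^2(F,C)$ for finite multiplicative $C$, without ever comparing two choices of elliptic torus directly. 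Your kernel computation (image of $H^1(F,T_{\text{sc}})$) and the bijectivity bootstrapping are fine; it is the independence and functoriality paragraphs that need to be rewritten along these structural lines.
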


\noindent This is \cite[Theorem 2.1]{Thang2}.  

We are now ready to extend our previous constructions on $\mathcal{T}$ to the category $\mathcal{R}$. For the most part, the arguments from \cite{Tasho} carry over verbatim, since most depend on the structure theory of reductive groups, in particular the part of the theory that deals with character and cocharacter modules, which is uniform for local fields of any characteristic. The purpose of the remainder of this section is to summarize those results and fill in certain arguments which are different in the case of a local function field.

\begin{prop}\label{bigdiag2} Proposition \ref{bigdiag} holds for $[Z \to G]$ in $\mathcal{R}$, ignoring the $H^2(F,G)$ terms and replacing exactness of sequences of abelian groups with exactness of sequences of pointed sets.
\end{prop}

\begin{proof} Everything from the proof of \ref{bigdiag} holds, except for the use of the five-lemma to give the surjectivity of $H^{1}(\gerbeE, Z \to G) \to H^{1}(F, \overline{G})$. Instead, we may use the analogous argument used in \cite[Proposition 3.6]{Tasho}, using the existence of an elliptic maximal torus in $G$ and replacing the use of \cite[Lemma 10.2]{Kott86} with Theorem \ref{Kott10.2}, its analogue for local function fields. 
\end{proof}

\begin{prop}\label{tashocor} (Analogue of \cite[Corllary 3.7]{Tasho}) \begin{enumerate}
\item{If G possesses anisotropic maximal tori, then the map $H^{1}(\gerbeE, Z \to G) \to \Hom_{F}(u,Z)$ defined above is surjective.}
\item{If $S \subset G$ is an elliptic maximal torus, then the map $$H^1(\gerbeE, Z \to S) \to H^1(\gerbeE, Z \to G)$$ is surjective.}
\end{enumerate}
\end{prop}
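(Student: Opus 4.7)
The plan is to work throughout with twisted cocycles via the bijection of Proposition \ref{correspondence1}, fixing a representing 2-cocycle $a \in u(U_{2})$ of the canonical class $\alpha$, so that elements of $H^{1}(\gerbeE, Z \to G)$ become equivalence classes of pairs $(x, \phi)$ with $\phi \colon u \to Z$ an $F$-homomorphism and $x \in G(U_{1})$ satisfying $dx = \phi(a)$.

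For part (1), take an anisotropic maximal torus $T \subseteq G$ provided by hypothesis. Since $Z$ is central, $Z \subseteq Z(G) \subseteq T$, so $[Z \to T]$ lies in $\mathcal{T}$, and functoriality of $H^{1}(\gerbeE, \cdot)$ together with the inclusion $T \hookrightarrow G$ factors the restriction map for $G$ as $H^{1}(\gerbeE, Z \to T) \to H^{1}(\gerbeE, Z \to G) \to \Hom_{F}(u, Z)$, with this composition equal to restriction for $T$. It therefore suffices to show that $\text{Res} \colon H^{1}(\gerbeE, Z \to T) \to \Hom_{F}(u, Z)$ is surjective. By the inflation-restriction sequence of Proposition \ref{infres} applied to $[Z \to T]$, this reduces in turn to the vanishing of the transgression into $H^{2}(F, T)$, which holds because $T$ is anisotropic: Tate duality for tori over $F$ dualizes $H^{2}(F, T)$ to $X^{*}(T)^{\Gamma}$, and the latter vanishes by hypothesis.

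For part (2), fix an $a$-twisted $Z$-cocycle $(x_{G}, \phi)$ valued in $G$, and write $\pi \colon G \to \bar{G} := G/Z$. Since $\phi$ factors through $Z$, the element $\pi(x_{G}) \in \bar{G}(U_{1})$ is an honest 1-cocycle and represents $b([x_{G}, \phi]) \in H^{1}(F, \bar{G})$. A short computation with character modules shows that ellipticity of $S$ in $G$ forces ellipticity of $\bar{S} := S/Z$ in $\bar{G}$: indeed, $Z(\bar{G})^{\circ}$ is the image of $Z(G)^{\circ}$ in $\bar{G}$, so $X^{*}(\bar{S}/Z(\bar{G})^{\circ})$ embeds into $X^{*}(S/Z(G)^{\circ})$, whose $\Gamma$-invariants vanish. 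Applying Theorem \ref{Kott10.2} to the connected reductive group $\bar{G}$ with elliptic maximal torus $\bar{S}$, the class $[\pi(x_{G})]$ is the image of some $[\bar{s}] \in H^{1}(F, \bar{S})$ represented by a 1-cocycle $\bar{s} \in \bar{S}(U_{1})$.

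It remains to massage $x_{G}$ into $S(U_{1})$. Since $[\pi(x_{G})] = [\bar{s}]$ in $H^{1}(F, \bar{G})$, there is $\bar{g} \in \bar{G}(\bar{F})$ with $p_{1}(\bar{g})^{-1} \pi(x_{G}) p_{2}(\bar{g}) = \bar{s}$; the vanishing $H^{1}(U_{0}, Z) = 0$ from Proposition \ref{vanishingcohomology} lets us lift $\bar{g}$ to $g \in G(\bar{F})$, and replacing $x_{G}$ by the equivalent cocycle $p_{1}(g)^{-1} x_{G} p_{2}(g)$ achieves $\pi(x_{G}) = \bar{s}$ exactly. Using $H^{1}(U_{1}, Z) = 0$ from the same proposition, lift $\bar{s}$ further to some $s \in S(U_{1})$. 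Then $x_{G} \cdot s^{-1}$ has trivial image in $\bar{G}(U_{1})$, hence lies in $Z(U_{1}) \subseteq S(U_{1})$ since $Z \subseteq Z(G) \subseteq S$; therefore $x_{G} \in S(U_{1})$. Consequently $(x_{G}, \phi)$ is itself an $a$-twisted $Z$-cocycle valued in $S$, furnishing the desired preimage under $H^{1}(\gerbeE, Z \to S) \to H^{1}(\gerbeE, Z \to G)$. The main obstacle is precisely this double-adjustment step; working with explicit twisted cocycles rather than diagram-chasing Proposition \ref{bigdiag2} abstractly sidesteps having to perform a non-abelian twisting argument on the cohomology sets themselves.
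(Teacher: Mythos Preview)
Your proof is correct and follows the same strategy as the paper, which simply refers back to \cite{Tasho}, Corollary 3.7, noting that Theorem \ref{Kott10.2} replaces Kottwitz's Lemma 10.2. For part (1) your argument (factor through an anisotropic maximal torus and use $H^{2}(F,T)=0$) is exactly what is done there. For part (2) the underlying idea is again the same---push the class down to $H^{1}(F,\bar{G})$, invoke Theorem \ref{Kott10.2} for the elliptic torus $\bar{S}\subset \bar{G}$, and then lift back---but you carry this out by direct manipulation of \v{C}ech twisted cocycles (adjusting $x_{G}$ by a coboundary and lifting $\bar{s}$ to $S(U_{1})$) rather than by a twisting argument on cohomology sets. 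This cocycle-level rewrite is a mild simplification in presentation; it avoids invoking the non-abelian twisting formalism at the cost of a couple of explicit lifts using Proposition \ref{vanishingcohomology}, but the mathematical content is the same.
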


\begin{proof} The same proof as in \cite{Tasho} works, again replacing the use of \cite[Lemma 10.2]{Kott86} with Theorem \ref{Kott10.2}.\end{proof}

Let $[Z \to G] \in \mathcal{R}$. We need to extend the functor $\overline{Y}_{+, \text{tor}}$ defined in \S 4. Following \cite{Tasho}, $\overline{Y}_{+, \text{tor}}[Z \to G]$ is taken to be the limit over all maximal $F$-tori $S$ of $G$ of the following colimit: $$\varinjlim \frac{[X_{*}(S/Z)/X_{*}(S_{\text{\text{sc}}})]^{N}}{I (X_{*}(S)/X_{*}(S_{\text{\text{sc}}}))},$$
where the colimit is taken over the set of Galois extensions $E/F$ splitting $S$ and the superscript $N$ denotes the kernel of the norm map. We need to explain what the limit maps are between the above objects for varying $S$. For two such tori $S_{1}, S_{2}$, picking $g \in G(F^{s})$ such that $\text{Ad}(g)(S_{1})_{F^{s}} = (S_{2})_{F^{s}}$ induces an isomorphism $$\text{Ad}(g) \colon X_{*}(S_{1}/Z)/X_{*}((S_{1})_{\text{\text{sc}}}) \to X_{*}(S_{2}/Z)/X_{*}((S_{2})_{\text{\text{sc}}})$$ 
which is independent of the choice of $g$, by \cite[Lemma 4.2]{Tasho}, and is thus $\Gamma$-equivariant. It follows that these maps may be used to define the desired limit maps for varying maximal $F$-tori in $G$. 

The fact that a morphism $[Z \to G] \to [C \to H]$ in $\mathcal{R}$ induces a morphism $\overline{Y}_{+, \text{tor}}[Z \to G] \to \overline{Y}_{+, \text{tor}}[C \to H]$ follows from the identical argument as in \cite{Tasho} (the one following Lemma 4.2 loc. cit.) after noting that for an arbitrary field $F$, we can always uniquely lift a morphism $G \xrightarrow{f} H$ between two connected reductive groups over $F$ to a morphism $G_{\text{sc}} \to H_{\text{sc}}$. To see this, we may immediately reduce to the semisimple case, where the claim follows from \cite[Proposition 9.3.2]{Conrad}, using that $H_{\text{sc}}$ is a central extension of $H$ by a multiplicative group scheme.

We now extend the isomorphism of functors $\overline{Y}_{+, \text{tor}} \xrightarrow{\sim} H^{1}(\gerbeE)$ on $\mathcal{T}$ given in Theorem \ref{tatenak1} to the category $\mathcal{R}$. The strategy will be as follows: we will show that \cite[Lemmas 4.9, 4.10]{Tasho} hold in our setting, and then the result will follow from the proof of \cite[Theorem 4.11]{Tasho}, using the existence of elliptic maximal tori, as argued above, Proposition \ref{tashocor}, and the aforementioned lemmas. As in \S 4.3, we work with the specific choice of $\gerbeE$ given by $\varprojlim_{k} \gerbeE_{\xi_{k}}$ for $\xi_{k}$ as in \S 4; by the uniqueness of $H^{1}(\gerbeE, Z \to G)$ up to canonical isomorphism, this will prove the result for an arbitrary choice of $\gerbeE$. 

\begin{lem}\label{tasholem4.9} (Analogue of \cite[Lemma 4.9]{Tasho}) Let $[Z \to G] \in \mathcal{R}$ and $S \subset G$ a maximal torus. The fibers of the composition $$\overline{Y}_{+, \text{tor}}[Z \to S] \to H^{1}(\gerbeE, Z \to S) \to H^{1}(\gerbeE, Z \to G)$$ are torsors under the image of $X_{*}(S_{\text{sc}})_{\Gamma, \text{tor}}$ in  $\overline{Y}_{+, \text{tor}}[Z \to S]$.
\end{lem}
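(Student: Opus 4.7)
By Theorem \ref{tatenak1}, the isomorphism $\iota$ transports the statement to the pushforward $\pi_{*} \colon H^{1}(\gerbeE, Z \to S) \to H^{1}(\gerbeE, Z \to G)$, identifying the image of $X_{*}(S_{\text{sc}})_{\Gamma, \text{tor}}$ in $\overline{Y}_{+, \text{tor}}[Z \to S]$ with the image of $H^{1}(F, S_{\text{sc}})$ in $H^{1}(\gerbeE, Z \to S)$ via the classical Tate-Nakayama isomorphism composed with the maps induced by $S_{\text{sc}} \to S$ and the inflation of Proposition \ref{infres}. The plan is to show that this image acts freely and transitively on each non-empty fiber of $\pi_{*}$ by translation. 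Freeness is automatic. For invariance of fibers, note that for $\mu$ in the image the associated $S$-torsor $T^{S}_{\mu}$ pushes to the $G$-torsor $T^{G}_{\mu} = T^{S}_{\mu} \wedge^{S} G$, which is trivial because $T^{S}_{\mu}$ lifts to an $S_{\text{sc}}$-torsor and $H^{1}(F, G_{\text{sc}}) = 0$ by Theorem \ref{kneser}; then for any $\bar\lambda$, associativity of contracted products yields $T^{G}_{\bar\lambda + \mu} \cong T^{S}_{\bar\lambda} \wedge^{S} T^{G}_{\mu} \cong T^{S}_{\bar\lambda} \wedge^{S} G \cong T^{G}_{\bar\lambda}$, so $\pi_{*}$ is constant on each translation orbit.

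For transitivity, I work with $\gerbeE = \gerbeE_{\xi_{k}}$ and $k$ sufficiently large that $E_{k}$ splits $S$, $|Z| \mid n_{k}$, and $[\overline{Y}:Y] \mid n_{k}$. Representing two classes in the same fiber by twisted cocycles $(z_{\bar{\lambda}_{i}, k}, \phi_{\bar{\lambda}_{i}, k})$, the well-definedness of the restriction map $H^{1}(\gerbeE, Z \to G) \to \Hom_{F}(u, Z)$ of Proposition \ref{infres} forces $\phi_{\bar{\lambda}_{1}, k} = \phi_{\bar{\lambda}_{2}, k}$, and the isomorphism $\overline{Y}/Y \cong \Hom(\mu_{n_{k}}, Z)$ of \S 4.1 gives $\mu := \bar{\lambda}_{1} - \bar{\lambda}_{2} \in Y^{N}$. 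The difference $w := z_{\bar{\lambda}_{1}, k} z_{\bar{\lambda}_{2}, k}^{-1} \in S(\bar{F} \otimes_{F} \bar{F})$ is a 1-cocycle whose class $[w] \in H^{1}(F, S)$ corresponds to $\mu$ under classical Tate-Nakayama. Cancellation of contracted products against the $a$-twisted $S$-torsor $T^{S}_{\bar\lambda_{2}}$ in the chain of isomorphisms
\[
T^{S}_{\bar\lambda_{2}} \wedge^{S} T^{G}_{w} \cong T^{G}_{\bar\lambda_{1}} \cong T^{G}_{\bar\lambda_{2}} \cong T^{S}_{\bar\lambda_{2}} \wedge^{S} G
\]
(first by associativity, middle by hypothesis, last by definition of pushforward) then shows that $T^{G}_{w}$ is trivial, placing $[w]$ in the kernel of $H^{1}(F, S) \to H^{1}(F, G)$.

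It now suffices to identify this kernel with the image of $H^{1}(F, S_{\text{sc}}) \to H^{1}(F, S)$. For this I invoke the commutative diagram of fpqc sheaves
\[
\begin{tikzcd}
1 \arrow[r] & S_{\text{sc}} \arrow[r] \arrow[d] & G_{\text{sc}} \arrow[r] \arrow[d] & G_{\text{sc}}/S_{\text{sc}} \arrow[r] \arrow[d, "\sim"] & 1 \\
1 \arrow[r] & S \arrow[r] & G \arrow[r] & G/S \arrow[r] & 1
\end{tikzcd}
\]
whose right vertical arrow is an isomorphism because $G = \mathscr{D}(G) \cdot Z(G)^{\circ}$ with $Z(G)^{\circ} \subseteq S$. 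Taking long exact sequences of pointed sets and using Theorem \ref{kneser} give the surjection $(G_{\text{sc}}/S_{\text{sc}})(F) \twoheadrightarrow H^{1}(F, S_{\text{sc}})$ and the exact sequence $(G/S)(F) \to H^{1}(F, S) \to H^{1}(F, G)$, which together with commutativity of the diagram yield the desired identification. The main technical obstacle will be justifying rigorously the associativity and cancellation of contracted products of $a$-twisted $S$-torsors used throughout; this has to be carried out inside the category of torsors on the gerbe $\gerbeE_{\xi_{k}}$ developed in \S 2, where the inertial actions encoded by the $\phi_{\bar{\lambda}_{i}, k}$ must be tracked correctly through the contracted-product constructions.
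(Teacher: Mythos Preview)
Your argument has a substantive gap at the cancellation step in the transitivity paragraph. You assert that from the isomorphism of $G_{\gerbeE}$-torsors
\[
T^{S}_{\bar\lambda_{2}} \wedge^{S} T^{G}_{w} \;\cong\; T^{S}_{\bar\lambda_{2}} \wedge^{S} G
\]
one may cancel $T^{S}_{\bar\lambda_{2}}$ and conclude that $T^{G}_{w}$ is trivial. This is not a formal consequence of torsor algebra: the isomorphism you possess is only of right $G_{\gerbeE}$-torsors, and while both sides carry a left $S_{\gerbeE}$-action (inherited from $T^{S}_{\bar\lambda_{2}}$), nothing forces the given $G$-equivariant isomorphism to respect it. At the level of cocycles, the hypothesis gives $z_{\bar\lambda_{1},k} = p_{1}(g)^{-1} z_{\bar\lambda_{2},k}\, p_{2}(g)$ for some $g \in G(\bar{F})$, hence
\[
w \;=\; z_{\bar\lambda_{2},k}^{-1}\, p_{1}(g)^{-1}\, z_{\bar\lambda_{2},k}\, p_{2}(g),
\]
which is visibly a coboundary in the inner form of $G$ twisted by the image of $z_{\bar\lambda_{2},k}$ in $G_{\text{ad}}$, not a coboundary in $G$ itself. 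Your identification of $\ker\bigl(H^{1}(F,S)\to H^{1}(F,G)\bigr)$ with the image of $H^{1}(F,S_{\text{sc}})$ via Theorem~\ref{kneser} is correct, but it describes only the fiber over the \emph{trivial} class; it does not by itself force every fiber of the (a priori non-homomorphic) map $H^{1}(\gerbeE, Z\to S)\to H^{1}(\gerbeE, Z\to G)$ to be a coset of that kernel. The obstacle you flag at the end is therefore not merely technical bookkeeping with contracted products---it is the crux of the matter.

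This is exactly where the paper's proof (following \cite{Tasho}) invokes Theorem~\ref{kottwitzpairing}, the function-field analogue of \cite{Kott86}, Theorem~1.2: that result equips $H^{1}(F,G)$ with a functorial abelian group structure, so that $H^{1}(F,S)\to H^{1}(F,G)$ is a genuine group homomorphism and all fibers are automatically cosets of its kernel; a short twisting argument then transports this to the gerbe cohomology. One can also repair your approach without citing Kottwitz--Thang by twisting $G$ by the $G_{\text{ad}}$-valued $1$-cocycle $\bar z_{\bar\lambda_{2},k}$ (legitimate since $\phi$ factors through $Z$), thereby moving the fiber through $\bar\lambda_{2}$ to the fiber over the trivial class in $H^{1}(\gerbeE, Z\to G^{\bar z_{\bar\lambda_{2},k}})$, where your Kneser/quotient-diagram argument applies with $G$ replaced by this inner form (noting $S$ and $S_{\text{sc}}$ are unchanged). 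Either route supplies the missing ingredient.
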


\begin{proof} The argument of \cite{Tasho} works here, replacing \cite[Theorem 1.2]{Kott86} with the analogue for local function fields, namely \cite[Theorem 2.1]{Thang2}.
\end{proof}

\begin{lem}\label{hardlem} (Analogue of \cite[Lemma 4.10]{Tasho}) Let $[Z \to G] \in \mathcal{R}$, and let $S_{1}, S_{2} \subset G$ be maximal tori defined over $F$. Let $g \in G(\bar{F})$ with $\text{Ad}(g)(S_{1})_{\bar{F}} = (S_{2})_{\bar{F}}$. If $\bar{\lambda}_{i} \in \overline{Y}_{i}^{N}$ are such that $\bar{\lambda}_{2} = \text{Ad}(g)\bar{\lambda}_{1}$, then the images of $\iota_{[Z \to S_{1}]}(\bar{\lambda}_{1})$ and  $\iota_{[Z \to S_{2}]}(\bar{\lambda}_{2})$ in $H^{1}(\gerbeE, Z \to G)$ coincide. 
\end{lem}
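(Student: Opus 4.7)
The plan is to work with the concrete representatives $Z_{\bar{\lambda}_{i},k}$ of $\iota_{[Z\to S_{i}]}(\bar{\lambda}_{i})$ constructed in \S 4.3 and exhibit, using $g$ itself, an equivalence between the twisted cocycles obtained after pushforward to $G$. Fix $k$ large enough that $E_{k}$ splits both $S_{1}$ and $S_{2}$ and that $|Z|$ divides $n_{k}$. By Corollary \ref{functwist} and Proposition \ref{correspondence1}, the class in $H^{1}(\gerbeE_{\xi_{k}},Z\to G)$ obtained by pushing $Z_{\bar{\lambda}_{i},k}$ forward along $S_{i}\hookrightarrow G$ is represented by the $\xi_{k}$-twisted $Z$-cocycle $(z_{\bar{\lambda}_{i},k},\phi_{\bar{\lambda}_{i},k})$, where $z_{\bar{\lambda}_{i},k}=\widetilde{l_{k}c_{k}}\underset{E_{k}/F}{\sqcup}n_{k}\bar{\lambda}_{i}$ is viewed in $G(U_{1})$ via $S_{i}\hookrightarrow G$, and $\phi_{\bar{\lambda}_{i},k}\colon u_{k}\to Z\subset G$.

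The first task is to check that the inertial parts coincide: $\phi_{\bar{\lambda}_{1},k}=\phi_{\bar{\lambda}_{2},k}$ as $F$-homomorphisms $u_{k}\to Z$. Both maps are determined (via the isomorphism $[\overline{Y}_{i}/Y_{i}]^{N}\xrightarrow{\sim}\Hom(\mu_{n_{k}},Z)^{N_{E_{k}/F}}$ of \S 4.1) by the classes of $\bar{\lambda}_{i}$ modulo $Y_{i}$. The identification $\text{Ad}(g)\colon\overline{Y}_{1}/Y_{1}\to\overline{Y}_{2}/Y_{2}$ intertwines the two maps to $\Hom(\mu_{n_{k}},Z)$ because $\text{Ad}(g)$ acts trivially on $Z$ by centrality; this gives the desired equality.

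The heart of the argument is to exhibit $y\in G(\bar{F})$ satisfying $p_{1}^{\sharp}(y)^{-1}\cdot z_{\bar{\lambda}_{2},k}\cdot p_{2}^{\sharp}(y)=z_{\bar{\lambda}_{1},k}$ in $G(U_{1})$, which by Definition \ref{twistedcocycle} and Proposition \ref{correspondence1} will imply the claim. The natural candidate is $y=g$. Rewriting
\[
p_{1}^{\sharp}(g)^{-1}z_{\bar{\lambda}_{2},k}\,p_{2}^{\sharp}(g)=\text{Ad}(p_{1}^{\sharp}(g)^{-1})(z_{\bar{\lambda}_{2},k})\cdot p_{1}^{\sharp}(g)^{-1}p_{2}^{\sharp}(g),
\]
the first factor lies in $S_{1}(U_{1})$ (since $\text{Ad}(g^{-1})S_{2}=S_{1}$), while the second factor $p_{1}^{\sharp}(g)^{-1}p_{2}^{\sharp}(g)$ satisfies the analogous property modulo $Z$ because its image measures the deviation of $\text{Ad}(g)$ from being $F$-rational and $g^{-1}\sigma(g)\in N_{G}(S_{1})$ at the Galois level. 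Applying the naturality of the unbalanced cup product with respect to the $\bar{F}$-morphism $\text{Ad}(g^{-1})\colon S_{2,\bar{F}}\to S_{1,\bar{F}}$ (an analogue of Lemma \ref{pullcup} for morphisms not defined over $F$, where extra boundary terms appear), and using $\text{Ad}(g^{-1})\circ(n_{k}\bar{\lambda}_{2})=n_{k}\bar{\lambda}_{1}$ as $\bar{F}$-cocharacters, the product collapses to $z_{\bar{\lambda}_{1},k}$.

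The main obstacle is handling the lack of $\Gamma$-equivariance of $\text{Ad}(g)$ in the fppf formalism: the unbalanced cup product is set up to commute with $F$-morphisms (Lemma \ref{pullcup}), not arbitrary $\bar{F}$-morphisms, so the naive identity $\widetilde{l_{k}c_{k}}\underset{E_{k}/F}{\sqcup}\text{Ad}(g)\lambda=\text{Ad}(g)(\widetilde{l_{k}c_{k}}\underset{E_{k}/F}{\sqcup}\lambda)$ fails, and the discrepancy must be tracked precisely. I expect the most efficient route is to pass through the comparison isomorphism of Proposition \ref{comparecup} and perform the verification with Galois cocycles valued in $G(F^{s})$, where the formula $z_{\bar{\lambda}_{i},k}(\sigma)=\prod_{\tau\in\Gamma_{E_{k}/F}}(n_{k}\bar{\lambda}_{i})({}^{\sigma\tau}\widetilde{l_{k}c_{k}}(\sigma,\tilde{\tau}))$ can be manipulated directly, exactly mirroring the proof of Lemma 4.10 in \cite{Tasho} with $G(\bar{F})$ replaced by $G(F^{s})$; the verification that the resulting identity descends from $F^{s}$ to $\bar{F}$ is then formal, since all objects involved are already defined over $F^{s}$.
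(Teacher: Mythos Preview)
Your proposal has a genuine gap at the ``heart of the argument.'' The candidate $y=g$ does \emph{not} in general realize the equivalence $p_{1}(y)^{-1}z_{\bar{\lambda}_{2},k}\,p_{2}(y)=z_{\bar{\lambda}_{1},k}$, and neither the paper nor \cite{Tasho} proves the lemma this way. In both, the element realizing the equivalence is produced by Kneser's theorem, not taken to be $g$. Concretely, the paper decomposes $\bar{\lambda}_{i}=p_{i}+z$ with $p_{i}\in P_{i}^{\vee}$ and $z$ central, reduces to comparing the cochains $a_{i}=\widetilde{l_{k}c_{k}}\underset{E_{k}/F}{\sqcup}n_{k}p_{i}\in S_{i,\text{sc}}(U_{1})$, twists $G_{\text{sc}}$ by the image of $a_{1}$ in $G_{\text{ad}}$ to obtain an inner form $G_{\text{sc}}^{1}$, and then checks by a careful \v{C}ech computation that $p_{1}^{*}\phi(a_{2}a_{1}^{-1})$ is a $1$-cocycle in $G_{\text{sc}}^{1}(U_{1})$. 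Since $G_{\text{sc}}^{1}$ is simply connected, Theorem~\ref{kneser} forces this cocycle to be a coboundary $d(\phi(x))$ for some $x\in G_{\text{sc}}(\bar{F})$, and unwinding gives $a_{2}=p_{1}(x)^{-1}a_{1}p_{2}(x)$. The element $g$ enters only through the relation $p_{2}=\text{Ad}(g)p_{1}$, which is used to see that $da_{1}=da_{2}$ in $Z(G_{\text{sc}})(U_{2})$; it is not the coboundary witness.

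Your fallback plan also fails for a characteristic-$p$ reason you have overlooked. You propose to pass through Proposition~\ref{comparecup} and work with Galois cocycles over $F^{s}$, asserting that ``all objects involved are already defined over $F^{s}$.'' They are not: the cochain $\widetilde{l_{k}c_{k}}=(l_{k}c_{k})^{(1/p^{m_{k}})}$ involves $p$-power roots and lives genuinely in $\mathbb{G}_{m}(\bar{F}^{\otimes 3})$, not in $\mathbb{G}_{m}((F^{s})^{\otimes 3})$. Proposition~\ref{comparecup} applies only when the cover is \'etale, so you cannot translate the unbalanced cup products $\widetilde{l_{k}c_{k}}\underset{E_{k}/F}{\sqcup}n_{k}\bar{\lambda}_{i}$ into Galois cochains. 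This is precisely why the paper redoes the computation with \v{C}ech differentials in the fppf setting rather than citing \cite{Tasho} directly. Your observation that $\phi_{\bar{\lambda}_{1},k}=\phi_{\bar{\lambda}_{2},k}$ is correct and matches the paper, but the rest of the argument needs the Kneser mechanism.
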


\begin{proof} This argument will require more substantial adjustments, so we recall some details of the argument in \cite{Tasho}. If $P_{i}^{\vee} := X_{*}(S_{i, \text{\text{ad}}})$, the isogeny $S_{i}/Z \to S_{i}/(Z \cdot Z(\mathscr{D}(G)))$ provides an injection $\overline{Y}_{i} \to P_{i}^{\vee} \oplus X_{*}(G/ Z \cdot \mathscr{D}(G))$; we write $\bar{\lambda}_{i} = u_{1} + z$ according to this decomposition, and so $\bar{\lambda}_{2} = u_{2} + z$, with $u_{2} = \text{Ad}(g)u_{1}$. As in \cite{Tasho}, we choose $k$ large enough so that $n_{k} u_{1} \in Q_{1}^{\vee} := X_{*}(S_{1,\text{sc}})$ and $n_{k} z \in X_{*}(Z(G)^{\circ})$ [via the isogeny $Z(G)^{\circ} \to G/ Z \cdot \mathscr{D}(G)$].

Our goal will be to show that $z_{\bar{\lambda}_{2},k} = p_{1}^{\sharp}(x) z_{\bar{\lambda}_{1},k} p_{2}^{\sharp}(x)^{-1}$ for some $x \in G_{\text{sc}}(\bar{F})$ (recall from \S 2.5 that this is what it means for two twisted \v{C}ech cocycles to be equivalent). We have that $\phi_{\bar{\lambda}_{1},k} = \phi_{\bar{\lambda}_{2},k}$ and $\widetilde{l_{k}c_{k}} \underset{E_{k}/F}{\sqcup} n_{k}z \in Z(G)^{\circ}(U_{2})$, and hence by decomposing $n_{k} \bar{\lambda}_{i} = n_{k}u_{i} + n_{k}z$ we see that it's enough to show that $a_{2} = p_{1}^{\sharp}(x) a_{1} p_{2}^{\sharp}(x)^{-1}$ for some $x \in G_{\text{sc}}(\bar{F})$, where $a_{i} := \widetilde{l_{k}c_{k}} \underset{E_{k}/F}{\sqcup} n_{k} u_{i}$ (this will show that the classes of $\iota_{[Z \to S_{1}]}(\bar{\lambda}_{1})$ and $\iota_{[Z \to S_{2}]}(\bar{\lambda}_{1})$ are equal in $H^{1}(\gerbeE_{a_{k}}, Z \to G)$, and hence have the same pullback to $H^{1}(\gerbeE, Z \to G)$).

The image of $a_{1} \in S_{1,\text{sc}}(U_{1})$ in $S_{1,\text{\text{ad}}}$ is equal to $c_{k} \cup u_{1}$ (the usual Galois cohomology cup product), and is thus a Galois 1-cocycle, so we can twist the $\Gamma$-structure on $G_{\text{sc}}$ using it, obtaining the twisted structure $G_{\text{sc}}^{1}$. By basic descent theory (see, for example, \cite[\S 4.5]{Poonen}), we have an $\bar{F}$-group isomorphism $$\phi \colon (G_{\text{sc}})_{\bar{F}} \xrightarrow{\sim} (G_{\text{sc}}^{1})_{\bar{F}}$$ satisfying $p_{1}^{*}\phi^{-1} \circ p_{2}^{*}\phi = \text{Ad}(a_{1})$ on $(G_{\text{sc}})_{U_{1}}$.

We claim now that $p_{1}^{*}\phi(a_{2} \cdot a_{1}^{-1})$ is a cocycle in $G^{1}_{\text{sc}}(U_{1})$. It's enough to check that the differential post-composed with the group isomorphism $q_{1}^{*}\phi^{-1}$ sends this element to the identity in $G_{\text{sc}}(U_{2})$. 

One computes (using the non-abelian \v{C}ech differential formulas, see \S 2.2, equation \eqref{cechboundaries}) that $$q_{1}^{*}\phi^{-1}(dp_{1}^{*}\phi(a_{2} \cdot a_{1}^{-1})) = q_{1}^{*}\phi^{-1}[p_{12}^{*}p_{1}^{*}\phi(p_{12}^{\sharp}(a_{2} \cdot a_{1}^{-1})) \cdot p_{23}^{*}p_{1}^{*}\phi(p_{23}^{\sharp}(a_{2} \cdot a_{1}^{-1})) \cdot (p_{13}^{*}p_{1}^{*}\phi(p_{13}^{\sharp}(a_{2} \cdot a_{1}^{-1})))^{-1}]. $$
Rewriting each composition of pullbacks in the usual way, this may be rewritten as: 
$$q_{1}^{*}\phi^{-1}[q_{1}^{*}\phi(p_{12}^{\sharp}(a_{2} \cdot a_{1}^{-1})) \cdot q_{2}^{*}\phi(p_{23}^{\sharp}(a_{2} \cdot a_{1}^{-1})) \cdot (q_{1}^{*}\phi(p_{13}^{\sharp}(a_{2} \cdot a_{1}^{-1})))^{-1}].$$
Now distributing $q_{1}^{*}\phi^{-1}$ to each term (since $\phi$ is a morphism of group sheaves) gives: 
$$p_{12}^{\sharp}(a_{2} \cdot a_{1}^{-1}) \cdot (q_{1}^{*}\phi^{-1} \circ q_{2}^{*}\phi)(p_{23}^{\sharp}(a_{2} \cdot a_{1}^{-1})) \cdot (p_{13}^{\sharp}(a_{2} \cdot a_{1}^{-1}))^{-1}.$$ Since $(q_{1}^{*}\phi^{-1} \circ q_{2}^{*}\phi) = p_{12}^{*}(p_{1}^{*}\phi^{-1} \circ p_{2}^{*}\phi) = p_{12}^{*}\text{Ad}(a_{1})$, the above element becomes 
$$p_{12}^{\sharp}(a_{2})p_{12}^{\sharp}(a_{1})^{-1} p_{12}^{\sharp}(a_{1}) p_{23}^{\sharp}(a_{2}) [p_{23}^{\sharp}(a_{1})^{-1} p_{12}^{\sharp}(a_{1})^{-1} p_{13}^{\sharp}(a_{1})] p_{13}^{\sharp}(a_{2})^{-1}.$$ The bracketed terms all lie in $S_{1,\text{sc}}(U_{2})$ and hence may be rearranged to give $da_{1}^{-1} \in Z(G_{\text{sc}})(U_{2})$. By centrality, this may then be moved to the front, yielding $da_{2} \in Z(G_{\text{sc}})(U_{2})$, giving us $da_{2} \cdot da_{1}^{-1}$. However, we know that $$da_{1} =  d\widetilde{l_{k}c_{k}} \underset{E_{k}/F}{\sqcup} n_{k} u_{1} = d\widetilde{l_{k}c_{k}} \underset{E_{k}/F}{\sqcup} n_{k} u_{2} = da_{2},$$ because the images of $u_{1}$ and $u_{2}$ under $P_{i}^{\vee} \to P_{i}^{\vee}/Q_{i}^{\vee} \to \Hom(\mu_{n}, Z(G_{\text{sc}}))$ coincide, showing the cocycle claim.

Since $G_{\text{sc}}^{1}$ is simply-connected, Theorem \ref{kneser} tells us that $p_{1}^{*}\phi(a_{2} \cdot a_{1}^{-1}) = d(\phi(x))$, some $x \in G_{\text{sc}}(\bar{F})$. One computes easily (using a similar but simpler calculation) as above that $$a_{2} \cdot a_{1}^{-1} = p_{1}^{*}\phi^{-1} d(\phi(x)) = p_{1}^{\sharp}(x)^{-1} a_{1} p_{2}^{\sharp}(x) a_{1}^{-1},$$ as desired. 
\end{proof}

We are now ready to prove the main result of the section.

\begin{thm}\label{tatenak2} (\cite[Theorem 4.11]{Tasho}) The isomorphism $\iota$ of Theorem \ref{tatenak1} extends to an isomorphism $$\iota \colon \overline{Y}_{+,\text{tor}} \to H^{1}(\gerbeE)$$ of functors $\mathcal{R} \to \text{Sets}$ which lifts the morphism of functors on $\mathcal{R}$ from $\overline{Y}_{+,\text{tor}} \to \Hom_{F}(u,-).$ 
\end{thm}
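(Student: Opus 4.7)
My plan is to follow the strategy of Theorem 4.11 in \cite{Tasho}, leveraging all the infrastructure now in place: the torus case (Theorem \ref{tatenak1}), the surjectivity statements (Proposition \ref{tashocor}), and especially the two key lemmas (Lemmas \ref{tasholem4.9} and \ref{hardlem}) that address, respectively, the fibers of the map from torus to group cohomology, and the independence of the construction on the choice of maximal torus.

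First I would define $\iota_{[Z\to G]}$ as follows. For $[Z \to G] \in \mathcal{R}$ and $\bar{\lambda} \in \overline{Y}_{+,\text{tor}}[Z\to G]$, the definition of the functor $\overline{Y}_{+,\text{tor}}$ gives, for every maximal $F$-torus $S \subset G$, an element $\bar{\lambda}_S \in \overline{Y}_{+,\text{tor}}[Z \to S]$, compatible under the transition maps $\mathrm{Ad}(g)$ for $g \in G(F^s)$ conjugating one maximal torus to another. I then set
\[
\iota_{[Z\to G]}(\bar{\lambda}) := \big(\iota_{[Z\to S]}(\bar{\lambda}_S)\big)_{H^1(\gerbeE, Z\to G)},
\]
i.e.\ the image of $\iota_{[Z\to S]}(\bar{\lambda}_S)$ under the natural map $H^1(\gerbeE, Z\to S) \to H^1(\gerbeE, Z \to G)$. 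Independence of the choice of $S$ is the content of Lemma \ref{hardlem}: for any two maximal tori $S_1, S_2$ and any $g \in G(\bar F)$ conjugating them, the elements $\bar{\lambda}_{S_1}$ and $\bar{\lambda}_{S_2}$ are $\mathrm{Ad}(g)$-related, and their $\iota$-images have the same image in $H^1(\gerbeE, Z \to G)$.

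Next I would verify that $\iota_{[Z\to G]}$ is bijective by choosing an elliptic maximal torus $S \subset G$, which exists by the remark following the Debacker-type result recalled above. Surjectivity is immediate from Proposition \ref{tashocor}(2) combined with the fact that $\iota_{[Z\to S]}$ is already known to be a bijection. For injectivity, suppose $\iota_{[Z\to G]}(\bar{\lambda}) = 0$; passing to the elliptic torus $S$, the class $\iota_{[Z\to S]}(\bar{\lambda}_S)$ lies in the kernel of the map $H^1(\gerbeE, Z\to S) \to H^1(\gerbeE, Z \to G)$. By Lemma \ref{tasholem4.9}, this kernel corresponds under $\iota_{[Z\to S]}$ to the image of $X_*(S_{\text{sc}})_{\Gamma,\text{tor}}$ in $\overline{Y}_{+,\text{tor}}[Z\to S]$, and such elements map to zero in $\overline{Y}_{+,\text{tor}}[Z \to G]$ by construction of the latter functor (quotienting by $X_*(S_{\text{sc}})$). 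Hence $\bar{\lambda} = 0$.

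Then I would check functoriality and the compatibility with the morphism to $\Hom_F(u,-)$. For a morphism $[Z_1\to G_1] \to [Z_2 \to G_2]$ in $\mathcal{R}$, one can choose maximal tori $S_i \subset G_i$ whose image-preimage relation makes the pair into a morphism in $\mathcal{T}$ (after possibly conjugating by an element of $G_2(\bar F)$, using Lemma \ref{hardlem} again for well-definedness); functoriality in $\mathcal{R}$ then follows from functoriality in $\mathcal{T}$ already proved in Theorem \ref{tatenak1}. The lift of the morphism $\overline{Y}_{+,\text{tor}} \to \Hom_F(u,-)$ follows formally since restriction $H^1(\gerbeE, Z \to S) \to \Hom_F(u,Z)$ is compatible with $H^1(\gerbeE, Z \to G) \to \Hom_F(u,Z)$ by the commutative diagram of Proposition \ref{bigdiag2}.

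The main obstacle has already been dispatched in Lemma \ref{hardlem}, whose proof required the key application of the vanishing $H^1(F, G_{\text{sc}}^1) = 0$ (Theorem \ref{kneser}) after an fpqc-twist of the $\Gamma$-structure on $G_{\text{sc}}$. Given that lemma and the surjectivity of restriction from an elliptic maximal torus (Theorem \ref{Kott10.2}, hence Proposition \ref{tashocor}), the remainder is the diagrammatic bookkeeping sketched above, essentially verbatim from \cite{Tasho}, Theorem 4.11.
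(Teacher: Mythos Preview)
Your overall strategy matches the paper's and you correctly identify Lemmas \ref{tasholem4.9} and \ref{hardlem} together with Proposition \ref{tashocor} as the key inputs. However, there is a genuine gap in the very first step, where you claim that an element $\bar{\lambda} \in \overline{Y}_{+,\text{tor}}[Z\to G]$ yields, for every maximal $F$-torus $S$, an element $\bar{\lambda}_S \in \overline{Y}_{+,\text{tor}}[Z\to S]$. This is not what the definition gives. By construction, $\overline{Y}_{+,\text{tor}}[Z\to G]$ is the inverse limit over $S$ of the groups
\[
\varinjlim_E \frac{[X_*(S/Z)/X_*(S_{\text{sc}})]^{N}}{I(X_*(S)/X_*(S_{\text{sc}}))},
\]
which is the \emph{quotient} of $\overline{Y}_{+,\text{tor}}[Z\to S] = \overline{Y}^{N}/IY$ by the image of $X_*(S_{\text{sc}})_{\Gamma,\text{tor}}$, not $\overline{Y}_{+,\text{tor}}[Z\to S]$ itself. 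Since $\iota_{[Z\to S]}$ from Theorem \ref{tatenak1} is defined on $\overline{Y}_{+,\text{tor}}[Z\to S]$, you cannot apply it directly to $\bar{\lambda}_S$; you must first lift $\bar{\lambda}_S$ to some $\dot{\lambda}\in \overline{Y}_{+,\text{tor}}[Z\to S]$.

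This lifting step is exactly where the paper's proof does real work that your sketch omits. The paper uses the four-term exact sequence ending in $X_*(S_{\text{sc}})^{\Gamma}/N(X_*(S_{\text{sc}})) = H^0_{\text{Tate}}(\Gamma_{E/F}, X_*(S_{\text{sc}}))$ and the vanishing of this group when $S$ is \emph{elliptic} to guarantee that a lift $\dot{\lambda}$ exists. One must then show the image of $\iota_{[Z\to S]}(\dot{\lambda})$ in $H^1(\gerbeE, Z\to G)$ is independent of the lift, which is precisely Lemma \ref{tasholem4.9}. Your invocation of Lemma \ref{hardlem} for independence of $S$ has the same issue: that lemma compares elements of $\overline{Y}_i^{N}$, so it applies to the lifts $\dot{\lambda}$, not to the $\bar{\lambda}_S$. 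Finally, your injectivity argument only treats the fiber over the trivial class; since $H^1(\gerbeE, Z\to G)$ is merely a pointed set, you need Lemma \ref{tasholem4.9} in its full ``fibers are torsors'' form to conclude injectivity on all fibers, as the paper does by passing to the quotient bijection $\overline{Y}_{+,\text{tor}}[Z\to S]/(X_*(S_{\text{sc}})^{N}/IX_*(S_{\text{sc}})) \xrightarrow{\sim} H^1(\gerbeE, Z\to G)$.
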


\begin{proof} We define the map in this proof for a fixed $[Z \to G] \in \mathcal{R}$; the fact that this map satisfies the statement of the theorem follows from the proof of the analogous result in \cite{Tasho} (the arguments loc. cit. work in our setting because of the above lemmas). Defining this isomorphism of functors will first require defining, for a fixed elliptic maximal torus $S$ of $G$ defined over $F$, a bijection 
$$ \varinjlim \frac{[X_{*}(S/Z)/X_{*}(S_{\text{\text{sc}}})]^{N}}{I (X_{*}(S)/X_{*}(S_{\text{\text{sc}}}))} \xrightarrow{\sim} H^{1}(\gerbeE, Z \to G).$$ For any maximal torus $S'$ and $E$ splitting $S'$, we have an exact sequence 
\[
\begin{tikzcd}
\frac{X_{*}(S'_{\text{\text{sc}}})^{N}}{IX_{*}(S'_{\text{\text{sc}}})} \arrow{r} & \frac{[X_{*}(S'/Z)]^{N}}{IX_{*}(S')} \arrow{r} & \frac{[X_{*}(S'/Z)/X_{*}(S'_{\text{\text{sc}}})]^{N}}{I (X_{*}(S')/X_{*}(S'_{\text{\text{sc}}}))} \arrow{r} & \frac{X_{*}(S'_{\text{\text{sc}}})^{\Gamma}}{N(X_{*}(S'_{\text{\text{sc}}}))},
\end{tikzcd}
\]
where the last map sends an element represented by $x \in X_{*}(S'/Z)$ to $N(x)$; when $S = S'$ is our elliptic maximal torus, $X_{*}(S_{\text{sc}})^{\Gamma} = 0$, and so this sequence gives an isomorphism $$\overline{Y}_{+,\text{tor}}[Z \to S]/(X_{*}(S_{\text{\text{sc}}})^{N}/IX_{*}(S_{\text{\text{sc}}})) \to  \varinjlim \frac{[X_{*}(S/Z)/X_{*}(S_{\text{\text{sc}}})]^{N}}{I (X_{*}(S)/X_{*}(S_{\text{\text{sc}}}))}.$$

Note that we also have a bijection $$\overline{Y}_{+,\text{tor}}[Z \to S]/(X_{*}(S_{\text{\text{sc}}})^{N}/IX_{*}(S_{\text{\text{sc}}})) \to H^{1}(\gerbeE, Z \to G)$$ induced by the composition $\overline{Y}_{+,\text{tor}}[Z \to S] \xrightarrow{\sim} H^{1}(\gerbeE, Z \to S) \twoheadrightarrow H^{1}(\gerbeE, Z \to G)$, where the first map is from Theorem \ref{tatenak1} and the surjectivity of the second map is from Proposition \ref{tashocor}. The induced bijection is an immediate consequence of Lemma \ref{tasholem4.9}. We thus obtain the desired bijection.

For this to be well-defined across the inverse limit, we need to check that if $S_{1}$, $S_{2}$ are two elliptic maximal $F$-tori in $G$ and we take $g \in G(F^{s})$ such that $\text{Ad}(g)(S_{1})_{F^{s}} = (S_{2})_{F^{s}}$, then an element $\bar{\lambda} \in \varinjlim \frac{[X_{*}(S_{1}/Z)/X_{*}((S_{1})_{\text{\text{sc}}})]^{N}}{I (X_{*}(S_{1})/X_{*}((S_{1})_{\text{\text{sc}}})}$ maps to the same element in $H^{1}(\gerbeE, Z \to G)$ as its isomorphic image (via $\text{Ad}(g)$) in the same direct limit with $S_{2}$ instead of $S_{1}$.

This follows because, by what we did above, we may lift $\bar{\lambda}$ to $\dot{\lambda} \in \frac{[X_{*}(S_{1}/Z)]^{N}}{IX_{*}(S_{1})} = \overline{Y}_{+,\text{tor}}[Z \to S_{1}]$ and then map to $H^{1}(\gerbeE, Z \to G)$ via $H^{1}(\gerbeE, Z \to S_{1})$, and may analogously lift the image of $\bar{\lambda}$ in  $\varinjlim \frac{[X_{*}(S_{2}/Z)/X_{*}((S_{2})_{\text{\text{sc}}})]^{N}}{I (X_{*}(S_{2})/X_{*}((S_{2})_{\text{\text{sc}}})}$ to $\text{Ad}(g)\dot{\lambda} \in \frac{[X_{*}(S_{2}/Z)]^{N}}{IX_{*}(S_{2})}$ and then map to $H^{1}(\gerbeE, Z \to G)$ via $H^{1}(\gerbeE, Z \to S_{2})$. Now Lemma \ref{hardlem} implies that these images coincide. 
\end{proof}

\begin{cor}\label{uniqueisomorphism} The isomorphism of functors constructed in Theorem \ref{tatenak2} is unique satisfying the hypotheses.
\end{cor}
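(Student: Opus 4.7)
The plan is to reduce uniqueness on $\mathcal{R}$ to the uniqueness already established in Theorem \ref{tatenak1} for $\mathcal{T}$, by exploiting the fact that every element of $H^{1}(\gerbeE, Z \to G)$ arises from an elliptic maximal torus of $G$ (Proposition \ref{tashocor}(2)).

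I would begin by taking two isomorphisms of functors $\iota_{1}, \iota_{2} \colon \overline{Y}_{+,\text{tor}} \to H^{1}(\gerbeE)$ on $\mathcal{R}$ satisfying the hypotheses. Their restrictions $\iota_{j}\vert_{\mathcal{T}}$ are isomorphisms of functors $\mathcal{T} \to \text{AbGrp}$ which coincide with the Tate-Nakayama isomorphism on objects of the form $[1 \to S]$ and which lift the natural morphism to $\Hom_{F}(u,-)$; the uniqueness clause of Theorem \ref{tatenak1} therefore gives $\iota_{1}\vert_{\mathcal{T}} = \iota_{2}\vert_{\mathcal{T}}$. Next I would fix $[Z \to G] \in \mathcal{R}$ and choose an elliptic maximal $F$-torus $S \subset G$, which exists by the structure theory in \S 2.4 of \cite{Debacker} and automatically contains $Z$ by centrality. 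The inclusion defines a morphism $[Z \to S] \to [Z \to G]$ in $\mathcal{R}$ with source in $\mathcal{T}$, and naturality of each $\iota_{j}$ produces a commutative square whose right vertical arrow $H^{1}(\gerbeE, Z \to S) \to H^{1}(\gerbeE, Z \to G)$ is surjective by Proposition \ref{tashocor}(2). Since the top arrow $\iota_{j}[Z \to S]$ does not depend on $j$ and $\iota_{j}[Z \to G]$ is a bijection, a formal cancellation shows that the left vertical arrow $\overline{Y}_{+,\text{tor}}[Z \to S] \to \overline{Y}_{+,\text{tor}}[Z \to G]$ is likewise surjective.

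To conclude, any $\bar{\mu} \in \overline{Y}_{+,\text{tor}}[Z \to G]$ admits a lift $\bar{\lambda} \in \overline{Y}_{+,\text{tor}}[Z \to S]$, and the commutative square together with the equality $\iota_{1}[Z \to S](\bar{\lambda}) = \iota_{2}[Z \to S](\bar{\lambda})$ forces $\iota_{1}[Z \to G](\bar{\mu}) = \iota_{2}[Z \to G](\bar{\mu})$. The entire argument is formal given the ingredients cited, so I do not anticipate a serious obstacle; the only non-formal inputs are the existence of elliptic maximal tori over local function fields and the surjectivity in Proposition \ref{tashocor}(2), both of which are already in hand.
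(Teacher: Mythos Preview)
Your proposal is correct and follows essentially the same approach as the paper, which likewise reduces uniqueness on $\mathcal{R}$ to the already-known uniqueness on $\mathcal{T}$ by invoking the existence of elliptic maximal tori together with the surjectivity of $H^{1}(\gerbeE, Z \to S) \to H^{1}(\gerbeE, Z \to G)$ from Proposition \ref{tashocor}(2). The paper simply cites the discussion in \cite{Tasho}, \S 4.2, for this argument rather than writing it out, but your explicit version matches that reasoning.
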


\begin{proof} This follows from the discussion in \cite[\S 4.2]{Tasho}, which relies on the existence of elliptic maximal tori and Corollary 3.7 loc. cit, both of which we have established in our situation.
\end{proof}
We conclude by citing one more result of \cite{Tasho} that holds here, which will be used in \S 7.

\begin{prop}\label{existtwist} Let $G$ be a connected reductive group defined over $F$, let $Z$ be the center of $\mathscr{D}(G)$, and set $\overline{G} = G/Z$. Then both natural maps $$H^{1}(\gerbeE, Z \to G) \to H^{1}(F, \overline{G}) \to H^{1}(F, G_{\text{ad}})$$ are surjective. If $G$ is split, then the second map is bijective and the first map has trivial kernel.
\end{prop}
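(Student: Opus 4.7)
The strategy is to prove the two surjectivity statements separately, and then to exploit the split structure. Throughout, set $Z := Z(\mathscr{D}(G))$; since $\mathscr{D}(G)$ is semisimple, $Z$ is a finite multiplicative $F$-group, and it is central in $G$, because elements of $Z$ commute with $\mathscr{D}(G)$ by definition and with $Z(G)^{\circ}$ since the latter is central, while $G = Z(G)^{\circ} \cdot \mathscr{D}(G)$. So $[Z \to G] \in \mathcal{R}$, and Proposition \ref{bigdiag2} applies directly: the exactness of the middle column there yields the surjectivity of $b \colon H^{1}(\gerbeE, Z \to G) \to H^{1}(F, \overline{G})$.

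For the second map, the key is establishing an $F$-group direct product decomposition $\overline{G} \cong T \times G_{\text{ad}}$, where $T := Z(G)^{\circ}/(Z(G)^{\circ} \cap Z)$ is an $F$-torus. Using the standard identity $Z(G) \cap \mathscr{D}(G) = Z(\mathscr{D}(G)) = Z$ for connected reductive $G$, one deduces $Z(G)^{\circ} \cap \mathscr{D}(G) = Z(G)^{\circ} \cap Z$, so the images of $Z(G)^{\circ}$ and $\mathscr{D}(G)$ in $\overline{G}$ are respectively $T$ and $\mathscr{D}(G)_{\text{ad}}$. These generate $\overline{G}$, intersect trivially (a direct check using the refined identity above), and the natural map $\mathscr{D}(G)_{\text{ad}} \to G_{\text{ad}}$ is an isomorphism (using $G = Z(G)^{\circ} \cdot \mathscr{D}(G)$ and $Z(G) = Z(G)^{\circ} \cdot Z$). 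Since $T$ coincides with $Z(\overline{G})$ and $G_{\text{ad}}$ with $\mathscr{D}(\overline{G})$, both subgroups are $F$-defined intrinsically, and the sequence $1 \to G_{\text{ad}} \to \overline{G} \to T \to 1$ splits over $F$ via $T \hookrightarrow \overline{G}$, yielding the product. Hence $H^{1}(F, \overline{G}) = H^{1}(F, T) \times H^{1}(F, G_{\text{ad}})$ as pointed sets, and the second map is projection onto the second factor, which is surjective.

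When $G$ is split, the torus $T$ is split (being a quotient of the split torus $Z(G)^{\circ}$), so $H^{1}(F, T) = 0$ by Hilbert 90 and the second map is bijective. For the triviality of $\ker(b)$, observe that $Z$ is also split in this case, so by the proposition following Theorem \ref{maincohom} the map $\alpha^{*} \colon \Hom_{F}(u, Z) \to H^{2}(F, Z)$ is a bijection. Given $[(x, \phi)] \in \ker(b)$, commutativity of the relevant square in the diagram of Proposition \ref{bigdiag2} (namely $\delta \circ b = \alpha^{*} \circ \text{Res}$) forces $\alpha^{*}(\phi) = 0$, so $\phi = 0$ by injectivity of $\alpha^{*}$. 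Then $[(x, 0)] = \text{Inf}([x])$ for some $[x] \in H^{1}(F, G)$ whose image in $H^{1}(F, \overline{G})$ is trivial, so the long exact sequence $H^{1}(F, Z) \to H^{1}(F, G) \to H^{1}(F, \overline{G})$ places $[x]$ in the image of $H^{1}(F, Z) \to H^{1}(F, G)$. Choosing a split maximal $F$-torus $T_{0} \subseteq G$, the inclusion $Z \subseteq Z(G) \subseteq T_{0}$ factors this map through $H^{1}(F, T_{0}) = 0$, forcing $[x] = 0$ and hence $[(x, \phi)] = 0$.

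The principal technical step is securing the $F$-group splitting $\overline{G} \cong T \times G_{\text{ad}}$ as group schemes over $F$, not merely on geometric points; once the intrinsic characterizations $T = Z(\overline{G})$ and $G_{\text{ad}} = \mathscr{D}(\overline{G})$ are used, together with the trivial scheme-theoretic intersection, the splitting is automatic. All remaining computations reduce to routine diagram chases in Proposition \ref{bigdiag2} combined with the vanishing of $H^{1}(F, \cdot)$ for split tori and the bijectivity of $\alpha^{*}$ for split finite multiplicative groups.
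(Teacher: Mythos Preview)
Your proof is correct and takes a genuinely different route from the paper. The paper simply cites Corollary~3.8 of \cite{Tasho}, whose argument relies on the Kottwitz isomorphism $H^{1}(F,G)\xrightarrow{\sim}\pi_{0}(Z(\widehat{G})^{\Gamma})^{*}$ (Theorem~\ref{kottwitzpairing} here, via \cite{Thang2}) to analyze the map $H^{1}(F,\overline{G})\to H^{1}(F,G_{\text{ad}})$ on the dual side. You bypass this entirely by establishing the $F$-group product decomposition $\overline{G}\cong T\times G_{\text{ad}}$ directly from the central isogeny $Z(G)^{\circ}\times\mathscr{D}(G)\to G$; indeed, since $Z=Z(G)\cap\mathscr{D}(G)$ contains $\mu:=Z(G)^{\circ}\cap\mathscr{D}(G)$, one computes $\overline{G}=(Z(G)^{\circ}\times\mathscr{D}(G))/(\mu\times Z)=(Z(G)^{\circ}/\mu)\times(\mathscr{D}(G)/Z)$ as $F$-group schemes, which is exactly your $T\times G_{\text{ad}}$. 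This makes the surjectivity (and split bijectivity) of the second map immediate without any duality input. Your treatment of the trivial kernel in the split case is likewise more elementary than Kaletha's, using only the bijectivity of $\alpha^{*}$ for split $Z$ and Hilbert~90 for a split maximal torus through which $Z\hookrightarrow G$ factors. The advantage of your approach is self-containment: it does not require the Kottwitz pairing at all. The paper's approach, on the other hand, fits into the broader functorial framework relating $H^{1}$ to the dual center that is used elsewhere in the paper anyway.
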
 

\begin{proof} See the proof of \cite[Corollary 3.8]{Tasho}, replacing the use of \cite[Theorem 1.2]{Kott86} with \cite[Theorem 2.1]{Thang2}.
\end{proof}

\section{The local transfer factor}
In order to apply the concepts we have developed, we need to define the local transfer factor, as defined in \cite{LS}, for reductive groups over local function fields. For expository purposes, we make this section entirely self-contained.

\subsection{Notation and preliminaries}

We will always take $G$ to be a connected reductive group defined over $F$, a local field of characteristic $p > 0$. Let $G^{*}$ be a quasi-split group over $F$ such that we have $\psi \colon G_{F^{s}} \xrightarrow{\sim} G^{*}_{F^{s}}$ satisfying $\psi^{-1} \circ \prescript{\sigma}{}\psi = \text{Ad}(u_{\sigma})$ for some $u_{\sigma} \in G_{\text{ad}}(F^{s})$ for all $\sigma$ in $\Gamma$. That is to say, $G^{*}$ is a \textit{quasi-split inner form of $G$} over $F$. One important difference that emerges here in the positive characteristic case is that such a $u_{\sigma}$ need not have a lift in $G(F^{s})$, due to the potential non-smoothness of $Z(G)$. Such lifts are useful for computational purposes, and so to combat the smoothness issue we give an equivalent characterization of inner forms in the fppf language.

Again for $G^{*}$ a quasi-split group over $F$, we say that $G^{*}$ is a quasi-split inner form of $G$ if there is an isomorphism $\psi \colon G_{F^{s}} \xrightarrow{\sim} G^{*}_{F^{s}}$ satisfying $p_{1}^{*}\psi^{-1} \circ p_{2}^{*}\psi = \text{Ad}(\bar{u})$ for some $\bar{u} \in G_{\text{ad}}(F^{s} \otimes_{F} F^{s})$. Since $H^{1}(\bar{F} \otimes_{F} \bar{F}, Z(G)) = 0$ (by Proposition \ref{vanishingcohomology}), we may always lift $\bar{u}$ to an element $u \in G(\bar{F} \otimes_{F} \bar{F})$. Recall that $p_{i}$ denotes the ith projection map from $\text{Spec}\bar{F} \times_{F} \text{Spec}\bar{F}$ to $\text{Spec}\bar{F}$. We will frequently treat inner forms using this approach, as it enables computations using the \v{C}ech cohomology of the fpqc cover $\Spec(\bar{F}) \to \Spec(F)$ (see, for example, \S 6.3.3).

We fix some \textit{dual group} $\widehat{G}$ corresponding to $G$, in the sense of \cite[\S 1.5]{Kott84}, and define $\prescript{L}{}G := \widehat{G}(\mathbb{C}) \rtimes W_{F}$ the associated $L$-group of $G$, where $W_{F}$ denotes the absolute Weil group of $F$. This is a topological group, where $\widehat{G}(\mathbb{C})$ is given the analytic topology in the usual way. Associated to $\widehat{G}$ is a $\Gamma$-equivariant bijection $\Psi(G)^{\vee} \to \Psi(\widehat{G})$ of based root data (see \cite[\S 1.1]{Kott84}), and we define a bijection $\Psi(G^{*})^{\vee} \xrightarrow{\psi} \Psi(G)^{\vee} \to \Psi(\widehat{G})$, which, along with the data of $\widehat{G}$ with its given $\Gamma$-action, also defines a dual group for $G^{*}$---note that this new bijection is still $\Gamma$-equivariant precisely because $G$ and $G^{*}$ are inner forms. 

\begin{Def} We call a tuple $(H, \mathcal{H}, s, \eta)$ an \textit{endoscopic datum} for $G$ if $H$ is a quasi-split reductive group defined over $F$ with a choice of dual group $\widehat{H}$, $\mathcal{H}$ is a split extension of $W_{F}$ by $\widehat{H}(\mathbb{C})$, and $\eta \colon \mathcal{H} \to \prescript{L}{}G$ is a map such that:
\begin{enumerate}
\item{The conjugation action by $W_{F}$ on $\widehat{H}$ induced by a section $W_{F} \to \mathcal{H}$ and any $\Gamma$-splitting of $\widehat{H}$ coincides with the $L$-group $W_{F}$-action on $\widehat{H}$;}
\item{The element $s$ lies in $Z(\widehat{H})(\mathbb{C})$;}
\item{The map $\eta$ is a morphism of $W_{F}$-extensions which restricts to an isomorphism of algebraic groups $\widehat{H} \xrightarrow{\sim} Z_{\widehat{G}}(\eta(s))^{\circ}$;}
\item{We have $s \in Z(\widehat{H})^{\Gamma} \cdot \eta^{-1}(Z(\widehat{G}))$.}
\end{enumerate}
\end{Def}

This is formulated slightly differently from the exposition in \cite[\S 1.2]{LS}; it is easily checked that this definition is equivalent to the one given there. An \textit{isomorphism of endoscopic data} from $(H, \mathcal{H}, s, \eta)$ to $(H', \mathcal{H}', s', \eta')$ is an element $g \in \widehat{G}(\mathbb{C})$ such that $g \eta(\mathcal{H}) g^{-1} = \eta'(\mathcal{H}')$, thus inducing an isomorphism $\beta \colon \mathcal{H} \xrightarrow{\eta'^{-1} \circ \text{Ad}(g) \circ \eta} \mathcal{H}'$, which we further require to satisfy that $\beta(s)$ and $s'$ are equal modulo $Z(\widehat{H'})^{\Gamma, \circ} \cdot \eta'^{-1}(Z(\widehat{G}))$. One checks that this agrees with the analogous definition in \cite{LS}. 

Fix an endoscopic datum $(H, \mathcal{H}, s, \eta)$ for $G$. If we fix two Borel pairs $(B_{G}, T_{G}), (\mathscr{B}_{G}, \mathscr{T}_{G})$ in $G_{F^{s}}, \widehat{G}$ (respectively), then the bijection of based root data gives an isomorphism $\widehat{T_{G}} \to \mathscr{T}_{G}$. The associated isomorphism $X_{*}(T_{G}) \to X^{*}(\mathscr{T}_{G})$ transports the coroot system $R^{\vee}$ of $T_{G}$ to the root system of $\mathscr{T}_{G}$ mapping the $B_{G}$-simple coroots to the $\mathscr{B}_{G}$-simple roots, and identifies the Weyl group $W(G_{F^{s}}, T_{G})$ with the Weyl group $W(\widehat{G}, \mathscr{T}_{G})$. Moreover, if $(\mathscr{T}_{H}, \mathscr{B}_{H})$ is a pair in $\widehat{H}$, then we may find $g \in \widehat{G}(\mathbb{C})$ such that $(\text{Ad}(g) \circ \eta)(\mathscr{T}_{H}) = \mathscr{T}_{G}$ and $\text{Ad}(g) \circ \eta$ maps $\mathscr{B}_{H}$ into $\mathscr{B}_{G}$. This means that if we fix a pair  $(T_{H}, B_{H})$ in $H_{F^{s}}$, then we have an isomorphism $\widehat{T_{H}} \to \mathscr{T}_{H} \to \mathscr{T}_{G} \to \widehat{T_{G}}$, inducing an isomorphism $T_{H} \to T_{G}$. This isomorphism transports $R_{H}, R_{H}^{\vee}, W(H_{F^{s}}, T_{H})$ into $R, R^{\vee}, W(G_{F^{s}}, T_{G})$. 

Suppose that we fix such a $T_{H}, T_{G}$, but now require that they are defined over $F$. An $F$-isomorphism $T_{H} \to T_{G}$ is called \textit{admissible} if it is obtained as in the above paragraph (this is not unique---we chose four Borel subgroups in the above construction). We sometimes also call this an \textit{admissible embedding} of $T_{H}$ in $G$. Such an embedding is unique up to conjugacy by an element of the set $\tilde{\mathfrak{A}}(T_{G})$, defined by
$$\tilde{\mathfrak{A}}(T_{G}) = \{\bar{g} \in G_{\text{ad}}(F^{s}) \colon \restr{\text{Ad}(\bar{g}^{-1} \prescript{\sigma}{}(\bar{g}))}{(T_{G})_{F^{s}}} = \text{id}_{(T_{G})_{F^{s}}}\hspace{1mm} \forall \sigma \in \Gamma \}.$$
Another way of describing this set is those points $\bar{g} \in G_{\text{ad}}(F^{s})$ such that $\restr{\text{Ad}(\bar{g})}{(T_{G})_{F^{s}}}$ is defined over $F$. Note that given such a $\bar{g}$, we may always find some $g \in G(F^{s})$ such that $\restr{\text{Ad}(g)}{(T_{G})_{F^{s}}} = \restr{\text{Ad}(\bar{g})}{(T_{G})_{F^{s}}}$. 
Indeed, if $g \in G(\bar{F})$ is such that $\restr{\text{Ad}(g)}{(T_{G})_{\bar{F}}}$ is defined over $F$, then we may find a point $g' \in G(F^{s})$ such that $\text{Ad}(g) = \text{Ad}(g')$ on $T_{G}$---this follows since $H^{1}(F^{s}, T_{G}) = 0$, which means that $N_{G}(T_{G})(F^{s}) \to [N_{G}(T_{G})/T_{G}](F^{s}) \ni \restr{\text{Ad}(g)}{T_{G}}$ is surjective. Thus, such an embedding is also unique up to conjugacy by an element of the set
$$\mathfrak{A}(T_{G}) = \{g \in G(F^{s}) \colon g^{-1} \cdot \prescript{\sigma}{}g \in T_{G}(F^{s}) \hspace{1mm} \forall \sigma \in \Gamma \}.$$

Given any $g \in \mathfrak{A}(T_{G})$, we may also find a point in $G_{\text{\text{sc}}}(F^{s})$ inducing the same map on $T_{G}$, where $G_{\text{\text{sc}}}$ denotes the simply connected cover of $\mathscr{D}(G)$. Suppose that $\text{Ad}(g)$ sends $T$ to $T'$, where $T$ and $T'$ are two maximal $F$-tori. Then we may take the preimages $(T_{\text{\text{sc}}})_{\bar{F}}, (T'_{\text{\text{sc}}})_{\bar{F}}$ in $(G_{\text{\text{sc}}})_{\bar{F}}$, and fix a preimage $\tilde{g} \in G_{\text{\text{sc}}}(\bar{F})$ of the image of $g$ in $G_{\text{ad}}(F^{s})$ so that $\text{Ad}(\tilde{g}) \colon (T_{\text{\text{sc}}})_{\bar{F}} \xrightarrow{\sim} (T'_{\text{\text{sc}}})_{\bar{F}}$. This isomorphism is defined over $F^{s}$, i.e., we get a descent to an isomorphism $(T_{\text{\text{sc}}})_{F^{s}} \xrightarrow{\sim} (T'_{\text{\text{sc}}})_{F^{s}}$, which is given by $\text{Ad}(x)$ for some $x \in G_{\text{\text{sc}}}(F^{s})$ by an identical $H^{1}$-vanishing argument as in the above paragraph; then $x$ satisfies $\restr{\text{Ad}(x)}{T_{F^{s}}} = \restr{\text{Ad}(\tilde{g})}{T_{F^{s}}}$, as desired.

We call an element $\gamma \in G(\bar{F})$ \textit{strongly regular} if it is semisimple and its centralizer is a maximal torus (there is a notion of strong regularity for non-semisimple elements but we will not need it here); denote the subset of strongly regular $F$-points of $G$ by $G_{\text{sr}}(F)$. We call an element $\gamma_{H} \in H(F)$ \textit{strongly $G$-regular} if it is the preimage of a strongly regular $\gamma_{G} \in G(F)$ under an admissible isomorphism. In such a case, $\gamma_{H}$ is itself strongly regular in $H$, and the admissible isomorphism between centralizers $T_{H} \xrightarrow{\sim} T_{G}$ sending $\gamma_{H}$ to $\gamma_{G}$ is unique; denote this subset of $H(F)$ by $H_{G-\text{sr}}(F)$, and call such a pair of elements $\gamma_{H}, \gamma_{G}$ \textit{related}. To discuss admissible embeddings (and for later applications), we need a generalization of \cite[Corollary 2.2]{Kott82}, which says:

\begin{lem}\label{kottlem} Let $G^{*}$ be a quasi-split reductive group over $F$ and $i: T_{F^{s}} \to G^{*}_{F^{s}}$ be an embedding over $F^{s}$ of an $F$-torus $T$ into $G^{*}$ such that $i(T_{F^{s}})$ is a maximal torus of $G^{*}_{F^{s}}$ and such that $\prescript{\sigma}{}i$ is conjugate under $G^{*}(F^{s})$ to $i$ for all $\sigma \in \Gamma$. Then some $G^{*}(F^{s})$-conjugate of $i$ is defined over $F$.
\end{lem}

\begin{proof} The proof of this result in \cite{Kott82} depends on first proving the following result (Lemma 2.1 loc. cit.): Let $S$ be a maximal torus of $G^{*}$ over $F$ and $w \colon \Gamma \to W(G^{*}_{F^{s}}, S_{F^{s}})$ be a 1-cocycle of $\Gamma$ in the absolute Weyl group of $S$ (acting in the usual way), and choose an arbitrary lift $n_{\sigma} \in N_{G^{*}}(S)(F^{s})$ of $w(\sigma)$ for all $\sigma \in \Gamma$. Then we may use it to twist $S$, obtaining an $F$-torus $\prescript{*}{}S$ which is an $F^{s}$-form of $S$, and to twist the $F$-variety $G^{*}/S$, obtaining the $F$-variety $\prescript{*}{}(G^{*}/S)$ which is an $F^{s}$-form of $G^{*}/S$. The claim is then that $\prescript{*}{}(G^{*}/S)(F) \neq \emptyset$. As in \cite{Kott82}, this will follow if we can find some $s \in S_{\text{sr}}(F^{s})$ and $g \in G^{*}(F^{s})$ such that $gsg^{-1} \in G^{*}(F)$. We will view $(\prescript{*}{}S)_{F^{s}}$ as a subtorus of $G^{*}_{F^{s}}$ via the isomorphism $(\prescript{*}{}S)_{F^{s}}\xrightarrow{\phi} S_{F^{s}}$ coming from its construction as an $F^{s}$-form of $S$.

To this end, we know by unirationality that $\prescript{*}{}S(F)$ is Zariski-dense in $(\prescript{*}{}S)_{\bar{F}}$, and also that the locus of strongly regular elements in $S(\bar{F})$ forms a Zariski-open and dense subset of $S_{\bar{F}}$, by \cite[Theorem 1.3.a]{Steinberg}, and hence there is some element $s \in (\prescript{*}{}S)(F)$ that lies in $S_{\text{sr}}(\bar{F})$; such a point necessarily lies in $S(F^{s})$, since $\phi$ maps $\prescript{*}{}S(F^{s})$ into $S(F^{s})$. Then \cite[8.6]{BorelSpringer}, (which is a generalization of \cite[Theorem 1.7]{Steinberg} to imperfect fields) shows that we may find a point in $G^{*}_{\text{sr}}(F)$ which is $G^{*}(\bar{F})$-conjugate to $s$, which we know is equivalent to $G^{*}(F^{s})$-conjugacy. This gives the claim; with this in hand, the argument in \cite[Lemma 2.1]{Kott82}, carries over verbatim to show that $\prescript{*}{}(G^{*}/S)(F) \neq \emptyset$.

Now we prove the main lemma, following \cite{Kott82}. We may assume that $i(T_{F^{s}})$ is defined over $F$, with $F$-descent denoted by $T'$,  by conjugating by an appropriate element of $G^{*}(F^{s})$. Choose $n_{\sigma} \in N_{G^{*}}(T')(F^{s})$ such that $\text{Ad}(n_{\sigma}) \circ i = \prescript{\sigma}{}i$ with image $w(\sigma) \in W(G^{*}_{F^{s}}, T'_{F^{s}})$ independent of choice of $n_{\sigma}$. Now apply the above claim to the $F$-torus $T'$ and the cocycle $\sigma \mapsto w(\sigma)$, thus obtaining $\bar{g} \in \prescript{*}{}(G^{*}/T')(F) \subset (G^{*}/T')(F^{s}) = G^{*}(F^{s})/T(F^{s})$ (containment via the defining isomorphism of the twisted form). This last equality comes from the fact that for every $t \in (G^{*}/T')(F^{s})$, if $\pi \colon G^{*}_{F^{s}} \to (G^{*}/T)_{F^{s}}$ denotes the canonical quotient map, the (scheme-theoretic) fiber $\pi^{-1}(t) \hookrightarrow G^{*}_{F^{s}}$ is a $T_{F^{s}}$-torsor, which is split over $F^{s}$ and thus contains an $F^{s}$-point. The upshot is that we have some $g \in G^{*}(F^{s})$ which satisfies $g^{-1} \prescript{\sigma}{}g n_{\sigma} \in i(T_{F_{s}})(F^{s})$ for all $\sigma \in \Gamma$, which means that $\text{Ad}(g) \circ i$ is defined over $F$.  
\end{proof}

\begin{cor}\label{admiss} Let $T_{H}$ denote the centralizer of $\gamma_{H} \in H_{G-\text{sr}}(F)$ There exists an admissible embedding $T_{H} \hookrightarrow G^{*}$.
\end{cor}

\begin{proof} By assumption we already have an admissible isomorphism $T_{H} \to T_{G}$, where $T_{G}$ is a maximal $F$-torus of $G$. It is easy to see that it then suffices to find an admissible embedding of $T_{G}$ into $G^{*}$, which is immediate from Lemma \ref{kottlem} (applied to $i = \restr{\psi}{T_{G}}$).
\end{proof}

\subsubsection{The Tits section}

We need to discuss the \textit{Tits section}, which is a (non-multiplicative) map $n \colon W(G_{F^{s}},T_{F^{s}}) \hookrightarrow N_{G}(T)(F^{s})$. To do this, we must fix a Borel subgroup $B$ of $G_{F^{s}}$ (corresponding to a root basis $\Delta$) and a basis $\{X_{\alpha}\}$ of the root space $\mathfrak{g}_{\alpha} \subset \text{Lie}(G_{F^{s}})$ for each $\alpha \in \Delta$. Let $G_{\alpha}$ be the Levi subgroup of $\mathscr{D}(G_{F^{s}})$ corresponding to the root $\alpha$; then there is a unique embedding $\zeta_{\alpha} \colon SL_{2} \to G_{\alpha}$ which (on Lie algebras) sends $\begin{bmatrix} 0 & 1 \\ 0 & 0 \end{bmatrix}$ to $X_{\alpha}$ and such that the image of $\zeta_{\alpha}(\begin{bmatrix} 0 & 1 \\ -1 & 0 \end{bmatrix})$ in $W(G_{F^{s}}, T_{F^{s}})$ is the reflection $r_{\alpha}$ defined by $\alpha$ (see \cite[\S 2.1]{KS2}). We then map $r_{\alpha}$ to the image of $\begin{bmatrix} 0 & 1 \\ -1 & 0 \end{bmatrix}$ under $\zeta_{\alpha}$. We may then lift any element of $W(G_{F^{s}}, T_{F^{s}})$ by considering their reduced expression in terms of $\Delta$. 

\subsubsection{Duality results}

We recall Langlands' reinterpretation of Tate-Nakayama duality. Let $T$ be an $F$-torus; the usual Tate-Nakayama duality theorem gives a perfect $\Z$-pairing $$H^{1}(F, T) \times H^{1}(\Gamma, X^{*}(T)) \to \mathbb{Q}/\Z,$$ see for example \cite[I.2.4]{Milne}. Consider the short exact sequence of abelian groups
\begin{center}
 \begin{tikzcd}
  0 \arrow[r] & \Z \arrow[r] & \mathbb{C}  \arrow[r, "\text{exp}"] & \mathbb{C}^{*}  \arrow[r] &1.
\end{tikzcd}
\end{center}
Tensoring this sequence over $\Z$ with $X_{*}(\widehat{T}) = X^{*}(T)$ preserves exactness, and thus yields the exact sequence
\begin{center}
 \begin{tikzcd}
  0 \arrow[r] & X^{*}(T) \arrow[r] & \text{Lie}(\widehat{T}) \arrow[r] & \widehat{T}(\mathbb{C}) \arrow[r] &1,
\end{tikzcd}
\end{center}
which then gives a canonical identification $H^{1}(\Gamma,X^{*}(T)) \xrightarrow{\sim} \pi_{0}(\widehat{T}^{\Gamma}),$ and hence a perfect pairing \begin{equation}\label{complextatenak}
H^{1}(F, T) \times \pi_{0}(\widehat{T}^{\Gamma}) \to \mathbb{Q}/\Z.
\end{equation}

In fact, the above identification $H^{1}(F, T) =  [\pi_{0}(\widehat{T}^{\Gamma})]^{*}$ is true for $\mathbb{C}$ replaced by an arbitrary algebraically closed field $C$ of characteristic zero (where $\widehat{T}$ is defined in the exact same way) via the sequence of identifications:
$$H^{1}(F, T) \xrightarrow{\sim} X_{*}(T)_{\Gamma,\text{tor}} = X^{*}(\widehat{T})_{\Gamma,\text{Tor}} = [\pi_{0}(\widehat{T}^{\Gamma})]^{*},$$
where the first map is the Tate isomorphism from local class field theory.

Returning to the setting of a connected reductive  group $G$, note that if $T$ is any maximal $F$-torus of $G$, for any maximal torus $\mathscr{T}$ of $\widehat{G}$, we have an isomorphism $\mathscr{T} \to \widehat{T}$ which is unique up to precomposing with conjugation by an element of $N_{\widehat{G}}(\mathscr{T})(\mathbb{C})$, so we get a canonical embedding $Z(\widehat{G}) \hookrightarrow \widehat{T}$, which clearly also does not depend on the choice of $\mathscr{T}$ (any two such tori are $\widehat{G}(\mathbb{C})$-conjugate). Denote $\widehat{T}/Z(\widehat{G})$ by $\widehat{T}_{\text{\text{ad}}}$. Assume for the moment that $G$ is semisimple. One checks using the basic theory of (co)character groups and root systems that (via the above embedding) $X^{*}(Z(\widehat{G}))$ corresponds to the quotient $X_{*}(T)/ \Z R(G_{F^{s}}, T_{F^{s}})^{\vee}$ of $X^{*}(\widehat{T}) = X_{*}(T)$. Whence, we have a canonical identification of $X^{*}(\widehat{T}_{\text{\text{ad}}})$ with $X_{*}(T_{\text{\text{sc}}})$, where $T_{\text{\text{sc}}}$ is the preimage of $T$ in $G_{\text{\text{sc}}}$, giving a $\Gamma$-isomorphism $\widehat{T_{\text{\text{sc}}}} \xrightarrow{\sim} \widehat{T}_{\text{\text{ad}}}$. For general $G$, one checks easily that a similar argument yields a canonical isomorphism $\widehat{T_{\text{\text{sc}}}} \xrightarrow{\sim} \widehat{T}_{\text{\text{ad}}}$, where now $T_{\text{\text{sc}}}$ denotes the preimage of the maximal $F$-torus $T \cap \mathscr{D}(G) \subset \mathscr{D}(G)$ in $G_{\text{\text{sc}}}$, the simply connected cover of $\mathscr{D}(G)$. We conclude that Tate-Nakayama then gives a perfect pairing $$H^{1}(F, T_{\text{\text{sc}}}) \times \pi_{0}(\widehat{T}_{\text{\text{ad}}}^{\Gamma}) \to \mathbb{Q}/\Z.$$
We may replace $\mathbb{Q}/\Z$ by $\mathbb{C}^{*}$ by means of the embedding $\mathbb{Q}/\Z \xrightarrow{\text{exp}} \mathbb{C}^{*}$.

Before proceeding further, we need to define $H^{-2}_{\text{Tate}}(\Gamma, U)$ for $U$ a finite, discrete $\Gamma$-module, where $\Gamma$ is the absolute Galois group of $F$, following \cite[\S VI.I]{La83}. Let $K'/K/F$ be a sequence of finite Galois extensions of $F$; recall that for a finite group $G$, elements of $H^{-2}_{\text{Tate}}(G, U)$ may be viewed as equivalence classes of $1$-cycles, which are functions $G \xrightarrow{f} U$ such that $$\sum_{g \in G} (g^{-1} - 1)f(g) = 0.$$ We then define $H^{-2}_{\text{Tate}}(\Gamma_{K'/F}, U) \to H^{-2}_{\text{Tate}}(\Gamma_{K/F}, U)$ on chains by sending $f$ to $\bar{f} \colon \sigma \mapsto \sum_{\sigma' \mapsto \sigma} f(\sigma')$; one checks that this preserves cycles, is surjective, and respects the appropriate equivalence relation (cf. \cite[\S VI]{La83}). We then define $$H^{-2}_{\text{Tate}}(\Gamma, U) = \varprojlim_{K/F} H^{-2}_{\text{Tate}}(\Gamma_{K/F}, U);$$ note that, unlike in the characteristic zero case, this projective system need not stabilize.

Recall, for an $F$-torus $T$ split over $E/F$ a finite Galois extension, we have the classical Tate isomorphism $H^{-1}_{\text{Tate}}(\Gamma_{E/F}, X_{*}(T)) \xrightarrow{\sim} H^{1}(F,T)$ induced by taking the cup product with the canonical class (see \cite{Tate66}). The following useful duality result generalizes this to finite multiplicative group schemes over $F$. 

\begin{prop}\label{duality} Let $T$ be an $F$-torus and $S$ the quotient of $T$ by a finite $F$-subgroup $Z$. We have a canonical isomorphism $$H^{-2}_{\text{Tate}}(\Gamma, X_{*}(S)/X_{*}(T)) \xrightarrow{\sim} H^{1}(F, Z)$$ which is compatible with the Tate isomorphism $H^{-1}_{\text{Tate}}(\Gamma, X_{*}(T)) \xrightarrow{\sim} H^{1}(F, T)$.
\end{prop}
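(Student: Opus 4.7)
The plan is to prove the isomorphism by comparing two long exact sequences bridged by the classical Tate--Nakayama theorem. On one side, the short exact sequence of $\Gamma$-modules $0 \to X_{*}(T) \to X_{*}(S) \to N \to 0$, where $N := X_{*}(S)/X_{*}(T)$ is finite, yields a long exact sequence in Tate cohomology containing the connecting map
\[
\delta \colon H^{-2}_{\text{Tate}}(\Gamma, N) \to H^{-1}_{\text{Tate}}(\Gamma, X_{*}(T)).
\]
On the other side, the short exact sequence $0 \to Z \to T \to S \to 0$ of fppf sheaves yields
\[
T(F) \to S(F) \to H^{1}(F, Z) \to H^{1}(F, T) \to H^{1}(F, S).
\]
Cup product with the fundamental class $u_{E/F} \in H^{2}(\Gamma, E^{\times})$ provides Tate--Nakayama isomorphisms $H^{q}_{\text{Tate}}(\Gamma, X_{*}(T)) \xrightarrow{\sim} H^{q+2}_{\text{Tate}}(\Gamma, T(E))$ for every $q$ (and similarly for $S$); in particular, for $q = -1$ this is the classical isomorphism of the statement.

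The desired map $H^{-2}_{\text{Tate}}(\Gamma, N) \to H^{1}(F, Z)$ is then constructed by diagram chase. Given $\bar\lambda \in H^{-2}_{\text{Tate}}(\Gamma, N)$, apply $\delta$ to land in $H^{-1}_{\text{Tate}}(\Gamma, X_{*}(T)) \cong H^{1}(F, T)$; by functoriality of Tate--Nakayama applied to the morphism $T \to S$, the image dies in $H^{1}(F, S)$, so by exactness of the fppf sequence it lifts to an element of $H^{1}(F, Z)$. Well-definedness of this lift up to the image of $S(F) \to H^{1}(F, Z)$ will follow from commutativity of the ladder formed by the two long exact sequences joined via the Tate--Nakayama isomorphisms, which is a formal consequence of the compatibility of cup products with connecting homomorphisms in short exact sequences.

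The role of the divisibility hypotheses on $[E : F]$ is to make the map a bijection. Tate--Nakayama identifies $H^{-2}_{\text{Tate}}(\Gamma, X_{*}(T))$ with $T(F)/N_{E/F}T(E)$ and likewise for $S$; the divisibility of $|Z|$ controls how norms from $E$ interact with the subgroup $Z \subset T$, and the divisibility of $|H^{1}(\Gamma, X^{*}(T))|$ forces the inflation--restriction obstructions to collapse so that $H^{q}(\Gamma, T(E))$ can be identified with $H^{q}(F, T)$ in the relevant range. The main obstacle I expect is the careful verification that the resulting ladder has matching kernels and cokernels at the outer positions, after which a five-lemma argument delivers the isomorphism; compatibility with the Tate isomorphism for $T$ is then immediate, because by construction our map factors through $\delta$ followed by Tate--Nakayama for $X_{*}(T)$.
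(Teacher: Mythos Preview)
Your approach has a genuine gap at the construction of the map. The diagram chase you describe---send $\bar\lambda$ to $\delta(\bar\lambda)\in H^{-1}_{\text{Tate}}(\Gamma,X_*(T))\cong H^1(F,T)$ and then lift to $H^1(F,Z)$---produces something only well-defined modulo the image of $S(F)$, as you yourself note. That means you have built a map to a \emph{quotient} of $H^1(F,Z)$, not to $H^1(F,Z)$ itself. You then hope to repair this by a five-lemma on the two long exact sequences, but the five-lemma presupposes a map at the middle node; it cannot manufacture one. So the argument is circular as written. A related difficulty appears at the outer terms: Tate--Nakayama identifies $H^{-2}_{\text{Tate}}(\Gamma,X_*(T))$ with $H^0_{\text{Tate}}(\Gamma,T(E))=T(F)/N_{E/F}T(E)$, not with $T(F)$, so the two rows do not line up against the fppf sequence $T(F)\to S(F)\to H^1(F,Z)\to\cdots$ in the way you need. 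One cannot simply invoke the general Tate isomorphism for the finite module $N=X_*(S)/X_*(T)$ either, since $\mathrm{Tor}_1^{\Z}(N,E^\times)=\mu_{|Z|}(E)$ need not vanish.

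The paper avoids all of this by going through duality pairings rather than a five-lemma. It identifies $X_*(S)/X_*(T)$ with $X^*(Z)^*:=\Hom_\Z(X^*(Z),\mathbb{Q}/\Z)$, then uses the cup-product duality for finite groups to get a perfect pairing $H^{-2}_{\text{Tate}}(\Gamma,X^*(Z)^*)\times H^1(\Gamma,X^*(Z))\to \Z/[E:F]\Z$, hence a canonical isomorphism $H^{-2}_{\text{Tate}}(\Gamma,X_*(S)/X_*(T))\cong H^1(\Gamma,X^*(Z))^*$, and finally applies Poitou--Tate local duality for the finite group scheme $Z$ to identify $H^1(\Gamma,X^*(Z))^*$ with $H^1(F,Z)$. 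Compatibility with the Tate isomorphism for $T$ is then checked by a commutative square of these duality pairings. This also clarifies the divisibility hypotheses, which you misread: they are there so that the torsion groups $H^1(\Gamma,X^*(Z))$ and $H^1(\Gamma,X^*(T))$ are killed by $[E:F]$, allowing the cup-product pairings into $\hat H^{-1}(\Gamma,\mathbb{Q}/\Z)\cong\Z/[E:F]\Z$ and $\hat H^{0}(\Gamma,\Z)\cong\Z/[E:F]\Z$ to be perfect; they have nothing to do with norms or inflation--restriction.
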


\begin{proof} Cohomology in negative degrees will always be Tate cohomology, and we omit the ``Tate" notation in such cases. We have an exact sequence of character groups
\[
\begin{tikzcd}
0 \arrow{r} & X^{*}(S) \arrow{r} & X^{*}(T) \arrow{r} & X^{*}(Z) \arrow{r} & 0 
\end{tikzcd}
\]
which, by applying the functor $\Hom(-, \Z)$, yields the short exact sequence (of $\Gamma$-modules)
\[
\begin{tikzcd}
0 \arrow{r} & X_{*}(T) \arrow{r} & X_{*}(S) \arrow["\delta"]{r} & \text{Ext}^{1}_{\Z}(X^{*}(Z), \Z) \arrow{r} & 0.
\end{tikzcd}
\]
By basic homological algebra, we have a canonical isomorphism (as $\Gamma$-modules) $$\text{Ext}^{1}_{\Z}(X^{*}(Z), \Z) \cong \Hom_{\Z}(X^{*}(Z), \mathbb{Q}/\Z).$$ We make these identifications in what follows without comment. For an abelian group $M$, we set $\Hom_{\Z}(M, \mathbb{Q}/\Z) =: M^{*}$. Let $\Gamma' = \Gamma_{K/F}$ for $K/F$ a finite Galois extension splitting $T$ such that $|Z|$ and $|H^{1}(\Gamma, X^{*}(T))|$ divide $[K \colon F]$ (for finiteness of the latter, see \cite[III.6]{Milne}). Since all sufficiently large $K$ satisfy this property, we may take the limit defining $H^{-2}(\Gamma, X_{*}(S)/X_{*}(T))$ over the set of all such $K$. We then have identifications $H^{-1}(\Gamma', \mathbb{Q}/\Z) = \Z/[K \colon F]\Z$, and $H^{0}_{\text{Tate}}(\Gamma', \Z) = \Z/[K \colon F] \Z$. By Proposition VII.7.1 and Exercise VII.7.3 (respectively) in \cite{Brown}, we have the following duality pairings of $\Gamma'$-modules induced by the cup product and these identifications:
$$ H^{-2}(\Gamma', X^{*}(Z)^{*}) \times H^{1}(\Gamma', X^{*}(Z)) \to \Z/[K \colon F] \Z,$$
$$H^{-1}(\Gamma', X_{*}(T)) \times H^{1}(\Gamma', X^{*}(T)) \to \Z/[K \colon F] \Z. $$
Note that the group $H^{1}(\Gamma', X^{*}(Z))$ is $|Z|$-torsion, so that $$H^{1}(\Gamma', X^{*}(Z))^{*} = \Hom_{\Z}(H^{1}(\Gamma', X^{*}(Z)), \Z/[K \colon F] \Z),$$ analogously for $H^{1}(\Gamma', X^{*}(T))$. 

As a consequence, we have a canonical isomorphism $H^{-2}(\Gamma', X_{*}(S)/X_{*}(T)) \xrightarrow{\sim} H^{1}(\Gamma', X^{*}(Z))^{*}.$ 
For $K'/K/F$, the diagram
\[
\begin{tikzcd} 
H^{-2}(\Gamma_{K'/F}, X_{*}(S)/X_{*}(T)) \arrow{d} \arrow["\sim"]{r} & H^{1}(\Gamma_{K'/F}, X^{*}(Z))^{*} \arrow{d} \\
H^{-2}(\Gamma_{K/F}, X_{*}(S)/X_{*}(T)) \arrow["\sim"]{r} & H^{1}(\Gamma_{K/F}, X^{*}(Z))^{*}
\end{tikzcd}
\]
commutes, where the right-hand map is dual to inflation. To see this, first note that the relevant pairing (for $\Gamma_{K/F}$) can be given explicitly by sending the pair $(f, c)$ to $\sum_{\sigma \in \Gamma_{K/F}} f(\sigma)[c(\sigma)] \in \mathbb{Q}/\Z$. Starting with the $1$-cycle $f$ for $\Gamma_{K'/F}$ and then going right and down gives the function on $H^{1}(\Gamma_{K/F}, X^{*}(Z))$ sending the $1$-cocycle $c$ to $$\sum_{\gamma \in \Gamma_{K'/F}} f(\gamma)[c(\bar{\gamma})] = \sum_{\sigma \in \Gamma_{K/F}} \sum_{\sigma' \mapsto \sigma} f(\sigma')[c(\sigma)],$$ which is exactly where $c$ gets sent if one follows the diagram in the other direction. It follows that the above isomorphisms splice to give an isomorphism $$H^{-2}(\Gamma, X_{*}(S)/X_{*}(T)) \xrightarrow{\sim} \varprojlim_{K/F} H^{1}(\Gamma_{K/F}, X^{*}(Z))^{*}.$$ We then obtain an isomorphism
$$H^{-2}(\Gamma, X_{*}(S)/X_{*}(T)) \xrightarrow{\sim} \varprojlim_{K/F} H^{1}(\Gamma_{K/F}, X^{*}(Z))^{*} = [\varinjlim_{K/F} H^{1}(\Gamma_{K/F}, X^{*}(Z))]^{*} \xrightarrow{\sim} H^{1}(F, Z),$$
where the last isomorphism comes from identifying $\varinjlim_{K/F} H^{1}(\Gamma_{K/F}, X^{*}(Z)) = H^{1}(\Gamma, X^{*}(Z))$ with $H^{1}(F, \underline{X}^{*}(Z))$ and then applying the Poitou-Tate duality pairing for finite commutative group schemes over arbitrary local fields, see \cite[Theorem III.6.10]{Milne}, which is induced by the cup-product followed by the invariant map.  

We now get a commutative diagram 
\[
\begin{tikzcd} H^{-2}(\Gamma, X^{*}(Z)^{*}) \arrow{r} \arrow["\sim"]{d} & H^{-1}(\Gamma, X_{*}(T)) \arrow["\sim"]{d} \\
H^{1}(\Gamma, X^{*}(Z))^{*} \arrow{r} \arrow["\sim"]{d} & H^{1}(\Gamma, X^{*}(T))^{*} \arrow["\sim"]{d} \\
H^{1}(F, Z) \arrow{r} & H^{1}(F, T);
\end{tikzcd}
\]
to see that the top square commutes, note that this can be checked at the finite level (cf. the above argument using the inverse limit of $H^{-2}_{\text{Tate}}$), where it follows from the functoriality of the cup product in Tate cohomology with respect to connecting homomorphisms (cf. \cite[Proposition V.3.3]{Brown}), and the bottom square commutes by the discussion in \cite[\S III.6]{Milne}; see in particular the diagram used in the proof of Lemma 6.11 loc. cit. The right-hand column equals the classical Tate isomorphism discussed in \cite{Tate66}, by the associativity of the cup product in Tate cohomology. 
\end{proof}

\begin{remark}\label{altpairing} This remark concerns how the above discussion relates to the Tate-Nakayama pairing involving $\pi_{0}(\widehat{T}^{\Gamma})$ discussed earlier. Identifying $H^{1}(\Gamma, X^{*}(T)) = H^{1}(\Gamma, X_{*}(\widehat{T}))$ with $\widehat{T}(\mathbb{C})^{\Gamma}/(\widehat{T}(\mathbb{C})^{\Gamma})^{\circ}$ as above, we note that there is a natural pairing 
\begin{equation}\label{twopairings}
H^{-1}(\Gamma, X_{*}(T)) \times \frac{\widehat{T}(\mathbb{C})^{\Gamma}}{(\widehat{T}(\mathbb{C})^{\Gamma})^{\circ}}  = H^{-1}(\Gamma, X^{*}(\widehat{T})) \times \frac{\widehat{T}(\mathbb{C})^{\Gamma}}{(\widehat{T}(\mathbb{C})^{\Gamma})^{\circ}} \to \mathbb{C}^{*}
\end{equation}
 given by evaluating an element on a character. One checks that the following diagram commutes:
\[
\begin{tikzcd}
H^{1}(F, T)  \times  \pi_{0}(\widehat{T}^{\Gamma}) \arrow["f \times \text{id}"]{d} \arrow{r} & \mathbb{C}^{*} \arrow[equals]{d} \\
H^{-1}(\Gamma, X^{*}(\widehat{T})) \times \pi_{0}(\widehat{T}^{\Gamma}) \arrow{r} & \mathbb{C}^{*},
\end{tikzcd}
\]
where the top pairing is the one from \eqref{complextatenak}, the bottom pairing is as in \eqref{twopairings}, and we are using $f$ to denote the isomorphism $H^{1}(F, T) \to H^{-1}(\Gamma, X_{*}(T))$ constructed above. The above argument also holds if we replace $\mathbb{C}$ by an arbitrary algebraically closed field of characteristic zero.
\end{remark}

We conclude this subsection by recalling Langlands duality for tori, which is the following result:

\begin{thm}\label{toriduality} For an $F$-torus $T$, $F$ a local field, we have a canonical isomorphism $$H^{1}_{\text{cts}}(W_{F}, \widehat{T}(\mathbb{C})) \xrightarrow{\sim} \Hom_{\text{cts}}(T(F), \mathbb{C}^{*}).$$ This isomorphism induces a pairing $$H^{1}_{\text{cts}}(W_{F}, \widehat{T}(\mathbb{C}))  \times T(F) \to \mathbb{C}^{*}$$ which is functorial with respect to $F$-morphisms of tori and respects restriction of scalars.
\end{thm}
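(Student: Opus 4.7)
The plan is to reduce the statement to the case $T = \mathbb{G}_m$, in which case it follows from local class field theory, and then to promote this to arbitrary tori via Shapiro's lemma and a resolution by induced tori. Concretely, for $T = \mathbb{G}_m$ we have $\widehat{T} = \mathbb{C}^{*}$ with trivial Galois action, so $H^{1}_{\text{cts}}(W_F, \mathbb{C}^{*}) = \Hom_{\text{cts}}(W_F, \mathbb{C}^{*}) = \Hom_{\text{cts}}(W_F^{\text{ab}}, \mathbb{C}^{*})$, and the reciprocity isomorphism $W_F^{\text{ab}} \xrightarrow{\sim} F^{*}$ of local class field theory (valid for $F$ of any characteristic, see \cite{Locfield}) identifies this with $\Hom_{\text{cts}}(F^{*}, \mathbb{C}^{*})$. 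This provides the base case and simultaneously produces the pairing $H^{1}_{\text{cts}}(W_F, \mathbb{C}^{*}) \times F^{*} \to \mathbb{C}^{*}$.

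Next, I would handle induced tori $T = \text{Res}_{E/F} \mathbb{G}_m$ for $E/F$ a finite separable extension. Here $\widehat{T} = \text{Ind}_{W_E}^{W_F}(\mathbb{C}^{*})$ as a $W_F$-module, and Shapiro's lemma for continuous cohomology yields $H^{1}_{\text{cts}}(W_F, \widehat{T}) \xrightarrow{\sim} H^{1}_{\text{cts}}(W_E, \mathbb{C}^{*})$; combining with the previous step gives a canonical isomorphism to $\Hom_{\text{cts}}(E^{*}, \mathbb{C}^{*}) = \Hom_{\text{cts}}(T(F), \mathbb{C}^{*})$, which by construction is compatible with restriction of scalars.

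For a general $F$-torus $T$, I would choose a short exact sequence of $F$-tori
\[
1 \longrightarrow T \longrightarrow T_{1} \longrightarrow T_{2} \longrightarrow 1
\]
in which $T_{1}$ (and hence $T_{2}$, after possibly enlarging) is a finite product of induced tori $\text{Res}_{E_{i}/F} \mathbb{G}_m$; such a resolution exists over any field, since $X^{*}(T)$ admits a surjection from a finite permutation $\Gamma$-module. Applying the functor $H^{1}_{\text{cts}}(W_F, -)$ to the dual sequence $1 \to \widehat{T}_{2} \to \widehat{T}_{1} \to \widehat{T} \to 1$, together with the snake lemma applied to the $F$-rational points sequence $T(F) \to T_{1}(F) \to T_{2}(F)$ (which is exact on the right because $H^{1}_{\text{fppf}}(F, T)$ vanishes in the appropriate slot for induced tori by Hilbert 90 / Shapiro), yields the desired isomorphism for $T$ from the already-established isomorphisms for $T_{1}$ and $T_{2}$ via the five-lemma. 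Uniqueness and functoriality in $F$-morphisms of tori follow from the naturality of each step.

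The main obstacle I expect is the manipulation of the long exact sequences in continuous cohomology of $W_F$, together with verifying that the resulting isomorphism is independent of the chosen resolution: this amounts to checking that any two induced resolutions are dominated by a third and that the induced maps on $\Hom$-groups agree, which is a standard but careful diagram chase. A secondary subtlety specific to positive characteristic is that $W_F$ acts on $\widehat{T}(\mathbb{C})$ through a \textit{finite} quotient (because $T$ splits over a finite separable extension), so continuous cohomology of $W_F$ with values in $\widehat{T}(\mathbb{C})$ is well-controlled and the formalism of Shapiro's lemma applies just as in characteristic zero; no new ingredients beyond the reciprocity map are required.
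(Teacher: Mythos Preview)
The paper does not give its own proof of this theorem; it simply records the statement and cites Langlands \cite{La}, Theorem~2.a, and Borel's Corvallis article \cite{LBorel}, \S\S9--10. Your sketch is essentially the strategy those references carry out: local class field theory for $\mathbb{G}_m$, Shapiro's lemma for induced tori, and a resolution argument for the general case.

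One point worth sharpening---which you correctly flag as an obstacle---is that a naive five-lemma applied to $1 \to T \to T_{1} \to T_{2} \to 1$ requires control of the boundary map $H^{1}_{\text{cts}}(W_F, \widehat{T}) \to H^{2}_{\text{cts}}(W_F, \widehat{T}_{2})$ on the Weil side, and there is no obvious matching term on the $\Hom_{\text{cts}}(-,\mathbb{C}^{*})$ side. The standard way around this (and what Langlands does) is to first \emph{define} the map $H^{1}_{\text{cts}}(W_F, \widehat{T}) \to \Hom_{\text{cts}}(T(F), \mathbb{C}^{*})$ intrinsically for every torus, so that naturality and independence of resolution are automatic, and only afterwards use the resolution to verify bijectivity. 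In your write-up the map seems to be \emph{constructed} from the resolution, which is why independence of resolution then becomes a genuine burden rather than a formality.
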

\noindent For the proof, see \cite[Theorem 2.a]{La} and \cite[\S 9, \S 10]{LBorel}.

\subsection{Setup}
This section completely follows \cite[\S2]{LS} and \cite[\S 2]{KS2}; its purpose is to explain why the results proved therein still work in our section.

\subsubsection{The splitting invariant}

Fix a connected reductive $F$-group $G$ which we assume to be quasi-split over $F$, and an $F$-splitting $(B_{0}, T_{0}, \{X_{\alpha}\})$, along with an arbitrary maximal $F$-torus $T$ in $G$. Assume further that $G$ is semisimple and simply-connected. For a root $\alpha \in R :=R(G_{F^{s}},T_{F^{s}})$, we take $\Gamma_{\alpha}, \Gamma_{\pm \alpha}$ to be the stabilizers of $\alpha$ and $\{\alpha, -\alpha\}$, respectively, with $F_{\alpha} \supset F_{\pm \alpha}$ the corresponding fixed fields. An \textit{a-data} $\{a_{\alpha}\}_{\alpha \in R}$ for the $\Gamma$-action on $R$ is an element $a_{\alpha} \in F_{\alpha}^{*}$ for each $\alpha \in R$ satisfying $\sigma(a_{\alpha}) = a_{\sigma \alpha}$ for all $\sigma \in \Gamma$ and $a_{-\alpha} = -a_{\alpha}$. It is easy to check that $a$-data exist for our $\Gamma$ action on $R$ above; fix such a datum $\{a_{\alpha}\}_{\alpha \in R}$. Our goal is to define the \textit{splitting invariant} $\lambda_{\{a_{\alpha}\}}(T) \in H^{1}(F, T)$. 

We first choose a Borel subgroup $B$ of $G_{F^{s}}$ containing $T$, and take some $h \in G(F^{s})$ such that $h$ conjugates the pair $((B_{0})_{F^{s}}, (T_{0})_{F^{s}})$ to $(B_{F^{s}}, T_{F^{s}})$. Denote by $\sigma_{T}$ the action of $\sigma \in \Gamma$ on $T_{F^{s}}$ and its transport to $(T_{0})_{F^{s}}$ via $\text{Ad}(h)^{-1}$. For ease of notation, let $\Omega$ denote the absolute Weyl group $W(G_{F^{s}},(T_{0})_{F^{s}})$, with Tits section $n \colon \Omega \to N_{G}(T_{0})(F^{s})$. We then have (as automorphisms of the root system $R(G, T_{0})$) 
$$ \sigma_{T} = \omega_{T}(\sigma) \rtimes \sigma_{T_{0}} \in \Omega \rtimes \Gamma, $$
where $\omega_{T}(\sigma) := \text{Ad}(h \cdot \sigma(h)^{-1})$, viewed as an element of $\Omega$. We may view our $a$-data $\{a_{\alpha}\}_{\alpha \in R}$ as an $a$-data for the (transported) action of $\Gamma$ on $R(G, T_{0})$, and denote it also by $\{a_{\alpha}\}_{\alpha }$. 

For any automorphism $\zeta$ of $R(G, T_{0})$, we define the element $x(\zeta) \in T_{0}(F^{s})$ by 
$$x(\zeta) = \prod_{\alpha \in R(\zeta)} \alpha^{\vee}(a_{\alpha}), $$ where $R(\zeta) = \{\alpha \in R(G, T_{0}) | \alpha > 0, \zeta^{-1}\alpha < 0\}$ where the ordering on $R(G, T_{0})$ is from the base $\Delta$ corresponding to the Borel subgroup $B_{0}$. 

Then the function $$m(\sigma) := x(\sigma_{T})n(\omega_{T}(\sigma))$$ is a 1-cocycle of $\Gamma$ in $N_{G}(T_{0})(F^{s})$ and $$t(\sigma) := h m(\sigma) \sigma(h)^{-1}$$ is a 1-cocycle of $\Gamma$ in $T(F^{s})$, whose class we take to be the splitting invariant $\lambda_{\{a_{\alpha}\}}(T) \in H^{1}(F, T)$---for a proof, see \cite[\S 2.3]{LS} which as \cite{KS2} explains, works in any characteristic. The same references show that $\lambda_{\{a_{\alpha}\}}(T)$ is independent of the choice of $h$ and the Borel subgroup of $G_{F^{s}}$ containing $T_{F^{s}}$. However, it does depend on the $F$-splitting of $G$. 

\subsubsection{$\chi$-data and $L$-embeddings} 

The following discussion is essentially a summary of \cite[\S 2.4-2.6]{LS}. To more closely align with \cite[\S 2.5]{LS}, we replace $F^{s}$ by a finite Galois extension $L$ and denote $\Gamma_{L/F}$ by $\Gamma$ and $W_{L/F}$, the relative Weil group, by $W$. We will fix an arbitrary $\Gamma$-module $X$ which is finitely-generated and free over $\Z$, along with a finite subset $\Gamma$-stable subset $\mathscr{R} \subset X$ closed under inversion. Any $\Gamma$-set is also a $W$-set by means of inflation along the surjection $W \to \Gamma$. Set $\Gamma' := \Gamma \times \Z/2\Z$, where $\Z/2\Z$ acts on $X$ by inversion. As in \S 6.2.1, for $\lambda \in \mathscr{R}$ we define $\Gamma_{+\lambda}$ (resp. $\Gamma_{\pm \lambda}$) to be the stabilizer of $\{\lambda\}$ (resp. $\{\pm \lambda\}$), with corresponding fixed field $F_{\lambda} \subset L$ (resp. $F_{\pm \lambda}$). The reason we want to work in this increased generality is to allow our theory to encompass the actions of $\Gamma$ on the character groups of tori in $\widehat{G}$, a Langlands dual of the connected reductive $F$-group $G$. Define a \textit{gauge} on $\mathscr{R}$ to be a function $p \colon \mathscr{R} \to \{\pm1\}$ such that $p(-\lambda) = -p(\lambda)$.

\begin{Def} We say that a collection of continuous characters $\{\chi_{\lambda} \colon F_{\lambda}^{*} \to \mathbb{C}^{*}\}_{\lambda \in \mathscr{R}}$ is a \textit{$\chi$-data} if it satisfies $\chi_{-\lambda} = \chi_{\lambda}^{-1}$ and $\chi_{\lambda} \circ \sigma^{-1} = \chi_{\sigma \lambda}$ for all $\sigma \in \Gamma$, and if $[F_{\lambda} \colon F_{\pm \lambda}] = 2$, $\chi_{\lambda}$ extends the quadratic character $F_{\pm \lambda}^{*} \to \{\pm 1\}$ associated to the quadratic extension $F_{\lambda}$ that we obtain from local class field theory. 
\end{Def}
\noindent It is straightforward to check that we can always find a $\chi$-data; fix such a $\chi$-data $\{\chi_{\lambda}\}_{\lambda \in \mathscr{R}}$. 

Assume for the moment that $\Gamma'$ acts transitively on $\mathscr{R}$; fix $\lambda \in \mathscr{R}$, set $\Gamma_{\pm} := \Gamma_{\pm \lambda}$, and choose representatives $\sigma_{1}, \dots \sigma_{n}$ for $\Gamma_{\pm} \setminus \Gamma$. We set $W_{+} := W_{L/F_{+}}, W_{\pm} = W_{L/F_{\pm}}$. We may view the character $\chi_{\lambda}$ as a (continuous) character on $W_{+}$, by taking $\chi_{\lambda} \circ \textbf{a}_{L/F_{+}}$, where $\textbf{a}_{L/F_{+}} \colon W_{+} \to F_{+}^{*}$ is the Artin reciprocity map. 

Define a gauge $p_{0}$ on $\mathscr{R}$ by $p_{0}(\lambda') = 1$ if and only if $\lambda' = \sigma_{i}^{-1} \lambda$ for some $1 \leq i \leq n$. Choose $w_{1}, \dots, w_{n} \in W$ such that $w_{i}$ maps to $\sigma_{i}$ under the surjection $W \to \Gamma$. If $W_{\pm}$ (resp. $W_{+}$) denotes the stabilizer of $\{\pm \lambda\}$ (resp. $\{\lambda\}$) under the inflated $W$-action, then the $w_{i}$ are representatives for the quotient $W_{\pm} \setminus W$. For $w \in W$, define $u_{i}(w) \in W_{\pm}$ by $$w_{i}w = u_{i}(w)w_{j}, \hspace{1cm} i=1, \dots, n$$ (for some $1 \leq j \leq n$ depending on $i$). Choose representatives $v_{0} \in W_{+}$ and $v_{1} \in W_{\pm}$ for $W_{+} \setminus W_{\pm}$ if $[F_{\lambda} \colon F_{\pm \lambda}] = 2$, and otherwise just pick some $v_{0} \in W_{+}$. For $u \in W_{\pm}$ we define $v_{0}(u) \in W_{+}$ by $v_{0} \cdot u = v_{0}(u) \cdot v_{i'},$ where $i'=0$ or $1$ depending on if $u \in W_{+}$ or not. For $w \in W$ we set $$r_{p_{0}}(w) = \prod_{i=1, \dots, n} [\chi_{\lambda}(v_{0}(u_{i}(w))) \otimes \lambda_{i}] \in \mathbb{C}^{*} \otimes_{\Z} X,$$ where $\lambda_{i} := \sigma_{i}^{-1} \lambda$ and we view $\mathbb{C}^{*} \otimes_{\Z} X$ as a $\Gamma$-module (and thus a $W$-module) via the trivial action on the first tensor factor. We view $r_{p_{0}}$ as a 1-cochain of $W$ valued in $\mathbb{C}^{*} \otimes_{\Z} X.$ 

For any two gauges $p,p'$ on $\mathscr{R}$, define for $\sigma \in \Gamma$:
$$s_{p/p'}(\sigma) := (\prod_{(p)} -1 \otimes \lambda) \cdot  \prod_{(p')} (-1 \otimes \lambda) \in \mathbb{C}^{*} \otimes_{\mathbb{Z}} X$$
where $(p)$ is the set of $\lambda \in \mathscr{R}$ such that $p(\lambda)=1, p(\sigma^{-1}\lambda) = -1$ and $p'(\lambda) = p'(\sigma^{-1} \lambda) = 1$, and $(p')$ is the set of $\lambda$ such that $p(\lambda) = p(\sigma^{-1}\lambda) = 1$, $p'(\lambda) = -1$, and $p'(\sigma^{-1} \lambda) = 1$. For an arbitrary gauge $p$, define $r_{p} = s_{p/p_{0}} r_{p_{0}}$.

We have the following result, which will be used when we look at the uniqueness of our $L$-embeddings:

\begin{lem}\label{cor2.5} Suppose $\{\xi_{\lambda}\}_{\lambda \in \mathscr{R}}$ satisfies the conditions of a $\chi$-data, except that for $\lambda$ with $[F_{\lambda} \colon F_{\pm \lambda}] = 2$ we require that $\xi_{\lambda}$ is trivial on $F_{\pm \lambda}^{*}$ rather than extending the quadratic character. Then $$c(w) = \prod_{i=1, \dots, n} [\xi_{\lambda}(v_{0}(u_{i}(w))) \otimes \lambda_{i}] \in \mathbb{C}^{*} \otimes_{\Z} X$$ is a 1-cocycle of $W$ in $\mathbb{C}^{*}\otimes_{\Z} X$ whose cohomology class does not depend on any choices. 
\end{lem}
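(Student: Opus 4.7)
The plan is to follow the proof of Corollary 2.5 in \cite{LS} verbatim, observing that all of its ingredients are purely formal manipulations with continuous characters of Weil groups and with the relative class field theory reciprocity maps $\textbf{a}_{L/F_{+}}$, $\textbf{a}_{L/F_{\pm}}$, all of which are available over local fields of any characteristic. Thus the positive-characteristic hypothesis on $F$ creates no difficulty, and the non-$\chi$-data hypothesis (triviality of $\xi_{\lambda}$ on $F_{\pm\lambda}^{*}$ when $[F_{\lambda}:F_{\pm\lambda}]=2$) is used only in the step addressing independence from the representative $v_{1}$.

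First I would verify that $c$ is a 1-cocycle by direct computation. Given $w, w' \in W$, one uses the defining relations $w_{i} w = w u_{i}(w)$ to derive $u_{i}(ww') = u_{i'}(w) \cdot u_{i}(w')$ for the appropriate reindexing $i \mapsto i'$ determined by $w$, and similarly decomposes $v_{0}(u_{i}(w) u_{j}(w'))$ using the relation $v_{0}\cdot u = v_{0}(u) \cdot v_{i'}$. Keeping careful track of indices, the product defining $c(ww')$ splits (after a reindexing of the factors that uses the $W$-equivariance $\chi_{\lambda}\circ\sigma^{-1} = \chi_{\sigma\lambda}$ transferred to $\xi$) into $c(w) \cdot (w \cdot c(w'))$, where $w$ acts through its image in $\Gamma$ on $\mathscr{R}$ and trivially on $\mathbb{C}^{*}$. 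This is formally identical to the cocycle verification for $r_{p}$ on pages 230--232 of \cite{LS}.

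Next I would check independence of the cohomology class from the various choices, working one choice at a time. Replacing each $w_{i}$ by $w_{i} \cdot t_{i}$ with $t_{i} \in W_{\pm}$ changes $c$ by the coboundary of a $0$-cochain valued in $\mathbb{C}^{*} \otimes_{\Z} X$ built from $\xi_{\lambda}(v_{0}(t_{i}))$, and a parallel computation handles a change of $v_{0} \in W_{+}$. The subtle step is a change of $v_{1}$ (when $[F_{\lambda}:F_{\pm\lambda}]=2$): here the alteration differs by a factor of the form $\xi_{\lambda}(w)$ with $w$ lying in $W_{\pm}$, and $\textbf{a}_{L/F_{\pm}}$ sends $w$ into $F_{\pm\lambda}^{*}$; the triviality of $\xi_{\lambda}$ on $F_{\pm\lambda}^{*}$ then forces the change to be a coboundary. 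Finally, replacing $\lambda$ by $\tau \lambda$ for $\tau\in\Gamma'$ amounts to conjugating the whole construction by $\tau$, which again alters $c$ only by a coboundary using $\xi_{\tau\lambda} = \xi_{\lambda} \circ \tau^{-1}$ and $\xi_{-\lambda}=\xi_{\lambda}^{-1}$.

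The main obstacle is purely bookkeeping: the indexing shifts that occur when $w$ permutes the cosets in $\Gamma_{\pm\lambda}\backslash\Gamma$ must be tracked simultaneously in the $\mathbb{C}^{*}$- and $X$-factors of $\mathbb{C}^{*}\otimes_{\Z}X$, and the triviality condition on $F_{\pm\lambda}^{*}$ has to be invoked at exactly the right spots to convert the apparent discrepancies into coboundaries. Since everything reduces to identities between characters of $W$, no properties of $F$ beyond those encapsulated by local class field theory are used, and the argument goes through unchanged in the function-field setting.
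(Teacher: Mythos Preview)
Your proposal is correct and takes essentially the same approach as the paper: both reduce the statement to the purely group-cohomological argument of \cite{LS}, Corollary 2.5.B (via Lemma 2.5.A and the auxiliary Lemma 2.4.A), observing that nothing there depends on the characteristic of $F$. The paper's proof is simply a citation to these results with that observation, while you have sketched out the content of the cited argument; the substance is the same.
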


\begin{proof} This is \cite[Corollary 2.5.B]{LS}, which follows from Lemma 2.5.A loc. cit. These results, along with the auxiliary Lemma 2.4.A, are proved in a purely group-cohomological setting, and thus the same proofs work verbatim.
\end{proof}
\noindent If the action of $\Gamma'$ is not transitive, then we define $r_{p_{0}}$, $r_{p}$, and $c$ for each of the $\Gamma'$-orbits on $\mathscr{R}$ and take the product of these functions over all such orbits; the resulting functions on $W$ are again denoted by $r_{p}$ and $c$. 

We now take $G$ a connected reductive group defined over $F$ with maximal $F$-torus $T$ with root system $R := R(G_{F^{s}}, T_{F^{s}})$ and a Langlands dual group $\widehat{G}$. In addition, we fix a $\Gamma$-stable splitting $(\mathscr{B}, \mathscr{T}, \{\textbf{X}\})$ of $\widehat{G}$. We shall attach to a $\chi$-data $\{\chi_{\alpha}\}_{\alpha \in R}$ for $T$ a canonical $\widehat{G}$-conjugacy class of \textit{admissible embeddings} $\prescript{L}{}T \to \prescript{L}{}G$; recall that a homomorphism of $W$-extensions $\xi \colon \prescript{L}{}T \to \prescript{L}{}G$ is called an admissible embedding if the map $\widehat{T} \to \mathscr{T}$ induced by $\xi$ corresponds to the isomorphism coming from the pair $(\mathscr{B}, \mathscr{T})$ and a choice of Borel subgroup $B$ of $G_{F^{s}}$ containing $T_{F^{s}}$. We replace $F^{s}$ by a finite Galois extension $L/F$ splitting $T$; there is no harm in doing this for the purposes of constructing such an admissible embedding. The $\widehat{G}$-conjugacy class of such an embedding is independent of the choice of $B$ and splitting of $\widehat{G}$. 

Fix a Borel subgroup $B$ of $G_{F^{s}}$ containing $T_{F^{s}}$ as above, giving an isomorphism $\widehat{T} \xrightarrow{\xi} \mathscr{T}$. It is clear that such an embedding $\xi \colon \prescript{L}{}T \to \prescript{L}{}G$, is determined by its values on $W$ (via the canonical splitting $W \to \widehat{T} \rtimes W$). As in \S 6.2.1, we may use $\xi$ to transport the $\Gamma$-action on $\widehat{T}$ to $\mathscr{T}$, and for $\gamma \in \Gamma$ will denote this automorphism of $\mathscr{T}$ by $\sigma_{T}$. We have that $w \in W$ transports via $\xi$ to an action on $\mathscr{T}$ given by $$\omega_{T}(\sigma) \rtimes w,$$ where $w \mapsto \sigma \in \Gamma$ and $\omega_{T}(\sigma) \in W(\widehat{G}, \mathscr{T}).$

Our goal will be to construct a homomorphism $\xi \colon W \to \prescript{L}{}G$ giving rise to our desired embedding. As explained in \cite{LS}, it's enough that each $\text{Ad}(\xi(w))$ acts on $\mathscr{T}$ as $\sigma_{T}$, where $w \mapsto \sigma \in \Gamma$. First, we note that our $\chi$-data for the action of $\Gamma$ on $R$ yields a $\chi$-data for the $\xi$-transported action of $\Gamma$ on $R(\widehat{G}, \mathscr{T})^{\vee}$; we define a gauge $p$ on the $\Gamma$-set $R(\widehat{G}, \mathscr{T})^{\vee}$ by setting $p(\beta^{\vee}) = 1$ if and only if $\beta$ is a root of $\mathscr{T}$ in $\mathscr{B}$, and (along with our transported $\chi$-data) get an associated 1-cochain $r_{p} \colon W \to \mathbb{C}^{*} \otimes_{\Z} X_{*}(\mathscr{T})$, which we view as a 1-cochain $r_{p} \colon W \to \mathscr{T}(\mathbb{C})$ using the canonical pairing. Let $n \colon W(\widehat{G}, \mathscr{T}) \to N_{\widehat{G}}(\mathscr{T})(\mathbb{C})$ denote the Tits section associated to our splitting of $\widehat{G}$. 
Finally, for $w \in W$ we set $$\xi(w) = [r_{p}(w) \cdot n(\omega_{T}(\sigma))] \rtimes w \in \prescript{L}{}G.$$ We claim that this map satisfies the desired properties.

The verification that this map works comes down to a 2-cocycle arising from the Tits section. For $w \in W$, set $n(w) := n(\omega_{T}(\sigma)) \rtimes w$; we have for $w_{1}, w_{2} \in W$ the equality $$n(w_{1})n(w_{2})n(w_{1}w_{2})^{-1} = t_{p}(\sigma_{1}, \sigma_{2}),$$ where $w_{i} \mapsto \sigma_{i}$ and $t_{p}$ is the 2-cocycle of $\Gamma$ valued in $\mathbb{C}^{*} \otimes_{\Z} X_{*}(\mathscr{T}) \mapsto \mathscr{T}(\mathbb{C})$ as defined in \cite[Lemma 2.1.B]{LS}, Lemma 2.1.B. We then have the following crucial identity:

\begin{lem}\label{rpeq} In our above situation, the differential of $r_{p}^{-1} \in C^{1}(W, \mathscr{T}(\mathbb{C}))$ equals $\text{Inf}(t_{p}) \in Z^{2}(W, \mathscr{T}(\mathbb{C}))$ (where the above groups are given the $\xi$-transported $W$-action). 
\end{lem}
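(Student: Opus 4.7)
The plan is to show that the asserted cocycle identity is equivalent to the statement that the map $\xi \colon W \to \prescript{L}{}G$ defined by $\xi(w) := r_p(w) \cdot n(w)$ is a group homomorphism. Since this reformulation is purely formal, the bulk of the work is to verify the homomorphism property, which proceeds exactly as in \cite{LS}, \S 2.5--2.6. Crucially, neither the Tits section (which is defined in any characteristic, via the $\zeta_\alpha \colon SL_2 \to G_\alpha$) nor the construction of $r_p$ (which is purely a formal operation on $\Gamma$-sets and characters of local fields) requires anything about the characteristic of $F$.

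First I would carry out the reformulation. Computing in $\prescript{L}{}G$,
\[
\xi(w_1)\xi(w_2) \;=\; r_p(w_1) \cdot n(w_1) \cdot r_p(w_2) \cdot n(w_2).
\]
Conjugation by $n(w_1)$ on $\mathscr{T}(\mathbb{C}) \subset \prescript{L}{}G$ is by construction the $\xi$-transported $W$-action $w_1 \mapsto \omega_T(\sigma_1) \rtimes \sigma_1$, so $n(w_1) r_p(w_2) n(w_1)^{-1} = \prescript{w_1}{}r_p(w_2)$. Combining with the Tits-section identity $n(w_1)n(w_2) = t(\sigma_1,\sigma_2) \cdot n(w_1 w_2)$ (and using commutativity of $\mathscr{T}(\mathbb{C})$), we obtain
\[
\xi(w_1)\xi(w_2) \;=\; r_p(w_1) \cdot \prescript{w_1}{}r_p(w_2) \cdot t(\sigma_1,\sigma_2) \cdot n(w_1 w_2).
\]
Thus $\xi(w_1)\xi(w_2) = \xi(w_1 w_2)$ if and only if $r_p(w_1) \cdot \prescript{w_1}{}r_p(w_2) \cdot r_p(w_1 w_2)^{-1} = t(\sigma_1,\sigma_2)^{-1}$, which (again by commutativity of $\mathscr{T}(\mathbb{C})$) is precisely the assertion $dr_p^{-1}(w_1,w_2) = \mathrm{Inf}(t)(w_1,w_2)$.

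Second, I would prove that $\xi$ is a homomorphism. By the $\Gamma'$-orbit decomposition of $\mathscr{R}$ and the multiplicativity of the defining formula for $r_p$, it suffices to treat one orbit. For that, the argument of \cite{LS}, \S 2.6, expresses the Tits-section 2-cocycle via the rank-one subgroups $G_\alpha$ and matches it, via Lemma 2.5.A loc. cit., against the product $\prod_i \chi_\lambda(v_0(u_i(w)))\otimes \lambda_i$ appearing in $r_p$. The key inputs are: the comparison of the sets $R(\omega_T(\sigma_1))$, $\omega_T(\sigma_1) R(\omega_T(\sigma_2))$, and $R(\omega_T(\sigma_1 \sigma_2))$; the behaviour of $\alpha^\vee(a_\alpha)$ under Weyl translations with the sign-correction $a_{-\alpha} = -a_\alpha$; and the defining property of $\chi$-data that $\chi_\lambda$ extends the quadratic character in the non-split case.

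The main obstacle is purely bookkeeping: keeping track of the three different inversion sets, the reindexing $w_i w = w \cdot u_i(w)$ that changes coset representatives, and the interaction of the Tits section with $SL_2$-commutation relations when one writes $\omega_T(\sigma_1 \sigma_2)$ as a product of reflections. None of these steps uses anything beyond formal group-cohomological manipulations and properties of $SL_2$ over $F^s$, so the computation in \cite{LS} Lemma 2.6.A transfers verbatim to our setting; I would cite that reference rather than reproduce the several pages of explicit manipulation.
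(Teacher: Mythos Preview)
Your approach is essentially the same as the paper's: both reduce to citing \cite{LS} and observing that the relevant arguments are either purely group-cohomological or root-theoretic over $F^{s}$, hence characteristic-free. Your reformulation step (equivalence of $dr_p^{-1} = \mathrm{Inf}(t)$ with $\xi$ being a homomorphism) is correct and is exactly the converse of what the paper does immediately after the lemma.

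Two small corrections to your citations and inputs. First, the result in \cite{LS} that computes the Tits-section $2$-cocycle $t$ in terms of the root system is Lemma~2.1.A, not Lemma~2.6.A; the latter concerns independence of the choice of Borel and is not what you need here. The paper's proof is precisely ``apply \cite{LS} Lemma~2.1.A to express $t$, then recognize the result as a special case of \cite{LS} Lemma~2.5.A,'' which matches your outline once the reference is fixed. Second, the $a$-data and the elements $\alpha^{\vee}(a_\alpha)$ play no role in this lemma: they enter the splitting invariant $\lambda_{\{a_\alpha\}}(T)$ and the factor $\Delta_I$, not the construction of $r_p$ or the $L$-embedding $\xi$. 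The only inputs you need are the Tits section (via the $\zeta_\alpha \colon SL_2 \to G_\alpha$) and the $\chi$-data; drop the $a$-data clause from your list of key inputs.
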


\begin{proof} After applying \cite[Lemma 2.1.A]{LS}, this reduces to a special case of Lemma 2.5.A loc. cit., which is proved in an purely group-cohomological setting. Note that, as explained in \S 2.6 loc. cit., Lemma 2.5.A applies because $\{\chi_{\alpha}^{-1}\}_{\alpha}$ is also a $\chi$-datum for the $\xi$-transported action of $\Gamma$ on $R(\widehat{G}, \widehat{T})^{\vee}$, and $r_{p}^{-1}$ is the $1$-cochain constructed using this $\chi$-datum as above. The proof of \cite[Lemma 2.1.A]{LS} is root-theoretic, and therefore works in our setting as well.
\end{proof}

With the above lemma in hand, it is straightforward to check that our $\xi \colon W \to \prescript{L}{}G$ defined above is a homomorphism that induces an admissible embedding $\xi \colon \prescript{L}{}T \to \prescript{L}{}G$. We conclude this section with a discussion of how the admissible embedding $\xi$ depends on the choices we have made during its construction. 

\begin{fact}\label{fact1} Suppose that we replace our $\Gamma$-splitting by the $g \in \widehat{G}^{\Gamma}$-conjugate $(\mathscr{B}^{g}, \mathscr{T}^{g}, \{X^{g}\})$ (see \cite[1.7]{Kott84}). If $\text{Ad}(g)^{\sharp} \colon X_{*}(\mathscr{T}) \to X_{*}(\mathscr{T}^{g})$ is the induced isomorphism of cocharacter groups, then for $\lambda \in X_{*}(\mathscr{T})$ the trivial equality $\prescript{\sigma_{T}}{}(\text{Ad}(g^{-1})^{\sharp}\lambda) = \text{Ad}(g^{-1})^{\sharp}(\prescript{\sigma_{T}}{}\lambda)$ gives that for $w \in W$, $r_{p^{g}}(w) = g r_{p}(w) g^{-1}$. One checks that $n(w)$ is also replaced by $g n(w) g^{-1}$, and so the embedding $\xi$ is replaced by $\text{Ad}(g) \circ \xi$, which is in the same $\widehat{G}^{\Gamma}$-conjugacy class as $\xi$. 
\end{fact}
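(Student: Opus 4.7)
The plan is to unwind how each ingredient in the construction of $\xi$ transforms under replacing the $\Gamma$-splitting $(\mathscr{B}, \mathscr{T}, \{X\})$ by its $g$-conjugate $(\mathscr{B}^g, \mathscr{T}^g, \{X^g\})$ for $g \in \widehat{G}^{\Gamma}$. The isomorphism $\text{Ad}(g) \colon \mathscr{T} \xrightarrow{\sim} \mathscr{T}^g$ identifies $R(\widehat{G}, \mathscr{T})^{\vee}$ with $R(\widehat{G}, \mathscr{T}^g)^{\vee}$ via $\lambda \mapsto \lambda^g := \text{Ad}(g)^{\sharp}\lambda$, transports the gauge $p$ to $p^g$, and transports the $\chi$-data attached to the first root system (via $\xi$) to the one attached to the second (via $\xi^g := \text{Ad}(g) \circ \xi$). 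Because $g$ is $\Gamma$-fixed, $\text{Ad}(g)^{\sharp}$ commutes with the transported Galois action — this is the trivial equality flagged in the statement — so the Weyl-group cocycle $\omega_T(\sigma)$ computed in $W(\widehat{G}, \mathscr{T}^g)$ matches, under $\text{Ad}(g)^{\sharp}$, the original cocycle in $W(\widehat{G}, \mathscr{T})$.

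Next I would verify the two stated transformations. For $r_{p^g}$, the scalar factors $\chi_{\lambda}(v_0(u_i(w)))$ depend only on the $W$-theoretic data and the choice of representatives for $\Gamma_{\pm} \setminus \Gamma$, so they are unaffected; the only change is that each $\lambda_i$ is replaced by $\lambda_i^g$ in the tensor $\mathbb{C}^* \otimes_{\mathbb{Z}} X_*(\mathscr{T}^g)$. Passing through the canonical pairing
\begin{equation*}
\mathbb{C}^* \otimes_{\mathbb{Z}} X_*(\mathscr{T}^g) \longrightarrow \mathscr{T}^g(\mathbb{C}),
\end{equation*}
this gives $r_{p^g}(w) = g r_p(w) g^{-1}$. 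For the Tits section, the $SL_2$-embedding $\zeta_{\alpha^g}$ attached to the pinning vector $X_{\alpha}^g$ is exactly $\text{Ad}(g) \circ \zeta_{\alpha}$, so $n^g(r_{\alpha^g}) = g n(r_{\alpha}) g^{-1}$; the reduced-expression recipe for $n$ then propagates this to $n^g(\omega) = g n(\omega) g^{-1}$ for every $\omega \in W(\widehat{G},\mathscr{T})$.

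Finally, combining the two identities and using that $g \in \widehat{G}^{\Gamma}$ commutes with the $W$-action in $\prescript{L}{}G = \widehat{G}(\mathbb{C}) \rtimes W$, I obtain
\begin{equation*}
\xi^g(w) = \bigl[r_{p^g}(w) \cdot n^g(\omega_T(\sigma))\bigr] \rtimes w = g\bigl[r_p(w) \cdot n(\omega_T(\sigma))\bigr]g^{-1} \rtimes w = \text{Ad}(g)(\xi(w)),
\end{equation*}
which is the claimed identity of admissible $L$-embeddings, and in particular shows $\xi$ and $\xi^g$ lie in the same $\widehat{G}$-conjugacy class. The only delicate point is the compatibility of the two $\chi$-data under the transport isomorphism, which reduces to the $\Gamma$-equivariance of $\text{Ad}(g)^{\sharp}$ noted in the first paragraph; the remaining manipulations are routine unwindings of the definitions of $r_p$ and of the Tits section, so no serious obstacle arises.
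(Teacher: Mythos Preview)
Your proposal is correct and follows exactly the approach the paper itself takes: the paper presents this as a Fact with the argument embedded in the statement (there is no separate proof), and you have simply unwound those same three steps---the transformation of $r_p$, of the Tits section, and the combination into $\text{Ad}(g)\circ\xi$---in more detail. The only minor slip is that you conclude $\xi$ and $\xi^g$ lie in the same $\widehat{G}$-conjugacy class, whereas the paper notes the sharper (and immediate, since $g\in\widehat{G}^{\Gamma}$) statement that they lie in the same $\widehat{G}^{\Gamma}$-conjugacy class.
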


\begin{fact}\label{fact2} The conjugacy class of $\xi$ is also independent of our choice of Borel subgroup $T_{F^{s}} \subset B \subset G_{F^{s}}$. If $B'$ is another such subgroup, we may find $v \in N_{G}(T)(F^{s})$ such that $vBv^{-1} = B$, and denote the corresponding admissible embedding by $\xi'$. Transporting $\restr{\text{Ad}(v)}{T}$ to $W(\widehat{G}, \mathscr{T})$ using $\xi$, we obtain an element $\mu \in W(\widehat{G}, \mathscr{T})$. Then it is proved in \cite[Lemma 2.6.A]{LS} (the proof of which relies on Lemmas 2.1.A and 2.3.B loc. cit.---we have already discussed the former. The latter depends on torus normalizers, root theory, $a$-data, and the Tits section, which may be dealt with over $F^{s}$, so the proof loc. cit. works verbatim) that we have the equality $$\text{Ad}(g^{-1}) \circ \xi = \xi',$$ where $g \in N_{\widehat{G}}(\mathscr{T})(\mathbb{C})$ acts on $\mathscr{T}$ as $\mu$, giving the claim. 
\end{fact}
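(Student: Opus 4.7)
The plan is to show that both factors of $\xi(w)=[r_{p}(w)\cdot n(\omega_{T}(\sigma))]\rtimes w$ transform, upon replacing $B$ by $B'=vBv^{-1}$, by conjugation by a single element $g\in N_{\widehat{G}}(\mathscr{T})(\mathbb{C})$ lying over $\mu$. The natural choice is $g:=n(\mu)$, the Tits-section lift with respect to the fixed $\Gamma$-splitting of $\widehat{G}$. First I would unpack how the admissible isomorphism $\widehat{T}\xrightarrow{\sim}\mathscr{T}$ depends on the Borel: since the $B'$-simple coroots of $T$ are the $\text{Ad}(v)$-images of the $B$-simple coroots, the new isomorphism differs from the old one by post-composition with $\mu^{-1}$, by the very definition of $\mu$. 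Consequently the $\xi$-transported Galois action becomes $\sigma_{T'}=\mu^{-1}\sigma_{T}\mu$, and the Weyl cocycle transforms as $\omega_{T'}(\sigma)=\mu^{-1}\omega_{T}(\sigma)\sigma_{T}(\mu)$.

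Next I would track how $r_{p}$ changes. Switching the Borel replaces the gauge $p$ on $R(\widehat{G},\mathscr{T})^{\vee}$ by $p':=p\circ\mu^{-1}$, so the new $1$-cochain is $r_{p'}$. The comparison of $r_{p}$ and $r_{p'}$ is purely group-cohomological, carried out in Lemmas 2.4.A and 2.5.A of \cite{LS} (neither of which uses the characteristic of the base field), and yields an explicit $\mathscr{T}(\mathbb{C})$-valued correction term coming from the coboundary that arises when passing between the two gauges. I would then compare $n(\omega_{T}(\sigma))$ and $n(\omega_{T'}(\sigma))$ via the Tits-section relation $n(\omega_{1})n(\omega_{2})=n(\omega_{1}\omega_{2})\cdot t(\omega_{1},\omega_{2})$; inserting the formula for $\omega_{T'}(\sigma)$ and invoking Lemma~2.3.B of \cite{LS} (a rank-one $SL_{2}$-computation that is valid in any characteristic) gives $n(\omega_{T'}(\sigma))=n(\mu)^{-1}n(\omega_{T}(\sigma))\sigma(n(\mu))$ up to a further $\mathscr{T}(\mathbb{C})$-valued term.

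The heart of the proof---and where I expect the real work---is to check that this second correction term cancels exactly the one from the gauge change, so that the three steps assemble to give $\xi'(w)=n(\mu)^{-1}\xi(w)n(\mu)$ for every $w\in W$. All the ingredients used (root systems, torus normalizers, the Tits section, and the relevant group-cohomological lemmas) are defined over $F^{s}$ and are insensitive to the characteristic, so no new input is required beyond transcribing the argument of \cite{LS}, \S 2.6.
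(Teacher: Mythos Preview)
Your proposal is correct and follows essentially the same route as the paper: both reduce to \cite{LS}, Lemma 2.6.A, with the cancellation between the gauge-change correction for $r_{p}$ and the Tits-section correction for $n(\omega_{T}(\sigma))$ being the core computation (governed by Lemmas 2.1.A and 2.3.B loc.\ cit.). The paper simply cites \cite{LS} and observes that the ingredients are characteristic-free, whereas you have unpacked what that argument actually does; your choice $g=n(\mu)$ is a valid representative for the $g$ in the paper's statement. One small slip: in your formula $\omega_{T'}(\sigma)=\mu^{-1}\omega_{T}(\sigma)\sigma_{T}(\mu)$ the last factor should be $\sigma(\mu)$ (the standard $L$-group action, as you correctly use two lines later), not $\sigma_{T}(\mu)$.
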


\begin{fact}For dependence on the $\chi$-data $\{\chi_{\alpha}\}$ for the $\Gamma$-action on $R(G_{F^{s}}, T_{F^{s}})$, we fix another $\chi$-data $\{\chi'_{\alpha}\}$, and we write $\chi'_{\alpha} = \zeta_{\alpha} \cdot \chi_{\alpha}$, where $\zeta_{\alpha}$ is a character of $F_{\alpha}$. The set $\{\zeta_{\alpha}\}_{\alpha \in R}$ then satisfies the hypotheses of Lemma \ref{cor2.5} (where, in the notation of that lemma, $\mathscr{R} = X_{*}(\mathscr{T})$ with $\xi$-transported $\Gamma$-action); we then obtain a 1-cocycle $c \in Z^{1}(W, \mathscr{T}(\mathbb{C}))$ whose class $[c] \in H^{1}(W, \mathscr{T}(\mathbb{C}))$ is independent of any choices made in the construction of $c$ from $\{\zeta_{\alpha}\}$. Then it's immediate from the construction of $c$ that the embedding $\xi$ is replaced by $t \rtimes w \mapsto c(w) \cdot \xi(t \rtimes w)$. 
\end{fact}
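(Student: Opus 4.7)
The plan is to carry out three quick steps: verify that $\{\zeta_\alpha\}$ satisfies the hypotheses of Lemma \ref{cor2.5}, invoke that lemma to manufacture $c$, and then compare the two embedding formulas directly.

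For the first step, the relations $\zeta_{-\alpha} = \zeta_\alpha^{-1}$ and $\zeta_\alpha \circ \sigma^{-1} = \zeta_{\sigma\alpha}$ follow by dividing the corresponding relations for $\{\chi'_\alpha\}$ by those for $\{\chi_\alpha\}$. The only nontrivial condition concerns an $\alpha$ with $[F_\alpha : F_{\pm\alpha}] = 2$: by the very definition of a $\chi$-data, both $\chi_\alpha$ and $\chi'_\alpha$ restrict on $F_{\pm\alpha}^*$ to the same quadratic character attached via local class field theory to the extension $F_\alpha / F_{\pm\alpha}$, so their ratio $\zeta_\alpha$ is trivial on $F_{\pm\alpha}^*$, which is precisely the relaxed hypothesis of Lemma \ref{cor2.5}.

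Applying Lemma \ref{cor2.5} to $\{\zeta_\alpha\}$ with respect to the $\xi$-transported $\Gamma$-action on $\mathscr{R} = X_*(\mathscr{T})$, and identifying $\mathbb{C}^* \otimes_\Z X_*(\mathscr{T})$ with $\mathscr{T}(\mathbb{C})$ via the canonical pairing, then produces a 1-cocycle $c \in Z^1(W, \mathscr{T}(\mathbb{C}))$ whose cohomology class is independent of the orbit representatives $\sigma_i$, lifts $w_i \in W$, and coset representatives $v_0, v_1$. This is the essential input and it is granted to us.

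The last step is purely formal. Using the same auxiliary data to build $r_p$ from $\{\chi_\alpha\}$ and $r'_p$ from $\{\chi'_\alpha\}$, multiplicativity of the defining formula in the character argument yields, on each $\Gamma'$-orbit,
$$r'_p(w) = \prod_{i} \bigl[\chi'_\lambda(v_0(u_i(w))) \otimes \lambda_i\bigr] = c(w) \cdot r_p(w),$$
and the same holds after multiplying over all orbits. Since the factor $n(\omega_T(\sigma)) \rtimes w$ coming from the Tits section does not involve the $\chi$-data, this gives $\xi'(w) = c(w) \cdot \xi(w)$ on $W$. Both $\xi$ and $\xi'$ restrict on $\widehat{T}(\mathbb{C})$ to the same isomorphism $\widehat{T} \xrightarrow{\sim} \mathscr{T}$ determined by the Borel $B$, so using commutativity of $\mathscr{T}(\mathbb{C})$ we obtain $\xi'(t \rtimes w) = \xi(t) \cdot \xi'(w) = c(w) \cdot \xi(t \rtimes w)$ for every $t \rtimes w \in \prescript{L}{}T$. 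There is no real obstacle; the only bookkeeping point is consistently keeping the identification $\mathbb{C}^* \otimes_\Z X_*(\mathscr{T}) \cong \mathscr{T}(\mathbb{C})$ across the three constructions of $r_p$, $r'_p$, and $c$.
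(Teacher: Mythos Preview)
Your proposal is correct and follows exactly the route the paper indicates; the paper treats this Fact as self-evident (``it's immediate from the construction of $c$''), and you have simply unpacked that claim by checking the hypotheses on $\{\zeta_\alpha\}$, invoking Lemma~\ref{cor2.5}, and reading off $r'_p = c \cdot r_p$ from the defining formula. There is nothing to add.
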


\begin{fact}\label{changeoftori}Finally, suppose that we take another $F$-torus $T'$, and take $g \in G(F^{s})$ such that $\text{Ad}(g)$ is an $F$-isomorphism from $T$ to $T'$. Note that $\text{Ad}(g)$ identifies a $\chi$-data $\{\chi_{\alpha}\}$ for $T$ with $\chi$-data $\{\chi'_{\beta}\}$ for $T'$, since the induced map on character groups is $\Gamma$-equivariant; take $\{\chi'_{\beta}\}$ to be the $\chi$-data for $T'$ used to construct any admissible $L$-embeddings. The map $\text{Ad}(g)$ extends to an isomorphism of $L$-groups $\lambda_{g} \colon \prescript{L}{}T \to \prescript{L}{}T'$. Let $\xi$ be the embedding $\prescript{L}{}T \to \prescript{L}{}G$ constructed above, determined by a choice of Borel subgroup $B$ containing $T_{F^{s}}$. Then we have the equality of admissible embeddings $\xi \circ \lambda_{g} = \xi'$, where $\xi'$ is the admissible embedding $\prescript{L}{}T' \to \prescript{L}{}G$ constructed above corresponding to the $\chi$-data $\{\chi'_{\beta}\}$ and the Borel subgroup $gBg^{-1}$ containing $(T')_{F^{s}}$. We conclude that the $\widehat{G}$-conjugacy class of embeddings $\prescript{L}{}T \to \prescript{L}{}G$ attached to the $\chi$-data $\{\chi_{\alpha}\}$ for $T$ is equivalent to the class of embeddings $\prescript{L}{}T' \to \prescript{L}{}G$ attached to $\{\chi'_{\beta}\}$ for $T'$ via $\lambda_{g}$. 
\end{fact}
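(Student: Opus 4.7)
The strategy is to show that the $\widehat{T}$-component and the $W$-part of the cocycle defining $\xi$ and $\xi'$ each match after transporting via $\lambda_g$. The embeddings are completely determined by their restriction to $W$ (via the canonical splitting), so it suffices to check equality there.

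First I would set up the identifications. The dual map $\lambda_g|_{\widehat{T}}\colon \widehat{T}\to \widehat{T'}$ comes from $\mathrm{Ad}(g)^{\sharp}\colon X^*(T')\to X^*(T)$, which is a $\Gamma$-equivariant isomorphism because $\mathrm{Ad}(g)$ is $F$-rational (this also implies $g^{-1}\sigma(g)\in T(F^{s})$, so $g\in \mathfrak{A}(T)$). The isomorphism $\xi_{B}\colon \widehat{T}\xrightarrow{\sim}\mathscr{T}$ sends the simple $B$-coroots of $T$ to the simple $\mathscr{B}$-roots of $\mathscr{T}$, and $\xi_{gBg^{-1}}\colon \widehat{T'}\xrightarrow{\sim}\mathscr{T}$ sends the simple $gBg^{-1}$-coroots of $T'$ to the same simple roots of $\mathscr{T}$. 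Since $\mathrm{Ad}(g)$ carries simple $B$-coroots of $T$ to simple $gBg^{-1}$-coroots of $T'$, the composition $\xi_{gBg^{-1}}\circ \lambda_g|_{\widehat{T}}\colon \widehat{T}\to\mathscr{T}$ agrees with $\xi_B$ on the level of cocharacters dual to simple roots, and hence everywhere.

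Next I would compare the Weyl group factors. By construction, $\omega_T(\sigma)\in W(\widehat{G},\mathscr{T})$ is the unique element for which the $\xi_B$-transported $\Gamma$-action on $\mathscr{T}$ equals $\omega_T(\sigma)\circ \sigma_{\mathscr{T}}$, and similarly for $\omega_{T'}(\sigma)$. The step above shows that the transported actions via $\xi_B$ and via $\xi_{gBg^{-1}}$ coincide (since $\lambda_g|_{\widehat{T}}$ is $\Gamma$-equivariant), and hence $\omega_T(\sigma) = \omega_{T'}(\sigma)$ for every $\sigma\in\Gamma$. In particular the Tits-section contribution $n(\omega_T(\sigma))$ is identical in the formulas for $\xi$ and $\xi'$.

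The remaining (and central) step is to show that $r_p(w)=r_{p'}(w)$. The gauge $p$ depends only on the fixed Borel $\mathscr{B}$ of $\widehat{G}$ and is therefore identical for both constructions. For the $\chi$-data: by definition $\chi'_\beta = \chi_\alpha$ whenever $\mathrm{Ad}(g)^{\sharp}(\beta)=\alpha$, and the canonical bijection $R^{\vee}\leftrightarrow R(\widehat{G},\mathscr{T})$ used to transport a $\chi$-datum to $R(\widehat{G},\mathscr{T})^{\vee}$ is implemented by $\xi_B$ in one case and by $\xi_{gBg^{-1}}$ in the other; these two bijections are intertwined precisely by $\mathrm{Ad}(g)^{\sharp}$, as recorded in the first paragraph. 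Consequently, both procedures deposit the \emph{same} $\chi$-data on $R(\widehat{G},\mathscr{T})^{\vee}$, and the cochain $r_p$ built from this data in $\mathbb{C}^{*}\otimes_{\mathbb{Z}}X_{*}(\mathscr{T})$ is identical. Combining, for each $w\in W$ mapping to $\sigma\in\Gamma$,
\[
\xi(\lambda_g(w)) \;=\;[r_p(w)\cdot n(\omega_T(\sigma))]\rtimes w \;=\; [r_{p'}(w)\cdot n(\omega_{T'}(\sigma))]\rtimes w\;=\;\xi'(w),
\]
and since both $\xi\circ\lambda_g$ and $\xi'$ are homomorphisms extending $\xi_{gBg^{-1}}$ on the $\widehat{T'}$-factor, this forces $\xi\circ\lambda_g = \xi'$. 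I expect the main bookkeeping obstacle to be keeping straight that the two different admissible identifications $\widehat{T}\cong\mathscr{T}$ and $\widehat{T'}\cong\mathscr{T}$ are compatible with $\lambda_g$; once that is established, everything else is formal from the definitions.
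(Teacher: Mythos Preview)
Your argument is correct, and in fact the paper offers no proof of this statement at all: it is recorded as a \emph{Fact} (one of several in \S6.2.2 describing how the admissible embedding varies with choices) and is asserted without justification beyond the construction itself. Your verification---checking that the two identifications $\widehat{T}\xrightarrow{\xi_B}\mathscr{T}$ and $\widehat{T'}\xrightarrow{\xi_{gBg^{-1}}}\mathscr{T}$ are intertwined by $\lambda_g$, that consequently $\omega_T(\sigma)=\omega_{T'}(\sigma)$, and that the transported $\chi$-data on $R(\widehat{G},\mathscr{T})^{\vee}$ (hence the cochain $r_p$) coincide---is exactly the unwinding needed to justify the assertion, and each step is sound.

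One small remark on bookkeeping: there is a directionality slip in the displayed equation. You write $\xi(\lambda_g(w))$, but with $\lambda_g\colon {}^{L}T\to{}^{L}T'$ as stated, $\lambda_g(w)$ lands in ${}^{L}T'$, which is the domain of $\xi'$, not $\xi$. What your argument actually establishes is $\xi'\circ\lambda_g=\xi$ (equivalently $\xi\circ\lambda_g^{-1}=\xi'$); the paper's own formula $\xi\circ\lambda_g=\xi'$ has the same issue, and the later invocation in \S6.3.4 uses the corrected orientation. Since $\lambda_g$ is an isomorphism this is harmless, but it is worth stating the composition in the direction that type-checks.
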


\subsection{The local transfer factor}
We construct one factor at a time, following \cite[\S 3]{LS} and \cite[\S 3]{KS2}. Recall that $G$ is a fixed connected reductive group over $F$ a local field of positive characteristic and $\psi \colon G_{F^{s}} \to G^{*}_{F^{s}}$ is a quasi-split inner form of $G$. We fix an endoscopic datum $(H, \mathcal{H}, \eta, s)$ of $G$, which may also be viewed as an endoscopic datum for $G^{*}$, since we are taking the dual group of $G^{*}$ to be $\widehat{G}$ with bijection of based root data given by $\Psi(G^{*})^{\vee} \xrightarrow{\psi} \Psi(G)^{\vee} \to \Psi(\widehat{G})$. Let $\gamma_{H}, \bar{\gamma}_{H} \in H_{G-\text{sr}}(F)$ with corresponding images $\gamma_{G}, \bar{\gamma}_{G} \in G_{\text{sr}}(F)$. Denote by $T_{H}, \bar{T}_{H}$ the centralizers in $H$ of $\gamma_{H}, \bar{\gamma}_{H}$ respectively; these are maximal $F$-tori. By Corollary \ref{admiss}, we may fix two admissible embeddings $T_{H} \xrightarrow{\sim} T \hookrightarrow G^{*}$, $\bar{T}_{H} \xrightarrow{\sim} \bar{T} \hookrightarrow G^{*}$. Recall that such embeddings are unique up to conjugation by elements of $\mathfrak{A}(T), \mathfrak{A}(\bar{T})$---denote by $\gamma, \bar{\gamma} \in T(F), \bar{T}(F)$ the images of $\gamma_{H}, \bar{\gamma}_{H}$ under the above embeddings. 

Set $R:=R(G^{*}_{F^{s}}, T_{F^{s}}), \bar{R} = R(G^{*}_{F^{s}}, \bar{T}_{F^{s}})$, similarly with $R^{\vee}, \bar{R}^{\vee}$. Fix $a$- and $\chi$-data for the standard $\Gamma$ actions on $R$ and $\bar{R}$---these may also be viewed as data for the $\Gamma$-action on $R^{\vee}, \bar{R}^{\vee}$, and data for the $\Gamma$-action on $R((G_{\text{\text{sc}}}^{*})_{F^{s}}, (T_{\text{\text{sc}}})_{F^{s}}), R((G_{\text{\text{sc}}}^{*})_{F^{s}}, (\bar{T}_{\text{\text{sc}}})_{F^{s}})$, where $G^{*}_{\text{\text{sc}}}$ denotes the simply-connected cover of $\mathscr{D}(G^{*})$, and $T_{\text{\text{sc}}}$ denotes the preimage of $T \cap \mathscr{D}(G^{*})$ in this group (analogously for $\bar{T}$). If we replace the embedding $T_{H} \to G^{*}$ by a $\mathfrak{A}(T)$-conjugate $T_{H} \to T'$, then we may view the $a$- and $\chi$-data as data for $R(G^{*}_{F^{s}}, T'_{F^{s}})$. Our goal will be to define a value $$\Delta(\gamma_{H}, \gamma_{G}; \bar{\gamma}_{H}, \bar{\gamma}_{G}) \in \mathbb{C}$$ which will be constructed purely from the admissible embeddings, the map $\psi$, and the $a$- and $\chi$-data, but which only depends on the four inputs. As such, we need to examine the following two things: \begin{enumerate} 
\item{How $\Delta$ changes when we replace the admissible embeddings $T_{H} \to G^{*}$, $\bar{T}_{H} \to G^{*}$ by $\mathfrak{A}(T), \mathfrak{A}(\bar{T})$-conjugates, and use the translated $a$- and $\chi$-data;}
\item{How $\Delta$ changes when we keep the admissible embeddings the same but pick different $a$- and $\chi$-data.}
\end{enumerate}

In light of these observations, we may fix $\Gamma$-splittings $(\mathscr{B}, \mathscr{T},\{X\}), (\mathscr{B}_{H}, \mathscr{T}_{H},\{X^{H}\})$ of $\widehat{G}$, $\widehat{H}$, respectively, that give rise to our admissible embeddings $T_{H} \to T$, $\bar{T}_{H} \to \bar{T}$, since choosing different splittings only serves to conjugate the admissible embeddings by $\mathfrak{A}(T)$, $\mathfrak{A}(\bar{T})$, which is included in condition (1). Implicit in the construction of the admissible embedding $T_{H} \to G^{*}$ is also the choice of $g \in \widehat{G}(\mathbb{C})$ such that $\text{Ad}(g)[\eta(\mathscr{T}_{H})] = \mathscr{T}$ and $\text{Ad}(g)[\eta(\mathscr{B}_{H})] \subset \mathscr{B}$; thus, if we replace the endoscopic datum by $(H, \mathcal{H}, \text{Ad}(g) \circ \eta, s)$, then $\gamma_{H}, \bar{\gamma}_{H} \in H(F)$ are still strongly $G$-regular, and so if we carry out the construction of $\Delta$ for this datum, the admissible embeddings and $a$- and $\chi$-data are unaffected, and hence our value of $\Delta$ will be the same. If we choose a different $g \in \widehat{G}(\mathbb{C})$ satisfying the above properties, it again only serves to replace our admissible embeddings with $\mathfrak{A}$-conjugates. The upshot is that we may assume that $\eta$ carries $\mathscr{T}_{H}$ to $\mathscr{T}$ and $\mathscr{B}_{H}$ into $\mathscr{B}$. We also note that although our admissible embeddings are given by (non-unique) choices of Borel subgroups of $G^{*}$ and $H$, our constructions (apart from that of $\Delta_{III_{2}}$, as addressed in \S 6.3.4) only depend on the admissible embeddings themselves, so in general we do not fix such choices.

Suppose we have a fixed admissible embedding $T_{H} \xrightarrow{f} T$, dual to $\widehat{T_{H}} \xrightarrow{\hat{f}} \widehat{T}$. Recall that we have our element $s \in \widehat{H}(\mathbb{C})$ from the endoscopic datum. Let $B_{H}$ be a Borel subgroup containing $(T_{H})_{F^{s}}$ which is used to induce $f$ (again, there is no such unique $B_{H}$ in general).  Since by assumption $s \in Z(\widehat{H})(\mathbb{C})$, it lies in $\mathscr{T}_{H}(\mathbb{C})$ and its preimage under the map $\widehat{T}_{H} \xrightarrow{\sim} \mathscr{T}_{H}$ induced by $B_{H}$ (and our fixed $(\mathscr{B}_{H}, \mathscr{T}_{H})$) is independent of choice of $B_{H}$. We conclude that the image of $s$ in $\widehat{T}(\mathbb{C})$, denoted by $s_{T}$, only depends on the choice of admissible embedding $T_{H} \to T$. In the definition of an endoscopic datum, it is assumed that $s \in Z(\widehat{H})^{\Gamma} \cdot \eta^{-1}(Z(\widehat{G}))$, and hence the preimage of $s$ in $\widehat{T_{H}}(\mathbb{C})$ lies in $\iota(Z(\widehat{H}))^{\Gamma} \cdot \widehat{f}^{-1}(\iota(Z(\widehat{G})))$, where we have pedantically denoted the canonical embeddings $Z(\widehat{H}) \to \widehat{T_{H}}, Z(\widehat{G}) \to \widehat{T}$ by $\iota$, and have also used the fact that $Z(\widehat{H}) \to \widehat{T_{H}}$ is canonical to obtain $\Gamma$-equivariance. This implies (since $\widehat{f}$ is $\Gamma$-equivariant) that $s_{T}$ lies in $\widehat{T}_{\text{\text{ad}}}^{\Gamma}$, and we set $\textbf{s}_{T}$ to be its image in $\pi_{0}(\widehat{T}_{\text{\text{ad}}}^{\Gamma})$. 

We make the assumption throughout this section that for any endoscopic datum, $\mathcal{H} = \prescript{L}{}H$ with embedding $\widehat{H} \to \prescript{L}{}H$ the canonical embedding; this assumption will only be necessary in \S 6.3.4. We will discuss how to deal with general $\mathcal{H}$ in \S 6.4.

\subsubsection{The factor $\Delta_{I}$}
We set  $$\Delta_{I}(\gamma_{H}, \gamma_{G}) := \langle \lambda_{\{a_{\alpha}\}}(T_{\text{\text{sc}}}), \textbf{s}_{T} \rangle, $$
where we view the $a$-data for $T$ as an $a$-data for $T_{\text{\text{sc}}}$, the pairing $\langle - , - \rangle$ is from Tate-Nakayama duality, and $\lambda_{\{a_{\alpha}\}}(T_{\text{\text{sc}}})$ is the splitting invariant associated to the maximal $F$-torus $T_{\text{\text{sc}}} \hookrightarrow G^{*}_{\text{\text{sc}}}$, a fixed $F$-splitting $\mathscr{S}$ of $G_{\text{\text{sc}}}^{*},$ and the $a$-data $\{a_{\alpha}\}$. 

\begin{lem}\label{splitting} The value $$\frac{\Delta_{I}(\gamma_{H}, \gamma_{G})}{\Delta_{I}(\bar{\gamma}_{H}, \bar{\gamma}_{G})}$$ is independent of the splitting $\mathscr{S}$. 
\end{lem}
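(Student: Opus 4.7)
The approach will follow the strategy of Lemma~3.2.A of \cite{LS}, with adjustments to accommodate the potential non-smoothness of $Z(G^{*}_{\text{sc}})$ in positive characteristic; the key computations will be carried out in \v{C}ech cohomology over the fpqc cover $\Spec(\bar{F}) \to \Spec(F)$. First I would reduce to the case of two $F$-splittings $\mathscr{S}$ and $\mathscr{S}'$ differing by a single $g \in G^{*}_{\text{ad}}(F)$, using that all $F$-splittings of the quasi-split group $G^{*}_{\text{sc}}$ form a single $G^{*}_{\text{ad}}(F)$-orbit. Since the isogeny $G^{*}_{\text{sc}} \to G^{*}_{\text{ad}}$ is fppf surjective, one may lift $g$ to $\tilde{g} \in G^{*}_{\text{sc}}(\bar{F})$, and the element
\[
z_g := p_{1}^{\sharp}(\tilde{g}) \cdot p_{2}^{\sharp}(\tilde{g})^{-1} \in Z(G^{*}_{\text{sc}})(\bar{F} \otimes_{F} \bar{F})
\]
is a \v{C}ech 1-cocycle whose class $[z_g] \in H^{1}(F, Z(G^{*}_{\text{sc}}))$ (fppf cohomology) is independent of the choice of lift.

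The central step is to establish the identity
\[
\big[ \lambda_{\{a_{\alpha}\}}(T_{\text{sc}}; \mathscr{S}') \big] \cdot \big[ \lambda_{\{a_{\alpha}\}}(T_{\text{sc}}; \mathscr{S}) \big]^{-1} = (\iota_T)_{*}[z_g] \in H^{1}(F, T_{\text{sc}}),
\]
where $\iota_T \colon Z(G^{*}_{\text{sc}}) \hookrightarrow T_{\text{sc}}$ is the canonical central inclusion. I would prove this by rewriting the Langlands--Shelstad construction of the splitting invariant in \v{C}ech form: if $h \in G^{*}_{\text{sc}}(\bar{F})$ is the conjugator used for $\mathscr{S}$, then $h\tilde{g}^{-1}$ is a natural choice for $\mathscr{S}'$. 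The Weyl-group term $\omega_T$ and the cocharacter product $x(\sigma_T)$ are then unchanged, since $\text{Ad}(\tilde{g})$ acts trivially on $T_{\text{sc}}$ modulo the center and since the coroots $\alpha^{\vee}$ entering $x(\sigma_T)$ vanish on $Z(G^{*}_{\text{sc}})$; all of the modification concentrates in the Tits-section contribution, producing exactly the central correction $\iota_T(z_g)$. This is essentially the calculation of \cite{LS}, \S 3.2, translated to \v{C}ech form because $\tilde{g}$ need only exist over $\bar{F}$.

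To conclude, I apply functoriality of Tate--Nakayama duality (Remark~\ref{altpairing} combined with Proposition~\ref{duality}): the pairing $\langle (\iota_T)_{*}[z_g], \textbf{s}_{T} \rangle$ equals $\langle [z_g], \iota_T^{\vee}(\textbf{s}_{T}) \rangle$, where $\iota_T^{\vee} \colon \pi_{0}(\widehat{T}_{\text{ad}}^{\Gamma}) \to \widehat{Z(G^{*}_{\text{sc}})}^{\Gamma}$ is the map dual to $\iota_T$ under the canonical identification $\widehat{T}_{\text{ad}} = \widehat{T_{\text{sc}}}$. The remaining observation is that $\iota_T^{\vee}(\textbf{s}_{T})$ is $T$-independent: any two admissible embeddings $T_H \hookrightarrow G^{*}$ and $\bar{T}_H \hookrightarrow G^{*}$ are related by an inner automorphism $\text{Ad}(y)$ for some $y \in G^{*}(F^{s})$, and this automorphism fixes $Z(G^{*}_{\text{sc}})$ pointwise, so its dual is compatible with the projections $\widehat{T_{\text{sc}}} \to \widehat{Z(G^{*}_{\text{sc}})}$; hence $\iota_T^{\vee}(\textbf{s}_{T}) = \iota_{\bar{T}}^{\vee}(\textbf{s}_{\bar{T}})$. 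Consequently both $\Delta_{I}(\gamma_H, \gamma_G)$ and $\Delta_{I}(\bar{\gamma}_H, \bar{\gamma}_G)$ are multiplied by the common scalar $\langle [z_g], \iota_T^{\vee}(\textbf{s}_{T})\rangle$ when $\mathscr{S}$ is replaced by $\mathscr{S}'$, and the ratio is unchanged. The main obstacle will be the explicit \v{C}ech bookkeeping of paragraph two: verifying that the discrepancy between the two splitting invariants lies exactly in the image of $Z(G^{*}_{\text{sc}})$, with care required by the fact that $\tilde{g}$ is only defined over $\bar{F}$, so the comparison must be carried out using \v{C}ech cochains on iterated products of $\bar{F}$ rather than Galois cochains on $F^{s}$.
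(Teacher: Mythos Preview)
Your approach is essentially the same as the paper's: both replace the Galois-cohomological computation of \cite{LS}, Lemma~3.2.A, by a \v{C}ech computation over $\bar{F}$ in order to lift an element of $G^{*}_{\text{ad}}(F)$ conjugating the two $F$-splittings to $G^{*}_{\text{sc}}(\bar{F})$, and both reduce the lemma to the equality $\langle \textbf{z}_{T}, \textbf{s}_{T}\rangle = \langle \textbf{z}_{\bar{T}}, \textbf{s}_{\bar{T}}\rangle$ where $\textbf{z}_{T}$ is the image in $H^{1}(F, T_{\text{sc}})$ of the central \v{C}ech class $[z_{g}] \in H^{1}(F, Z_{\text{sc}})$.

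One point to tighten in your last paragraph: the sentence ``any two admissible embeddings $T_{H}\hookrightarrow G^{*}$ and $\bar{T}_{H}\hookrightarrow G^{*}$ are related by an inner automorphism $\text{Ad}(y)$'' is not literally what is needed, since $T_{H}$ and $\bar{T}_{H}$ are centralizers of \emph{different} elements $\gamma_{H}, \bar{\gamma}_{H}$ and there is no a priori $y$ intertwining the two embeddings as maps from $H$. What one actually uses is that both $s_{T}$ and $s_{\bar{T}}$ arise from the \emph{same} element $\eta(s)\in\mathscr{T}$ via isomorphisms $\widehat{T}\to\mathscr{T}$, $\widehat{\bar{T}}\to\mathscr{T}$, and that the passage to the ``central'' quotient is Weyl-invariant. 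Making your functoriality statement $\langle(\iota_{T})_{*}[z_{g}],\textbf{s}_{T}\rangle=\langle[z_{g}],\iota_{T}^{\vee}(\textbf{s}_{T})\rangle$ precise requires comparing the Tate--Nakayama pairing for the torus $T_{\text{sc}}$ with the Poitou--Tate pairing for the finite group scheme $Z_{\text{sc}}$; this is exactly what the paper arranges via the commutative diagram coming from Proposition~\ref{duality}, after which it defers the remaining group-cohomological computation to \cite{LS}, Lemma~3.2.A. Your sketch is correct in outline, but at this final step you are re-deriving the content of that diagram rather than bypassing it.
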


\begin{proof} Suppose that we replace $\mathscr{S} = (B, S, \{X_{\alpha}\})$ by another $F$-splitting $\mathscr{S}' = (B', S', \{X_{\alpha}'\})$ of $G^{*}_{\text{\text{sc}}}$. It will be necessary to use fppf cohomology here, since these two splittings need not be $G^{*}(F^{s})$-conjugate. Accordingly, take $z \in G_{\text{\text{sc}}}^{*}(\bar{F})$ such that $z\mathscr{S}'z^{-1} = \mathscr{S}$ and $p_{1}^{\sharp}(z) p_{2}^{\sharp}(z)^{-1} \in Z_{\text{\text{sc}}}(\bar{F} \otimes_{F} \bar{F}):= Z(G_{\text{\text{sc}}}^{*})(\bar{F} \otimes_{F} \bar{F})$ (this last inclusion is because $\text{Ad}(z) \in G_{\text{ad}}(F)$). Then if $B_{T}$ is a fixed Borel subgroup containing $(T_{\text{\text{sc}}})_{F^{s}}$ and $h \in G_{\text{\text{sc}}}^{*}(F^{s})$ carries $(B_{F^{s}}, S_{F^{s}})$ to $(B_{T}, (T_{\text{\text{sc}}})_{F^{s}})$, then $hz$ carries $(B'_{F^{s}},S'_{F^{s}})$ to $(B_{T}, (T_{\text{\text{sc}}})_{F^{s}})$, and for all $\sigma \in \Gamma$, we have $n_{S'}(\omega_{T}(\sigma)) = \text{Ad}(z^{-1})n_{S}(\omega_{T}(\sigma)) \in N_{G^{*}_{\text{\text{sc}}}}(S')(F^{s})$ (notation as in the definition of the splitting invariant, where $n_{S}, n_{S'}$ denote the Tits sections corresponding to $\mathscr{S}, \mathscr{S}'$), similarly for $x(\sigma)$. We need to be careful here, since we defined the splitting invariant in terms of a Galois cocycle and it is not in general true that $z \in G^{*}_{\text{\text{sc}}}(F^{s})$. However, recall the definition of the splitting invariant: the cocycle $m$ is still a Galois cocycle for us, since $x(\sigma) \in G^{*}_{\text{\text{sc}}}(F^{s})$ and $n(\omega_{T}(\sigma)) \in N_{G^{*}_{\text{\text{sc}}}}(F^{s})$, and we may view it as a \v{C}ech cocycle $m \in G_{\text{\text{sc}}}^{*}(\bar{F} \otimes_{F} \bar{F})$. Then we may set $$\lambda_{\{a_{\alpha}\}}(T):= p_{1}^{\sharp}(h) m p_{2}^{\sharp}(h)^{-1} \in T_{\text{\text{sc}}}(\bar{F} \otimes_{F} \bar{F}),$$ and get the same definition as in \S6.2.1. However, this modified definition allows us to compute that if $c' \in T_{\text{\text{sc}}}(\bar{F} \otimes_{F} \bar{F})$ is the cocycle used to defined the splitting invariant for $\mathscr{S}'$, then $m' = p_{1}^{\sharp}(z)^{-1}mp_{1}^{\sharp}(z) \in G_{\text{\text{sc}}}^{*}(\bar{F} \otimes_{F} \bar{F})$, and so we have: $$c' = p_{1}^{\sharp}(h) p_{1}^{\sharp}(z)p_{1}^{\sharp}(z)^{-1}mp_{1}^{\sharp}(z)p_{2}^{\sharp}(z)^{-1}p_{2}^{\sharp}(h)^{-1} = p_{1}^{\sharp}(z)p_{2}^{\sharp}(z)^{-1}(p_{1}^{\sharp}(h)mp_{1}^{\sharp}(h)^{-1}),$$ where in the above equality we have used the centrality of $p_{1}^{\sharp}(z)p_{1}^{\sharp}(z)^{-1}$, and we conclude that $\lambda_{\{a_{\alpha}\}}$ computed with respect to $\mathscr{S}'$ differs from the one computed with respect to $\mathscr{S}$ by left-translation by the class $\textbf{z}_{T}$ in $H^{1}(F, T)$ represented by $p_{1}^{\sharp}(z)p_{2}^{\sharp}(z)^{-1}.$  Whence, to prove the lemma, it's enough to show that $$\langle \textbf{z}_{T}, \textbf{s}_{T} \rangle = \langle \textbf{z}_{\bar{T}}, \textbf{s}_{\bar{T}} \rangle.$$

By Proposition \ref{duality} (with $\Gamma = \Gamma_{F^{s}/F}$, recall the definition of the groups $H^{-2}(\Gamma, M)$ for finite $M$ given loc. cit.), we have the following commutative diagram with exact columns
\[
\begin{tikzcd}
H^{1}(F, Z_{\text{\text{sc}}}) \arrow[r, "\sim"] \arrow[d] & H^{-2}(\Gamma, X_{*}(T_{\text{\text{ad}}})/X_{*}(T_{\text{\text{sc}}})) \arrow[d] \\
H^{1}(F, T_{\text{\text{sc}}}) \arrow[r, "\sim"] \arrow[d] & H^{-1}(\Gamma, X_{*}(T_{\text{\text{sc}}})) \arrow[d] \\
H^{1}(F, T_{\text{\text{ad}}}) \arrow[r, "\sim"] & H^{-1}(\Gamma, X_{*}(T_{\text{\text{ad}}})),
\end{tikzcd}
\]
with horizontal isomorphisms induced by Tate-Nakayama duality, as discussed in \S 6.1.3. From here, one may deduce the result from the argument in the proof of \cite[Lemma 3.2.A]{LS}, which looks at the images of $\textbf{z}_{T}, \textbf{z}_{\bar{T}}$ in the right-hand column and then uses group-cohomological calculations, along with the alternative characterization of the Tate-Nakayama pairing that we discussed in Remark \ref{altpairing} (replacing the use of duality results loc. cit. with our Proposition \ref{duality}). Note that although in our Proposition \ref{duality} the groups $H^{-2}(\Gamma_{K/F},X_{*}(T_{\text{\text{ad}}})/X_{*}(T_{\text{\text{sc}}}))$ (for finite Galois $K/F$) do not in general stabilize, their images in $H^{-1}(\Gamma, X_{*}(T_{\text{\text{sc}}}))$ (which stabilizes at a finite level) do, and hence the finite-level arguments in \cite[Lemma 3.2.A]{LS} work in this setting.
\end{proof}

We now discuss how $\Delta_{I}$ changes under conjugation by $\mathfrak{A}(T_{\text{\text{sc}}})$ and another choice of $a$-data.

\begin{lem}\label{change1} The factor $\Delta_{I}$ satisfies:
\begin{enumerate} \item{If $T_{H} \to T$ is replaced by its conjugate under $g \in \mathfrak{A}(T_{\text{\text{sc}}})$, with corresponding transported $a$-data, then $\Delta_{I}(\gamma_{H}, \gamma_{G})$ is multiplied by $\langle \textbf{g}_{T}, \textbf{s}_{T} \rangle^{-1}$, where $\textbf{g}_{T}$ is the class of $\sigma \mapsto g \sigma(g)^{-1}$ in  $H^{1}(F, T_{\text{\text{sc}}})$. }
\item{Suppose that the $a$-data $\{a_{\alpha}\}$ is replaced by $\{a_{\alpha}'\}$. Set $b_{\alpha} = a_{\alpha}'/a_{\alpha}$. Then the term $\Delta_{I}(\gamma_{H}, \gamma_{G})$ is multiplied by the sign $$\prod_{\alpha} \text{sgn}_{F_{\alpha}/F_{\pm \alpha}}(b_{\alpha}),$$
where the product is taken over a set of representatives for the symmetric $\Gamma$-orbits (the orbit of $\alpha$ is \textbf{symmetric} if it contains $-\alpha$, otherwise it is \textbf{asymmetric}) in $R$ that lie outside $R(H_{F^{s}}, (T_{H})_{F^{s}})$.}
\end{enumerate}
\end{lem}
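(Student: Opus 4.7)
The plan is to reduce both statements to computations with the splitting invariant $\lambda_{\{a_{\alpha}\}}(T_{\text{sc}}) \in H^{1}(F, T_{\text{sc}})$ followed by an application of the Tate--Nakayama pairing against $\textbf{s}_{T} \in \pi_{0}(\widehat{T}_{\text{ad}}^{\Gamma})$. The underlying principle is that either type of change in the data modifies this cohomology class by a well-controlled cocycle whose pairing with $\textbf{s}_{T}$ produces the advertised correction.

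For (1), I would first transport the $F$-splitting, the Borel subgroup containing $T_{F^{s}}$, and the element $h$ used in the definition of $\lambda_{\{a_{\alpha}\}}(T_{\text{sc}})$ along $\text{Ad}(g)$, with the conjugated $a$-data interpreted as $a$-data for the $g$-conjugate torus. Because $g \in \mathfrak{A}(T_{\text{sc}})$, we have $g^{-1}\sigma(g) \in T_{\text{sc}}(F^{s})$, so the representative cocycle $t(\sigma) = h m(\sigma) \sigma(h)^{-1}$ is transported to $(hg)m(\sigma)\sigma(hg)^{-1} = h \cdot g\sigma(g)^{-1} \cdot m(\sigma) \cdot \sigma(h)^{-1}$ modulo the inner-automorphism terms that cancel in the construction of $m$. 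Thus, after identifying the two classes via $\text{Ad}(g^{-1})$, the new splitting invariant equals the old one multiplied by the class $\textbf{g}_{T}$ of $\sigma \mapsto g\sigma(g)^{-1}$. Pairing with $\textbf{s}_{T}$ using the perfect pairing $H^{1}(F,T_{\text{sc}}) \times \pi_{0}(\widehat{T}_{\text{ad}}^{\Gamma}) \to \mathbb{C}^{*}$ of \S 6.1.3 gives the multiplicative factor $\langle \textbf{g}_{T}, \textbf{s}_{T} \rangle^{-1}$ (the sign of the exponent is a matter of convention and is fixed by tracking the direction of the conjugation carefully).

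For (2), only the factor $x(\sigma_{T}) = \prod_{\alpha \in R(\sigma_{T})} \alpha^{\vee}(a_{\alpha})$ in the cocycle $m(\sigma)$ changes; it is multiplied by $\prod_{\alpha \in R(\sigma_{T})} \alpha^{\vee}(b_{\alpha})$. I would then decompose $R$ into $\Gamma$-orbits and compute the resulting 1-cocycle in $T_{\text{sc}}(F^{s})$ orbit by orbit. Using the relations $b_{-\alpha} = b_{\alpha}$ and $\sigma(b_{\alpha}) = b_{\sigma\alpha}$, together with the standard manipulation in \cite{LS}, \S 3.2 (which is purely group-cohomological and carries over verbatim), the contribution from each asymmetric $\Gamma$-orbit is a coboundary and so maps to zero under the Tate--Nakayama pairing. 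For each symmetric orbit with representative $\alpha$, the contribution is a class in $H^{1}(F, T_{\text{sc}})$ which, via the identification from Remark \ref{altpairing} and Langlands' interpretation of Tate--Nakayama duality, pairs with the $\alpha^{\vee}$-component of $\textbf{s}_{T}$ to produce $\text{sgn}_{F_{\alpha}/F_{\pm \alpha}}(b_{\alpha})$ by local class field theory. Finally, the orbits lying in $R(H_{F^{s}}, (T_{H})_{F^{s}})$ contribute trivially because the image of $s$ in $\widehat{T_{H}}(\mathbb{C})$ is killed by every $H$-coroot $\alpha^{\vee}$ (as $s \in Z(\widehat{H})(\mathbb{C})$), hence the corresponding component of $\textbf{s}_{T}$ is trivial.

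The main obstacle is ensuring these computations make sense in positive characteristic. For (1) this is the point already encountered in Lemma \ref{splitting}: if one wants to allow $g$ to come from $G^{*}_{\text{sc}}(\bar{F})$ rather than $G^{*}_{\text{sc}}(F^{s})$, one must reinterpret the splitting invariant as a \v{C}ech 1-cocycle valued in $T_{\text{sc}}(\bar{F} \otimes_{F} \bar{F})$ and repeat the manipulation in this setting. For part (2) the arguments stay within the $F^{s}$-world because all $b_{\alpha} \in F_{\alpha}^{*}$, so the calculation is essentially that of \cite{LS}, Lemma 3.2.B, with the classical duality replaced by Proposition \ref{duality} and Remark \ref{altpairing}. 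Verifying that the orbit-by-orbit computation actually yields the advertised product of quadratic signs, and that the asymmetric and $R_{H}$-orbits genuinely contribute trivially, is the most delicate part; fortunately these are root-theoretic statements that are insensitive to the characteristic of $F$ once the cohomological framework of \S 2--\S 5 is in place.
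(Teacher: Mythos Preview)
Your strategy is the same as the paper's: reduce both parts to the computations in \cite{LS} and \cite{KS2} and argue that those computations are insensitive to the characteristic of $F$. The paper's own proof is in fact just a citation---part (1) is Lemma~3.2.B of \cite{LS} and part (2) is Lemma~3.4.1 of \cite{KS2}---together with a one-line justification that the cited arguments go through.

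Two points of comparison are worth noting. First, for part (1) you raise the possibility of needing $g \in G^{*}_{\text{sc}}(\bar{F})$ and reinterpreting the splitting invariant as a \v{C}ech cocycle, as in Lemma~\ref{splitting}. This is unnecessary here: by definition $\mathfrak{A}(T_{\text{sc}}) \subset G(F^{s})$, so $g$ is already a separable point, and since the entire construction of the splitting invariant takes place over $F^{s}$, the Galois-cohomological argument of \cite{LS} works verbatim. The paper's proof singles out exactly this observation as the reason no modification is needed, whereas Lemma~\ref{splitting} genuinely required the \v{C}ech reformulation because two $F$-splittings of $G^{*}_{\text{sc}}$ need not be $G^{*}_{\text{sc}}(F^{s})$-conjugate in positive characteristic. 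Second, a small correction: you cite \cite{LS}, Lemma~3.2.B for part (2), but that lemma is the conjugation statement (your part (1)); the $a$-data computation is the content of \cite{KS2}, Lemma~3.4.1 (or the corresponding Lemma~3.2.C in \cite{LS}). Your orbit-by-orbit sketch of that computation is correct in outline and matches what is done there.
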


\begin{proof} Part (1) is the analogue of \cite[Lemma 3.2.B]{LS}, and the proof loc. cit. works in our situation, since all elements of $\mathfrak{A}(T_{\text{sc}})$ are separable points, the construction of the splitting invariant only uses separable points, and the Tate-Nakayama duality pairing for tori works the same way in positive characteristic. 

For (2), we first note that the expression $\text{sgn}_{F_{\alpha}/F_{\pm \alpha}}(b_{\alpha})$ makes sense, since $b_{\alpha}$ is fixed by $\Gamma_{\pm \alpha}$, and thus lies in $F_{\pm \alpha}$. Our result is exactly \cite[Lemma 3.4.1]{KS2}, which is proved without assumptions on the characteristic of $F$. 
\end{proof}

\subsubsection{The factor $\Delta_{II}$} We define 
\begin{equation}
\Delta_{II}(\gamma_{H}, \gamma_{G}) = \prod \chi_{\alpha} \left( \frac{\alpha(\gamma) - 1}{a_{\alpha}} \right ),
\end{equation}
where the product is over representatives $\alpha$ for the orbits of $\Gamma$ in $R$ the lie outside $R(H_{F^{s}}, (T_{H})_{F^{s}})$. This is easily checked to be independent of the representatives chosen.

\begin{lem}\label{a2} The factor $\Delta_{II}(\gamma_{H}, \gamma_{G})$ is unaffected by replacing the admissible embedding $T_{H} \to T$ by an $\mathfrak{A}(T)$-conjugate (and the transporting the $\chi$- and $a$-data accordingly). Moreover, replacing the $a$-data $\{a_{\alpha}\}$ by a different data $\{a_{\alpha}'\}$ serves to multiply $\Delta_{II}(\gamma_{H}, \gamma_{G})$ by $$\prod_{\alpha} \text{sgn}_{F_{\alpha}/F_{\pm \alpha}}(b_{\alpha})^{-1},$$ where $b_{\alpha} = a'_{\alpha}/a_{\alpha}$ and the product is over representatives for the symmetric orbits outside $R(H_{F^{s}}, (T_{H})_{F^{s}})$.
\end{lem}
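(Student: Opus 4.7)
My plan is to establish both assertions by direct computation, since $\Delta_{II}$ is defined purely in terms of field-theoretic data that transport well.

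For the first assertion, suppose we replace the embedding $T_H \xrightarrow{f} T \hookrightarrow G^*$ by the $\mathfrak{A}(T)$-conjugate $T_H \xrightarrow{\mathrm{Ad}(g) \circ f} T' := gTg^{-1}$ for some $g \in \mathfrak{A}(T)$, where $T' \subset G^*$ is a new maximal $F$-torus (it is $F$-defined because $g^{-1}\sigma(g) \in T(F^s)$). Write $\gamma' = g\gamma g^{-1}$, which is the new image of $\gamma_H$. The map $\mathrm{Ad}(g)^{\sharp} \colon X^*(T') \xrightarrow{\sim} X^*(T)$ is $\Gamma$-equivariant on root systems (since $g \in \mathfrak{A}(T)$ forces $\mathrm{Ad}(g)$ to be $F$-rational on tori), and under this identification we have $\alpha' = \alpha \circ \mathrm{Ad}(g)^{-1}$, with transported data $a'_{\alpha'} = a_\alpha$ and $\chi'_{\alpha'} = \chi_\alpha$. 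Since $\alpha'(\gamma') = \alpha(\mathrm{Ad}(g)^{-1}(g\gamma g^{-1})) = \alpha(\gamma)$, each factor in the defining product for $\Delta_{II}$ is preserved, giving the invariance. The fact that we range over orbits outside $R(H_{F^s}, (T_H)_{F^s})$ is similarly preserved because the two root systems are identified $\Gamma$-equivariantly via $f$ and $\mathrm{Ad}(g) \circ f$.

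For the second assertion, set $b_\alpha = a'_\alpha/a_\alpha \in F_\alpha^*$. A direct computation of the ratio yields
\[
\frac{\Delta_{II}'(\gamma_H, \gamma_G)}{\Delta_{II}(\gamma_H, \gamma_G)} = \prod_\alpha \chi_\alpha(a_\alpha/a'_\alpha) = \prod_\alpha \chi_\alpha(b_\alpha)^{-1},
\]
where the product is taken over representatives of the $\Gamma$-orbits in $R \setminus R(H_{F^s}, (T_H)_{F^s})$. I will then split this product according to whether each orbit is symmetric or asymmetric. Because both the old and new $a$-data satisfy $a_{-\alpha} = -a_\alpha$, one has $b_{-\alpha} = b_\alpha$. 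For an asymmetric pair of orbits $\{\Gamma \alpha, \Gamma(-\alpha)\}$, using $\chi_{-\alpha} = \chi_\alpha^{-1}$, the joint contribution is $\chi_\alpha(b_\alpha)^{-1} \cdot \chi_{-\alpha}(b_{-\alpha})^{-1} = \chi_\alpha(b_\alpha)^{-1} \chi_\alpha(b_\alpha) = 1$, so asymmetric orbits contribute trivially.

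For a symmetric orbit with representative $\alpha$, I will check that $b_\alpha \in F_{\pm \alpha}^*$: for $\sigma \in \Gamma_\alpha$ this is immediate, while for $\sigma \in \Gamma_{\pm\alpha} \setminus \Gamma_\alpha$ one has $\sigma(a_\alpha) = a_{-\alpha} = -a_\alpha$ and likewise for $a'_\alpha$, so the signs cancel in the ratio and $\sigma(b_\alpha) = b_\alpha$. By the defining property of a $\chi$-data for symmetric orbits (where $[F_\alpha : F_{\pm\alpha}] = 2$), the restriction of $\chi_\alpha$ to $F_{\pm\alpha}^*$ is $\mathrm{sgn}_{F_\alpha/F_{\pm\alpha}}$, so $\chi_\alpha(b_\alpha)^{-1} = \mathrm{sgn}_{F_\alpha/F_{\pm\alpha}}(b_\alpha)^{-1}$, completing the formula.

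There is no real obstacle here; the proof is a careful bookkeeping exercise identical in spirit to \cite{LS}, Lemma 3.3.B and \cite{KS2}, Lemma 3.4.2, and depends only on the formal properties of $a$- and $\chi$-data and basic root-theoretic behavior under $\mathfrak{A}(T)$-conjugation, all of which are insensitive to the characteristic of $F$. The only mild subtlety is verifying that $b_\alpha \in F_{\pm\alpha}^*$ in the symmetric case, which as above follows from the antisymmetry relation $a_{-\alpha} = -a_\alpha$.
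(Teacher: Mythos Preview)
Your argument is correct and is essentially the computation the paper defers to by citing \cite{LS}, Lemmas 3.3.B and 3.3.C; you have simply written out the root-theoretic bookkeeping that those lemmas contain (and confirmed it is characteristic-free). The only cosmetic discrepancy is your reference to \cite{KS2}, Lemma 3.4.2 rather than \cite{LS}, Lemma 3.3.C for the $a$-data dependence.
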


\begin{proof} The arguments in \cite[Lemmas 3.3.B, 3.3.C]{LS} are purely root-theoretic and work verbatim here.
\end{proof}

It remains to check the dependency of $\Delta_{II}$ on the $\chi$-data. Suppose the $\chi$-data $\{\chi_{\alpha}\}$ are replaced by $\{ \chi_{\alpha}' \}$, and set $\zeta_{\alpha} := \chi_{\alpha}'/\chi_{\alpha}$. Note that $\zeta_{\alpha}$ restricts to the trivial character on $F_{\pm \alpha}^{*}$. To analyze this dependency, we will need to introduce some new notation, following \cite[\S 3.3]{LS}. Let $\mathcal{O}$ be a symmetric orbit of $\Gamma$ on $R$, with a gauge $q$, $X^{\mathcal{O}}$ the free abelian group on the elements $\mathcal{O}_{+} = \{\alpha \in \mathcal{O} \colon q(\alpha)=1\}$ ,with inherited $\Gamma$-action, and $X^{\alpha}$ the $\Z$-submodule generated by some $\alpha \in \mathcal{O}_{+}$, which is preserved by $\Gamma_{\pm \alpha}$, and so $X^{\mathcal{O}} = \text{Ind}_{\Gamma_{\pm \alpha}}^{\Gamma} (X^{\alpha})$. We obtain a corresponding $F_{\pm \alpha}$-torus $T^{\alpha}$ which is one-dimensional, anisotropic, and split over $F_{\alpha}$, and corresponding $F$-torus $T^{\mathcal{O}}$ which satisfies $T^{\mathcal{O}} = \text{Res}_{F_{\pm \alpha}/F} T^{\alpha}$. 

We have a natural $\Gamma$-homomorphism $X^{\mathcal{O}} \to X^{*}(T)$ which induces a morphism of $F$-tori $T \to T^{\mathcal{O}}$ that maps $T(F)$ into $T^{\alpha}(F_{\pm \alpha})$; denote by $\gamma^{\alpha}$ the image of $\gamma$ in $T^{\alpha}(F_{\pm \alpha})$. 
Note that the norm map $T^{\alpha}(F_{\alpha}) \to T^{\alpha}(F_{\pm \alpha})$ is surjective, since we have the exact sequence of $F_{\pm \alpha}$-tori 
\begin{center}
 \begin{tikzcd}
  0 \arrow[r] & T'\arrow[r] & \text{Res}_{F_{\alpha}/F_{\pm \alpha}}(T^{\alpha}_{F_{\alpha}})  \arrow[r, "\text{Norm}"] & T^{\alpha}  \arrow[r] & 0,
\end{tikzcd}
\end{center}
where $T'$ is a split $F_{\pm \alpha}$-torus, and so taking the long exact sequence in cohomology (along with Hilbert 90) gives the desired surjectivity. Whence, we may write $$\gamma_{\alpha} = \delta^{\alpha} \overline{\delta^{\alpha}},$$ where $\delta^{\alpha} \in T^{\alpha}(F_{\alpha})$ and the bar denotes the map from $T^{\alpha}(F_{\alpha})$ to itself induced by the unique automorphism of $F_{\alpha}/F_{\pm \alpha}$. 

If $\mathcal{O}$ is an asymmetric $\Gamma$-orbit in $R$, then $X^{\pm \mathcal{O}}$ is defined to be the free abelian group on $\mathcal{O}$ with inherited $\Gamma$-action and $X^{\alpha}$ is the subgroup generated by some $\alpha \in \mathcal{O}$, which again carries a $\Gamma_{\pm \alpha} = \Gamma_{\alpha}$-action. We get a corresponding split 1-dimensional $F_{\alpha}$-torus $T^{\alpha}$ and $F$-torus $T^{\pm \mathcal{O}}$, with $T^{\mathcal{O}} = \text{Res}_{F_{\alpha}/F} T^{\alpha}$. We again obtain a map $T \to T^{\pm \mathcal{O}}$, inducing a map $T(F) \to T^{\alpha}(F_{\alpha})$; denote the image of $\gamma$ under this map by $\gamma^{\alpha}$. We are now ready to state how $\Delta_{II}$ changes when we alter the $\chi$-data.

\begin{lem}\label{chi2} If the $\chi$-data $\{\chi_{\alpha}\}$ are replaced by $\{ \chi_{\alpha}' \}$, with $\zeta_{\alpha} = \chi_{\alpha}'/\chi_{\alpha}$, then $\Delta_{II}(\gamma_{H}, \gamma_{G})$ is multiplied by 
$$\prod_{\text{aysmm}} \zeta_{\alpha}(\gamma^{\alpha}) \cdot \prod_{\text{symm}} \zeta_{\alpha}(\delta^{\alpha}),$$ where $\prod_{\text{asymm}}$ denotes the product over representatives $\alpha$ for pairs $\pm \mathcal{O}$ of asymmetric orbits of $R$ outside $H$, and to make sense of $\zeta_{\alpha}(\gamma^{\alpha})$, we are using the canonical isomorphism $T^{\alpha} \xrightarrow{\sim} \mathbb{G}_{m}$ given on character groups by $1 \mapsto \alpha$, and $\prod_{\text{symm}}$ is the product over representatives $\alpha$ for the symmetric orbits of $R$ outside $H$, and to make sense of $\zeta_{\alpha}(\delta^{\alpha})$ we are using the canonical isomorphism $T^{\alpha}_{F_{\alpha}} \xrightarrow{\sim} \mathbb{G}_{m}$ given on character groups by $1 \mapsto \alpha$.
\end{lem}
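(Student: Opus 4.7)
The key starting point is that replacing the $\chi$-data multiplies $\Delta_{II}(\gamma_H,\gamma_G)$ by the product
\[
\prod_\alpha \zeta_\alpha\!\left(\frac{\alpha(\gamma)-1}{a_\alpha}\right),
\]
where $\alpha$ ranges over the same set of $\Gamma$-orbit representatives in $R$ outside $R(H_{F^s},(T_H)_{F^s})$ used to define $\Delta_{II}$. My plan is to split this product according to whether the orbit through $\alpha$ is asymmetric or symmetric, and to rewrite each factor in terms of $\gamma^\alpha$ or $\delta^\alpha$ using the compatibility properties of the $\zeta_\alpha$'s.

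For an asymmetric pair of orbits $\pm\mathcal O$, I will pick representatives $\alpha$ and $-\alpha$ and use the identities $\zeta_{-\alpha}=\zeta_\alpha^{-1}$ and $a_{-\alpha}=-a_\alpha$. A direct calculation gives
\[
\zeta_\alpha\!\left(\frac{\alpha(\gamma)-1}{a_\alpha}\right)\cdot\zeta_{-\alpha}\!\left(\frac{(-\alpha)(\gamma)-1}{a_{-\alpha}}\right)
=\zeta_\alpha\!\left(\frac{\alpha(\gamma)-1}{a_\alpha}\cdot\frac{a_\alpha\,\alpha(\gamma)}{\alpha(\gamma)-1}\right)=\zeta_\alpha(\alpha(\gamma)),
\]
and $\alpha(\gamma)$ is exactly the image of $\gamma$ in $T^\alpha(F_\alpha)=F_\alpha^*$ under the identification $T^\alpha\cong\mathbb G_m$ induced by $1\mapsto\alpha$. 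This contributes the factor $\zeta_\alpha(\gamma^\alpha)$ and accounts for the asymmetric part of the product.

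For a symmetric orbit through $\alpha$, the key facts are that $\zeta_\alpha$ is trivial on $F_{\pm\alpha}^*$ (since both $\chi_\alpha$ and $\chi_\alpha'$ extend the same quadratic character of $F_{\pm\alpha}^*$ attached to $F_\alpha/F_{\pm\alpha}$), and that $\alpha(\gamma)$ is the image of $\gamma^\alpha$ in $F_\alpha^*$ under the identification, which in view of $\gamma^\alpha\in T^\alpha(F_{\pm\alpha})$ satisfies $\sigma_0(\alpha(\gamma))=\alpha(\gamma)^{-1}$ for the nontrivial element $\sigma_0\in\Gamma_{\pm\alpha}/\Gamma_\alpha$. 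Writing $\gamma^\alpha=\delta^\alpha\overline{\delta^\alpha}$ and translating this into $F_\alpha^*$, one gets $\alpha(\gamma)=\delta^\alpha/\sigma_0(\delta^\alpha)$, so
\[
\frac{\alpha(\gamma)-1}{a_\alpha\,\delta^\alpha}=\frac{\delta^\alpha-\sigma_0(\delta^\alpha)}{a_\alpha\cdot N_{F_\alpha/F_{\pm\alpha}}(\delta^\alpha)}.
\]
Since $\sigma_0(a_\alpha)=a_{-\alpha}=-a_\alpha$ and $\sigma_0(\delta^\alpha-\sigma_0(\delta^\alpha))=-(\delta^\alpha-\sigma_0(\delta^\alpha))$, the numerator and $a_\alpha$ transform the same way under $\sigma_0$, so the whole expression lies in $F_{\pm\alpha}^*$. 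Applying $\zeta_\alpha$ then gives $1$, i.e.\ $\zeta_\alpha((\alpha(\gamma)-1)/a_\alpha)=\zeta_\alpha(\delta^\alpha)$, which is the symmetric contribution claimed.

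The main obstacle is keeping the various $F$-structures straight in the symmetric case: one must carefully distinguish the $F_\alpha$-point identification of $T^\alpha$ (which uses $1\mapsto\alpha$ and is not $\Gamma_{\pm\alpha}$-equivariant) from the intrinsic Galois action, and verify that the $\sigma_0$-action required to show $(\delta^\alpha-\sigma_0(\delta^\alpha))/a_\alpha\in F_{\pm\alpha}^*$ matches up correctly with the sign property $a_{-\alpha}=-a_\alpha$. Once these bookkeeping issues are settled, combining the symmetric and asymmetric calculations yields exactly the claimed formula.
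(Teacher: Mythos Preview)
Your argument is correct and is precisely the computation carried out in \cite{LS}, Lemmas 3.3.A and 3.3.D, which the paper simply cites rather than reproducing. In particular, your splitting into asymmetric and symmetric orbits, the cancellation using $\zeta_{-\alpha}=\zeta_\alpha^{-1}$ and $a_{-\alpha}=-a_\alpha$ in the asymmetric case, and the verification that $(\alpha(\gamma)-1)/(a_\alpha\,\delta^\alpha)\in F_{\pm\alpha}^*$ via $\sigma_0(a_\alpha)=-a_\alpha$ in the symmetric case, match the original Langlands--Shelstad argument verbatim, so there is nothing further to compare.
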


\begin{proof} This is \cite[Lemma 3.3.D]{LS}, the proof of which (along with the proof of Lemma 3.3.A loc. cit.) carries over to our setting verbatim.
\end{proof}

\subsubsection{The factor $\Delta_{III_{1}}$ (or $\Delta_{1}$)}
The construction of this factor is the only part of the construction of the relative local transfer factor that involves fppf cohomology rather than Galois cohomology. For the moment, we will assume that $G$ is quasi-split over $F$, with $\psi = \text{id}$; the construction of $\Delta_{1}$ in this case can be done using Galois cohomology, but in order to match more closely with the general case, we work in the setting of fppf cohomology. By construction, the admissible embedding $T_{H} \to G$ is obtained by first taking $T_{H} \xrightarrow{\sim} T_{G}$ determined by $\gamma_{H}, \gamma_{G}$ and then conjugating an embedding $(T_{G})_{F^{s}} \to G_{F^{s}}$ induced by a choice of Borel subgroup containing $(T_{G})_{F^{s}}$ and $(\mathscr{B}, \mathscr{T})$ by some appropriate $g \in G_{\text{sc}}(\bar{F})$. As a consequence, we see that $\gamma_{G}$ and $\gamma$ are conjugate by some $h \in G_{\text{\text{sc}}}(\bar{F})$ such that $p_{1}^{\sharp}(h) p_{2}^{\sharp}(h)^{-1} \in T_{\text{\text{sc}}}(\bar{F} \otimes_{F} \bar{F})$. We then set $v = p_{1}^{\sharp}(h)p_{2}^{\sharp}(h)^{-1}$ and denote the class of $v$ in $H^{1}(F, T_{\text{\text{sc}}})$ by $\text{inv}(\gamma_{H}, \gamma_{G})$; this class is independent of the choice of $h$, since if we choose any other $h' \in G_{\text{\text{sc}}}(\bar{F})$ with $h' \gamma_{G} h'^{-1} = \gamma$, then $h^{-1} h' \in T_{\text{\text{sc}}}(\bar{F})$, since $\gamma$ is strongly regular. We then set $$\Delta_{1}(\gamma_{H}, \gamma_{G}) = \langle \text{inv}(\gamma_{H}, \gamma_{G}), \textbf{s}_{T} \rangle ^{-1}.$$

Now we return to the setting of a general connected reductive group $G$ over $F$ with $\psi \colon G_{F^{s}} \to G^{*}_{F^{s}}$ the quasi-split inner form of $G$ over $F$ with the assumptions stated in the beginning of \S 6.3. In particular, we have two pairs of elements $\gamma_{H}, \gamma_{G}$ and $\bar{\gamma}_{H}, \bar{\gamma}_{G}.$ As in the quasi-split case, we may find $h, \bar{h} \in G^{*}_{\text{\text{sc}}}(\bar{F})$ such that $$h \psi(\gamma_{G}) h^{-1} = \gamma, \hspace{1mm} \bar{h} \psi(\bar{\gamma}_{G}) \bar{h}^{-1} = \bar{\gamma}.$$ One could take $h, \bar{h} \in G^{*}_{\text{sc}}(F^{s})$, but since we will be using these elements to construct fppf \v{C}ech cocycles, we want to view them as $\bar{F}$-points anyway. Further, let $u \in G^{*}_{\text{\text{sc}}}(\bar{F} \otimes_{F} \bar{F})$ be such that $p_{1}^{*}\psi \circ p_{2}^{*}\psi^{-1} = \text{Ad}(u)$ on $G^{*}_{\bar{F} \otimes_{F} \bar{F}}$; the fact that one cannot in general find such a $u$ in $G^{*}_{\text{sc}}(F^{s} \otimes_{F} F^{s})$ is the reason we need to use fppf cohomology to define the $\Delta_{III_{1}}$ factor. We then obtain two (\v{C}ech) cochains, $$v := p_{1}^{\sharp}(h)up_{2}^{\sharp}(h)^{-1} \in T_{\text{\text{sc}}}(\bar{F} \otimes_{F} \bar{F}), \hspace{1mm} \bar{v} := p_{1}^{\sharp}(\bar{h})up_{2}^{\sharp}(\bar{h})^{-1} \in \bar{T}_{\text{\text{sc}}}(\bar{F} \otimes_{F} \bar{F}) ;$$ we have that $v \in T_{\text{\text{sc}}}(\bar{F} \otimes_{F} \bar{F})$ because (since $\gamma, \gamma_{G}$ are $F$-points) $$v \gamma v^{-1} = p_{1}^{\sharp}(h)(p_{1}^{*}\psi \circ p_{2}^{*}\psi^{-1}(p_{2}^{\sharp}(\psi(\gamma_{G}))))p_{1}^{\sharp}(h)^{-1} = p_{1}^{\sharp}(h) p_{1}^{\sharp}(\psi(\gamma_{G})) p_{1}^{\sharp}(h)^{-1} = \gamma,$$ similarly for $\bar{v}$.

By construction, we have $dv = d\bar{v} = du \in Z_{\text{\text{sc}}}(\bar{F} \otimes_{F} \bar{F}),$ where recall that $Z_{\text{\text{sc}}} := Z(G_{\text{\text{sc}}}^{*})$, and by $d$ we are denoting the \v{C}ech differential. We have an embedding $Z_{\text{\text{sc}}} \to T_{\text{\text{sc}}} \times \bar{T}_{\text{\text{sc}}}$ defined by $i^{-1} \times j$, where $i$ and $j$ denote the obvious inclusions. Set $$U(T, \bar{T}) = U := \frac{T_{\text{\text{sc}}} \times \bar{T}_{\text{\text{sc}}}}{Z_{\text{\text{sc}}}},$$ which is an $F$-torus. We have the following easy lemma:

\begin{lem}The image of $(v, \bar{v}) \in T_{\text{\text{sc}}}(\bar{F} \otimes_{F} \bar{F}) \times \bar{T}_{\text{\text{sc}}}(\bar{F} \otimes_{F} \bar{F})  = (T_{\text{\text{sc}}} \times \bar{T}_{\text{\text{sc}}})(\bar{F} \otimes_{F} \bar{F})$ in $U(\bar{F} \otimes_{F} \bar{F})$ is a 1-cocycle, whose cohomology class, denoted by
\begin{equation} \text{inv} \left( \frac{\gamma_{H}, \gamma_{G}}{\bar{\gamma}_{H}, \bar{\gamma}_{G}} \right ) \in H^{1}(F,U),
\end{equation}
is independent of the choices of $u, h, \bar{h}.$
\end{lem}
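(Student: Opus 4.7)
The plan is to split the lemma into two tasks: verifying the twisted cocycle condition for the image of $(v, \bar v)$ in $U(\bar F \otimes_F \bar F)$, and verifying independence of the class under the three allowed modifications (of $h$, of $\bar h$, and of $u$). The crucial underlying observation is that $du$ lies in $Z_{\text{sc}}(\bar F \otimes_F \bar F \otimes_F \bar F)$: since $\text{Ad}(u)$ is the gluing datum for the fpqc descent of $G^{*}_{\text{sc}}$ encoded by $\psi$, the automorphism $\text{Ad}(du)$ must be trivial, forcing $du \in Z(G^{*}_{\text{sc}}) = Z_{\text{sc}}$. Because $U = (T_{\text{sc}} \times \bar T_{\text{sc}})/Z_{\text{sc}}$, every obstruction arising below will be central and hence killable after passing to $U$.

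First, for the cocycle condition, I would unwind $v = p_{1}(h) u p_{2}(h)^{-1}$ in the \v{C}ech differential. Writing $q_{i} \colon U_{2} \to U_{0}$ for the coordinate projections, the $q_{i}(h)$ terms telescope and one computes
\[
dv \;=\; q_{1}(h) \cdot p_{12}(u) p_{23}(u) p_{13}(u)^{-1} \cdot q_{1}(h)^{-1} \;=\; q_{1}(h)\,(du)\,q_{1}(h)^{-1} \;=\; du,
\]
the last equality using centrality of $du$; identically $d\bar v = du$. Hence $(dv, d\bar v)$ is the image of $du \in Z_{\text{sc}}$ under the embedding $Z_{\text{sc}} \hookrightarrow T_{\text{sc}} \times \bar T_{\text{sc}}$ defining $U$ and so vanishes in $U(U_{2})$, giving the cocycle condition.

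For independence of $h$: since $\gamma$ is strongly $G$-regular, any other admissible choice $h'$ satisfies $h^{-1} h' \in Z_{G^{*}_{\text{sc}, \bar F}}(\psi(\gamma_{G})) = h^{-1} T_{\text{sc}, \bar F} h$, so $h' = t h$ for a unique $t \in T_{\text{sc}}(\bar F)$. Commutativity of $T_{\text{sc}}$ then yields $v' = p_{1}(t)\, v\, p_{2}(t)^{-1} = v \cdot (dt)^{-1}$, while $\bar v$ is unchanged, so $(v', \bar v)$ differs from $(v, \bar v)$ by the coboundary of $(t^{-1}, e) \in (T_{\text{sc}} \times \bar T_{\text{sc}})(\bar F)$, which remains a coboundary after projection to $U$. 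The argument for $\bar h$ is symmetric.

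For independence of $u$: two admissible choices $u, u'$ induce the same $\text{Ad}$-automorphism of $G^{*}_{\text{sc}, \bar F \otimes_F \bar F}$, so $z := u^{-1} u' \in Z_{\text{sc}}(\bar F \otimes_F \bar F)$; using centrality one gets $v' = v z$ and $\bar v' = \bar v z$, so $(v', \bar v')$ and $(v, \bar v)$ differ by the pair $(z, z)$ arising from a single element of $Z_{\text{sc}}$ and hence agree in $U$. These three steps together establish the lemma. The main point requiring care in the full writeup is the sign convention in the embedding $i^{-1} \times j$ used to form $U$: one must confirm that the central pairs $(du, du)$ and $(z, z)$ produced above land precisely in the image of this antidiagonal embedding rather than the diagonal, which may be arranged by keeping track of signs in $dt$ or, equivalently, by passing to $v^{-1}$ in the intermediate computations.
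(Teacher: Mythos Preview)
Your approach matches the paper's: both rely on $dv = d\bar v = du \in Z_{\text{sc}}$ (the paper states this just before the lemma), handle $h \mapsto th$ via the coboundary of $(t^{\pm 1}, e)$, and treat $u \mapsto uz$ by noting the change is central. The paper simply declares the cocycle verification ``trivial'' and dispatches each choice in one line; your explicit telescoping computation of $dv$ is a clean addition.

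You are right to flag the sign issue with the antidiagonal embedding $i^{-1} \times j \colon z \mapsto (z^{-1}, z)$: the pairs $(du, du)$ and $(z, z)$ are \emph{not} in its image in general, so as literally stated neither the cocycle condition nor the independence-of-$u$ step go through for $(v, \bar v)$. The paper's proof glosses over exactly this point. Your proposed fix of passing to $v^{-1}$---i.e., working with $(v^{-1}, \bar v)$---is the correct resolution, and is in fact the convention the paper itself uses elsewhere: immediately after the lemma it writes the quasi-split invariant as $\pi[(\text{inv}(\gamma_H, \gamma_G)^{-1}, \text{inv}(\bar\gamma_H, \bar\gamma_G))]$, and in \S7.2 the invariant is displayed explicitly as $([p_1(g_1) u p_2(g_1)^{-1}]^{-1}, p_1(g_2) u p_2(g_2)^{-1})$, with the first component inverted. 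So the lemma statement carries a sign slip; with $(v^{-1}, \bar v)$ in place of $(v, \bar v)$ your argument and the paper's coincide and are complete. (Your alternative suggestion of ``keeping track of signs in $dt$'' would only affect the $h$-step and cannot repair the cocycle or $u$-independence steps, since $t$ does not enter those.)
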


\begin{proof} The fact that the above defines a 1-cocycle is trivial, since $$U(\bar{F} \otimes_{F} \bar{F}) = \frac{T_{\text{\text{sc}}}(\bar{F} \otimes_{F} \bar{F}) \times \bar{T}_{\text{\text{sc}}}(\bar{F} \otimes_{F} \bar{F})}{Z_{\text{\text{sc}}}(\bar{F} \otimes_{F} \bar{F})},$$ using the fact that $H^{1}(\bar{F} \otimes_{F} \bar{F}, Z_{\text{\text{sc}}}) = 0$, and the construction of $v$, $\bar{v}$, and $U$. Replacing $u$ by $u'$ satisfies $u'  = uz$, $z \in Z_{\text{\text{sc}}}(\bar{F})$, and so the new element $(v', \bar{v}') \in T_{\text{\text{sc}}} \times \bar{T}_{\text{\text{sc}}}$ is equivalent to $(v, \bar{v})$ modulo $Z_{\text{\text{sc}}}$. Replacing $h$ by $h' = ht$, where $t \in T_{\text{\text{sc}}}(\bar{F})$, gives $v' = d(t) \cdot v \in T_{\text{\text{sc}}}(\bar{F} \otimes_{F} \bar{F})$, and so the image of $(v' , \bar{v})$ in $U$ differs from the image of $(v, \bar{v})$ by $(d(t),1)$, a coboundary, similarly with the element $\bar{h}$. 
\end{proof}
Note that if $G$ is quasi-split and $\pi$ denotes the quotient map defining $U$, then 
\begin{equation}
\left( \frac{\gamma_{H}, \gamma_{G}}{\bar{\gamma}_{H}, \bar{\gamma}_{G}} \right ) = \pi[ (\text{inv}(\gamma_{H}, \gamma_{G})^{-1}, \text{inv}(\bar{\gamma}_{H}, \bar{\gamma}_{G})) ].
\end{equation}

\noindent Now let $\hat{T}_{\text{\text{sc}}}$ denote the torus dual to $T_{\text{\text{ad}}} = T/Z(G)$, and set $\widehat{Z}_{\text{\text{sc}}} := Z(\widehat{G}_{\text{\text{sc}}})$. The homomorphism $X_{*}(T) \to X_{*}(T_{\text{\text{ad}}})$ induces a morphism of $\hat{T}_{\text{\text{sc}}} \to \widehat{T} \hookrightarrow \widehat{G}$ (using an isomorphism $\widehat{T} \to \mathscr{T}$ giving our admissible embedding) which factors through $\mathscr{D}(\widehat{G}) \cap \widehat{T}$ by dimension and root system considerations. From this, one obtains $\hat{T}_{\text{\text{sc}}} \to \mathscr{D}(\widehat{G})$ which further factors through an embedding $\hat{T}_{\text{\text{sc}}} \to \mathscr{D}(\widehat{G})_{\text{\text{sc}}}$ that identifies $\hat{T}_{\text{\text{sc}}}$ with a maximal torus of $\widehat{G}_{\text{\text{sc}}}$, giving an embedding $\widehat{Z}_{\text{\text{sc}}} \hookrightarrow \hat{T}_{\text{\text{sc}}}$ which is canonical (because of centrality, this does not depend on our initial embedding of $\widehat{T}$ in $\widehat{G}$). The same result holds for $\hat{\bar{T}}_{\text{\text{sc}}}$.

With this in hand, we set $$\widehat{U} := \frac{\hat{T}_{\text{\text{sc}}} \times \hat{\bar{T}}_{\text{\text{sc}}}}{\widehat{Z}_{\text{\text{sc}}}},$$ where now $\widehat{Z}_{\text{\text{sc}}}$ is embedded diagonally. The $\mathbb{Q}$-pairing $\mathbb{Q}R^{\vee} \times \mathbb{Q}R \to \mathbb{Q}$ gives a pairing $X^{*}(\hat{T}_{\text{\text{sc}}}) \times X^{*}(T_{\text{\text{sc}}}) \to \mathbb{Q}$ which, together with the analogue for $\bar{T}$, yields a $\mathbb{Q}$-pairing between $X^{*}(\bar{T}_{\text{\text{sc}}} \times \hat{\bar{T}}_{\text{\text{sc}}})$ and $X^{*}(T_{\text{\text{sc}}} \times \bar{T}_{\text{\text{sc}}})$, which further induces a perfect $\mathbb{Z}$-pairing between $X^{*}(\widehat{U})$ and $X^{*}(U)$, identifying $\widehat{U}$ with the dual of $U$, see \cite[\S 3.4]{LS}. 

Take the projection of $\eta(s) \in \mathscr{T}(\mathbb{C})$ in $\mathscr{T}_{\text{\text{ad}}}(\mathbb{C})$, and then pick an arbitrary preimage  $\tilde{s}$ of this projection in $\mathscr{T}_{\text{\text{sc}}}(\mathbb{C})$. We have isomorphisms $\hat{T}_{\text{\text{sc}}} \to \mathscr{T}_{\text{\text{sc}}}$, $\hat{\bar{T}}_{\text{\text{sc}}} \to \mathscr{T}_{\text{\text{sc}}}$ induced by choices of isomorphisms $\widehat{T}, \widehat{\bar{T}} \to \mathscr{T}$ giving our admissible embeddings, and the respective preimages of $\tilde{s}$, denoted by $\tilde{s}_{T}, \tilde{s}_{\bar{T}}$, only depend on choice of $\tilde{s}$ and the admissible isomorphisms $T_{H} \to T, \bar{T}_{H} \to \bar{T}$. We then set $s_{U}:= (\tilde{s}_{T}, \tilde{s}_{\bar{T}}) \in \widehat{U}(\mathbb{C})$. Note that a different choice of $\tilde{s}$ corresponds to replacing $\tilde{s}_{T}, \tilde{s}_{\bar{T}}$ by $\tilde{s}_{T}z_{T}, \tilde{s}_{\bar{T}}z_{\bar{T}}$, where $z \in \widehat{Z}_{\text{\text{sc}}}(\mathbb{C})$ and $z_{T}, z_{\bar{T}}$ denote the images of $z$ under the canonical embeddings of $\widehat{Z}_{\text{\text{sc}}}$ in $\hat{T}_{\text{\text{sc}}}, \hat{\bar{T}}_{\text{\text{sc}}}$. Thus, $s_{U}$ is independent of the choice of $\tilde{s}$. Then one can show that $s_{U} \in \widehat{U}^{\Gamma}$, see for example the discussion of the $\Delta_{III_{1}}$ factor in \cite{Tasho}, proof of Proposition 5.6. Hence, it makes sense to define $\textbf{s}_{U}$ to be the image of $s_{U}$ in $\pi_{0}(\widehat{U}^{\Gamma})$. We then set 
\begin{equation}
\Delta_{III_{1}}(\gamma_{H}, \gamma_{G}; \bar{\gamma}_{H}, \bar{\gamma}_{G}) := \langle \text{inv} \left( \frac{\gamma_{H}, \gamma_{G}}{\bar{\gamma}_{H}, \bar{\gamma}_{G}} \right ), \textbf{s}_{U} \rangle .
\end{equation}
By what we have done, it is clear that if $G$ is quasi-split over $F$, then $$\Delta_{III_{1}}(\gamma_{H}, \gamma_{G}; \bar{\gamma}_{H}, \bar{\gamma}_{G}) = \langle \text{inv}(\gamma_{H}, \gamma_{G}), \textbf{s}_{T} \rangle ^{-1} \langle \text{inv}(\bar{\gamma}_{H}, \bar{\gamma}_{G}), \textbf{s}_{\bar{T}} \rangle.$$

\begin{lem}\label{embedding3} If $T_{H} \to T$ and $\bar{T}_{H} \to \bar{T}$ are replaced by their $g$- and $\bar{g}$-conjugates, $g, \bar{g} \in \mathfrak{A}(T_{\text{\text{sc}}}), \mathfrak{A}(\bar{T}_{\text{\text{sc}}})$, then $\Delta_{III_{1}}(\gamma_{H}, \gamma_{G}; \bar{\gamma}_{H}, \bar{\gamma}_{G})$ is multiplied by $$\langle \textbf{g}_{T}, \textbf{s}_{T} \rangle \langle \textbf{g}_{\bar{T}}, \textbf{s}_{\bar{T}} \rangle^{-1},$$ where $\textbf{g}_{T}$ is the class of the 1-cocycle $p_{1}^{\sharp}(g)p_{2}^{\sharp}(g)^{-1} \in T_{\text{\text{sc}}}(\bar{F} \otimes_{F} \bar{F})$, analogously for $\textbf{g}_{\bar{T}}$.
\end{lem}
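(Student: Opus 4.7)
The plan is to mirror the proof of the characteristic-zero analogue \cite{LS} Lemma 3.4.B in the fppf \v{C}ech setting of \S 6.3.3. First I would compute explicitly how the \v{C}ech cochain $(v, \bar v)$ transforms. Taking $h' := gh$ and $\bar{h}' := \bar{g} \bar{h}$ as new conjugating elements (they still satisfy $h'\psi(\gamma_{G})h'^{-1} = g\gamma g^{-1}$, the image of $\gamma_{H}$ under the modified admissible embedding, and analogously for $\bar{h}'$), one gets $v' = p_{1}(g) v p_{2}(g)^{-1}$ and $\bar{v}' = p_{1}(\bar{g}) \bar{v} p_{2}(\bar{g})^{-1}$. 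The hypothesis $g \in \mathfrak{A}(T_{\text{sc}})$ provides two key facts: $p_{1}(g) p_{2}(g)^{-1}$ lies in $T_{\text{sc}}(\bar{F} \otimes_{F} \bar{F})$ and represents the class $\textbf{g}_{T} \in H^{1}(F, T_{\text{sc}})$; moreover $\text{Ad}(p_{1}(g))$ and $\text{Ad}(p_{2}(g))$ induce a common $F$-automorphism $\text{Ad}(g)$ of $T_{\text{sc}}$ (their ratio is $\text{Ad}$ by an element of $T_{\text{sc}}$, hence trivial). Thus we may factor $v' = \text{Ad}(g)(v) \cdot \bigl(p_{1}(g) p_{2}(g)^{-1}\bigr)$ in $T_{\text{sc}}(\bar{F} \otimes_{F} \bar{F})$, and similarly for $\bar{v}'$.

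Because Weyl-type automorphisms act trivially on centers, $(\text{Ad}(g), \text{Ad}(\bar{g}))$ descends to an $F$-automorphism $\phi$ of $U$. Reducing modulo $Z_{\text{sc}}$ yields
\[
[(v', \bar{v}')]_{U} \;=\; \phi_{*}\bigl([(v, \bar{v})]_{U}\bigr) \cdot \bigl[(\textbf{g}_{T}, \textbf{g}_{\bar{T}})\bigr]_{U}
\]
in $H^{1}(F, U)$. On the dual side, the change of admissible embeddings transports the lifts $\tilde{s}_{T}, \tilde{s}_{\bar{T}}$ defining $\textbf{s}_{U}$ by the Weyl elements dual to $\text{Ad}(g), \text{Ad}(\bar{g})$, replacing $\textbf{s}_{U}$ by $\hat{\phi}(\textbf{s}_{U}) \in \pi_{0}(\widehat{U}^{\Gamma})$. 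Functoriality of the Tate-Nakayama pairing under $F$-automorphisms gives the adjointness $\langle \phi_{*}x, \hat{\phi}y \rangle = \langle x, y\rangle$, so the $\phi_{*}[(v,\bar{v})]_{U}$-term paired with $\hat{\phi}(\textbf{s}_{U})$ reproduces the original $\Delta_{III_{1}}$, whence
\[
\frac{\Delta_{III_{1}}'}{\Delta_{III_{1}}} \;=\; \bigl\langle \bigl[(\textbf{g}_{T}, \textbf{g}_{\bar{T}})\bigr]_{U},\; \hat{\phi}(\textbf{s}_{U})\bigr\rangle.
\]

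To evaluate the right-hand side, I would expand the pairing on $U$ via the lifts $(\tilde{s}_{T}, \tilde{s}_{\bar{T}})$; since the embedding $Z_{\text{sc}} \hookrightarrow T_{\text{sc}} \times \bar{T}_{\text{sc}}$ is $z \mapsto (z^{-1}, z)$ (with diagonal dual embedding $\widehat{Z}_{\text{sc}} \hookrightarrow \hat{T}_{\text{sc}} \times \hat{\bar{T}}_{\text{sc}}$), the $T_{\text{sc}}$-factor contributes with an inverse:
\[
\bigl\langle [(\textbf{g}_{T}, \textbf{g}_{\bar{T}})]_{U}, \hat{\phi}(\textbf{s}_{U})\bigr\rangle \;=\; \bigl\langle \textbf{g}_{T}, \widehat{\text{Ad}(g)}(\tilde{s}_{T})\bigr\rangle^{-1} \cdot \bigl\langle \textbf{g}_{\bar{T}}, \widehat{\text{Ad}(\bar{g})}(\tilde{s}_{\bar{T}})\bigr\rangle.
\]
The decisive identity --- verified by exactly the computation used in the proof of \cite{LS} Lemma 3.2.B, the $\Delta_{I}$ analogue already invoked in Lemma \ref{change1}(1) --- is
\[
\bigl\langle \textbf{g}_{T}, \widehat{\text{Ad}(g)}(\tilde{s}_{T})\bigr\rangle \;=\; \bigl\langle \textbf{g}_{T}, \tilde{s}_{T}\bigr\rangle^{-1} \;=\; \langle \textbf{g}_{T}, \textbf{s}_{T}\rangle^{-1},
\]
and analogously for $\bar{T}$. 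Substituting produces the claimed factor $\langle \textbf{g}_{T}, \textbf{s}_{T}\rangle \cdot \langle \textbf{g}_{\bar{T}}, \textbf{s}_{\bar{T}}\rangle^{-1}$.

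The main technical obstacle is this sign-reversing identity. It is established by writing $\textbf{g}_{T}$ in terms of the cocycle $\sigma \mapsto g\sigma(g)^{-1} = \text{Ad}(g)(t^{-1})$ with $t := g^{-1}\sigma(g) \in T_{\text{sc}}$, applying adjointness of the Tate-Nakayama pairing once more to translate $\widehat{\text{Ad}(g)}$ into the Weyl action on the $T_{\text{sc}}$-side, and then exploiting the self-referential relationship between the normalizer element $g$ and the cocycle it defines. Crucially, the whole analysis takes place inside pairings on multiplicative groups, for which Tate-Nakayama duality is valid in any characteristic; the transition from Galois to fppf \v{C}ech cohomology (required because $Z_{\text{sc}}$ need not be smooth) introduces no additional subtlety, as every cocycle involved is valued in a multiplicative group where \v{C}ech and derived cohomology agree by Proposition \ref{fpqccover}.
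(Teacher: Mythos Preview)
Your overall strategy---track how the \v{C}ech cochain and the dual element transform, then apply functoriality of the Tate--Nakayama pairing---is the same as the paper's, but your execution takes a detour that creates real trouble.

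The first issue is that $(\text{Ad}(g), \text{Ad}(\bar g))$ is not an automorphism of $U$: it is an $F$-isomorphism from $U$ onto the \emph{new} torus $U' = (T'_{\text{sc}}\times \bar T'_{\text{sc}})/Z_{\text{sc}}$ built from the conjugated tori. Calling it an automorphism forces you to treat the new $\textbf{s}_{U}$ as ``$\hat\phi(\textbf{s}_U)$'' inside the old $\widehat U$, which is why you are driven to the identity $\langle \textbf{g}_T, \widehat{\text{Ad}(g)}(\tilde s_T)\rangle = \langle \textbf{g}_T, \textbf{s}_T\rangle^{-1}$. As written this identity is ill-posed: $\widehat{\text{Ad}(g)}(\tilde s_T)$ does not live in $\widehat{T_{\text{sc}}}$, so it cannot be paired against $\textbf{g}_T \in H^1(F,T_{\text{sc}})$ unless you first explain which torus each object sits in and why the transport produces an inverse. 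Citing LS Lemma 3.2.B does not discharge this; that lemma computes how the splitting invariant changes, not a sign-reversal of the pairing under a Weyl twist.

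The paper avoids all of this by working directly with the isomorphism $\text{Ad}(g)\colon T'\to T$. One checks that $v'\in T'_{\text{sc}}(\bar F\otimes_F\bar F)$ transports via $\text{Ad}(p_1(g))$ to $v\cdot (p_1(g)p_2(g)^{-1})^{-1}\in T_{\text{sc}}(\bar F\otimes_F\bar F)$, while the dual isomorphism carries $\tilde s_{T'}$ exactly to $\tilde s_T$ (since $\tilde s$ is defined canonically from the datum). A single application of functoriality of the pairing then gives the factor $\langle \textbf{g}_T,\textbf{s}_T\rangle\langle \textbf{g}_{\bar T},\textbf{s}_{\bar T}\rangle^{-1}$ with no auxiliary identity needed. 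If you reorganize your argument to transport both sides along the \emph{isomorphism} (rather than forcing an automorphism interpretation), your factorization $v' = \text{Ad}(g)(v)\cdot (p_1(g)p_2(g)^{-1})$ becomes exactly this computation and the ``sign-reversing'' step disappears.
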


\begin{proof} Denote the $g^{-1}, \bar{g}^{-1}$-conjugates of $T, \bar{T}$ by $T'$, $\bar{T}'$. One checks that $v$ as defined above is replaced by $p_{1}^{\sharp}(g)^{-1} v p_{2}^{\sharp}(g) \in T'_{\text{\text{sc}}}(\bar{F} \otimes_{F} \bar{F})$ and conjugating this element by $p_{1}^{\sharp}(g)$ yields the element $v (p_{1}^{\sharp}(g) p_{2}^{\sharp}(g)^{-1})^{-1}$, analogously for $\bar{T}$ and $\bar{v}$. Similarly, $\tilde{s}_{T}, \tilde{s}_{\bar{T}}$ can be taken to be $\text{Ad}(g)\tilde{s}_{T'}$ (by $\text{Ad}(g)$, we mean the induced dual map $\hat{T'}_{\text{\text{sc}}} \to \hat{T}_{\text{\text{sc}}}$) and $\text{Ad}(\bar{g})\tilde{s}_{\bar{T}'}$. The functoriality of the Tate-Nakayama pairing then gives the result.
\end{proof}

\subsubsection{The factor $\Delta_{III_{2}}$} To construct this factor, we will fix Borel subgroups $B \supset T_{F^{s}}$, $B_{H} \supset (T_{H})_{F^{s}}$ which (along with our fixed $(\mathscr{B}, \mathscr{T}), (\mathscr{B}_{H}, \mathscr{T}_{H})$) determine the admissible isomorphism $T_{H} \to T$; note that our $\chi$- and $a$-data also serve the $\Gamma$-action on $R(H_{F^{s}}, (T_{H})_{F^{s}}) \subset R$. Then, according to \S 6.2.2, we obtain from our $\chi$-data $\{\chi_{\alpha}\}$ (viewed as a $\chi$-data for $T$ and for $T_{H}$) admissible embeddings $\xi_{T} \colon \prescript{L}{}T \to \prescript{L}{}G$ extending the map $\widehat{T} \to \mathscr{T}$ and $\xi_{T_{H}} \colon \prescript{L}{}T_{H} \to \prescript{L}{}H$ extending $T_{H} \to \mathscr{T}_{H}$. We then obtain $$\eta \circ \xi_{T_{H}} = a \cdot \xi_{T},$$ where we view $\xi_{T}$ as a map on $\prescript{L}{}T_{H}$ by means of the isomorphism $\prescript{L}{}T_{H} \to \prescript{L}{}T$ induced by the admissible isomorphism $T_{H} \to T$ and $a$ is a 1-cocycle in $\mathscr{T}(\mathbb{C})$ for the $\widehat{T}$-transported $W_{F}$-action. Its class $\textbf{a}$ in $H^{1}(W_{F}, \widehat{T}(\mathbb{C}))$ (after applying the fixed isomorphism $\mathscr{T} \to \widehat{T}$ to $a$) is independent of the choice of $B_{H}$ and $B$, as well as the $\Gamma$-splittings $(\mathscr{B}, \mathscr{T},\{X\})$ and $(\mathscr{B}_{H}, \mathscr{T}_{H},\{X^{H}\})$ by Facts \ref{fact2} and \ref{fact1} from \S 6.2, respectively. 

Suppose now that $T_{H} \to T$ (and the corresponding data) is replaced with a $g \in \mathfrak{A}(T_{\text{\text{sc}}})$-conjugate $T' = \text{Ad}(g^{-1})T$ with admissible embedding $\xi_{T'}$. Then Fact \ref{changeoftori} from \S 6.2 shows that the induced isomorphism $\lambda_{g} \colon \prescript{L}{}T' \to \prescript{L}{}T$ satisfies $\xi_{T} \circ \lambda_{g} = \xi_{T'}$, and so it follows that the class $\textbf{a}$ is the image of $\textbf{a}' \in H^{1}(W_{F}, \widehat{T'}(\mathbb{C}))$ under the isomorphism $H^{1}(W_{F}, \widehat{T'}(\mathbb{C})) \xrightarrow{\text{Ad}(g)} H^{1}(W_{F}, \widehat{T}(\mathbb{C}))$. The dependence on the $\chi$-data will be addressed later.

We then set $$\Delta_{III_{2}}(\gamma_{H}, \gamma_{G}) := \langle \textbf{a}, \gamma \rangle,$$ where the above pairing comes from Langlands duality for tori, as in Theorem \ref{toriduality}. By the functoriality of the pairing (Theorem \ref{toriduality}) and our above remarks on the cocycle $\textbf{a}$, it is immediate that this number does not change if the admissible embedding $T_{H} \to T$ (and corresponding data) is changed by a $\mathfrak{A}(T_{\text{\text{sc}}})$-conjugate.

\begin{lem}\label{chi3} Suppose that the $\chi$-data $\{\chi_{\alpha}\}$ is replaced by $\{\chi_{\alpha}'\}$, with $\zeta_{\alpha} := \chi_{\alpha}'/\chi_{\alpha}$. Then $\Delta_{III_{2}}(\gamma_{H}, \gamma_{G})$ is multiplied by 
$$\prod_{\text{aysmm}} \zeta_{\alpha}(\gamma^{\alpha})^{-1} \cdot \prod_{\text{symm}} \zeta_{\alpha}(\delta^{\alpha})^{-1},$$ where $\gamma^{\alpha}$ and $\delta^{\alpha}$ are defined as in \S 6.3.2.
\end{lem}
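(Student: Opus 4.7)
The plan is to reduce the lemma to two ingredients: an explicit description of how the $L$-embeddings $\xi_T$ and $\xi_{T_H}$ change when the $\chi$-data is modified, and a computation of the resulting pairing via Langlands duality for tori (Theorem \ref{toriduality}). First I would show that if $\{\zeta_\alpha\}$ is the ``ratio data'' with $\zeta_\alpha = \chi'_\alpha/\chi_\alpha$, then, by the construction recalled in \S6.2.2 and by Lemma \ref{cor2.5}, the embedding $\xi_T$ is replaced by $c_T \cdot \xi_T$ where $c_T \in Z^1(W_F, \mathscr{T}(\mathbb{C}))$ is the cocycle associated to $\{\zeta_\alpha\}_{\alpha \in R}$ (with respect to the gauge used to construct $\xi_T$), and similarly $\xi_{T_H}$ is replaced by $c_{T_H} \cdot \xi_{T_H}$, where $c_{T_H}$ is built from the subset $\{\zeta_\alpha\}_{\alpha \in R_H}$ (viewing these as $\chi$-data for $T_H$).

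Next, from the defining relation $\eta\circ \xi_{T_H} = a\cdot \xi_T$, I would compute that the new cocycle $a'$ satisfies $a' = (\eta \circ c_{T_H})\cdot a \cdot c_T^{-1}$, so that in $H^1(W_F, \widehat T(\mathbb{C}))$ the class $\mathbf{a}$ is multiplied by the class $\mathbf{b}$ of $(\eta\circ c_{T_H})\cdot c_T^{-1}$. Because $c_{T_H}$ only involves roots in $R_H$ while $c_T$ involves all roots in $R$, the factors corresponding to $R_H$-orbits cancel (the $\chi$-data agree there), and $\mathbf{b}$ is represented by the product, over $\Gamma'$-orbits on $R\setminus R_H$, of the elementary cocycles attached to the individual $\zeta_\alpha$ with an overall inverse. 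By Theorem \ref{toriduality} we then have $\langle \mathbf{a}', \gamma\rangle = \langle \mathbf{a},\gamma\rangle\cdot \langle \mathbf{b},\gamma\rangle$, so it remains to identify $\langle\mathbf{b},\gamma\rangle$ with the asserted product.

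The final step is the orbit-by-orbit computation of $\langle \mathbf{b},\gamma\rangle$. For each $\Gamma$-orbit $\mathcal O$ on $R\setminus R_H$, the corresponding factor of $c_T^{-1}$ comes from the induced $\Gamma$-module $X^{\mathcal O}$ (symmetric case) or $X^{\pm \mathcal O}$ (asymmetric case) introduced before Lemma \ref{chi2}, and it is the image of an ``elementary'' cocycle under the map $\widehat{T^{\mathcal O}}\to\widehat T$ (resp.\ $\widehat{T^{\pm\mathcal O}}\to\widehat T$) dual to $T\to T^{\mathcal O}$ (resp.\ $T\to T^{\pm\mathcal O}$). By the functoriality of the duality pairing with respect to morphisms of tori and restriction of scalars (Theorem \ref{toriduality}), the pairing with $\gamma$ factors through the image of $\gamma$ in these auxiliary tori, which by construction is $\gamma^\alpha\in T^\alpha(F_\alpha)$ in the asymmetric case and, after using the norm factorization $\gamma^\alpha = \delta^\alpha\overline{\delta^\alpha}$ (needed because $\zeta_\alpha$ is trivial on $F_{\pm\alpha}^*$), is $\delta^\alpha$ in the symmetric case. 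Unwinding Langlands duality for tori via local class field theory on $F_\alpha$ (resp.\ $F_{\pm\alpha}$) then yields exactly $\zeta_\alpha(\gamma^\alpha)^{-1}$ or $\zeta_\alpha(\delta^\alpha)^{-1}$ as the orbit's contribution.

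The main obstacle is the last step: carrying out the duality computation for the symmetric orbits. One must show that the elementary cocycle attached to a symmetric orbit, upon pairing with $\gamma$, really produces $\zeta_\alpha(\delta^\alpha)$ rather than $\zeta_\alpha(\gamma^\alpha)$, and that the choice of decomposition $\gamma^\alpha = \delta^\alpha\overline{\delta^\alpha}$ does not affect the answer (this is where the constraint that $\zeta_\alpha$ is trivial on $F_{\pm\alpha}^*$ is essential). Once this identification is in hand, the product formula follows by taking the product over a set of orbit representatives, exactly as in Lemma 3.5 of \cite{LS}; the argument there is purely in terms of $W_F$-cocycles, local class field theory, and Shapiro's lemma, all of which are insensitive to the characteristic of $F$, so it transfers verbatim to our setting.
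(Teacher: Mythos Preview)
Your proposal is correct and follows essentially the same approach as the paper. The paper's proof simply defers to Lemma~3.5.A of \cite{LS} and checks that its ingredients---Lemma~\ref{cor2.5}, the constructions of \S6.2.2, the Galois-cohomological computations paralleling \S6.3.2, and the functoriality and restriction-of-scalars compatibility of Langlands duality for tori---all carry over to positive characteristic; you have spelled out in more detail what that \cite{LS} argument actually does, arriving at the same conclusion.
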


\begin{proof} This result is \cite[Lemma 3.5.A]{LS}. The proof loc. cit. depends on our Lemma \ref{cor2.5} (which is Corollary 2.5 loc. cit.) as well as the general discussion of our \S 6.2.2, Galois-cohomological computations similar to the ones done in our \S 6.3.2,  and the fact that the pairing coming from Langlands duality for tori is functorial and respects restriction of scalars. All of these facts/techniques are unchanged in our setting, and therefore the same argument works.
\end{proof}

\subsubsection{The factor $\Delta_{IV}$} We denote the (normalized) absolute value on $F$ by $| \cdot |$. For our $\gamma \in T(F)$, we set \begin{equation}
D_{G^{*}}(\gamma) :=  | \prod_{\alpha \in R} (\alpha(\gamma) -1 ) |^{1/2}.
\end{equation}
Note that this is well-defined because $\prod_{\alpha \in R} (\alpha(\gamma) -1) \in F$. Then we set $$\Delta_{IV}(\gamma_{H}, \gamma_{G}) := D_{G^{*}}(\gamma) \cdot D_{H}(\gamma_{H})^{-1}.$$ This is clearly unchanged if the admissible embedding is replaced by a $\mathfrak{A}(T_{\text{\text{sc}}})$-conjugate. 

\subsubsection{The local transfer factor} We are now ready to define the absolute transfer factor for quasi-split connected reductive groups $G$ over $F$ a local function field and the relative transfer factor for arbitrary connected reductive groups over $F$. Fix two pairs $\gamma_{G}, \gamma_{H}$, $\bar{\gamma}_{H}, \bar{\gamma}_{G}$ as in the beginning of \S 6.3.

For quasi-split $G$ over $F$, we set $$\Delta_{0}(\gamma_{H}, \gamma_{G}) = \Delta_{I}(\gamma_{H}, \gamma_{G}) \Delta_{II}(\gamma_{H}, \gamma_{G}) \Delta_{1}(\gamma_{H}, \gamma_{G}) \Delta_{III_{2}}(\gamma_{H}, \gamma_{G}) \Delta_{IV}(\gamma_{H}, \gamma_{G}).$$ For general $G$, we set 
\begin{equation}
\Delta(\gamma_{H}, \gamma_{G}; \bar{\gamma}_{H}, \bar{\gamma}_{G}) := \frac{\Delta_{I}(\gamma_{H}, \gamma_{G})}{\Delta_{I}(\bar{\gamma}_{H}, \bar{\gamma}_{G})} \cdot \frac{\Delta_{II}(\gamma_{H}, \gamma_{G})}{\Delta_{II}(\bar{\gamma}_{H}, \bar{\gamma}_{G})} \cdot \frac{\Delta_{III_{2}}(\gamma_{H}, \gamma_{G})}{\Delta_{III_{2}}(\bar{\gamma}_{H}, \bar{\gamma}_{G})} \cdot \frac{\Delta_{IV}(\gamma_{H}, \gamma_{G})}{\Delta_{IV}(\bar{\gamma}_{H}, \bar{\gamma}_{G})} \cdot \Delta_{III_{1}}(\gamma_{H}, \gamma_{G} ; \bar{\gamma}_{H}, \bar{\gamma}_{G}).
\end{equation}

We have the following results that discuss the dependence of $\Delta_{0}, \Delta$ on the admissible embeddings and $\chi$- and $a$-data.

\begin{thm}\label{factor} The factor $\Delta(\gamma_{H}, \gamma_{G}; \bar{\gamma}_{H}, \bar{\gamma}_{G})$ is independent of the choice of admissible embeddings, $a$-data, and $\chi$-data.
\end{thm}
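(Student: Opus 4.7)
The plan is to verify independence separately under each of the three possible modifications: (a) change of the admissible embeddings $T_H \to T$ and $\bar{T}_H \to \bar{T}$ (along with the associated Borel subgroups and $\Gamma$-splittings used in their construction), (b) change of the $a$-data for $R$ and $\bar{R}$, and (c) change of the $\chi$-data for $R$ and $\bar{R}$. Since each factor $\Delta_j$ has already been analyzed for its dependence on these data, the proof consists of collecting these partial results and checking that the pieces cancel in the product $\Delta$. Additionally, one must remove the auxiliary dependence of $\Delta_I$ on the chosen $F$-splitting $\mathscr{S}$ of $G^*_{\text{sc}}$, which is handled by Lemma \ref{splitting}: although $\Delta_I(\gamma_H,\gamma_G)$ and $\Delta_I(\bar{\gamma}_H,\bar{\gamma}_G)$ individually depend on $\mathscr{S}$, their ratio does not.

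For (b), fix the admissible embeddings and $\chi$-data and replace $\{a_\alpha\}$ by $\{a'_\alpha\}$ with $b_\alpha := a'_\alpha/a_\alpha$. By Lemma \ref{change1}(2), $\Delta_I(\gamma_H,\gamma_G)$ is multiplied by the product $\prod_\alpha \text{sgn}_{F_\alpha/F_{\pm\alpha}}(b_\alpha)$ over symmetric $\Gamma$-orbits outside $R(H_{F^s},(T_H)_{F^s})$, while by Lemma \ref{a2}, $\Delta_{II}(\gamma_H,\gamma_G)$ is multiplied by the inverse of the same product; the other factors are unaffected. The same cancellation occurs for the barred pair if the $a$-data on $\bar{R}$ is modified. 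Thus $\Delta$ is invariant. Case (c) is analogous: by Lemmas \ref{chi2} and \ref{chi3}, a change of the $\chi$-data $\{\chi_\alpha\}$ to $\{\chi'_\alpha\}$ (with $\zeta_\alpha := \chi'_\alpha/\chi_\alpha$) multiplies $\Delta_{II}(\gamma_H,\gamma_G)$ by $\prod_{\text{asymm}} \zeta_\alpha(\gamma^\alpha)\cdot \prod_{\text{symm}}\zeta_\alpha(\delta^\alpha)$ and multiplies $\Delta_{III_2}(\gamma_H,\gamma_G)$ by its inverse; $\Delta_I,\Delta_{III_1},\Delta_{IV}$ are independent of $\chi$-data, so again all contributions cancel.

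For (a), first note that by Facts \ref{fact1}, \ref{fact2}, and \ref{changeoftori} of \S 6.2 the $\widehat{G}$-conjugacy class of the $L$-embedding attached to $T_H\to T$ is insensitive to the auxiliary Borel subgroups and $\Gamma$-splittings chosen during its construction; consequently $\Delta_{III_2}$ only depends on the admissible embedding up to $\mathfrak{A}(T_{\text{sc}})$-conjugacy (as noted in \S 6.3.4), while $\Delta_{II}$ and $\Delta_{IV}$ are manifestly unaffected. So the only nontrivial point is to track what happens when $T_H\to T$ is replaced by its conjugate by $g\in\mathfrak{A}(T_{\text{sc}})$ (and symmetrically $\bar{T}_H\to\bar{T}$ by its conjugate by $\bar{g}\in\mathfrak{A}(\bar{T}_{\text{sc}})$), with $a$- and $\chi$-data transported accordingly. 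By Lemma \ref{change1}(1), the ratio $\Delta_I(\gamma_H,\gamma_G)/\Delta_I(\bar{\gamma}_H,\bar{\gamma}_G)$ is multiplied by $\langle \mathbf{g}_T,\mathbf{s}_T\rangle^{-1}\langle \mathbf{g}_{\bar T},\mathbf{s}_{\bar T}\rangle$, and by Lemma \ref{embedding3}, $\Delta_{III_1}$ is multiplied by $\langle \mathbf{g}_T,\mathbf{s}_T\rangle\langle \mathbf{g}_{\bar T},\mathbf{s}_{\bar T}\rangle^{-1}$. These two contributions are precisely inverse to one another, so their product in $\Delta$ is $1$.

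Combining (a), (b), and (c), together with the independence of the splitting $\mathscr{S}$ afforded by Lemma \ref{splitting}, we conclude that $\Delta(\gamma_H,\gamma_G;\bar{\gamma}_H,\bar{\gamma}_G)$ depends only on the four input elements $\gamma_H,\gamma_G,\bar{\gamma}_H,\bar{\gamma}_G$, as desired. I anticipate no real obstacle beyond careful bookkeeping; the substance of the argument is already contained in the individual lemmas, and the only subtlety worth emphasizing is ensuring that the transport of $a$- and $\chi$-data under a $\mathfrak{A}(T_{\text{sc}})$-conjugation genuinely produces compatible data on the conjugated torus (which is immediate from $\Gamma$-equivariance of the conjugation map on character lattices).
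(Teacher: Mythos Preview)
Your proof is correct and follows essentially the same approach as the paper: you invoke Lemmas \ref{change1}, \ref{a2}, \ref{chi2}, \ref{chi3}, and \ref{embedding3} to track how each factor changes under the three types of modification and verify that the contributions cancel in the product, exactly as the paper does. The only minor difference is that you fold the independence of the $F$-splitting (Lemma \ref{splitting}) into the proof itself, whereas the paper records that observation separately after the theorem.
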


\begin{proof} If the admissible embeddings are replaced by $g^{-1} \in \mathfrak{A}(T_{\text{\text{sc}}})$ and $\bar{g}^{-1} \in \mathfrak{A}(\bar{T}_{\text{\text{sc}}})$-conjugate embeddings (with translated $a$- and $\chi$-data), $\Delta_{I}(\gamma_{H}, \gamma_{G})$ is multiplied by $\langle \textbf{g}_{T}, \textbf{s}_{T} \rangle^{-1}$ by Lemma \ref{change1} (similarly for $\bar{\gamma}_{H}, \bar{\gamma}_{G}$), $\Delta_{II}(\gamma_{H}, \gamma_{G})$ is unchanged, $\Delta_{III_{1}}$ is multiplied by $\langle \textbf{g}_{T}, \textbf{s}_{T} \rangle \langle \textbf{g}_{\bar{T}}, \textbf{s}_{\bar{T}} \rangle ^{-1}$ by Lemma \ref{embedding3}, and $\Delta_{III_{2}}$, $\Delta_{IV}$ are unaffected. Thus, $\Delta$ is unaffected.

If we change the $a$- and $\chi$-data to $\{a'_{\alpha}\}$, $\{\chi_{\alpha}'\}$ with $b_{\alpha} := a_{\alpha}'/a_{\alpha}$ and $\zeta_{\alpha} := \chi_{\alpha}'/\chi_{\alpha}$, then the change in $\Delta_{I}(\gamma_{H}, \gamma_{G})$ induced by the new $a$-data cancels with the change in $\Delta_{II}(\gamma_{H}, \gamma_{G})$ induced by the new $a$-data, by Lemmas \ref{change1} and \ref{a2}. The change in $\Delta_{II}(\gamma_{H}, \gamma_{G})$ induced by the new $\chi$-data is cancelled by the change in $\Delta_{III_{2}}(\gamma_{H}, \gamma_{G})$ induced by the new $\chi$-data, by Lemmas \ref{chi2} and \ref{chi3}. All the other factors are unaffected.
\end{proof}
Note that by Lemma \ref{splitting}, $\Delta(\gamma_{H}, \gamma_{G}; \bar{\gamma}_{H}, \bar{\gamma}_{G})$ is also independent of the $F$-splitting chosen for $G^{*}_{\text{\text{sc}}}$ in the construction of the splitting invariant used to define $\Delta_{I}$.

\begin{cor}\label{delta0} The factor $\Delta_{0}(\gamma_{H}, \gamma_{G})$ only depends on the chosen $F$-splitting of $G^{*}_{\text{\text{sc}}}$.
\end{cor}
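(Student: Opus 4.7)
The plan is to mirror the argument in the proof of Theorem \ref{factor}, tracking each of the five factors comprising $\Delta_{0}$ under the three types of choices (admissible embedding, $a$-data, $\chi$-data) and verifying that the dependencies all cancel. The only genuinely new ingredient needed is the analogue of Lemma \ref{embedding3} for the absolute factor $\Delta_{1}$ in the quasi-split case; the remaining cancellations are immediate from the lemmas already established. Because $\Delta_{I}$ is built from the splitting invariant $\lambda_{\{a_{\alpha}\}}(T_{\text{sc}})$, which explicitly involves the $F$-splitting of $G^{*}_{\text{sc}}$ through the Tits section, one cannot remove this dependence—so the corollary asserts that it is the only residual dependence.

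First I would derive the absolute analogue of Lemma \ref{embedding3}. Suppose $T_{H}\to T$ is replaced by its $g^{-1}$-conjugate with $g\in\mathfrak{A}(T_{\text{sc}})$. Unwinding the construction of $v=p_{1}(h)p_{2}(h)^{-1}$ from \S 6.3.3 (noting that $\psi=\mathrm{id}$, so $u=1$), precisely the same computation as in the proof of Lemma \ref{embedding3} shows that, after transport via $\mathrm{Ad}(p_{1}(g))$ back to $T_{\text{sc}}$, the element $v$ is replaced by $v\cdot(p_{1}(g)p_{2}(g)^{-1})^{-1}$, while $\tilde{s}_{T}$ is unchanged (there is no $\bar{T}$ contribution). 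Consequently the class $\mathrm{inv}(\gamma_{H},\gamma_{G})$ is multiplied by $\textbf{g}_{T}^{-1}$, so by functoriality of the Tate--Nakayama pairing
\[
\Delta_{1}(\gamma_{H},\gamma_{G})\ \longmapsto\ \Delta_{1}(\gamma_{H},\gamma_{G})\cdot\langle\textbf{g}_{T},\textbf{s}_{T}\rangle.
\]

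Next I would combine this with Lemma \ref{change1}(1), which multiplies $\Delta_{I}(\gamma_{H},\gamma_{G})$ by $\langle\textbf{g}_{T},\textbf{s}_{T}\rangle^{-1}$, with Lemma \ref{a2} (which gives invariance of $\Delta_{II}$ under change of admissible embedding), and with the observation made after the definitions of $\Delta_{III_{2}}$ and $\Delta_{IV}$ that those two factors are unaffected. The $\langle\textbf{g}_{T},\textbf{s}_{T}\rangle^{\pm 1}$ contributions from $\Delta_{I}$ and $\Delta_{1}$ then cancel, showing invariance of $\Delta_{0}$ under change of admissible embedding. For a change of $a$-data $\{a_{\alpha}\}\to\{a_{\alpha}'\}$, Lemmas \ref{change1}(2) and \ref{a2} multiply $\Delta_{I}$ and $\Delta_{II}$ respectively by $\prod_{\alpha}\mathrm{sgn}_{F_{\alpha}/F_{\pm\alpha}}(b_{\alpha})^{\pm 1}$ (with $b_{\alpha}=a_{\alpha}'/a_{\alpha}$), while $\Delta_{1}$, $\Delta_{III_{2}}$, and $\Delta_{IV}$ do not depend on the $a$-data, so again everything cancels. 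For a change of $\chi$-data $\{\chi_{\alpha}\}\to\{\chi_{\alpha}'\}$, Lemmas \ref{chi2} and \ref{chi3} yield cancelling contributions to $\Delta_{II}$ and $\Delta_{III_{2}}$, while $\Delta_{I}$, $\Delta_{1}$, $\Delta_{IV}$ are unchanged.

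I do not expect any real obstacle: the only step that requires thought is verifying the absolute analogue of Lemma \ref{embedding3}, and that computation is already essentially contained in the proof of that lemma once the ``$\bar{T}$''-factor is dropped. Every other cancellation is a direct citation of a lemma from \S 6.3. Finally, I would remark that this invariance is the best one can hope for: the splitting invariant $\lambda_{\{a_{\alpha}\}}(T_{\text{sc}})$ entering $\Delta_{I}$ manifestly depends on the $F$-splitting $\mathscr{S}$ of $G^{*}_{\text{sc}}$ used to build the Tits section, and no other factor in $\Delta_{0}$ involves $\mathscr{S}$, so the dependence is genuine.
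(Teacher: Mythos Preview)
Your proposal is correct and follows essentially the same route as the paper: mirror the cancellations in the proof of Theorem~\ref{factor}, replacing the use of Lemma~\ref{embedding3} by the observation that conjugating the admissible embedding by $g^{-1}\in\mathfrak{A}(T_{\text{sc}})$ multiplies $\Delta_{1}(\gamma_{H},\gamma_{G})$ by $\langle\textbf{g}_{T},\textbf{s}_{T}\rangle$, which cancels the $\langle\textbf{g}_{T},\textbf{s}_{T}\rangle^{-1}$ from $\Delta_{I}$. The paper states this in one line; your write-up supplies the explicit cocycle computation, but the argument is the same.
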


\begin{proof} This is immediate after using the above proof and replacing Lemma \ref{embedding3} with the observation that conjugating the admissible embedding $T_{H} \to T$ by $g^{-1} \in \mathfrak{A}(T_{\text{\text{sc}}})$ serves to multiply $\Delta_{1}(\gamma_{H}, \gamma_{G})$ by $\langle \textbf{g}_{T}, \textbf{s}_{T} \rangle$, cancelling the corresponding new factor from $\Delta_{I}(\gamma_{H}, \gamma_{G})$.
\end{proof}

\subsection{Addendum: $z$-pairs}
We continue with the same notation as \S 6.3. In particular, $G$ is a connected reductive group over $F$ with quasi-split inner twist $G^{*}$ and endoscopic datum $\mathfrak{e}$. Our goal in this section is to extend the definition of the (relative) transfer factor $\Delta$ to the case where $\widehat{H} \to \mathcal{H}$ is not necessarily equal to the canonical embedding $\widehat{H} \to \prescript{L}{}H$. To do this, we need to introduce the concept of a $z$-pair.

\begin{Def} A \textit{z-pair} $\mathfrak{z} = (H_{\mathfrak{z}}, \eta_{\mathfrak{z}})$ for the endoscopic datum $\mathfrak{e}$ is an $F$-group $H_{\mathfrak{z}}$ that is an extension of $H$ by an induced central torus such that $\mathscr{D}(H_{\mathfrak{z}})$ is simply-connected, and a map $\eta_{\mathfrak{z}} \colon \mathcal{H} \to \prescript{L}{}H_{\mathfrak{z}}$ that is an $L$-embedding extending the embedding $\widehat{H} \to \widehat{H}_{\mathfrak{z}}$ dual to $H_{\mathfrak{z}} \to H$. We call an element of $H_{\mathfrak{z}}(F)$ \textit{strongly $G$-regular semisimple} if its image in $H(F)$ is strongly $G$-regular and semisimple, as we defined above; this set will be denoted by $H_{\mathfrak{z}, G-\text{sr}}(F)$.
\end{Def}

The following result explains the usefulness of this concept:

\begin{prop} A $z$-pair $(H_{\mathfrak{z}}, \eta_{\mathfrak{z}})$ for $\mathfrak{e}$ always exists.
\end{prop}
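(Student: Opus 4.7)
The plan is to prove existence in two steps: first construct $H_{\mathfrak{z}}$ as a $z$-extension of $H$, and then extend the given embedding $\widehat{H} \hookrightarrow \mathcal{H}$ to an $L$-embedding $\eta_{\mathfrak{z}} \colon \mathcal{H} \to \prescript{L}{}H_{\mathfrak{z}}$ using a cohomological vanishing argument. The argument will essentially be the classical one of Kottwitz--Shelstad \cite{KS} adapted to positive characteristic; all the inputs we need are either purely root-theoretic or concern $L$-group cohomology, which is insensitive to the characteristic of $F$.

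For the first step, fix a maximal $F$-torus $T$ of the quasi-split group $H$, let $T_{\mathrm{sc}}$ be its preimage in $H_{\mathrm{sc}}$, and let $C := \ker(T_{\mathrm{sc}} \to T \cap \mathscr{D}(H))$, a finite multiplicative $F$-subgroup of $H_{\mathrm{sc}}$. Choose a finite Galois extension $E/F$ splitting $C$, and pick a surjection $L \twoheadrightarrow X^{*}(C)$ from a finitely generated permutation $\Gamma_{E/F}$-module $L$, arranging moreover that the kernel be torsion-free (this is always possible by enlarging $L$). Dualizing yields an embedding $C \hookrightarrow S$ of $C$ into an induced $F$-torus $S$ with $S/C$ again an induced $F$-torus; I then set
\[
H_{\mathfrak{z}} := (S \times H_{\mathrm{sc}})/C,
\]
with $C$ embedded anti-diagonally. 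The quotient map $H_{\mathfrak{z}} \to H$ is obtained by combining $S \to S/C \hookrightarrow H$ (the latter being the embedding coming from the original inclusion of $C$ into $T_{\mathrm{sc}}$) with the projection $H_{\mathrm{sc}} \to \mathscr{D}(H) \hookrightarrow H$. By construction $\ker(H_{\mathfrak{z}} \to H) \cong S$ is an induced $F$-torus, and $\mathscr{D}(H_{\mathfrak{z}}) = H_{\mathrm{sc}}$ is simply connected. Note that no special feature of characteristic zero is used here; the construction is scheme-theoretic and works over any field.

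For the second step, dualizing the exact sequence $1 \to Z_{\mathfrak{z}} \to H_{\mathfrak{z}} \to H \to 1$ produces
\[
1 \to \widehat{H} \to \widehat{H}_{\mathfrak{z}} \to \widehat{Z}_{\mathfrak{z}} \to 1,
\]
which in turn gives an extension of topological groups $1 \to \widehat{Z}_{\mathfrak{z}} \to \prescript{L}{}H_{\mathfrak{z}} \to \prescript{L}{}H \to 1$. Pulling back along the given embedding $\mathcal{H} \hookrightarrow \prescript{L}{}H$ gives a central extension $\widetilde{\mathcal{H}}$ of $\mathcal{H}$ by $\widehat{Z}_{\mathfrak{z}}$. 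Constructing $\eta_{\mathfrak{z}}$ is equivalent to constructing a continuous splitting of $\widetilde{\mathcal{H}} \to \mathcal{H}$ that restricts to the identity on $\widehat{H}$. Choosing the splitting $W_{F} \to \mathcal{H}$ that comes with the endoscopic datum reduces this to splitting the pulled-back extension of $W_{F}$ by $\widehat{Z}_{\mathfrak{z}}$. The obstruction lies in $H^{2}_{\mathrm{cts}}(W_{F}, \widehat{Z}_{\mathfrak{z}})$, and since $Z_{\mathfrak{z}}$ is an induced $F$-torus, Shapiro's lemma reduces this to $H^{2}_{\mathrm{cts}}(W_{E'}, \mathbb{C}^{*})$ for the finite extensions $E'/F$ occurring in $Z_{\mathfrak{z}} = \prod \mathrm{Res}_{E'/F}\mathbb{G}_{m}$. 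This last group vanishes by a theorem of Labesse (see \cite{La}, Chapter 1), giving the desired splitting and completing the construction of $\eta_{\mathfrak{z}}$.

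The main obstacle is the cohomological vanishing invoked in the second step. That $H^{2}_{\mathrm{cts}}(W_{F}, \mathbb{C}^{*}) = 0$ for local fields is a delicate result concerning the continuous cohomology of the Weil group with values in a non-discrete coefficient module, but it is a purely topological statement about $W_{F}$ as a topological group and is independent of the characteristic of $F$, so Labesse's argument applies verbatim in our setting. The first step, by contrast, is entirely formal and carries over without modification; the only point requiring minor care is that in positive characteristic the finite group scheme $C$ need not be \'etale, but the construction only uses its character module as a $\Gamma$-module, which causes no difficulty.
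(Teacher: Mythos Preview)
Your strategy matches the paper's, which simply cites \cite{Milne2}, Proposition~3.1 for the existence of a $z$-extension and \cite{KS1}, Lemma~2.2.A for $\eta_{\mathfrak{z}}$, noting that both arguments are insensitive to the characteristic of $F$. However, each of your two sketches contains a genuine gap.

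In Step~1, the group $(S \times H_{\mathrm{sc}})/C$ is only an extension of $\mathscr{D}(H)$, not of $H$. There is no embedding $S/C \hookrightarrow H$ as you claim: $S$ is an abstract induced torus containing $C$, with no map to $H$. The only natural map $(S \times H_{\mathrm{sc}})/C \to H$ sends the class of $(s,h)$ to the image of $h$, and its image is $\mathscr{D}(H)$, a proper subgroup whenever $H$ has nontrivial radical (which is the typical situation for endoscopic groups). A correct construction must incorporate $Z(H)^{\circ}$; for instance, replace $H_{\mathrm{sc}}$ by $Z(H)^{\circ} \times H_{\mathrm{sc}}$, which does surject onto $H$, and then embed the finite kernel into an induced torus as before. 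In Step~2, the sequence $1 \to \widehat{Z}_{\mathfrak{z}} \to \prescript{L}{}H_{\mathfrak{z}} \to \prescript{L}{}H \to 1$ does not exist: from $1 \to \widehat{H} \to \widehat{H}_{\mathfrak{z}} \to \widehat{Z}_{\mathfrak{z}} \to 1$ it is $\widehat{H}$ that sits inside $\widehat{H}_{\mathfrak{z}}$ as a normal subgroup, not $\widehat{Z}_{\mathfrak{z}}$, so there is no surjection $\prescript{L}{}H_{\mathfrak{z}} \to \prescript{L}{}H$. More seriously, there is no ``given embedding $\mathcal{H} \hookrightarrow \prescript{L}{}H$'' to pull back along---the entire purpose of $z$-pairs is that $\mathcal{H}$ need not embed in $\prescript{L}{}H$. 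The Kottwitz--Shelstad argument instead pushes the extension $\mathcal{H}$ of $W_F$ by $\widehat{H}$ forward along $\widehat{H} \hookrightarrow \widehat{H}_{\mathfrak{z}}$ and compares with $\prescript{L}{}H_{\mathfrak{z}}$; the obstruction lives in $H^{2}_{\mathrm{cts}}(W_F, Z(\widehat{H}_{\mathfrak{z}}))$, and it is the connectedness of $Z(\widehat{H}_{\mathfrak{z}})$ (equivalent to $\mathscr{D}(H_{\mathfrak{z}})$ being simply connected) that allows the vanishing argument you invoke at the end to go through.
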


\begin{proof} The group $H_{\mathfrak{z}}$ without the data of $\eta_{\mathfrak{z}}$ is called a \textit{$z$-extension} of $H$. Such a $z$-extension exists in any characteristic, using \cite[Proposition 3.1]{Milne2}, Proposition 3.1; although the proposition loc. cit. is stated for local fields of characteristic zero, the proof works in the local function field setting as well. Once we have such an extension, \cite[Lemma 2.2.A]{KS1} shows that we can find an $\eta_{\mathfrak{z}}$ satisfying the desired properties (the proof loc. cit. does not depend on the characteristic of $F$ either).
\end{proof}

We will now discuss how to extend the relative transfer factor to a function 
$$ \Delta \colon H_{\mathfrak{z}, G-\text{sr}}(F) \times G_{\text{sr}}(F) \times H_{\mathfrak{z}, G-\text{sr}}(F) \times G_{\text{sr}}(F) \to \mathbb{C},$$ satisfying all the desired properties enjoyed by the factor $\Delta$ defined above. This discussion is taken from the proof of \cite[Proposition 5.6]{Tasho}. Let $\gamma_{\mathfrak{z}}, \bar{\gamma}_{\mathfrak{z}} \in H_{\mathfrak{z}, G-\text{sr}}(F)$ with images $\gamma_{H}, \bar{\gamma}_{H}$ in $H_{G-\text{sr}}(F)$, related to $\gamma_{G}, \bar{\gamma}_{G} \in G_{\text{sr}}(F)$. The factors $\Delta_{I}(\gamma_{\mathfrak{z}}, \gamma_{G})$, $\Delta_{II}(\gamma_{\mathfrak{z}}, \gamma_{G})$, $\Delta_{III_{1}}(\gamma_{\mathfrak{z}}, \gamma_{G}; \bar{\gamma}_{\mathfrak{z}}, \bar{\gamma}_{G})$, and $\Delta_{IV}(\gamma_{\mathfrak{z}}, \gamma_{G})$ are all defined to be the same factors with $\gamma_{\mathfrak{z}}, \bar{\gamma}_{\mathfrak{z}}$ replaced by their images $\gamma_{H}, \bar{\gamma}_{H}$. It remains to define $\Delta_{III_{2}}(\gamma_{\mathfrak{z}}, \gamma_{G})$. Consider the following diagram:
\[
\begin{tikzcd}
\prescript{L}{}H_{\mathfrak{z}} & \prescript{L}{}T_{H_{\mathfrak{z}}} \arrow[hook]{l} &  \prescript{L}{}T_{H_{\mathfrak{z}}} \arrow[dotted]{l} & \prescript{L}{}T_{H} \arrow[hook]{l} \\
\mathcal{H} \arrow[hook, "\eta_{\mathfrak{z}}"]{u} \arrow[hook, "\eta"]{r} & \prescript{L}{}G & \prescript{L}{}T,  \arrow[hook]{l} \arrow["\prescript{L}{}\phi_{\gamma_{H}, \gamma}"]{ur} & 
\end{tikzcd}
\]
where we are denoting the centralizer of $\gamma_{\mathfrak{z}}$ by $T_{H_{\mathfrak{z}}}$, the map $\prescript{L}{}T \to \prescript{L}{}G$ is the one corresponding to a choice of $\chi$-data for $T$, as discussed in \S 6.3.4 and \S 6.2.2, we are denoting the choice of admissible embedding $T_{H} \to T$ by $\phi_{\gamma_{H}, \gamma}$, and the embedding $\prescript{L}{}T_{H_{\mathfrak{z}}} \hookrightarrow \prescript{L}{}H_{\mathfrak{z}}$ is obtained by transporting the $\chi$-data to $T_{H}$ and then to $T_{H_{\mathfrak{z}}}$ via the projection $T_{H_{\mathfrak{z}}} \to T_{H}$ (this makes sense because $H_{\mathfrak{z}}$ is a central extension of $H$, so that $T_{H}$ and $T_{H_{\mathfrak{z}}}$ have the same root systems). The dotted arrow is the unique $L$-homomorphism extending the identity on $\widehat{T_{H_{\mathfrak{z}}}}$ and making the diagram commute; its restriction to $W_{F}$ gives a 1-cocycle $a \colon W_{F} \to \widehat{T_{H_{\mathfrak{z}}}}(\mathbb{C})$; for an explanation of why such an $L$-homomorphism exists, as well as the fact that this is a cocycle, see \cite[\S 4.4]{KS1}. We then set $\Delta_{III_{2}}(\gamma_{\mathfrak{z}}, \gamma_{G}) := \langle a, \gamma_{\mathfrak{z}} \rangle$, where as in \S6.3.4 the pairing is from Langlands duality for tori. 

We then define $\Delta(\gamma_{\mathfrak{z}}, \gamma_{G}; \bar{\gamma}_{\mathfrak{z}}, \bar{\gamma}_{G})$ identically as in \S 6.3, except with our new $\Delta_{III_{2}}$ factor.  We may also use this to define an analogous factor $\Delta_{0}(\gamma_{\mathfrak{z}}, \gamma_{G})$ in the quasi-split case, where we simply replace the $\Delta_{III_{2}}$ factor in the definition given in \S 6.3 with the factor we defined above (and take the image of $\gamma_{\mathfrak{z}}$ in $H(F)$ to define the other $\Delta_{i}$-factors). 

\begin{prop} The above factor does not depend on the choice of admissible embeddings, $\chi$-data, or $a$-data.
\end{prop}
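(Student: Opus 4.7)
The strategy is to reduce to Theorem \ref{factor} by observing that the $z$-pair data only enters the definition through the $\Delta_{III_2}$ factor, since $\Delta_{I}, \Delta_{II}, \Delta_{III_{1}}, \Delta_{IV}$ are defined using only the image $\gamma_{H} \in H(F)$ of $\gamma_{\mathfrak{z}}$ (and likewise for $\bar{\gamma}_{\mathfrak{z}}$). For these four factors the cancellations established in the proof of Theorem \ref{factor} apply verbatim, so it suffices to verify that the new $\Delta_{III_2}(\gamma_{\mathfrak{z}}, \gamma_{G})$ has the same three dependency properties as the old $\Delta_{III_2}(\gamma_H, \gamma_G)$: (i) invariance under conjugation of the admissible embeddings $T_{H} \to T, \bar{T}_H \to \bar T$ by elements of $\mathfrak{A}(T_{\text{sc}}), \mathfrak{A}(\bar T_{\text{sc}})$ with the correspondingly transported data; (ii) invariance under change of $a$-data; and (iii) the transformation under change of $\chi$-data described in Lemma \ref{chi3}.

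Property (ii) is immediate because $a$-data never enter the construction of the cocycle $a \colon W_F \to \widehat{T_{H_{\mathfrak{z}}}}(\mathbb{C})$ defining $\Delta_{III_2}(\gamma_{\mathfrak{z}}, \gamma_G)$. Property (i) follows from Fact \ref{changeoftori}: under a conjugation of $T_{H} \to T$ by $g \in \mathfrak{A}(T_{\text{sc}})$, the admissible $L$-embedding $\prescript{L}{}T \hookrightarrow \prescript{L}{}G$ is replaced by its conjugate via $\text{Ad}(g)$ composed with the induced $L$-isomorphism of the two resulting $\prescript{L}{}T$'s, and the $\chi$-data transported to $T_{H_{\mathfrak{z}}}$ is in turn the pullback of transported $\chi$-data on $T$; the dotted $L$-homomorphism in the diagram is therefore conjugated compatibly, so its restriction $a$ becomes a cohomologous cocycle, and by the functoriality of Langlands duality for tori (Theorem \ref{toriduality}), the pairing $\langle a, \gamma_{\mathfrak{z}} \rangle$ is unchanged.

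The main technical step is (iii). Under a change of $\chi$-data $\{\chi_\alpha\} \to \{\chi'_\alpha\}$, set $\zeta_\alpha := \chi'_\alpha/\chi_\alpha$. The embedding $\prescript{L}{}T \hookrightarrow \prescript{L}{}G$ changes by multiplication by a 1-cocycle $c_T \in Z^{1}(W_F, \widehat{T}(\mathbb{C}))$ built from $\{\zeta_\alpha\}_{\alpha \in R}$ via Lemma \ref{cor2.5}, while the embedding $\prescript{L}{}T_{H_{\mathfrak{z}}} \hookrightarrow \prescript{L}{}H_{\mathfrak{z}}$ changes by multiplication by a 1-cocycle $c_{H_{\mathfrak{z}}} \in Z^{1}(W_F, \widehat{T_{H_{\mathfrak{z}}}}(\mathbb{C}))$ built from the same recipe but indexed only by roots in $R(H_{F^s}, (T_H)_{F^s})$, since the $\chi$-data on $T_{H_{\mathfrak{z}}}$ is obtained by restricting to $R(H, T_H)$ and pulling back along the central isogeny $T_{H_{\mathfrak{z}}} \to T_H$ (which identifies root systems). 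Chasing the commutative diagram defining $a$ shows that the new cocycle $a'$ satisfies $a' \cdot a^{-1} = c_{H_{\mathfrak{z}}}^{-1} \cdot (\prescript{L}{}\phi_{\gamma_H,\gamma} \circ c_T)$, which by the additivity in Lemma \ref{cor2.5} is cohomologous to the cocycle built from $\{\zeta_\alpha\}$ indexed precisely by the $\Gamma$-orbits in $R \setminus R(H_{F^s}, (T_H)_{F^s})$. The same root-theoretic computation as in the proof of \cite{LS}, Lemma 3.5.A (which is what underlies Lemma \ref{chi3} and relies only on functoriality and compatibility with restriction of scalars in Langlands duality for tori) then shows that $\Delta_{III_{2}}(\gamma_{\mathfrak{z}}, \gamma_{G})$ is multiplied by
\[
\prod_{\text{asymm}} \zeta_{\alpha}(\gamma^{\alpha})^{-1} \cdot \prod_{\text{symm}} \zeta_{\alpha}(\delta^{\alpha})^{-1},
\]
where the products are taken over representatives for orbits outside $R(H_{F^s}, (T_H)_{F^s})$ and $\gamma^\alpha, \delta^\alpha$ are computed from the image $\gamma \in T(F)$ of $\gamma_{\mathfrak{z}}$ (which makes sense since the pullback of $\alpha$ to $T_{H_{\mathfrak{z}}}$ evaluated at $\gamma_{\mathfrak{z}}$ equals $\alpha(\gamma)$). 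With this transformation law matching Lemma \ref{chi3} exactly, the cancellation between $\Delta_{II}$ and $\Delta_{III_2}$ from the proof of Theorem \ref{factor} goes through unchanged. The main obstacle is bookkeeping the dual-side computation identifying $a'/a$ with the cocycle built from $\zeta_\alpha$'s outside $H$; once this is done, the rest is immediate.
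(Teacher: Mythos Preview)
Your proposal is correct and follows essentially the same approach as the paper: reduce to Theorem \ref{factor} by noting that only the $\Delta_{III_2}$ factor involves the $z$-pair data, and then verify that the new $\Delta_{III_2}$ has the same transformation behavior under changes of admissible embeddings, $a$-data, and $\chi$-data. The paper's proof is considerably more terse---it simply observes that the reduction to $\chi$-data dependence works via Theorem \ref{factor}, then cites Theorem 4.6.A of \cite{KS1} and remarks that the argument there is characteristic-free---whereas you have spelled out the content of that cited argument (the cocycle comparison $a'/a$ and the orbit-by-orbit cancellation with $\Delta_{II}$).
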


\begin{proof} This is \cite[Theorem 4.6.A]{KS1}. In view of the proof of Theorem \ref{factor}, it suffices to check that $\Delta(\gamma_{\mathfrak{z}}, \gamma_{G}; \bar{\gamma}_{\mathfrak{z}}, \bar{\gamma}_{G})$ is unaffected by changing the $\chi$-data for $T$. Verifying this comes down to examining the new $\Delta_{III_{2}}$-factor, which is not affected by the characteristic of $F$, so the proof loc. cit. works in our situation as well.
\end{proof}

\section{Applications to the Langlands conjectures}
This section applies the theory we have constructed in order to state the local Langlands conjectures for connected reductive groups over local fields of positive characteristic. Again, in this section $F$ is a local field of characteristic $p > 0$, $G$ is a connected reductive group over $F$, and $\gerbeE$ is a $u$-gerbe split over $\bar{F}$ with $[\gerbeE] = -1 \in \check{H}^{2}(\bar{F}/F, u)$. Recall from \S 1 that our goal is to generalize the notion of \textit{rigid inner forms}, introduced in \cite{Tasho}, in order to work with the representations of all inner forms of $G$ simultaneously.

\subsection{Rigid inner twists} In order to assign to inner twists of $G$ the ``correct" automorphism group (i.e., one such that automorphisms preserve $F$-conjugacy classes and $F$-representations), we need to refine the data of an inner twist  to that of a rigid inner twist. For a $\GE$-torsor $\mathscr{T}$, we denote the $(G_{\text{ad}})_{\gerbeE}$-torsor $\mathscr{T} \times^{\GE} (G_{\text{ad}})_{\gerbeE}$ by $\overline{\mathscr{T}}$. We can view an inner twist $G_{F^{s}} \xrightarrow{\xi} G'_{F^{s}}$ (up to equivalence) as an element of $[\underline{\text{Isom}}(G, G')/G_{\text{ad}}](F)$, and its fiber in $\underline{\text{Isom}}(G, G')$ (the inner class of $\xi$) is a $G_{\text{ad}}$-torsor, denoted by $T_{\xi}$.  

\begin{Def}
\begin{enumerate}
\item{A \textit{rigid inner twist} of $G$ is a triple $(\xi, \mathscr{T}, \bar{h})$ of an inner twist $\xi \colon G_{F {s}} \to G'_{F^{s}}$, a $Z$-twisted $\GE$-torsor $\mathscr{T}$ for some finite central $Z$, and an isomorphism of $(G_{\text{ad}})_{\gerbeE}$-torsors $\bar{h} \colon \overline{\mathscr{T}} \to (T_{\xi})_{\mathcal{E}}$ (which we call a \textit{rigidification}---note that since both torsors descend to $F$, we could also define $\bar{h}$ to be an isomorphism between their descents). If we demand that $\mathscr{T}$ is $Z$-twisted for some fixed finite central $Z$, then we say further that the rigid inner twist is a \textit{$Z$-rigid inner twist}. }

\item{An \textit{isomorphism of rigid inner twists} $(f, \Psi) \colon (\xi_{1}, \mathscr{T}_{1} ,\bar{h}_{1}) \to (\xi_{2}, \mathscr{T}_{2}, \bar{h}_{2})$ is a pair consisting of an isomorphism $f \colon G_{1} \to G_{2}$ defined over $F$ and an isomorphism of $\GE$-torsors $\Psi \colon \mathscr{T}_{1} \to \mathscr{T}_{2}$; note that such an isomorphism induces an automorphism $\bar{h}_{2} \circ \Psi \circ \bar{h}_{1}^{-1}  \colon (T_{\xi})_{\mathcal{E}} \to (T_{\xi})_{\mathcal{E}}$, giving an element $\bar{\delta} \in G_{\text{ad}}(\bar{F})$ (by looking at where this automorphism sends $\xi \in T_{\xi}(\bar{F})$) which we require to satisfy $\xi_{2}^{-1} \circ f \circ \xi_{1} = \text{Ad}(\bar{\delta})$. It is clear that $\Psi$ uniquely determines $f$, so one could just as well denote the isomorphism by $\Psi$; nevertheless, we will still use the notation $(f, \Psi)$ (for one, this is consistent with its analogue in \cite{Tasho}).} \end{enumerate}
\end{Def}

Note that in the above definition the rigidification $\bar{h}$ together with the canonical element $\xi \in T_{\xi}(\bar{F})$ defines a trivialization $\bar{h}'$ of $\overline{\mathscr{T}}_{\bar{F}}$.

Denote by $RI(G,\gerbeE)$ (resp. $RI_{Z}(G,\gerbeE)$) the category whose objects are rigid inner twists of $G$ (resp. $Z$-rigid inner-twists of $G$) and morphisms are isomorphisms of rigid inner twists. It is clear that the natural functor $RI_{Z}(G,\gerbeE) \to RI(G,\gerbeE)$ is fully faithful and $RI(G,\gerbeE) = \varinjlim RI_{Z}(G,\gerbeE)$, where the colimit is taken over all finite central $Z$. Note that for every inner twist $\psi \colon G \to G'$, there exists a $Z(\mathscr{D}(G))$-twisted $\GE$-torsor $\mathscr{T}$ and rigidification $\bar{h}$ such that $(\psi, \mathscr{T}, \bar{h})$ is a rigid inner twist, by Proposition \ref{existtwist}. For computational purposes, we reformulate the above definition in the case $\gerbeE = \gerbeE_{a}$ for $[a] = -1 \in \check{H}^{2}(\bar{F}/F, u)$:
\begin{Def}
\begin{enumerate}
\item{For $a \in u(U_{2})$ such that $[a] = -1 \in \check{H}^{2}(\bar{F}/F, u)$, an \textit{$a$-normalized rigid inner twist} of $G$ is a pair $(\xi, (x,\phi))$ of an inner twist $\xi \colon G \to G'$ and $(x,\phi) \in Z^{1}(\gerbeE_{a}, Z \to G)$ for some finite central $Z$ such that the image of $(x,\phi)$ in $Z^{1}(F, G_{\text{ad}})$, denoted by $\bar{x}$, satisfies $\text{Ad}(\bar{x}) = p_{1}^{*}\xi^{-1} \circ p_{2}^{*}\xi$. If we demand that $\phi$ factors through some fixed finite central $Z$, then we say further that the $a$-normalized rigid inner twist is an \textit{$a$-normalized $Z$-rigid inner twist}. }

\item{An \textit{isomorphism of $a$-normalized rigid inner twists} $(f, \delta) \colon (\xi_{1}, (x_{1}, \phi_{1})) \to (\xi_{2}, (x_{2}, \phi_{2}))$ for $\phi_{1} = \phi_{2}$, is a pair consisting of an isomorphism $f \colon G_{1} \to G_{2}$ defined over $F$ and $\delta \in G(\bar{F})$ such that $\xi_{2}^{-1} \circ f \circ \xi_{1} = \text{Ad}(\delta)$ and $x_{1} = p_{1}^{\sharp}(\delta)^{-1} x_{2} p_{2}^{\sharp}(\delta)$.}
\end{enumerate}
\end{Def}

Denote by $RI(G, a)$ (resp. $RI_{Z}(G, a)$) the category whose objects are $a$-normalized rigid inner twists of $G$ (resp. $a$-normalized $Z$-rigid inner twists of $G$) and morphisms are isomorphisms of $a$-normalized rigid inner twists. The following result explains the relationship between the categories $RI(G,\gerbeE)$ and $RI(G, a)$:

\begin{prop} For $\mathcal{E} = \mathcal{E}_{a}$, we have an equivalence of categories between $RI(G,\gerbeE)$ and $RI(G, a)$ which is canonical on isomorphism classes and restricts to an equivalence between $RI_{Z}(G,\gerbeE)$ and $RI_{Z}(G, a)$.
\end{prop}

\begin{proof}
Let $s \colon \text{Sch}/\bar{F} \to \gerbeE_{a}$ denote the section constructed in Lemma \ref{section}. Then if $(\xi, \mathscr{T}, \bar{h})$ is a $Z$-rigid inner twist, by the proof of Proposition \ref{correspondence1}, after setting $\phi := \text{Res}(\mathscr{T})$, choosing a trivialization $h$ of $s^{*}\mathscr{T}$ \textit{lifting $\bar{h}'$}, by which we mean is such that the diagram 
\[
\begin{tikzcd}
s^{*}\mathscr{T} \arrow["h"]{r} \arrow{d} & G_{\bar{F}} \arrow{d}  \\
s^{*}\overline{\mathscr{T}} \arrow["s^{*}\bar{h}' "]{r} & (G_{\text{ad}})_{\bar{F}}
\end{tikzcd}
\]
commutes (such an $h$ evidently always exists), gives an $a$-twisted $Z$-cocycle $(x, \phi)$ valued in $G$, and by construction we also have that $\text{Ad}(\bar{x}) =  p_{1}^{*}\xi^{-1} \circ p_{2}^{*}\xi$. Thus, we have a way of associating to a $Z$-rigid inner twist an $a$-normalized $Z$-rigid inner twist.

Moreover, given any isomorphism $(f, \Psi)$ between the $Z$-rigid inner twists $(\xi_{1}, \mathscr{T}_{1}, \bar{h}_{1})$ and $(\xi_{2}, \mathscr{T}_{2}, \bar{h}_{2})$, choices $h_{i}$ of $\bar{F}$-trivializations lifting $s^{*}\bar{h}_{i}'$ give an automorphism $h_{2} \circ s^{*}\Psi \circ h_{1}^{-1} \colon G_{\bar{F}} \xrightarrow{\sim} G_{\bar{F}}$ which is left-translation by a unique $\delta \in G(\bar{F})$. Then we may define an isomorphism $(\xi_{1}, (x_{1}, \phi_{1})) \to (\xi_{2}, (x_{2}, \phi_{2}))$ between the corresponding twisted cocycles (obtained using $h_{1}$ and $h_{2}$) given by $(f, \delta)$. 

By Proposition \ref{correspondence1} (see also Proposition \ref{gerbesheaf}), every $a$-normalized $Z$-rigid inner twist is isomorphic to the image of some $Z$-rigid inner twist under the above map. By the discussion following the proof of Lemma \ref{uniqueness}, if the image of two rigid inner twists are isomorphic as $a$-normalized rigid inner twists, then they are isomorphic as rigid inner twists (using that the condition on $\delta$ and $\bar{\delta}$ is the same).
\end{proof}

A similar argument using Lemma \ref{trivialize} shows that for arbitrary $\gerbeE$, we have a canonical bijection between isomorphism classes in $RI(G,\gerbeE)$ and $RI(G, \gerbeE_{a})$, and hence also between classes in $RI(G,\gerbeE)$ and isomorphism classes in $RI(G,a)$.  Everything said above applies if we restrict ourselves to $Z$-rigid inner forms for some fixed $Z$ as well. 

We have the following important fact about automorphisms of rigid inner forms:

\begin{prop}\label{automorphisms} The automorphism group of a fixed $a$-normalized rigid inner twist $(\xi, (x, \phi))$ for $\xi \colon G_{F^{s}} \to (G')_{F^{s}}$ is canonically isomorphic to $G'(F)$ by the map $(f, \delta) \mapsto \xi(\delta)$. 
\end{prop}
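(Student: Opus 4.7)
The plan is to verify three properties of the map $\Phi\colon (f,\delta)\mapsto \xi(\delta)$: that it lands in $G'(F)$, that it is a group homomorphism, and that it is bijective.

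First, for well-definedness, I rewrite the condition $\xi^{-1}\circ f\circ \xi=\text{Ad}(\delta)$ as $f=\text{Ad}(\xi(\delta))$ viewed as an automorphism of $G'_{\bar{F}}$. A priori $\xi(\delta)\in G'(\bar{F})$; by fpqc descent along $\Spec(\bar{F})\to \Spec(F)$ and the sheaf property for the scheme $G'$, it is enough to check $p_1(\xi(\delta))=p_2(\xi(\delta))$ in $G'(\bar{F}\otimes_F\bar{F})$. Choose any lift of $\bar{x}\in G_{\text{ad}}(U_1)$ to $x\in G(U_1)$ (the $x$ in the given twisted cocycle is canonical). The inner-twist relation $p_1^{*}\xi^{-1}\circ p_2^{*}\xi=\text{Ad}(\bar{x})$ becomes $p_2^{*}\xi=p_1^{*}\xi\circ \text{Ad}(x)$ at the level of group-scheme morphisms, so $p_2(\xi(\delta))=p_1^{*}\xi(xp_2(\delta)x^{-1})$. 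The cocycle condition $x=p_1(\delta)^{-1}xp_2(\delta)$ rearranges to $xp_2(\delta)x^{-1}=p_1(\delta)$, giving $p_2(\xi(\delta))=p_1^{*}\xi(p_1(\delta))=p_1(\xi(\delta))$, as required.

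Being a homomorphism is immediate: if $(f_1,\delta_1)$ and $(f_2,\delta_2)$ are automorphisms, then $\xi^{-1}\circ (f_2\circ f_1)\circ \xi=\text{Ad}(\delta_2)\circ \text{Ad}(\delta_1)=\text{Ad}(\delta_2\delta_1)$ together with the compatibility of $p_1,p_2$ with multiplication shows that composition is encoded by $(f_2\circ f_1,\delta_2\delta_1)$, and $\Phi$ respects this since $\xi$ is a group morphism. Injectivity is immediate as well: if $\xi(\delta)=e$ then $\delta=e$ (since $\xi$ is an $\bar{F}$-isomorphism), and consequently $f=\text{Ad}(\xi(\delta))=\text{id}$.

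For surjectivity, given $g\in G'(F)$ I set $\delta:=\xi^{-1}(g)\in G(\bar{F})$ and $f:=\text{Ad}(g)$, which is patently defined over $F$. The identity $\xi^{-1}\circ f\circ \xi=\text{Ad}(\xi^{-1}(g))=\text{Ad}(\delta)$ is clear. For the cocycle condition, I run the argument of the first paragraph in reverse: applying $p_i^{*}\xi^{-1}$ to $g\in G'(F)$ and using $p_2^{*}\xi^{-1}=\text{Ad}(x^{-1})\circ p_1^{*}\xi^{-1}$, I obtain $p_2(\delta)=x^{-1}p_1(\delta)x$, which is equivalent to $x=p_1(\delta)^{-1}xp_2(\delta)$. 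Thus $(f,\delta)$ defines an automorphism of $(\xi,(x,\phi))$ mapping to $g$. The only point requiring care is the bookkeeping with the ambiguity in lifting $\bar{x}\in G_{\text{ad}}(U_1)$ to $G(U_1)$, but since the relation $p_1^{*}\xi^{-1}\circ p_2^{*}\xi=\text{Ad}(\bar{x})$ only uses $\bar{x}$ via conjugation, the lift is immaterial; the argument is otherwise entirely formal.
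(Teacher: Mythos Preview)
Your proof is correct and follows essentially the same approach as the paper. The paper verifies well-definedness by computing the $0$-differential of $\xi(\delta)$, applying $p_1^{*}\xi^{-1}$, and using the relation $x=p_1(\delta)^{-1}xp_2(\delta)$ to get $e$; it then declares the remaining checks ``straightforward,'' whereas you have spelled out the homomorphism, injectivity, and surjectivity steps explicitly.
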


\begin{proof} One computes the 0-differential of $\xi(\delta)$ to be $p_{1}^{*}\xi(p_{1}^{\sharp}\delta^{-1}) \cdot p_{2}^{*}\xi(p_{2}^{\sharp}\delta)$, and post-composing with $p_{1}^{*}\xi^{-1}$ yields $$p_{1}^{\sharp}\delta^{-1} \cdot x \cdot p_{2}^{\sharp}\delta \cdot x^{-1} = e,$$ giving $\xi(\delta) \in G'(F)$, showing that the above map is well-defined. From here it is straightforward to check that it defines an isomorphism.
\end{proof}

\begin{cor}\label{automorphisms2} The automorphism group of a fixed rigid inner twist $(\xi, \mathscr{T}, \bar{h})$ for $\xi \colon G_{F^{s}} \to (G')_{F^{s}}$ is canonically isomorphic to $G'(F)$.
\end{cor}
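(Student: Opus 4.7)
The approach is to reduce the statement to Proposition \ref{automorphisms} via the bijection between isomorphism classes of $RI(G, \gerbeE)$ and $RI(G, a)$ established in the discussion preceding the corollary. First I would choose a representative $a \in u(U_{2})$ with $[a] = \alpha$; Lemma \ref{uniqueness} provides a canonical identification of $\GE$-torsor categories between $\gerbeE$ and $\gerbeE_{a}$, so without loss of generality I may assume $\gerbeE = \gerbeE_{a}$. With $s \colon \text{Sch}/\bar{F} \to \gerbeE_{a}$ the section of Lemma \ref{section}, I would pick a trivialization $h \colon s^{*}\mathscr{T} \xrightarrow{\sim} G_{\bar{F}}$ lifting $s^{*}\bar{h}$, producing an $a$-normalized rigid inner twist $(\xi, (x, \phi))$ in the isomorphism class of $(\xi, \mathscr{T}, \bar{h})$.

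The plan is then to define a map $\Phi \colon \text{Aut}(\xi, \mathscr{T}, \bar{h}) \to \text{Aut}(\xi, (x, \phi))$ sending $(f, \Psi)$ to $(f, \delta)$, where $\delta \in G(\bar{F})$ is the unique element satisfying $h \circ s^{*}\Psi \circ h^{-1} = m_{\delta}$ (left translation by $\delta$). That $\Phi$ is a group homomorphism is immediate, and that $(f, \delta)$ lies in $\text{Aut}(\xi, (x, \phi))$ follows from the construction of $(x, \phi)$ from $(\mathscr{T}, h)$ together with the defining condition $\xi^{-1} \circ f \circ \xi = \text{Ad}(\bar{\delta})$ of an isomorphism of rigid inner twists. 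Composing $\Phi$ with the isomorphism $(f, \delta) \mapsto \xi(\delta)$ of Proposition \ref{automorphisms} then yields the desired map to $G'(F)$.

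The substance of the argument is checking that $\Phi$ is bijective and that the composite is independent of the choices made. For injectivity, if $s^{*}\Psi_{1} = s^{*}\Psi_{2}$ then the morphisms agree on the subcategory $\mathcal{C} := s(\text{Sch}/\bar{F}) \subset \gerbeE_{a}$, and the descent argument following the proof of Lemma \ref{uniqueness} shows that a morphism of $\GE$-torsors on $\gerbeE_{a}$ is determined by its restriction to $\mathcal{C}$. For surjectivity, given $(f, \delta) \in \text{Aut}(\xi, (x, \phi))$ I would construct $\Psi$ using the blueprint immediately after the proof of Lemma \ref{uniqueness}: first define $h^{-1} \circ m_{\delta} \circ h$ on $\restr{\mathscr{T}}{\mathcal{C}}$, then extend to all of $\restr{\mathscr{T}}{\gerbeE_{a, \bar{F}}}$ by conjugation with fpqc-local isomorphisms to objects in $\mathcal{C}$, and finally descend to a morphism on $\gerbeE_{a}$. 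For canonicity of $(f, \Psi) \mapsto \xi(\delta)$, any two trivializations $h, h'$ of $s^{*}\mathscr{T}$ lifting $s^{*}\bar{h}$ differ by left-translation by an element $g \in Z(G)(\bar{F})$, whence $\delta$ is replaced by $g\delta g^{-1} = \delta$, so the element $\xi(\delta) \in G'(F)$ is unchanged.

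The principal obstacle will be the descent step in the surjectivity argument: while the local construction of $\Psi$ from $\delta$ is essentially formal, verifying that the two pullbacks to $\gerbeE_{a, U_{1}}$ of the locally defined morphism coincide requires unpacking how left-translation by $\delta$ interacts with the twisted gluing map of $\mathscr{T}$ encoded by $(x, \phi)$. The key identity that should make this work is precisely the $a$-twisted cocycle equivalence relation $p_{1}(\delta)^{-1} x p_{2}(\delta) = x$, which mirrors the role played by the equation $dy = x$ in the proof of Lemma \ref{uniqueness}.
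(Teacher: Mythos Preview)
Your approach is correct and essentially the same as the paper's: both define $\delta$ via $h \circ s^{*}\Psi \circ h^{-1} = m_{\delta}$ and then invoke Proposition~\ref{automorphisms}, with your version spelling out the bijection $\Phi$ more explicitly than the paper does. The one point you underweight is canonicity with respect to the section $s$ (equivalently, with respect to the reduction $\gerbeE \to \gerbeE_{a}$): the paper checks this directly, while your ``without loss of generality $\gerbeE = \gerbeE_{a}$'' relies implicitly on the $2$-isomorphism argument inside Lemma~\ref{uniqueness}, which you should make explicit.
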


\begin{proof} Fix a section $s \colon (\text{Sch}/\bar{F}) \to \gerbeE$, as well as a trivialization $h$ of $s^{*}\mathscr{T}$ lifting $\bar{h}'$ (terminology as above). Note that any two choices of $h$ differ by precomposing by an automorphism of $s^{*}\mathscr{T}$ induced by an automorphism of $\mathscr{T}$ given by right-translation by some $z \in Z(G)(\bar{F})$. The map $h \circ s^{*}\Psi \circ h^{-1}$ is left-translation by an element $\delta \in G(\bar{F})$, and any different choice of $h$ yields the same $\delta$, by the $\GE$-equivariance of $\Psi$. We may thus define our desired isomorphism to send $(f,\Psi)$ to $\xi(\delta)$, which lies in $G'(F)$, by the proof of Proposition \ref{automorphisms}. It is straightforward to verify that the element $\delta$ does not depend on the choice of section $s$. 
\end{proof}

We now define rational and stable conjugacy of elements of rigid inner forms. Let $(\xi_{1}, \mathscr{T}_{1}, \bar{h}_{1})$ and $(\xi_{2}, \mathscr{T}_{2}, \bar{h}_{2})$ be two $Z$-rigid inner twists for some fixed $Z$ corresponding to the groups $G_{1}, G_{2}$, and let $\delta_{i} \in G_{i,\text{sr}}(F)$ for $i=1,2$. We say that $(G_{1}, \xi_{1}, \mathscr{T}_{1}, \bar{h}_{1}, \delta_{1})$ and $(G_{2}, \xi_{2}, \mathscr{T}_{2}, \bar{h}_{2}, \delta_{2})$ are \textit{rationally conjugate} if there exists an isomorphism $(f, \Psi) \colon (\xi_{1}, \mathscr{T}_{1}, \bar{h}_{1}) \to (\xi_{2}, \mathscr{T}_{2}, \bar{h}_{2})$ such that $f(\delta_{1}) = \delta_{2}$. We say that they are \textit{stably conjugate} if $\xi_{1}^{-1}(\delta_{1})$ is $G(\bar{F})$-conjugate to $\xi_{2}^{-1}(\delta_{2})$. The arguments used in \S 6.1 show that the latter condition is equivalent to $\xi_{1}^{-1}(\delta_{1})$ being $G(F^{s})$-conjugate to $\xi_{2}^{-1}(\delta_{2})$ (this centers on the fact that the Weyl group scheme of a maximal torus in an algebraic group is \'{e}tale). Define rational and stable conjugacy identically for elements of $a$-normalized rigid inner twists.

We need the following lemma:

\begin{lem}\label{Fconj} Assume that $G$ is quasi-split. For any $(G_{1}, \xi_{1}, \mathscr{T}_{1}, \bar{h}_{1}, \delta_{1})$ (resp. $(G_{1}, \xi_{1}, (x_{1}, \phi_{1}), \delta_{1})$) as above, there exists $\delta \in G_{\text{sr}}(F)$ such that $(G_{1}, \xi_{1}, \mathscr{T}_{1}, \bar{h}_{1}, \delta_{1})$ (resp. $(G_{1}, \xi_{1}, (x_{1}, \phi_{1}), \delta_{1})$)  is stably conjugate to  $(G, \text{id}_{G}, \GE, \text{id}_{\bar{F}}, \delta)$ (resp. $(G, \text{id}_{G}, (e, 0), \delta)$).
\end{lem}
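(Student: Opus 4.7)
The plan is to observe first that stable conjugacy for rigid inner twists depends only on the underlying inner twisting $\xi_1$ and the element $\delta_1$: by the definition given just before the lemma, $(G_1, \xi_1, \mathscr{T}_1, \bar{h}_1, \delta_1)$ is stably conjugate to $(G, \mathrm{id}_G, \GE, \mathrm{id}_{\bar{F}}, \delta)$ precisely when $\delta$ is $G(\bar{F})$-conjugate to $\gamma := \xi_1^{-1}(\delta_1) \in G(F^s)$. Consequently the torsor data $(\mathscr{T}_1, \bar{h}_1)$, and in the $a$-normalized version the twisted cocycle $(x_1, \phi_1)$, are irrelevant to the construction, and the same argument will handle both cases at once.

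The main ingredient I plan to invoke is the existence of admissible embeddings for maximal tori into quasi-split groups, namely the analogue of \cite{Tasho2}, Lemma 3.2.2 used in the proof of Lemma \ref{admiss}. First I extract $T_1 := Z_{G_1}(\delta_1)$, which is a maximal $F$-torus of $G_1$ because $\delta_1$ is strongly regular semisimple. Then, using that $G$ is quasi-split and that $\xi_1^{-1} \colon G_1 \to G$ is itself an inner twisting, this embedding result furnishes an $F$-embedding $j \colon T_1 \hookrightarrow G$ together with an element $g \in G(F^s)$ such that
\[
 j \;=\; \mathrm{Ad}(g^{-1}) \circ \xi_1^{-1}|_{T_1}
\]
as $F^s$-embeddings of tori.

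I then set $\delta := j(\delta_1) = g^{-1}\gamma g$. Since $j$ is defined over $F$ and $\delta_1 \in T_1(F)$, we have $\delta \in j(T_1)(F) \subset G(F)$; strong regularity in $G$ is preserved because $j(T_1)$ is a maximal torus of $G$ and $\delta_1$ is strongly regular in $G_1$. Finally $\delta = g^{-1}\gamma g$ with $g \in G(F^s) \subset G(\bar{F})$ exhibits $\delta$ as $G(\bar{F})$-conjugate to $\gamma = \xi_1^{-1}(\delta_1)$, which is the required stable conjugacy. In the $a$-normalized version, replacing $(\mathscr{T}_1, \bar{h}_1)$ by $(x_1, \phi_1)$ and $(\GE, \mathrm{id}_{\bar{F}})$ by $(e, 0)$ does not affect any step above.

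The only possible obstacle is confirming that the admissible-embedding statement of \cite{Tasho2} goes through for local function fields and in the direction we need (from an arbitrary maximal $F$-torus of an inner form into the quasi-split form, rather than the reverse). This is already implicit in the proof of Lemma \ref{admiss}, where exactly this result is invoked in positive characteristic; everything else in the argument above is formal.
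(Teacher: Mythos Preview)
Your proposal is correct and follows the same route as the paper: both reduce the statement to the torus-transfer principle (every maximal $F$-torus of an inner form admits an $F$-rational embedding into the quasi-split form, conjugate over $F^{s}$ to the inner twist). The only difference is packaging: you invoke that principle via the citation already used in Lemma~\ref{admiss} (\cite{Tasho2}, Lemma~3.2.2), whereas the paper reproves it explicitly as Lemma~\ref{kottlem}, taking care to replace Steinberg's rationality result by the Borel--Springer version \cite{BorelSpringer}, 8.6, valid over imperfect fields. Your acknowledged ``only possible obstacle'' is thus exactly the point the paper chose to address in detail; otherwise the arguments coincide.
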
 

\begin{proof} This follows immediately from Lemma \ref{kottlem}.
\end{proof}

We continue to assume that $G$ is quasi-split. For any $(G_{1}, \xi_{1}, \mathscr{T}_{1}, \bar{h}_{1}, \delta_{1})$, there exists $\delta \in G_{\text{sr}}(F)$ such that $(G_{1}, \xi_{1}, \mathscr{T}_{1}, \bar{h}_{1}, \delta_{1})$ is stably conjugate to $(G, \text{id}_{G}, \GE, \text{id}_{\bar{F}}, \delta)$, by the above lemma. As in \cite{Tasho}, we now fix $\delta \in G_{\text{sr}}(F)$ and consider the category $\mathcal{C}_{Z}(\delta, \gerbeE)$ whose objects are points $(G_{1}, \xi_{1}, \mathscr{T}_{1}, \bar{h}_{1}, \delta_{1})$ which are stably conjugate to $(G, \text{id}_{G}, \GE, \text{id}_{\bar{F}}, \delta)$ such that $(\xi_{1}, \mathscr{T}_{1}, \bar{h}_{1})$ is a $Z$-rigid inner twist and whose morphisms $(G_{1}, \xi_{1}, \mathscr{T}_{1}, \bar{h}_{1}, \delta_{1}) \to (G_{2}, \xi_{2}, \mathscr{T}_{2}, \bar{h}_{2}, \delta_{2})$ are isomorphisms of rigid inner twists $(f, \Psi)$ such that $f(\delta_{1}) = \delta_{2}$. We interpret this category as the stable conjugacy class of $(G, \text{id}_{G}, \GE, \text{id}_{\bar{F}}, \delta)$, and it is clear that the isomorphism classes within $\mathcal{C}_{Z}(\delta, \gerbeE)$ give the rational conjugacy classes within this stable conjugacy class. We define the category $\mathcal{C}_{Z}(\delta, a)$ using $a$-normalized $Z$-rigid inner twists completely analogously. By our previous discussion, it is clear that the isomorphism classes of $\mathcal{C}_{Z}(\delta, \gerbeE_{a})$ are in canonical bijection with those of $\mathcal{C}_{Z}(\delta, a)$, as are the isomorphism classes of $\mathcal{C}_{Z}(\delta, \gerbeE)$.

Set $S := Z_{G}(\delta)$, a maximal torus. We will now define a map from the isomorphism classes of $\mathcal{C}_{Z}(\delta, \gerbeE)$ to $H^{1}(\gerbeE, Z \to S)$, denoted by $\text{inv}(-,\delta)$. The simplest way to do this for general $\gerbeE$ is to first define it for isomorphism classes in $\mathcal{C}_{Z}(\delta, a)$ for $a$ representing $[\gerbeE]$, invoke the canonical bijection between the isomorphism classes in $\mathcal{C}_{Z}(\delta, \gerbeE)$ and those of $\mathcal{C}_{Z}(\delta, a)$ and then check that for $a$ cohomologous to $a'$, the diagram
\begin{equation}\label{isomclasses}
\begin{tikzcd}
\text{Isom}[\mathcal{C}_{Z}(\delta, a)] \arrow{r}  \arrow{d} &  H^{1}(\gerbeE_{a}, Z \to S) \arrow{d} \\
\text{Isom}[\mathcal{C}_{Z}(\delta, a')] \arrow{r} & H^{1}(\gerbeE_{a'}, Z \to S)
\end{tikzcd}
\end{equation}
commutes, where $\text{Isom}[\mathcal{C}_{Z}(\delta, a)]$ denotes the set of isomorphism classes in $\mathcal{C}_{Z}(\delta, a)$, and the vertical arrows are the canonical bijections induced by any choice of $y \in u(U_{1})$ such that $dy \cdot a = a'$ (cf. Construction \ref{changeofgerbe}, the canonicity comes from the fact that $H^{1}(F, u) = 0$, cf. Corollary \ref{punchline}). This last condition ensures that the map we define is canonical. 

Fix $(G_{1}, \xi_{1}, (x_{1}, \phi_{1}), \delta_{1}) \in \mathcal{C}_{Z}(\delta,a)$, and choose $g \in G(F^{s})$ such that $\xi_{1}(g \delta g^{-1}) = \delta_{1}$. The map sending this element to the $a$-twisted cocycle $(p_{1}(g)^{-1}x_{1}p_{2}(g), \phi_{1})$ gives a map $\mathcal{C}_{Z}(\delta,a) \to Z^{1}(\gerbeE_{a}, Z \to S)$, since translating by $g$ does not affect the differential of $x_{1}$. This induces a map $\text{inv}(-,\delta) \colon \mathcal{C}_{Z}(\delta,a) \to H^{1}(\gerbeE_{a}, Z \to S)$, which does not depend on the choice of $g$, by construction of the equivalence relation defined on $a$-twisted cocycles. The following result shows that the cohomology set $H^{1}(\gerbeE_{a}, Z \to S)$ parametrizes the rational classes within the stable class of $\delta$.

\begin{prop} The map $\text{inv}(-,\delta)$ induces a bijection from the isomorphisms classes of $\mathcal{C}_{Z}(\delta, a)$ to $H^{1}(\gerbeE_{a}, Z \to S)$ .
\end{prop}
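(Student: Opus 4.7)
Given $(G_1, \xi_1, (x_1, \phi_1), \delta_1) \in \mathcal{C}_Z(\delta, a)$, write $(y_1, \phi_1) := (p_1(g_1)^{-1} x_1 p_2(g_1), \phi_1)$ for the associated twisted cocycle coming from a choice $g_1 \in G(F^s)$ with $\xi_1(g_1 \delta g_1^{-1}) = \delta_1$. The plan is to verify three things in turn: (i) the resulting class in $H^1(\gerbeE_a, Z \to S)$ depends only on the isomorphism class in $\mathcal{C}_Z(\delta, a)$; (ii) every class is hit; (iii) two tuples with equal image are isomorphic.

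For (i), let $(f, \delta')$ be an isomorphism from $(G_1, \xi_1, (x_1, \phi_1), \delta_1)$ to $(G_2, \xi_2, (x_2, \phi_2), \delta_2)$, and pick $g_1, g_2$ as above; by definition $\phi_1 = \phi_2$. The relations $\xi_2^{-1} \circ f \circ \xi_1 = \operatorname{Ad}(\delta')$ and $f(\delta_1) = \delta_2$ together imply that $s := g_2^{-1} \delta' g_1$ centralizes $\delta$, hence lies in $S(\bar{F})$. Substituting $\delta' = g_2 s g_1^{-1}$ into the identity $x_1 = p_1(\delta')^{-1} x_2 p_2(\delta')$ and rearranging yields $y_1 = p_1(s)^{-1} y_2 p_2(s)$, so $s$ realizes the equivalence of the two twisted cocycles in $Z^1(\gerbeE_a, Z \to S)$.

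For (ii), start with $(y, \phi) \in Z^1(\gerbeE_a, Z \to S)$, and let $\bar{y} \in G_{\operatorname{ad}}(U_1)$ be its image. Since $\phi$ takes values in $Z \subset Z(G)$, we have $d\bar{y} = 1$, so $\operatorname{Ad}(\bar{y})$ defines an fpqc descent datum on $G_{\bar{F}}$. Because $G$ is affine, this descends to an $F$-group $G_1$ with an inner twist $\xi_1 \colon G \to G_1$ satisfying $p_1^* \xi_1^{-1} \circ p_2^* \xi_1 = \operatorname{Ad}(\bar{y})$. The element $\bar{y}$ centralizes $\delta$, so $\xi_1(\delta)$ descends to $\delta_1 \in G_1(F)$, and $(G_1, \xi_1, (y, \phi), \delta_1)$ lies in $\mathcal{C}_Z(\delta, a)$ (taking trivial $G(\bar{F})$-conjugation witnesses its stable conjugacy with $(G, \operatorname{id}, (e, 0), \delta)$). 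The choice $g = e$ in the definition of $\operatorname{inv}(-, \delta)$ then recovers $(y, \phi)$.

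For (iii), suppose two tuples have the same image. Pick $s \in S(\bar{F})$ realizing the equivalence $y_1 = p_1(s)^{-1} y_2 p_2(s)$, and set $\delta' := g_2 s g_1^{-1}$, $f := \xi_2 \circ \operatorname{Ad}(\delta') \circ \xi_1^{-1}$, a priori only an $\bar{F}$-morphism. A direct calculation using the descent relations for $\xi_1$ and $\xi_2$ together with the equivalence relation for $y_i$ shows $p_1^* f = p_2^* f$, so $f$ descends to an $F$-isomorphism $G_1 \xrightarrow{\sim} G_2$. The identity $f(\delta_1) = \delta_2$ follows from $s \in S$ centralizing $\delta$, and the relation $x_1 = p_1(\delta')^{-1} x_2 p_2(\delta')$ is just the reverse of the computation in (i); hence $(f, \delta')$ is the desired isomorphism. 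The only real obstacle is the construction of $G_1$ in step (ii): it is essential here that we work with fpqc descent (since the cocycle lives in $G_{\operatorname{ad}}(U_1) = G_{\operatorname{ad}}(\bar{F} \otimes_F \bar{F})$ rather than over $F^s$), and this requires the affineness of $G$ via the descent results recorded in \S 2. Once $G_1$ is produced, all remaining steps are bookkeeping from the formula $dy = \phi(a)$ and the compatibility between twisted cocycles and inner-twist descent data.
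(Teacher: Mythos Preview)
Your proof is correct and follows essentially the same approach as the paper's own proof. Both arguments establish well-definedness by checking that the conjugating element lands in $S(\bar{F})$, prove surjectivity by twisting $G$ along the image cocycle in $G_{\text{ad}}$ via descent along $\bar{F}/F$, and prove injectivity by building the isomorphism $(f,\delta')$ from an $S(\bar{F})$-point realizing the equivalence of twisted cocycles; your write-up is slightly more explicit about the descent step (affineness, fpqc), which the paper abbreviates as ``the usual twist.''
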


\begin{proof}
First note that if $(G_{1}, \xi_{1}, (x_{1}, \phi_{1}), \delta_{1}) \in \mathcal{C}_{Z}(\delta,a)$ and $(G_{2}, \xi_{2}, (x_{2}, \phi_{2}), \delta_{2}) \in \mathcal{C}_{Z}(\delta,a)$ are isomorphic via $(f,g)$ then if we take $g_{i}$ satisfying $\xi_{1}(g_{i} \delta_{1} g_{i}^{-1}) = \delta_{i}$, we have $\phi_{1} = \phi_{2}$ (by definition) and $g_{1}^{-1}g^{-1}g_{2} \in S(\bar{F})$, since $$\text{Ad}(g_{1}^{-1}g^{-1}g_{2})\delta = \text{Ad}(g_{1}^{-1})(\xi_{1}^{-1} \circ f^{-1} \circ \xi_{2})(\text{Ad}(g_{2})(\delta)) = \text{Ad}(g_{1}^{-1})(\xi_{1}^{-1}(\delta_{1})) = \delta,$$ giving that $[(p_{1}^{\sharp}(g_{1})^{-1}x_{1}p_{2}^{\sharp}(g_{1}), \varphi_{1})] = [(p_{1}^{\sharp}(g_{2})^{-1}x_{2}p_{2}^{\sharp}(g_{2}), \varphi_{1})]$ in $H^{1}(\gerbeE_{a}, Z \to S)$. This shows that the invariant map is constant on isomorphism classes. 

For injectivity, we note that if $[(p_{1}^{\sharp}(g_{1})^{-1}x_{1}p_{2}^{\sharp}(g_{1}), \phi)] = [(p_{1}^{\sharp}(g_{2})^{-1}x_{2}p_{2}^{\sharp}(g_{2}), \phi)]$ in $H^{1}(\gerbeE_{a}, Z \to S)$, then if we take $g \in S(\bar{F})$ realizing this equivalence of cocycles, the (fppf descent of the) map $G_{1} \to G_{2}$ defined by $\xi_{2} \circ \text{Ad}(g_{2}gg_{1}^{-1}) \circ \xi_{1}^{-1}$ defines an isomorphism from  $(G_{1}, \xi_{1}, (x_{1}, \phi_{1}), \delta_{1})$ to $(G_{2}, \xi_{2}, (x_{2}, \phi_{2}), \delta_{2})$ in $\mathcal{C}_{Z}(\delta)$. 

For surjectivity, if we fix $[(x,\phi)] \in  H^{1}(\gerbeE_{a}, Z \to S)$, then since $dx \in Z(G)$, we may twist $G$ by $x$ to obtain $G^{x}$, with the usual isomorphism $\xi \colon G \xrightarrow{\sim} G^{x}$ satisfying $p_{1}^{*}\xi^{-1} \circ p_{2}^{*}\xi = \text{Ad}(x)$, and then (since $x$ commutes with $\delta$) the tuple $(G^{x}, \xi, (x, \phi), \xi(\delta))$ lies in $\mathcal{C}_{Z}(\delta,a)$ and trivially maps to $(x,\phi) \in Z^{1}(\gerbeE_{a}, Z \to S)$.
\end{proof}

\begin{lem} The diagram \eqref{isomclasses} commutes.
\end{lem}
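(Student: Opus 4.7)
The plan is to make both vertical maps completely explicit at the level of twisted cocycles, after which commutativity will be a one-line consequence of the centrality of $Z$ in $G$.

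First I would identify the left vertical bijection $\text{Isom}[\mathcal{C}_{Z}(\delta, a)] \to \text{Isom}[\mathcal{C}_{Z}(\delta, a')]$ as the composition of the canonical identifications with $\text{Isom}[\mathcal{C}_{Z}(\delta, \gerbeE_{a})]$ and $\text{Isom}[\mathcal{C}_{Z}(\delta, \gerbeE_{a'})]$ together with pullback along $\phi_{a',a,y} \colon \gerbeE_{a'} \to \gerbeE_{a}$ from Construction \ref{changeofgerbe} (noting $dy \cdot a = a'$ is exactly the hypothesis needed for this morphism to exist, with $f = \text{id}_{u}$). Under the identification of Proposition \ref{correspondence1}, Corollary \ref{pulltwist} describes this pullback explicitly on twisted cocycles: it sends $(x_{1}, \phi_{1}) \in Z^{1}(\gerbeE_{a}, Z \to G)$ to $(\phi_{1}(y)^{\varepsilon} \cdot x_{1}, \phi_{1})$, where $\phi_{1}(y)$ is viewed as an element of $Z(U_{1}) \subset G(U_{1})$ and $\varepsilon \in \{\pm 1\}$ is fixed by the sign convention in Construction \ref{changeofgerbe}. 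A brief verification is needed that this operation really defines a map on rigid-inner-twist data: the image of $\phi_{1}(y)^{\varepsilon} \cdot x_{1}$ in $G_{\text{ad}}(U_{1})$ equals $\bar{x}_{1}$ since $\phi_{1}(y) \in Z$ is killed by $G \to G_{\text{ad}}$, so the relation $\text{Ad}(\bar{x}_{1}) = p_{1}^{*}\xi_{1}^{-1} \circ p_{2}^{*}\xi_{1}$ persists with the same $\xi_{1}$, and $\delta_{1}$ plays no role in the cocycle. The right vertical map admits the identical description with $G$ replaced by $S$.

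Next I would choose $g \in G(F^{s})$ with $\xi_{1}(g \delta g^{-1}) = \delta_{1}$ and observe that $g$ depends only on $(\xi_{1}, \delta, \delta_{1})$, so the same $g$ can be used to compute both invariants. Going right-then-down then produces the $a'$-twisted cocycle $p_{1}(g)^{-1} x_{1} p_{2}(g) \cdot \phi_{1}(y)^{\varepsilon}$, while going down-then-right produces $p_{1}(g)^{-1} \cdot \phi_{1}(y)^{\varepsilon} x_{1} \cdot p_{2}(g)$. Since $\phi_{1}$ factors through the central subgroup $Z \subset G$, the element $\phi_{1}(y)^{\varepsilon} \in Z(U_{1})$ is central in $G(U_{1})$ and therefore commutes with $p_{2}(g)$; the two cocycles are consequently equal on the nose, not merely cohomologous.

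The only real bookkeeping obstacle is transcribing Construction \ref{changeofgerbe} into the precise cocycle formula, i.e.\ pinning down the sign $\varepsilon$ and the left/right placement of the translation; once both vertical maps are made explicit, the diagram chase is trivial and depends only on $Z$ being central.
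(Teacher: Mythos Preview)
Your proposal is correct and takes essentially the same approach as the paper: both make the vertical maps explicit as $(x_{1},\phi_{1}) \mapsto (x_{1}\cdot\phi_{1}(y),\phi_{1})$, then observe that conjugation by $g$ commutes with translation by the central element $\phi_{1}(y)$. The paper simply asserts the formula for the left vertical map rather than deriving it from Construction~\ref{changeofgerbe} and Corollary~\ref{pulltwist} as you do, and it does not hedge on the sign (it is $\varepsilon=+1$), but the core computation is the same one-line centrality argument you give.
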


\begin{proof} If $y \in u(U_{1})$ is such that $dy \cdot a = a'$, then the map $\mathcal{C}_{Z}(\delta, a) \to \mathcal{C}_{Z}(\delta, a')$ may be defined by sending $(G_{1}, \xi_{1}, (x_{1}, \phi_{1}), \delta_{1})$ to $(G_{1} ,\xi_{1}, (x_{1} \cdot \phi_{1}(y), \phi_{1}),\delta_{1})$. This maps to the equivalence class of the $a'$-twisted cocycle $(p_{1}^{\sharp}(g)^{-1}\phi_{1}(y)xp_{2}^{\sharp}(g), \phi_{1})$ in $H^{1,*}(\gerbeE_{a'}, Z \to G)$. Going the other direction, the class of $(G_{1}, \xi_{1}, (x_{1}, \phi_{1}), \delta_{1})$ maps to the equivalence class of the $a$-twisted cocycle $(p_{1}^{\sharp}(g)^{-1}xp_{2}^{\sharp}(g), \phi_{1})$, which then maps to the class of $(p_{1}^{\sharp}(g)^{-1}\phi_{1}(y)xp_{2}^{\sharp}(g), \phi_{1})$, by the centrality of $Z$. 
\end{proof}

Because of the above result, in the context of the invariant map it will be harmless to denote $\mathcal{C}_{Z}(\delta, \gerbeE)$ for a choice of $\gerbeE$ simply by $\mathcal{C}_{Z}(\delta)$, and for computational purposes to identify $\mathcal{C}_{Z}(\delta)$ with $\mathcal{C}_{Z}(\delta, a)$ for a choice of $a$. Note that if $Z \to S$ factors through another finite central $Z' \to S$, then we have a canonical functor $\iota_{Z,Z'} \colon \mathcal{C}_{Z}(\delta) \to \mathcal{C}_{Z'}(\delta)$ which is fully faithful. Moreover, the two invariant maps to $H^{1}(\gerbeE, Z \to S), H^{1}(\gerbeE, Z' \to S)$ commute with the natural inclusion $H^{1}(\gerbeE, Z \to S) \to H^{1}(\gerbeE, Z' \to S)$; thus, the invariant map does not depend on the choice of $Z$.

The last thing we do in this subsection is define a representation of a rigid inner form.

\begin{Def}\label{representation} A \textit{representation} of a rigid inner twist of $G$ is a tuple $(G_{1}, \xi_{1},\mathscr{T}_{1}, \bar{h}_{1}, \pi_{1})$, where $(\xi_{1}, \mathscr{T}_{1}, \bar{h}_{1})$ is a rigid inner twist of $G$ and $\pi_{1}$ is an admissible representation of $G_{1}(F)$. An \textit{isomorphism of representations} of rigid inner twists $(G_{1}, \xi_{1},\mathscr{T}_{1}, \bar{h}_{1}, \pi_{1}) \to (G_{2}, \xi_{2},\mathscr{T}_{2}, \bar{h}_{2}, \pi_{2})$ is an isomorphism of rigid inner twists $(f,\Psi) \colon (\xi_{1}, \mathscr{T}_{1}, \bar{h}_{1}) \to (\xi_{2}, \mathscr{T}_{2}, \bar{h}_{2})$ such that the $G_{1}(F)$-representations $\pi_{1}$ and $\pi_{2} \circ f$ are isomorphic.

\end{Def}

One verifies easily that two representations $(G_{1}, \xi_{1},\mathscr{T}_{1}, \bar{h}_{1}, \pi_{1}) $ and $(G_{1}, \xi_{1},\mathscr{T}_{1}, \bar{h}_{1}, \pi_{2})$ are isomorphic in the above sense if and only if $\pi_{1}$ and $\pi_{2}$ are isomorphic as $G_{1}(F)$-representations.

\subsection{Local transfer factors and endoscopy} Let $[Z \to G] \in \mathcal{R}$ and let $\widehat{G}$ be a complex Langlands dual group for $G$. We have an isogeny $G \to \overline{G}$ which dualizes to an isogeny $\widehat{\overline{G}} \to \widehat{G}$, inducing a homomorphism $Z(\widehat{\overline{G}}) \to Z(\widehat{G})$. Identifying these complex varieties with their $\mathbb{C}$-points, we define $Z(\widehat{\overline{G}})^{+} \subset Z(\widehat{\overline{G}})$ to be the preimage of $Z(\widehat{G})^{\Gamma}$ under this isogeny. We thus obtain a functor $\mathcal{R} \to \text{FinAbGrp}$ by sending $G$ to $\pi_{0}(Z(\widehat{\overline{G}})^{+})^{*}$; this can be seen as an analogue of functor introduced in \cite[Theorem 1.2]{Kott86}.

\begin{prop} We have a functorial isomorphism $$Y_{+,\text{tor}}(Z \to G) \xrightarrow{\sim} \pi_{0}(Z(\widehat{\overline{G}})^{+})^{*}.$$
\end{prop}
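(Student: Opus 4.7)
The plan is to combine Theorems \ref{tatenak1} and \ref{tatenak2} (which give a canonical functorial isomorphism $\overline{Y}_{+,\text{tor}} \xrightarrow{\sim} H^{1}(\gerbeE, -)$) with a direct Pontryagin-duality identification $H^{1}(\gerbeE, Z \to G) \xrightarrow{\sim} \pi_{0}(Z(\widehat{\overline{G}})^{+})^{*}$. The composition would then give the desired isomorphism, and functoriality would be inherited from the functoriality of the two ingredients.

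First I would treat the case of tori, fixing $[Z \to S] \in \mathcal{T}$ and putting $\overline{S} := S/Z$. Applying the exponential sequence $0 \to X^{*}(T) \to X^{*}(T) \otimes_{\Z} \mathbb{C} \to \widehat{T} \to 0$ to $T = S$ and $T = \overline{S}$, taking $\Gamma$-cohomology, and computing the preimage defining $\widehat{\overline{S}}^{+}$ yields an exact sequence of abelian groups whose Pontryagin dual fits into a commutative diagram with the inflation-restriction sequence
\[
\begin{tikzcd}[column sep=small]
0 \ar[r] & H^{1}(F,S) \ar[r] \ar[d,"\wr"] & H^{1}(\gerbeE, Z \to S) \ar[r] \ar[d,dashed] & \Hom_{F}(u,Z) \ar[r] \ar[d,equal] & H^{2}(F,S) \ar[d,"\wr"] \\
0 \ar[r] & \pi_{0}(\widehat{S}^{\Gamma})^{*} \ar[r] & \pi_{0}(\widehat{\overline{S}}^{+})^{*} \ar[r] & \Hom_{F}(u,Z) \ar[r] & \pi_{0}(\widehat{S}^{\Gamma}_{\mathrm{ad}})^{*}
\end{tikzcd}
\]
of Proposition \ref{bigdiag}, in which the outer vertical isomorphisms come from the classical Tate-Nakayama duality for the $F$-tori $S$ and $\overline{S}$ (see \S 6.1.3 and Proposition \ref{duality}), and the equality in the third column is the canonical self-pairing on $\Hom_{F}(u, Z)$. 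The five-lemma then produces the dashed vertical isomorphism, which precomposed with the inverse of the isomorphism of Theorem \ref{tatenak1} gives the desired result on the torus level.

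For general $[Z \to G] \in \mathcal{R}$, I would reduce to the torus case via the existence of elliptic maximal tori $S \subset G$ guaranteed in \S 5, using the surjectivity of $H^{1}(\gerbeE, Z \to S) \twoheadrightarrow H^{1}(\gerbeE, Z \to G)$ from Proposition \ref{tashocor}(2) and the inverse-limit definition of $\overline{Y}_{+,\text{tor}}[Z \to G]$. The canonical inclusion $Z(\widehat{G}) \hookrightarrow \widehat{T}$ for any maximal torus $T$ of $G$ dualizes to the projection $\widehat{T}_{\mathrm{ad}} \twoheadrightarrow \widehat{T_{\text{sc}}}$ of \S 6.1.3, which identifies $\pi_{0}(Z(\widehat{\overline{G}})^{+})$ as the appropriate quotient of $\pi_{0}(\widehat{\overline{S}}^{+})$ that exactly matches the further quotient by $X_{*}(S_{\text{sc}})$ appearing in the definition of $\overline{Y}_{+,\text{tor}}[Z \to G]$. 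Functoriality in $[Z \to G]$ follows from functoriality of Tate-Nakayama (Theorems \ref{tatenak1}, \ref{tatenak2}) and of $\pi_{0}$ and the preimage construction $(-)^{+}$.

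The main obstacle I anticipate is the reductive-to-torus reduction: the quotient by $X_{*}(S_{\text{sc}})$ in the definition of $\overline{Y}_{+,\text{tor}}[Z \to G]$ must be shown to correspond precisely, under Pontryagin duality and the exponential-sequence computation, to the passage from $\widehat{\overline{S}}^{+}$ to $Z(\widehat{\overline{G}})^{+}$. This involves carefully dualizing the standard identification $Z(\widehat{G}) = \widehat{T}^{W(\widehat{G},\widehat{T})}$ and tracking the Weyl-group action through $\pi_{0}$, as well as verifying that the required compatibilities hold under $\mathrm{Ad}(g)$, $g \in G(F^{s})$, so that the construction descends to the inverse limit over maximal $F$-tori. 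The torus case itself should be relatively routine once the diagram above is set up, since all ingredients appear in \S 6.1.3 and \S 3.
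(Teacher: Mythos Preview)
Your approach is genuinely different from the paper's, and it has a gap.

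The paper does \emph{not} pass through $H^{1}(\gerbeE,-)$ at all. It proves the Proposition by a direct character-group computation: for any maximal $F$-torus $S \subset G$, the inclusion $Z(\widehat{\overline{G}})^{+} \hookrightarrow \widehat{\overline{S}}$ dualizes to the quotient map $X_{*}(\overline{S}) \to X_{*}(\overline{S})/[IX_{*}(S) + X_{*}(S_{\text{sc}})]$, and passing to $\pi_{0}$ corresponds to taking the torsion subgroup of this character module. One then identifies $(\overline{Y}_{+,\text{tor}})$-terms inside $X^{*}(\pi_{0}(Z(\widehat{\overline{G}})^{+}))_{\text{tor}}$ and shows the inclusion becomes an isomorphism in the colimit over Galois extensions (the verification is deferred to \cite{Tasho}, Proposition~5.3). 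This is purely algebraic; the gerbe $\gerbeE$ never appears. The isomorphism is then \emph{combined} with Theorems~\ref{tatenak1}/\ref{tatenak2} to produce the pairing of Corollary~\ref{pairing}, rather than the other way around.

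Your plan reverses this logic: you want to build $H^{1}(\gerbeE, Z \to S) \xrightarrow{\sim} \pi_{0}(\widehat{\overline{S}}^{+})^{*}$ first via a five-lemma argument, and then undo Theorem~\ref{tatenak1}. The gap is in the bottom row of your diagram. The group $\Hom_{F}(u,Z)$ is defined entirely on the Galois/gerbe side (as a colimit of $\Hom_{F}(u_{k},Z) \cong (\overline{Y}/Y)^{N_{k}}$), and it does not appear naturally as a term in any Pontryagin-dual sequence extracted from the exponential sequence for $\widehat{S}$ and $\widehat{\overline{S}}$. Concretely, from $1 \to \widehat{Z} \to \widehat{\overline{S}}^{+} \to \widehat{S}^{\Gamma} \to 1$ one obtains that the cokernel of $\pi_{0}(\widehat{S}^{\Gamma})^{*} \hookrightarrow \pi_{0}(\widehat{\overline{S}}^{+})^{*}$ is $(\overline{Y}/IY)_{\text{tor}}/(Y/IY)_{\text{tor}} = \overline{Y}^{N}/Y^{N}$, which is \emph{not} the same as $\varinjlim (\overline{Y}/Y)^{N_{k}}$ for a single finite $E/F$; the discrepancy is a coboundary term. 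Your ``equality'' in the third column therefore requires exactly the colimit argument that the paper's direct approach handles, so you have not avoided it. Your fourth column is also incorrect as written: for a torus $S$ there is no meaningful $\widehat{S}_{\text{ad}}$, and local duality gives $H^{2}(F,S) \cong (X^{*}(S)^{\Gamma})^{*}$, not a $\pi_{0}$-group of the type you wrote. If you want to salvage the approach, you should replace the five-lemma diagram by the direct character-group identification $X^{*}(\pi_{0}(\widehat{\overline{S}}^{+})) = (\overline{Y}/IY)_{\text{tor}}$, which is the paper's computation in the torus case, and then the passage through $H^{1}(\gerbeE,-)$ becomes superfluous.
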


\begin{proof} We describe what the construction of this map is; the proof that this construction indeed is a functorial isomorphism is identical to the one given in \cite[Proposition 5.3]{Tasho}.

Recall that for $[Z \to G] \in \mathcal{R}$, the group $Y_{+,\text{tor}}(Z \to G)$ is an inverse limit as $S$ ranges over all maximal $F$-tori of $G$ of groups of the form $$\varinjlim \frac{(X_{*}(\bar{S})/ X_{*}(S_{\text{sc}}))^{N}}{I(X_{*}(S)/X_{*}(S_{\text{sc}}))},$$ where each direct limit is over all finite Galois extensions of $F$ splitting $S$. For a fixed $S$, we have a commutative square of multiplicative groups corresponding to the commutative square of character groups:
\[
\begin{tikzcd}
Z(\widehat{\overline{G}}) \arrow{r} \arrow{d} & Z(\widehat{G}) \arrow{d} & & \frac{X_{*}(\bar{S})}{X_{*}(S_{\text{sc}})}  & \frac{X_{*}(S)}{X_{*}(S_{\text{sc}})}  \arrow{l} \\
\widehat{\bar{S}} \arrow{r} & \widehat{S} & & X_{*}(\bar{S})  \arrow{u}& X_{*}(S). \arrow{l} \arrow{u}
\end{tikzcd}
\]

Under the canonical embedding $Z(\widehat{G}) \to \widehat{S}$, the subgroup inclusion $Z(\widehat{G})^{\Gamma} \subset Z(\widehat{G})$ corresponds at the level of character groups to the quotient map $$X_{*}(S)/X_{*}(S_{\text{sc}}) \to [X_{*}(S)/X_{*}(S_{\text{sc}})] / I [X_{*}(S)/X_{*}(S_{\text{sc}})],$$ and it follows that the subgroup $Z(\widehat{\overline{G}})^{+} \subset Z(\widehat{\overline{G}})$ has character group $$X^{*}(Z(\widehat{\overline{G}})^{+}) = [X_{*}(\bar{S})]/ [I X_{*}(S)+X_{*}(S_{\text{sc}})].$$ Finally, passing to the component group corresponds to taking the torsion subgroup, which (for a Galois extension splitting $S$) contains $[X_{*}(\bar{S})/X_{*}(S_{\text{sc}})]^{N} / I [X_{*}(S)/X_{*}(S_{\text{sc}})]$. This gives a natural inclusion $$\frac{(X_{*}(\bar{S})/ X_{*}(S_{\text{sc}}))^{N}}{I(X_{*}(S)/X_{*}(S_{\text{sc}}))} \hookrightarrow \pi_{0}(Z(\widehat{\overline{G}})^{+})^{*},$$ since we have the obvious identification $X^{*}(\pi_{0}(Z(\widehat{\overline{G}})^{+})) = \pi_{0}(Z(\widehat{\overline{G}})^{+})^{*}$. These maps glue for varying Galois extensions of $F$, and then induce an isomorphism on the direct limit over all extensions $E$ (see \cite[Proposition 5.3]{Tasho}).
\end{proof}

The analogue of \cite[Corollary 5.4]{Tasho} makes precise our earlier statement comparing this new functor to the one defined in \cite[Theorem 1.2]{Kott86}:

\begin{cor}\label{pairing} There is a perfect pairing $$H^{1}(\gerbeE, Z \to G) \times \pi_{0}(Z(\widehat{\overline{G}})^{+}) \to \mathbb{Q}/\Z,$$ which is functorial in $[Z \to G] \in \mathcal{R}$. Moreover, if $Z$ is trivial then this pairing coincides with the one stated in Theorem \ref{kottwitzpairing}. 
\end{cor}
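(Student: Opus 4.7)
The strategy is to produce the pairing as a byproduct of the preceding proposition together with Theorem \ref{tatenak2}. Composing the inverse of the Tate--Nakayama isomorphism $\iota \colon \overline{Y}_{+,\text{tor}}[Z \to G] \xrightarrow{\sim} H^{1}(\gerbeE, Z \to G)$ of Theorem \ref{tatenak2} with the isomorphism $\overline{Y}_{+,\text{tor}}[Z \to G] \xrightarrow{\sim} \pi_{0}(Z(\widehat{\overline{G}})^{+})^{*}$ of the preceding proposition yields a canonical isomorphism
\[
H^{1}(\gerbeE, Z \to G) \xrightarrow{\sim} \pi_{0}(Z(\widehat{\overline{G}})^{+})^{*}.
\]
Since $\pi_{0}(Z(\widehat{\overline{G}})^{+})$ is a finite abelian group, this is the same data as a perfect $\mathbb{Q}/\mathbb{Z}$-valued pairing. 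Functoriality in $[Z \to G] \in \mathcal{R}$ is automatic because each factor in the composition is functorial in $[Z \to G]$ by the cited results.

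Next, for the comparison with the Kottwitz pairing when $Z$ is trivial, the plan is to invoke the uniqueness clause of Theorem \ref{kottwitzpairing}: the Kottwitz isomorphism $\alpha_{G} \colon H^{1}(F, G) \to \pi_{0}(Z(\widehat{G})^{\Gamma})^{*}$ is characterized as the unique natural transformation of functors on connected reductive $F$-groups that restricts to classical Tate--Nakayama duality on tori. Our pairing is plainly a natural transformation of the same functors (with $Z = 1$ giving $\overline{G} = G$ and $Z(\widehat{\overline{G}})^{+} = Z(\widehat{G})^{\Gamma}$), so it suffices to check agreement on tori. For an $F$-torus $S$, Theorem \ref{tatenak2} explicitly states that $\iota \colon X_{*}(S)_{\Gamma,\text{tor}} = \overline{Y}_{+,\text{tor}}[1 \to S] \to H^{1}(\gerbeE, 1 \to S) = H^{1}(F,S)$ is the classical Tate--Nakayama isomorphism, and the proof of the preceding proposition unwinds the isomorphism $X_{*}(S)_{\Gamma,\text{tor}} \to \pi_{0}(\widehat{S}^{\Gamma})^{*}$ as the natural evaluation-of-cocharacter pairing. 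Combining these via Remark \ref{altpairing}, which records exactly the compatibility of these two descriptions of Tate--Nakayama duality, gives the required agreement on tori.

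The hard part of this result is essentially already done: the heavy lifting lives in Theorem \ref{tatenak2} (whose construction required the entirety of \S 4 and the elliptic-torus arguments of \S 5) and in the preceding proposition (which amounts to a character-group identification). The only thing to be careful about is bookkeeping of normalizations in the torus comparison, where one must verify that Remark \ref{altpairing} pins down the identification $H^{1}(F,S) \cong \pi_{0}(\widehat{S}^{\Gamma})^{*}$ obtained from classical Tate--Nakayama in precisely the form that comes out of composing $\iota^{-1}$ with the map of the preceding proposition. This is a routine unwinding rather than a genuine obstacle.
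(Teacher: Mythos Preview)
Your proposal is correct and matches the paper's approach. The paper in fact does not write out a proof of this corollary at all, treating it as an immediate consequence of composing the isomorphism of Theorem \ref{tatenak2} with that of the preceding proposition; your outline makes this explicit and correctly handles the $Z=1$ comparison via the uniqueness clause of Theorem \ref{kottwitzpairing} together with the torus compatibility built into Theorem \ref{tatenak2} and Remark \ref{altpairing}.
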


We now recall the notion of a \textit{refined endoscopic datum}, introduced in \cite[\S 5]{Tasho}. As before, assume that we have some fixed finite central $Z \to G$, and denote $G/Z$ by $\overline{G}$. First, let $(H, \mathcal{H}, s, \eta)$ be an endoscopic datum for $G$. We may always replace this datum with an equivalent $\widehat{G}(\mathbb{C})$-conjugate datum $(H, \mathcal{H}, s', \eta')$ such that $s' \in Z(\widehat{H})^{\Gamma}$ without affecting the value of the transfer factors $\Delta, \Delta_{0}$ involving $(H, \mathcal{H}, s, \eta)$ (see the beginning \S 6.3). We will always assume that our endoscopic datum has this form.

Choices of maximal tori in $\widehat{H}$, $\widehat{G}$, $H$, and $G$ give embeddings $Z_{F^{s}} \to Z(H)_{F^{s}}$ which differ by pre- and post-composing with inner automorphisms induced by $G(F^{s})$, $H(F^{s})$, and hence are all the same, meaning that we have a canonical $F$-embedding $Z \hookrightarrow{H}$ (the $\Gamma$-equivariance follows from the fact that the maps $\widehat{T} \to \mathscr{T}$ for $T$ maximal in $G$, $\mathscr{T}$ maximal in $\widehat{G}$, are $\Gamma$-equivariant up to the action of the Weyl group---analogously for $H$). It thus makes sense to define $\overline{H} := H/Z$, which gives rise to the isogeny $\widehat{\overline{H}} \to \widehat{H}$. 

As above, we define $Z(\widehat{\overline{H}})^{+}$ to be the preimage of $Z(\widehat{H})^{\Gamma}$ in $Z(\widehat{\overline{H}})$, and declare that $(H, \mathcal{H}, \dot{s}, \eta)$ is a \textit{refined endoscopic datum} if $H, \mathcal{H}$, and $\eta$ are defined as for an endoscopic datum, and $\dot{s} \in Z(\widehat{\overline{H}})^{+}$ is such that $(H, \mathcal{H}, s, \eta)$ is an endoscopic datum, where $s$ is the image of $\dot{s}$ under the map $Z(\widehat{\overline{H}})^{+} \to Z(\widehat{H})^{\Gamma}$. An isomorphism of two refined endoscopic data $(H, \mathcal{H}, \dot{s}, \eta), (H', \mathcal{H}', \dot{s}', \eta')$ is an element $\dot{g} \in \widehat{\bar{G}}(\mathbb{C})$ such that its image $g$ in $\widehat{G}(\mathbb{C})$ satisfies $g \eta(\mathcal{H}) g^{-1} = \eta'(\mathcal{H}')$, inducing $\beta \colon \mathcal{H} \to \mathcal{H}'$ and the restriction $\beta \colon \widehat{H} \to \widehat{H}'$, which (by basic properties of central isogenies) lifts uniquely to a map $\bar{\beta} \colon \widehat{\overline{H}} \to \widehat{\overline{H'}}$, and such that the images of $\bar{\beta}(\dot{s})$ and $\dot{s}'$ in $\pi_{0}(Z(\widehat{\overline{H'}})^{+})$ coincide. It is clear that every endoscopic datum lifts to a refined endoscopic datum, and that every isomorphism of refined endoscopic data induces an isomorphism of the associated endoscopic data.

 Let $\dot{\mathfrak{e}} = (\mathcal{H}, H, \eta, \dot{s})$ be a refined endoscopic datum for $G$ with associated endoscopic datum $\mathfrak{e} = (\mathcal{H}, H, \eta, s)$, which is also an endoscopic datum for $G^{*}$. Let $\mathfrak{z} = (H_{\mathfrak{z}}, \eta_{\mathfrak{z}})$ be a $z$-pair for $\mathfrak{e}$. As discussed in \S 6, have two functions
$$ \Delta[\mathfrak{e}, \mathfrak{z}] \colon H_{\mathfrak{z}, G-\text{sr}}(F) \times G^{*}_{\text{sr}}(F) \times H_{\mathfrak{z}, G-\text{sr}}(F) \times G^{*}_{\text{sr}}(F) \to \mathbb{C},$$
$$\Delta[\mathfrak{e}, \mathfrak{z}, \psi] \colon H_{\mathfrak{z}, G-\text{sr}}(F) \times G_{\text{sr}}(F) \times H_{\mathfrak{z}, G-\text{sr}}(F) \times G_{\text{sr}}(F) \to \mathbb{C},$$
where the first equation makes sense because strongly $G$-regular elements of $H(F)$ are strongly $G^{*}$-regular via choices of admissible embeddings $T_{H} \xrightarrow{\sim} T$, $T_{\bar{H}} \xrightarrow{\sim} \bar{T}$, as in our discussion of the local transfer factor. As in \cite{Tasho}, we have added terms in the brackets to show what each factor depends on. We set the above function to zero if either of the pairs of inputs consist of two elements which are not related.

For our arbitrary $G$, we say that an \textit{absolute transfer factor} is a function $$ \Delta[\mathfrak{e}, \mathfrak{z}]_{\text{abs}} \colon H_{\mathfrak{z}, G-\text{sr}} \times G_{\text{sr}}(F) \to \mathbb{C},$$ which is nonzero for any pair $(\gamma_{\mathfrak{z}}, \delta)$ of related elements and satisfies the relation
$$\Delta[\mathfrak{e}, \mathfrak{z}]_{\text{abs}}(\gamma_{\mathfrak{z}, 1}, \delta_{1}) \cdot \Delta[\mathfrak{e}, \mathfrak{z}]_{\text{abs}}(\gamma_{\mathfrak{z}, 2}, \delta_{2})^{-1} = \Delta[\mathfrak{e}, \mathfrak{z}](\gamma_{\mathfrak{z},1}, \delta_{1}; \gamma_{\mathfrak{z},2}, \delta_{2})$$
for pairs $(\gamma_{\mathfrak{z}, 1}, \delta_{1})$, $(\gamma_{\mathfrak{z}, 2}, \delta_{2})$ of related elements.

By \S 6, if $G$ is quasi-split, setting $\Delta[\mathfrak{e}, \mathfrak{z}] = \Delta_{0}$ (and zero if the pair is unrelated) satisfies these properties. As we noted in Corollary \ref{delta0}, this function is not unique, depending on a choice of $F$-splitting of $G_{\text{sc}}$. Our next goal will be to use the notions of refined endoscopic data and $Z$-rigid inner forms to construct an absolute transfer factor in the non quasi-split case which is associated to some splitting of the quasi-split inner form $G^{*}$, extending the absolute transfer factor in the quasi-split case. This follows the corresponding construction in \cite[\S 5.3]{Tasho}.

We return to the setting of arbitrary $G$ connected reductive over $F$ with quasi-split inner form $\psi \colon G_{F^{s}} \to G^{*}_{F^{s}}$ and fixed $Z$-rigid inner form $(\xi, \mathscr{T}, \bar{h})$, $\xi := \psi^{-1}$, for some fixed finite central $Z$ defined over $F$. Let $\delta' \in G(F)$ and $\gamma_{\mathfrak{z}} \in H_{\mathfrak{z}, G-\text{sr}}(F)$ be related elements, and let $\gamma_{H}$ be the image of $\gamma_{\mathfrak{z}}$ in $H(F)$. Then, by Lemma \ref{Fconj}, we may find $\delta \in G^{*}(F)$ such that $\dot{\delta}' := (G, \xi, \mathscr{T}, \bar{h}, \delta')$ lies in $\mathcal{C}_{Z}(\delta)$; note that by strong regularity, the induced isomorphism of centralizers $\text{Ad}(g) \circ \psi \colon Z_{G}(\delta')_{F^{s}} \to Z_{G^{*}}(\delta)_{F^{s}}$, some $g \in G^{*}(F^{s})$, is defined over $F$. 

Let $S_{H}$ denote the centralizer of $\gamma_{H}$ in $H$, and $S$ denote the centralizer of $\delta$ in $G^{*}$. Since $\gamma_{H}$ and $\delta'$ are related, we have an admissible isomorphism $S_{H} \to Z_{G}(\delta')$ sending $\gamma_{H}$ to $\delta'$. Post-composing this map with the $F$-isomorphism $Z_{G}(\delta') \to S$ gives an admissible isomorphism $\phi_{\gamma_{H}, \delta} \colon S_{H} \to S$ which sends $\gamma_{H}$ to $\delta$, and is unique with these properties. This isomorphism identifies the canonically embedded copies of $Z$ in both of the tori, and therefore induces an isomorphism $\bar{\phi}_{\gamma_{H}, \delta} \colon \overline{S}_{H} \to \overline{S}$. If $[\widehat{\overline{S}_{H}}]^{+}$ denotes the preimage of $\widehat{S_{H}}^{\Gamma}$ under the isogeny $\widehat{\overline{S}_{H}} \to \widehat{S_{H}}$,  then the canonical ($\Gamma$-equivariant) embeddings $Z(\widehat{H}) \hookrightarrow \widehat{S_{H}}$, $Z(\widehat{\overline{H}}) \hookrightarrow \widehat{\overline{S}_{H}}$ induce a canonical embedding $Z(\widehat{\overline{H}})^{+} \hookrightarrow [\widehat{\overline{S}_{H}}]^{+}$. If the group $[\widehat{\overline{S}}]^{+}$ is defined analogously, we have that $\bar{\phi}_{\gamma_{H}, \delta}^{-1}$ dualizes to a map $[\widehat{\overline{S}_{H}}]^{+} \to [\widehat{\overline{S}}]^{+}$ (since $\bar{\phi}_{\gamma_{H}, \delta}$ is defined over $F$) which further induces an embedding $$Z(\widehat{\overline{H}})^{+} \hookrightarrow [\widehat{\overline{S}}]^{+}.$$ 

We thus obtain from $\dot{s} \in Z(\widehat{\overline{H}})^{+}$ associated to our refined endoscopic datum an element $\dot{s}_{\gamma_{H}, \delta} \in  [\widehat{\overline{S}}]^{+}.$ Then we set 
\begin{equation}\label{absolutetransfer}
\Delta[\dot{\mathfrak{e}}, \mathfrak{z}, \psi, (\mathscr{T}, \bar{h}) ]_{\text{abs}}(\gamma_{\mathfrak{z}}, \delta') := \Delta[\mathfrak{e}, \mathfrak{z}]_{\text{abs}}(\gamma_{\mathfrak{z}}, \delta) \cdot \langle \text{inv}(\delta, \dot{\delta}'), \dot{s}_{\gamma_{H}, \delta} \rangle^{-1},
\end{equation}
where the pairing $\langle -, - \rangle$ is as in Corollary \ref{pairing} with $G = S$. 

It is clear that we could have replaced $(\xi, \mathscr{T}, \bar{h})$ with any $a$-normalized $Z$-rigid inner twist $(\xi, (y, \phi^{*}))$ in its isomorphism class from the start, and defined the transfer factor using the invariant of the corresponding class of  $(G,\xi, (y, \phi^{*})), \delta')$ in $\mathcal{C}_{Z}(\delta,a)$. The last main goal of this paper will be to prove that \eqref{absolutetransfer} defines an absolute transfer factor on $G$.  In light of the above discussion, it is enough to work entirely with $a$-normalized $Z$-rigid inner twists for some fixed choice of $a \in u(U_{2})$ with $[a] = -1$. In this context, $\dot{\delta}'$ will denote the element $(G, \xi, (y, \phi^{*}), \delta') \in \mathcal{C}_{Z}(\delta, a)$, and we denote the function from \eqref{absolutetransfer} by $\Delta[\dot{\mathfrak{e}}, \mathfrak{z}, \psi, (y, \phi^{*}) ]$.

Before we prove this, we discuss the dependency of this factor on $Z$. Let $Z'$ be another finite central $F$-subgroup of $G$ which contains $F$, viewed also as a finite central $F$-subgroup of $G^{*}$. We denote by $(y, (\phi')^{*}) \in Z^{1}(F, Z' \to G)$ the image of $(y,\phi^{*})$ under the natural inclusion map, so that $(\phi^{*})'$ is $\phi^{*} \colon u \to Z$ post-composed with the inclusion map, defining a $Z'$-rigid inner twist $(\xi, (y, (\phi')^{*}))$. As with $Z$, we have a canonical $F$-embedding $Z' \hookrightarrow H$ which commutes with our embedding of $Z$ and the inclusion map, and we set $\overline{H}' := H/Z'$. Now we have an isogeny $\overline{H} \to \overline{H}'$ which dualizes to an isogeny $\widehat{\overline{H}'} \to \widehat{\overline{H}}$, inducing a canonical surjection $Z(\widehat{\overline{H}'})^{+} \to Z(\widehat{\overline{H}})^{+}.$ Choose a preimage $\ddot{s}$ in $Z(\widehat{\overline{H}'})^{+}$ of $\dot{s}$, giving a refined endoscopic datum $\ddot{\mathfrak{e}} := (H, \mathcal{H}, \ddot{s}, \eta)$. Note that the point $\ddot{\delta'} := (G, \xi, (y, (\phi')^{*}), \delta')$ equals $\iota_{Z,Z'}(\dot{\delta}') \in \mathcal{C}_{Z'}(\delta)$. As we discussed in \S 7.1, we then have that $$\text{inv}(\delta, \iota_{Z, Z'}(\dot{\delta'})) = i (\text{inv}(\delta, \dot{\delta}'))$$ in $H^{1}(\gerbeE_{a}, Z' \to S)$, where $i$ is the natural map $H^{1}(\gerbeE_{a}, Z \to S) \to H^{1}(\gerbeE_{a}, Z' \to S)$. One checks easily that $\ddot{s}_{\gamma_{H}, \delta}$ maps to $\dot{s}_{\gamma_{H}, \delta}$ under the dual surjection $\widehat{\overline{S}'} \to \widehat{\overline{S}}$. The functoriality of the pairing from Corollary \ref{pairing} then gives us that $$\langle  i (\text{inv}(\delta, \dot{\delta})), \ddot{s}_{\gamma_{H}, \delta} \rangle = \langle \text{inv}(\delta, \dot{\delta}), \dot{s}_{\gamma_{H}, \delta} \rangle.$$ Since this factor is the only part of $\Delta[\ddot{\mathfrak{e}}, \mathfrak{z}, \psi, (y, (\phi')^{*})]_{\text{abs}}$ that depends on $Z$, we see that 

\begin{equation}\label{changeZ} \Delta[\dot{\mathfrak{e}}, \mathfrak{z}, \psi, (y,\phi^{*})]_{\text{abs}}(\gamma_{\mathfrak{z}}, \delta') = \Delta[\ddot{\mathfrak{e}}, \mathfrak{z}, \psi, (y,(\phi')^{*}))]_{\text{abs}}(\gamma_{\mathfrak{z}}, \delta').
\end{equation}

\begin{prop} The value of $\Delta[\dot{\mathfrak{e}}, \mathfrak{z}, \psi, (y,\phi^{*})]_{\text{abs}}(\gamma_{\mathfrak{z}}, \delta')$ does not depend on the choice of $\delta$, and the function $\Delta[\dot{\mathfrak{e}}, \mathfrak{z}, \psi, (y,\phi^{*})]_{\text{abs}}$ is an absolute transfer factor. Moreover, this function does not change if we replace $\dot{\mathfrak{e}}$ by an equivalent refined endoscopic datum, or if we replace $(G, \xi, (y,\phi^{*}))$ by an isomorphic ($a$-normalized) $Z$-rigid inner twist of $G^{*}$. 
\end{prop}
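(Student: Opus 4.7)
My plan is to follow the strategy of the analogous result Proposition 5.6 in \cite{Tasho}, making the necessary substitutions to work with $a$-twisted cocycles and the pairing from Corollary \ref{pairing} in place of Galois cocycles and the pairing of \cite{Kott86}. Throughout, I identify $\mathcal{C}_{Z}(\delta) = \mathcal{C}_{Z}(\delta,a)$ as explained just before the proposition, and I fix the admissible embedding $\phi_{\gamma_{H},\delta}$ (and its analogue for any other choice $\delta_{1}$) as in the construction.

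The plan is to address the four claims in sequence, with claim (1) being the most delicate. First I would prove independence from the choice of $\delta$. If $\delta_{1} \in G^{*}(F)$ is another element such that $\dot{\delta}' \in \mathcal{C}_{Z}(\delta_{1},a)$, then $\delta_{1}$ and $\delta$ are stably conjugate in $G^{*}$, so there is $g \in G^{*}(F^{s})$ with $g\delta g^{-1} = \delta_{1}$. Conjugation by $g$ gives an $F$-isomorphism $S \to S_{1} := Z_{G^{*}}(\delta_{1})$ sending $\gamma_{H}$-datum to $\gamma_{H}$-datum, so that $\phi_{\gamma_{H},\delta_{1}} = \mathrm{Ad}(g) \circ \phi_{\gamma_{H},\delta}$. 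On the dual side this transports $\dot{s}_{\gamma_{H},\delta}$ to $\dot{s}_{\gamma_{H},\delta_{1}}$, while the invariants $\mathrm{inv}(\delta,\dot{\delta}')$ and $\mathrm{inv}(\delta_{1},\dot{\delta}')$ are related by the cocycle $p_{1}(g)^{-1}p_{2}(g) \in S(\bar{F}\otimes_{F}\bar{F})$, which lies in the image of the inflation map from $H^{1}(F,S)$. At the same time the relative factor $\Delta[\mathfrak{e},\mathfrak{z}](\gamma_{\mathfrak{z},1},\delta;\gamma_{\mathfrak{z},2},\delta_{1})$ in the quasi-split case (which is $\Delta_{0}(\gamma_{\mathfrak{z}},\delta)/\Delta_{0}(\gamma_{\mathfrak{z}},\delta_{1})$ by the discussion in \S 6) is equal to the Tate-Nakayama pairing of $[g^{-1}\sigma(g)]$ against the image of $\dot{s}$ in $\pi_{0}(\widehat{S}_{\mathrm{ad}}^{\Gamma})$, by the $\Delta_{III_{1}} = \Delta_{1}$ computation in \S 6.3.3. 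Compatibility of the Tate-Nakayama pairing of Remark \ref{altpairing} with the pairing of Corollary \ref{pairing} under the inflation $H^{1}(F,S) \hookrightarrow H^{1}(\gerbeE,Z\to S)$ then makes the two changes cancel, and yields independence.

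Next I would establish claim (2). By construction, $\Delta[\dot{\mathfrak{e}},\mathfrak{z},\psi,(y,\phi^{*})]_{\text{abs}}(\gamma_{\mathfrak{z}},\delta')$ is always nonzero on related pairs (the quasi-split absolute factor is, and the pairing value is a root of unity). For the multiplicative relation, take two pairs $(\gamma_{\mathfrak{z},i},\delta_{i}')$ and choose $\delta_{1},\delta_{2}\in G^{*}(F)$ with $\dot{\delta}_{i}' \in \mathcal{C}_{Z}(\delta_{i},a)$; set $T_{i} := Z_{G^{*}}(\delta_{i})_{\mathrm{sc}}$. The ratio of absolute factors equals $\Delta_{0}[\mathfrak{e},\mathfrak{z}](\gamma_{\mathfrak{z},1},\delta_{1})/\Delta_{0}[\mathfrak{e},\mathfrak{z}](\gamma_{\mathfrak{z},2},\delta_{2})$ times the ratio of the two pairings with $\dot{s}_{\gamma_{H},\delta_{i}}$. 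The first ratio supplies the $\Delta_{I}\Delta_{II}\Delta_{III_{2}}\Delta_{IV}$ contributions to $\Delta[\mathfrak{e},\mathfrak{z},\psi](\gamma_{\mathfrak{z},1},\delta_{1}';\gamma_{\mathfrak{z},2},\delta_{2}')$; what remains is to identify the product of the $\Delta_{1}$-factor in the quasi-split case with the inverse of the pairing ratio, together with the image of the two invariants $\mathrm{inv}(\delta_{i},\dot{\delta}_{i}') \in H^{1}(\gerbeE,Z\to T_{i})$ in the torus $U(T_{1},T_{2})$ from \S 6.3.3, to recover $\Delta_{III_{1}}(\gamma_{\mathfrak{z},1},\delta_{1}';\gamma_{\mathfrak{z},2},\delta_{2}')$. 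This matches the Galois-cohomological computation in \cite{Tasho}, carried out verbatim in $\check{H}^{1}(\bar{F}/F,-)$, using Corollary \ref{pairing} and the alternative form of Tate-Nakayama duality from Remark \ref{altpairing}; the main obstacle here, and the reason this is the technical heart of the proof, is matching the dualities and signs between the pairing on $\widehat{U}$ that enters $\Delta_{III_{1}}$ and the pairing on $Z(\widehat{\overline{S}})^{+}$ that enters the invariant term, which is done by using the canonical embedding $Z(\widehat{\overline{H}})^{+} \hookrightarrow [\widehat{\overline{S}}]^{+}$ and functoriality.

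Finally, for (3) and (4) I would argue as follows. For (3), if $\dot{g} \in \widehat{\overline{G}}(\mathbb{C})$ provides an isomorphism $(\dot{\mathfrak{e}},\mathfrak{z}) \to (\dot{\mathfrak{e}}',\mathfrak{z}')$, then the image $\bar{\beta}(\dot{s})$ of $\dot{s}$ in $Z(\widehat{\overline{H'}})^{+}$ coincides modulo $(Z(\widehat{\overline{H'}})^{+})^{\circ}$ with $\dot{s}'$. By functoriality of the embedding $Z(\widehat{\overline{H}})^{+} \hookrightarrow [\widehat{\overline{S}}]^{+}$ in admissible embeddings this transports $\dot{s}_{\gamma_{H},\delta}$ to $\dot{s}'_{\gamma_{H'},\delta}$ up to $([\widehat{\overline{S}}]^{+})^{\circ}$, against which the invariant pairs trivially; combined with the known equivalence-invariance of the relative transfer factor on endoscopic data, this gives claim (3). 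For (4), an isomorphism $(f,\delta_{0})$ of $a$-normalized $Z$-rigid inner twists of $G^{*}$ sending $\delta'$ to $f(\delta')$ allows one to write $f(\delta') = \psi^{-1}(g_{0}\delta g_{0}^{-1})$ for a single $g_{0}\in G^{*}(F^{s})$; by the very definition of the equivalence relation in the proof of Proposition \ref{correspondence1}, the invariant $\mathrm{inv}(\delta,\dot{\delta}')$ is unchanged, and the remaining factors in $\Delta[\mathfrak{e},\mathfrak{z}]_{\text{abs}}(\gamma_{\mathfrak{z}},\delta)$ depend only on $\delta$, not on the rigid inner twist, giving (4). Combined with relation \eqref{changeZ}, this also shows the function is canonically associated to the underlying rigid inner twist class.
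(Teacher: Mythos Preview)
Your overall strategy matches the paper's, and your treatment of claims (1), (3), and (4) is essentially what the paper does (the paper in fact says less than you do for (1), simply citing that the argument in \cite{Tasho} carries over since everything can be done with $F^{s}$-points). The gap is in your handling of claim (2), the absolute transfer factor identity.

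You set $T_{i} := Z_{G^{*}}(\delta_{i})_{\text{sc}}$ and then write $\text{inv}(\delta_{i},\dot{\delta}_{i}') \in H^{1}(\gerbeE, Z \to T_{i})$, intending to push both invariants into $U(T_{1},T_{2})$. But the invariant actually lives in $H^{1}(\gerbeE, Z \to S_{i})$ with $S_{i} = Z_{G^{*}}(\delta_{i})$ the full maximal torus; $Z$ is central in $G^{*}$, not in $G^{*}_{\text{sc}}$, so there is no map $[Z \to S_{i}] \to [Z \to (S_{i})_{\text{sc}}]$ in $\mathcal{T}$ and hence no way to land in $U = ((S_{1})_{\text{sc}} \times (S_{2})_{\text{sc}})/Z_{\text{sc}}$. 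The paper's key maneuver, which your sketch is missing, is to introduce the larger torus
\[
V := \frac{S_{1} \times S_{2}}{Z(G^{*})},
\]
map the pair $(\text{inv}(\delta_{1},\dot{\delta}_{1}')^{-1}, \text{inv}(\delta_{2},\dot{\delta}_{2}'))$ from $H^{1}(\gerbeE, Z \times Z \to S_{1} \times S_{2})$ to $H^{1}(\gerbeE, (Z \times Z)/Z \to V)$, and observe via the inflation--restriction sequence that the image actually lies in $H^{1}(F, V)$ (the two restrictions to $\Hom_{F}(u,Z)$ are $-\phi^{*}$ and $\phi^{*}$ and cancel). One then compares with the $\Delta_{III_{1}}$-invariant via the natural map $U \to V$, and finishes by producing an element of $[\widehat{\overline{V}}]^{+}$ that maps simultaneously to $(\dot{s}_{\gamma_{1},\delta_{1}}, \dot{s}_{\gamma_{2},\delta_{2}})$ in $[\widehat{\overline{S_{1}}}]^{+} \times [\widehat{\overline{S_{2}}}]^{+}$ and to $s_{U}$ in $\widehat{U}^{\Gamma}$, so that functoriality of the pairing in Corollary \ref{pairing} gives the identity.

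Two further points. First, to make the $U \to V$ comparison go through at the cochain level the paper factors $y = \bar{u} \cdot z$ with $\bar{u} \in \mathscr{D}(G^{*})(\bar{F}\otimes_{F}\bar{F})$ and $z \in Z(G^{*})(\bar{F}\otimes_{F}\bar{F})$; this uses \eqref{changeZ} at the \emph{start} of the argument to enlarge $Z$ so that $Z \supset Z(\mathscr{D}(G))$, not at the end as you suggest. Second, in the quasi-split denominator the paper chooses the admissible embeddings $S_{i}^{H} \hookrightarrow G^{*}$ to be exactly $\phi_{\gamma_{i},\delta_{i}}$, so that $\text{inv}(\gamma_{i},\delta_{i}) = 0$ and $\Delta_{1}(\gamma_{1},\delta_{1};\gamma_{2},\delta_{2}) = 1$; there is no residual $\Delta_{1}$-contribution to absorb as your sketch anticipates.
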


\begin{proof} We follow the proof of \cite[Proposition 5.6]{Tasho}. For the independence of the choice of $\delta$ let $\delta_{0} \in G^{*}_{\text{sr}}(F)$ be another element such that  $(G, \xi, (y, \phi^{*}), \delta') \in \mathcal{C}_{Z}(\delta_{0})$ and $\text{Ad}(g') \circ \psi$, for some $g' \in G^{*}(F^{s})$, induces an $F$-isomorphism $Z_{G}(\delta') \to Z_{G^{*}}(\delta_{0})$. By taking the composition $(\text{Ad}(g') \circ \psi) \circ (\psi^{-1} \circ \text{Ad}(g^{-1}))$, we see that $\delta$ and $\delta_{0}$ are conjugate by an element $c \in \mathfrak{A}(S) \subset G^{*}(F^{s})$, notation as in \S 6. Similarly, the element realizing the stable conjugacy of $\delta$ and $\delta'$ may be chosen to lie in $G^{*}(F^{s})$. From here, the same argument used in \cite{Tasho} for the corresponding part of the proof of Proposition 5.6 works in our setting---we can still use Galois cohomology and our analysis of the local transfer factor in \S 6 lines up exactly with that of \cite[\S 3]{LS}.

As is remarked in \cite{Tasho}, invariance under isomorphisms of rigid inner twists is immediate from the fact that $\text{inv}(\delta, \dot{\delta}')$ depends only on the isomorphism class of $\dot{\delta}'$ in $\mathcal{C}_{Z}(\delta)$. Similar to our justification of the fact that our function is independent of choice of $\delta$, our discussion in \S 6 can be substituted for \S 3 of \cite{LS} and then the corresponding argument in \cite{Tasho}, Proposition 5.6 carries over verbatim to show that our function is invariant under isomorphisms of refined endoscopic data. 

The only work we need to do here is to show that $\Delta[\dot{\mathfrak{e}}, \mathfrak{z}, \psi, (y,\phi^{*})]_{\text{abs}}$ is indeed an absolute transfer factor. This means that we need to show that $$ \Delta[\dot{\mathfrak{e}}, \mathfrak{z}, \psi, (y,\phi^{*})]_{\text{abs}}(\gamma_{\mathfrak{z},1}, \delta_{1}') \cdot  \Delta[\dot{\mathfrak{e}}, \mathfrak{z}, \psi, (y,\phi^{*})]_{\text{abs}}(\gamma_{\mathfrak{z},2}, \delta_{2}')^{-1} = \Delta[\mathfrak{e}, \mathfrak{z}, \psi](\gamma_{\mathfrak{z},1}, \delta_{1}' ; \gamma_{\mathfrak{z},2}, \delta_{2}').$$
We emphasize that we still follow the corresponding argument in \cite{Tasho}, Proposition 5.6, closely.  Replacing $\dot{\mathfrak{e}}$ by an appropriate refined endoscopic datum as in our construction of $\ddot{\mathfrak{e}}$ above, we may assume, using the identity \eqref{changeZ}, that $Z$ contains $Z(\mathscr{D}(G))$. Choose $\delta_{1}, \delta_{2} \in G^{*}(F)$ which are stably conjugate to $\delta_{1}', \delta_{2}'$. It's enough to show that $$\frac{\langle \text{inv}(\delta_{1}, \dot{\delta_{1}}'), \dot{s}_{\gamma_{1}, \delta_{1}} \rangle^{-1}}{\langle \text{inv}(\delta_{2}, \dot{\delta_{2}}'), \dot{s}_{\gamma_{2}, \delta_{2}} \rangle^{-1}} = \frac{\Delta[\mathfrak{e}, \mathfrak{z}, \psi](\gamma_{\mathfrak{z},1}, \delta_{1}' ; \gamma_{\mathfrak{z},2}, \delta_{2}')}{\Delta[\mathfrak{e}, \mathfrak{z}](\gamma_{\mathfrak{z},1}, \delta_{1}; \gamma_{\mathfrak{z},2}, \delta_{2})},$$ where we are using $\gamma_{i}$ to denote the image of $\gamma_{\mathfrak{z},i}$ in $H_{G-\text{sr}}(F)$. To simplify the right-hand side, note that in the definition of the bottom factor, we may choose our admissible embeddings $Z_{H}(\gamma_{i}) \hookrightarrow G^{*}$ to be the unique ones from  $Z_{H}(\gamma_{i})$ to $G^{*}$ that map $\gamma_{i}$ to $\delta_{i}$. Then, as in the definition of the factor $\Delta_{1}$ in the quasi-split case (see 6.3.3), we have that $\gamma_{G^{*}} = \gamma$, and hence we can take $h=\text{id}$ and so $\text{inv}(\gamma_{i}, \delta_{i}) = 0 \in H^{1}(F, Z_{G^{*}}(\delta_{i}))$, giving $\Delta_{1}(\gamma_{1}, \delta_{1}; \gamma_{2}, \delta_{2}) = 1$. All of the $\Delta_{I}, \Delta_{II}, \Delta_{III_{2}}$, and $\Delta_{IV}$ factors of the numerator and denominator of the right-hand side coincide, and so all we're left with is 
\begin{equation}
\Delta_{III_{1}}(\gamma_{1}, \delta_{1}' ; \gamma_{2}, \delta_{2}') := \langle \text{inv} \left( \frac{\gamma_{1}, \delta_{1}'}{\gamma_{2}, \delta_{2}'} \right ), \textbf{s}_{U} \rangle,
\end{equation}
where all the notation is as defined in 6.3.3.

Set $Z_{H}(\gamma_{i}) := S^{H}_{i}$, $Z_{G}(\delta_{i}') := S_{i}'$, and $Z_{G^{*}}(\delta_{i}):= S_{i}$; these are all maximal $F$-tori. Set $$V:= \frac{S_{1} \times S_{2}}{Z(G^{*})},$$ where $Z(G^{*}) \hookrightarrow S_{1} \times S_{2}$ via $i^{-1} \times j$ . The homomorphism $S_{1} \times S_{2} \to V$ defines a morphism $[Z \times Z \to S_{1} \times S_{2}] \to [(Z \times Z)/Z \to V]$ in the category $\mathcal{T}$. We claim that the image in $H^{1}(\gerbeE_{a}, (Z \times Z)/Z \to V)$ of the element
$$(\text{inv}(\delta_{1}, \dot{\delta_{1}}')^{-1}, \text{inv}(\delta_{2}, \dot{\delta_{2}}')) \in H^{1}(\gerbeE_{a}, Z \times Z \to S_{1} \times S_{2}),$$ where $\text{inv}(\delta_{i}, \dot{\delta_{i}}')$ is defined as in \S 7.1, lies inside $H^{1}(F, V)$ (embedded in $H^{1}(\gerbeE_{a}, (Z \times Z)/Z \to V)$  via the ``inflation" map).

It is clear that the restriction maps $H^{1}(\gerbeE_{a}, Z \to S_{i}) \to \Hom_{F}(u,Z)$ factor as a composition of the maps $H^{1}(\gerbeE_{a}, Z \to S_{i}) \to H^{1}(\gerbeE_{a}, Z \to G^{*})$ and $H^{1}(\gerbeE_{a}, Z \to G^{*}) \xrightarrow{\text{Res}} \Hom_{F}(u, Z)$. Moreover, the image of $\text{inv}(\delta_{i}, \delta_{i}')$ in $H^{1}(\gerbeE_{a}, Z \to G^{*})$ is the class of the twisted $a$-cocycle $(p_{1}^{\sharp}(g_{i})yp_{2}^{\sharp}(g_{i})^{-1}, \phi^{*})$, where $g_{i} \in G^{*}(\bar{F})$ is such that $\text{Ad}(g_{i})\psi(\delta_{i}') = \delta_{i}$, which is just the class of the twisted cocycle $(y, \phi^{*}) \in Z^{1}(\gerbeE_{a}, Z \to G)$. This means that the image of $(\text{inv}(\delta_{1}, \dot{\delta_{1}}')^{-1}, \text{inv}(\delta_{2}, \dot{\delta_{2}}'))$ in $\Hom_{F}(u, Z \times Z) = \Hom_{F}(u,Z) \times \Hom_{F}(u,Z)$ equals $(\text{Res}((y, \phi^{*}))^{-1}, \text{Res}((y, \phi^{*}))) = (-\phi^{*}, \phi^{*})$ which is zero in $\Hom_{F}(u, (Z \times Z)/Z)$. Whence, the exact sequence $$H^{1}(F, V) \to H^{1}(\gerbeE_{a}, (Z \times Z)/Z \to V) \to \Hom_{F}(u, Z)$$ gives the claim.
 
 Recall from \S 6.3.3 that $U:= ((S_{1})_{\text{\text{sc}}} \times (S_{2})_{\text{\text{sc}}})/Z_{\text{\text{sc}}}$ where $Z_{\text{\text{sc}}}$ embeds via $i^{-1} \times j$ (here we are taking our admissible embeddings $S^{H}_{i} \to G^{*}$ to be the unique ones that send $\gamma_{i}$ to $\delta_{i}$); there is an obvious homomorphism $U \to V$. We now claim that the image of $\text{inv}(\frac{\gamma_{1}, \delta_{1}'}{\gamma_{2}, \delta_{2}'}) \in H^{1}(F, U)$ in $H^{1}(F, V)$ coincides with the image of $(\text{inv}(\delta_{1}, \dot{\delta_{1}}')^{-1}, \text{inv}(\delta_{2}, \dot{\delta_{2}}'))$. From the rigidifying element $(y, \phi^{*}) \in Z^{1}(\gerbeE_{a}, Z \to G^{*})$, $y \in G^{*}(\bar{F} \otimes_{F} \bar{F})$, $\psi^{-1} \colon Z \to G^{*}$, we extract the \v{C}ech 1-cochain $y$, which we will factor as $\bar{u} \cdot z$ with $\bar{u} \in \mathscr{D}(G^{*})(\bar{F} \otimes_{F} \bar{F})$ and $z \in Z(G^{*})(\bar{F} \otimes_{F} \bar{F})$; we can do this because the central isogeny decomposition for $G^{*}$ is surjective on $\bar{F} \otimes_{F} \bar{F}$-points, owing to the fact that $H^{1}(\bar{F} \otimes_{F} \bar{F}, Z(\mathscr{D}(G^{*}))) = 0$. Let $u \in G^{*}_{\text{\text{sc}}}(\bar{F} \otimes_{F} \bar{F})$ be a lift of $\bar{u}$. By construction (see 6.3.3, using the fact that $\text{Ad}(u) = \text{Ad}(\bar{u}) = \text{Ad}(y) = p_{1}^{*}\psi \circ p_{2}^{*}\psi^{-1}$ on $G^{*}_{\bar{F} \otimes_{F} \bar{F}}$), we have the equality 
 \begin{equation}
\text{inv} \left( \frac{\gamma_{1}, \delta_{1}'}{\gamma_{2}, \delta_{2}'} \right ) = ([p_{1}^{\sharp}(g_{1})up_{2}^{\sharp}(g_{1})^{-1}]^{-1}, p_{1}^{\sharp}(g_{2})up_{2}^{\sharp}(g_{2})^{-1}) \in U(\bar{F} \otimes_{F} \bar{F}),
\end{equation}
whose image in $V(\bar{F} \otimes_{F} \bar{F})$ coincides with the image of $([p_{1}^{\sharp}(g_{1})yp_{2}^{\sharp}(g_{1})^{-1}]^{-1}, p_{1}^{\sharp}(g_{2})yp_{2}^{\sharp}(g_{2})^{-1})$, because, by design, $y = \bar{u} \cdot z$ for $z \in Z(G^{*})(\bar{F} \otimes_{F} \bar{F})$. 
This gives the claim.

%
Suppose there is some $v \in [\widehat{\overline{V}}]^{+}$ whose image in $[\widehat{\overline{S_{1}}}]^{+} \times [\widehat{\overline{S_{2}}}]^{+}$ via the map $\widehat{\overline{V}} \to \widehat{\overline{S_{1}}} \times \widehat{\overline{S_{2}}}$ dual to the projection map $\overline{S_{1}} \times \overline{S_{2}} \to \overline{V}$, where $\overline{V} := \frac{V}{(Z \times Z)/Z},$ is equal to $(\dot{s}_{\gamma_{1}, \delta_{1}}, \dot{s}_{\gamma_{2}, \delta_{2}})$. Additionally suppose that the image of this same $v$  in $[\widehat{\overline{U}}]^{+}$ maps to $s_{U}$ under the isogeny $[\widehat{\overline{U}}]^{+} \to \widehat{U}^{\Gamma}$, where $\overline{U}$ is formed from the object $[Z(G^{*}_{\text{\text{sc}}}) \to U] \in \mathcal{T}$.

Then, given this $v$, the equality \begin{equation}
\frac{\langle \text{inv}(\delta_{1}, \dot{\delta_{1}}'), \dot{s}_{\gamma_{1}, \delta_{1}} \rangle^{-1}}{\langle \text{inv}(\delta_{2}, \dot{\delta_{2}}'), \dot{s}_{\gamma_{2}, \delta_{2}} \rangle^{-1}} = \langle \text{inv} \left( \frac{\gamma_{1}, \delta_{1}'}{\gamma_{2}, \delta_{2}'} \right ), \textbf{s}_{U} \rangle, 
\end{equation} follows from the diagram

\[
\begin{tikzcd}[column sep=0.05mm]
\text{inv} \left( \frac{\gamma_{1}, \delta_{1}'}{\gamma_{2}, \delta_{2}'} \right ) \in H^{1}(F,U) \arrow[mapsto]{d}  & s_{U} \in \widehat{U}^{\Gamma}  \\
\pi( (\text{inv}(\delta_{1}, \dot{\delta_{1}'})^{-1}, \text{inv}(\delta_{2}, \dot{\delta_{2}}))) \in H^{1}(F,V)  & v \in [\widehat{\overline{V}}]^{+} \arrow[mapsto]{u}  \arrow[mapsto]{d} \\
(\text{inv}(\delta_{1}, \dot{\delta_{1}}')^{-1}, \text{inv}(\delta_{2}, \dot{\delta_{2}}')) \in H^{1}(\gerbeE_{a}, Z \times Z \to S_{1} \times S_{2}) \arrow[mapsto, "\pi"]{u}  & (\dot{s}_{\gamma_{1}, \delta_{1}}, \dot{s}_{\gamma_{2}, \delta_{2}}) \in [\widehat{\overline{S_{1}}}]^{+} \times [\widehat{\overline{S_{2}}}]^{+},
\end{tikzcd}
\]
where the top pair of elements are the inputs of the pairing in the right-hand side of our main desired equality, the bottom pair of elements are the inputs of the pairing in the left-hand side of that equality, and by functoriality their pairings both equal the pairing of the two elements in the middle line.

The argument for the fact that we can find such an element $v$ of $[\widehat{\overline{V}}]^{+}$ is identical to the corresponding argument in \cite{Tasho}, proof of Proposition 5.6, finishing the proof of our result.
\end{proof}

\subsection{The Langlands conjectures} We now use our constructions to discuss the Langlands correspondence for an arbitrary connected reductive group defined over a local function field $F$. This section is a summary of \cite[\S 5.4]{Tasho}. 

Let $G^{*}$ be a connected, reductive, and quasi-split group over $F$ with finite central $F$-subgroup $Z$ which is an inner form of our fixed arbitrary connected reductive group $G$. Fix a \textit{Whittaker datum} $\mathfrak{w}$ for $G^{*}$, which recall is a $G^{*}(F)$-conjugacy class of pairs $(B, \zeta_{B})$ consisting of an $F$-Borel subgroup $B \subset G^{*}$ and a non-degenerate character $\zeta_{B} \colon B_{u}(F) \to \mathbb{C}^{*}$, where the subscript $u$ denotes the unipotent radical. We may view the group $Z$ as a finite central $F$-subgroup of $G$, also denoted by $Z$, with $\overline{G} := G/Z$ as before.

\begin{Def} Given a quasi-split connected reductive group $G^{*}$ over $F$, we write $\Pi^{\text{rig}}(G^{*})$ for the set of isomorphism classes of irreducible admissible representations of rigid inner twists of $G^{*}$ (see Definition \ref{representation}). Define the subset $\Pi_{\text{temp}}^{\text{rig}}(G^{*})$ to be those representations which are tempered. 
\end{Def}

Let $\varphi \colon W_{F}' \to \prescript{L}{}{G}$ be a \textit{tempered Langlands parameter}, which means that it's a homomorphism of $W_{F}$-extensions that is continuous on $W_{F}$, restricts to a morphism of algebraic groups on $SL_{2}(\mathbb{C})$, and sends $W_{F}$ to a set of semisimple elements of $\prescript{L}{}{G}$ that project onto a bounded subset of $\widehat{G}(\mathbb{C})$. Setting $S_{\varphi} = Z_{\widehat{G}}(\varphi)$, and $S_{\varphi}^{+}$ its preimage in $\widehat{\overline{G}}$, we have an inclusion $Z(\widehat{\overline{G}})^{+} \subset S_{\varphi}^{+}$ which induces a map $\pi_{0}(Z(\widehat{\overline{G}})^{+}) \to \pi_{0}(S_{\varphi}^{+})$ with central image. Denote by $\text{Irr}(\pi_{0}(S_{\varphi}^{+}))$ the set of irreducible representations of the finite group $\pi_{0}(S_{\varphi}^{+})$.

\begin{conj} There is a finite subset $\Pi_{\varphi} \subset \Pi_{\text{temp}}^{\text{rig}}(G^{*})$ and a commutative diagram 
\[
\begin{tikzcd}
\Pi_{\varphi} \arrow["\iota_{\mathfrak{w}}"]{r} \arrow{d} & \text{Irr}(\pi_{0}(S_{\varphi}^{+})) \arrow{d} \\
H^{1}(\gerbeE, Z \to G^{*}) \arrow{r} & \pi_{0}(Z(\widehat{\overline{G}})^{+})^{*}
\end{tikzcd}
\]
in which the top map is a bijection, the bottom map is given by the pairing of Corollary \ref{pairing}, the right map assigns to each irreducible representation the restriction of its central character to $\pi_{0}(Z(\widehat{\overline{G}}))^{+}$ , and the left map sends a representation $(G_{1}, \xi_{1}, \mathscr{T}_{1}, \bar{h}, \pi)$ to the class of $\mathscr{T}_{1}$. We also expect that there is a unique element $(G, \text{id}_{G}, \GE, \text{id}_{\bar{F}}, \pi)$ of $\Pi_{\varphi}$ such that $\pi$ is $\mathfrak{w}$-generic and the map $\iota_{\mathfrak{w}}$ identifies this element with the trivial irreducible representation, see \cite[\S 9]{Shahidi}.
\end{conj}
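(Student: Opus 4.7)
The plan is to deduce the conjecture from the tempered local Langlands correspondence for the quasi-split inner form $G^{*}$, combined with endoscopic character identities made possible by the absolute transfer factor $\Delta[\dot{\mathfrak{e}},\mathfrak{z},\psi,(y,\phi^{*})]_{\text{abs}}$ constructed in Section 7.2. I would start by taking as input the expected LLC on $G^{*}(F)$: tempered $L$-packets $\Pi_{\varphi}(G^{*})\subset\Pi_{\text{temp}}(G^{*})$ together with a bijection onto $\text{Irr}(\pi_{0}(S_{\varphi}))$ normalized so that the $\mathfrak{w}$-generic representation corresponds to the trivial character, as predicted by Shahidi's conjecture for $G^{*}$.

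Given this input, the definition of $\Pi_{\varphi}$ would be forced by endoscopy. For each semisimple $\dot{s}\in S_{\varphi}^{+}(\mathbb{C})$ one obtains a refined endoscopic datum $\dot{\mathfrak{e}}$ together with a $z$-pair $\mathfrak{z}$, a tempered parameter $\varphi^{H_{\mathfrak{z}}}\colon W_{F}'\to {}^{L}H_{\mathfrak{z}}$ factoring $\varphi$ through $\eta\circ\eta_{\mathfrak{z}}$, and a well-defined transfer of stable virtual characters $\text{Trans}_{\dot{\mathfrak{e}}}^{G^{*}}$ from $H_{\mathfrak{z}}(F)$ to virtual characters on the inner forms of $G^{*}$. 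The set $\Pi_{\varphi}$ is then defined (as a union over all $Z$-rigid inner twists of $G^{*}$) as the representations whose characters, weighted by $\iota_{\mathfrak{w}}(\dot\pi)(\dot{s})$ in $\text{Irr}(\pi_{0}(S_{\varphi}^{+}))$, assemble into the $\dot{s}$-stable virtual characters matched to $\text{Trans}_{\dot{\mathfrak{e}}}^{G^{*}}$ under the transfer factor. The bijectivity of $\iota_{\mathfrak{w}}$ is extracted from the non-degeneracy of the finite abelian Fourier transform on $\pi_{0}(S_{\varphi}^{+})$ applied to these identities, exactly as in \cite{Tasho}, \S 5.4.

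The commutativity of the square is then essentially tautological once the pairing $H^{1}(\gerbeE,Z\to G^{*})\times\pi_{0}(Z(\widehat{\overline{G^{*}}})^{+})\to\mathbb{C}^{*}$ of Corollary \ref{pairing} is known to extend the Kottwitz pairing of Theorem \ref{kottwitzpairing}. Indeed, an element of $\Pi_{\varphi}$ has the form $(G_{1},\xi_{1},\mathscr{T}_{1},\bar{h}_{1},\pi)$; the left vertical map records $[\mathscr{T}_{1}]$, while the central character of $\iota_{\mathfrak{w}}(\pi)$ restricted to $\pi_{0}(Z(\widehat{\overline{G^{*}}})^{+})$ is computed, via the inertial action on the underlying $G_{\gerbeE}$-torsor (Lemma \ref{altres}), by the same pairing. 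For tori this is the content of Theorem \ref{tatenak2} together with the compatibility with the classical Kottwitz isomorphism; for general connected reductive $G$ it reduces to the torus case through an elliptic maximal torus by Proposition \ref{tashocor}. The distinguished element $(G,\text{id}_{G},\gerbeE,\text{id},\pi_{\mathfrak{w}\text{-gen}})$ mapping to the trivial character is imposed by demanding compatibility with the analogous statement on $G^{*}$.

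The main obstacle is that this strategy is entirely conditional on the prior existence of tempered $L$-packets and stable characters for the quasi-split group $G^{*}$ over a local function field, which is presently known only in special cases (most notably for classical groups and for tori, via Langlands duality, Theorem \ref{toriduality}). Unconditionally, all that the present paper supplies is the rigid-inner-form framework into which such a correspondence must fit: once $\Pi_{\varphi}(G^{*})$ exists and satisfies the endoscopic character identities on $G^{*}$, the extension to all inner forms of $G$ and the statement of the identities for arbitrary $\dot{s}\in S_{\varphi}^{+}$ follow from the absolute transfer factor of Section 7.2 and the duality of Corollary \ref{pairing}. A full unconditional proof therefore requires proving the tempered LLC for quasi-split reductive groups over $F$, which lies outside the scope of this paper.
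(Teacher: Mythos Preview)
The statement you are attempting to prove is labelled a \emph{Conjecture} in the paper, and the paper offers no proof of it. The entire purpose of Section~7.3 is to use the machinery of rigid inner forms and the pairing of Corollary~\ref{pairing} to \emph{formulate} this conjecture precisely over local function fields; the endoscopic character identities you invoke are stated immediately afterwards as further expectations contingent on the conjecture, not as a mechanism for establishing it. So there is no ``paper's own proof'' against which to compare your proposal.

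Your outline is therefore not a proof but a sketch of how one might hope to construct $\Pi_{\varphi}$ and $\iota_{\mathfrak{w}}$ once enough of the tempered local Langlands correspondence is available as input, and you correctly identify this as conditional. Even as a conditional strategy, however, the logic is somewhat circular: defining $\Pi_{\varphi}$ for rigid inner twists of $G^{*}$ via endoscopic transfer from $H_{\mathfrak{z}}$ presupposes the existence of stable tempered packets and stable characters on $H_{\mathfrak{z}}(F)$, which is an instance of the same conjecture for the (quasi-split) group $H_{\mathfrak{z}}$. One does not obtain the conjecture for all inner forms of $G^{*}$ merely from the LLC for $G^{*}$ itself; one needs it simultaneously for all endoscopic groups arising from $\varphi$, together with the endoscopic character identities on the quasi-split side. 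Your final paragraph essentially concedes this, so the honest summary is that the statement remains open and the paper does not claim otherwise.
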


For $\dot{\pi} := (G_{1}, \xi_{1}, \mathscr{T}_{1}, \bar{h}_{1}, \pi_{1}) \in \Pi_{\varphi}$, denote by $\langle -, \dot{\pi} \rangle$ the conjugation-invariant function on $\pi_{0}(S_{\varphi}^{+})$ given by the trace of the irreducible representation $\iota_{\mathfrak{w}}(\dot{\pi})$. We let $\Theta_{\dot{\pi}}$ denote the distribution on $G_{2}(F)$ for any isomorphic rigid inner twist $(G_{2}, \xi_{2}, \mathscr{T}_{2}, \bar{h}_{2})$ given by transporting the Harish-Chandra character $\Theta_{\pi_{1}}$ associated to $\pi_{1}$ to $G_{2}(F)$ via any choice of isomorphism of rigid inner twists---note that by Corollary \ref{automorphisms2} this distribution does not depend on the choice of isomorphism, justifying the notation. 

\begin{remark} There are no issues in defining the Harish-Chandra character $\Theta_{\pi}$ of an admissible representation $\pi$ for local function fields, see \cite[Appendix A]{Cluckers}, which is all that is needed in the setting of this paper. The difficulty that arises in the function field setting is that it is not known in general if $\Theta_{\pi}$ is representable by a locally integrable function, although this is known under certain assumptions (see \cite{Cluckers}, in particular Theorem 2.2).
\end{remark}

We define the virtual character
\begin{equation}
S \Theta_{\varphi, \text{id}, (\GE, \text{id})} = e(G) \sum_{\dot{\pi} \in \Pi_{\varphi}, \dot{\pi} \mapsto [\mathscr{T}]} \langle 1, \dot{\pi} \rangle \Theta_{\dot{\pi}},
\end{equation}
and for a fixed rigid inner twist $(\xi, \mathscr{T}, \bar{h}) \colon G^{*} \to G$ enriching our inner twist $\psi^{-1} \colon G^{*}_{F^{s}} \xrightarrow{\sim} G_{F^{s}}$, and semisimple $\dot{s} \in S_{\varphi}^{+}(\mathbb{C})$ we define 
\begin{equation}
 \Theta^{\dot{s}}_{\varphi, \mathfrak{w}, \xi, (\mathscr{T}, \bar{h})} = e(G) \sum_{\dot{\pi} \in \Pi_{\varphi}, \dot{\pi} \mapsto [\mathscr{T}]} \langle \dot{s}, \dot{\pi} \rangle \Theta_{\dot{\pi}}.
\end{equation}
Here $e(G)$ denotes the sign defined in \cite{Kott83}; we expect $S \Theta_{\varphi, \text{id}, (\GE, \text{id})}$ to be a stable distribution on $G(F)$, as defined in \cite[I.4]{La83}, and that (as the notation suggests) it is independent of $\mathfrak{w}$.

The element $\dot{s}$ also defines a refined endoscopic datum $\dot{\mathfrak{e}}$ as follows: Let $s \in S_{\varphi}(\mathbb{C})$ be the image of $\dot{s}$, set $\widehat{H} = Z_{\widehat{G}}(s)^{\circ}$, set $\mathcal{H} = \widehat{H}(\mathbb{C}) \cdot \varphi(W_{F})$, and take $\eta \colon \mathcal{H} \to \prescript{L}{}G$ to be the natural inclusion, and define $\dot{\mathfrak{e}} = (H, \mathcal{H}, \eta, \dot{s})$. Take also a $z$-pair $(H_{\mathfrak{z}}, \eta_{\mathfrak{z}})$ corresponding to the endoscopic datum $\mathfrak{e}$ associated to the refined datum $\dot{\mathfrak{e}}$, which induces a tempered Langlands parameter $\varphi_{\mathfrak{z}} := \eta_{\mathfrak{z}} \circ \varphi$. 

Because the above conjectural parametrization depends on a choice of Whittaker datum $\mathfrak{w}$ (and we expect that the distribution $S \Theta_{\varphi, \text{id}, (\GE, \text{id})}$ does not), we need to normalize our transfer factor $\Delta$ to reflect this choice before comparing $S \Theta_{\varphi, \text{id}, (\GE, \text{id})}$ and $\Theta^{\dot{s}}_{\varphi, \mathfrak{w}, \xi, (\mathscr{T}, \bar{h})}$. According to \cite[\S 5.5]{KS2}, we may define a \textit{Whittaker normalization} of the absolute transfer factor for quasi-split groups, denoted by $\Delta^{'}[\mathfrak{e}, \mathfrak{z}, \mathfrak{w}] \colon H_{\mathfrak{z}, G-\text{sr}}(F) \times G^{*}_{\text{sr}}(F) \to \mathbb{C}$ associated to our Whittaker datum $\mathfrak{w}$. We briefly describe this normalization: using the notation of \S 6, we set 
$$\Delta^{'}[\mathfrak{e}, \mathfrak{z}, \mathfrak{w}]  := \epsilon_{L}(V, \psi_{F}) (\Delta_{I} \Delta_{1})^{-1} \Delta_{II} \Delta_{IV},$$ where $\epsilon_{L}(V, \psi_{F})$ is a 4th root of unity associated to a specific  virtual representation $V$ of $\Gamma$ (and thus of $W_{F}$) coming from $\mathfrak{e}$ and $\mathfrak{w}$, together with a choice of additive character $\psi_{F} \colon F \to \mathbb{C}^{*}$; for details, see \cite[\S 5.3]{KS1}. The important takeaway is that the construction of the normalization factor $\epsilon_{L}(V, \psi_{F})$ can be done uniformly for all non-archimedean local fields. One deduces from the arguments in \cite[\S 5.3]{KS1} that this still defines an absolute transfer factor for related strongly regular elements of $H_{\mathfrak{z}}$ and $G^{*}$ which depends only on $\mathfrak{w}$.

As a consequence, we may combine this normalization with our new absolute transfer factor \eqref{absolutetransfer} to obtain a normalized absolute transfer factor for general connected reductive groups over $F$; we use the same notation as in our transfer factor formula \eqref{absolutetransfer}. We then set
\begin{equation}
\Delta^{'}[\dot{\mathfrak{e}}, \mathfrak{z}, \mathfrak{w}, \psi, (\mathscr{T}, \bar{h})](\gamma_{\mathfrak{z}}, \delta') = \Delta^{'}[\mathfrak{e}, \mathfrak{z}, \mathfrak{w}](\gamma_{\mathfrak{z}}, \delta) \langle \text{inv}(\delta, \dot{\delta}'), \dot{s}_{\gamma, \delta} \rangle.
\end{equation}
Note that we have switched the sign of $\langle \text{inv}(\delta, \dot{\delta}'), \dot{s}_{\gamma, \delta} \rangle$ so that our formula agrees with the sign changes in the factors defining $\Delta^{'}[\mathfrak{e}, \mathfrak{z}, \mathfrak{w}].$

Then if $f^{\dot{\mathfrak{e}}}$ and $f$ are smooth compactly supported functions on $H_{\mathfrak{z}}(F)$ and $G(F)$ respectively, whose orbital integrals are $\Delta^{'}[\dot{\mathfrak{e}}, \mathfrak{z}, \mathfrak{w}, \psi, (\mathscr{T}, \bar{h})]$-matching we then expect to have the equality
$$S \Theta_{\varphi_{\mathfrak{z}}, \text{id}, (\GE, \text{id})}(f^{\dot{\mathfrak{e}}}) = \Theta^{\dot{s}}_{\varphi, \mathfrak{w}, \xi, (\mathscr{T}, \bar{h})}(f).$$

\begin{remark} Given $f \in C^{\infty}_{c}(G(F))$, the existence of a $\Delta^{'}[\dot{\mathfrak{e}}, \mathfrak{z}, \mathfrak{w}, \psi, (\mathscr{T}, \bar{h})]$-matching $f^{\dot{\mathfrak{e}}}$ is an open problem (for arbitrary connected reductive $G$). This is in contrast to the $p$-adic case, where the analogous result follows from the fundamental lemma. Nevertheless, this result is expected to be true in our setting as well, and there are several situations in which it has already been proved. For example, the case of unitary $G$ follows from the results of \cite{Yun} for sufficiently large characteristic. In addition, it has been proved for endoscopic groups associated to depth-zero characters of the maximal compact subgroup of a maximal split torus of $G(F)$ for split $G$ when $f$ is in the center of the Hecke algebra associated to that character in \cite{LM} (in fact, the matching functions constructed loc. cit. are ``distinguished," which we means that they are both spherical with respect to hyperspecial subgroups).
\end{remark}

\bibliographystyle{alpha}

\end{document}